\documentclass[12pt, a4paper, twoside]{report}


\usepackage[ngerman,english]{babel}	

\usepackage[utf8]{inputenc}
\usepackage[T1]{fontenc}
\usepackage{lmodern}

\usepackage{titling}
\newcommand{\subtitle}[1]{%
  \posttitle{%
    \par\end{center}
    \begin{center}\large#1\end{center}
    \vskip0.5em}%
}

\usepackage[headsepline,markcase=noupper]{scrlayer-scrpage}
\pagestyle{scrheadings}
\ohead[]{}
\ihead[]{}
\chead[]{\headmark}
\cfoot[\pagemark]{\pagemark}
\setlength{\headheight}{1.1\baselineskip}

\usepackage{graphicx}
\usepackage{amsmath} 
\usepackage{amssymb} 
\usepackage{amsfonts}
\usepackage{amsthm}
\usepackage{mathrsfs}
\usepackage{enumerate}
\usepackage{makeidx} 
\usepackage{verbatim}
\usepackage{epsfig}
\usepackage{dsfont}
\usepackage{caption}	
\usepackage{graphicx}
\usepackage{sidecap}
\usepackage{float}
\usepackage{stackrel}	
\usepackage{mathtools} 
\usepackage{ltablex}	
\usepackage{bbm}
\usepackage{stackengine,graphicx,scalerel} 

\usepackage{autobreak} 

\usepackage[thinlines]{easytable}

\usepackage[inline]{enumitem}
\makeatletter
\newcommand{\inlineitem}[1][]{%
\ifnum\enit@type=\tw@
    {\descriptionlabel{#1}}
  \hspace{\labelsep}%
\else
  \ifnum\enit@type=\z@
       \refstepcounter{\@listctr}\fi
    \quad\@itemlabel\hspace{\labelsep}%
\fi}
\makeatother



     
\usepackage[backref=page]{hyperref}

\makeindex

\setlength{\textwidth}{15cm}
\setlength{\textheight}{23.5cm}
\hoffset=0.2cm  
\voffset=-0.6cm  

\oddsidemargin0.24cm	
\evensidemargin0.24cm		


\topmargin=-0.4cm

\hfuzz=6pt
\vfuzz=2pt 

\headsep=27pt 
\parindent=15pt

\pagenumbering{arabic} 

\addtolength{\skip\footins}{2mm}

\frenchspacing

\makeindex      
     
     
\newlength{\fixboxwidth}     
\setlength{\fixboxwidth}{\marginparwidth}     
\addtolength{\fixboxwidth}{-0pt}

\usepackage{microtype} 

\urlstyle{same}

\usepackage{bookmark}
\pdfstringdefDisableCommands{\def\and{, }}
\makeatletter 
  \providecommand*{\toclevel@author}{999}
  \providecommand*{\toclevel@title}{0}
\makeatother


\theoremstyle{plain}

\newtheorem{thm}{Theorem}[chapter]
\newtheorem{cor}[thm]{Corollary}
\newtheorem{lemma}[thm]{Lemma}

\newtheorem{prop}[thm]{Proposition}

\theoremstyle{definition}

\newtheorem{ex}[thm]{Example}
\newtheorem{rem}[thm]{Remark}
\newtheorem{defi}[thm]{Definition}
\newtheorem{alg}[thm]{Algorithm}

\numberwithin{equation}{chapter}   
\numberwithin{thm}{chapter}

\newcommand{\thistheoremname}{}
\newtheorem*{genericthm}{\thistheoremname}
\newenvironment{namedthm}[1]
  {\renewcommand{\thistheoremname}{#1}%
   \begin{genericthm}}
  {\end{genericthm}}




\renewcommand{\phi}{\varphi} 
\newcommand{\eps}{\varepsilon}

\newcommand{\IR}{\ensuremath{\mathbb{R}}}
\newcommand{\IN}{\ensuremath{\mathbb{N}}}
\newcommand{\IZ}{\ensuremath{\mathbb{Z}}}
\newcommand{\IQ}{\ensuremath{\mathbb{Q}}}
\newcommand{\IC}{\ensuremath{\mathbb{C}}}
\newcommand{\IK}{\ensuremath{\mathbb{K}}}
\newcommand{\IP}{\ensuremath{\mathbb{P}}}
\newcommand{\IE}{\ensuremath{\mathbb{E}}}

\newcommand{\C}{\mathcal{C}}

\newcommand{\lall}{\Lambda^{\rm all}}
\newcommand{\lstd}{\Lambda^{\rm std}}

\newcommand{\mix}{{\rm mix}}

\renewcommand{\d}{{\rm d}} 

\newcommand\varlesssim{\mathrel{\ensurestackMath{\ThisStyle{%
  \stackengine{-.4\LMex}{\SavedStyle<}{%
    \rotatebox{-25}{$\SavedStyle\sim$}}{U}{r}{F}{T}{S}}}}}
\newcommand\vargtrsim{\mathrel{\ensurestackMath{\ThisStyle{%
  \stackengine{-.4\LMex}{\SavedStyle>}{%
    \rotatebox{25}{$\SavedStyle\sim$}}{U}{l}{F}{T}{S}}}}}



\DeclareMathOperator{\cost}{cost}
\DeclareMathOperator{\err}{err}
\DeclareMathOperator{\comp}{n}
\DeclareMathOperator{\e}{e}
\DeclareMathOperator{\disp}{disp}

\DeclareMathOperator{\APP}{APP}
\DeclareMathOperator{\OPT}{OPT}
\DeclareMathOperator{\INT}{INT}

\newcommand{\diff}{\mathrm{D}}
\DeclareMathOperator\supp{supp}
\DeclareMathOperator*\esssup{ess\,sup}

\DeclareMathOperator{\diag}{diag}
\DeclareMathOperator{\rank}{rank}

\newcommand{\dist}{\mathrm{dist}}
\DeclareMathOperator\card{card}
\DeclareMathOperator{\vspan}{span}
\DeclareMathOperator\rad{rad}

\DeclareMathOperator\Var{Var}



\newcommand{\norm}[1]{\left\Vert#1\right\Vert}
\newcommand{\Xnorm}[2]{\left\Vert#1\right\Vert_{#2}}
\newcommand{\scalar}[2]{\left\langle#1,#2\right\rangle}
\newcommand{\Xscalar}[3]{\left\langle#1,#2\right\rangle_{#3}}

\newcommand{\set}[1]{\left\{#1\right\}}
\newcommand{\abs}[1]{\left|#1\right|}
\newcommand{\brackets}[1]{\left(#1\right)}

\begin{document}

\pagenumbering{alph}
\pagestyle{empty}


\begin{center}

\includegraphics[height = 4cm]{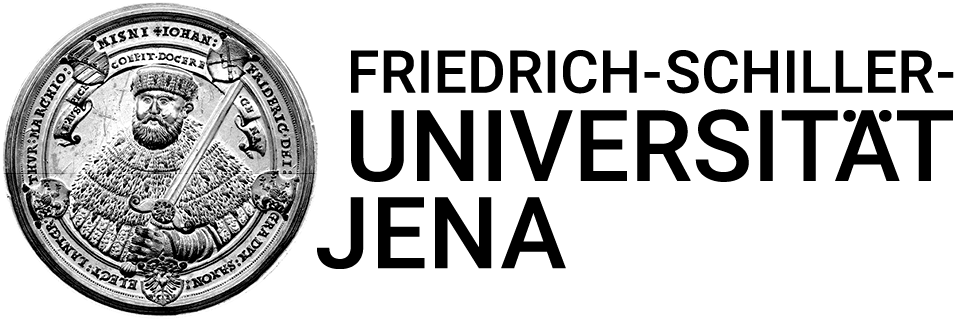}

\vspace{2cm}

{\textbf{\LARGE{%
Algorithms and Complexity\\
for some\\ 
\vspace*{3mm}
Multivariate Problems
}}}


\vspace{3cm}

\textbf{D I S S E R T A T I O N}\\
\textit{zur Erlangung des akademischen Grades\\
doctor rerum naturalium (Dr.\,rer.\,nat.)}\\

\vspace*{\fill}
vorgelegt dem Rat der\\
Fakult\"at f\"ur Mathematik und Informatik\\
der Friedrich-Schiller-Universit\"at Jena\\

\vspace*{\fill}
von David Krieg, M.\,Sc.\\
geboren am 8.\ Juli 1991 in Würzburg
\end{center}

\newpage

\vspace*{\fill}
\textbf{Gutachter}
\begin{enumerate}
	\item Prof.~Dr.~Aicke~Hinrichs, Linz
	\item Prof.~Dr.~Erich~Novak, Jena
	\item Prof.~Dr.~Henryk~Wo\'zniakowski, Warschau 
\end{enumerate}

\vspace{1cm}
\textbf{Tag der \"offentlichen Verteidigung}: 29.\ März 2019

\newpage

\subsection*{Acknowledgments}

First and foremost, I wish to express my deepest gratitude to
my supervisor Erich Novak for his valuable advice on so many topics,
including the innumerable hints and comments on this very thesis.
It is always a great pleasure to collaborate with Erich.
His well-aimed 
questions often led me
right to the heart of the matter at hand.
Furthermore, I am grateful to Aicke Hinrichs, Joscha Prochno, Daniel Rudolf, 
and Mario Ullrich for exciting collaborations
in the context of this thesis.
I thank Joscha,
Daniel, Mario, Glenn Byrenheid,
Marc Hovemann, and Winfried Sickel for their helpful
feedback on this thesis, and in particular
Robert Kunsch, who 
offered many insightful comments regarding the presentation
of the subject.
I also wish to thank numerous colleagues
for many great discussions during my time as a PhD student,
including, but not limited to, 
Stefan Heinrich, Therese Mieth, Christian Richter, Tino Ullrich,
and Henryk Wo\'zniakowski.
Finally, I cannot help but thank my parents, Mechthilde and Roland, 
for providing me with unfailing support
throughout my years of study.
The same holds for my two sisters, Anna and Judith,
and for Katharina, who never lost her patience with me.

\thispagestyle{empty}
\newpage
\phantom{empty page}
\thispagestyle{empty}
\newpage


\selectlanguage{ngerman}
\pagestyle{scrheadings}
\pagenumbering{roman}
\manualmark
\cleardoublepage
\chapter*{Zusammenfassung}
\addcontentsline{toc}{chapter}{Zusammenfassung}
\markboth{Zusammenfassung}{Zusammenfassung}

Auch mit den modernsten Computern
können wir in endlicher Zeit nur endlich viele Informationen 
über unsere Welt sammeln und verarbeiten.
Das macht das Finden exakter Lösungen 
für viele numerische Probleme unmöglich.
Beispiele hierfür sind die Frage nach der Abhängigkeit
einer beliebigen Größe von verschiedenen Parametern
(ein sogenanntes Approximationsproblem)
sowie die Berechnung eines Durchschnitts unter
unendlich vielen Werten
(ein sogenanntes Integrationsproblem).
In diesen Fällen müssen wir uns mit Näherungslösungen begnügen,
die wir auch mit endlich vielen Informationen bestimmen können.

Üblicherweise gibt es zwei Arten von Informationen:
das a priori Wissen und die empirischen Daten.
Das a priori Wissen ist bereits in der Problemstellung enthalten
und basiert in der Regel auf Modellannahmen.
Zum Beispiel wissen wir möglicherweise aus
theoretischen Vorbetrachtungen schon
etwas über die Regularität der
Funktion, die wir approximieren wollen.
Die empirischen Daten müssen wir dagegen erst durch 
Messungen, Umfragen, Programme oder andere Interaktionen
mit der Probleminstanz gewinnen.
Sie werden anschließend zu unserer
Näherungslösung verarbeitet.

Dieser Vorgang, also das Sammeln und das Verarbeiten der Daten,
kann durch einen Algorithmus beschrieben werden.
Jeder Algorithmus $A$ hat bestimmte Kosten und einen bestimmten Fehler, 
die wir mit $\cost(A)$ und $\err(A)$ bezeichnen.
Die Kosten messen den Aufwand, der zur Gewinnung
der Näherungslösung nötig ist.
Dieser ist oft proportional zu der Anzahl der gesammelten Informationen.
Der Fehler misst die zu erwartende Abweichung der Näherungslösung
von der exakten Lösung.
Für ein gegebenes Problem $\mathcal{P}$ stellen wir uns nun die Frage,
wie klein der Fehler eines Algorithmus mit vorgegebenen maximalen Kosten $n$ 
im besten Fall sein kann.
Wir fragen also nach dem $n$-ten minimalen Fehler 
$$
 \e(n,\mathcal{P}) = \inf\set{\err(A) \mid \cost(A)\leq n}.
$$
Umgekehrt fragen wir nach den minimal nötigen Kosten,
mit denen ein vorgegebener Fehler von höchstens $\varepsilon>0$
erreicht werden kann. Es geht also um die Größe
$$
 \comp(\varepsilon,\mathcal{P})= \min\set{\cost(A) \mid \err(A)\leq \varepsilon},
$$
die sogenannte $\varepsilon$-Komplexität des Problems.
Diese Größen sind invers zueinander und messen die Schwierigkeit des Problems. 
Zahlreiche klassische Untersuchungen beschäftigen sich
mit der Abfallgeschwindigkeit des $n$-ten minimalen Fehlers,
wenn $n$ gegen unendlich strebt.
Dies entspricht der Frage nach dem Verhalten der
$\varepsilon$-Komplexität des Problems,
wenn $\varepsilon$ gegen null strebt.

In vielen Fällen hat das Problem eine intrinsische Dimension $d\in\IN$. 
Beispielsweise ist die Probleminstanz häufig eine Funktion von $d$ Variablen.
Wir sprechen dann von einem multivariaten Problem,
das wir auch mit $\mathcal{P}_d$ bezeichnen. 
In diesem Fall interessieren wir uns für das Verhalten
der Komplexität $\comp(\varepsilon,\mathcal{P}_d)$
als Funktion in beiden Parametern $\varepsilon$ und $d$.
Viele Probleme unterliegen dem \emph{Fluch der Dimension}.
Die Komplexität wächst exponentiell mit der Dimension.
Solche Probleme sind für große Dimensionen praktisch unlösbar.
Man hofft also, dass die Komplexität nicht exponentiell von
$d$ oder $\varepsilon^{-1}$ abhängt.
In diesem Fall spricht man von \emph{Tractability}.
Noch besser ist es, wenn die Komplexität durch ein 
Polynom in $\varepsilon^{-1}$ und $d$ beschränkt ist.
Man spricht dann von polynomialer Tractability.

Für praktische Anwendungen reicht es allerdings nicht aus, 
die Komplexität des Problems zu studieren.
Diese gibt nur Auskunft darüber,
was der beste Algorithmus leisten kann.
Sie liefert uns nicht den besten Algorithmus.
Diesen zu finden, ist
im Allgemeinen eine unrealistische Hoffnung.
Es ist in der Regel bereits eine große Herausforderung,
einen Algorithmus zu finden, der den Fehler $\varepsilon$ erreicht
und dessen Kosten sich ähnlich wie die Komplexität des Problems verhalten.

In dieser Dissertation tragen wir Ergebnisse zu 
verschiedenen multivariaten Problemen bei.
Wir studieren die numerische Integration und Approximation
mit verschiedenen Arten von a priori Wissen.
Außerdem betrachten wir das Problem der globalen Optimierung und das Dispersionsproblem. 
In manchen Fällen erhalten wir neue Ergebnisse zur
Konvergenzordnung des Fehlers $\e(n,\mathcal{P}_d)$.
In anderen Fällen beweisen wir Ergebnisse bezüglich
der Tractability des Problems.
Aus der jeweiligen Sicht präsentieren wir optimale  
Algorithmen für die meisten dieser Probleme.
Diese Resultate finden sich in den Kapiteln~2--4.
In Kapitel~\ref{chap:problems and algorithms} stellen wir 
zunächst einige Grundlagen und Begrifflichkeiten
zur Verfügung.

\subsubsection{Zu Kapitel~\ref{chap:mixed smoothness}:
	Integration und Approximation von Funktionen gemischter Glattheit}

Dieses Kapitel beschäftigt sich mit der Integration
und der Approximation von Funktionen mit beschränkten gemischten Ableitungen,
wie sie beispielsweise im Zusammenhang
mit der elektronischen Schr\"odingergleichung und
verschiedenen Integralgleichungen auftreten \cite[Sec.\,9.1]{DTU18}.
Wir betrachten Funktionen aus der Klasse
$$
 F_d^r=\Bigg\{f\in L^2([0,1]^d) \,\,\Bigg\vert\,
 \sum\limits_{\alpha \in \set{0,\dots,r}^d}
 \norm{\diff^\alpha f}_2^2 \leq 1\Bigg\}.
$$

Wir beginnen mit dem Integrationsproblem.
Sei also $\mathcal{P}[\INT,F_d^r,\rm{det}]$ das Problem, 
Funktionen aus $F_d^r$ auf Basis von Funktionswerten
mithilfe deterministischer Algorithmen zu integrieren.
Die Konvergenzordnung des $n$-ten minimalen Fehlers 
ist für dieses Problem bekannt.
Ein optimaler Algorithmus wurde 1976 von Frolov vorgestellt \cite{Fr76}.
Es gilt
$$
 \e(n,\mathcal{P}[\INT,F_d^r,\mathrm{det}])
 \asymp
 n^{-r} \ln^{\frac{d-1}{2}} n.
$$
Mithilfe randomisierter Algorithmen lässt sich diese Konvergenzordnung verbessern.
Das Problem $\mathcal{P}[\INT,F_d^r,\rm{ran}]$,
Funktionen aus $F_d^r$ auf Basis von Funktionswerten
mithilfe randomisierter Algorithmen zu integrieren,
erfüllt die asymptotische \"Aquivalenz
$$
 \e(n,\mathcal{P}[\INT,F_d^r,\mathrm{ran}])
 \asymp
 n^{-r-1/2}.
$$
Insbesondere ist die Konvergenzordnung für letzteres Problem
unabhängig von der Dimension $d$ des Gebietes.
Dies ist eine Konsequenz von Satz~\ref{thm:main theorem},
welcher außerdem aufzeigt,
dass Frolovs Algorithmus in Kombination mit einer zufälligen
Verschiebung und Streckung der Menge der Knotenpunkte
optimal ist, siehe auch~\cite{KN17,Ul17}.

Die folgenden Abschnitte beschäftigen sich mit dem Problem
der $L^2$-Approximation.
Für dieses Problem ist es sinnvoll,
sowohl Algorithmen zu betrachten, deren Information
durch Funktionswerte gegeben ist,
als auch solche, die
beliebige lineare Information nutzen.
In Abschnitt~\ref{sec:tensorproduct} widmen wir uns dem
Fall der linearen Information.
In diesem Fall sind deterministische Algorithmen praktisch
genauso gut wie randomisierte Algorithmen \cite{No92}.
Wir studieren daher nur das Problem
$\mathcal{P}[\APP,F_d^r,\lall,\mathrm{det}]$,
die $L^2$-Approximation von
Funktionen aus $F_d^r$ auf Basis linearer Information
mithilfe deterministischer Algorithmen.
Es ist bereits sein 1960 bekannt, dass
$$
 \e(n,\mathcal{P}[\APP,F_d^r,\lall,\mathrm{det}])
 \asymp
 n^{-r} \ln^{r(d-1)} n
$$
im Sinne der schwachen asymptotischen \"Aquivalenz gilt \cite{Ba60}.
Ein optimaler Algorithmus ist anhand der Singul\"arwertzerlegung
der assoziierten Einbettung gegeben.
Wir wollen diese Fehlerzahlen hier jedoch etwas genauer betrachten.
Im Hinblick auf \cite{KSU15}, zeigen wir die starke asymptotische
\"Aquivalenz
$$
 \e(n,\mathcal{P}[\APP,F_d^r,\lall,\mathrm{det}])
 \sim
 (\pi^d (d-1)!\, n)^{-r} \ln^{r(d-1)} n,
$$
siehe Korollar~\ref{cor: strong equivalence mix all}.
Dies bedeutet, dass die Fehlerzahlen f\"ur großes $n$
sehr gut durch die rechte Seite der Gleichung beschrieben
werden können.
Da diese Ergebnisse nur f\"ur sehr große $n$ relevant sind,
stellen wir auch pr\"aasymptotische Absch\"atzungen bereit.
In Korollar~\ref{cor:preasymptotics mixed cube} beweisen wir die obere Schranke
$$
 \e(n,\mathcal{P}[\APP,F_d^r,\lall,\mathrm{det}])
 \leq\, 2\,n^{-c(d)}
  \quad\text{mit}\quad 
  c(d)=\frac{1.1929}{2+\ln d}
$$
f\"ur alle $n\in\IN$.
Weiter zeigen wir, dass diese Abschätzung für $n<2^d$ nicht wesentlich
verbessert werden kann, siehe Korollar~\ref{cor: preasymptotic lower bound mix all}.

In Abschnitt~\ref{sec:OptimalMC} wenden wir uns dem Fall zu,
dass die Informationen durch Funktionswerte gegeben sind.
Wir betrachten randomisierte Algorithmen.
Für das entsprechende Problem $\mathcal{P}[\APP,F_d^r,\lstd,\mathrm{ran}]$
beweisen wir die asymptotische Äquivalenz
$$
 \e(n,\mathcal{P}[\APP,F_d^r,\lstd,\mathrm{ran}])
 \asymp
 n^{-r} \ln^{r(d-1)} n,
$$
siehe Korollar~\ref{cor:order OptimalMC mixed}.
Wir geben einen Algorithmus an, dessen Fehler
sich in dieser Hinsicht optimal verhält,
siehe Algorithmus~\ref{alg:explicit alg}.
Außerdem beweisen wir die präasymptotische Abschätzung
$$
\e(n,\mathcal{P}[\APP,F_d^r,\lstd,\mathrm{ran}])
\leq\, 8\,n^{-c(d)}
$$
für alle $n\in\IN$ mit $c(d)$ wie oben, siehe \eqref{eq:preasymptotic ran std}.
Diese Abschätzungen zeigen, dass richtig gewählte Funktionswerte 
für das Approximationsproblem 
eine genauso große Aussagekraft haben
wie beliebige lineare Information,  
insofern randomisierte Algorithmen erlaubt sind.
Es ist ein ungelöstes Problem,
ob dieser Sachverhalt bestehen bleibt,
wenn wir nur deterministische Algorithmen betrachten.

An dieser Stelle wollen wir noch anmerken, 
dass die oben genannten Ergebnisse jeweils
für allgemeinere Fragestellungen formuliert werden können:
\begin{itemize}
 \item Abschnitt~\ref{sec:Frolov}: Frolovs Algorithmus und seine Randomisierung
 sind optimal für viele Klassen glatter Funktionen.
 \item Abschnitt~\ref{sec:tensorproduct}: Wir studieren optimale Algorithmen
 für beliebige Tensorproduktprobleme zwischen Hilberträumen.
 \item Abschnitt~\ref{sec:OptimalMC}: Wir präsentieren optimale
 Algorithmen für die $L^2$-Approximation von Funktionen
 aus der Einheitskugel von Hilberträumen, die kompakt in einen
 $L^2$-Raum eingebettet sind,
 vorausgesetzt die Singulärwerte dieser Einbettung
 erfüllen eine gewisse Abfallbedingung.
\end{itemize}

\subsubsection{Zu Kapitel~\ref{chap:tractability uniform approximation}:
	Tractability des Problems der gleichmäßigen Approximation}

In diesem Kapitel studieren wir die 
Leistungsfähigkeit deterministischer Algorithmen 
für das Problem, eine Funktion $f:[0,1]^d\to \IR$ 
gleichmäßig anhand endlich vieler Funktionswerte zu approximieren.
Um hier überhaupt etwas erreichen zu können,
ist a priori Wissen über die Funktion $f$ vonnöten,
sagen wir $f\in F_d$ für eine Menge $F_d$ von beschränkten Funktionen.
Sei $\mathcal{P}[F_d]$ das Problem der gleichmäßigen Approximation
mit a priori Wissen $F_d$.
Wir interessieren uns für die Tractability dieses Problems.
Insbesondere würden wir gerne mehr darüber wissen,
welche Art von a priori Wissen zu positiven
Ergebnissen in Hinblick auf die Tractability 
und damit zur praktischen Lösbarkeit des Problems in hohen Dimensionen führt.

Es ist bekannt, dass Glattheit alleine nicht ausreicht.
Selbst mit dem a priori Wissen
$$
 F_d = \set{f \in \C^\infty([0,1]^d) \,\big\vert\,
 \Vert \diff^\alpha f \Vert_\infty \leq 1 
 \text{\ für alle } \alpha \in \IN_0^d},
$$
unterliegt das Problem dem Fluch der Dimension~\cite{NW09}.
Selbstverständlich überträgt sich dieser Umstand auf 
den Fall endlicher Glattheit $r\in\IN$,
das heißt, auf den Fall von a priori Wissen
$$
 \C^r_d= \set{f \in \C^r([0,1]^d) \,\big\vert\,
 \Vert \diff^\alpha f \Vert_\infty \leq 1 
 \text{\ für alle } \alpha \in \IN_0^d \text{ mit } \abs{\alpha}\leq r}.
$$

Aber wie schlimm genau ist dieser Fluch?
Ab welcher Dimension hat man mit der Unlösbarkeit
des Problems zu rechnen?
Um diese Fragen dreht sich Abschnitt~\ref{sec:ck functions}.
Für gerade Zahlen $r$ stellen wir fest,
dass es positive Konstanten $c_r$, $C_r$ und $\varepsilon_r$
gibt, sodass
\begin{equation*}
 \brackets{c_r \sqrt{d}\, \varepsilon^{-1/r}}^d
 \leq \comp(\varepsilon,\mathcal{P}[\C^r_d]) \leq
 \brackets{C_r \sqrt{d}\, \varepsilon^{-1/r}}^d
\end{equation*}
für alle $d\in\IN$ und $\varepsilon\in (0,\varepsilon_r)$ gilt,
siehe Satz~\ref{main theorem even}.
Aus Ergebnissen von~\cite{Wa84} folgt,
dass selbige Abschätzungen auch für das Problem
der globalen Optimierung gelten,
da die Klasse $\C^r_d$ konvex und symmetrisch ist,
siehe Abschnitt~\ref{sec:optimization}.
Insbesondere wächst die Komplexität beider Probleme im Fall $r\geq 2$
für eine vorgegebene Fehlerschranke $\varepsilon>0$ wie $d^{d/2}$
und damit superexponentiell.

Andererseits wissen wir, dass zusätzliches Wissen über die 
Struktur der Funktion $f$ durchaus zu Tractability führen kann.
Beispiele hierfür sind folgende Annahmen:
\begin{itemize}
 \item Die Funktion ist eine Ridge-Funktion\cite{MUV15}. Das heißt,
 sie hat die Gestalt $f=g(\langle \cdot,\mathbf x_0\rangle)$
 für ein $\mathbf x_0\in\IR^d$ und ein $g:\IR\to\IR$.
 \item Die Funktion ist separierbar \cite{NR97,WW04}. 
 Sie lässt sich als Summe von Funktionen in $m$ Variablen
 schreiben, wobei die Ordnung $m$ unabhängig von der Dimension ist. 
 Man beachte, dass sich obige Paper
 nicht mit gleichmäßiger Approximation, 
 sondern mit $L^2$-Approximation und Integration beschäftigen.
 \item Die Funktion ist symmetrisch \cite{We12}. Das heißt, 
 $f(\mathbf x)$ ist invariant bezüglich Umordnungen 
 der Koordinaten von $\mathbf x\in[0,1]^d$. 
 Man beachte allerdings, dass die Funktionen in~\cite{We12} 
 nicht anhand von Funktionswerten, sondern anhand von
 anderen linearen Informationen approximiert werden.
\end{itemize}
Ein weiteres Beispiel studieren wir in Abschnitt~\ref{sec:rank one}.
Hier stellen wir uns vor, dass $f$ ein Rank-1-Tensor ist.
Das bedeutet, die $d$-dimensionale Funktion kann als Produkt von $d$ 
eindimensionalen Funktionen geschrieben werden.
Genauer gesagt nehmen wir an, dass $f$ ein Element der Klasse
$$
 F_{r,M}^d =
 \Big\{ \bigotimes_{i=1}^d  f_i \,\Big\vert\,
 f_i\colon [0,1]\to [-1,1], \
 \Vert f_i^{(r)} \Vert_\infty \le M \Big\}
$$
ist, wobei die Parameter $r\in\IN$ und $M>0$ die Glattheit
der Funktion beschreiben.
Die Funktion $\bigotimes_{i=1}^d  f_i$ heißt Tensorprodukt
der Funktionen $f_i$ und bildet $\mathbf x\in [0,1]^d$ 
auf das Produkt der Funktionswerte $f_i(x_i)$ ab. 
In Satz~\ref{tractability results} stellen wir fest,
dass das Problem der gleichmäßigen Approximation mit
a priori Wissen $F_{r,M}^d$ genau dann am Fluch der Dimension leidet, 
wenn $M\geq 2^r r!$.
Gilt dagegen $M<2^r r!$, so wächst die Komplexität nur polynomial mit der Dimension.
Falls $M\leq r!$ is der Grad dieser polynomialen Abhängigkeit
sogar unabhängig von der Fehlertoleranz $\varepsilon$
und wir erhalten polynomiale Tractability.
Andernfalls wächst der Exponent logarithmisch mit $\varepsilon^{-1}$.
In allen drei Fällen stellen wir einen Algorithmus vor,
dessen Kosten genau dieses Verhalten aufzeigen.
Der Algorithmus ist daher optimal im Hinblick auf die Tractability des Problems.
In Abschnitt~\ref{sec:optimization} beweisen wir außerdem,
dass die Komplexität des Problems der globalen Optimierung
auf $F_{r,M}^d$ dasselbe Verhalten aufweist.
Dies gilt, obwohl die Klasse $F_{r,M}^d$ nicht konvex ist.

Im Verlauf von Abschnitt~\ref{sec:rank one} wird klar,
dass das Problem der Approximation von Rank-1-Tensoren
eng mit dem geometrischen Problem der Dispersion zusammenhängt.
Die Dispersion einer Menge von Punkten im $d$-dimensionalen Einheitswürfel
ist das Volumen der größten achsenparallelen Box,
die keinen dieser Punkte enthält.
Diese Größe ist auch unabhängig vom obigen Approximationsproblem
von Interesse.
Man fragt nach möglichst kleinen Punktmengen,
die eine vorgegebene Dispersion $\varepsilon$ erreichen
oder unterbieten. 
In Abschnitt~\ref{sec:dispersion} geben wir eine solche
Punktmenge für alle $\varepsilon>0$ und jede Dimension $d\in\IN$ an.
Die Punktmenge ist ein dünnes Gitter und hat daher eine
besonders einfache Struktur.
Für viele Parameter $(\varepsilon,d)$ ist uns keine
kleinere Punktmenge mit der gewünschten Dispersion bekannt.

\subsubsection{Zu Kapitel~\ref{chap:random info}:
	Optimale Information versus zufällige Information}

Das letzte Kapitel unterscheidet sich wesentlich von den beiden
vorigen Kapiteln.
Bisher haben wir danach gestrebt,
optimale Algorithmen zu finden, welche optimale Information
über die Probleminstanz sammeln.
In Wirklichkeit haben wir jedoch oft keinen Zugriff auf 
optimale Information.
Das kann zum Beispiel daran liegen, dass wir nicht wissen,
wie wir die Parameter wählen müssen, um möglichst aussagekräftige
Messergebnisse zu erhalten.
Es kann auch sein, dass wir die Parameter für unsere
Messung nicht frei bestimmen können.
In diesem Kapitel nehmen wir an, dass die Parameter
dem Zufall unterliegen.
Wir erhalten also zufällige Information
und stellen uns die folgende Frage.
\begin{center}
Was ist die typische Güte
von zufälliger Information?
\end{center}
Selbstverständlich ist die zufällige Information
niemals besser als optimale Information,
aber es kann passieren, dass zufällige Information
nur unwesentlich schlechter ist.
In diesem Fall macht es wenig Sinn,
mühsam nach optimaler Information zu streben.

Um unsere Frage präzise formulieren zu können,
müssen wir klarstellen,
wie wir die Güte der Information messen
und welchem Zufall die Information unterliegt.
Die Güte der Information messen wir anhand ihres Radius.
Dies ist der Worst-Case-Fehler des besten Algorithmus,
der ausschließlich mit dieser Information und dem a priori Wissen arbeitet,
siehe Abschnitt~\ref{sec:errors}.
Unsere Information soll aus unabhängigen zufälligen Messungen
stammen, die alle derselben Verteilung genügen.
Sicher gibt es hier viele Verteilungen,
die es zu studieren wert sind.
Wir werden die obige Frage allerdings für zwei
Klassen von Beispielen betrachten,
bei denen wir jeweils eine Verteilung für
besonders natürlich und daher für besonders interessant halten.

Das erste Beispiel ist das Problem der $L^p$-Approximation
periodischer Lipschitz-Funktionen von $d$ Variablen
mithilfe von $n$ Funktionswerten.
Hier ist die optimale Information durch Funktionswerte
auf einem regulären Gitter gegeben.
Zufällige Information soll dagegen durch Funktionswerte an
$n$ unabhängigen, gleichverteilten Punkten gegeben sein.
Es stellt sich heraus, dass sich die Güte zufälliger Information
im Fall $p<\infty$ asymptotisch genauso verhält 
wie die Güte optimaler Information, 
siehe Korollar~\ref{cor:Lip p<infty}. 
Der Fall $p=\infty$ ist das Problem der gleichmäßigen Approximation
von Lipschitz-Funktionen.
Hier ist zufällige Information asymptotisch etwas schlechter
als optimale Information, jedoch nur wenig,
siehe Korollar~\ref{cor:Lip p=infty} sowie \cite{BDKKW17}.

Das zweite Beispiel ist das Problem der $\ell^2$-Approximation
von Punkten aus einem $m$-dimensionalen Ellipsoid
mithilfe von $n$ linearen Messungen,
wobei wir uns vorstellen, dass $m$ viel größer als $n$ ist,
beispielsweise $m=2^n$.
Optimale Information ist hier durch die Koordinaten
in Richtung der $n$ größten Halbachsen des Ellipsoids gegeben.
Zufällige Information ist dagegen durch Koordinaten
in $n$ zufällige Richtungen gegeben,
die unabhängig und gleichverteilt auf der Sphäre in $\IR^m$ sind.
Abhängig von der Dicke des Ellipsoids
erhalten wir sehr verschiedene Ergebnisse 
über die Güte zufälliger Information:
Wenn die geordnete Folge der Halbachsen des Ellipsoids
schnell genug abfällt, so ist zufällige Information
fast genauso gut wie optimale Information.
Fällt die Folge zu langsam,
so ist zufällige Information beinahe völlig nutzlos,
siehe Satz~\ref{thm:main result random info hilbert}.
Wir werden auch eine Version dieses Problems im Fall $m=\infty$
besprechen. Dieser Fall entspricht dem Problem der
$L^2$-Approximation von Funktionen aus einem kompakt
eingebetteten Hilbertraum.

\subsubsection{Veröffentlichungen}

Die meisten Ergebnisse dieser Dissertation wurden bereits veröffentlicht.
Es folgt eine Liste der relevanten Veröffentlichungen des Autors.
Die Reihenfolge entspricht der Reihenfolge der Abschnitte dieser Arbeit.
Der zweite Punkt der Liste ist die Masterarbeit des Autors.

\begin{enumerate}
 \item mit \textsc{E.\,Novak}.
      \newblock A universal algorithm
	    for multivariate integration.
      \newblock {\em Foundation of Computational Mathematics}, 17(4):895--916, 2017,
      siehe Abschnitt~\ref{sec:Frolov}.
 \item On the randomization of Frolov's algorithm for multivariate integration.
      \newblock Masterarbeit, Friedrich-Schiller-Universit\"at Jena, 2016,
      verfügbar als arXiv:1603.04637 [math.NA],
      siehe Abschnitt~\ref{sec:Frolov}.
 \item Tensor power sequences and the approximation of tensor product operators.
      \newblock {\em Journal of Complexity}, 44:30--51, 2018,
      siehe Abschnitt~\ref{sec:tensorproduct}.
 \item Optimal Monte Carlo methods for $L^2$-approximation.
      \newblock {\em Constructive Approximation}, 2018,
      \newblock https://doi.org/10.1007/s00365-018-9428-4,
      siehe Abschnitt~\ref{sec:OptimalMC}.
 \item Uniform recovery of high-dimensional $C^r$-functions.
      \newblock {\em Journal of Complexity}, 50:116--126, 2019,
      siehe Abschnitt~\ref{sec:ck functions}.
 \item mit \textsc{D.\,Rudolf}.
      \newblock Recovery algorithms for high-dimensional rank one tensors.
      \newblock {\em Journal of Approximation Theory}, 
      237:17--29, 2019,
      siehe Abschnitt~\ref{sec:rank one}.
 \item On the dispersion of sparse grids.
      \newblock {\em Journal of Complexity}, 45:115--119, 2018,
      siehe Abschnitt~\ref{sec:dispersion}.
 \item mit \textsc{A.\,Hinrichs, E.\,Novak, J.\,Prochno, and M.\,Ullrich}. 
      \newblock Random Abschnitts of ellipsoids and the power of random information.
      \newblock Preprint, verfügbar als arXiv:1901.06639 [math.FA],
      siehe Abschnitt~\ref{sec:random info hilbert}.
\end{enumerate}

\selectlanguage{english}
\cleardoublepage
\chapter*{Introduction and Results}
\addcontentsline{toc}{chapter}{Introduction and Results}
\markboth{Introduction and Results}{Introduction and Results}

Even with the help of modern computers, we 
cannot hope to collect or process 
more than a finite amount of information 
about the world in finite time.
This makes it impossible to find
exact solutions to many numerical problems,
such as
the question 
for the dependence
of a certain quantity 
upon several parameters 
(a so-called approximation problem)
or the computation of some average
of infinitely many values
(a so-called integration problem).
It is then necessary to settle for 
approximate solutions
which may be obtained from a finite amount of information.

The information usually consists of two parts:
the a priori knowledge and the empirical data.
The a priori knowledge
is inherent to the problem
or simply assumed by our model.
For example, we might have some knowledge about
the regularity of the function that 
we want to approximate.
The data has to be gained from 
measurements, surveys, programs, etc.\ We process the data
to generate the approximate solution.
%

The whole procedure is described by an algorithm.
Each algorithm $A$ has a certain cost, denoted by $\cost(A)$, 
and a certain error, denoted by $\err(A)$.
The cost measures the effort that is needed
to obtain the approximate solution.
It is often proportional to 
the amount of collected data.
The error measures the possible disparity 
of the approximate and the exact solution.
Given a problem $\mathcal{P}$,
the question is how small the error of an algorithm
with maximal cost $n$ can possibly be.
We ask for the $n^{\rm th}$ minimal error 
$$
 \e(n,\mathcal{P}) = \inf\set{\err(A) \mid \cost(A)\leq n}.
$$
Conversely, we 
ask for the minimal
cost that is needed to achieve an error of
at most $\varepsilon>0$, that is,
$$
 \comp(\varepsilon,\mathcal{P})= \min\set{\cost(A) \mid \err(A)\leq \varepsilon}.
$$
This quantity is called the $\varepsilon$-complexity of the problem.
We also talk about the $\varepsilon$-information complexity
if $\cost(A)$ is given by the amount of information
that is required by the algorithm.
The $n^{\rm th}$ minimal error and the $\varepsilon$-complexity 
are inverse to one another
and measure the difficulty of the problem. 

Many classical investigations
are concerned with the speed of decay of the 
$n^{\rm th}$ minimal error as $n$ tends to infinity,
or equivalently, with the behavior of the
$\varepsilon$-complexity as $\varepsilon$ tends to zero.
But quite often, the problem has some intrinsic dimension $d\in\IN$. 
For example, the problem instance may be a
function of $d$ variables.
We then talk about a multivariate problem,
which we denote by $\mathcal{P}_d$. 
In this case, we are interested in the
behavior of $\comp(\varepsilon,\mathcal{P}_d)$
as a function of both $\varepsilon$ and $d$.
At best, we hope that the problem is polynomially tractable,
that is, the complexity depends at most polynomially
on both $\varepsilon^{-1}$ and $d$.
However, many problems suffer from the curse of dimensionality:
the $\varepsilon$-complexity increases exponentially
with the dimension for some $\varepsilon$.
Of course, there are many shades of tractability
in between these extremes.
For instance, the problem is called quasi-polynomially tractable
if the $\varepsilon$-complexity increases at most
polynomially with the dimension for any fixed $\varepsilon$
and the polynomial order increases at most
logarithmically with $\varepsilon^{-1}$.

For practical purposes, however, 
it is not enough to know 
how much an algorithm can possibly achieve. 
One actually wants to get hold of optimal algorithms.
These are algorithms that achieve an error of at most $\varepsilon$
with (almost) minimal cost.

In this thesis,
we contribute to a collection of several multivariate problems.
We study numerical integration and approximation,
global optimization and the problem of dispersion.
In some cases, we present new results
on the speed of decay of $\e(n,\mathcal{P}_d)$.
In other cases, we give tractability results.
From the respective points of view, 
we provide optimal algorithms
for most of the problems.
These results can be found in Chapters~2--4.
The theoretical foundations 
are discussed in Chapter~\ref{chap:problems and algorithms}.

\subsubsection{On Chapter~\ref{chap:mixed smoothness}:
	Integration and Approximation of Functions with Mixed Smoothness}

This chapter is centered around the integration
and approximation problem for multivariate functions
having bounded mixed derivatives.
Such functions appear, for example, in the context of the
electronic Schr\"odinger equation or
certain integral equations \cite[Sec.\,9.1]{DTU18}.
More precisely, we consider functions from the class
$$
 F_d^r=\Bigg\{f\in L^2([0,1]^d) \,\,\Bigg\vert\,
 \sum\limits_{\alpha \in \set{0,\dots,r}^d}
 \norm{\diff^\alpha f}_2^2 \leq 1\Bigg\}.
$$

The first section is concerned with the integration problem.
Let $\mathcal{P}[\INT,F_d^r,\rm{det}]$
be the problem of integrating functions from $F_d^r$
with deterministic algorithms that use
function values as information.
For this problem, it is known that
$$
 \e(n,\mathcal{P}[\INT,F_d^r,\mathrm{det}])
 \asymp
 n^{-r} \ln^{\frac{d-1}{2}} n.
$$
An optimal algorithm was given by Frolov in 1976 \cite{Fr76}.
This order of convergence may be improved by randomized algorithms.
The problem $\mathcal{P}[\INT,F_d^r,\rm{ran}]$
of integrating such functions
with randomized algorithms satisfies
$$
 \e(n,\mathcal{P}[\INT,F_d^r,\mathrm{ran}])
 \asymp
 n^{-r-1/2}.
$$
In particular, the order is independent of the dimension $d$.
This is a consequence of Theorem~\ref{thm:main theorem} 
which states that a randomly shifted and dilated version
of Frolov's algorithm is optimal for this problem,
see also~\cite{KN17,Ul17}.

The remaining sections are concerned with the problem
of $L^2$-approximation.
For this problem it makes sense to study algorithms
that use function values as information as well as
algorithms that use arbitrary pieces of linear information.

Section~\ref{sec:tensorproduct} is concerned with the case of linear information.
In this case, deterministic algorithms are practically
as powerful as randomized algorithms \cite{No92}.
This means that it is enough to study the problem
$\mathcal{P}[\APP,F_d^r,\lall,\mathrm{det}]$
of approximating such functions in $L^2$
with deterministic algorithms that use 
linear information.
It is known since 1960 \cite{Ba60} that
$$
 \e(n,\mathcal{P}[\APP,F_d^r,\lall,\mathrm{det}])
 \asymp
 n^{-r} \ln^{r(d-1)} n.
$$
An optimal algorithm is given by the singular
value decomposition of the associated embedding.
Here, we go a little more into detail.
In the spirit of \cite{KSU15}, we show that
$$
 \e(n,\mathcal{P}[\APP,F_d^r,\lall,\mathrm{det}])
 \sim
 (\pi^d (d-1)!\, n)^{-r} \ln^{r(d-1)} n
$$
in the sense of strong equivalence of sequences,
see Corollary~\ref{cor: strong equivalence mix all}.
Since these results are only relevant for very
large $n$, we also provide explicit estimates
for small $n$, preasymptotic estimates. 
In Corollary~\ref{cor: preasymptotic lower bound mix all}
and Corollary~\ref{cor:preasymptotics mixed cube},
we prove that
$$
 \e(n,\mathcal{P}[\APP,F_d^r,\lall,\mathrm{det}])
 \leq\, 2\,n^{-c(d)}
  \quad\text{with}\quad 
  c(d)=\frac{1.1929}{2+\ln d}
$$
for all $n\in\IN$
and that this bound cannot be improved much
for $n<2^d$.

In Section~\ref{sec:OptimalMC}, we turn to the case of function values as information.
We provide an optimal randomized algorithm
for the respective problem $\mathcal{P}[\APP,F_d^r,\lstd,\mathrm{ran}]$
and show that
$$
 \e(n,\mathcal{P}[\APP,F_d^r,\lstd,\mathrm{ran}])
 \asymp
 n^{-r} \ln^{r(d-1)} n,
$$
see Corollary~\ref{cor:order OptimalMC mixed}.
Therefore, function values are as powerful
as arbitrary linear information, as long as randomized algorithms 
are allowed.
Also the preasymptotic estimates are similar.
We get
$$
\e(n,\mathcal{P}[\APP,F_d^r,\lstd,\mathrm{ran}])
\leq\, 8\,n^{-c(d)}
$$
for all $n\in\IN$ with $c(d)$ as above, see \eqref{eq:preasymptotic ran std}.
Note that the question for optimal algorithms
and the order of convergence is still unsolved
for deterministic algorithms that use function values
as information.
We remark that each section will cover
a more general setting:
\begin{itemize}
 \item Section~\ref{sec:Frolov}: Frolov's algorithm and its randomization 
 are optimal for many classes of smooth functions.
 \item Section~\ref{sec:tensorproduct}: We study optimal algorithms
 for the $L^2$-approximation of functions from the unit ball
 of any tensor product Hilbert space.
 \item Section~\ref{sec:OptimalMC}: We provide optimal randomized algorithms
 for the $L^2$-approximation of functions from 
 the unit ball of any Hilbert space that is
 compactly embedded in the respective $L^2$-space,
 provided that the singular values of this
 embedding satisfy a certain decay condition.
\end{itemize}

\subsubsection{On Chapter~\ref{chap:tractability uniform approximation}:
	Tractability of the Uniform Approximation Problem}

In this chapter,
we study the power of deterministic algorithms 
for the problem of recovering a
function $f:[0,1]^d\to \IR$ 
from a finite number of function values 
in the uniform norm.
In order to achieve anything at all,
it is necessary to have some a priori knowledge
about the function, say $f\in F_d$
for some $F_d\subset L^\infty([0,1]^d)$.
Let $\mathcal{P}[F_d]$ be the problem of uniform approximation
with a priori knowledge $F_d$.
We are interested in the tractability of this problem.
In particular, we want to know
what kind of a priori knowledge leads
to positive tractability results.

It is well known that smoothness alone is not enough.
Even if we have the a priori knowledge 
$$
 F_d = \set{f \in \C^\infty([0,1]^d) \,\big\vert\, 
 \Vert \diff^\alpha f \Vert_\infty \leq 1 
 \text{\ for all } \alpha \in \IN_0^d},
$$
the problem suffers from the curse of dimensionality~\cite{NW09}.
Of course, the curse stays present
if we only know about finite smoothness $r\in\IN$,
that is, if we have the a priori knowledge 
$$
 \C^r_d= \set{f \in \C^r([0,1]^d) \,\big\vert\,
 \Vert \diff^\alpha f \Vert_\infty \leq 1 
 \text{\ for all } \alpha \in \IN_0^d \text{ with  } \abs{\alpha}\leq r}.
$$
But how bad is the situation exactly?
This question is studied in Section~\ref{sec:ck functions}.
For even numbers $r$, we find that 
there are positive constants $c_r$, $C_r$ and $\varepsilon_r$
such that
\begin{equation*}
 \brackets{c_r \sqrt{d}\, \varepsilon^{-1/r}}^d
 \leq \comp(\varepsilon,\mathcal{P}[\C^r_d]) \leq
 \brackets{C_r \sqrt{d}\, \varepsilon^{-1/r}}^d
\end{equation*}
for all $d\in\IN$ and $\varepsilon\in (0,\varepsilon_r)$,
see Theorem~\ref{main theorem even}.
It follows from~\cite{Wa84}
that the same estimates hold for the problem
of global optimization on $\C^r_d$
since this class is convex and symmetric,
see Section~\ref{sec:optimization}.
In particular, the complexity of both problems
grows like $d^{d/2}$ for any fixed $\varepsilon>0$
and $r\geq 2$.
For odd numbers $r$, the precise behavior of the
complexity as a function of both $\varepsilon$ and $d$
is still unclear.

On the other hand, it is known that additional knowledge
about the structure of $f$ may lead to tractability of the
uniform approximation problem.
For example, we may assume that $f$ is
\begin{itemize}
 \item a ridge function \cite{MUV15}. That is, it
 can be written in the form
 $g(\langle \cdot,\mathbf x_0\rangle)$ for some $\mathbf x_0\in\IR^d$ and $g:\IR\to\IR$.
 \item separable \cite{NR97,WW04}. 
 It can be written as a sum of 
 $m$-variate functions, where $m$
 is independent of $d$. 
 Note that the above papers are not concerned
 with uniform approximation but with $L^2$-approximation
 and integration.
 \item symmetric \cite{We12}. That is,
 $f(\mathbf x)$ is invariant under a reordering of the
 coordinates of $\mathbf x\in[0,1]^d$. Note that
 the functions in~\cite{We12} are to be recovered from arbitrary linear information
 and not exclusively from function values.
\end{itemize}
Another example is studied in Section~\ref{sec:rank one}.
Here, we assume that $f$ is a rank one tensor.
That is, it can be written as a product of $d$ univariate functions.
More precisely, we assume that $f$
is contained in the class
$$
 F_{r,M}^d =
 \Big\{ \bigotimes_{i=1}^d  f_i \,\Big\vert\,
 f_i\colon [0,1]\to [-1,1], \
 \Vert f_i^{(r)} \Vert_\infty \le M \Big\}
$$
for some smoothness parameters $r\in\IN$ and $M>0$.
The function $\bigotimes_{i=1}^d  f_i$ is called 
the tensor product of the functions $f_i$ over $i\leq d$
and maps $\mathbf x\in [0,1]^d$ to the product
of all~$f_i(x_i)$. In Theorem~\ref{tractability results}
we find that uniform approximation with a priori
knowledge $F_{r,M}^d$ suffers from the curse
of dimensionality iff $M\geq 2^r r!$.
It is quasi-polynomially tractable iff $M<2^r r!$
and even polynomially tractable iff $M\leq r!$.
In every case we provide an optimal algorithm.
Moreover, we show that the same tractability results
hold for the problem of global optimization on the class $F_{r,M}^d$
which is symmetric but not convex, see Section~\ref{sec:optimization}.

It will become apparent that the uniform
approximation of rank one tensors
is closely related to the problem of dispersion.
The dispersion of a finite point set in $[0,1]^d$ 
is the volume of the largest empty axis-aligned box
amidst the point set.
This quantity is also of independent interest.
One asks for the minimal cardinality that is
necessary to achieve a dispersion of at most $\varepsilon$
in dimension $d$,
but also for explicit point sets with this property.
In Section~\ref{sec:dispersion} we provide such a point set
for every $\varepsilon>0$ and every $d\in\IN$.
In a vast range of the parameters $(\varepsilon,d)$,
we do not know any smaller point set with this property.
The point set is an instance of a sparse grid
and hence easy to handle.
It may be used for the algorithms from Section~\ref{sec:rank one}.

\subsubsection{On Chapter~\ref{chap:random info}:
	Optimal Information versus Random Information}

The last chapter is somewhat different.
In the previous chapters, we aimed at finding
optimal algorithms that use optimal information
about the problem instance.
However, quite often we do not have access to optimal information. 
The reason may be that 
we do not know which 
kind of measurements lead to optimal information
or that we do not even get to choose our measurements.
In this chapter, we assume that the information
comes in randomly
and ask the following question:
\begin{center}
What is the typical quality 
of random information?
\end{center}
Of course, random information cannot be better than
optimal information, but it
may turn out that typical random information 
is only slightly worse.
In this case, searching for optimal
information is rather pointless.

To make this more precise,
we need to clarify how we measure the
quality of our information 
and what we mean by random.
The first is done with the so-called
radius of information,
which is the worst case error of the best algorithm
that uses nothing but the given information
and the a priori knowledge about the
problem instance,
see Section~\ref{sec:errors}.
The random information, on the other hand, 
shall be obtained from
a certain number of independent measurements
that all follow the same law.
In general,
there is no right or wrong
in the choice of the distribution
that we want to investigate.
However, we study this question for two basic examples
for which there seems to be a natural
choice for this distribution.

The first example is the problem of $L^p$-approximation
of periodic Lipschitz functions on the $d$-dimensional unit cube
using $n$ function values.
While optimal information is given by
function values on a regular grid,
random information shall be given by function
values at $n$ random points that are chosen
independently and uniformly from the domain.
It turns out that typical random information
is asymptotically just as good as optimal information
if $p<\infty$, see Corollary~\ref{cor:Lip p<infty}. 
For $p=\infty$, it is only slightly worse,
see Corollary~\ref{cor:Lip p=infty} and \cite{BDKKW17}.

The second example is the problem of 
$\ell^2$-approximation
of a point from an $m$-dimensional ellipsoid
by means of $n$ linear measurements,
where we imagine that $m$ is much larger than $n$.
While optimal information is given by 
$n$ scalar products
in direction of the largest semi-axes,
random information shall be given by scalar products
in $n$ directions taken independently from the 
uniform distribution on the sphere in $\IR^m$.
We obtain very different results
depending on the shape of the ellipsoid:
If the ordered sequence of semi-axes decays fast enough,
typical random information is almost as good
as optimal information.
If it decays too slowly,
typical random information is practically useless,
see Theorem~\ref{thm:main result random info hilbert}.
We shall also present a variant of these results
for $m=\infty$, which corresponds to
the problem of $L^2$-approximation in
a Hilbert space.

\subsubsection{Publications}

Most of the results in this thesis are already published. 
Below, the relevant papers are listed 
in order of the corresponding sections.
The second item is the author's master thesis.

\begin{enumerate}
 \item with \textsc{E.\,Novak}.
      \newblock A universal algorithm
	    for multivariate integration.
      \newblock {\em Foundation of Computational Mathematics}, 17(4):895--916, 2017,
      see Section~\ref{sec:Frolov}.
 \item On the randomization of Frolov's algorithm for multivariate integration.
      \newblock Master thesis, Friedrich-Schiller-Universit\"at Jena, 2016,
      available on arXiv:1603.04637 [math.NA],
      see Section~\ref{sec:Frolov}.
 \item Tensor power sequences and the approximation of tensor product operators.
      \newblock {\em Journal of Complexity}, 44:30--51, 2018,
      see Section~\ref{sec:tensorproduct}.
 \item Optimal Monte Carlo methods for $L^2$-approximation.
      \newblock {\em Constructive Approximation}, 2018,
      \newblock https://doi.org/10.1007/s00365-018-9428-4,
      see Section~\ref{sec:OptimalMC}.
 \item Uniform recovery of high-dimensional $C^r$-functions.
      \newblock {\em Journal of Complexity}, 50:116--126, 2019,
      see Section~\ref{sec:ck functions}.
 \item with \textsc{D.\,Rudolf}.
      \newblock Recovery algorithms for high-dimensional rank one tensors.
      \newblock {\em Journal of Approximation Theory}, 
      237:17--29, 2019,
      see Section~\ref{sec:rank one}.
 \item On the dispersion of sparse grids.
      \newblock {\em Journal of Complexity}, 45:115--119, 2018,
      see Section~\ref{sec:dispersion}.
 \item with \textsc{A.\,Hinrichs, E.\,Novak, J.\,Prochno, and M.\,Ullrich}. 
      \newblock Random sections of ellipsoids and the power of random information.
      \newblock Preprint, available on arXiv:1901.06639 [math.FA],
      see Section~\ref{sec:random info hilbert}.
\end{enumerate}


\newpage

\pagestyle{scrplain}

\cleardoublepage
\tableofcontents
\newpage

\pagenumbering{arabic}
\setcounter{page}{1}
\pagestyle{scrheadings}
\automark[section]{chapter}

\cleardoublepage

\chapter{Problems and Algorithms}
\label{chap:problems and algorithms}

In most cases, a numerical problem is associated
with a solution operator $S:F\to G$. 
For example, we may think of the computation of integrals
$$
 S:F\to \IR, \quad S(f)=\int_0^1 f(x) ~dx
$$
for some input set $F$ of integrable functions on $[0,1]$.
Then a (deterministic) algorithm is just a particular
mapping $A:F\to G$, computing some
output $A(f)\in G$ for every input $f\in F$.
For example, a quadrature rule is a mapping
$$
 A:F\to \IR, \quad A(f)=\sum_{i=1}^n a_i f(x_i)
$$
for some number $n\in\IN$, weights $a_i\in\IR$
and nodes $x_i\in [0,1]$.
Each algorithm is assigned a cost and an error.
In one way or another, 
the error measures the distance
between the output $A(f)$ and the solution $S(f)$,
while the cost
measures the effort for computing the output.
In the above example, one could define
$$
 \cost(A)=n 
 \qquad\text{and}\qquad
 \err(A)=\sup_{f\in F} \abs{S(f)-A(f)}.
$$
We shall discuss various types of problems
that are defined via a solution operator
in Section~\ref{sec:usual problems}.
However, we also want to study the problem of dispersion
and the problem of finding a global maximizer, 
which are not associated with a solution operator.
For this reason, we first introduce an
abstract notion of a problem.

\section{General Notions}

\begin{defi}
\label{def:problem}
 A \emph{problem} $\mathcal{P}$ is a triple
 $(\mathcal{A},\err,\cost)$ consisting of a set $\mathcal{A}$
 and two functions
 $$
 \err: \mathcal{A}\to [0,\infty],
 \qquad
 \cost: \mathcal{A}\to \set{0,1,2,\hdots,\infty}.
 $$
 For $A\in\mathcal{A}$ the numbers $\err(A)$ and $\cost(A)$ are called the \emph{error}
 and the \emph{cost} of $A$.
 For every $n\in\IN$ the \emph{$n^{\rm th}$ minimal error} of $\mathcal{P}$
 is defined by
 $$
  \e(n,\mathcal{P})
  =\inf\set{\err(A) \mid A\in\mathcal{A}, \cost(A)\leq n}.
 $$
 For every $\varepsilon\geq 0$ the \emph{$\varepsilon$-complexity} of $\mathcal{P}$
 is defined by
 $$
  \comp(\varepsilon,\mathcal{P})
  =\min\set{\cost(A) \mid A\in\mathcal{A}, \err(A)\leq \varepsilon}.
 $$
\end{defi}

Many problems are inherited from a solution operator.
In this case, the set $\mathcal{A}$ consists of algorithms.
Before we turn to such problems,
let us have a look at an example of a geometric problem 
which is not defined via a solution operator,
the problem of dispersion.
A second example, 
the problem of finding a global maximizer,
will be described in Section~\ref{sec:optimization}.

\begin{ex}[The problem of dispersion, Part 1 of 3]
\label{ex:dispersion}
 For every $d\in \IN$ let $\mathcal{S}_d$ be the set of
 all finite subsets of $[0,1]^d$. Let $\mathcal{B}_d$ be
 the set of all boxes in $[0,1]^d$, that is, 
 $$
  \mathcal{B}_d
  =\set{\prod_{j=1}^d I_j\,\big|\, I_j\subset [0,1] \text{ interval}}.
 $$
 The dispersion of a point set $P\in\mathcal{S}_d$ 
 is the volume of the largest empty box amidst the point set,
 that is,
 $$
  \disp(P)
  =\sup\set{\lambda^d(B) \mid B\in \mathcal B_d, B\cap P =\emptyset }.
 $$
 We consider the problem $\mathcal{P}_d=(\mathcal{S}_d,\disp,\card)$.
 In this case, $\e(n,\mathcal{P}_d)$ is the minimal dispersion
 of $n$ points in $[0,1]^d$. 
 The complexity $\comp(\varepsilon,\mathcal{P}_d)$
 is the minimal cardinality of a $d$-dimensional point set
 achieving a dispersion of at most $\varepsilon$.
\end{ex}

A problem is called \emph{solvable}
if the $n^{\rm th}$ minimal error tends to 0
as $n$ tends to infinity.
Numerous classical investigations ask for
the speed of this convergence.


\begin{namedthm}{Example~\ref{ex:dispersion}}[Part 2 of 3]
 By dividing the unit cube into $(n+1)$ boxes
 of equal volume, we immediately see that the dispersion
 of $n$ points is at least $1/(n+1)$.
 On the other hand, Rote and Tichy \cite{RT96} 
 showed in 1996 that the dispersion of the
 first $n$ points of the Halton-Hammersely sequence
 is at most $2^{d-1}\pi_d /n$, where 
 $\pi_d$ is the product of the first $(d-1)$ primes.
 Hence,
 $$
  \e(n,\mathcal{P}_d) \asymp n^{-1},
 $$
 that is, the $n^{\rm th}$ minimal error of the problem
 of dispersion decays with polynomial order 1
 for all fixed $d\in\IN$.
 In particular, the problem of dispersion is solvable.
\end{namedthm}

The question for the speed of decay translates 
into the question
for the dependence of the complexity on $\varepsilon$.
Many problems, like the dispersion problem,
have some intrinsic dimension $d\in\IN$.
There is growing interest in the $d$-dependence
of the complexity.
Tractability asks for the behavior of 
the complexity as a function of both $\varepsilon$ and $d$.
We give some examples of tractability notions.
Note that the following list is far from complete.

\begin{defi}
 Consider a family of problems $\mathcal{P}_d$
 with index $d\in\IN$. The family
 \begin{itemize}
 \item is \emph{strongly polynomially tractable} 
  if there are constants $c,p>0$ such that 
  $\comp(\varepsilon,\mathcal{P}_d)\leq c\, \varepsilon^{-p}$ 
  for all $\varepsilon\in(0,1)$ and all $d\in\IN$;
 \item is \emph{polynomially tractable} 
  if there are constants $c,q,p>0$ such that 
  $\comp(\varepsilon,\mathcal{P}_d)\leq c \,\varepsilon^{-p} d^q$ 
  for all $\varepsilon\in(0,1)$ and all $d\in\IN$;
 \item is \emph{quasi-polynomially tractable} 
  if there are constants $c,t>0$ such that
  $$
  \comp(\varepsilon,\mathcal{P}_d)
  \leq c \exp\left(t(1+\ln(\varepsilon^{-1}))(1+\ln d)\right)
  $$
 for all $\varepsilon\in(0,1)$ and all $d\in\IN$;
 \item suffers from the \emph{curse of dimensionality} 
  if there is some $\varepsilon>0$, $c>0$ and $\alpha>1$ 
  such that $\comp(\varepsilon,\mathcal{P}_d)\geq c \alpha^d$
  for all $d\in\IN$.
 \end{itemize}
\end{defi}

Note that the term problem often refers to a whole family of problems.

\begin{namedthm}{Example~\ref{ex:dispersion}}[Part 3 of 3]
 The interest in the $d$-dependence of the 
 complexity of the problem of dispersion
 started much later.
 Aistleitner, Hinrichs, and Rudolf \cite{AHR17}
 were the first to show that the complexity
 increases with the dimension.
 In 2017, they proved
 $$
  \comp(\varepsilon,\mathcal{P}_d) 
  \geq (4\varepsilon)^{-1} (1-4\varepsilon)\log_2 d
 $$
 for all $d\in\IN$ and $\varepsilon\leq 1/4$.
 In 2018, Sosnovec \cite{So18} showed
 that this logarithmic dependence on $d$ is already optimal. 
 Not much later, Ullrich and Vybíral \cite{UV18} proved that
 $$
  \comp(\varepsilon,\mathcal{P}_d) 
  \leq \left\lceil 2^7\, 
  \varepsilon^{-2} \brackets{1 + \log_2\brackets{\varepsilon^{-1}}}^2  
  \,\log_2 d \right\rceil
 $$
 for all $d\geq 2$ and $\varepsilon<1/2$.
 In particular, the problem of dispersion is
 polynomially tractable, but not strongly
 polynomially tractable.
\end{namedthm}

\section{Important Types of Problems}
\label{sec:usual problems}

We now turn to problems
that are inherited from a solution operator.
In the following, let $F$ be a set
and let $(G,\dist)$ be a metric space.
We consider a mapping
$$
S: F \to G,
$$
which we call the \emph{solution operator}.
The set $F$ is called the \emph{input set},
$f\in F$ is called the \emph{input} or \emph{problem instance}
and $S(f)$ is called the \emph{solution}.

In this section, we discuss important types of algorithms,
error functions and cost functions
that are associated with $S$,
thereby defining various problems
in the sense of Definition~\ref{def:problem}.
We introduce basic concepts of information-based complexity.
For a detailed discussion
and a variety of further problems,
we refer the reader to \cite{TWW88} and the monographs
\cite{NW08,NW10,NW12}.

\subsection{Algorithms}
\label{sec:algorithms}

In this thesis, a (deterministic) algorithm
is nothing but a particular mapping $A:F\to G$.
It is described by the \emph{output} $A(f)$
belonging to each \emph{input} $f\in F$.

\begin{rem}
One may rightfully object that an actual algorithm
is not fully determined by its outputs.
However, we are only interested in the
error of the algorithm in comparison with its information cost.
These characteristics are already given by the 
input-output mapping $A$ itself,
see Section~\ref{sec:errors} and Section~\ref{sec:cost}.
If we wanted to talk about computational cost,
then we would have to describe an algorithm
by all the computational steps it performs.
\end{rem}

We assume that an algorithm can be decomposed into two parts.
The first is a mapping $N: F\to c_{00}$
that collects a finite amount of information
about the input.
Here $c_{00}$ is the union of all $\IR^n$ over $n\in\IN_0$.
The second is a mapping $\phi: N(F)\to G$
that uses this information to produce an output.
We now discuss these two parts.

The information mapping $N$ collects the information by taking several
measurements of the problem instance.
For different problems, different types of measurements may be executable.
Let $\Lambda$ be a class of real-valued functions $L:F\to \IR$. 
A functional $L\in\Lambda$ is called a \emph{measurement}, 
the number $L(f)\in \IR$ is called a \emph{piece of information} about $f$.\footnote{Analogously,
we may consider functionals $L:F\to\IC$ such that one piece of
of information is given by one complex number $L(f)\in\IC$.
For simplicity, we only discuss the $\IR$-valued case.}
We give two popular examples:
\begin{itemize}
 \item If $F$ consists of real-valued functions on a common domain $D$,
 we often consider the class $\Lambda=\lstd$ of function evaluations 
 $L(f)=f(x)$ for all $x\in D$,
 the class of \emph{standard information}.
 \item If $F$ is a subset of a normed space,
 we may allow the class $\Lambda=\lall$ of all continuous linear functionals,
 the class of \emph{linear information}.
\end{itemize}
A \emph{nonadaptive information mapping} based on $\Lambda$ 
is a mapping of the form
$$
 N_n: F\to \IR^n, 
 \quad 
 N_n(f)=(L_1(f),\hdots,L_n(f))
$$
for some $n\in\IN$ and measurements $L_1,\hdots,L_n\in\Lambda$.
That is, $N_n$ collects $n$ pieces of information 
about the problem instance.
We take the same measurements for every input.
In contrast, an adaptive information mapping 
may use the already collected pieces of information after each measurement 
to decide whether and how to take another measurement.
In general, a mapping $N:F\to c_{00}$ is called an 
\emph{information mapping} based on $\Lambda$ 
if there are
\begin{itemize}
 \item functionals $L_i:F\times \IR^{i-1} \to \IR$
 such that $L_i(\cdot,\mathbf y)\in\Lambda$ for all $\mathbf y\in\IR^{i-1}$, $i\in\IN$;
 \item a function $T:c_{00}\to\set{0,1}$,
 which we call the \emph{termination function};
\end{itemize}
such that for every $f\in F$ we have $N(f)=(y_1,\hdots,y_{n(f)})$
with
$$
 y_i=L_i(f,y_1,\hdots,y_{i-1})
 \quad\text{and}\quad 
 n(f)=\min\set{n\in\IN \mid T(y_1,\hdots,y_n)=0}.
$$
The family $((L_i)_{i\in\IN},T)$ is called a \emph{representation}
of the information mapping.
The information is called \emph{adaptive} if it is not nonadaptive.

To generate an output from the collected information, 
we allow any function $\phi: N(F)\to \IR$. 
Of course, this means that the computational cost to
obtain $\phi(\mathbf y)$ for $\mathbf y\in N(F)$ may be arbitrarily high.
For concrete algorithms, the function $\phi$
should be as simple as possible.

A mapping $A: F\to G$ is called a 
\emph{deterministic algorithm} based on $\Lambda$
if there is an information mapping 
$N:F\to c_{00}$ and a function $\phi:N(F)\to \IR$ 
such that $A=\phi \circ N$.
The pair $(\phi,N)$ is called a \emph{representation}
of the algorithm $A$.
It is said to be \emph{nonadaptive} if the information
mapping $N$ can be chosen to be nonadaptive.
Else, it is called \emph{adaptive}.
The class of all deterministic algorithms based on $\Lambda$
is denoted by 
$$
 \mathcal{A}[F,G,\Lambda,\mathrm{det}]. 
$$
The class of all nonadaptive deterministic algorithms based on $\Lambda$
is denoted by 
$$
 \mathcal{A}[F,G,\Lambda,\mathrm{det},\mathrm{nonada}].
$$
If $F$ and $G$ are subsets of linear spaces over $\IR$,
the algorithm may be \emph{linear}.
The class of all linear deterministic algorithms based on $\Lambda$
is denoted by 
$$
 \mathcal{A}[F,G,\Lambda,\mathrm{det},\mathrm{lin}].
$$
Note that every linear algorithm is nonadaptive and hence
$$
 \mathcal{A}[F,G,\Lambda,\mathrm{det},\mathrm{lin}]
 \subset
 \mathcal{A}[F,G,\Lambda,\mathrm{det},\mathrm{nonada}]
 \subset
 \mathcal{A}[F,G,\Lambda,\mathrm{det}].
$$

Let us turn to randomized algorithms.
Here, we assume that $F$ is equipped with a topology.
A \emph{randomized algorithm} based on $\Lambda$
is a family $(A^\omega)_{\omega\in\Omega}$ of
deterministic algorithms based on $\Lambda$ which is indexed
by a probability space $(\Omega,\mathcal{F},\IP)$ 
such that the mapping
$$
 F\times \Omega \to \IR,
 \quad
 (f,\omega) \mapsto \dist(S(f),A^\omega(f))
$$
is measurable.
The class of all such algorithms is denoted by
$$
 \mathcal{A}[F,G,\Lambda,\mathrm{ran}].
$$
The randomized algorithm is called \emph{nonadaptive} or \emph{linear}
if $A^\omega$ is nonadaptive or linear for almost every $\omega\in \Omega$.
A randomized algorithm is also referred to as
a \emph{Monte Carlo method}.
We use these terms interchangeably.
Randomized algorithms can be regarded as
a generalization of deterministic algorithms
since any deterministic algorithm may be viewed
as a family of deterministic algorithms $A^\omega$
that is independent of $\omega$.

\begin{rem}
 The definition of a randomized algorithm would still make sense
 if we skipped the property of measurability. 
 We prefer this definition since it simplifies the notion of
 the error of a randomized algorithm.
 Moreover, Theorem~\ref{thm:randomization useless in Hspace} 
 has only been proven for measurable algorithms.
\end{rem}

We now introduce different ways to measure
the error and cost of such algorithms.

\subsection{Errors}
\label{sec:errors}

We introduce the error criteria
that are used in this thesis.
The \emph{worst case error} of a deterministic algorithm $A:F\to G$
is defined as
$$
 \err(A,S,F,G,\mathrm{wc})
 = \sup_{f\in F}\, \dist(S(f),A(f)).
$$
It measures the maximal distance between
the output and the solution.
One may weaken this error criterion by
considering the average distance instead.
Given a probability measure $\mu$ on the input class $F$
we define the \emph{average case error}
$$
 \err(A,S,F,G,\mu)
 = \sqrt{\int_F \dist(S(f),A(f))^2~\d\mu(f)}
$$
provided that the error functional $f \mapsto \dist(S(f),A(f))$ is $\mu$-measurable.

The \emph{worst case error} of a randomized algorithm 
$(A^\omega)_{\omega\in\Omega}$ is defined as
$$
 \err((A^\omega)_{\omega\in\Omega},S,F,G,\mathrm{wc})
 = \sup_{f\in F}\sqrt{\IE\left[\dist(S(f),A(f))^2\right]}.
$$
That is, it measures the maximal root mean square of
the distance of the output and the solution.
Given a Borel probability measure $\mu$ on $F$
we also define the \emph{average case error} of
randomized algorithms as
$$
 \err((A^\omega)_{\omega\in\Omega},S,F,G,\mu)
 = \sqrt{\int_{F}\IE\left[\dist(S(f),A(f))^2\right]~\d\mu(f)}.
$$
One could easily replace the root mean square 
in the above definitions
by every other normalized moment of the distance.
Note that these errors coincide with the respective error
of a deterministic algorithm if the algorithm is independent of $\omega$.

To measure the quality of an information mapping,
we introduce the notion of the \emph{radius of information}.
The radius of an information mapping is the smallest error
which can be achieved with algorithms
that use this information mapping.
It can be defined for each of the above error criteria.
For instance, the radius of a deterministic
information mapping $N:F\to c_{00}$ in the worst case
setting is given by
$$
 \rad(N,S,F,G)
 = \inf_{\phi:c_{00}\to G} \err(\phi\circ N,S,F,G,\mathrm{wc}).
$$
Proposition~\ref{prop:radius global vs local} below explains 
why we call this quantity a radius.
Note that the radius of a 
subset $M$ of $G$ is given by
\begin{equation}
 \label{eq:radius of sets}
 \rad(M) = \adjustlimits\inf_{g\in G} \sup_{m\in M} \dist(g,m).
\end{equation}
An algorithm based on $N$ cannot distinguish
inputs with the same information.
Thus the optimal algorithm based on $N$
maps $f\in F$ to the center of 
the set $S(N^{-1}(\mathbf y))$ of all solutions
that are possible for the information $\mathbf y=N(f)$.
The radius of this set is called the
\emph{radius of information at $\mathbf y$}
and denoted by
$$
 r_{\mathbf y}(N,S,F,G)
 = \rad\brackets{S(N^{-1}(\mathbf y))}.
$$
For given information $\mathbf{y}$,
we cannot guarantee an error less than $r_{\mathbf y}(N,S,F,G)$.
This leads to the following result.

\begin{prop}[\cite{TW80}]
\label{prop:radius global vs local}
 Let $S:F\to G$ be a solution operator from a set $F$
 to a metric space $G$ and let $N:F\to c_{00}$
 be an information mapping. Then
 $$
  \rad(N,S,F,G)
  = \sup_{\mathbf y\in N(F)} r_{\mathbf y}(N,S,F,G).
 $$
\end{prop}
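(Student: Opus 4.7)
The plan is to establish the identity by proving both inequalities separately. The central observation is that any algorithm of the form $\phi\circ N$ is forced to assign the same output $\phi(\mathbf y)\in G$ to every input in the fiber $N^{-1}(\mathbf y)$, so its contribution to the worst-case error from that fiber is precisely the worst distance from $\phi(\mathbf y)$ to a point of $S(N^{-1}(\mathbf y))$. Partitioning $F$ along fibers of $N$ therefore reduces the global optimization over $\phi$ to a pointwise optimization in $G$ over each fiber, and the quantity $r_{\mathbf y}(N,S,F,G)$ is by definition what one gets from the best possible pointwise choice.

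For the direction $\rad(N,S,F,G)\geq \sup_{\mathbf y\in N(F)} r_{\mathbf y}(N,S,F,G)$, I would fix an arbitrary $\phi:c_{00}\to G$ and rewrite the worst-case error as a double supremum over fibers:
$$
 \err(\phi\circ N,S,F,G,\mathrm{wc})
 = \sup_{\mathbf y\in N(F)}\ \sup_{m\in S(N^{-1}(\mathbf y))} \dist\brackets{m,\phi(\mathbf y)}.
$$
For each fixed $\mathbf y$, the inner expression is bounded below by $\inf_{g\in G}\sup_{m\in S(N^{-1}(\mathbf y))}\dist(g,m)=r_{\mathbf y}(N,S,F,G)$ in view of \eqref{eq:radius of sets}. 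Taking the supremum over $\mathbf y\in N(F)$ and then the infimum over $\phi$ gives the inequality.

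For the reverse direction, let $R$ denote the right-hand side and fix an arbitrary $\eta>0$. Since $r_{\mathbf y}(N,S,F,G)\leq R$ for every $\mathbf y\in N(F)$, the definition of $\rad$ in \eqref{eq:radius of sets} supplies, for each such $\mathbf y$, a point $g_{\mathbf y}\in G$ with
$$
 \sup_{m\in S(N^{-1}(\mathbf y))} \dist(g_{\mathbf y},m)\ \leq\ r_{\mathbf y}(N,S,F,G)+\eta\ \leq\ R+\eta.
$$
Setting $\phi(\mathbf y):=g_{\mathbf y}$ for $\mathbf y\in N(F)$ and extending $\phi$ arbitrarily to $c_{00}$ produces an admissible mapping whose error, by the same fiberwise decomposition as above, is at most $R+\eta$. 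Hence $\rad(N,S,F,G)\leq R+\eta$, and letting $\eta\to 0$ closes the argument.

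The only delicate point I anticipate is the simultaneous selection of the near-optimal centers $g_{\mathbf y}$, which invokes the axiom of choice; this causes no difficulty here because the definition of $\rad(N,S,F,G)$ places no regularity (in particular no measurability) constraint on $\phi$. Apart from this, the proof is a bookkeeping exercise built entirely on the tautology that the worst-case error factors through the fibers of $N$.
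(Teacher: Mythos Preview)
Your proof is correct and follows the standard approach: decompose the worst-case error along the fibers $N^{-1}(\mathbf y)$, observe that any $\phi$ must pick a single center per fiber, and conclude by matching the fiberwise infima with an $\eta$-approximate selection. The paper does not give its own proof of this proposition; it merely states the result with a reference to \cite{TW80}, where precisely this argument appears.
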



%
%

\subsection{Cost}
\label{sec:cost}

In this thesis, the cost of an algorithm is given
by the amount of information 
that the algorithm uses about the problem instance,
that is, we study the \emph{information cost} of an algorithm. 
We do not study their computational cost or other cost models.
This is based on the assumption that collecting information 
usually consumes much more time than processing it:
while the information may be obtained 
from complicated subroutines, physical measurements
or even surveys,
it is usually processed by basic arithmetic operations.
Although this assumption is fulfilled in many examples,
it may not always be adequate.
Then we would have to define problems $(\mathcal{A},\err,\cost)$
with other cost functions.

We first define the cost of information mappings.
The cost of a nonadaptive information mapping $N$ based on $\Lambda$
is simply given by the number $n$ of measurements.
The definition of the cost of adaptive information mappings 
is not quite as indisputable,
since it may take a different number $n(f)$ of measurements
for different inputs $f\in F$.
We study the \emph{worst case cost} of an information mapping.
That is, given an information mapping $N:F\to c_{00}$
as defined in Section~\ref{sec:algorithms}, 
we take the maximum of the number $n(f)$ of measurements
over all possible inputs $f\in F$.
Note that the number $n(f)$ does not depend on the 
representation $((L_i)_{i\in\IN},T)$ of the information mapping $N$.
Hence, we define
$$
 \cost(N,F,\Lambda,\mathrm{wc})= \sup_{f\in F}\, n(f).
$$
Another approach would be to consider the
average number $n(f)$ of measurements with respect to
some measure $\mu$ on $F$, the average case cost of $N$.

We now define the cost of algorithms.
The \emph{worst case cost} of a deterministic algorithm 
$A\in \mathcal{A}[F,G,\Lambda,\mathrm{det}]$
is the worst case cost of the information mapping
in an optimal representation of $A$, that is,
$$
 \cost(A,F,\Lambda,\mathrm{wc})=\min\set{\cost(N,F,\Lambda,\mathrm{wc}) \mid (\phi,N)
 \text{ representation of } A}.
$$
Moreover, we define the \emph{worst case cost} of a randomized algorithm 
$(A^\omega)_{\omega\in\Omega}$ 
by
$$
 \cost((A^\omega)_{\omega\in\Omega},F,\Lambda,\mathrm{wc})
 =\sup_{\omega\in\Omega}\, \cost(A^{\omega},F,\Lambda,\mathrm{wc}).
$$
This is the cost of computing $A^\omega(f)$
for the worst input $f\in F$ 
and the worst realization $A^\omega$ of $A$.
Note that it is also common to consider the 
expectation over all realizations instead of the maximum.

\subsection{Resulting Problems}

We may now formally define the problems of our interest
that are inherited from a solution operator $S$.

\begin{defi}
Let $S:F\to G$ be an operator from a topological space $F$ to a metric space $G$
and let $\Lambda$ be a class of real-valued functions on $F$.
Let $\star\in\set{\mathrm{det},\mathrm{ran}}$,
$\circ\in\set{\emptyset,\mathrm{nonada},\mathrm{lin}}$ 
and $\triangle\in\set{\mu,\mathrm{wc}}$,
where $\mu$ is some probability measure on $F$.
Then we define the problem
$$
 \mathcal{P}[S,F,G,\Lambda,\star,\circ,\triangle]
 =(\mathcal{A},\err,\cost)
$$
of approximating $S$ with (nonadaptive/linear) deterministic/randomized
algorithms based on $\Lambda$ in the worst/average case setting by
\begin{align*}
 \mathcal{A}&=\mathcal{A}[F,G,\Lambda,\star,\circ], \\
 \err&=\err(\cdot,S,F,G,\triangle), \\
 \cost&=\cost(\cdot,F,\Lambda,\mathrm{wc}).
\end{align*}
\end{defi}

\begin{rem}
 Note that we always consider the worst case cost.
 The term average case only refers to the error criterion.
\end{rem}

\begin{rem}
 The setting is determined by the parameters
 $S$, $F$, $G$, $\Lambda$, $\mathrm{det}$ or $\mathrm{ran}$, 
 $\mathrm{wc}$ or $\mathrm{\mu}$,
 and possibly $\mathrm{nonada}$ or $\mathrm{lin}$.
 So far, we put all relevant parameters in the 
 definition of the problems,
 the classes of algorithms, and the error and cost functions.
 In what follows, a part of the setting will often
 be clear from the context.
 For instance, a whole chapter may be concerned with the
 same solution operator $S$.
 We usually skip the respective parameters in this case.
\end{rem}

Let us discuss some basic relations between
the minimal errors in
the different settings.
Obviously, we have the relation
$$
 \e(n,\mathcal{P}[S,F,G,\Lambda,\star,\circ,\mu])
 \leq
 \e(n,\mathcal{P}[S,F,G,\Lambda,\star,\circ,\mathrm{wc}])
$$
since the worst case error of an algorithm
is always at least as large as the average case error.
Moreover,
we have
\begin{equation*}
 \e(n,\mathcal{P}[S,F,G,\Lambda,\mathrm{ran},\circ,\triangle])
 \leq
 \e(n,\mathcal{P}[S,F,G,\Lambda,\mathrm{det},\circ,\triangle])
\end{equation*}
since the class of randomized algorithms is larger
than the class of deterministic algorithms. 
In fact, we even have equality in the average case setting,
that is, if $\mu$ is a Borel probability measure on
the topological space $F$, we have
\begin{equation}
 \label{eq:randomization useless in ac}
 \e(n,\mathcal{P}[S,F,G,\Lambda,\mathrm{ran},\mu])
 =
 \e(n,\mathcal{P}[S,F,G,\Lambda,\mathrm{det},\mu]).
\end{equation}
This means that randomization has no effect in the average case setting.
This is a simple consequence of Tonelli's theorem:
if $(A^\omega)_{\omega\in\Omega}$ is a randomized algorithm
with worst case cost $n$ or less, we have
\begin{multline*}
  \err\brackets{(A^\omega)_{\omega\in\Omega},\mathrm{\mu}}^2
  = \int_F \IE\brackets{\dist(S(f),A^{\omega}(f))^2}~\d\mu(f)\\
  = \IE\brackets{ \int_F \dist(S(f),A^{\omega}(f))^2~\d\mu(f)}
  = \IE\brackets{\err\brackets{A^{\omega},\mu}^2}.
 \end{multline*}
This means that there is a realization $A^{\omega}$ of the
randomized algorithm such that
 $$
  \err\brackets{A^{\omega},\mu} 
  \leq \err\brackets{(A^\omega)_{\omega\in\Omega},\mu}.
 $$
Since $A^{\omega}$ is a deterministic algorithm with cost $n$ or less,
this proves \eqref{eq:randomization useless in ac}.

In particular, we obtain the following theorem, which links 
the worst case error
of randomized algorithms and the average case error of
deterministic algorithms.
It is called \emph{Bakhvalov's technique} and is essential 
for proving lower bounds for the error of randomized algorithms.
We refer to \cite[Section~4.3.3]{NW08} for more details.

\begin{thm}[\cite{NW08}]
\label{thm:Bakhvalov technique}
 Let $S:F\to G$ be an operator from a topological space $F$ to a metric space $G$
 and let $\Lambda$ be a class of real-valued functions on $F$.
 For any Borel probability measure $\mu$ on $F$ and any $n\in\IN_0$, we have
 $$
  \e(n,\mathcal{P}[S,F,G,\Lambda,\mathrm{ran},\mathrm{wc}])
  \geq
  \e(n,\mathcal{P}[S,F,G,\Lambda,\mathrm{det},\mu]).
 $$
\end{thm}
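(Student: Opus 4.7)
The plan is to chain two simple inequalities: worst case dominates average case (for any measure $\mu$), and randomization gives no advantage in the average case setting, the latter being precisely the identity \eqref{eq:randomization useless in ac} established just above the theorem.

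First I would fix an arbitrary randomized algorithm $(A^\omega)_{\omega\in\Omega} \in \mathcal{A}[F,G,\Lambda,\mathrm{ran}]$ with worst case cost at most $n$. The key observation is the pointwise bound
$$
 \IE\brackets{\dist(S(f),A^\omega(f))^2} \leq \sup_{f\in F} \IE\brackets{\dist(S(f),A^\omega(f))^2} = \err((A^\omega)_{\omega\in\Omega},S,F,G,\mathrm{wc})^2
$$
for every $f\in F$. Integrating this inequality against the Borel probability measure $\mu$ yields
$$
 \err((A^\omega)_{\omega\in\Omega},S,F,G,\mu) \leq \err((A^\omega)_{\omega\in\Omega},S,F,G,\mathrm{wc}).
$$
Here I rely on the measurability built into the definition of a randomized algorithm to justify that the integral on the left is well-defined and the $\IE$--$\int$ interchange in \eqref{eq:randomization useless in ac} is legitimate.

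Next I would apply \eqref{eq:randomization useless in ac}: since the randomized minimal error in the average case setting equals the deterministic minimal error there, and since $(A^\omega)_{\omega\in\Omega}$ has worst case cost at most $n$, there exists (in fact, by the Tonelli argument preceding the theorem, one may take a particular realization $\omega$) a deterministic algorithm $A \in \mathcal{A}[F,G,\Lambda,\mathrm{det}]$ with $\cost(A,F,\Lambda,\mathrm{wc}) \leq n$ such that
$$
 \err(A,S,F,G,\mu) \leq \err((A^\omega)_{\omega\in\Omega},S,F,G,\mu).
$$

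Combining the two displays, every randomized algorithm of cost at most $n$ is matched in average case $\mu$-error by a deterministic algorithm of cost at most $n$. Taking the infimum over all such randomized algorithms on the right and over all such deterministic algorithms on the left gives the claimed inequality. The proof is essentially a bookkeeping exercise; the only mild subtlety is to ensure that the measurability hypothesis in the definition of a randomized algorithm suffices to make all the integrals in \eqref{eq:randomization useless in ac} meaningful, but this has already been set up carefully in the preceding discussion, so no obstacle of real substance remains.
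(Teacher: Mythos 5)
Your proposal is correct and matches the paper's own argument: the theorem is stated immediately after the paper establishes the inequality $\e(n,\mathcal{P}[\ldots,\mu])\leq\e(n,\mathcal{P}[\ldots,\mathrm{wc}])$ and the identity \eqref{eq:randomization useless in ac}, and is obtained (``In particular'') exactly by chaining these two facts as you do. The only cosmetic difference is that you work at the level of individual algorithms and then take infima, whereas the paper combines the two relations directly at the level of minimal errors, but the content is identical.
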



%

We finish this section with an example.
Note that many other examples are provided throughout this thesis.

\begin{ex}[An integration problem, Part 1 of 2]
\label{ex:Lipschitz}
 Assume that the function $f:[0,1]\to \IR$
 is known to be in the Lipschitz class
 $$
  F=\set{f:[0,1]\to \IR \mid 
  \forall (x,y)\in [0,1]^2: \abs{f(x)-f(y)}\leq \abs{x-y}}.
 $$
 The function itself, however, is unknown.
 We want to approximate the integral
 $$
  S(f)=\int_0^1 f(x)~\d x
 $$
 of the function up to a guaranteed error.
 To do so, we may request a finite number of function values
 using any deterministic scheme.
 The cost of an algorithm $A:F\to \IR$ is the maximal
 number of requested function values
 and its error is
 $$
  \err(A)=\sup_{f\in F} \abs{S(f)-A(f)}.
 $$
 In the above terms, we study the problem
 $
  \mathcal{P}[S,F,\IR,\lstd,\mathrm{det},\mathrm{wc}].
 $
\end{ex}
\noindent
We continue this example after gathering
some results on so-called linear problems.

\subsection{Linear Problems}
\label{sec:LPs}

We consider deterministic problems in the worst case setting.
Many of these problems are linear in the sense of
the following definition.

\begin{defi}
\label{def:LP}
 The problem $\mathcal{P}[S,F,G,\Lambda,\mathrm{det},\mathrm{wc}]$
 is called a \emph{linear problem} if
 \begin{itemize}
  \item $F$ is a nonempty, convex and symmetric subset of a normed space $\widetilde F$;
  \item $G$ is a normed space;
  \item $S:\widetilde F\to G$ is linear;
  \item $\Lambda$ is a class of continuous linear functionals.
 \end{itemize}
\end{defi}

We present some basic results on linear problems without proof.
We refer the reader to \cite[Section~4.2]{NW08} for further
details, proofs and references.
The first result says that the radius of a nonadaptive information
mapping $N:F\to \IR^n$
is already (almost) determined by its radius at zero.

\begin{thm}[\cite{NW08}]
\label{thm:radius at zero}
 Let $S,F,G$, and $\Lambda$ describe a linear problem
 and let $N$ be a nonadaptive
 information mapping based on $\Lambda$. Then
 $$
  r_{\mathbf 0}(N,S,F,G)
  \leq
  \rad(N,S,F,G)
  \leq
  2 r_{\mathbf 0}(N,S,F,G)
 $$
\end{thm}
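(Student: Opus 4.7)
The plan is to derive both inequalities from a careful exploitation of the linear and convex symmetric structure, combined with Proposition~\ref{prop:radius global vs local}. First I would record a basic observation: since $F$ is nonempty, convex, and symmetric, it contains $0$ (take any $f\in F$ and form $(f+(-f))/2$), and hence $\mathbf 0=N(0)\in N(F)$. In particular, $r_{\mathbf 0}(N,S,F,G)$ is actually an element of the supremum $\sup_{\mathbf y\in N(F)} r_{\mathbf y}(N,S,F,G)$, so the lower bound
$$
 r_{\mathbf 0}(N,S,F,G)\ \leq\ \rad(N,S,F,G)
$$
follows immediately from Proposition~\ref{prop:radius global vs local}.

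For the upper bound, the key idea is to translate the fibre over $\mathbf y$ back to the fibre over $\mathbf 0$. Fix $\mathbf y\in N(F)$ and pick any $f_0\in F$ with $N(f_0)=\mathbf y$. For an arbitrary $f\in F$ with $N(f)=\mathbf y$, symmetry of $F$ gives $-f_0\in F$, and convexity gives $h:=(f-f_0)/2\in F$. Since $N$ is linear (every $L_i$ is a continuous linear functional and $N$ is nonadaptive), we have $N(h)=0$, i.e.\ $h\in F\cap\ker N$. Hence
$$
 \Xnorm{S(f)-S(f_0)}{G}\ =\ 2\,\Xnorm{S(h)}{G}
 \ \leq\ 2\sup_{g\in F\cap\ker N}\Xnorm{S(g)}{G}.
$$

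Now I would identify the right-hand supremum as $r_{\mathbf 0}(N,S,F,G)$. The set $M:=S(F\cap\ker N)$ is the image of a convex, symmetric set under the linear map $S$, so it is itself convex and symmetric around $0$ in $G$. For any such set, the radius (in the sense of \eqref{eq:radius of sets}) equals $\sup_{m\in M}\Xnorm{m}{G}$: the choice of center $0$ shows $\rad(M)\leq \sup_{m\in M}\Xnorm{m}{G}$, and conversely for any candidate center $g\in G$ and any $m\in M$ we have $-m\in M$, so $\sup_{m\in M}\Xnorm{g-m}{G}\geq \tfrac12(\Xnorm{g-m}{G}+\Xnorm{g+m}{G})\geq \Xnorm{m}{G}$. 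Consequently the displayed estimate gives $\Xnorm{S(f)-S(f_0)}{G}\leq 2\,r_{\mathbf 0}(N,S,F,G)$ for every $f$ in the fibre over $\mathbf y$, so choosing $S(f_0)$ as center in the definition of the radius yields $r_{\mathbf y}(N,S,F,G)\leq 2\,r_{\mathbf 0}(N,S,F,G)$. Taking the supremum over $\mathbf y\in N(F)$ and applying Proposition~\ref{prop:radius global vs local} finishes the proof.

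The only conceptual step that requires care is the identification $\rad(M)=\sup_{m\in M}\Xnorm{m}{G}$ for a symmetric set $M\subset G$; everything else is a direct consequence of linearity of $N$ and $S$ together with convex symmetry of $F$. So I do not expect a real obstacle here, only the need to justify that symmetry cleanly.
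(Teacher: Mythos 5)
Your proof is correct and is precisely the standard translation argument from [NW08] (which the paper cites without reproducing a proof): the lower bound follows from $\mathbf 0\in N(F)$ together with Proposition~\ref{prop:radius global vs local}, and the upper bound follows by centering each fibre $S(N^{-1}(\mathbf y))$ at $S(f_0)$ and using convexity/symmetry of $F$ and linearity of $N$ and $S$ to bound the diameter by $2\,r_{\mathbf 0}$. The intermediate fact that $r_{\mathbf 0}(N,S,F,G)=\sup_{f\in F:\,N(f)=\mathbf 0}\Xnorm{Sf}{G}$, which you derive from the symmetry of $M=S(F\cap\ker N)$, is exactly the identity stated in the paper immediately after the theorem, so your justification of it fills in a detail the paper leaves to the reader.
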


It is easy to check that the radius of $N$ at zero satisfies
$$
 r_{\mathbf 0}(N,S,F,G)=\sup_{f\in F: N(f)=\mathbf{0}} \Xnorm{Sf}{G}.
$$
An important consequence of the previous theorem
is that adaption is not necessary for linear problems.
If $N:F\to c_{00}$ is an adaptive information mapping
as defined in Section~\ref{sec:algorithms},
we define a corresponding nonadaptive information mapping
$N^{\rm{non}}:F\to \IR^n$ by setting $n=n(0)$ and
$$
 N^{\rm{non}}(f)=(L_1(f),L_2(f,0),\hdots,L_n(f,0,\hdots,0)).
$$
This means that the nonadaptive information $N^{\rm{non}}$ 
takes the same measurements for every input
and these measurements are the same as for the
adaptive information $N$ for the input zero. 
By Proposition~\ref{prop:radius global vs local} and
Theorem~\ref{thm:radius at zero}, we obtain
$$
 \rad(N^{\rm{non}},S,F,G)
  \leq
  2 r_{\mathbf 0}(N^{\rm{non}},S,F,G) 
  = 2 r_{\mathbf 0}(N,S,F,G)
  \leq 2\rad(N,S,F,G).
$$
Clearly the worst case cost of $N^{\rm{non}}$
is bounded above by the worst case cost of $N$.
In particular, we may loose a factor 
of at most 2 if we study the error
of nonadaptive algorithms in comparison
to arbitrary algorithms \cite[Section~4.2.1]{NW08}.

\begin{cor}[\cite{NW08}]
\label{cor:adaption does not help}
 Let $S,F,G$ and $\Lambda$ describe a linear problem.
 For every information mapping $N:F\to c_{00}$
 the nonadaptive information mapping $N^{\rm{non}}:F\to\IR^n$
 satisfies
 $$
  \rad(N^{\rm{non}},S,F,G)
  \leq
  2 \rad(N,S,F,G).
 $$
 In particular, for all $n\in\IN$, we have
 $$
 \e\brackets{n,\mathcal{P}[S,F,G,\Lambda,\mathrm{det},\mathrm{wc},\mathrm{nonada}]}
 \leq 2 \e\brackets{n,\mathcal{P}[S,F,G,\Lambda,\mathrm{det},\mathrm{wc}]}.
 $$
\end{cor}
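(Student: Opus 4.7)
The plan is to follow the construction of $N^{\rm non}$ already sketched in the paragraph preceding the corollary and then apply the two ingredients that have just been recorded: Proposition~\ref{prop:radius global vs local} (which bounds the global radius by the pointwise radius) and Theorem~\ref{thm:radius at zero} (which, for linear problems, bounds the radius of a nonadaptive information mapping by twice its radius at zero).

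First I would fix an information mapping $N$ with representation $((L_i)_{i\in\IN},T)$, set $n=n(0)$ (the number of queries $N$ makes on the input $f=0$), and define
$$
 N^{\rm non}(f)=(L_1(f),L_2(f,0),\ldots,L_n(f,0,\ldots,0)),
$$
so that $N^{\rm non}$ is a nonadaptive information mapping of worst case cost at most $n \le \cost(N,F,\Lambda,\mathrm{wc})$. The key technical step is to verify the set equality
$$
 \{f\in F\mid N(f)=\mathbf 0\}=\{f\in F\mid N^{\rm non}(f)=\mathbf 0\}.
$$
This follows by induction on the index $i$: if $N^{\rm non}(f)=\mathbf 0$, then $y_1=L_1(f)=0$, and once $y_1=\ldots=y_{i-1}=0$ one has $y_i=L_i(f,0,\ldots,0)=0$ by assumption, so the adaptive procedure on $f$ produces exactly the same sequence of zeros (and hence the same termination index $n=n(0)$) as on the input $0$; the reverse inclusion is proved identically.

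Next I would use this set equality together with the observation that, since each $L_i(\cdot,\mathbf 0_{i-1})$ is linear, $F$ is symmetric and convex, and $S$ is linear, the radius of information at zero reduces to
$$
 r_{\mathbf 0}(N,S,F,G)=\sup_{\substack{f\in F\\ N(f)=\mathbf 0}}\Xnorm{Sf}{G}
$$
as recorded just after Theorem~\ref{thm:radius at zero}, with the same formula valid for $N^{\rm non}$. Hence $r_{\mathbf 0}(N^{\rm non},S,F,G)=r_{\mathbf 0}(N,S,F,G)$. Combining this with Proposition~\ref{prop:radius global vs local} (for $N$) and Theorem~\ref{thm:radius at zero} (for $N^{\rm non}$) gives the chain
$$
 \rad(N^{\rm non},S,F,G)\le 2\,r_{\mathbf 0}(N^{\rm non},S,F,G)=2\,r_{\mathbf 0}(N,S,F,G)\le 2\,\rad(N,S,F,G),
$$
which is the first claim.

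For the consequence on minimal errors, given any $A=\phi\circ N\in\mathcal A[F,G,\Lambda,\mathrm{det}]$ with worst case cost at most $n$, the associated $N^{\rm non}$ has cost at most $n$ as well, and by the above its radius is bounded by $2\,\rad(N,S,F,G)\le 2\,\err(A,S,F,G,\mathrm{wc})$; taking an infimum over $A$ and choosing any algorithm realizing (or approximating) the radius of $N^{\rm non}$ yields the stated factor-$2$ inequality between the minimal errors. The only delicate point in the whole argument is the set-theoretic identity above — once that is established, the rest is a clean assembly of the two quoted results and the definition of worst case cost.
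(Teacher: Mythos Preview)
Your proof is correct and follows exactly the same route as the paper: establish $r_{\mathbf 0}(N^{\rm non})=r_{\mathbf 0}(N)$ via the kernel identity, then chain Theorem~\ref{thm:radius at zero} (applied to the nonadaptive $N^{\rm non}$) with Proposition~\ref{prop:radius global vs local} (applied to $N$) to obtain $\rad(N^{\rm non})\le 2r_{\mathbf 0}(N^{\rm non})=2r_{\mathbf 0}(N)\le 2\rad(N)$. You supply more detail than the paper on the set equality $\{f:N(f)=\mathbf 0\}=\{f:N^{\rm non}(f)=\mathbf 0\}$, which the paper takes for granted, but the structure is identical.
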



In many cases
we do not even loose the factor $2$.
In addition,
it turns out
that linear algorithms are optimal
in the very same cases \cite[Section~4.2.2]{NW08}.
In the following theorem, $\mathcal{B}(X)$ and $\mathcal{C}(X)$ are the spaces
of bounded respectively continuous real valued functions on $X$.

\begin{thm}[\cite{NW08}]
\label{thm:linear algorithms are optimal}
 Let $S,F,G$ and $\Lambda$ describe a linear problem.
 Assume that one of the following conditions is satisfied:
 \begin{itemize}
  \item $G=\IR$ or $G=\mathcal{B}(X)$ for some set $X$ or $G$ is some $L^\infty$-space;
  \item $F$ is the unit ball of a pre-Hilbert space $\widetilde F$;
  \item $G=\mathcal{C}(X)$ for some compact Hausdorff space $X$
  and $S$ is compact.
 \end{itemize}
 Then every information mapping $N:F\to c_{00}$
 yields a nonadaptive information mapping $N^{\rm{non}}:F\to\IR^n$ with
 $$
  \rad(N^{\rm{non}},S,F,G)
  \leq \rad(N,S,F,G).
 $$
 Moreover, the nonadaptive information satisfies
 $$
  \rad(N^{\rm{non}},S,F,G)
  = r_{\mathbf 0}(N^{\rm{non}},S,F,G)
  = \inf_{\phi\, \textrm{\emph{linear}}} \err\brackets{\phi\circ N^{\rm{non}}}.
 $$
\end{thm}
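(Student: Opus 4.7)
The plan is to unify the three cases by reducing everything to one key claim: in each of the settings, there exists a linear map $\phi:\IR^n\to G$ such that
\begin{equation*}
\sup_{f\in F}\, \Xnorm{S(f)-\phi(N^{\rm non}(f))}{G}\,\leq\, r_{\mathbf 0}(N^{\rm non},S,F,G).
\end{equation*}
Once this is shown, the chain of inequalities closes: since $\phi\circ N^{\rm non}$ is an admissible algorithm, the left-hand side bounds $\rad(N^{\rm non})$ from above; combined with the trivial lower bound $r_{\mathbf 0}\leq \rad$ this gives $\rad(N^{\rm non})=r_{\mathbf 0}(N^{\rm non})=\inf_{\phi\text{ linear}}\err(\phi\circ N^{\rm non})$. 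The identity $r_{\mathbf 0}(N^{\rm non})=r_{\mathbf 0}(N)$ is immediate from the construction in Corollary~\ref{cor:adaption does not help}, because $\{f:N(f)=\mathbf 0\}=\{f:N^{\rm non}(f)=\mathbf 0\}$ (on this set the recursion defining $N(f)$ produces the same zeros as the nonadaptive replacement). Finally $r_{\mathbf 0}(N)\leq \rad(N)$ holds by Proposition~\ref{prop:radius global vs local}, so everything collapses to the key claim.

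For the scalar target $G=\IR$, I would invoke Hahn--Banach. Let $p$ be the Minkowski functional of $F$; by definition $|S(h)|\leq r_{\mathbf 0}(N^{\rm non})\,p(h)$ for every $h\in\ker(N^{\rm non})$. Extend $S|_{\ker(N^{\rm non})}$ to a linear functional $\tilde T$ on $\widetilde F$ dominated by $r_{\mathbf 0}(N^{\rm non})\,p$. Since $S-\tilde T$ vanishes on $\ker(N^{\rm non})$, it factors as $\phi\circ N^{\rm non}$ for a linear $\phi$ on $N^{\rm non}(\widetilde F)$, extended arbitrarily to $\IR^n$. The resulting bound $|S(f)-\phi(N^{\rm non}(f))|=|\tilde T(f)|\leq r_{\mathbf 0}(N^{\rm non})$ for $f\in F$ is exactly what is needed. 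For $G=\mathcal B(X)$ or an $L^\infty$-space one applies the scalar argument componentwise: for each $x\in X$ (or a.e.\ $x$) set $S_x=\delta_x\circ S$, note that $r_{\mathbf 0}(N^{\rm non},S_x)\leq r_{\mathbf 0}(N^{\rm non},S)$, and define $\phi(\mathbf y)(x):=\phi_x(\mathbf y)$. The remaining work is to ensure boundedness (resp.\ measurability) of $x\mapsto \phi(\mathbf y)(x)$; this can be arranged by exploiting the freedom in the Hahn--Banach extension, for instance by selecting a common extension on a finite-dimensional quotient.

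For the pre-Hilbert case, the argument is more transparent: let $P$ denote the orthogonal projection of $\widetilde F$ onto $(\ker N^{\rm non})^\perp$. Since $N^{\rm non}$ is injective on this subspace, setting $\phi(N^{\rm non}(w)):=S(w)$ for $w\in(\ker N^{\rm non})^\perp$ defines a linear $\phi$ on $N^{\rm non}(\widetilde F)$, which I extend linearly to $\IR^n$. Then $\phi(N^{\rm non}(f))=S(Pf)$, so $\Xnorm{S(f)-\phi(N^{\rm non}(f))}{G}=\Xnorm{S((I-P)f)}{G}$; since $(I-P)f\in\ker N^{\rm non}$ and $\Vert(I-P)f\Vert\leq\Vert f\Vert\leq 1$, the right-hand side is bounded by $r_{\mathbf 0}(N^{\rm non})$.

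The third case, $G=\mathcal C(X)$ with $S$ compact, is the one I expect to be the main obstacle. The pointwise Hahn--Banach extension used for $\mathcal B(X)$ produces a bounded function $x\mapsto \phi(\mathbf y)(x)$ but not a continuous one. The plan is to exploit compactness of $S$: the set $S(F)$ is relatively compact and hence equicontinuous in $\mathcal C(X)$, so the family of scalar problems $S_x$ varies continuously in $x$. One can then either approximate $X$ by finite nets and pass to the limit along a uniformly convergent subsequence of piecewise extensions, or invoke a Michael-type continuous selection to obtain an extension $\tilde T$ whose associated $\phi$ lands in $\mathcal C(X)$. This is the delicate step, and all parameter-dependent Hahn--Banach constructions should be organized so that the resulting $\phi$ depends linearly on $\mathbf y$ and continuously on $x$ simultaneously.
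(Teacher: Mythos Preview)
The paper does not prove this theorem; it is quoted from \cite{NW08} (Section~4.2.2 there) without proof, so there is no argument in the paper to compare against. Your sketch is in fact the classical route to these results: the scalar case is Smolyak's lemma via Hahn--Banach, the pre-Hilbert case is the orthogonal-projection argument, and the $\mathcal B(X)$/$L^\infty$ and $\mathcal C(X)$ cases are obtained by handling the target pointwise.

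Your reduction to the key claim and the derivation of the chain $r_{\mathbf 0}(N)\leq \rad(N)$, $r_{\mathbf 0}(N^{\rm non})=r_{\mathbf 0}(N)$, etc., are correct; in particular your observation that $\{f:N(f)=\mathbf 0\}=\{f:N^{\rm non}(f)=\mathbf 0\}$ is right once one notes that the termination function sees only zeros in both cases. The scalar and pre-Hilbert arguments are complete as written. The genuine soft spot is the passage from $G=\IR$ to $G=\mathcal B(X)$ or $L^\infty$: your phrase ``selecting a common extension on a finite-dimensional quotient'' hides the real issue, which is that the pointwise Hahn--Banach extensions $\phi_x$ must be chosen so that $x\mapsto \phi_x(\mathbf y)$ is bounded (resp.\ measurable) for \emph{every} $\mathbf y\in\IR^n$, not just for $\mathbf y\in N^{\rm non}(F)$, if you want $\phi$ to be a linear map into $G$. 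One clean way to do this is to pick a basis $f_1,\dots,f_n$ of a complement of $\ker N^{\rm non}$ in $\widetilde F$ and set $\phi_x(N^{\rm non}f_j)=S_x(f_j)$, which forces boundedness and linearity simultaneously; but then one must check that this particular $\phi$ still satisfies the error bound, which is where the actual work lies. For $\mathcal C(X)$ with $S$ compact you correctly flag continuity of $x\mapsto\phi_x$ as the obstacle; the argument in \cite{NW08} goes through an approximation of $S$ by finite-rank operators rather than a Michael selection, which is somewhat more concrete than what you outline.
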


This leads to a very useful formula
for the $n^{\rm th}$ minimal worst case error.

\begin{thm}[\cite{NW08}]
\label{thm:error formula}
 Let $\mathcal{P}=(\mathcal{A},\err,\cost)$ be a linear problem
 given by $S,F,G$ and $\Lambda$
 such that one of the conditions in Theorem~\ref{thm:linear algorithms are optimal} holds.
 Then, for every $n\in\IN$,
 $$
  \e(n,\mathcal{P})
  = \inf_{\substack{A \text{ \emph{linear}}\\\cost(A)\leq n}} \err(A)
  =\adjustlimits\inf_{N\in\Lambda^n} \sup_{f\in F:\, N(f)=\mathbf{0}} \Xnorm{Sf}{G}.
 $$
\end{thm}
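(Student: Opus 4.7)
The plan is to chain together Proposition~\ref{prop:radius global vs local}, Corollary~\ref{cor:adaption does not help}, and Theorem~\ref{thm:linear algorithms are optimal} to reduce from arbitrary (possibly adaptive, nonlinear) algorithms all the way down to linear algorithms using nonadaptive information, and to identify the resulting infimum with a supremum over the kernel of the information mapping.

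First I would bound $\e(n,\mathcal{P})$ from below by the third quantity. Given any $A\in\mathcal{A}$ with $\cost(A)\leq n$, pick a representation $(\phi,N)$ of $A$ whose information mapping $N$ has worst case cost $\leq n$. Since $\err(A)\geq\rad(N,S,F,G)$ by the definition of the radius, and since one of the conditions in Theorem~\ref{thm:linear algorithms are optimal} is assumed, there exists a nonadaptive $N^{\rm non}\in\Lambda^{n(0)}\subset\Lambda^n$ (pad with trivial functionals if $n(0)<n$) such that
$$
 \rad(N^{\rm non},S,F,G)\leq \rad(N,S,F,G)\leq \err(A).
$$
By the same theorem, $\rad(N^{\rm non},S,F,G)=r_{\mathbf 0}(N^{\rm non},S,F,G)=\sup_{f\in F:\, N^{\rm non}(f)=\mathbf 0}\Xnorm{Sf}{G}$. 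Taking the infimum over $A$ on the left shows that $\e(n,\mathcal{P})$ dominates the third expression.

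Next I would show that the second quantity (infimum over linear algorithms) dominates $\e(n,\mathcal{P})$, which is immediate since linear algorithms form a subclass of the algorithms allowed in the definition of $\e(n,\mathcal{P})$. To close the loop it remains to bound the second quantity from above by the third one. For any $N\in\Lambda^n$ with $\rad(N,S,F,G)$ close to the infimum, Theorem~\ref{thm:linear algorithms are optimal} yields that the linear approximation error equals the radius, i.e.\
$$
 \inf_{\phi\text{ linear}}\err(\phi\circ N) = \rad(N,S,F,G) = \sup_{f\in F:\, N(f)=\mathbf 0}\Xnorm{Sf}{G},
$$
and each such $\phi\circ N$ is a linear algorithm of cost at most $n$. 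Taking the infimum over $N\in\Lambda^n$ gives that the second quantity is bounded above by the third. Combining the three inequalities yields the required equality.

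The only subtle step I expect is the bookkeeping around cost: the worst case cost of an adaptive information mapping can in principle exceed $n(0)$, but since $N^{\rm non}$ only uses the measurements taken along the input $0$ it lies in $\Lambda^{n(0)}\subset\Lambda^n$, so the cost condition is preserved. Everything else is an unpacking of the cited results together with the observation that the double infimum $\inf_{N\in\Lambda^n}\inf_{\phi\text{ linear}}\err(\phi\circ N)$ is precisely the infimum over all linear algorithms of cost at most $n$.
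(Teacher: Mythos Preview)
The paper does not give its own proof of this theorem; it is quoted from \cite{NW08} and stated without argument. Your derivation is correct and is exactly the kind of chaining of Theorem~\ref{thm:linear algorithms are optimal} with the definitions that the reference carries out. One cosmetic remark: you list Proposition~\ref{prop:radius global vs local} and Corollary~\ref{cor:adaption does not help} in your plan, but your actual argument only uses Theorem~\ref{thm:linear algorithms are optimal} (which already subsumes the relevant content of both), together with the identity $r_{\mathbf 0}(N)=\sup_{f\in F:\,N(f)=\mathbf 0}\Xnorm{Sf}{G}$ and the trivial inclusion of linear algorithms in $\mathcal A$. The padding step is fine: since $F$ is convex and symmetric we have $0\in F$, so $n(0)\le n$, and repeating one of the functionals of $N^{\rm non}$ (or, in the degenerate case $n(0)=0$, inserting any element of $\Lambda$) produces an element of $\Lambda^n$ without increasing the radius at zero.
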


\begin{namedthm}{Example~\ref{ex:Lipschitz}}[Part 2 of 2]
The problem of integrating Lipschitz-functions is linear.
The target space is $\IR$.
Hence, linear algorithms are optimal
and we only need to consider algorithms of the form
$$
 A_n:F\to \IR, \quad A_n(f)=\sum_{i=1}^n a_i f(x_i)
$$
with some $n\in\IN$, weights $a_i\in\IR$
and nodes $x_i\in[0,1]$.
With the help of Theorem~\ref{thm:error formula}
it is easily verified that
$$
 \e(n,\mathcal{P})=\frac{1}{4n}
$$
and that the minimal error is achieved 
by the algorithm $A_n$ if we choose constant weights $a_i=1/n$
and equidistant nodes $x_i=\frac{2i-1}{2n}$ for $i=1,\hdots,n$.
\end{namedthm}

\subsubsection{Linear Problems over Hilbert Spaces}

We finish this section with linear problems
over Hilbert spaces based on $\lall$.
We assume that $F$ is the unit ball of a Hilbert space $H$
and that $G$ is another Hilbert space.
Let $S:H\to G$ be a compact linear operator.

The operator $W=S^*S:H\to H$ is positive and compact.
Hence, it admits a finite or countable orthonormal basis 
$\mathcal{B}$ of $\ker(S)^\perp$ consisting of eigenvectors
$b\in\mathcal{B}$ to eigenvalues
$$
 \lambda(b) = \Xscalar{Wb}{b}{H}= \Xnorm{Sb}{G}^2 > 0
.$$
For any $f\in H$ we have the relation
$$
 S(f) = \sum_{b\in\mathcal{B}} \Xscalar{f}{b}{H} Sb.
$$
The square-roots of the eigenvalues of $W$ are called singular values of $S$.
Let $\sigma_n$ be the $n^{\rm th}$ largest singular value if $n\leq\abs{\mathcal{B}}$. 
Else, let $\sigma_n=0$.
We consider the linear algorithm
$$
 A_n:F\to G, \quad
 A_n(f)= \sum_{b\in\mathcal{B}(n)} \Xscalar{f}{b}{H} Sb,
$$
where $\mathcal{B}(n)$ consists of all $b\in\mathcal{B}$ 
that satisfy $\Xnorm{S b}{G}>\sigma_{n+1}$.
This algorithm is optimal among all algorithms with
cost $n$ or less~\cite[Section~4.2.3]{NW08}.

\begin{thm}[\cite{NW08}]
 \label{thm:LPs over Hilbert spaces}
 The algorithm $A_n$ satisfies $\cost(A_n)\leq n$ and
 $$
  \err(A_n)=\e(n,\mathcal{P}[S,F,G,\lall,\mathrm{det},\mathrm{wc}])
  =\sigma_{n+1}.
 $$
\end{thm}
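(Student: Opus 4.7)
The plan is to prove the two equalities $\cost(A_n)\leq n$, $\err(A_n)=\sigma_{n+1}$, and $\e(n,\mathcal{P})=\sigma_{n+1}$ by establishing matching upper and lower bounds on the $n^{\rm th}$ minimal error. Since $F$ is the unit ball of a Hilbert space, the second bullet of Theorem~\ref{thm:linear algorithms are optimal} applies, so Theorem~\ref{thm:error formula} gives
$$
 \e(n,\mathcal{P})
 =\adjustlimits\inf_{N\in(\lall)^n}\sup_{f\in F:\,N(f)=\mathbf 0}\Xnorm{Sf}{G}.
$$
I will use this representation for the lower bound and analyze $A_n$ directly for the upper bound.

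For the upper bound, I would first observe that the measurements used by $A_n$ are the continuous linear functionals $f\mapsto\Xscalar{f}{b}{H}$ for $b\in\mathcal{B}(n)$. Since $\mathcal{B}(n)$ collects those basis vectors with $\Xnorm{Sb}{G}>\sigma_{n+1}$ and there are at most $n$ such eigenvectors by the ordering of singular values, $\cost(A_n)\leq n$. For the error, I exploit that the images $Sb$ are pairwise orthogonal in $G$: indeed $\Xscalar{Sb}{Sb'}{G}=\Xscalar{Wb}{b'}{H}=\lambda(b)\Xscalar{b}{b'}{H}$, which vanishes for $b\neq b'$, and $\Xnorm{Sb}{G}^2=\lambda(b)$. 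Writing the orthogonal expansion of $f\in F$ with respect to $\mathcal{B}$ and subtracting the partial sum $A_n(f)$ yields
$$
 \Xnorm{Sf-A_n(f)}{G}^2
 =\sum_{b\in\mathcal{B}\setminus\mathcal{B}(n)}|\Xscalar{f}{b}{H}|^2\Xnorm{Sb}{G}^2
 \leq \sigma_{n+1}^2\sum_{b\in\mathcal{B}}|\Xscalar{f}{b}{H}|^2
 \leq \sigma_{n+1}^2.
$$
Taking the supremum over $f\in F$ gives $\err(A_n)\leq\sigma_{n+1}$, hence $\e(n,\mathcal{P})\leq\sigma_{n+1}$.

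For the lower bound, I would fix an arbitrary $N=(L_1,\dots,L_n)\in(\lall)^n$ and exhibit an admissible $f\in F$ with $N(f)=\mathbf 0$ and $\Xnorm{Sf}{G}\geq\sigma_{n+1}$. If $|\mathcal{B}|\leq n$ then $\sigma_{n+1}=0$ and nothing is to show, so assume there are at least $n+1$ basis vectors $b_1,\dots,b_{n+1}$ corresponding to the $n+1$ largest singular values $\sigma_1\geq\dots\geq\sigma_{n+1}$. On the $(n+1)$-dimensional subspace $V=\vspan\{b_1,\dots,b_{n+1}\}$, the map $f\mapsto(L_1(f),\dots,L_n(f))$ has a non-trivial kernel by dimension count. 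Pick $f=\sum_{i=1}^{n+1}c_ib_i$ in this kernel with $\sum c_i^2=1$; then $f\in F$ and, using the orthogonality of the $Sb_i$,
$$
 \Xnorm{Sf}{G}^2=\sum_{i=1}^{n+1}c_i^2\sigma_i^2\geq\sigma_{n+1}^2\sum_{i=1}^{n+1}c_i^2=\sigma_{n+1}^2.
$$
Since $N$ was arbitrary, Theorem~\ref{thm:error formula} gives $\e(n,\mathcal{P})\geq\sigma_{n+1}$, which combined with the previous estimate proves $\err(A_n)=\e(n,\mathcal{P})=\sigma_{n+1}$.

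The argument is essentially routine linear algebra once Theorem~\ref{thm:linear algorithms are optimal} and Theorem~\ref{thm:error formula} are in hand; the only genuinely delicate point is the bookkeeping around possibly infinite or truncated bases and ties among singular values, which is handled by treating $\sigma_{n+1}=0$ separately and using the convention that $\mathcal{B}(n)$ is defined by strict inequality $\Xnorm{Sb}{G}>\sigma_{n+1}$ (so that $|\mathcal{B}(n)|\leq n$ even in the presence of multiplicities).
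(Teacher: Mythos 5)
Your proof is correct and is the canonical argument for this result; note that the paper itself gives no proof here and simply cites~\cite{NW08}, where precisely this three-step scheme (cost bound from the strict inequality defining $\mathcal{B}(n)$, upper bound on $\err(A_n)$ via orthogonality of the $Sb$ in $G$, lower bound via a dimension count on $\vspan\{b_1,\dots,b_{n+1}\}$ combined with Theorem~\ref{thm:error formula}) is used. Your closing remark on ties among singular values is apt: the strict inequality in the definition of $\mathcal{B}(n)$ is exactly what ensures $|\mathcal{B}(n)|\leq n$.
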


It is known from~\cite{No92} that
randomized algorithms cannot be much better than
deterministic algorithms in this setting:
up to a factor of at most $\sqrt{2}$, 
the algorithm $A_{2n-1}$ is as good as
any deterministic or randomized algorithm
with cost $n$ or less.

\begin{thm}[\cite{NW08}]
 \label{thm:randomization useless in Hspace}
 Let $H$ and $G$ be Hilbert spaces, let $F$ be the unit ball of $H$,
 and let $S:H\to G$ be compact.
 For any $n\in\IN$, we have
 $$
 \e(n,\mathcal{P}[S,F,G,\lall,\mathrm{ran},\mathrm{wc}])
 \geq 
 \frac{1}{\sqrt{2}}\,\e(2n-1,\mathcal{P}[S,F,G,\lall,\mathrm{det},\mathrm{wc}]).
 $$
\end{thm}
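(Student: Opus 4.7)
The plan is to reduce this worst-case randomized lower bound to an average-case deterministic lower bound via Bakhvalov's technique and then concentrate the measure on the top $2n$-dimensional singular subspace of $S$. By Theorem~\ref{thm:LPs over Hilbert spaces}, the right-hand side equals $\sigma_{2n}/\sqrt{2}$, where $\sigma_1\geq\sigma_2\geq\cdots$ are the singular values of $S$; so the task is to show
$$
\e(n,\mathcal{P}[S,F,G,\lall,\mathrm{ran},\mathrm{wc}])\geq \sigma_{2n}/\sqrt{2}.
$$
Let $b_1,\ldots,b_{2n}$ be an orthonormal system of eigenvectors of $W=S^*S$ corresponding to the $2n$ largest eigenvalues, set $V=\vspan(b_1,\ldots,b_{2n})$, and let $\mu$ be the uniform (rotation-invariant) probability measure on the unit sphere $\sphere_V\subset F$. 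By Theorem~\ref{thm:Bakhvalov technique} it then suffices to prove $\err(A,\mu)\geq \sigma_{2n}/\sqrt{2}$ for every deterministic algorithm $A$ with $\cost(A)\leq n$.

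Consider first a nonadaptive $A=\phi\circ N$ with $N=(L_1,\ldots,L_n)$, and put $V_0=V\cap\ker N$; by rank--nullity, $\dim V_0\geq n$. Decompose each $f\in V$ as $f=f_0+f_1$ with $f_0\in V_0$ and $f_1\in V_0^\perp\cap V$. Since $N(f)=N(f_1)$, the output $A(f)$ depends only on $f_1$, and the reflection $R\colon f_0+f_1\mapsto -f_0+f_1$ is an isometry of $V$ which preserves both $\mu$ and $A(f)$ while negating $Sf_0$. Averaging $\|Sf-A(f)\|^2$ over the pair $\{f,R(f)\}$ kills the cross term, leaving
$$
\err(A,\mu)^2 \;\geq\; \IE\,\|Sf_0\|^2 \;=\; \IE\,\scalar{Wf_0}{f_0} \;=\; \frac{1}{2n}\,\trace(W|_{V_0}),
$$
where we used $\IE\,[ff^*]=\tfrac{1}{2n}I_V$ for the uniform measure on the unit sphere of $V\cong\IR^{2n}$. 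Cauchy's interlacing inequality applied to $W|_V$, whose spectrum is $\sigma_1^2,\ldots,\sigma_{2n}^2$, then gives
$$
\trace(W|_{V_0})\;\geq\;\sum_{i=2n-\dim V_0+1}^{2n}\sigma_i^2\;\geq\; n\,\sigma_{2n}^2,
$$
so $\err(A,\mu)\geq\sigma_{2n}/\sqrt{2}$.

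The main obstacle is extending this bound from nonadaptive to adaptive information: the reflection $R$ need not preserve $A$ once the queries depend on previous observations. The standard remedy is to freeze the adaptive query tree at zero, obtaining a canonical nonadaptive $N^{\mathrm{non}}=(L_1,\,L_2(\cdot,0),\,\ldots,\,L_n(\cdot,0,\ldots,0))$ on which $A$ coincides with the constant $A(0)$ throughout $V\cap\ker N^{\mathrm{non}}$, and then to perform the symmetrization fibre-by-fibre after conditioning on the adaptive observation history; rotation-invariance of $\mu$ on $\sphere_V$ keeps the conditional second-moment computation centred in $V_0$ and preserves the trace bound $\trace(W|_{V_0})\geq n\sigma_{2n}^2$, yielding the desired $\sigma_{2n}^2/2$ lower bound in the general case.
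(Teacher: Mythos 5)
The paper does not actually prove this theorem; it is cited from \cite{NW08} (and \cite{No92}) without argument, so there is no internal proof to compare against. Evaluated on its own merits, your strategy is the classical one of Novak \cite{No92}: apply Bakhvalov's technique (Theorem~\ref{thm:Bakhvalov technique}) with $\mu$ the uniform probability measure on the unit sphere $\sphere_V$ of $V=\vspan(b_1,\hdots,b_{2n})$, and reduce to $\err(A,\mu)\ge\sigma_{2n}/\sqrt2$ for deterministic $A$ with $n$ queries. Your nonadaptive computation is correct: the reflection through $V_0^\perp\cap V$ preserves $\mu$ and the algorithm output, the cross term dies, the second-moment identity gives $\IE\|Sf_0\|_G^2=\tfrac1{2n}\trace(W|_{V_0})$, and Cauchy interlacing gives $\trace(W|_{V_0})\ge (\dim V_0)\,\sigma_{2n}^2\ge n\sigma_{2n}^2$. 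So far so good.

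The gap is in the adaptive case, and it is real. After conditioning on the observed history $\mathbf y=N(f)$, the conditional law of $f$ on $\sphere_V$ is uniform on $a(\mathbf y)+\rho(\mathbf y)\sphere(V_0(\mathbf y))$ with $a(\mathbf y)\neq 0$ and $\rho(\mathbf y)^2=1-\|a(\mathbf y)\|^2$. The fibre-wise symmetrization then gives
\[
 \IE\big[\,\|Sf-A(f)\|_G^2 \mid \mathbf y\big]\ \ge\ \rho(\mathbf y)^2\cdot\frac{\trace(W|_{V_0(\mathbf y)})}{\dim V_0(\mathbf y)}\ \ge\ \big(1-\|a(\mathbf y)\|^2\big)\,\sigma_{2n}^2,
\]
with normalization $1/\dim V_0(\mathbf y)$, not $1/(2n)$. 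You therefore still need to show that $\IE\big[1-\|a(N(f))\|^2\big]\ge 1/2$, and neither the phrase ``rotation-invariance keeps the computation centred in $V_0$'' nor the trace bound addresses that; the ``freeze the tree at zero'' device from Corollary~\ref{cor:adaption does not help} controls only the worst-case radius at zero and is useless for average-case error. The standard way to close the gap is a telescoping (martingale) argument: with $\rho_k^2:=1-\|a_k\|^2$ the conditional squared radius after $k$ observations and $m=\dim V=2n$, rotation invariance gives $\IE[\rho_k^2\mid y_1,\hdots,y_{k-1}]=\rho_{k-1}^2\,\frac{m-k}{m-k+1}$, whence $\IE[\rho_n^2]\ge \frac{m-n}{m}=\frac12$; equivalently, one may run Novak's induction on $n$, conditioning on $y_1$ and reducing to $n-1$ queries on a $(2n-1)$-dimensional sphere of random radius $\rho_1$. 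Without one of these ingredients your proof of the adaptive case does not go through.
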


\chapter{Integration and Approximation of Functions with Mixed Smoothness}
\label{chap:mixed smoothness}

In this chapter, we study the following multivariate problems.
\begin{itemize}
 \item Section~\ref{sec:Frolov}: The integration of multivariate
 functions from different smoothness classes.
 We allow randomized algorithms based on $\lstd$.
 This section is based on~\cite{Kr16,KN17,Ul17}.
 \item Section~\ref{sec:tensorproduct}: The approximation of 
 a tensor product operator between Hilbert spaces.
 We allow deterministic algorithms based on $\lall$.
 This section is based on~\cite{Kr18}.
 \item Section~\ref{sec:OptimalMC}: The $L^2$-approximation of functions 
 from a Hilbert space
 that is compactly embedded into $L^2$.
 We allow randomized algorithms based on $\lstd$.
 This section is based on~\cite{Kr18c}.
\end{itemize}
We will focus on the rate of convergence of the $n^{\rm th}$ minimal error
and provide algorithms that achieve the optimal error rate. 
In Section~\ref{sec:tensorproduct} and \ref{sec:OptimalMC},
we will also discuss the error of these algorithms for small $n$.
All results can be applied for 
multivariate functions with mixed smoothness.


\section{A Universal Algorithm for Integration}
\label{sec:Frolov}

We want to approximate the integral 
$$
S_d(f)=\int_{[0,1]^d} f(\mathbf x)~\d\mathbf x
$$
of a multivariate function $f:[0,1]^d\to\IR$.
To compute an approximation,
we may request a certain amount $n$ of function values.
The function $f$ itself is not known.
We do, however, have some a priori knowledge about the function.
We assume that
the function is smooth
in the sense that certain weak derivatives $\diff^\alpha f$ 
exist and are square-integrable.
Which derivatives are known to be existent and
square-integrable is different in different applications. 

In several applications, $\alpha$ covers the range of all multi-indices
with $\vert\alpha\vert\leq r$ for some $r\in\IN$.
We say that $f$ has isotropic smoothness $r$.
For example, the solutions of elliptic partial differential
equations in general and Poisson's equation in particular have this 
type of smoothness~\cite{GT01,HT08}.
They typically appear in electrostatics or continuum mechanics.
With deterministic algorithms,
the integral of such functions can be computed up to 
an error of order $n^{-r/d}$,
but not with higher accuracy~\cite{Ba59,No88}.
The expected error may be smaller,
if randomness can be used.
With randomized algorithms,
we may achieve an expected error
of order $n^{-r/d-1/2}$~\cite{Ba59,Ba62,No88}.

In other applications, $\alpha$ covers the range of all multi-indices
with $\Vert\alpha\Vert_\infty\leq r$.
We say that $f$ has mixed smoothness $r$.
This is a stronger smoothness condition.
For example, solutions of the electronic Schrödinger equation 
have this type of smoothness~\cite{Ys10}.
With deterministic algorithms, the integral of such functions can be 
computed up to an error of order $n^{-r}(\ln n)^{(d-1)/2}$~\cite{Fr76}.
Using randomness, we may achieve an expected error
of order $n^{-r-1/2}$~\cite{Ba62,Ul17}.
These rates are much better than the rates in the isotropic case
if the number $d$ of variables is large.

In most applications, however, we do not really know 
how smooth our integrand is.
Thus, we would like to have an algorithm
which can be applied to any integrable function 
and automatically detects its smoothness.
That is,
whenever $f$ has isotropic or mixed smoothness $r$ for some $r\in\IN$,
the expected and guaranteed error should decay
with the above mentioned error rates.
We say that the algorithm is universal.
In this section, we will present a universal
algorithm for multivariate integration.

Let us formulate the main result of this section.
For every $r\in\IN$, let $H^r([0,1]^d)$
be the linear space of functions with isotropic smoothness $r$
and let $H^r_{\rm mix}([0,1]^d)$ be the linear space
of functions with mixed smoothness $r$.
We define norms on these spaces via the relations
\begin{align*}
 \Xnorm{f}{H^r([0,1]^d)}^2 &=
 \sum_{\abs{\alpha}\leq r}\,
 \Xnorm{\diff^\alpha f}{L^2([0,1]^d)}^2
 \quad&&\text{for}\quad 
 f\in H^r([0,1]^d),\\
 \Xnorm{f}{H^r_{\rm mix}([0,1]^d)}^2 &=
 \sum_{\Vert\alpha\Vert_\infty \leq r}\,
 \Xnorm{\diff^\alpha f}{L^2([0,1]^d)}^2
 \quad&&\text{for}\quad 
 f\in H^r_{\rm mix}([0,1]^d).
\end{align*}
For each $n\in\IN$, we define a
randomized algorithm $(A_n^\omega)_{\omega\in\Omega}$
of the form
$$
 A_n^\omega(f)=\sum_{j=1}^n a_j(\omega) f\brackets{\mathbf x^{(j)}(\omega)}
$$ 
for $f\in L^1([0,1]^d)$ and $\omega\in \Omega$,
where $(\Omega,\mathcal{F},\IP)$ is a probability space
and $\mathbf x^{(j)}:\Omega\to [0,1]^d$
and $a_j:\Omega\to\IR$ are random variables for each $j\leq n$,
see Algorithm~\ref{alg:final algorithm}.
These algorithms have the following properties.

\begin{thm}[\cite{KN17,Ul17}]
\label{thm:main theorem}
 There are positive constants $c,c_1,c_2,\hdots$
 such that the following holds for all $n\in\IN$ with $n\geq c$
 and $f\in L^1([0,1]^d)$.
 \begin{itemize}
  \item $\displaystyle \IE\brackets{ A_n(f)} = S_d(f)$.
 \end{itemize}
 If $f$ has mixed smoothness $r\in\IN$, then
  \begin{itemize}
   \item $\displaystyle \sqrt{\IE \abs{S_d(f)-A_n(f)}^2} \leq c_r\, n^{-r-1/2} 
   \Xnorm{f}{H^r_{\rm mix}([0,1]^d)}$,
   \item $\displaystyle \IP\brackets{\abs{S_d(f)-A_n(f)} 
   \leq c_r\, n^{-r}(\ln n)^{(d-1)/2} 
   \Xnorm{f}{H^r_{\rm mix}([0,1]^d)}}=1.$
  \end{itemize}
 If $f$ has isotropic smoothness $r\in\IN$ with $r>d/2$, then
 \begin{itemize}
  \item $\displaystyle \sqrt{\IE \abs{S_d(f)-A_n(f)}^2} \leq c_r\, n^{-r/d-1/2} 
  \Xnorm{f}{H^r([0,1]^d)}$,
  \item $\displaystyle
  \IP\brackets{\abs{S_d(f)-A_n(f)} \leq c_r\, n^{-r/d} 
  \Xnorm{f}{H^r([0,1]^d)}}=1$.
 \end{itemize}
\end{thm}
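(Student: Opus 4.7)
The plan is to realize $A_n^\omega$ as Frolov's classical lattice rule, randomized by a uniform shift (which provides unbiasedness and a clean Parseval-type variance representation) and an independent random dilation (which provides the extra $n^{-1/2}$ factor in the root-mean-square). Concretely, fix once and for all a matrix $B\in\IR^{d\times d}$, built from the Vandermonde of the conjugates of a totally real algebraic integer of degree $d$, so that $B\IZ^d$ has Frolov's property $\prod_i|k_i|\ge c$ on all nonzero lattice points. For $a>0$ and $\boldsymbol\xi\in\IR^d$, set
\[
Q_{a,\boldsymbol\xi}(f)=\frac{1}{a^d|\det B|}\sum_{\mathbf m\in\IZ^d:\,a^{-1}B^{-T}\mathbf m+\boldsymbol\xi\in[0,1]^d}f(a^{-1}B^{-T}\mathbf m+\boldsymbol\xi),
\]
which uses $\asymp a^d$ nodes. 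I take $A_n^\omega:=Q_{a,\boldsymbol\xi}$ with $\boldsymbol\xi$ uniform on a fundamental domain of $a^{-1}B^{-T}\IZ^d$ and $a$ drawn from an absolutely continuous density on $[c_1 n^{1/d},\,2c_1 n^{1/d}]$, the constants chosen so that every realization uses at most $n$ active nodes.

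Unbiasedness is immediate: for each fixed $a$, the shifted copies $\{a^{-1}B^{-T}\mathbf m+\boldsymbol\xi\}_{\mathbf m}$ tile $\IR^d$ as $\boldsymbol\xi$ varies over a fundamental domain, hence $\IE_{\boldsymbol\xi}Q_{a,\boldsymbol\xi}(f)=S_d(f)$ by a direct change of variables, and Fubini closes the loop. For the almost-sure bounds I would invoke classical Frolov analysis: extending the integrand to $f_\ind:=f\cdot\ind_{[0,1]^d}$ and applying Poisson summation with respect to $a^{-1}B^{-T}\IZ^d$ represents the error as
\[
S_d(f)-Q_{a,\boldsymbol\xi}(f)=-\sum_{\mathbf k\in aB\IZ^d\setminus\{0\}}\widehat{f_\ind}(\mathbf k)\,e^{2\pi i\boldsymbol\xi\cdot\mathbf k},
\]
and Cauchy--Schwarz against the appropriate smoothness weight yields the deterministic rates. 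For $f\in H^r_{\mix}$, the Frolov property forces $\prod_i(1+|k_i|)\gtrsim a^d$ on the dual lattice, and hyperbolic-cross enumeration gives $\sqrt{\sum_{\mathbf k}\prod_i(1+|k_i|)^{-2r}}\lesssim a^{-r}(\ln a)^{(d-1)/2}\asymp n^{-r}(\ln n)^{(d-1)/2}$. For $f\in H^r$ with $r>d/2$, the dual lattice has minimal distance $\gtrsim a$ and covolume $\asymp a^d$, and a radial lattice-sum estimate returns $a^{-r}\asymp n^{-r/d}$; the condition $r>d/2$ is precisely the Sobolev embedding that makes the truncation $f_\ind$ meaningful.

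For the mean-square bound, I would combine the same Poisson representation with Parseval to obtain, for each fixed $a$,
\[
\IE_{\boldsymbol\xi}|S_d(f)-Q_{a,\boldsymbol\xi}(f)|^2=\sum_{\mathbf k\in aB\IZ^d\setminus\{0\}}|\widehat{f_\ind}(\mathbf k)|^2.
\]
The pointwise Sobolev bound on each $|\widehat{f_\ind}(\mathbf k)|^2$, combined with the Frolov lower bound $\prod_i(1+|k_i|)\gtrsim a^d$, already gives an RMS of order $n^{-r}$ from the shift alone, an improvement over the almost-sure bound by a logarithmic factor. The extra $n^{-1/2}$ comes from averaging over the dilation $a$: swapping the sum over $\mathbf m\in\IZ^d\setminus\{0\}$ with the expectation in $a$, the change of variables $t=a|B\mathbf m|$ turns $\IE_a|\widehat{f_\ind}(aB\mathbf m)|^2$ into a one-dimensional radial integral of $|\widehat{f_\ind}|^2$ with Jacobian $|B\mathbf m|^{-1}\asymp a^{-1}$, which is a non-tangential sample of the $L^2$ mass of $\widehat{f_\ind}$ along a ray and contributes the extra factor $n^{-1}$ in the variance.

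The main obstacle will be this last step. One has to handle the boundary corrections produced by the $[0,1]^d$-cutoff in Poisson summation (these are lower order in the mixed-smoothness regime but in the isotropic case are only tractable under the Sobolev embedding $r>d/2$), and one has to verify that the radial averaging in Fourier space genuinely picks up a full factor $n^{-1}$ uniformly over the lattice directions $\mathbf m$. This hinges on a careful radial Sobolev embedding estimate whose character differs between the mixed-smoothness and isotropic regimes, and accounts for the bulk of the technical work.
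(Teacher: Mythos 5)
Your proposal captures the right high-level architecture — Frolov lattice, random shift for unbiasedness and the Parseval identity, random dilation for the extra $n^{-1/2}$, Poisson summation, a Hölder/Cauchy–Schwarz split against a smoothness weight — but two of your specific constructions are the wrong ones, and in both cases you flag the resulting difficulty as ``technical work'' when in fact a different construction is required.

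First, the cutoff $f_\ind = f\cdot\ind_{[0,1]^d}$ does not work, and the boundary corrections are not lower order in the mixed-smoothness regime: unless $f$ already vanishes to high order on $\partial[0,1]^d$, the cutoff is discontinuous, so $\widehat{f_\ind}(\mathbf k)$ decays only like $\prod_j(1+|k_j|)^{-1}$ along coordinate directions, and the Poisson error series converges only at the rate $n^{-1}(\ln n)^{d-1}$ or so — the smoothness $r$ is completely destroyed. The paper instead pushes $f$ forward by a fixed smooth diffeomorphism $\Psi$ of $(0,1)^d$ whose Jacobian vanishes to all orders at the boundary, sets $f_0 = (f\circ\Psi)\,|\mathrm{D}\Psi|$, and runs the transformed rule $\widetilde Q$ on the original $f$. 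Then $f_0\in\mathring H^r_{\mathrm{mix}}([0,1]^d)$ has the same integral as $f$, and (because $\Psi$ is fixed and smooth with a bounded inverse Jacobian) $\|f_0\|_{H^r_{\mathrm{mix}}}\lesssim\|f\|_{H^r_{\mathrm{mix}}}$. The whole point is that $\|f_0\|_{H^r}$ is controlled and $f_0$ has compact support, so the Poisson error estimates apply at the full rate; that is what your cutoff cannot give you.

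Second, a scalar random dilation $a\in[c_1n^{1/d},2c_1n^{1/d}]$ is insufficient for the variance gain. After Parseval you have $\IE_{\boldsymbol\xi}|\text{err}|^2 = \sum_{\mathbf m\neq 0}|\widehat{f_0}(aB\mathbf m)|^2$. With a scalar $a$, each $aB\mathbf m$ moves along a one-dimensional ray segment, which has Lebesgue measure zero in $\IR^d$, so taking $\IE_a$ and then summing over $\mathbf m$ cannot be compared via Fubini to $\|\widehat{f_0}\|^2_{L^2(D_n)}$; you are restricting an $L^2$ function to a measure-zero union of rays, and no ``radial Sobolev embedding'' bails you out. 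The paper dilates by a diagonal matrix $U=\diag(u_1,\dots,u_d)$ with the $u_j$ independent and uniform on $[1,2^{1/d}]$, so each $n^{1/d}UB\mathbf m$ is uniformly distributed over the $d$-dimensional box $[n^{1/d}B\mathbf m,(2n)^{1/d}B\mathbf m]$, which has volume $\asymp n\prod_j|(B\mathbf m)_j|\gtrsim n$ by the Frolov property. Summing over $\mathbf m$ and Fubini then give $\IE|\text{err}|^2 \lesssim n^{-1}\|\widehat{f_0}\|^2_{L^2(D_n)}$ (Theorem~\ref{keyprop2} of the paper), which is the genuine source of the $n^{-1/2}$ gain.

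With those two replacements — the smooth change of variables in place of the cutoff, and the coordinate-wise dilation in place of the scalar one — your outline becomes essentially the paper's proof.
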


We remark that these constants may depend on $d$.
The condition $r>d/2$ ensures that the functions in $H^r([0,1]^d)$ are continuous.
The algorithm is a randomization of Frolov's algorithm~\cite{Fr76}.
It was first proposed in~\cite{KN17}.
The order of the expected error for functions with mixed smoothness 
was proven in~\cite{Ul17}.

In particular, we obtain the following result on the order of convergence.
Let $F_d^r$ be the unit ball of $H^r_{\rm mix}([0,1]^d)$
and let 
$$
 \mathcal{P}_d^r=
 \mathcal{P}[S_d,F_d^r,\IR,\lstd,\mathrm{ran},\mathrm{wc}]
$$ 
be the problem of integrating a function from $F_d^r$
with randomized algorithms based on $\lstd$ in the worst case 
setting.

\begin{cor}[\cite{Ba59,Ul17}]
 \label{cor:order of convergence}
 For any $r\in\IN$ and $d\in\IN$, we have
 $$
  \e\brackets{n,\mathcal{P}_d^r} \asymp n^{-r-1/2}.
 $$
\end{cor}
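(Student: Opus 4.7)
The plan is to split the asymptotic equivalence $\e(n,\mathcal{P}_d^r) \asymp n^{-r-1/2}$ into the two matching inequalities and handle them by quite different means. The upper bound is essentially free from the main theorem of the section, while the lower bound reduces, via Bakhvalov's technique, to a classical bump-based argument.

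For the upper bound, I would simply invoke Theorem~\ref{thm:main theorem}. The randomized algorithm $A_n$ constructed there is based on $\lstd$ and satisfies, for every $n\geq c$ and every $f\in H^r_{\mathrm{mix}}([0,1]^d)$,
$$
 \sqrt{\IE\abs{S_d(f)-A_n(f)}^2} \;\leq\; c_r\, n^{-r-1/2}\,\Xnorm{f}{H^r_{\mathrm{mix}}([0,1]^d)}.
$$
Taking the supremum over the unit ball $f\in F_d^r$ and adjusting the constant to absorb the finitely many small values $n<c$, one obtains
$\e(n,\mathcal{P}_d^r)\leq \err(A_n,\mathrm{wc})\lesssim n^{-r-1/2}$ for all $n\in\IN$.

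For the matching lower bound I would apply Theorem~\ref{thm:Bakhvalov technique}, which reduces the worst case randomized lower bound to an average case deterministic one against any Borel probability measure $\mu$ on $F_d^r$. The measure I would use is the classical Bakhvalov fooling construction: fix a smooth tensor-product bump $\psi:[0,1]^d\to[0,\infty)$ with $\int\psi>0$, set $N:=2n$ and $h:=N^{-1/d}$, tile $[0,1]^d$ into $N$ essentially disjoint cubes $Q_i$ of side $h$, and let $\psi_i$ be the translate and dilate of $\psi$ supported in $Q_i$. A direct scaling check gives
$$
 \Xnorm{\diff^\alpha \psi_i}{L^2}^2 \;\sim\; h^{d-2\abs{\alpha}},
 \qquad\text{hence}\qquad
 \Xnorm{\psi_i}{H^r_{\mathrm{mix}}}^2\;\sim\; h^{d(1-2r)}\;=\;N^{2r-1},
$$
while $\int\psi_i\sim h^d=N^{-1}$. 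With $\epsilon=(\epsilon_1,\dots,\epsilon_N)$ uniform on $\{-1,+1\}^N$, I define
$f_\epsilon := c\, N^{-r}\sum_{i=1}^N \epsilon_i\,\psi_i$, choose the absolute constant $c$ so that $\Xnorm{f_\epsilon}{H^r_{\mathrm{mix}}}\leq 1$ uniformly in $N$ (the disjointness of supports makes the norm squared add up to $c^2 N\cdot N^{-2r}\cdot N^{2r-1}=c^2$), and let $\mu$ be the pushforward of the uniform distribution on signs.

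For any deterministic algorithm $A$ based on $\lstd$ using at most $n$ function values, the disjointness of the supports of the $\psi_i$ implies that the output $A(f_\epsilon)$ is measurable with respect to the signs $\{\epsilon_i:i\in I(\epsilon)\}$ where $I(\epsilon)$ is the (possibly adaptively chosen) set of indices whose cube was actually queried; in particular $\abs{I(\epsilon)}\leq n\leq N/2$. Since $S_d(f_\epsilon)=c\,N^{-r-1}(\int\psi)\sum_i\epsilon_i$, the at least $N-n\geq n$ unseen signs remain symmetric and independent conditional on the algorithm's trajectory, so they contribute variance
$$
 \IE_\epsilon\abs{S_d(f_\epsilon)-A(f_\epsilon)}^2 \;\gtrsim\; n\cdot\brackets{N^{-r-1}}^2\;\sim\; n^{-2r-1}.
$$
This yields $\err(A,\mu)\gtrsim n^{-r-1/2}$ for every deterministic $A$ with cost at most $n$, and Theorem~\ref{thm:Bakhvalov technique} transfers this bound to $\e(n,\mathcal{P}_d^r)$.

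The only nontrivial point I expect is the careful treatment of adaption in the last display: one must argue that, although $I(\epsilon)$ itself depends on $\epsilon$, conditioning on the ordered sequence of queried cubes and their observed signs leaves the unobserved signs uniform and independent, so their contribution to the integral is pure noise from the algorithm's point of view. This is a standard information-theoretic argument, but it is the one place where the bound is not a direct computation.
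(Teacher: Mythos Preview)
Your upper bound is exactly the paper's argument. For the lower bound your construction is correct, including the handling of adaption: since the $\psi_i$ have disjoint supports, each function evaluation lies in at most one cube and hence reveals at most one sign, so conditioning on the full query trajectory leaves at least $N-n\geq n$ signs i.i.d.\ uniform and independent of $A(f_\epsilon)$, which gives the conditional variance bound you state.

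The paper, however, takes a much shorter route for the lower bound. Instead of building a $d$-dimensional fooling measure, it simply observes that for any $f_1\in F_1^r$ the function $f_d(\mathbf x)=f_1(x_1)$ lies in $F_d^r$ and has the same integral; composing any randomized algorithm $A^d$ on $F_d^r$ with this lift defines an algorithm on $F_1^r$ of no larger cost and no larger error, whence $\e(n,\mathcal{P}_d^r)\geq \e(n,\mathcal{P}_1^r)$. The univariate bound $\e(n,\mathcal{P}_1^r)\succcurlyeq n^{-r-1/2}$ is then taken from Bakhvalov~\cite{Ba59}. Your argument is essentially a self-contained $d$-variate version of Bakhvalov's original proof; it has the virtue of not relying on a citation, but the paper's reduction is more economical and exposes the structural fact that the mixed-smoothness randomized integration problem does not get harder as $d$ grows.
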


Section~\ref{sec:Frolov} is organized as follows.
In Section~\ref{functionclassessec} we define and characterize
the function classes of our interest.
In Section~\ref{frolovsrulesection} we introduce Frolov's deterministic
algorithm for the integration of functions with compact support.
In Section~\ref{randomdilationsection} and Section~\ref{randomshiftsection}
we discuss how this algorithm can be improved by introducing
a random dilation and a random shift to the set of nodes.
Section~\ref{transformationsection} shows how we can integrate
functions without compact support using a transformation of the unit cube.
Here we also give a proof of Theorem~\ref{thm:main theorem}
and Corollary~\ref{cor:order of convergence}.
We remark that our algorithm is optimal
for many other classes of smooth functions
in terms of the order of convergence of its error, see~\cite{Ul17}.

\subsection{The Function Classes}
\label{functionclassessec}

Let $r\in \IN$ and $d\in\IN$. The Sobolev space
of mixed smoothness $r$ is the vector space
\[
H^r_{\rm mix}(\IR^d) = \set{f\in L^2(\IR^d)\mid 
\diff^\alpha f \in L^2(\IR^d) \text{ for all } \alpha\in\IN_0^d 
\text{ with } \Vert\alpha\Vert_\infty \leq r}
\]
of $d$-variate, real-valued functions, equipped with the scalar product
$$
\Xscalar{f}{g}{H^r_{\rm mix}(\IR^d)}
= \sum_{\Vert\alpha\Vert_\infty \leq r} 
\Xscalar{\diff^\alpha f}{\diff^\alpha g}{L^2(\IR^d)}.
$$
It is known that $H^r_{\rm mix}(\IR^d)$ is a Hilbert space and its 
elements $f\in L^2(\IR^d)$ have continuous representatives.
The Fourier transform is the unique continuous linear operator
$\mathcal{F}: L^2(\IR^d)\to L^2(\IR^d)$
satisfying
\[
\mathcal{F}f(\mathbf y) = 
\int_{\IR^d}   f(\mathbf x)\, e^{-2\pi i \scalar{\mathbf x}{\mathbf y}}  \, \d \mathbf x
\]
for integrable $f:\IR^d \to \IR$ and almost all $\mathbf y\in\IR^d$.
The space ${H^r_{\rm mix}(\IR^d)}$ contains exactly those 
functions $f\in L^2(\IR^d)$ with 
${\mathcal{F}f \cdot h_r^{1/2} \in L^2(\IR^d)}$ 
for the weight function
\[
h_r: \IR^d \to \IR^+,\quad h_r(\mathbf x)= \sum\limits_{\Vert\alpha\Vert_\infty \leq r} 
\prod\limits_{j=1}^{d} |2\pi x_j|^{2\alpha_j} 
= \prod\limits_{j=1}^{d} \sum\limits_{k=0}^{r} |2\pi x_j|^{2k}.
\]
In terms of the Fourier transform, 
the scalar product in ${H^r_{\rm mix}(\IR^d)}$ 
is given by
$$
\Xscalar{f}{g}{H^r_{\rm mix}(\IR^d)}
= \Xscalar{\mathcal{F}f}{\mathcal{F}g}{L^2(\IR^d,h_r)},
$$
where $L^2(\IR^d,h_r)$ is the weighted $L^2$-space with weight $h_r$.
Analogously, the Sobolev space of isotropic
smoothness $r$ is
\[
{H^r(\IR^d)} = 
\set{f\in L^2(\IR^d)\mid \diff^\alpha f \in L^2(\IR^d) 
\text{ for all } \alpha \in \IN_0^d\text{ with } \vert\alpha\vert\leq r}
,\]
equipped with the scalar product
\[
\Xscalar{f}{g}{H^r(\IR^d)}= 
\sum_{\vert\alpha\vert\leq r} 
\Xscalar{\diff^\alpha f}{\diff^\alpha g}{L^2(\IR^d)}.
\]
This is again a Hilbert space.
If $r$ is greater than $d/2$,
then ${H^r(\IR^d)}$ also consists of continuous functions. 
The space contains exactly those functions 
$f\in L^2(\IR^d)$ with $\mathcal{F}f \cdot v_r^{1/2}\in L^2(\IR^d)$ 
for the weight function
\[
v_r: \IR^d \to \IR^+,\quad v_r(\mathbf x)= \sum\limits_{\abs{\alpha}\leq r} 
\prod\limits_{j=1}^{d} |2\pi x_j|^{2\alpha_j}
.\]
In terms of its Fourier transform, the scalar product in ${H^r(\IR^d)}$ is given by
$$
\Xscalar{f}{g}{H^r(\IR^d)}
= \Xscalar{\mathcal{F}f}{\mathcal{F}g}{L^2(\IR^d,v_r)},
$$
where $L^2(\IR^d,v_r)$ is the weighted $L^2$-space with weight $v_r$.
We refer to \cite{SU09} for an overview regarding these spaces
of mixed and isotropic smoothness.

Furthermore, let ${C_c(\IR^d)}$ be the real vector space of all 
continuous real valued functions 
with compact support in $\IR^d$. The spaces ${\mathring{H}^r_{\mix}([0,1]^d)}$ 
and ${\mathring{H}^r([0,1]^d)}$ of 
functions in ${H^r_{\rm mix}(\IR^d)}$ or ${H^r(\IR^d)}$ with compact support 
in the
unit cube are subspaces of ${C_c(\IR^d)}$.
They can also be considered as subspaces of the Hilbert space
\[
{H^r_{\rm mix}([0,1]^d)}= \set{f\in L^2([0,1]^d) \mid \diff^\alpha 
f \in L^2([0,1]^d) 
\text{ for all } \alpha \in \IN_0^d\text{ with } \vert\alpha\vert\leq r}
,\]
equipped with the scalar product
\[
\Xscalar{f}{g}{H^r_{\rm mix}([0,1]^d)}= \sum\limits_{\Vert\alpha\Vert_\infty \leq r} 
\Xscalar{\diff^\alpha f}{\diff^\alpha g}{L^2([0,1]^d)}  , 
\]
or the Hilbert space
\[
{H^r([0,1]^d)}= \set{f\in L^2([0,1]^d) 
\mid \diff^\alpha f \in L^2([0,1]^d) \text{ for all } \alpha 
\in \IN_0^d\text{ with } \abs{\alpha}\leq r}
,\]
with scalar product
\[
\Xscalar{f}{g}{H^r([0,1]^d)}= \sum\limits_{\abs{\alpha}\leq r} 
\Xscalar{\diff^\alpha f}{\diff^\alpha g}{L^2([0,1]^d)} 
.\]

\subsection{Frolov's Deterministic Algorithm}
\label{frolovsrulesection}

Our methods are based on the following family
of deterministic linear algorithms.

\begin{alg}
\label{alg:basic algorithm}
Let $B\in\IR^{d\times d}$ be invertible and $\mathbf v\in\IR^d$.
We define
\[
Q_B^{\mathbf v}(f)=\frac{1}{|\det B|} 
\sum\limits_{\mathbf m\in\IZ^d} f\left(B^{-\top}(\mathbf m+\mathbf v)\right)
\]
for any $f:\IR^d\to\IR$ such that the right hand side converges absolutely. 
The vector $\mathbf v$ is called shift parameter.
We write $Q_B=Q_B^{\mathbf 0}$.
\end{alg}


\begin{rem}
The value $Q_B^{\mathbf v}(f)$ can be thought of as a Riemann sum:
The nodes of the algorithm are the lower left corners of the parallelepipeds 
$$
B^{-\top}\brackets{\mathbf m+\mathbf v+[0,1]^d}, \quad \mathbf m\in\IZ^d,
$$
and the weight $\abs{\det B}^{-1}$ is the volume of this parallelepiped.
\end{rem}

The algorithm is well defined for functions with compact support.
To integrate these functions, the algorithm $Q_B^{\mathbf v}$ 
only uses the nodes $B^{-\top}(\mathbf m+ \mathbf v)$ for all 
$$
\mathbf m\in\IZ^d \cap \brackets{B^\top\left(\supp f\right)-\mathbf v}.
$$
The number of these nodes should be close to the
volume of the latter set.
In particular, the number of nodes of $Q_{aB}^{\mathbf v}$ 
should behave like $a^d$ as $a$ tends to infinity. 
The following lemma gives an exact upper bound, see~\cite{Sk94} 
for other bounds.

\begin{lemma}
\label{anlemma}
Assume that $f: \IR^d \to \IR$ is supported in an axis-parallel cube of edge length $l>0$. 
For any invertible matrix $B\in\IR^{d\times d}$, 
$\mathbf v\in\IR^d$, and $a\geq 1$, the algorithm $Q_{aB}^{\mathbf v}$ uses at most
$
\left(l\,\Vert B\Vert_1+1\right)^d a^d
$
function values of $f$.
\end{lemma}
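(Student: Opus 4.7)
The plan is to reduce the problem to counting lattice points in a suitably rescaled image of $\supp f$. By definition of $Q_{aB}^{\mathbf v}$, the node $(aB)^{-\top}(\mathbf m+\mathbf v)$ contributes to the sum only when it lies in $\supp f$; equivalently, when $\mathbf m$ lies in $aB^\top(\supp f)-\mathbf v$. So the number of function values used is bounded by
\[
 \# \brackets{\IZ^d \cap \brackets{aB^\top(\supp f)-\mathbf v}}.
\]
The task is therefore to bound this cardinality by $(l\Vert B\Vert_1+1)^d a^d$.

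First I would enclose $\supp f$ in an axis-parallel cube $C=\mathbf x_0+[0,l]^d$ and observe that $aB^\top(\supp f)-\mathbf v\subset aB^\top C-\mathbf v$. Next, I would estimate how large the image $aB^\top C$ can be along each coordinate axis: for $\mathbf y\in[0,l]^d$ the $i$-th coordinate of $B^\top\mathbf y$ equals $\sum_j B_{ji}y_j$, which sweeps out an interval of length at most $l\sum_j|B_{ji}|$. Taking the maximum over $i$, and recalling that the induced matrix norm $\Vert B\Vert_1$ is the maximum absolute column sum $\max_i\sum_j|B_{ji}|$, we conclude that $aB^\top C - \mathbf v$ is contained in an axis-parallel box whose side lengths are each at most $al\Vert B\Vert_1$.

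The last step is the standard lattice-point count: any translated interval of length $s\geq 0$ contains at most $s+1$ integers, and consequently a translated axis-parallel box with side lengths $s_1,\dots,s_d$ contains at most $\prod_{i=1}^d(s_i+1)$ points of $\IZ^d$. Applying this with $s_i\leq al\Vert B\Vert_1$ yields the upper bound $(al\Vert B\Vert_1+1)^d$. Finally, I would use the assumption $a\geq 1$ to absorb the additive constant, writing $al\Vert B\Vert_1 + 1 \leq a(l\Vert B\Vert_1+1)$, which gives the desired bound $(l\Vert B\Vert_1+1)^d a^d$.

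This proof should be essentially routine once the counting has been set up; there is no real obstacle beyond taking care of the matrix-norm bookkeeping. The only point that needs slight caution is the interaction between $B$ and $B^\top$ in the column-sum bound, and making sure the factor $a\geq 1$ is used exactly where it is needed to convert $(al\Vert B\Vert_1+1)^d$ into the stated form.
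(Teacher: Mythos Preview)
Your proof is correct and follows essentially the same approach as the paper: both reduce to counting integer points in a translate of $aB^\top(\supp f)$, enclose this set in an axis-parallel box of side length at most $al\Vert B\Vert_1$ (the paper phrases this via $\Vert B^\top\Vert_\infty=\Vert B\Vert_1$, you via the coordinate-wise range), apply the standard lattice-point bound $(al\Vert B\Vert_1+1)^d$, and use $a\geq 1$ to pull out the factor $a^d$.
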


\begin{proof}
The number of computed function values is 
given by the cardinality of
$$
 M=\set{\mathbf m\in\IZ^d \mid (aB)^{-\top}(\mathbf m+\mathbf v)\in \supp f}
$$
By assumption, $f$ has support in $[-l/2,l/2]^d+\mathbf z$ 
for some $\mathbf z\in\IR^d$. 
Thus, any $\mathbf m\in M$ satisfies 
$$
 \mathbf m+\left(\mathbf v-aB^\top \mathbf z\right)\in 
 \frac{al}{2} B^\top( [-1,1]^d).
$$
Since $\Vert B^\top x\Vert_\infty\leq \Vert B^\top\Vert_\infty 
=\Vert B\Vert_1$ for $\mathbf x\in[-1,1]^d$, we obtain
\[
M \subset \set{\mathbf m\in\IZ^d \mid m+\left(\mathbf v-aB^\top \mathbf z\right)\in 
\frac{al}{2}\left[-\Vert B\Vert_1,\Vert B\Vert_1\right]^d}
\]
and $\card(M) \leq \left(al\Vert B\Vert_1+1\right)^d$. 
Since $1\leq a$, we get the desired estimate.
\end{proof}

The error of this algorithm for integration on $C_c(\IR^d)$ can be 
expressed in terms of the Fourier transform.

\begin{lemma}
\label{errorlemma}
For any invertible matrix $B\in\IR^{d\times d}$, $\mathbf v\in\IR^d$, and 
$f\in C_c(\IR^d)$
\[
\left| Q_B^{\mathbf v}(f)-\int_{\IR^d} f(\mathbf x)~\d\mathbf x\right| 
\leq \sum\limits_{\mathbf m\in\IZ^d\setminus\set{0}} \left| \mathcal{F}f(B\mathbf m)\right|
.\]
\end{lemma}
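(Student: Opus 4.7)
The plan is to derive the identity
\[
 Q_B^{\mathbf v}(f) \;=\; \sum_{\mathbf m\in\IZ^d} \mathcal{F}f(B\mathbf m)\, e^{2\pi i\scalar{\mathbf m}{\mathbf v}}
\]
via the Poisson summation formula, and then to peel off the zeroth frequency. We may assume without loss of generality that the right-hand side of the claimed inequality is finite, since otherwise there is nothing to prove. Under this assumption, $\mathcal{F}f$ is absolutely summable on the lattice $B\IZ^d$, so Poisson summation applies.

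Concretely, I would apply the Poisson summation formula to the lattice $\Lambda=B^{-\top}\IZ^d$ with shift $B^{-\top}\mathbf v$, whose dual lattice is $\Lambda^{\ast}=B\IZ^d$ with covolume $|\det B|^{-1}$. For $f\in C_c(\IR^d)$ the left-hand Poisson sum contains only finitely many nonzero terms, so the classical identity reads
\[
 \sum_{\mathbf m\in\IZ^d} f\bigl(B^{-\top}(\mathbf m+\mathbf v)\bigr)
 \;=\; |\det B|\sum_{\mathbf m\in\IZ^d} \mathcal{F}f(B\mathbf m)\, e^{2\pi i\scalar{B\mathbf m}{B^{-\top}\mathbf v}}.
\]
Using $\scalar{B\mathbf m}{B^{-\top}\mathbf v}=\scalar{\mathbf m}{\mathbf v}$ and dividing by $|\det B|$ yields the displayed identity. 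The term $\mathbf m=\mathbf 0$ equals $\mathcal{F}f(\mathbf 0)=\int_{\IR^d} f(\mathbf x)\,\d\mathbf x$; subtracting it and taking absolute values, together with $|e^{2\pi i\scalar{\mathbf m}{\mathbf v}}|=1$, produces the claimed bound.

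The only delicate point is the legitimacy of Poisson summation for a merely continuous, compactly supported $f$, since the classical statement is usually formulated for Schwartz functions. Under the standing assumption that $\sum_{\mathbf m\neq\mathbf 0}|\mathcal{F}f(B\mathbf m)|<\infty$, the series on the right is absolutely convergent, so one may mollify $f$ by convolution with an approximate identity, apply Poisson summation to the Schwartz approximants, and pass to the limit using dominated convergence on the Fourier side and the fact that the sum on the spatial side is genuinely finite. This is where the main technical care lies, but it is routine; once justified, the identity and the triangle inequality deliver the lemma immediately. Note in particular that the shift parameter $\mathbf v$ drops out of the bound, which is the reason the right-hand side does not involve $\mathbf v$.
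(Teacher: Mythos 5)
Your proof is correct and takes essentially the same route as the paper: both derive the identity $Q_B^{\mathbf v}(f)=\sum_{\mathbf m\in\IZ^d}\mathcal{F}f(B\mathbf m)\,e^{2\pi i\scalar{\mathbf m}{\mathbf v}}$ via Poisson summation (the paper by substituting $g=f\circ B^{-\top}(\cdot+\mathbf v)$ and invoking the $\IZ^d$-version, you by invoking the shifted-lattice version directly — these are equivalent), handle the case $\sum_{\mathbf m\neq 0}|\mathcal{F}f(B\mathbf m)|=\infty$ by noting the inequality is then vacuous, and finish by peeling off the $\mathbf m=\mathbf 0$ term and applying the triangle inequality. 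The only cosmetic difference is that you sketch a mollification argument to justify Poisson summation for $C_c$ functions, whereas the paper simply cites a textbook reference for this standard fact.
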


\begin{proof}
The function $g=f\circ B^{-\top}(\cdot +\mathbf v)$ is continuous with compact support. 
Hence, the Poisson summation formula and an affine linear substitution 
$\mathbf x=B^\top \mathbf y-\mathbf v$ yield
\[\begin{split}
Q_B^{\mathbf v}(f)
&=\frac{1}{\abs{\det B}}\sum\limits_{\mathbf m\in\IZ^d} g(\mathbf m) 
= \frac{1}{\abs{\det B}}\sum\limits_{\mathbf m\in\IZ^d} \mathcal{F}g(\mathbf m)\\
&= \frac{1}{\abs{\det B}}\sum\limits_{\mathbf m\in\IZ^d}\, 
\int_{\IR^d} f\left(B^{-\top}(\mathbf x+\mathbf v)\right)\cdot 
e^{-2\pi i\langle\mathbf x,\mathbf m\rangle}
\, \d \mathbf x \\
&= \sum\limits_{\mathbf m\in\IZ^d}\, \int_{\IR^d} f\left(\mathbf y\right)
\cdot e^{-2\pi i\langle B^\top \mathbf y-\mathbf v,\mathbf m\rangle} \, \d \mathbf y \\
&= \sum\limits_{\mathbf m\in\IZ^d} \mathcal{F}f(B\mathbf m)
\cdot e^{2\pi i\langle \mathbf v,\mathbf m\rangle}
,\end{split}\]
if the latter series converges absolutely, see \cite[pp.\,356]{Ko00}. If not,
the stated inequality is obvious.
This proves the statement, 
since $\mathcal{F}f(B\mathbf m)e^{2\pi i\langle \mathbf v,\mathbf m\rangle}$
yields the integral of $f$ for $\mathbf m=0$.
\end{proof}

It is known how to choose the matrix $B$ in the rule $Q_B^{\mathbf v}$ to get a good 
deterministic quadrature rule on ${\mathring{H}^r_{\mix}([0,1]^d)}$. 
\begin{defi}
\label{def:Frolov}
We say that $B\in \IR^{d\times d}$ 
is a \emph{Frolov matrix} if the following holds:
\begin{itemize}
\item $B$ is invertible.
\item $\vert\prod\limits_{j=1}^{d}(B\mathbf m)_j\vert\geq 1$ 
for any $\mathbf m\in\IZ^d\setminus\set{0}$.
\item Any axis-aligned box of volume $c>0$
contains at most $c+1$ points of the lattice $B\IZ^d$.
\end{itemize}
If $B$ is a Frolov matrix, then
the algorithm $Q_{n^{1/d} B}$ for $n\in\IN$ (see Algorithm~\ref{alg:basic algorithm})
is referred to as \emph{Frolov's algorithm}.
\end{defi}

We first note that the number of nodes of
the Frolov algorithm is of order $n$.
To be precise, Lemma \ref{anlemma} says that 
$Q_{n^{1/d}B}$ uses at most
$\left(\Vert B\Vert_1+1\right)^d n$ function values
if the input function is supported in $[0,1]^d$.

It is known that one can construct a Frolov matrix $B$ in the following way. 
Let $p\in\IZ[x]$ be a polynomial of degree $d$ with leading 
coefficient 1 which is irreducible over $\IQ$ and has $d$ different 
real roots $\zeta_1,\hdots,\zeta_d$. Then the matrix
\[B=\left(\zeta_i^{j-1}\right)_{i,j=1}^d\]
has the desired properties, as shown in \cite[p.\,364]{Te93} and \cite{Ul16}. 
In arbitrary dimension $d$ we can choose 
$p(x)=(x-1)(x-3)\cdot\hdots\cdot(x-2d+1)-1$, 
see \cite{Fr76} or \cite{Ul16}.
In particular, there exists a $d$-dimensional Frolov matrix
for any $d\in\IN$. 
If $d$ is a power of two, we can also choose 
$p(x)=2\cos\left(d\cdot\arccos (x/2)\right)=2\,T_d(x/2)$, 
where $T_d$ is the Chebyshev 
polynomial of degree $d$, see \cite[p.\,365]{Te93}. 
Then the roots of $p$ are explicitly given by 
$\zeta_j=2\cos\left(\frac{2j-1}{2d}\pi\right)$ for $j=1,\hdots,d$
and the lattice $B\IZ^d$ is orthogonal~\cite{KOU17}.
We remark that an invertible matrix $B$ is a Frolov matrix 
iff there is some $c>0$ such that
$c B^{-\top}$ is a Frolov matrix~\cite{Sk94}. 

Geometrically speaking,
the second property of Definition~\ref{def:Frolov}
says that every point of the Frolov lattice $B\IZ^d$ 
but zero is contained in the complement of a hyperbolic cross.
We denote these sets by
$$
 D_t=\big\{\mathbf x\in\IR^d \mid \prod_{j=1}^{d}\abs{x_j}\geq t\big\}
 \quad\text{for}\quad t>0.
$$
This property is illustrated in Figure~\ref{A Frolov lattice}.


\begin{figure}[ht]
\begin{minipage}{.7\linewidth}
\includegraphics[width=\linewidth]{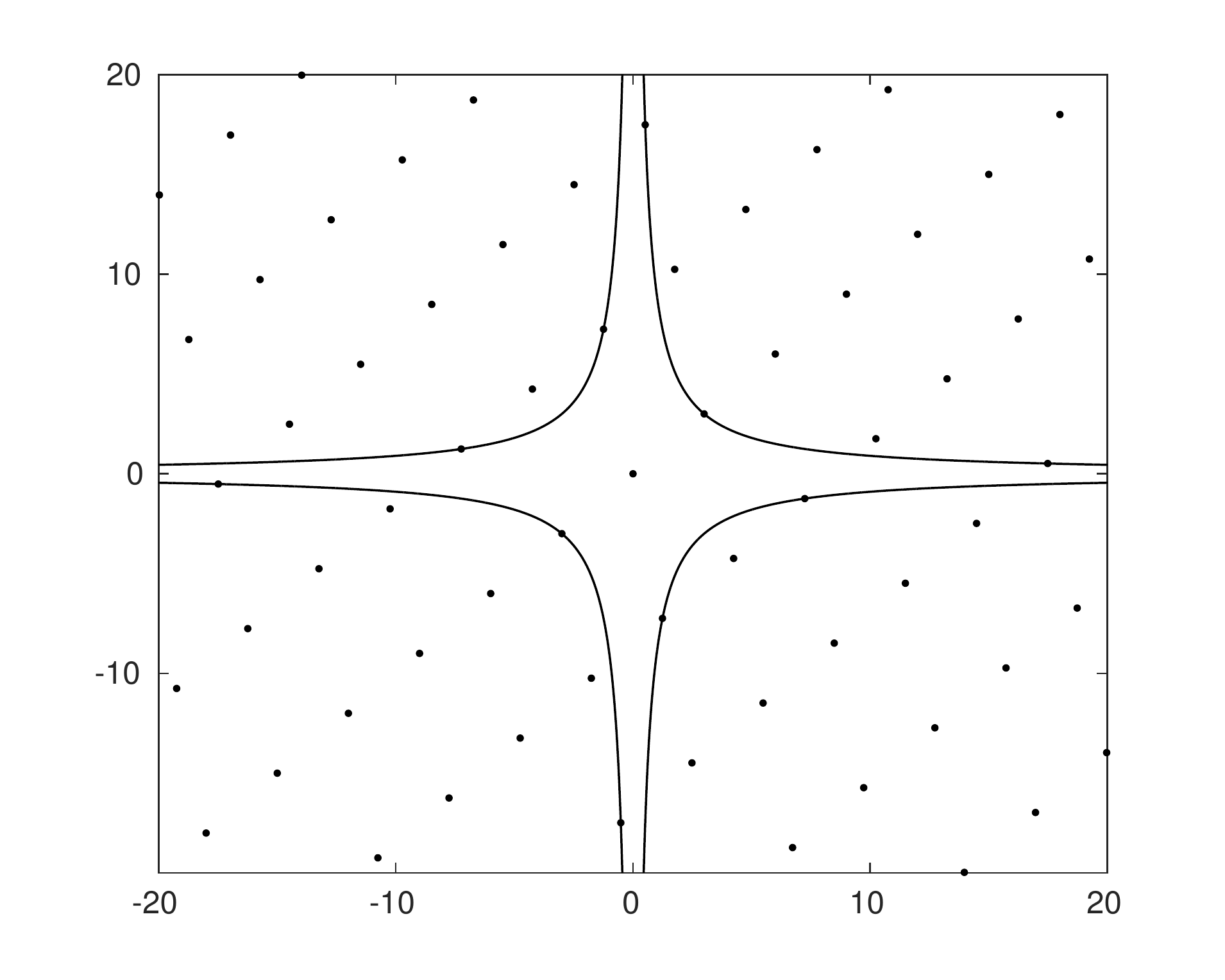}
\end{minipage}
\begin{minipage}{.29\linewidth}
\caption{}
\label{A Frolov lattice}
This figure shows the lattice $3B\IZ^d$ for $d=2$ and 
the Frolov matrix
\[B=\begin{pmatrix}
1 & 2-\sqrt{2}\\
1 & 2+\sqrt{2}
\end{pmatrix}.\]
Except the origin, every lattice point is
contained in $D_9$.
\end{minipage}
\end{figure}

In 1976, Frolov showed that this deterministic algorithm
has the optimal order of convergence on ${\mathring{H}^r_{\mix}([0,1]^d)}$
and that it satisfies the error bound below.
Note that the constant in this error bound
depends on the choice of the Frolov matrix.

\begin{thm}[\cite{Fr76}]
\label{frolovboundtheorem}
Let $B\in\IR^{d\times d}$ be a Frolov matrix and $r\in\IN$.
There is some $c_r>0$ such that for
every $n\geq 2$ and $f\in {\mathring{H}^r_{\mix}([0,1]^d)}$
\[
\abs{Q_{n^{1/d}B}(f)-S_d(f)} \leq\, c_r \,  n^{-r} 
\, (\ln n)^\frac{d-1}{2} \, \Xnorm{f}{H^r_{\rm mix}([0,1]^d)}
.\]
\end{thm}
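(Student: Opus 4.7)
The plan is to combine the Fourier-analytic error representation from Lemma~\ref{errorlemma} with a single Cauchy--Schwarz split, after which one factor is controlled by the mixed smoothness of $f$ and the other by the three defining properties of a Frolov matrix. Since $f\in \mathring{H}^r_{\mix}([0,1]^d)\subset C_c(\IR^d)$, I would apply Lemma~\ref{errorlemma} to the matrix $n^{1/d}B$ to bound the quadrature error by $\sum_{\mathbf m\neq 0}\abs{\mathcal{F}f(n^{1/d}B\mathbf m)}$, and then insert the weight $h_r(n^{1/d}B\mathbf m)^{\pm 1/2}$ to obtain
$$\sum_{\mathbf m\in\IZ^d\setminus\set{0}}\abs{\mathcal{F}f(n^{1/d}B\mathbf m)} \,\leq\, A_n(f)\cdot B_n,$$
$$A_n(f)^2 = \sum_{\mathbf m\neq 0} \abs{\mathcal{F}f(n^{1/d}B\mathbf m)}^2\, h_r(n^{1/d}B\mathbf m), \quad B_n^2 = \sum_{\mathbf m\neq 0} h_r(n^{1/d}B\mathbf m)^{-1}.$$
The theorem follows from the two estimates $A_n(f)\leq C_{d,r}\Xnorm{f}{H^r_{\mix}(\IR^d)}$ and $B_n\leq c_r\, n^{-r}(\ln n)^{(d-1)/2}$.

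For $A_n(f)$, the identity $h_r(\mathbf y)\abs{\mathcal{F}f(\mathbf y)}^2 = \sum_{\norm{\alpha}_\infty \leq r}\abs{\mathcal{F}(\diff^\alpha f)(\mathbf y)}^2$ reduces the task to bounding, for each $\alpha$, the lattice sum $\sum_{\mathbf m\in\IZ^d}\abs{\mathcal{F}(\diff^\alpha f)(n^{1/d}B\mathbf m)}^2$ by a constant times $\Xnorm{\diff^\alpha f}{L^2(\IR^d)}^2$. I would introduce the autocorrelation $G_\alpha = \diff^\alpha f\ast \widetilde{\diff^\alpha f}$, with $\widetilde g(\mathbf x)=\overline{g(-\mathbf x)}$, so that $\mathcal{F}G_\alpha = \abs{\mathcal{F}(\diff^\alpha f)}^2 \geq 0$ and $\supp G_\alpha\subset[-1,1]^d$. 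Poisson summation for the lattice $n^{1/d}B\IZ^d$ then converts the sum into $(n\abs{\det B})^{-1}\sum_{\lambda^*\in(n^{1/d}B)^{-\top}\IZ^d}G_\alpha(\lambda^*)$. Since $\abs{G_\alpha}\leq \Xnorm{\diff^\alpha f}{L^2(\IR^d)}^2$ and the dual lattice is itself a scaled Frolov lattice (using the duality remark after Definition~\ref{def:Frolov}) with at most $O(n)$ points inside $[-1,1]^d$, this yields the desired estimate; summing over $\alpha$ gives $A_n(f)^2\lesssim \Xnorm{f}{H^r_{\mix}(\IR^d)}^2$.

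For $B_n$, I would use the pointwise bound $h_r(\mathbf y)^{-1}\leq \prod_{j=1}^d\min(1,\abs{2\pi y_j}^{-2r})$ and group the nonzero lattice points by the dyadic indices $\mathbf s\in\IN_0^d$ with $\abs{2\pi(n^{1/d}B\mathbf m)_j}\in[2^{s_j},2^{s_j+1})$. The second Frolov property ($\abs{\prod_j(B\mathbf m)_j}\geq 1$ for $\mathbf m\neq 0$) forces $\abs{\mathbf s}_1\geq \log_2 n - c_d$ for every nonzero $\mathbf m$, while the third (distribution) property yields at most $O(2^{\abs{\mathbf s}_1}/n + 1)$ lattice points in each dyadic shell. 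Weighting by $2^{-2r\abs{\mathbf s}_1}$ and using that the number of $\mathbf s\in\IN_0^d$ with $\abs{\mathbf s}_1=t$ has polynomial order $t^{d-1}$, two geometric sums in $t\geq \log_2 n - c_d$ combine to give $B_n^2\lesssim n^{-2r}(\ln n)^{d-1}$, hence $B_n\lesssim n^{-r}(\ln n)^{(d-1)/2}$.

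The main obstacle is the hyperbolic-cross lattice-point count in the last step: the sharp logarithmic factor $(\ln n)^{(d-1)/2}$ emerges precisely from the interplay of the hyperbolic-cross exclusion with the distribution property of the Frolov lattice, amplified by the Cauchy--Schwarz split (without which one would instead get $(\ln n)^{d-1}$). A secondary subtlety is that the Poisson summation argument of the third paragraph depends on the dual lattice $(n^{1/d}B)^{-\top}\IZ^d$ also enjoying a distribution property, which is guaranteed since $cB^{-\top}$ is itself a Frolov matrix for a suitable constant $c>0$.
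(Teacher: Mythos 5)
Your overall strategy — Cauchy--Schwarz split after Lemma~\ref{errorlemma}, then estimate $B_n^2=\sum h_r^{-1}$ via dyadic shells and the two geometric/distribution properties of the Frolov lattice, and estimate $A_n(f)^2=\sum h_r\,\abs{\mathcal{F}f}^2$ via Plancherel — is exactly the skeleton of the proof the paper gives (for the stronger Theorem~\ref{mixthmworstcase}, with $U=I$ and $\mathbf v=\mathbf 0$; the paper only cites \cite{Ul16} for Theorem~\ref{frolovboundtheorem} itself but then proves the uniform version, which implies it). The $B_n$ estimate you describe is essentially identical to the paper's: the paper uses the shells $N(\beta)$ indexed by $\beta\in\IN_0^d$, shows $G_n^\beta=\emptyset$ for $\abs{\beta}\leq\log_2 n$ from property two, bounds $\card(G_n^\beta)$ by property three, and sums the geometric series — precisely your dyadic argument.

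Where you genuinely diverge is in the $A_n(f)$ estimate. You form the autocorrelation $G_\alpha=\diff^\alpha f\ast\widetilde{\diff^\alpha f}$, whose Fourier transform is $\abs{\mathcal F(\diff^\alpha f)}^2\geq 0$ and which is continuous and supported in $[-1,1]^d$, apply Poisson summation on $n^{1/d}B\IZ^d$, and bound the resulting dual-lattice sum by the $L^2$-norm of $\diff^\alpha f$ times the number of dual lattice points in $[-1,1]^d$; you control the latter via the remark that $cB^{-\top}$ is itself a Frolov matrix, so the dual lattice inherits the distribution property and the count is $O(n)$, cancelling the $1/(n\abs{\det B})$ from Poisson. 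The paper instead makes the change of variable $g_\alpha=\diff^\alpha f\circ(n^{1/d}B)^{-\top}$, tiles the support $(n^{1/d}B)^\top[0,1]^d$ by unit cells $\mathbf k+[0,1]^d$ indexed by $J_n$ with $\card(J_n)=O(n)$, applies Cauchy--Schwarz over the $J_n$ cells, and then uses Parseval on each cell; the bound again closes because $\card(J_n)$ and $\abs{\det(n^{1/d}B)}$ are both of order $n$. Both routes correctly pay exactly one factor of $n$ (from the Jacobian) against one factor of $n$ (from the cell or lattice-point count). Your Poisson/autocorrelation version is a well-known and arguably more transparent way to see the Plancherel-type bound, and it avoids the extra Cauchy--Schwarz over cells; the paper's version has the virtue that it extends verbatim to the dilated lattices $n^{1/d}UB$ without having to track what dilation does to the Frolov property of the dual, which is why the paper uses it in the uniform Theorem~\ref{mixthmworstcase}.

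Your proposal is correct. The one thing I would tighten, if you wrote it out in full, is the bound on the number of points of $(n^{1/d}B)^{-\top}\IZ^d$ in $[-1,1]^d$: after rescaling this is the number of points of the Frolov lattice $cB^{-\top}\IZ^d$ in a box of volume $(2c\,n^{1/d})^d=O(n)$, and the third Frolov property (at most $v+1$ points in any axis-aligned box of volume $v$) gives exactly the $O(n)$ you need — worth stating explicitly since the third property is phrased for axis-aligned boxes and $[-1,1]^d$ is one.
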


For a proof of this error
bound and its optimality, we refer the reader to \cite{Ul16}.
In fact, this error bound holds uniformly for $Q_{n^{1/d}UB}^{\mathbf v}$ 
for any diagonal matrix $U\in\IR^{d\times d}$ with diagonal entries in $[1,2^{1/d}]$ 
and $\mathbf v\in\IR^d$,
which is the statement of Theorem~\ref{mixthmworstcase}.
We note that Frolov's algorithm also has the optimal
rate of convergence on ${\mathring{H}^r([0,1]^d)}$.
This is implied by Theorem~\ref{isothmworstcase}.

%
%

\subsection{Random Dilation}
\label{randomdilationsection}

We study the impact of random dilations on Frolov's algorithm.
We consider the method $Q_{n^{1/d}UB}^{\mathbf v}$
(see Algorithm~\ref{alg:basic algorithm})
for a Frolov matrix $B\in\IR^{d\times d}$, $n\in\IN$,
shift parameter $\mathbf v\in\IR^d$
and a random diagonal matrix $U\in\IR^{d\times d}$ whose
diagonal entries are independent and uniformly distributed 
in $[1,2^{1/d}]$.
This method computes at most
$2\left(\Vert B\Vert_1+1\right)^dn$ function values,
see Lemma~\ref{anlemma}.

\subsubsection{Guaranteed Errors}

With probability 1, the error 
has the same rate
of convergence as Frolov's algorithm.

\begin{thm}[\cite{KN17}]
\label{mixthmworstcase}
Let $B\in\IR^{d\times d}$ be a Frolov matrix and $r\in\IN$.
There is some $c_r>0$ such that for any
$n\geq 2$ and $f\in {\mathring{H}^r_{\mix}([0,1]^d)}$,
$$
\sup_{U,\mathbf v} 
\abs{Q_{n^{1/d}UB}^{\mathbf v}(f)-S_d(f)} \leq\, c_r \,  n^{-r} 
\, (\ln n)^\frac{d-1}{2} \, \Xnorm{f}{H^r_{\rm mix}([0,1]^d)}
,$$
where the supremum is taken over all
diagonal matrices $U\in\IR^{d\times d}$ with
diagonal entries in $[1,2^{1/d}]$ and $\mathbf v\in\IR^d$.
\end{thm}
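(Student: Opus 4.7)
The plan is to reduce the statement to the deterministic bound of Theorem~\ref{frolovboundtheorem} by verifying that the dilated matrix $UB$ is itself a Frolov matrix and that the constant in that theorem can be chosen uniformly over the compact parameter set.

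First, I would observe that the shift $\mathbf v$ plays no role in the absolute error. Indeed, tracing through the calculation in the proof of Lemma~\ref{errorlemma}, for every invertible $A\in\IR^{d\times d}$, every $\mathbf v\in\IR^d$, and every $f\in C_c(\IR^d)$ one has
\[
Q_A^{\mathbf v}(f)-S_d(f) \;=\; \sum_{\mathbf m\in\IZ^d\setminus\set{\mathbf 0}} \mathcal{F}f(A\mathbf m)\,e^{2\pi i \scalar{\mathbf v}{\mathbf m}},
\]
so the triangle inequality yields the $\mathbf v$-independent bound
\[
\abs{Q_A^{\mathbf v}(f)-S_d(f)} \;\leq\; \sum_{\mathbf m\neq \mathbf 0} \abs{\mathcal{F}f(A\mathbf m)}.
\]
It therefore suffices to bound this last sum uniformly for $A=n^{1/d}UB$ as $U$ varies over the admissible range.

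Next, I would check that $UB$ is itself a Frolov matrix (Definition~\ref{def:Frolov}) whenever $U$ is diagonal with diagonal entries in $[1,2^{1/d}]$. Invertibility is immediate from $\abs{\det U}\geq 1$. The hyperbolic-cross property follows coordinatewise:
\[
\Bigl|\prod_{j=1}^{d}(UB\mathbf m)_j\Bigr|
\;=\; \abs{\det U}\cdot \Bigl|\prod_{j=1}^{d}(B\mathbf m)_j\Bigr| \;\geq\; 1
\]
for all $\mathbf m\in\IZ^d\setminus\set{\mathbf 0}$. For the box-counting property, any axis-aligned box $Q$ of volume $c$ is mapped by $U^{-1}$ to an axis-aligned box of volume $c/\abs{\det U}\leq c$, and the identity $UB\IZ^d\cap Q = U(B\IZ^d\cap U^{-1}Q)$ shows that $UB\IZ^d$ has at most $c/\abs{\det U}+1\leq c+1$ points in $Q$.

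Finally, I would apply Theorem~\ref{frolovboundtheorem} with the Frolov matrix $UB$ in place of $B$ to conclude
\[
\abs{Q_{n^{1/d}UB}^{\mathbf v}(f)-S_d(f)} \;\leq\; c_r(UB)\, n^{-r}\,(\ln n)^{(d-1)/2}\,\Xnorm{f}{H^r_{\mix}([0,1]^d)}.
\]
The main obstacle is then to ensure that $c_r(UB)$ can be chosen uniformly in $U$. To do so, I would inspect the proof of Theorem~\ref{frolovboundtheorem}: the constant produced there depends on the underlying Frolov matrix only through quantities such as $\Xnorm{\cdot}{1}$, $\abs{\det\cdot}$, and the explicit constants (both equal to $1$) in the second and third Frolov properties. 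For $UB$ with $U$ ranging over the compact set of diagonal matrices with entries in $[1,2^{1/d}]$, these quantities remain bounded above and below by strictly positive constants depending only on $B$ and $d$. Hence $c_r:=\sup_U c_r(UB)<\infty$ is finite and independent of $f$, $n$, $U$, and $\mathbf v$, yielding the claimed uniform estimate.
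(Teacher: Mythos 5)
Your proposal takes a genuinely different route from the paper. The paper does not attempt to reduce to Theorem~\ref{frolovboundtheorem}; instead it proves the theorem directly, repeating the Hölder/Poisson/hyperbolic-cross estimate from scratch and checking at each step that the constants do not depend on $U$ or $\mathbf v$. Concretely, the paper bounds $\big(\sum_{\mathbf m\neq\mathbf 0}h_r(n^{1/d}UB\mathbf m)^{-1}\big)$ by a constant multiple of $n^{-2r}(\ln n)^{d-1}$ using only the Frolov properties (which, as you observe, $UB$ inherits from $B$), and then bounds the companion factor by $\card(J_n)/|\det(n^{1/d}UB)|\cdot\Xnorm{f}{H^r_{\rm mix}([0,1]^d)}^2$, with the ratio $\card(J_n)/|\det(n^{1/d}UB)|$ bounded uniformly because both quantities are of order $n$.

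Your verification that $UB$ is itself a Frolov matrix is correct and clean, and the observation that $\mathbf v$ drops out of the error estimate is also correct. The gap is the uniformity claim for $c_r(UB)$. Theorem~\ref{frolovboundtheorem} is stated in the paper as a black box with the proof deferred to the literature, so there is no way, inside the paper's framework, to ``inspect the proof'' and read off how $c_r$ depends on the matrix. A bare compactness argument is not enough either: you would need the constant to be, say, an upper semicontinuous function of the matrix (or to be given by an explicit bounded formula in $\Xnorm{B}{1}$, $|\det B|$, etc.), and that is exactly the nontrivial content of the theorem you are proving. In other words, the uniformity you want to deduce from Theorem~\ref{frolovboundtheorem} is what the paper establishes by redoing the estimate directly; your reduction moves the difficulty rather than resolving it. Your proof sketch would become rigorous only after carrying out essentially the same computation the paper performs, or after a careful line-by-line check of the constant in \cite{Ul16} — which is outside the scope of what the paper provides.
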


\begin{proof}
Let us fix $U$ and $\mathbf v$ as above.
By Lemma~\ref{errorlemma} and Hölder's inequality,
\begin{multline}
\label{hoelder estimate}
\abs{Q_{n^{1/d}UB}^{\mathbf v}(f)-S_d(f)}^2
\leq \left(\sum\limits_{\mathbf m\in\IZ^d\setminus\set{0}} \left| 
\mathcal{F}f(n^{1/d}UB\mathbf m)\right|\right)^2\\
\leq \left(\sum\limits_{\mathbf m\in\IZ^d\setminus\set{0}} 
h_r(n^{1/d}UB\mathbf m)^{-1}\right)
\left(\sum\limits_{\mathbf m\in\IZ^d\setminus\set{0}} h_r(n^{1/d}UB\mathbf m)\cdot 
\abs{\mathcal{F}f(n^{1/d}UB\mathbf m)}^2\right)
.\end{multline}
We first prove that the first factor in this product is bounded 
above by a constant multiple of $n^{-2r}(\ln n)^{d-1}$,
where the constant is independent of $\mathbf v$ and $U$.
To that end, we consider the auxiliary set 
$$
 N(\beta)=\set{\mathbf x\in\IR^d \mid \forall1\leq j\leq d:
\lfloor 2^{\beta_j-1}\rfloor \leq|x_j|<2^{\beta_j}}
$$
for $\beta\in\IN_0^d$ and 
$$
G_n^\beta
=\set{\mathbf m\in\IZ^d\setminus\set{0}\mid n^{1/d}UB\mathbf m\in N(\beta)}.
$$
The domain $\IZ^d\setminus\set{0}$ of summation is the disjoint union 
of all $G_n^\beta$ over $\beta\in\IN_0^d$.

For $|\beta|\leq \log_2 n$, the points $\mathbf x$ in $N(\beta)$ satisfy
$\prod_{j=1}^{d}\abs{x_j}<2^{\abs{\beta}}\leq n$. But the second 
property of the Frolov matrix $B$ yields
$\prod_{j=1}^d \abs{n^{1/d}(UB\mathbf m)_j}\geq n$ for any 
$\mathbf m\in\IZ^d\setminus\set{0}$.
Hence, $G_n^\beta$ is empty for $|\beta|\leq \log_2 n$.
For $\abs{\beta} >\log_2 n$, any $\mathbf m\in G_n^\beta$ satisfies
\[
h_r(n^{1/d}UB\mathbf m)\geq \prod_{j=1}^d 
\left(1+\lfloor 2^{\beta_j-1}\rfloor^{2r}\right) 
\geq \prod_{j=1}^d 2^{2r(\beta_j-1)} = 2^{2r(|\beta|-d)}
\]
and hence $h_r(n^{1/d}UB\mathbf m)^{-1}\leq 2^{2r(d-|\beta|)}$.
Because of the third property of the Frolov matrix, we obtain
that the cardinality of $G_n^\beta$ is bounded above by
\[\card\brackets{\set{\mathbf m\in\IZ^d\setminus\set{0}\mid \abs{(B\mathbf m)_j}
< 2^{\beta_j}n^{-1/d}}} 
\leq 2^{d+\abs{\beta}} n^{-1} +1 \leq 2^{d+1+\abs{\beta}}
n^{-1}.\]
This shows that the first factor of~\eqref{hoelder estimate} satisfies
\begin{align}
\label{eq:needed in second proof}
\sum\limits_{|\beta|>\log_2 n}\, 
\sum\limits_{\mathbf m\in G_n^\beta} &h_r(n^{1/d}UB\mathbf m)^{-1}
\leq \sum\limits_{|\beta|>\log_2 n}\, 2^{2r(d-|\beta|)}
\cdot 2^{d+1+|\beta|}n^{-1}\\
\nonumber
&\leq \sum\limits_{k=\lceil \log_2 n\rceil}^\infty\, 2^{2r(d-k)}
\cdot 2^{d+1+k}n^{-1}\cdot \card\brackets{\set{\beta\in\IN_0^d\mid |\beta|=k}}.
\end{align}
The latter cardinality is bounded by $(k+1)^{d-1}$.
This yields the upper bound
\begin{multline*}
2^{2rd+d+1} n^{-1} \sum\limits_{k=\lceil \log_2 n\rceil}^{\infty}
2^{(1-2r)k} (k+1)^{d-1}\\
= 2^{2rd+d+1} n^{-1} 
\sum\limits_{k=0}^{\infty} 2^{(1-2r)(k+\lceil \log_2 n\rceil)}
\left(k+1+\lceil \log_2 n\rceil\right)^{d-1}\\
\leq 2^{2rd+d+1} n^{-1} \cdot n^{1-2r} 
\cdot \sum\limits_{k=0}^{\infty} 2^{(1-2r)k}\cdot 2^{d-1} 
\cdot (k+1)^{d-1}\cdot\lceil \log_2 n\rceil^{d-1}\\
\leq 2^{2rd+2d} \cdot n^{-2r} 
\cdot \sum\limits_{k=0}^{\infty} 2^{(1-2r)k} (k+1)^{d-1} 
\left( 2\cdot\frac{\ln n}{\ln 2}\right)^{d-1}\\
= \left( 2^{2rd+3d-1}\, (\ln 2)^{1-d} 
\sum\limits_{k=0}^{\infty} \left(2^{1-2r}\right)^k (k+1)^{d-1} \right)
\cdot n^{-2r}\, (\ln n)^{d-1},
\end{multline*}
which is the desired estimate since $2^{1-2r}<1$.

We now show that the second factor in the above 
inequality is bounded above by a constant 
multiple of $\Xnorm{f}{H^r_{\rm mix}([0,1]^d)}^2$,
where the constant is independent of $\mathbf v$ and $U$.
This will prove the theorem.
For $\mathbf x\in\IR^d$ we have
\[
h_r(\mathbf x)\cdot \abs{\mathcal{F}f(\mathbf x)}^{2}
=\sum\limits_{\Vert\alpha\Vert_\infty\leq r}\abs{\mathcal{F}\diff^\alpha f(\mathbf x)}^2
.\]
The function $g_\alpha=\diff^\alpha f\circ(n^{1/d}UB)^{-\top}$ has 
compact support
in $(n^{1/d}UB)^\top[0,1]^d$.
Consider the set
$J_n$ of all $\mathbf k\in\IZ^d$ for which $\brackets{\mathbf k+[0,1]^d}$ has 
nonempty intersection 
with $(n^{1/d}UB)^\top[0,1]^d$. The transformation
$\mathbf y=(n^{1/d}UB)^{-\top}\mathbf x$ yields
\begin{multline}
\label{eq:FDalphaf}
\abs{\mathcal{F}\diff^\alpha f \brackets{n^{1/d}UB\mathbf m}}^2
= \left| \int_{\IR^d} \diff^\alpha f(\mathbf y)\cdot 
e^{-2\pi i \langle n^{1/d}UB\mathbf m,\mathbf y\rangle} \d \mathbf y\right|^2\\
= \left|\frac{1}{\det(n^{1/d}UB)}\int_{\IR^d} g_\alpha(\mathbf x)\cdot 
e^{-2\pi i\langle \mathbf m,\mathbf x\rangle} \d \mathbf x\right|^2\\
= \left|\frac{1}{\det(n^{1/d}UB)}
\sum\limits_{\mathbf k\in J_n} \Xscalar{g_\alpha}{
e^{2\pi i\langle \mathbf m,\cdot\rangle}}{L^2\left(\mathbf k+[0,1]^d\right)}\right|^2\\
\leq \frac{\card(J_n)}{\abs{\det(n^{1/d}UB)}^2} 
\sum\limits_{\mathbf k\in J_n} 
\left|\Xscalar{g_\alpha}{
e^{2\pi i\langle \mathbf m,\cdot\rangle}}{L^2\left(\mathbf k+[0,1]^d\right)}\right|^2
.\end{multline}
Thus we obtain
\begin{multline*}
\sum\limits_{\mathbf m\in\IZ^d\setminus\set{0}} h_r(n^{1/d}UB\mathbf m)\cdot 
\abs{\mathcal{F}f(n^{1/d}UB\mathbf m)}^2
\leq \sum\limits_{\mathbf m\in\IZ^d} \sum\limits_{\Vert\alpha\Vert_\infty\leq r}
\abs{\mathcal{F}\diff^\alpha f(n^{1/d}UB\mathbf m)}^2\\
\leq \frac{\card(J_n)}{\abs{\det(n^{1/d}UB)}^2} 
\sum\limits_{\mathbf m\in\IZ^d}
\sum\limits_{\Vert\alpha\Vert_\infty\leq r}
\sum\limits_{\mathbf k\in J_n} \left|\Xscalar{g_\alpha}{e^{2\pi i\langle \mathbf m,
\cdot \rangle}}{L^2\left(\mathbf k+[0,1]^d\right)}\right|^2\\
= \frac{\card(J_n)}{\abs{\det(n^{1/d}UB)}^2} 
\sum\limits_{\Vert\alpha\Vert_\infty\leq r} 
\sum\limits_{\mathbf k\in J_n} \Xnorm{g_\alpha}{L^2\left(\mathbf k+[0,1]^d\right)}^2\\
= \frac{\card(J_n)}{\abs{\det(n^{1/d}UB)}^2} 
\sum\limits_{\Vert\alpha\Vert_\infty\leq r} \Xnorm{g_\alpha}{L^2(\IR^d)}^2
= \frac{\card(J_n)}{\abs{\det(n^{1/d}UB)}} 
\sum\limits_{\Vert\alpha\Vert_\infty\leq r} 
\Xnorm{\diff^\alpha f}{L^2(\IR^d)}^2
.\end{multline*}
Since both $\card(J_n)$ and $\abs{\det(n^{1/d}UB)}$ 
are of order $n$, their ratio is bounded
by a constant and the above inequality yields the statement.
\end{proof}

\begin{thm}[\cite{KN17}]
\label{isothmworstcase}
Let $B\in\IR^{d\times d}$ be any invertible matrix and $r>d/2$.
There is some $c_r>0$ such that, for any
$n\in\IN$ and $f\in {\mathring{H}^r([0,1]^d)}$,
\[
\sup_{U,\mathbf v} \abs{Q_{n^{1/d}UB}^{\mathbf v}(f)-S_d(f)} 
\leq\, c_r \,  n^{-r/d}
\, \Xnorm{f}{H^r([0,1]^d)}
,\]
where the supremum is taken over all
diagonal matrices $U\in\IR^{d\times d}$ with
diagonal entries in $[1,2^{1/d}]$ and $\mathbf v\in\IR^d$.
\end{thm}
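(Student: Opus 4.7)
The proof plan is to closely mirror the argument for Theorem~\ref{mixthmworstcase}, replacing the mixed-smoothness weight $h_r$ with the isotropic weight $v_r$, and substituting the Frolov-lattice hyperbolic-cross argument with a direct polynomial-decay estimate that uses the hypothesis $r>d/2$. Fix a diagonal matrix $U$ with entries in $[1,2^{1/d}]$ and a shift $\mathbf v\in\IR^d$. By Lemma~\ref{errorlemma} and the Cauchy--Schwarz inequality applied with the weight $v_r^{1/2}$, I obtain
$$
\abs{Q_{n^{1/d}UB}^{\mathbf v}(f)-S_d(f)}^2
\leq
\Bigl(\sum_{\mathbf m\in\IZ^d\setminus\{0\}} v_r(n^{1/d}UB\mathbf m)^{-1}\Bigr)
\Bigl(\sum_{\mathbf m\in\IZ^d\setminus\{0\}} v_r(n^{1/d}UB\mathbf m)\abs{\mathcal{F}f(n^{1/d}UB\mathbf m)}^2\Bigr),
$$
and the phase factor $e^{2\pi i\langle\mathbf v,\mathbf m\rangle}$ from Lemma~\ref{errorlemma} disappears in absolute value, which will already give uniformity in $\mathbf v$.

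The first factor is bounded by $C\,n^{-2r/d}$ uniformly in $U$. Indeed, picking out the $\alpha=re_j$ terms in the definition of $v_r$ yields $v_r(\mathbf x)\geq 1+(2\pi)^{2r}\Vert\mathbf x\Vert_\infty^{2r}$. Since each diagonal entry of $U$ is at least $1$, $\Vert UB\mathbf m\Vert_\infty\geq \Vert B\mathbf m\Vert_\infty\geq c_B\Vert\mathbf m\Vert_\infty$ with $c_B>0$ depending only on $B$. Grouping the lattice points by the value $k=\Vert\mathbf m\Vert_\infty$ and using $\card\{\mathbf m\in\IZ^d:\Vert\mathbf m\Vert_\infty=k\}\leq C_d k^{d-1}$ gives
$$
\sum_{\mathbf m\neq 0} v_r(n^{1/d}UB\mathbf m)^{-1}
\leq C_d\sum_{k=1}^\infty \frac{k^{d-1}}{1+(2\pi c_B)^{2r} n^{2r/d} k^{2r}}
\leq C\, n^{-2r/d}\sum_{k=1}^\infty k^{d-1-2r},
$$
and the last series converges precisely because $r>d/2$. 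This step replaces the hyperbolic-cross estimate from the mixed case and no longer requires $B$ to be a Frolov matrix.

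The second factor is bounded by a constant multiple of $\Vert f\Vert_{H^r([0,1]^d)}^2$, uniformly in $U$ and $\mathbf v$. As in the proof of Theorem~\ref{mixthmworstcase}, one uses the identity $v_r(\mathbf x)\abs{\mathcal{F}f(\mathbf x)}^2=\sum_{\abs{\alpha}\leq r}\abs{\mathcal{F}\diff^\alpha f(\mathbf x)}^2$ and then repeats the calculation culminating in~\eqref{eq:FDalphaf}: transforming $g_\alpha=\diff^\alpha f\circ(n^{1/d}UB)^{-\top}$, splitting its support into integer unit boxes $\mathbf k+[0,1]^d$, and invoking the Plancherel identity on each box yields
$$
\sum_{\mathbf m\in\IZ^d}\abs{\mathcal{F}\diff^\alpha f(n^{1/d}UB\mathbf m)}^2
\leq\frac{\card(J_n)}{\abs{\det(n^{1/d}UB)}}\,\Xnorm{\diff^\alpha f}{L^2(\IR^d)}^2.
$$
Since $\abs{\det U}\in[1,2]$, both $\card(J_n)$ and $\abs{\det(n^{1/d}UB)}$ are of order $n$ with constants that are uniform over the allowed $U$, so summing over $\abs{\alpha}\leq r$ produces the desired bound.

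The combination of both bounds finishes the proof. I expect the main obstacle to be purely bookkeeping: verifying that all constants in steps two and three are genuinely independent of $U$ and $\mathbf v$, and that the convergence of $\sum k^{d-1-2r}$ is the only place where the smoothness assumption $r>d/2$ is needed. The removal of the Frolov-lattice hypothesis is compensated by the fact that the isotropic weight $v_r$ decays polynomially in every direction of $\IR^d$, not just away from the hyperbolic cross.
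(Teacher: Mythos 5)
Your proposal is correct and takes essentially the same approach as the paper: Lemma~\ref{errorlemma} plus Cauchy--Schwarz with the weight $v_r$, followed by a polynomial-decay bound for the first factor (the paper bounds $v_r(\mathbf x)\gtrsim\Vert\mathbf x\Vert_2^{2r}$ and cites convergence of $\sum\Vert\mathbf m\Vert_2^{-2r}$ for $2r>d$, whereas you bound $v_r(\mathbf x)\gtrsim\Vert\mathbf x\Vert_\infty^{2r}$ and count shells $\Vert\mathbf m\Vert_\infty=k$, which amounts to the same thing), and then the same $g_\alpha$/$J_n$/Plancherel argument as in Theorem~\ref{mixthmworstcase} for the second factor. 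The only cosmetic difference is the choice of $\ell^\infty$ over $\ell^2$ in step two; both give the required uniform-in-$U$ constant because the diagonal entries of $U$ are bounded between $1$ and $2^{1/d}$.
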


\begin{proof}
Let $U$ and $\mathbf{v}$ be as above.
By Lemma~\ref{errorlemma} and Hölder's inequality,
\begin{multline}
\label{hoelder estimate 2}
\abs{Q_{n^{1/d}UB}^{\mathbf v}(f)-S_d(f)}^2
\leq \left(\sum\limits_{\mathbf m\in\IZ^d\setminus\set{0}} 
\left| \mathcal{F}f\brackets{n^{1/d}UB\mathbf m}\right|\right)^2\\
\leq \left(\sum\limits_{\mathbf m\in\IZ^d\setminus\set{0}} 
v_r\brackets{n^{1/d}UB\mathbf m}^{-1}\right)
\left(\sum\limits_{\mathbf m\in\IZ^d\setminus\set{0}} 
v_r\brackets{n^{1/d}UB\mathbf m}
\abs{\mathcal{F}f\brackets{n^{1/d}UB\mathbf m}}^2\right)
.\end{multline}
The first factor in this product is bounded by a constant multiple of 
$n^{-2r/d}$: since
\[
v_r\brackets{n^{1/d}UB\mathbf m} \geq \Vert n^{1/d}UB\mathbf m\Vert_2^{2r}
\geq n^{2r/d}  \Vert B\mathbf m\Vert_2^{2r}
\geq n^{2r/d} \Vert B^{-1}\Vert_2^{-2r}
\Vert\mathbf m\Vert_2^{2r}
,\]
we have
\[
\sum\limits_{\mathbf m\in\IZ^d\setminus\set{0}} v_r\brackets{n^{1/d}UB\mathbf m}^{-1}
\leq n^{-2r/d} \Vert B^{-1}\Vert_2^{2r}
\sum\limits_{\mathbf m\in\IZ^d\setminus\set{0}}\Vert \mathbf m\Vert_2^{-2r}
,\]
where this last series converges for $2r>d$.

We show that the second factor in \eqref{hoelder estimate 2} is bounded above 
by a constant multiple of $\Xnorm{f}{H^r([0,1]^d)}^2$. This will prove the theorem.
For any $\mathbf x\in\IR^d$ we have
\[
v_r(\mathbf x)\cdot \abs{\mathcal{F}f(\mathbf x)}^{2}
=\sum\limits_{\abs{\alpha}\leq r}\abs{\mathcal{F}\diff^\alpha f(\mathbf x)}^2
.\]

The function $g_\alpha=\diff^\alpha f\circ(n^{1/d}UB)^{-\top}$ has 
compact support
in the parallelepiped $(n^{1/d}UB)^\top[0,1]^d$.
Again consider the set
$J_n$ of all $\mathbf k\in\IZ^d$ for which $\brackets{\mathbf k+[0,1]^d}$ has a 
nonempty intersection 
with $(n^{1/d}UB)^\top[0,1]^d$.
With \eqref{eq:FDalphaf}, we obtain
\begin{multline*}
\sum\limits_{\mathbf m\in\IZ^d\setminus\set{0}} v_r(n^{1/d}UB\mathbf m)\cdot 
\abs{\mathcal{F}f(n^{1/d}UB\mathbf m)}^2
\leq \sum\limits_{\mathbf m\in\IZ^d} \sum\limits_{\vert\alpha\vert\leq r}
\abs{\mathcal{F}\diff^\alpha f(n^{1/d}UB\mathbf m)}^2\\
\leq \frac{\card(J_n)}{\abs{\det(n^{1/d}UB)}^2} 
\sum\limits_{\mathbf m\in\IZ^d}
\sum\limits_{\vert\alpha\vert\leq r}
\sum\limits_{\mathbf k\in J_n} \left|\Xscalar{g_\alpha}{e^{2\pi i\langle \mathbf m,
\cdot \rangle}}{L^2\left(\mathbf k+[0,1]^d\right)}\right|^2\\
= \frac{\card(J_n)}{\abs{\det(n^{1/d}UB)}^2} 
\sum\limits_{\vert\alpha\vert\leq r} 
\sum\limits_{\mathbf k\in J_n} \Xnorm{g_\alpha}{L^2\left(\mathbf k+[0,1]^d\right)}^2\\
= \frac{\card(J_n)}{\abs{\det(n^{1/d}UB)}^2} 
\sum\limits_{\vert\alpha\vert\leq r} \Xnorm{g_\alpha}{L^2(\IR^d)}^2
= \frac{\card(J_n)}{\abs{\det(n^{1/d}UB)}} 
\sum\limits_{\vert\alpha\vert\leq r} 
\Xnorm{\diff^\alpha f}{L^2(\IR^d)}^2
.\end{multline*}
Since both $\card(J_n)$ and $\abs{\det(n^{1/d}UB)}$ are of order $n$, 
their ratio is bounded
by a constant and the above inequality yields the statement.
\end{proof}

\subsubsection{Expected Errors}

In expectation, the random dilations improve the order of the error
of Frolov's algorithm by $1/2$ for
both ${\mathring{H}^r_{\mix}([0,1]^d)}$ and ${\mathring{H}^r([0,1]^d)}$.
These results are based on the following general error bound for
continuous functions with compact support. Recall that 
$D_n$ is the set of all $\mathbf x\in\IR^d$ 
with $\prod_{j=1}^{d}\abs{x_j}\geq n$.

\begin{thm}[\cite{KN17}]
\label{keyprop}
Let $B\in\IR^{d\times d}$ be a Frolov matrix
and let $U\in\IR^{d\times d}$ be a diagonal matrix
whose diagonal entries are independent and uniformly distributed in $[1,2^{1/d}]$.
There is a constant $c>0$ such that, for every $n\in\IN$, 
shift parameter $\mathbf v\in\IR^d$ and $f\in{\C_c(\IR^d)}$,
\[\IE \abs{Q_{n^{1/d}UB}^{\mathbf v}(f)-\int_{\IR^d} f(\mathbf x)~\d\mathbf x} 
\leq\, c \,  n^{-1} \int_{D_n} \abs{\mathcal{F}f(\mathbf x)}\, \d \mathbf x
.\]
\end{thm}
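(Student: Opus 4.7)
The plan is to apply Lemma~\ref{errorlemma} to get a deterministic pointwise bound on the error in terms of a sum of $|\mathcal{F}f|$ at lattice points, then take expectation over $U$ and use a change of variables to convert each expectation into an average over a small box in frequency space. Crucially, the bound from Lemma~\ref{errorlemma} is independent of the shift $\mathbf v$, so the shift parameter plays no further role.

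First I would fix $\mathbf m\in\IZ^d\setminus\set{\mathbf 0}$ and let
$$
 I_{\mathbf m}=\prod_{j=1}^d J_{\mathbf m,j},
 \qquad
 J_{\mathbf m,j}=\big\{n^{1/d}u(B\mathbf m)_j \mid u\in[1,2^{1/d}]\big\}.
$$
Using the independent uniform distribution of the diagonal entries of $U$ and a coordinatewise substitution $y_j=n^{1/d}u_j(B\mathbf m)_j$, one obtains
$$
 \IE\abs{\mathcal{F}f(n^{1/d}UB\mathbf m)}
 =\frac{1}{\lambda^d(I_{\mathbf m})}\int_{I_{\mathbf m}} \abs{\mathcal{F}f(\mathbf y)}\,\d\mathbf y,
$$
where $\lambda^d(I_{\mathbf m})=n(2^{1/d}-1)^d\prod_j|(B\mathbf m)_j|$. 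Since $u_j\geq 1$, every $\mathbf y\in I_{\mathbf m}$ satisfies $\prod_j|y_j|\geq n\prod_j|(B\mathbf m)_j|\geq n$ by the defining property of a Frolov matrix, so $I_{\mathbf m}\subset D_n$.

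Next I would swap sum and integral via Tonelli and bound the resulting density in two complementary ways. On one hand, for $\mathbf y\in I_{\mathbf m}$ the relation $|(B\mathbf m)_j|=|y_j|/(n^{1/d}u_j)\geq |y_j|/(2n)^{1/d}$ gives
$$
 \frac{1}{\lambda^d(I_{\mathbf m})}
 \leq \frac{2}{(2^{1/d}-1)^d\,\prod_{j=1}^d|y_j|}.
$$
On the other hand, the condition $\mathbf y\in I_{\mathbf m}$ forces $B\mathbf m$ into an axis-aligned box (lying in the orthant of $\mathbf y$) of side lengths $n^{-1/d}|y_j|(1-2^{-1/d})$, hence of volume $n^{-1}\prod_j|y_j|(1-2^{-1/d})^d$. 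The third Frolov property bounds the number $K(\mathbf y)$ of such $\mathbf m$ by this volume plus one, and for $\mathbf y\in D_n$ the bound $\prod_j|y_j|\geq n$ gives
$$
 K(\mathbf y)\leq n^{-1}\prod_{j=1}^d|y_j|\cdot\bigl[(1-2^{-1/d})^d+1\bigr]
 \leq 2\,n^{-1}\prod_{j=1}^d|y_j|.
$$

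Combining both estimates yields
$$
 \sum_{\mathbf m\neq\mathbf 0}\IE\abs{\mathcal{F}f(n^{1/d}UB\mathbf m)}
 \leq \int_{D_n}\abs{\mathcal{F}f(\mathbf y)}\cdot
 \frac{2\,K(\mathbf y)}{(2^{1/d}-1)^d\,\prod_j|y_j|}\,\d\mathbf y
 \leq \frac{4}{n(2^{1/d}-1)^d}\int_{D_n}\abs{\mathcal{F}f(\mathbf y)}\,\d\mathbf y,
$$
which is the desired bound with $c=4/(2^{1/d}-1)^d$. The main obstacle is the bookkeeping that ties together the two bounds on $1/\lambda^d(I_{\mathbf m})$ and on $K(\mathbf y)$: one must recognize that the factor $\prod_j|y_j|$ cancels out precisely, so that the rough estimate $\prod_j|(B\mathbf m)_j|\geq 1$ (which only uses the Frolov property coordinatewise) would not suffice, and the counting argument via the third Frolov property is indispensable. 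Careful attention is also needed to the signs: because $u_j>0$, each $I_{\mathbf m}$ lies in a single orthant, so all volume and counting estimates must be performed within that orthant.
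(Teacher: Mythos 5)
Your proof is correct and follows essentially the same route as the paper: apply Lemma~\ref{errorlemma}, write each expectation $\IE\abs{\mathcal{F}f(n^{1/d}UB\mathbf m)}$ as an average over the box where $n^{1/d}UB\mathbf m$ is uniformly distributed, swap sum and integral, bound the normalizing volume from below using $u_j\leq 2^{1/d}$, and count the lattice points $B\mathbf m$ landing in the dual box via the third Frolov property, observing that both the cardinality bound and the restriction to $D_n$ rely on the second Frolov property. The constant $4/(2^{1/d}-1)^d$ you obtain agrees with the paper's $4/c_d$.
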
 

\begin{proof}
Thanks to Lemma~\ref{errorlemma} 
and the monotone convergence theorem we have
\[\begin{split}
\IE \abs{Q_{n^{1/d}UB}^{\mathbf v}(f)-\int_{\IR^d} f(\mathbf x)~\d\mathbf x}
&\leq \IE 
\left(\sum\limits_{\mathbf m\in\IZ^d\setminus\set{0}} 
\abs{\mathcal{F}f\brackets{n^{1/d}UB\mathbf m}} \right)\\
&= \sum\limits_{\mathbf m\in\IZ^d\setminus\set{0}} 
\IE \abs{\mathcal{F}f\brackets{n^{1/d}UB\mathbf m}}
.\end{split}\]
Since each $n^{1/d}UB\mathbf m$ is uniformly distributed in the 
box $[n^{1/d}B\mathbf m,(2n)^{1/d}B\mathbf m]$ of volume 
$c_d\abs{\prod_{j=1}^dn^{1/d} (B\mathbf m)_j}$ with $c_d=(2^{1/d}-1)^d$, 
this series equals
\begingroup
  \allowdisplaybreaks
\begin{align*}
&\frac{1}{c_d} \sum\limits_{\mathbf m\in\IZ^d\setminus\set{0}}\, 
\int_{[n^{1/d}B\mathbf m,(2n)^{1/d}B\mathbf m]} \frac{\abs{\mathcal{F}f(\mathbf x)}}{\prod_{j=1}^d
\abs{n^{1/d}(B\mathbf m)_j}} \, \d \mathbf x \\
&\leq \frac{1}{c_d} 
\sum\limits_{\mathbf m\in\IZ^d\setminus\set{0}}\, \int_{[n^{1/d}B\mathbf m,(2n)^{1/d}B\mathbf m]} 
\frac{\abs{\mathcal{F}f(\mathbf x)}}{\prod_{j=1}^d 2^{-1/d}\abs{x_j}} \, \d \mathbf x\\
&= \frac{2}{c_d}\cdot \int_{\IR^d} 
\frac{\abs{\mathcal{F}f(\mathbf x)}}{\prod_{j=1}^d \abs{x_j}}\cdot 
\abs{\set{\mathbf m\in\IZ^d\setminus\set{0}\mid 
\mathbf x\in [n^{1/d}B\mathbf m,(2n)^{1/d}B\mathbf m]}} \, \d \mathbf x\\
&= \frac{2}{c_d}\cdot \int_{\IR^d} 
\frac{\abs{\mathcal{F}f(\mathbf x)}}{\prod_{j=1}^d \abs{x_j}}\cdot 
\abs{\set{\mathbf m\in\IZ^d\setminus\set{0}\mid B\mathbf m\in 
\left[\frac{\mathbf x}{(2n)^{1/d}},\frac{\mathbf x}{n^{1/d}}\right]}}\, \d \mathbf x  
.\end{align*}
\endgroup
Thanks to the properties of the Frolov matrix $B$, if 
$\prod_{j=1}^d \abs{x_j}<n$, 
the latter set is empty and otherwise contains no more 
than $\prod_{j=1}^d \abs{\frac{x_j}{n^{1/d}}}+1
\leq 2 n^{-1} \prod_{j=1}^d \abs{x_j}$ points.
Thus, we arrive at
\[
\IE \abs{Q_{n^{1/d}UB}^{\mathbf v}(f)-\int_{\IR^d} f(\mathbf x)~\d\mathbf x} \leq
\frac{4}{c_d}\cdot n^{-1} 
\int_{D_n} \abs{\mathcal{F}f(\mathbf x)} \, \d \mathbf x.
\]
\end{proof}

Additional differentiability properties of the function 
$f\in{\C_c(\IR^d)}$ result in decay 
properties of its Fourier transform $\mathcal{F}f$. 
This leads to estimates of the 
integral $\int_{D_n} \abs{\mathcal{F}f(\mathbf x)} \, \d \mathbf x$.
Hence, the general upper bound for the error of 
$Q_{n^{1/d}UB}^{\mathbf v}(f)$ in Theorem~\ref{keyprop} adjusts to the differentiability of $f$.
Two such examples are functions from 
${\mathring{H}^r_{\mix}([0,1]^d)}$ and ${\mathring{H}^r([0,1]^d)}$.

\begin{lemma}
\label{intlemmamix}
There is some $c_r>0$ such that,
for each $n\geq 2$ and $f\in{\mathring{H}^r_{\mix}([0,1]^d)}$,
\[
\int_{D_n} \abs{\mathcal{F}f(\mathbf x)} \, \d \mathbf x  \leq c_r \,  n^{-r+1/2}
\, \left(\ln n\right)^{\frac{d-1}{2}} \, \Xnorm{f}{H^r_{\rm mix}([0,1]^d)}
.\]
\end{lemma}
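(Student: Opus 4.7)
The plan is to apply the Cauchy--Schwarz inequality with the weight $h_r$ that appears in the Fourier characterization of $H^r_{\mix}(\IR^d)$, splitting
\[
 \int_{D_n} \abs{\mathcal{F}f(\mathbf x)}\,\d\mathbf x
 = \int_{D_n} \abs{\mathcal{F}f(\mathbf x)}\, h_r(\mathbf x)^{1/2} \cdot h_r(\mathbf x)^{-1/2}\,\d\mathbf x
\]
and estimating
\[
 \int_{D_n} \abs{\mathcal{F}f(\mathbf x)}\,\d\mathbf x
 \leq \brackets{\int_{\IR^d} \abs{\mathcal{F}f(\mathbf x)}^2 h_r(\mathbf x)\,\d\mathbf x}^{1/2}
 \brackets{\int_{D_n} h_r(\mathbf x)^{-1}\,\d\mathbf x}^{1/2}.
\]
The first factor equals $\Xnorm{f}{H^r_{\rm mix}(\IR^d)}$ by the Fourier characterization stated in Section~\ref{functionclassessec}, and this coincides with $\Xnorm{f}{H^r_{\rm mix}([0,1]^d)}$ since $f$ has compact support in $[0,1]^d$. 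Thus everything reduces to proving
\[
 \int_{D_n} h_r(\mathbf x)^{-1}\,\d\mathbf x \,\leq\, C_r\, n^{-2r+1} (\ln n)^{d-1}
\]
for some constant $C_r>0$.

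To obtain this bound I would recycle the dyadic decomposition used in the proof of Theorem~\ref{mixthmworstcase}. For $\beta\in\IN_0^d$ set
\[
 N(\beta)=\set{\mathbf x\in\IR^d \mid \lfloor 2^{\beta_j-1}\rfloor\leq \abs{x_j} < 2^{\beta_j}\ \text{for all } j},
\]
so that $\IR^d\setminus\{\mathbf 0\}$ is, up to a null set, the disjoint union of the $N(\beta)$. A point $\mathbf x\in N(\beta)$ satisfies $\prod_j\abs{x_j}<2^{\abs\beta}$, hence $N(\beta)\cap D_n$ is empty whenever $\abs\beta\leq \log_2 n$. For $\abs\beta >\log_2 n$, on the other hand, the product structure of $h_r$ gives
\[
 h_r(\mathbf x) \geq \prod_{j=1}^d \abs{2\pi x_j}^{2r} \geq (2\pi)^{2rd}\,2^{2r(\abs\beta -d)},
\]
while $\lambda^d(N(\beta))\leq 2^d\,2^{\abs\beta}$.

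Combining these two facts yields
\[
 \int_{D_n} h_r(\mathbf x)^{-1}\,\d\mathbf x
 \,\leq\, c_r' \sum_{\abs\beta>\log_2 n} 2^{(1-2r)\abs\beta},
\]
and the counting estimate $\card\set{\beta\in\IN_0^d \mid \abs\beta =k}\leq (k+1)^{d-1}$ together with $k+1\leq (K+1)(m+1)$ for $k=K+m$ and $K=\lceil\log_2 n\rceil$ reduces the remaining sum to a convergent geometric series times $n^{1-2r}(\ln n)^{d-1}$, exactly as in the corresponding step (see~\eqref{eq:needed in second proof}) of the proof of Theorem~\ref{mixthmworstcase}. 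Taking square roots then produces the claimed factor $n^{-r+1/2}(\ln n)^{(d-1)/2}$.

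No step presents a genuine obstacle; the only point that requires a little care is verifying that the dyadic decomposition argument still goes through when integrating over $\IR^d$ rather than summing over a lattice. This is straightforward because the measure of $N(\beta)$ plays the role that the counting bound $2^{d+1+\abs\beta} n^{-1}$ plays in~\eqref{eq:needed in second proof}, and losing the $n^{-1}$ factor from that lattice count is precisely the origin of the weaker exponent $-r+1/2$ (instead of $-r$) in the conclusion.
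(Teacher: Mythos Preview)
Your approach matches the paper's: Cauchy--Schwarz against $h_r$, then the same dyadic decomposition of $D_n$ via the boxes $N(\beta)$ and the summation from~\eqref{eq:needed in second proof}. The paper actually inserts a superfluous change of variables $\mathbf x = n^{1/d}B\mathbf y$ (with a Frolov matrix $B$) before carrying out the dyadic step, so your direct treatment of $\int_{D_n} h_r^{-1}$ is marginally cleaner.

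One small slip to fix: the bound $\prod_j |2\pi x_j|^{2r} \geq (2\pi)^{2rd}\,2^{2r(|\beta|-d)}$ fails on $N(\beta)$ when some $\beta_j = 0$, since then $|x_j|$ may be arbitrarily small. Use instead $h_r(\mathbf x) \geq \prod_j\bigl(1+\lfloor 2^{\beta_j-1}\rfloor^{2r}\bigr) \geq 2^{2r(|\beta|-d)}$ as in the proof of Theorem~\ref{mixthmworstcase}; this yields the same exponent without the spurious $(2\pi)^{2rd}$ factor, and the rest of your argument goes through unchanged.
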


\begin{proof}
Applying Hölder's inequality and a linear substitution $\mathbf x=n^{1/d}B\mathbf y$ 
to the above integral, we get
\begin{multline*}
\left( \int_{D_n} \abs{\mathcal{F}f(\mathbf x)} \, \d \mathbf x  \right)^2
\leq \brackets{\int_{D_n} h_r(\mathbf x)^{-1} \, \d \mathbf x} \Xnorm{f}{H^r_{\rm mix}([0,1]^d)}^2\\
=  n \abs{\det B} \left(\int_{G} h_r(n^{1/d}B\mathbf y)^{-1} \, \d \mathbf y \right) 
\Xnorm{f}{H^r_{\rm mix}([0,1]^d)}^2
\end{multline*}
with $G=B^{-1}D_1$ being the set of all $\mathbf y\in\IR^d$ with 
$\prod_{j=1}^{d}\abs{(B\mathbf y)_j}\geq 1$. Hence, it is sufficient to prove that the 
integral $\int_{G} h_r(n^{1/d}B\mathbf y)^{-1} \, \d \mathbf y$ is bounded by a constant 
multiple of $n^{-2r}(\ln n)^{d-1}$.
We again consider the auxiliary set 
$$
 N(\beta)=\left\{\mathbf x\in\IR^d \mid \lfloor 2^{\beta_j-1}\rfloor
\leq|x_j|<2^{\beta_j},1\leq j\leq d\right\}
$$
for $\beta\in\IN_0^d$
and 
$$G_n^\beta=\set{\mathbf y\in G\mid n^{1/d}B\mathbf y\in N(\beta)}.$$
Similar to the proof of Theorem \ref{mixthmworstcase},
the domain $G$ of integration is the disjoint union of all 
$G_n^\beta$ over $\beta\in\IN_0^d$,
where $G_n^\beta=\emptyset$ if $\abs{\beta}\leq \log_2 n$, and otherwise
the integrand is bounded above by $2^{2r(d-\abs{\beta})}$ for $\mathbf y\in G_n^\beta$.
On the other hand,
\begin{multline*}
\lambda^d(G_n^\beta)\leq \lambda^d\left((n^{1/d}B)^{-1}N(\beta)\right) 
= n^{-1} |\det B|^{-1} \lambda^d(N(\beta)) \\
= n^{-1} |\det B|^{-1} 2^d \prod_{j=1}^d\left(2^{\beta_j}-
\lfloor 2^{\beta_j-1}\rfloor\right)
\leq n^{-1} |\det B|^{-1} 2^{d+|\beta|}
.\end{multline*}
Like in the proof of Theorem \ref{mixthmworstcase}, we obtain
\begin{multline*}
\int_{G} h_r(n^{1/d}B\mathbf y)^{-1} \, \d \mathbf y 
= \sum\limits_{|\beta|>\log_2 n}\ \int_{G_n^\beta} h_r(n^{1/d}B\mathbf y)^{-1} \, \d \mathbf y \\
\leq \sum\limits_{|\beta|>\log_2 n} 2^{2r(d-|\beta|)}
 n^{-1} |\det B|^{-1} 2^{d+\beta}
= |\det B|^{-1} 2^{-1} \sum\limits_{|\beta|>\log_2 n} 2^{2r(d-|\beta|)}
 n^{-1} 2^{d+1+\abs{\beta}}\\
\overset{\eqref{eq:needed in second proof}}{\leq}\left( 2^{2rd+3d-2} |\det B|^{-1} (\ln 2)^{1-d} 
\sum\limits_{k=0}^{\infty} \left(2^{1-2r}\right)^k (k+1)^{d-1} \right)
\, n^{-2r}\, (\ln n)^{d-1}
,\end{multline*}
where the constant is finite since $2^{1-2r}<1$.
\end{proof}

Combining Theorem~\ref{keyprop} and Lemma~\ref{intlemmamix} 
yields the following.

\begin{thm}[\cite{KN17}]
\label{mixthm}
Let $B\in\IR^{d\times d}$ be a Frolov matrix
and let $U\in\IR^{d\times d}$ be a diagonal matrix
whose diagonal entries are independent and uniformly distributed in $[1,2^{1/d}]$.
For all $r\in\IN$, there is a constant $c_r>0$ such that,
for every $n\geq 2$, shift parameter $\mathbf v\in\IR^d$, and 
$f\in {\mathring{H}^r_{\mix}([0,1]^d)}$,
\[
\IE \abs{Q_{n^{1/d}UB}^{\mathbf v}(f)-S_d(f)} \leq\, c_r \, n^{-r-1/2} 
\, (\ln n)^\frac{d-1}{2} \, \Xnorm{f}{H^r_{\rm mix}([0,1]^d)}
.\]
\end{thm}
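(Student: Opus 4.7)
The plan is to combine Theorem~\ref{keyprop} and Lemma~\ref{intlemmamix} directly. Since $f\in \mathring{H}^r_{\rm mix}([0,1]^d)$ has compact support in $[0,1]^d$ and (for $r\geq 1$) admits a continuous representative, we have $f\in C_c(\IR^d)$; moreover, $S_d(f)=\int_{[0,1]^d} f(\mathbf x)\,\d\mathbf x=\int_{\IR^d} f(\mathbf x)\,\d\mathbf x$, so Theorem~\ref{keyprop} applies and yields a constant $c>0$ (independent of $\mathbf v$, $n$, $f$) with
$$
 \IE\,\bigl|Q_{n^{1/d}UB}^{\mathbf v}(f)-S_d(f)\bigr|
 \,\leq\, c\,n^{-1}\,\int_{D_n}\abs{\mathcal{F}f(\mathbf x)}\,\d\mathbf x.
$$

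Next I would insert the estimate from Lemma~\ref{intlemmamix}, which provides a constant $c_r'>0$ with
$$
 \int_{D_n}\abs{\mathcal{F}f(\mathbf x)}\,\d\mathbf x
 \,\leq\, c_r'\,n^{-r+1/2}\,(\ln n)^{(d-1)/2}\,\Xnorm{f}{H^r_{\rm mix}([0,1]^d)}
$$
for all $n\geq 2$. Multiplying the two inequalities and collecting powers of $n$ via $n^{-1}\cdot n^{-r+1/2}=n^{-r-1/2}$ yields the claim with $c_r=c\cdot c_r'$, which depends only on $r$ (and on $d$ and $B$, but neither on $n$, $\mathbf v$, nor $f$).

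Since both of the ingredients are already proven, there is no real obstacle here; the only thing to check is that the hypotheses of Theorem~\ref{keyprop} are met by $f\in \mathring{H}^r_{\rm mix}([0,1]^d)$. This is clear from the inclusion $\mathring{H}^r_{\rm mix}([0,1]^d)\subset C_c(\IR^d)$ (using the continuous representative), so the combination goes through verbatim.
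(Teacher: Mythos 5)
Your proof is correct and is exactly the argument the paper has in mind: the text introduces Theorem~\ref{mixthm} with the sentence ``Combining Theorem~\ref{keyprop} and Lemma~\ref{intlemmamix} yields the following,'' leaving the same one-line multiplication of bounds (and the same inclusion $\mathring{H}^r_{\mix}([0,1]^d)\subset C_c(\IR^d)$) implicit. Nothing is missing.
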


If the integrand is from the space 
${\mathring{H}^r([0,1]^d)}$,
the following lemma holds.

\begin{lemma}
\label{intlemmaiso}
For $r>d/2$, there is some $c_r>0$
such that, for all $n\in\IN$ and $f\in{\mathring{H}^r([0,1]^d)}$,
\[
\int_{D_n} \abs{\mathcal{F}f(\mathbf x)} \, \d \mathbf x  \leq c_r \,  n^{-r/d+1/2} \, 
\Xnorm{f}{H^r([0,1]^d)}
.\]
\end{lemma}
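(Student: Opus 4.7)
The plan is to mirror the proof of Lemma~\ref{intlemmamix}, but with the isotropic weight $v_r$ in place of $h_r$. By Cauchy--Schwarz,
$$
 \left(\int_{D_n} \abs{\mathcal F f(\mathbf x)}\,\d\mathbf x\right)^2
 \;\leq\;
 \left(\int_{D_n} v_r(\mathbf x)^{-1}\,\d\mathbf x\right)
 \left(\int_{\IR^d} v_r(\mathbf x)\abs{\mathcal F f(\mathbf x)}^2\,\d\mathbf x\right).
$$
The second factor equals $\Xnorm{f}{H^r(\IR^d)}^2$ by the Fourier characterization of $H^r(\IR^d)$ recalled in Section~\ref{functionclassessec}, and since $f$ has support in $[0,1]^d$, this in turn equals $\Xnorm{f}{H^r([0,1]^d)}^2$. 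So it remains to show that the first factor is bounded above by $c_r\, n^{-2r/d+1}$.

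The main ingredient will be a geometric observation linking the hyperbolic region $D_n$ to a Euclidean ball: by the AM--GM inequality, any $\mathbf x\in D_n$ satisfies
$$
 n \;\leq\; \prod_{j=1}^d\abs{x_j} \;\leq\; \brackets{\tfrac{1}{d}\Xnorm{\mathbf x}{2}^2}^{d/2},
$$
so $\Xnorm{\mathbf x}{2}\geq\sqrt{d}\,n^{1/d}$ on $D_n$. Next I would bound $v_r(\mathbf x)$ from below by $\Xnorm{\mathbf x}{2}^{2r}$ up to a constant: picking out the homogeneous part of degree $2r$ and using the multinomial identity
$$
 \sum_{\abs\alpha=r}\binom{r}{\alpha}\prod_{j=1}^d(2\pi x_j)^{2\alpha_j}=(2\pi)^{2r}\Xnorm{\mathbf x}{2}^{2r}
$$
yields $v_r(\mathbf x)\geq c\,\Xnorm{\mathbf x}{2}^{2r}$ for an absolute constant $c>0$.

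Combining these two estimates and passing to polar coordinates,
$$
 \int_{D_n} v_r(\mathbf x)^{-1}\,\d\mathbf x
 \;\leq\; c^{-1}\int_{\Xnorm{\mathbf x}{2}\geq\sqrt{d}\,n^{1/d}}\Xnorm{\mathbf x}{2}^{-2r}\,\d\mathbf x
 \;=\; c'_d\int_{\sqrt{d}\,n^{1/d}}^{\infty}\rho^{d-1-2r}\,\d\rho,
$$
which converges precisely because of the hypothesis $r>d/2$ and equals $c_r\,n^{1-2r/d}$. Taking square roots in the Cauchy--Schwarz estimate yields the desired bound. There is no real obstacle beyond the AM--GM step: the condition $r>d/2$ enters exactly once, to make the radial integral converge, and it is the same condition that ensures $H^r([0,1]^d)$ embeds into $\C_c(\IR^d)$ so that the quadrature expressions make sense.
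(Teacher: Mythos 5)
Your proof is correct and follows essentially the same route as the paper: Cauchy--Schwarz with the weight $v_r$, a lower bound on $v_r$ by $\Xnorm{\mathbf x}{2}^{2r}$, a radial cutoff for $\mathbf x\in D_n$, and a polar integral that converges precisely because $r>d/2$. One small imprecision: the constant in $v_r(\mathbf x)\geq c\,\Xnorm{\mathbf x}{2}^{2r}$ is not absolute, since $v_r$ lacks the multinomial coefficients $\binom{r}{\alpha}$ (which can be as large as $r!$) appearing in your identity; the constant is $r$-dependent, which is of course permitted here. Also, the paper uses the cruder but simpler bound $\Xnorm{\mathbf x}{2}\geq\Xnorm{\mathbf x}{\infty}\geq n^{1/d}$ on $D_n$ in place of your AM--GM estimate $\Xnorm{\mathbf x}{2}\geq\sqrt{d}\,n^{1/d}$; both give the same rate in $n$.
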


\begin{proof}
Like in Lemma~\ref{intlemmamix}, we apply Hölder's inequality and get
\begin{multline*}
\left( \int_{D_n} \abs{\mathcal{F}f(\mathbf x)} \, \d \mathbf x  \right)^2
\leq \left(\int_{D_n} v_r(\mathbf x)^{-1} \, \d \mathbf x \right) \Xnorm{f}{H^r(\IR^d)}^2\\
\leq \left(\int_{D_n} \Vert\mathbf x\Vert_2^{-2r}~\d \mathbf x \right)
\Xnorm{f}{H^r([0,1]^d)}^2
.\end{multline*} 
Since $\Vert \mathbf x\Vert_2\geq \Vert \mathbf x\Vert_\infty \geq n^{1/d}$ 
for $\mathbf x\in D_n$,
the latter integral in the above relation is bounded above by
$$
\int_{\Vert\mathbf x\Vert_2\geq n^{1/d}} 
\Vert\mathbf x\Vert_2^{-2r}~\d \mathbf x
= \int_{n^{1/d}}^{\infty}\int_{\mathbb S_{d-1}} 
R^{-2r+d-1}~\d\sigma(\mathbf y) \, \d R
= \frac{\sigma\left(\mathbb S_{d-1}\right)}{2r-d}
n^{-2r/d+1}.
$$
Here, $\sigma$ is the surface measure on $\mathbb S_{d-1}$.
\end{proof}

In this case, combining Theorem~\ref{keyprop} and 
Lemma~\ref{intlemmaiso} yields the following, where we
recall that ${\mathring{H}^r([0,1]^d)}\subset \C_c(\IR^d)$
for $r>d/2$.

\begin{thm}[\cite{KN17}]
\label{isothm}
Let $B\in\IR^{d\times d}$ be a Frolov matrix
and let $U\in\IR^{d\times d}$ be a diagonal matrix
whose diagonal entries are independent and uniformly distributed in $[1,2^{1/d}]$.
For all $r\in\IN$ with $r>d/2$, there is a constant $c_r>0$ such that, for
every $n\in\IN$, shift parameter $\mathbf v\in\IR^d$, and 
$f\in {\mathring{H}^r([0,1]^d)}$,
\[\begin{split}
\IE \abs{Q_{n^{1/d}UB}^{\mathbf v}(f)-S_d(f)} 
\leq\, c_r \,  n^{-r/d-1/2} \, \Xnorm{f}{H^r([0,1]^d)}
.\end{split}\]
\end{thm}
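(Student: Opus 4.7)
The plan is to combine the two preceding results: the general expectation bound in Theorem~\ref{keyprop} and the Fourier tail estimate for isotropic smoothness in Lemma~\ref{intlemmaiso}. This mirrors exactly the route taken in the proof of Theorem~\ref{mixthm}, only with the weight $h_r$ replaced by $v_r$ throughout.

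First I would observe that since $r>d/2$, the Sobolev embedding gives ${\mathring{H}^r([0,1]^d)} \subset \C_c(\IR^d)$, so Theorem~\ref{keyprop} is applicable to $f$. Moreover, because $\supp f \subset [0,1]^d$, the Lebesgue integral over $\IR^d$ coincides with $S_d(f)=\int_{[0,1]^d} f(\mathbf x)\,\d\mathbf x$. Therefore, Theorem~\ref{keyprop} supplies a constant $c>0$, independent of $n$, $\mathbf v$, and $f$, such that
$$
\IE \abs{Q_{n^{1/d}UB}^{\mathbf v}(f)-S_d(f)}
\leq c\, n^{-1} \int_{D_n} \abs{\mathcal{F}f(\mathbf x)}\, \d \mathbf x.
$$

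Next, I would feed in Lemma~\ref{intlemmaiso}, which yields a constant $c_r'>0$ such that
$$
\int_{D_n} \abs{\mathcal{F}f(\mathbf x)}\, \d \mathbf x
\leq c_r'\, n^{-r/d+1/2}\,\Xnorm{f}{H^r([0,1]^d)}
$$
for every $n\in\IN$ and every $f\in {\mathring{H}^r([0,1]^d)}$. Multiplying these two inequalities gives
$$
\IE \abs{Q_{n^{1/d}UB}^{\mathbf v}(f)-S_d(f)}
\leq c\,c_r'\, n^{-1}\cdot n^{-r/d+1/2}\,\Xnorm{f}{H^r([0,1]^d)}
= c_r\, n^{-r/d-1/2}\,\Xnorm{f}{H^r([0,1]^d)},
$$
with $c_r = c\,c_r'$, which is the claim.

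There is essentially no main obstacle here, as all the work has been absorbed into the two ingredients. The one small point to verify carefully is that the constant in Theorem~\ref{keyprop} and the constant in Lemma~\ref{intlemmaiso} are both independent of $\mathbf v$ (the first by construction, since the shift only enters through the phase factor $e^{2\pi i\langle \mathbf v,\mathbf m\rangle}$ which is bounded by $1$ in modulus and is absorbed by the triangle inequality in Lemma~\ref{errorlemma}) and of $f$ beyond the stated Sobolev norm dependence. Both are immediate from inspection of the respective proofs.
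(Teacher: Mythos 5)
Your proof is correct and takes exactly the route the paper intends: the paper states, in the sentence immediately preceding the theorem, that the result follows by combining Theorem~\ref{keyprop} with Lemma~\ref{intlemmaiso} and recalling the Sobolev embedding $\mathring{H}^r([0,1]^d)\subset\C_c(\IR^d)$ for $r>d/2$, without spelling out the short multiplication of the two bounds that you have written out.
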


We remark that the Frolov properties of the matrix $B$ 
are not needed to get this
estimate on ${\mathring{H}^r([0,1]^d)}$, although they are essential 
for the upper bound
on ${\mathring{H}^r_{\mix}([0,1]^d)}$ from Theorem~\ref{mixthm}. 
For example, also the 
identity matrix would do. But if $B$ is a Frolov matrix, 
$Q_{n^{1/d}UB}^{\mathbf v}$ works 
universally for ${\mathring{H}^r_{\mix}([0,1]^d)}$ and ${\mathring{H}^r([0,1]^d)}$. 
Furthermore, the Frolov properties
of $B$ prevent large jumps in the number of nodes of
$Q_{n^{1/d}UB}^{\mathbf v}$ for small changes of 
the dilation matrix $U$.

\subsection{Random Shift}
\label{randomshiftsection}

Now we also choose the
shift parameter $\mathbf v$ in $Q_{n^{1/d}UB}^{\mathbf v}$ randomly.
We choose it uniformly distributed in $[0,1]^d$. 
Note that the number of function values the algorithm
uses for functions with support in $[0,1]^d$ is still of order $n$.
The first advantage of this method is its unbiasedness.

\begin{prop}[\cite{KN17}]
\label{Munbiased}
Let $B\in\IR^{d\times d}$ be a random matrix which is
almost surely invertible.
Let $\mathbf v$ be uniformly distributed in $[0,1]^d$
and independent of $B$. 
For any $f\in L^1(\IR^d)$, 
the series $Q_B^{\mathbf v}(f)$ converges absolutely almost surely and
\[
\IE \brackets{Q_B^{\mathbf v}(f)}=\int_{\IR^d} f(\mathbf y) \, \d \mathbf y
.\]
\end{prop}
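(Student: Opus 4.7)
The plan is to condition on $B$ and handle the randomness in $\mathbf v$ first, reducing to a deterministic statement: for every fixed invertible $B\in\IR^{d\times d}$ and every $f\in L^1(\IR^d)$, the series $Q_B^{\mathbf v}(f)$ converges absolutely for almost every $\mathbf v\in[0,1]^d$ and its integral over $\mathbf v\in[0,1]^d$ equals $\int_{\IR^d} f(\mathbf y)\,\d\mathbf y$. Once this is established, taking expectation over $B$ gives the proposition because the inner conditional expectation does not depend on $B$.

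To prove the conditional statement, first I would apply Tonelli's theorem to the nonnegative function $(\mathbf v,\mathbf m)\mapsto |f(B^{-\top}(\mathbf m+\mathbf v))|$ on $[0,1]^d\times \IZ^d$. Since the translates $\mathbf m+[0,1]^d$ for $\mathbf m\in\IZ^d$ tile $\IR^d$ up to a null set, the substitution $\mathbf u=\mathbf m+\mathbf v$ gives
\begin{equation*}
 \sum_{\mathbf m\in\IZ^d} \int_{[0,1]^d} \abs{f\bigl(B^{-\top}(\mathbf m+\mathbf v)\bigr)}\,\d\mathbf v
 = \int_{\IR^d} \abs{f(B^{-\top}\mathbf u)}\,\d\mathbf u
 = \abs{\det B}\int_{\IR^d}\abs{f(\mathbf y)}\,\d\mathbf y,
\end{equation*}
where in the last equality I substitute $\mathbf y=B^{-\top}\mathbf u$ and use $|\det B^{-\top}|=|\det B|^{-1}$. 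Dividing by $|\det B|$ shows that the expectation of the absolute series is $\Vert f\Vert_1<\infty$, so the series $Q_B^{\mathbf v}(f)$ converges absolutely for $\mathbf v$ a.e.\ in $[0,1]^d$.

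With absolute convergence in hand, Fubini's theorem justifies swapping sum and integral in the expectation of $Q_B^{\mathbf v}(f)$ itself (without the absolute values), and the same tiling/substitution yields
\begin{equation*}
 \int_{[0,1]^d} Q_B^{\mathbf v}(f)\,\d\mathbf v
 =\frac{1}{\abs{\det B}}\sum_{\mathbf m\in\IZ^d}\int_{\mathbf m+[0,1]^d} f(B^{-\top}\mathbf u)\,\d\mathbf u
 =\int_{\IR^d} f(\mathbf y)\,\d\mathbf y.
\end{equation*}
Finally, since $B$ is almost surely invertible and $\mathbf v$ is independent of $B$, conditioning on $B$ and applying the tower property gives the claim. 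The only slightly delicate point is keeping track of the joint measurability so that Fubini applies to the unsigned series on the product space $[0,1]^d\times\IZ^d$ (with counting measure on $\IZ^d$); this is routine since each summand is a measurable function of $\mathbf v$.
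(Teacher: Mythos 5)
Your proposal is correct and follows essentially the same route as the paper: fix an invertible realization of $B$, establish $\IE\,\sum_{\mathbf m}\frac{1}{|\det B|}|f(B^{-\top}(\mathbf m+\mathbf v))|=\|f\|_1$ by the tiling-plus-substitution computation (the paper cites monotone convergence where you cite Tonelli — interchangeable here), deduce almost sure absolute convergence, swap sum and integral for the signed series (the paper uses dominated convergence where you use Fubini — again interchangeable), and then lift to random $B$ via independence and Fubini/tower property.
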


\begin{proof}
Let us first fix an invertible realization of $B$.
By the monotone convergence theorem,
we obtain
\begin{multline*}
\IE \brackets{\sum\limits_{\mathbf m\in\IZ^d} \frac{1}{\abs{\det B}} 
\abs{f\left( B^{-\top}(\mathbf m+{\mathbf v}) \right)}}
=\sum\limits_{\mathbf m\in\IZ^d} \IE \brackets{\frac{1}{\abs{\det B}} 
\abs{f\left( B^{-\top}(\mathbf m+{\mathbf v}) \right)}} \\
= \sum\limits_{\mathbf m\in\IZ^d} \frac{1}{\abs{\det B}} 
\int_{[0,1]^d} \abs{f\left( B^{-\top}(\mathbf m+\mathbf x) \right)} \, \d \mathbf x \\
= \sum\limits_{\mathbf m\in\IZ^d}\ \int_{B^{-\top}\brackets{\mathbf m+[0,1]^d}} 
\abs{f(\mathbf y)} \, \d \mathbf y
= \int_{\IR^d} \abs{f(\mathbf y)} \, \d \mathbf y\,
< \infty
.\end{multline*}
In particular, the series $Q_B^{\mathbf v}(f)$ 
converges absolutely almost surely
and is dominated by an integrable function.
Lebesgue's dominated convergence theorem yields
$$
\IE \brackets{Q_{B}^{\mathbf v}(f)}
= \sum\limits_{\mathbf m\in\IZ^d} \frac{1}{\abs{\det B}} 
\int_{[0,1]^d} f\left( B^{-\top}(\mathbf m+\mathbf x) \right) \, \d \mathbf x\\
= \int_{\IR^d} f(\mathbf y) \, \d \mathbf y.
$$
Fubini's theorem implies that the same equalities
hold if $B$ is a random matrix which is independent of $\mathbf v$ and 
almost surely invertible.
In particular, $Q_{B}^{\mathbf v}(f)$ still converges 
absolutely almost surely.
\end{proof}

%
%

The second advantage of this method is the slight improvement in the
order of convergence of the expected error 
on ${\mathring{H}^r_{\mix}([0,1]^d)}$.
If only $U$ is random, 
the expected error is of order $n^{-r-1/2}(\ln n)^\frac{d-1}{2}$,
see Theorem~\ref{mixthm}.
If both $U$ and $\mathbf v$ are random,
the expected error is of order $n^{-r-1/2}$,
as proven in \cite{Ul17}.
The proof even shows that the quantity 
$$
\brackets{\IE \abs{Q_{n^{1/d}UB}^{\mathbf v}(f)-S_d(f)}^2}^{1/2}
$$
satisfies this bound.
This is a stronger statement, as implied by Hölder's inequality.
We now turn to the proof.
Similar to Lemma~\ref{errorlemma}, 
the expected error of the randomized algorithm
for integration on ${\C_c(\IR^d)}$ can be expressed 
in terms of the Fourier transform.

\begin{lemma}
\label{errorlemma2}
Let $B\in\IR^{d\times d}$ be invertible and
$\mathbf v$ be uniformly distributed in $[0,1]^d$. 
For any $f\in \C_c(\IR^d)$, we have
\[
\IE \abs{Q_B^{\mathbf v}(f)-\int_{\IR^d} f(\mathbf x)~\d\mathbf x}^2 
= \sum\limits_{\mathbf m\in\IZ^d\setminus\set{0}} \abs{\mathcal{F}f(B\mathbf m)}^2
.\]
\end{lemma}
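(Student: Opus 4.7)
The plan is to interpret $\phi(\mathbf v) = Q_B^{\mathbf v}(f)$ as a $\IZ^d$-periodic function on the torus $[0,1]^d$ and apply Parseval's theorem. Since $\mathbf v$ is uniform on $[0,1]^d$, the expectation on the left-hand side is just the $L^2([0,1]^d)$ norm squared of $\phi - \int_{\IR^d} f$.

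First I would check that $\phi$ is well-defined and lies in $L^2([0,1]^d)$. Because $f\in\C_c(\IR^d)$, for each $\mathbf v$ only the $\mathbf m\in\IZ^d$ with $B^{-\top}(\mathbf m + \mathbf v)\in\supp f$ contribute to the series, and Lemma~\ref{anlemma} bounds their number uniformly in $\mathbf v$. In particular $\phi$ is $\IZ^d$-periodic (replacing $\mathbf v$ by $\mathbf v + \mathbf k$ merely reindexes the sum) and uniformly bounded, hence in $L^2([0,1]^d)$.

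Next I would compute the Fourier coefficients $\hat\phi(\mathbf m) = \int_{[0,1]^d}\phi(\mathbf v)\,e^{-2\pi i\langle \mathbf v,\mathbf m\rangle}\,\d\mathbf v$ on the torus. Interchanging the finite-at-each-point sum with the integral, using $e^{-2\pi i\langle\mathbf k,\mathbf m\rangle}=1$ for $\mathbf k\in\IZ^d$, and tiling $\IR^d = \bigcup_{\mathbf k\in\IZ^d}(\mathbf k + [0,1]^d)$, the substitution $\mathbf u = \mathbf k + \mathbf v$ yields
$$
\hat\phi(\mathbf m) = \frac{1}{|\det B|}\int_{\IR^d} f(B^{-\top}\mathbf u)\, e^{-2\pi i\langle \mathbf u,\mathbf m\rangle}\,\d\mathbf u.
$$
A further linear substitution $\mathbf y = B^{-\top}\mathbf u$ gives $\hat\phi(\mathbf m) = \mathcal{F}f(B\mathbf m)$. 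In particular $\hat\phi(\mathbf 0) = \int_{\IR^d} f(\mathbf y)\,\d\mathbf y$, which is precisely the quantity we subtract inside the absolute value. This calculation is essentially the Poisson-summation computation from the proof of Lemma~\ref{errorlemma} read in reverse, and is the main technical step.

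Finally, applying Parseval's identity on the torus to $\phi - \hat\phi(\mathbf 0) \in L^2([0,1]^d)$ gives
$$
\IE \abs{Q_B^{\mathbf v}(f) - \int_{\IR^d} f(\mathbf x)\,\d\mathbf x}^2
= \int_{[0,1]^d} \abs{\phi(\mathbf v) - \hat\phi(\mathbf 0)}^2\,\d\mathbf v
= \sum_{\mathbf m\in\IZ^d\setminus\{\mathbf 0\}} \abs{\mathcal{F}f(B\mathbf m)}^2,
$$
which is the claimed identity. The only subtle point is justifying the interchange of sum and integral when computing $\hat\phi(\mathbf m)$; this is immediate here because the integrand is bounded and the $\mathbf k$-sum has only finitely many nonzero terms for each $\mathbf v$, so dominated convergence (or direct Fubini on the tiling) applies.
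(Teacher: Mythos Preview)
Your proposal is correct and takes essentially the same approach as the paper: both view $\mathbf v\mapsto Q_B^{\mathbf v}(f)$ as a function in $L^2([0,1]^d)$, compute its Fourier coefficients via the tiling substitution to obtain $\hat\phi(\mathbf m)=\mathcal{F}f(B\mathbf m)$, and apply Parseval's identity. The paper phrases the final step as a variance decomposition $\IE|\phi|^2-|\IE\phi|^2$ rather than applying Parseval to $\phi-\hat\phi(\mathbf 0)$ directly, but this is a cosmetic difference.
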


\begin{proof}
We first recall that
$$
 \IE\, Q_B^{\mathbf v}(f)=\int_{\IR^d} f(\mathbf x) ~\d\mathbf x
 =\mathcal{F}f(\mathbf 0).
$$
In particular, we obtain
\[
\IE \abs{Q_B^{\mathbf v}(f)-\int_{\IR^d} f(\mathbf x)~\d\mathbf x}^2
= \Var \brackets{Q_B^{\mathbf v}(f)}
= \IE \abs{Q_B^{\mathbf v}(f)}^2 - \abs{\IE\, Q_B^{\mathbf v}(f)}^2
.\]
The algorithm $Q_B^{\mathbf v}(f)$ considered as a function of $\mathbf v\in[0,1]^d$
is a finite sum of square-integrable functions
and hence square-integrable.
Parseval's identity states
\[
\IE \abs{Q_B^{\mathbf v}(f)}^2
= \Xnorm{Q_B^{\brackets{\cdot}}(f)}{L^2([0,1]^d)}^2
= \sum\limits_{\mathbf m\in\IZ^d}
\abs{\Xscalar{Q_B^{\brackets{\cdot}}(f)}{
e^{2\pi i \scalar{\mathbf m}{\cdot}}}{L^2([0,1]^d)}}^2
.\]
For each index $\mathbf m\in\IZ^d$ we have the equality
\begin{multline*}
\Xscalar{Q_B^{\brackets{\cdot}}(f)}{e^{2\pi i \scalar{\mathbf m}{\cdot}}}{L^2([0,1]^d)}
= \abs{\det B}^{-1} \sum\limits_{\mathbf k\in\IZ^d}
\int_{[0,1]^d} f\left(B^{-\top}(\mathbf k+\mathbf v)\right)\, 
e^{-2\pi i \scalar{\mathbf m}{\mathbf v}}\d \mathbf v\\
= \abs{\det B}^{-1}
\int_{\IR^d} f\left(B^{-\top}\mathbf v\right)\, e^{-2\pi i \scalar{\mathbf m}{\mathbf v}}\d \mathbf v
= \int_{\IR^d} f\left(\mathbf v\right)\, e^{-2\pi i \scalar{B\mathbf m}{\mathbf v}}\d \mathbf v
= \mathcal{F}f(B\mathbf m)
.\end{multline*}
Putting everything together, we obtain the stated identity.
\end{proof}

Now follows an analogue of Theorem~\ref{keyprop}
for expected quadratic errors.

\begin{thm}[\cite{Ul17}]
\label{keyprop2}
Let $B$ be a Frolov matrix, let
$U$ be a random diagonal matrix whose diagonal entries
are independent and uniformly distributed in $[1,2^{1/d}]$,
and let $\mathbf v$ be independent of $U$ and uniformly
distributed in $[0,1]^d$.
There is a constant $c>0$ 
such that, for every $n\in\IN$ and $f\in{\C_c(\IR^d)}$,
\[
\IE \abs{Q_{n^{1/d}UB}^{\mathbf v}(f)-\int_{\IR^d} f(\mathbf x)~\d\mathbf x}^2
\leq\, c \,  n^{-1} \, \Vert\mathcal{F}f\Vert_{L^2\brackets{D_n}}^2
.\]
\end{thm}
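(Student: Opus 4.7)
\textbf{Proof plan for Theorem~\ref{keyprop2}.} The plan is to mimic the proof of Theorem~\ref{keyprop}, but starting from the exact variance identity of Lemma~\ref{errorlemma2} (which replaces the absolute value estimate of Lemma~\ref{errorlemma}). First, I would condition on the dilation matrix $U$. For any fixed realization of $U$, the matrix $n^{1/d}UB$ is invertible, so Lemma~\ref{errorlemma2} applied to $f\in\C_c(\IR^d)$ with $\mathbf v$ uniform in $[0,1]^d$ gives
$$
\IE_{\mathbf v}\abs{Q_{n^{1/d}UB}^{\mathbf v}(f)-\int_{\IR^d}f(\mathbf x)\,\d\mathbf x}^2
=\sum_{\mathbf m\in\IZ^d\setminus\{0\}}\abs{\mathcal{F}f(n^{1/d}UB\mathbf m)}^2.
$$
Then, taking expectation over $U$ and using Tonelli to swap expectation and summation, the problem reduces to bounding $\sum_{\mathbf m\neq 0}\IE_U\abs{\mathcal{F}f(n^{1/d}UB\mathbf m)}^2$.

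Next, I would exploit the fact that $n^{1/d}UB\mathbf m$ is uniformly distributed over the axis-parallel box with corners $n^{1/d}B\mathbf m$ and $(2n)^{1/d}B\mathbf m$, whose volume is $c_d\,n\,\prod_{j=1}^d\abs{(B\mathbf m)_j}$ with $c_d=(2^{1/d}-1)^d$. Writing each expectation as an integral and interchanging the sum with the integral, I obtain
$$
\sum_{\mathbf m\neq 0}\IE_U\abs{\mathcal{F}f(n^{1/d}UB\mathbf m)}^2
=\frac{1}{c_d n}\int_{\IR^d}\abs{\mathcal{F}f(\mathbf x)}^2\,\frac{N_n(\mathbf x)}{\prod_{j=1}^d\abs{(B\mathbf m(\mathbf x))_j}}\,\d\mathbf x,
$$
where I would instead separate the two factors cleanly by using the pointwise bound $\prod_j\abs{(B\mathbf m)_j}\ge 2^{-1}n^{-1}\prod_j\abs{x_j}$ (valid whenever $\mathbf x$ lies in the relevant box), and $N_n(\mathbf x)$ denotes the number of $\mathbf m\neq 0$ whose associated box contains $\mathbf x$, i.e.\ the number of $\mathbf m\neq 0$ with $B\mathbf m\in[\mathbf x/(2n)^{1/d},\mathbf x/n^{1/d}]$.

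The heart of the argument is then the Frolov counting estimate, which is the same geometric input as in the proof of Theorem~\ref{keyprop}. The second defining property of $B$ forces $B\mathbf m\in[\mathbf x/(2n)^{1/d},\mathbf x/n^{1/d}]$ to be empty unless $\prod_j\abs{x_j}\ge n$, i.e.\ $\mathbf x\in D_n$; and the third property gives $N_n(\mathbf x)\le 1+(1-2^{-1/d})^d n^{-1}\prod_j\abs{x_j}$, which on $D_n$ is bounded by a constant multiple of $n^{-1}\prod_j\abs{x_j}$. Substituting these bounds, the factor $\prod_j\abs{x_j}$ cancels with the reciprocal produced by the lower bound on $\prod_j\abs{(B\mathbf m)_j}$, leaving
$$
\sum_{\mathbf m\neq 0}\IE_U\abs{\mathcal{F}f(n^{1/d}UB\mathbf m)}^2
\le c\,n^{-1}\int_{D_n}\abs{\mathcal{F}f(\mathbf x)}^2\,\d\mathbf x,
$$
which is the claimed estimate.

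The main obstacle, as I see it, is purely bookkeeping: keeping track of the orientations in the product box (since $(B\mathbf m)_j$ may be negative), and verifying that the factor $\prod_j\abs{x_j}$ cancels exactly so that only $n^{-1}$ survives as the prefactor. Everything else is a direct squared analogue of the estimates already carried out in the proof of Theorem~\ref{keyprop}, with the $L^1$ norm of $\mathcal{F}f$ on $D_n$ replaced by the $L^2$ norm, as forced by the variance identity of Lemma~\ref{errorlemma2}.
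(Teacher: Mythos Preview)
Your proposal is correct and follows essentially the same route as the paper: apply Lemma~\ref{errorlemma2} conditionally on $U$, swap expectation with the lattice sum, rewrite each $\IE_U\abs{\mathcal{F}f(n^{1/d}UB\mathbf m)}^2$ as an integral over the dilation box, use the pointwise bound $\prod_j\abs{n^{1/d}(B\mathbf m)_j}\ge 2^{-1}\prod_j\abs{x_j}$, and then invoke the two Frolov properties to restrict to $D_n$ and cancel the coordinate product. The only cosmetic difference is that the paper appeals to monotone convergence rather than Tonelli and uses the cruder volume bound $N_n(\mathbf x)\le n^{-1}\prod_j\abs{x_j}+1$ instead of your $(1-2^{-1/d})^d n^{-1}\prod_j\abs{x_j}+1$, but this affects only the final constant.
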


\begin{proof}
By Lemma~\ref{errorlemma2} 
and the monotone convergence theorem, we have
\begin{equation*}
\IE \abs{Q_{n^{1/d}UB}^{\mathbf v}(f)-\int_{\IR^d} f(\mathbf x)~\d\mathbf x}^2
= \sum\limits_{\mathbf m\in\IZ^d\setminus\set{0}} 
\IE_U \abs{\mathcal{F}f(n^{1/d}UB\mathbf m)}^2
.\end{equation*}
Since each $n^{1/d}UB\mathbf m$ is uniformly distributed in the 
box $[n^{1/d}B\mathbf m,(2n)^{1/d}B\mathbf m]$ of volume 
$c_d\abs{\prod_{j=1}^d n^{1/d} (B\mathbf m)_j}$
with $c_d=(2^{1/d}-1)^d$,
this series equals
\begin{multline*}
\frac{1}{c_d} 
\sum\limits_{\mathbf m\in\IZ^d\setminus\set{0}}\, 
\int_{[n^{1/d}B\mathbf m,(2n)^{1/d}B\mathbf m]} 
\frac{\abs{\mathcal{F}f(\mathbf x)}^2}{\prod_{j=1}^d
\abs{n^{1/d}(B\mathbf m)_j}} \, \d \mathbf x \\
\leq \frac{1}{c_d} 
\sum\limits_{\mathbf m\in\IZ^d\setminus\set{0}}\, 
\int_{[n^{1/d}B\mathbf m,(2n)^{1/d}B\mathbf m]} 
\frac{\abs{\mathcal{F}f(\mathbf x)}^2}{\prod_{j=1}^d 2^{-1/d}\abs{x_j}} \, \d \mathbf x
= \frac{2}{c_d} \int_{\IR^d} 
\frac{\abs{\mathcal{F}f(\mathbf x)}^2}{\prod_{j=1}^d \abs{x_j}} 
N(\mathbf x)\d \mathbf x,
\end{multline*}
where
\begin{align*}
  N(\mathbf{x}) &= \card\set{\mathbf m\in\IZ^d\setminus\set{0}\mid 
 \mathbf x\in [n^{1/d}B\mathbf m,(2n)^{1/d}B\mathbf m]}\\
 &= \card\set{\mathbf m\in\IZ^d\setminus\set{0}\mid B\mathbf m\in
 \left[\frac{\mathbf x}{(2n)^{1/d}},\frac{\mathbf x}{n^{1/d}}\right]}.
\end{align*}
Thanks to the properties of the Frolov matrix $B$, if 
$\prod_{j=1}^d \abs{x_j}<n$, 
the latter set is empty and otherwise contains no more 
than 
$$
 \prod_{j=1}^d \abs{\frac{x_j}{n^{1/d}}}+1
 \leq 2 n^{-1} \prod_{j=1}^d \abs{x_j}
$$
points. Thus, we arrive at the upper bound
\[
\frac{4}{c_d} n^{-1} 
\int_{D_n} \abs{\mathcal{F}f(\mathbf x)}^2 \, \d \mathbf x 
\]
and the theorem is proven.
\end{proof}

Like the upper bound of Theorem~\ref{keyprop},
the upper bound of Theorem~\ref{keyprop2}
adjusts to the smoothness of the function.
This leads to the previously mentioned result
on the rate of convergence on $\mathring{H}^r_{\mix}([0,1]^d)$.

\begin{thm}[\cite{Ul17}]
\label{mixthm2}
Let $B$ be a Frolov matrix, 
let $U$ be a random diagonal matrix whose diagonal entries
are independent and uniformly distributed in $[1,2^{1/d}]$,
and let $\mathbf v$ be independent of $U$ and uniformly
distributed in $[0,1]^d$.
For every $r\in\IN$,
there is some $c_r>0$ such that,
for every $n\geq 2$ and $f\in {\mathring{H}^r_{\mix}([0,1]^d)}$,
\[
\brackets{\IE \abs{Q_{n^{1/d}UB}^{\mathbf v}(f)-S_d(f)}^2}^{1/2} 
\leq\, c_r \, n^{-r-1/2} 
\, \Xnorm{f}{H^r_{\rm mix}([0,1]^d)}
.\]
\end{thm}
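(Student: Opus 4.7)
The plan is to combine Theorem~\ref{keyprop2} with a direct estimate of $\Vert\mathcal{F}f\Vert_{L^2(D_n)}^2$ in terms of the mixed Sobolev norm. Since $f\in \mathring{H}^r_{\mix}([0,1]^d)$ has compact support in $[0,1]^d$, in particular $f\in C_c(\IR^d)$ and $S_d(f)=\int_{\IR^d}f(\mathbf x)\,\d\mathbf x$, so Theorem~\ref{keyprop2} gives
$$
 \IE \abs{Q_{n^{1/d}UB}^{\mathbf v}(f)-S_d(f)}^2
 \leq c\, n^{-1}\,\Xnorm{\mathcal{F}f}{L^2(D_n)}^2.
$$
It remains to prove the pointwise lower bound
$$
 h_r(\mathbf x)\,\geq\,(2\pi)^{2rd}\, n^{2r} \qquad\text{for all } \mathbf x\in D_n,
$$
since then the characterization $\Xnorm{f}{H^r_{\rm mix}(\IR^d)}^2=\int_{\IR^d}\abs{\mathcal{F}f(\mathbf x)}^2 h_r(\mathbf x)\,\d\mathbf x$ yields
$$
 \Xnorm{\mathcal{F}f}{L^2(D_n)}^2
 \leq (2\pi)^{-2rd}\, n^{-2r}\, \Xnorm{f}{H^r_{\rm mix}([0,1]^d)}^2,
$$
and putting the two estimates together gives $\IE|Q-S_d(f)|^2 \leq c_r\, n^{-2r-1}\Xnorm{f}{H^r_{\rm mix}([0,1]^d)}^2$, which is the stated bound after taking square roots.

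The only non-trivial step is verifying the lower bound on $h_r$. To do this I would split the coordinates into $A=\set{j\leq d: 2\pi\abs{x_j}\geq 1}$ and its complement. For $j\in A$ the factor $\sum_{k=0}^{r}(2\pi\abs{x_j})^{2k}$ dominates at $k=r$, giving at least $(2\pi\abs{x_j})^{2r}$; for $j\notin A$ it is at least $1$. Hence
$$
 h_r(\mathbf x)\,\geq\,\prod_{j\in A}(2\pi\abs{x_j})^{2r}
 =(2\pi)^{2r\abs{A}}\brackets{\prod_{j\in A}\abs{x_j}}^{2r}.
$$
Since $\abs{x_j}<1/(2\pi)$ for $j\notin A$, the constraint $\prod_{j=1}^{d}\abs{x_j}\geq n$ forces $\prod_{j\in A}\abs{x_j}\geq n\,(2\pi)^{d-\abs{A}}$. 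Substituting and simplifying collapses the powers of $2\pi$ into $(2\pi)^{2rd}$, independent of $\abs{A}$, proving the claimed pointwise bound.

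The main obstacle is this case analysis: the naive bound $h_r(\mathbf x)\geq\prod_j(2\pi\abs{x_j})^{2r}$ fails on $D_n$ because some coordinates may be small, and one must exploit that the summation in each factor of $h_r$ contains both the constant term $1$ and the top power to produce a uniform lower bound. This is the $L^2$-analogue of the hyperbolic-cross summation argument used in Lemma~\ref{intlemmamix} for the $L^1$-norm; the $L^2$-argument is considerably cleaner because the Fourier characterization of the norm provides the bound without any dyadic decomposition, and no Hölder-type estimate is needed.
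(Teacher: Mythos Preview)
Your argument is correct and follows exactly the paper's route: apply Theorem~\ref{keyprop2}, then bound $h_r$ from below on $D_n$ and use the Fourier characterization of the mixed Sobolev norm. The paper simply asserts $h_r(\mathbf x)\geq n^{2r}$ on $D_n$ in one line.

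One remark: the case split into $A$ and its complement is unnecessary, and your claim that ``the naive bound $h_r(\mathbf x)\geq\prod_j(2\pi|x_j|)^{2r}$ fails on $D_n$'' is not right. Each factor $\sum_{k=0}^r|2\pi x_j|^{2k}$ is a sum of nonnegative terms including the top term $|2\pi x_j|^{2r}$, so $\sum_{k=0}^r|2\pi x_j|^{2k}\geq |2\pi x_j|^{2r}$ holds trivially for every $x_j$, small or large. Multiplying over $j$ and using $\prod_j|x_j|\geq n$ gives $h_r(\mathbf x)\geq (2\pi)^{2rd}n^{2r}$ directly---no splitting needed. This is presumably what the paper had in mind; your detour through the index set $A$ reaches the same destination but the perceived obstacle is illusory.
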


\begin{proof}
If $c$ is the constant of Theorem~\ref{keyprop2}, we have the upper bound
\begin{multline*}
\IE \abs{Q_{n^{1/d}UB}^{\mathbf v}(f)-S_d(f)}^2
\leq c \,  n^{-1} \, \Vert\mathcal{F}f\Vert_{L^2\brackets{D_n}}^2\\
= c\, n^{-1} \, \int_{D_n} h_r(\mathbf x)^{-1}
\left|\mathcal{F}f(\mathbf x)\right|^2\, h_r(\mathbf x)\, \d \mathbf x\\
\leq\, c \,  n^{-1} \, \Xnorm{h_r^{-1}}{L^\infty\brackets{D_n}} 
\int_{\IR^d}\left|\mathcal{F}f(\mathbf x)\right|^2\, h_r(\mathbf x)\, \d \mathbf x
\end{multline*}
for the expected quadratic error.
Since $h_r(\mathbf x)\geq n^{2r}$ for $\mathbf x\in D_n$, we get the estimate
\[
\IE \abs{Q_{n^{1/d}UB}^{\mathbf v}(f)-S_d(f)}^2
\leq c\, n^{-2r-1} 
\, \Xnorm{f}{H^r_{\rm mix}([0,1]^d)}^2
,\]
which proves the theorem.
\end{proof}

The error of the algorithm in Theorem~\ref{mixthm2} also has the optimal
order of convergence for 
${\mathring{H}^r([0,1]^d)}$.
This can be derived from Theorem~\ref{keyprop2} using the same argument
as in the proof of Theorem~\ref{mixthm2}.

\begin{thm}[\cite{KN17}]
\label{isothm2}
Let $B$ be a Frolov matrix, 
let $U$ be a random diagonal matrix whose diagonal entries
are independent and uniformly distributed in $[1,2^{1/d}]$,
and let $\mathbf v$ be independent of $U$ and uniformly
distributed in $[0,1]^d$.
For every $r\in\IN$ with $r>d/2$,
there is some $c_r>0$ such that, 
for every $n\in\IN$ and $f\in {\mathring{H}^r([0,1]^d)}$,
\[
\brackets{\IE \abs{Q_{n^{1/d}UB}^{\mathbf v}(f)-S_d(f)}^2}^{1/2} 
\leq\, c_r \, n^{-r/d-1/2} 
\, \Xnorm{f}{H^r([0,1]^d)}
.\]
\end{thm}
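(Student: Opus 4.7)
The plan is to mimic the proof of Theorem~\ref{mixthm2} verbatim, with the mixed-smoothness weight $h_r$ replaced by the isotropic weight $v_r$. First, I would invoke Theorem~\ref{keyprop2} (which is a pure Fourier-analytic statement and knows nothing about smoothness type) to get
\[
\IE \abs{Q_{n^{1/d}UB}^{\mathbf v}(f)-S_d(f)}^2 \leq c\, n^{-1}\, \Xnorm{\mathcal{F}f}{L^2(D_n)}^2.
\]
Inserting the factor $v_r^{-1}v_r$ under the integral and applying Hölder's inequality separates the geometry of $D_n$ from the smoothness of $f$:
\[
\Xnorm{\mathcal{F}f}{L^2(D_n)}^2 \leq \Xnorm{v_r^{-1}}{L^\infty(D_n)}\int_{\IR^d}\abs{\mathcal{F}f(\mathbf x)}^2 v_r(\mathbf x)\,\d\mathbf x.
\]
The second factor is exactly $\Xnorm{f}{H^r(\IR^d)}^2$ by the Fourier characterization of $H^r(\IR^d)$ recalled in Section~\ref{functionclassessec}, and since $f$ has compact support in $[0,1]^d$ this equals $\Xnorm{f}{H^r([0,1]^d)}^2$.

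The only remaining step is to estimate $\Xnorm{v_r^{-1}}{L^\infty(D_n)}$. Here the weight $v_r(\mathbf x)=\sum_{\abs{\alpha}\leq r}\prod_j(2\pi\abs{x_j})^{2\alpha_j}$ contains in particular the pure-power terms $(2\pi\abs{x_j})^{2r}$ for each coordinate direction, so $v_r(\mathbf x)\geq (2\pi)^{2r}\Vert\mathbf x\Vert_\infty^{2r}$. For $\mathbf x\in D_n$, the geometric-mean bound $\Vert\mathbf x\Vert_\infty^d\geq \prod_j\abs{x_j}\geq n$ gives $\Vert\mathbf x\Vert_\infty\geq n^{1/d}$, and hence $v_r(\mathbf x)\geq (2\pi)^{2r}n^{2r/d}$ uniformly on $D_n$. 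Combining the three estimates yields
\[
\IE \abs{Q_{n^{1/d}UB}^{\mathbf v}(f)-S_d(f)}^2 \leq c_r\, n^{-2r/d-1}\,\Xnorm{f}{H^r([0,1]^d)}^2,
\]
and taking square roots gives the claim.

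There is no real obstacle here: the hypothesis $r>d/2$ is used implicitly to ensure that $f$ has a continuous representative so that $Q_{n^{1/d}UB}^{\mathbf v}(f)$ is even defined pointwise, and the Frolov property of $B$ is used only through Theorem~\ref{keyprop2} (as the author already notes after Theorem~\ref{isothm}, the Frolov property is not actually necessary for the isotropic estimate itself). The qualitative difference to the mixed-smoothness case is simply that the weight $v_r$ satisfies a worse but dimension-coupled lower bound $n^{2r/d}$ on $D_n$, instead of the dimension-free bound $n^{2r}$ enjoyed by $h_r$, which is exactly what produces the rate $n^{-r/d-1/2}$ in place of $n^{-r-1/2}$.
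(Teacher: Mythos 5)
Your proof is correct and takes the same route the paper intends: the paper does not spell out a proof of Theorem~\ref{isothm2} but states just before it that the result ``can be derived from Theorem~\ref{keyprop2} using the same argument as in the proof of Theorem~\ref{mixthm2}'', and that is exactly what you do, with $h_r$ replaced by $v_r$. The only cosmetic difference is that you lower-bound $v_r$ on $D_n$ via $v_r(\mathbf x)\geq(2\pi)^{2r}\Vert\mathbf x\Vert_\infty^{2r}$ and $\Vert\mathbf x\Vert_\infty\geq n^{1/d}$, while the paper's Lemma~\ref{intlemmaiso} (the $L^1$-analogue) uses $\Vert\mathbf x\Vert_2$; both work, and for an $L^\infty$-bound of $v_r^{-1}$ on $D_n$ your $\ell^\infty$-version is the more natural choice.
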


Note that the corresponding upper bound for the expected absolute error
(instead of the expected mean square error)
is a direct consequence of either Theorem~\ref{isothm}
or Theorem~\ref{isothm2}.


\subsection{Functions without Boundary Conditions}
\label{transformationsection}

We can transform the algorithm from Section~\ref{randomshiftsection}
such that its error satisfies the same upper bounds for 
every function in 
${H^r_{\rm mix}([0,1]^d)}$ and ${H^r([0,1]^d)}$, 
not only for those vanishing at the boundary. 
This is done by a standard method, which was already used in 
\cite[pp.\,359]{Te03} to transform Frolov's deterministic algorithm.
The transformation is independent of $r$ and 
preserves the unbiasedness of the algorithm.

To that end, let $\psi:\IR\to\IR$ be an infinitely differentiable function 
that is a diffeomorphism of $(0,1)$,
vanishes on $(-\infty,0)$, and equals 1 on $(1,\infty)$.
An example is given by the following definition for $x\in\IR$:
\[
h(x)=\begin{cases}
e^\frac{1}{(2x-1)^2-1} & \text{if } x\in(0,1),\\
0 & \text{else,}
\end{cases}
\quad\quad
\psi(x)=\frac{\int_{-\infty}^x h(t) \, \d t}{
\int_{-\infty}^{\infty} h(t) \, \d t}.
\]
Like $h$ also $\psi$ is infinitely differentiable and 
vanishes on $(-\infty,0)$ and equals 1 on $(1,\infty)$. 
Since the derivative of $\psi$ is strictly positive on $(0,1)$, 
it is strictly increasing and a bijection of $(0,1)$ and its 
inverse function is smooth.

\begin{figure}[ht]
 \begin{minipage}{.49\linewidth}
 \includegraphics[width=\linewidth]{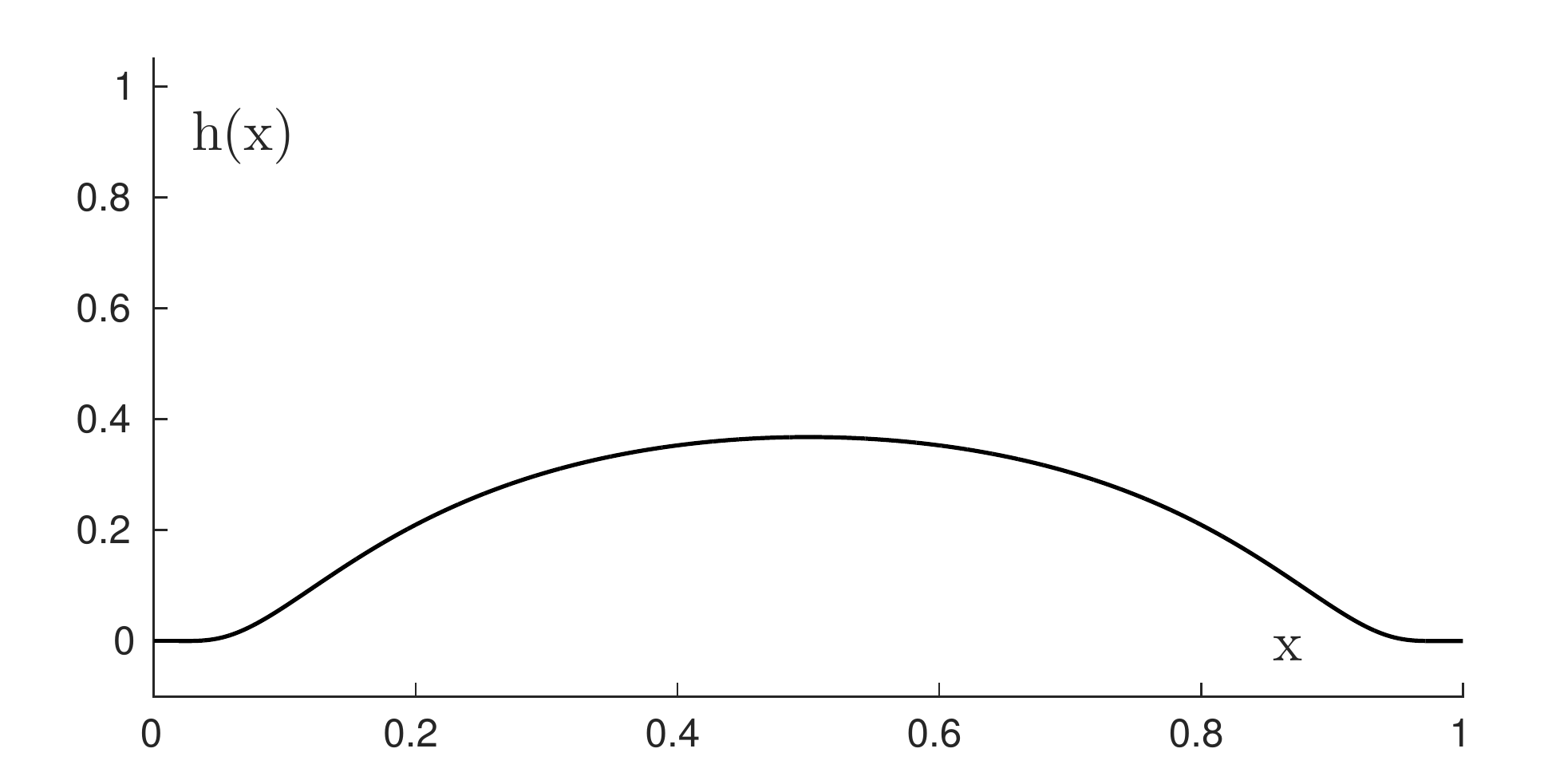}
 \end{minipage}
 \begin{minipage}{.49\linewidth}
 \includegraphics[width=\linewidth]{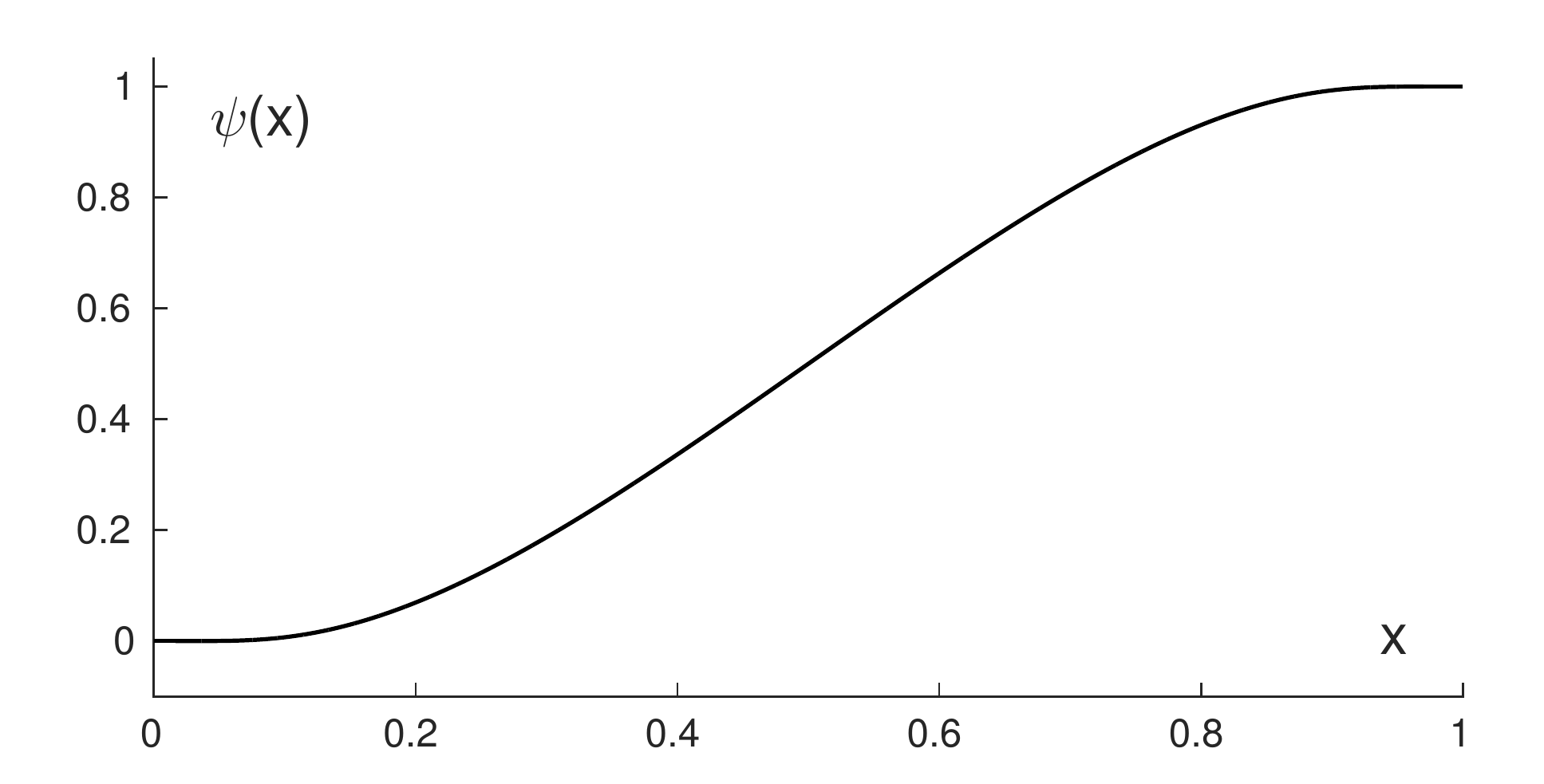}
 \end{minipage}
\end{figure}

Given such a function, 
the mapping
$$
 \Psi:\IR^d\to\IR^d, \quad
 \Psi(\mathbf x)=(\psi(x_1),\hdots,\psi(x_d))
$$ 
is a diffeomorphism of $(0,1)^d$ 
with inverse
$$
 \Psi^{-1}(\mathbf x)=(\psi^{-1}(x_1),\hdots,\psi^{-1}(x_d))^{\top}
$$ 
and Jacobian
$$
 |\diff \Psi(\mathbf x)| 
 =\prod\limits_{i=1}^{d}\psi'(x_i).
$$

If $A_n$ is any linear and deterministic quadrature formula 
with nodes $\mathbf x^{(j)}\in[0,1]^d$ and weights $a_j\in\IR$ for 
$j=1,\hdots,n$ we define the transformed quadrature formula 
$\widetilde{A}_n$ by choosing the new nodes and weights
\[
\widetilde{\mathbf x}^{(j)}=\Psi\brackets{\mathbf x^{(j)}} \text{\ \ \ \ and \ \ \ \ } 
\widetilde{a_j}=a_j \abs{\diff \Psi(\mathbf x^{(j)})}.
\]
Thus, $\widetilde{Q}_B^{\mathbf v}$ for $\mathbf v\in\IR^d$ and 
invertible $B\in\IR^{d\times d}$ takes the form
\[
\widetilde{Q}_B^{\mathbf v}(f)=\frac{1}{\abs{\det B}}
\sum\limits_{\mathbf m\in\IZ^d} f\left(\Psi\left(B^{-\top}(\mathbf m+\mathbf v)\right)\right)
\left|\diff \Psi\left(B^{-\top}(\mathbf m+\mathbf v)\right)\right|
\]
for any input function $f:[0,1]^d\to \IR$. Note that
the Jacobian is zero for any
$\mathbf m\in\IZ^d$ with $B^{-\top}(\mathbf m+\mathbf v)\not\in [0,1]^d$.
We now define the algorithm from Theorem~\ref{thm:main theorem}
in the introduction of this section.

\begin{alg}
 \label{alg:final algorithm}
 Let $B\in \IR^{d\times d}$ be a Frolov matrix
 and let $c= 2\left(\Vert B\Vert_1+1\right)^d$.
 For any $n\in\IN$ with $n\geq c$, we consider the randomized algorithm
 $$
  A_n=\widetilde{Q}_{(n/c)^{1/d}UB}^{\mathbf v},
 $$
 see Algorithm~\ref{alg:basic algorithm},
 where $U$ is a diagonal matrix whose diagonal entries
 are independent and uniformly distributed in $[1,2^{1/d}]$
 and $\mathbf v$ is independent of $U$ and uniformly
 distributed in $[0,1]^d$.
\end{alg}

By Lemma~\ref{anlemma}, the number of function values 
that this algorithm uses is bounded by $n$
for any $f:[0,1]^d\to \IR$.
We turn to the proof of Theorem~\ref{thm:main theorem}.

\begin{proof}[Proof of Theorem~\ref{thm:main theorem}]
For any $f\in L^1([0,1]^d)$, we define
$f_0=(f\circ \Psi) |\diff \Psi|$.
By the change of variables theorem, 
this function is integrable on $[0,1]^d$.
It satisfies
\[
S_d(f)=S_d(f_0)\quad\text{and}\quad 
\widetilde{Q}_{(n/c)^{1/d}UB}^{\mathbf v}(f) = Q_{(n/c)^{1/d}UB}^{\mathbf v}(f_0).
\]

\noindent
\emph{Part~1}.
Proposition~\ref{Munbiased} yields, for any $f\in L^1([0,1]^d)$,
\[
\IE \left(\widetilde{Q}_{(n/c)^{1/d}UB}^{\mathbf v}(f)\right) 
= \IE \left( Q_{(n/c)^{1/d}UB}^{\mathbf v}(f_0)\right) = S_d(f_0) = S_d(f).
\]

\noindent
\emph{Part~2}.
Since $\psi'$ vanishes outside $(0,1)$, 
all derivatives of $f_0$ vanish at the boundary
of $[0,1]^d$.
This implies $f_0\in {\mathring{H}^r_{\mix}([0,1]^d)}$ 
for all $f\in{H^r_{\rm mix}([0,1]^d)}$. By
Theorem~\ref{mixthm2},
\begin{multline*}
\IE \abs{\widetilde{Q}_{(n/c)^{1/d}UB}^{\mathbf v}(f)-S_d(f)}^2
= \IE \abs{ Q_{(n/c)^{1/d}UB}^{\mathbf v}(f_0)-S_d(f_0)}^2\\
\leq c_r^2\, n^{-2r-1} \Xnorm{f_0}{H^r_{\rm mix}([0,1]^d)}^2
\end{multline*}
and Theorem~\ref{mixthmworstcase} yields that
\begin{align*}
\sup\limits_{U,\mathbf v}
\abs{\widetilde{Q}_{(n/c)^{1/d}UB}^{\mathbf v}(f)-S_d(f)}
&= \sup\limits_{U,\mathbf v} 
\abs{ Q_{(n/c)^{1/d}UB}^{\mathbf v}(f_0)-S_d(f_0)}\\
&\leq c_r n^{-r} (\ln n)^\frac{d-1}{2} 
\Xnorm{f_0}{H^r_{\rm mix}([0,1]^d)}
,\end{align*}
if $c_r>0$ is the maximum of the constants of these theorems.
It remains to show that 
there is some $C_r>0$ such that every 
$f\in{H^r_{\rm mix}([0,1]^d)}$ 
satisfies 
$$
\Xnorm{f_0}{H^r_{\rm mix}([0,1]^d)}
\leq C_r\, \Xnorm{f}{H^r_{\rm mix}([0,1]^d)}.
$$
This is proven as follows.
The partial derivatives of $f_0$ take the form
\[
\diff^\alpha f_0(\mathbf x)=\frac{\partial^{\abs{\alpha}}}{\partial 
x_1^{\alpha_1}\cdots\partial x_d^{\alpha_d}}\ f(\Psi(\mathbf x))
\prod\limits_{i=1}^{d}\psi'(x_i) 
= \sum\limits_{\beta_1,\hdots,\beta_d=0}^{\alpha_1,\hdots,\alpha_d} 
\diff^\beta f(\Psi(\mathbf x)) R_{\alpha,\beta}(\mathbf x)
\]
for $\alpha\in\{0,1,\hdots,r\}^d$, where $R_{\alpha,\beta}(\mathbf x)$ is a finite 
sum of finite products of terms $\psi^{(j)}(x_i)$ with $i\in\{1,\hdots,d\}, 
j\in\{1,\hdots,rd+1\}$ and does not depend on $f$. It is therefore continuous 
and bounded by some $c_{\alpha,\beta}>0$.
We get
\begin{multline*}
 \Xnorm{\diff^\alpha f_0}{L^2([0,1]^d)}^2
 \leq \left(\sum\limits_{\beta_1,\hdots,\beta_d=0}^{\alpha_1,\hdots,\alpha_d} 
 \Xnorm{(\diff^\beta f \circ \Psi) \cdot R_{\alpha,\beta}}{L^2([0,1]^d)}\right)^2\\
 \leq \left(\sum\limits_{\beta_1,\hdots,\beta_d=0}^{\alpha_1,\hdots,\alpha_d} 
 c_{\alpha,\beta} \cdot\Xnorm{\diff^\beta f \circ \Psi}{L^2([0,1]^d)}\right)^2.
\end{multline*}
We proceed with H\"older's inequality and
the change of variables theorem for the
diffeomorphism $\Psi^{-1}$ of $(0,1)^d$ and
obtain the upper bound
\begin{multline*}
(r+1)^d\sum\limits_{\beta_1,\hdots,\beta_d=0}^{\alpha_1,
\hdots,\alpha_d} c_{\alpha,\beta}^2 \Xnorm{\diff^\beta f \circ \Psi}{
L^2([0,1]^d)}^2\\
= (r+1)^d\sum\limits_{\beta_1,\hdots,\beta_d=0}^{\alpha_1,\hdots,\alpha_d} 
c_{\alpha,\beta}^2 \int_{(0,1)^d} 
\diff^\beta f (\mathbf x)^2 
|\diff \Psi^{-1}(\mathbf x)| \, \d~\mathbf x\\
\leq (r+1)^d\sup_{\mathbf x\in(0,1)^d} |\diff \Psi^{-1}(\mathbf x)| 
\sum\limits_{\beta_1,\hdots,\beta_d=0}^{\alpha_1,\hdots,\alpha_d} 
c_{\alpha,\beta}^2 
\Xnorm{\diff^\beta f}{L^2([0,1]^d)}^2.
\end{multline*}
Summing over all $\alpha\in\{0,\hdots,r\}^d$
yields the desired estimate.

\noindent
\emph{Part~3}.
This is proven in the exact same manner, where we use
Theorem~\ref{isothm2} and Theorem~\ref{isothmworstcase}
instead of Theorem~\ref{mixthm2} and Theorem~\ref{mixthmworstcase}.
\end{proof}

We finish this section by showing how to 
arrive at Corollary~\ref{cor:order of convergence}.
Obviously,
Theorem~\ref{thm:main theorem} implies that
$$
 \e(n,\mathcal{P}_d^r)\preccurlyeq n^{-r-1/2}.
$$
On the other hand, it is proven in \cite{Ba59} that
$$
 \e(n,\mathcal{P}_1^r)\succcurlyeq n^{-r-1/2}.
$$
Moreover, we note that the function $f_d:[0,1]^d\to\IR$ with 
$f_d(\mathbf{x})=f_1(x_1)$
is contained in $F_d^r$ for any $f_1\in F_1^r$
and has the same integral.
If $A^d$ is a randomized algorithm on $F_d^r$,
we can define a
randomized algorithm $A^1$ on $F_1^r$ via
$A^1(f_1)=A^d(f_d)$.
The cost and error of $A^1$ are bounded above
by the cost and error of $A^d$.
This yields the relation
$$
 \e(n,\mathcal{P}_1^r) \leq \e(n,\mathcal{P}_d^r),
$$
which proves the corollary.

\section{Tensor Product Problems}
\label{sec:tensorproduct}

Let $H$ and $G$ be Hilbert spaces and 
let $S:H\to G$ be a compact linear operator.
Let $F$ be the unit ball of $H$.
The problem 
$$
 \mathcal{P}=\mathcal{P}[S,F,G,\lall,\mathrm{det},\mathrm{wc}]
$$ 
of approximating $S$ with deterministic algorithms based on $\lall$
in the worst case setting
is linear and was discussed in Section~\ref{sec:LPs}.
We study the corresponding problem
$$
 \mathcal{P}_d=\mathcal{P}[S_d,F_d,G_d,\lall,\mathrm{det},\mathrm{wc}],
$$
of approximating the $d^{\rm th}$ tensor product operator $S_d$.
This problem is linear as well.
See Section~\ref{sec:tensorproduct setting} for a more
detailed description of the problem.

The difficulty of the $d$-dimensional
problem $\mathcal{P}_d$ is completely determined
by the difficulty of the 1-dimensional problem $\mathcal{P}$.
In Section~\ref{asymptoticssection} we study the asymptotic behavior
of the $n^{\rm th}$ minimal error $\e(n,\mathcal{P}_d)$ for $n\to\infty$
based on the respective behavior of $\e(n,\mathcal{P})$.
In Section~\ref{preasymptoticssection} we do likewise
for the preasymptotic behavior of the minimal error.
Section~\ref{applicationssection} contains several examples.
The preasymptotic estimates also lead to
a tractability result in Section~\ref{tracsection}.

\begin{rem}
 It follows from Theorem~\ref{thm:LPs over Hilbert spaces} that
 the $n^{\rm th}$ minimal error satisfies
 $$
 \e(n,\mathcal{P}_d) = \inf\set{ \norm{S_d-A_n} \mid A_n\colon H_d\to G_d 
 \text{ linear, } \rank(A_n) \leq n}.
 $$
 This means that the error conincides with
 the $(n+1)^{\rm st}$ approximation number
 and all other $s$-numbers
 of the solution operator, see also~\cite[Section~11.3]{Pi78}
\end{rem}

\subsection{The Setting}
\label{sec:tensorproduct setting}

Let $H$ and $G$ be Hilbert spaces and 
let $S:H\to G$ be a compact linear operator.
Let $F$ be the unit ball of $H$.
We consider the problem 
$$
 \mathcal{P}=\mathcal{P}[S,F,G,\lall,\mathrm{det},\mathrm{wc}]
$$ 
of approximating $S$ with deterministic algorithms based on $\lall$
in the worst case setting.
From Section~\ref{sec:LPs},
we know that linear algorithms are optimal for this problem and that
optimal linear algorithms are given by the singular value decomposition
of $S$ in the following way,
see also \cite[Section~5.2]{NW08}.

Since $W=S^*S\in\mathcal{L}(H)$ is positive and compact,
it admits a finite or countable orthonormal basis $\mathcal{B}$
of $\ker(S)^\perp$ consisting of eigenvectors
$b\in\mathcal{B}$ to eigenvalues
$$
 \lambda(b) = \Xscalar{Wb}{b}{H}= \Xnorm{S b}{G}^2 > 0.
$$
We will refer to $\mathcal{B}$ as the orthonormal basis associated with $S$.
It can be characterized as the orthonormal basis of $\ker(S)^\perp$ 
whose image is an orthogonal basis of $\overline{S(H)}$.
It is unique up to the choice of orthonormal bases 
in the finite-dimensional eigenspaces of $W$.
We have
$$
 Sf = \sum_{b\in\mathcal{B}} \Xscalar{f}{b}{H} Sb
$$
for all $f\in H$. This representation is
called the \emph{singular value decomposition} or 
\emph{Schmidt decomposition} of $S$.
The square-roots of the eigenvalues of $W$ are called 
\emph{singular values} of $S$.
Let $\sigma_n$ be the $n^{\rm th}$ largest singular value of $S$
for all $n\leq\abs{\mathcal{B}}$. For $n>\abs{\mathcal{B}}$, let $\sigma_n=0$.
We consider the linear algorithm
$$
 A_n:F\to G, \quad
 A_n(f)= \sum_{b\in\mathcal{B}(n)} \Xscalar{f}{b}{H} Sb,
$$
where $\mathcal{B}(n)$ consists of all $b\in\mathcal{B}$ 
that satisfy $\Xnorm{S b}{G}>\sigma_{n+1}$.
We know that $A_n$ is optimal among all algorithms with cost $n$ or less,
see Theorem~\ref{thm:LPs over Hilbert spaces}.
It satisfies
$$
 \err(A_n)=\e(n,\mathcal{P})=\sigma_{n+1}.
$$
Moreover, we can easily verify the relation
\begin{equation}
\label{minmax}
 \sigma_{n+1} = \min\limits_{\substack{V\subset H\\ \dim(V)\leq n}}\, 
 \max\limits_{\substack{f\perp V\\ \Xnorm{f}{H}=1}} \Xnorm{Sf}{G},
\end{equation}
where equality is obtained for $V=\vspan(\mathcal{B}(n))$
and $f=b_{n+1}$.

We are concerned with tensor product problems, 
defined as follows.
Let $D$ be a set and let $\IK\in\set{\IR,\IC}$.
Let $D_d$ be the $d$-fold Cartesian product of $D$.
The tensor product of $\IK$-valued functions 
$f_1,\hdots,f_d$ on $D$ is the function
$$
 f_1\otimes\hdots\otimes f_d:\quad 
 D_d \to \IK, \quad \mathbf x \mapsto f_1(x_1)\cdot\hdots\cdot f_d(x_d).
$$
If $H$ is a Hilbert space of $\IK$-valued functions on $D$,
its $d^{\rm th}$ tensor product $H_d$ is the smallest Hilbert space of 
$\IK$-valued functions on $D_d$
that contains any tensor product of functions in $H$ and satisfies
$$
 \scalar{f_1\otimes\hdots\otimes f_d}{g_1\otimes\hdots\otimes g_d}
 = \scalar{f_1}{g_1}\cdot\hdots\cdot\scalar{f_d}{g_d}
$$
for any choice of functions $f_1,\hdots,f_d$ and $g_1,\hdots,g_d$ in $H$.
Let $G$ be another Hilbert space of $\IK$-valued functions 
with tensor product $G_d$
and let $S\in\mathcal{L}(H,G)$.
The $d^{\rm th}$ tensor product of $S$ is the unique 
operator $S_d\in\mathcal{L}(H_d,G_d)$ that satisfies
$$
 S_d\brackets{f_1\otimes\hdots\otimes f_d} = Sf_1\otimes\hdots\otimes Sf_d
$$
for any choice of functions $f_1,\hdots,f_d$ in $H$. 
If $S$ is compact, then so is $S_d$.
Finally, the $d^{\rm th}$ \emph{tensor product problem} is the problem
$$
 \mathcal{P}_d=\mathcal{P}[S_d,F_d,G_d,\lall,\mathrm{det},\mathrm{wc}],
$$
where $F_d$ is the unit ball of $H_d$.

Just like for the 1-dimensional problem $\mathcal{P}$,
optimal algorithms for $\mathcal{P}_d$ are linear and 
given by the singular value
decomposition of $S_d$.
Based on the singular value decomposition of $S$,
we easily obtain the singular value decomposition of $S_d$.
If $\mathcal{B}$ is the orthonormal basis associated with $S$, then
$$
 \mathcal{B}_d = \set{b_1\otimes\hdots\otimes b_d 
 \mid b_1,\hdots,b_d\in\mathcal{B}}
$$
is the orthonormal basis associated with the tensor product $S_d$.
In particular, the family of singular values of $S_d$ 
is given by
$$
 \sigma(\mathbf{n})=\sigma_{n_1}\cdot\hdots\cdot \sigma_{n_d}
 \quad\text{for}\quad \mathbf{n}\in\IN^d.
$$
Recall that $\e(n,\mathcal{P}_d)$ coincides with the
$(n+1)^{\rm st}$ largest singular value of $S_d$.
In particular,
$$
 \comp(\varepsilon,\mathcal{P}_d)
 =\card\set{\mathbf{n}\in\IN^d \mid \sigma(\mathbf{n})> \varepsilon}
$$
for all $\varepsilon\geq 0$.
The question for the difficulty of the
tensor product problem is thus of combinatorial nature.

\subsection{Asymptotic Behavior}
\label{asymptoticssection}

A classical result of Babenko \cite{Ba60} and Mityagin \cite{Mi62} 
is concerned with
the speed of decay of the $n^{\rm th}$ minimal error:

\begin{thm}[\cite{Ba60,Mi62}]
\label{babenko mityagin theorem}
 Let $\mathcal{P}_d$ be a tensor product problem
 as defined in Section~\ref{sec:tensorproduct setting}. 
 For any $r>0$ the following holds:
 \begin{itemize}
  \item[(i)] If\, $\e(n,\mathcal{P}) \preccurlyeq n^{-r}$,\, 
  then\, $\e(n,\mathcal{P}_d) \preccurlyeq  n^{-r}\brackets{\ln n}^{r(d-1)}$.
  \item[(ii)] If\, $\e(n,\mathcal{P}) \succcurlyeq n^{-r}$,\, 
  then\, $\e(n,\mathcal{P}_d) \succcurlyeq  n^{-r}\brackets{\ln n}^{r(d-1)}$. 
 \end{itemize}
\end{thm}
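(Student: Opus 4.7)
The plan is to translate the theorem into a counting problem about lattice points under a multiplicative (hyperbolic) constraint, which is a classical task. From the setup in Section~\ref{sec:tensorproduct setting} we have the key identity
$$
 \comp(\varepsilon,\mathcal{P}_d) = \card\set{\mathbf n\in\IN^d \mid \sigma(\mathbf n)>\varepsilon}
 = \card\set{\mathbf n\in\IN^d \mid \sigma_{n_1}\cdots\sigma_{n_d}>\varepsilon},
$$
where $\sigma_n$ is the $n^{\rm th}$ largest singular value of $S$ and $\e(n,\mathcal{P})=\sigma_{n+1}$. So the hypothesis $\e(n,\mathcal{P})\preccurlyeq n^{-r}$ translates into $\sigma_n\leq C n^{-r}$ for some $C>0$, and the reverse estimate into $\sigma_n\geq c n^{-r}$ for $n$ large enough. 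The goal is to convert these one-dimensional bounds into estimates on the $d$-dimensional cardinality, and finally to invert them to obtain the claimed bounds on $\e(n,\mathcal{P}_d)$.

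For part (i), I will use $\sigma_n\leq C n^{-r}$ to obtain the inclusion
$$
 \set{\mathbf n\in\IN^d \mid \sigma(\mathbf n)>\varepsilon}
 \subset \set{\mathbf n\in\IN^d \mid n_1\cdots n_d < (C^d/\varepsilon)^{1/r}}.
$$
The number of lattice points $\mathbf n\in\IN^d$ with $n_1\cdots n_d< M$ is a classical quantity (the summatory function of the $d$-fold divisor function) and is well known to behave like $M(\ln M)^{d-1}/(d-1)!$ as $M\to\infty$; I only need the upper estimate $\varlesssim M(\ln M)^{d-1}$, which follows by induction on $d$ from $\sum_{n\leq M}\lfloor M/n\rfloor\leq M(1+\ln M)$. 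Hence $\comp(\varepsilon,\mathcal{P}_d)\varlesssim \varepsilon^{-1/r}(\ln(1/\varepsilon))^{d-1}$. Setting $\varepsilon = c_d n^{-r}(\ln n)^{r(d-1)}$ and choosing $c_d$ large enough so that the right side is at most $n$, I conclude $\e(n,\mathcal{P}_d)\leq \varepsilon \preccurlyeq n^{-r}(\ln n)^{r(d-1)}$.

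For part (ii), I use the assumption $\sigma_n\geq c n^{-r}$ for $n\geq N_0$. Then for $\mathbf n$ with every $n_j\geq N_0$, we have $\sigma(\mathbf n)\geq c^d (n_1\cdots n_d)^{-r}$, so
$$
 \set{\mathbf n\in[N_0,\infty)^d \cap \IN^d \mid n_1\cdots n_d<(c^d/\varepsilon)^{1/r}}
 \subset\set{\mathbf n\in\IN^d \mid \sigma(\mathbf n)>\varepsilon}.
$$
The lower bound on the $d$-fold divisor sum (again classical, e.g.\ by induction) yields that the left-hand cardinality is $\vargtrsim M(\ln M)^{d-1}$ for $M=(c^d/\varepsilon)^{1/r}$ large. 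This gives $\comp(\varepsilon,\mathcal{P}_d)\vargtrsim \varepsilon^{-1/r}(\ln(1/\varepsilon))^{d-1}$, and inverting this estimate as before produces $\e(n,\mathcal{P}_d)\succcurlyeq n^{-r}(\ln n)^{r(d-1)}$.

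The main obstacle is purely technical: performing the inversion between $\varepsilon$ and $n$ cleanly so that the logarithmic factors line up. Concretely, after asserting $\comp(\varepsilon,\mathcal{P}_d)\asymp \varepsilon^{-1/r}(\ln\varepsilon^{-1})^{d-1}$, one has to verify that if $\varepsilon\asymp n^{-r}(\ln n)^{r(d-1)}$ then $\ln\varepsilon^{-1}\asymp \ln n$, so that $\varepsilon^{-1/r}(\ln\varepsilon^{-1})^{d-1}\asymp n$. This is routine but must be done carefully to get matching upper and lower bounds; it is the only nontrivial calculational point in the argument.
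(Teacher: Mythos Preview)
Your proposal is correct and follows precisely the route the paper takes for the sharper Theorem~\ref{asymptotic theorem} (which subsumes this result): translate to a lattice-point count under a hyperbolic constraint via the identity $\comp(\varepsilon,\mathcal{P}_d)=\card\{\mathbf n:\sigma(\mathbf n)>\varepsilon\}$, use the divisor-sum asymptotics for $K_N(R,d)=\card\{\mathbf n\in[N,\infty)^d\cap\IN^d:\prod n_j\le R\}$ (the paper's Lemma~\ref{cardinalityasymptotics}, proved by the same induction you sketch), and then invert between $n$ and $\varepsilon$ exactly as you describe. The paper does not prove Theorem~\ref{babenko mityagin theorem} separately, citing it as classical, so there is nothing further to compare. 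One small notational point: in two places you write $\varlesssim$ and $\vargtrsim$ where the paper's conventions call for $\preccurlyeq$ and $\succcurlyeq$ (weak equivalence up to constants, which is all Theorem~\ref{babenko mityagin theorem} asserts); this does not affect the argument.
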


Of course, other decay assumptions on $\e(n,\mathcal{P})$ may be of interest.
For instance, Pietsch~\cite{Pi82} and K\"onig~\cite{Ko84} 
study the decay of $\e(n,\mathcal{P}_d)$
if $\e(n,\mathcal{P})$ lies in the 
Lorentz sequence space $\ell_{p,q}$ for positive indices $p$ and $q$,
which is a stronger assumption than $(i)$ for $r=1/p$, 
but weaker than $(i)$ for any $r>1/p$.
However, we are motivated by the example of Sobolev embeddings,
see Section~\ref{applicationssection}.
We will hence stick to the assumptions of 
Theorem~\ref{babenko mityagin theorem}.
However, this theorem does not provide explicit estimates for $\e(n,\mathcal{P}_d)$, 
even if $n$ is huge.
This is because of the constants hidden in the notation.
But Theorem~\ref{babenko mityagin theorem} can be sharpened.

\begin{thm}[\cite{KSU15,Kr18}]
\label{asymptotic theorem}
 Let $\mathcal{P}_d$ be a tensor product problem
 as defined in Section~\ref{sec:tensorproduct setting}. 
 For any $c>0$ and $r>0$, the following holds:
 \begin{itemize}
  \item[(i)] If\, $\e(n,\mathcal{P}) \varlesssim c\, n^{-r}$,\, 
  then\, $\e(n,\mathcal{P}_d) \varlesssim 
  \frac{c^d}{{(d-1)!}^r}\, n^{-r}\brackets{\ln n}^{r(d-1)}$.
  \item[(ii)] If\, $\e(n,\mathcal{P}) \vargtrsim c\, n^{-r}$,\,
  then\, $\e(n,\mathcal{P}_d) \vargtrsim 
  \frac{c^d}{{(d-1)!}^r}\, n^{-r}\brackets{\ln n}^{r(d-1)}$. 
 \end{itemize}
\end{thm}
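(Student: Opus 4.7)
The plan is to convert the theorem into a counting statement about lattice points under a hyperbolic region, invoke the classical Dirichlet divisor asymptotic, and then invert. To set this up, recall from Section~\ref{sec:tensorproduct setting} that the singular values of $S_d$ are precisely the products $\sigma(\mathbf n)=\sigma_{n_1}\cdots\sigma_{n_d}$ for $\mathbf n\in\IN^d$, and that $\e(n,\mathcal{P}_d)$ is the $(n{+}1)^{\rm st}$ largest of these. Hence if I introduce the counting function
\[
 \nu(\varepsilon)=\card\set{\mathbf n\in\IN^d\mid \sigma(\mathbf n)>\varepsilon},
\]
then $\e(n,\mathcal{P}_d)\leq\varepsilon$ iff $\nu(\varepsilon)\leq n$. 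So the task is to pin down the asymptotic behavior of $\nu(\varepsilon)$ as $\varepsilon\to 0$ and then read off the corresponding $n^{\rm th}$ minimal error.

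The key combinatorial input I will use is the Dirichlet divisor estimate
\[
 D_d(t):=\card\set{\mathbf n\in\IN^d\mid n_1\cdots n_d\leq t}
 = \frac{t(\ln t)^{d-1}}{(d-1)!}\,(1+o(1))\quad\text{as } t\to\infty,
\]
which follows inductively from $D_d(t)=\sum_{n\leq t}D_{d-1}(t/n)$. For part (i), fix $\delta>0$. By assumption there is $N_0$ with $\sigma_n\leq (1+\delta)c\,n^{-r}$ for all $n\geq N_0$. I split the indices counted by $\nu(\varepsilon)$ into those with $\min_j n_j\geq N_0$ and the remainder. The first group contributes at most $D_d(t_\delta)$ with $t_\delta=((1+\delta)c)^{d/r}\varepsilon^{-1/r}$. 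Each term of the complementary group fixes at least one coordinate $n_j$ in $\set{1,\dots,N_0{-}1}$, so it is bounded by $dN_0\cdot D_{d-1}(C\varepsilon^{-1/r})$ for a constant $C$, which is $O(\varepsilon^{-1/r}(\ln\varepsilon^{-1})^{d-2})$ and thus of strictly smaller order than the main term. Combining gives
\[
 \nu(\varepsilon)\varlesssim \frac{((1+\delta)c)^{d/r}}{r^{d-1}(d-1)!}\,\varepsilon^{-1/r}(\ln\varepsilon^{-1})^{d-1},
\]
after using $\ln t_\delta=\tfrac{1}{r}\ln\varepsilon^{-1}+O(1)$.

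To convert this into a bound on $\e(n,\mathcal{P}_d)$, I invert asymptotically. Given large $n$, pick $\varepsilon_n$ so that the right-hand side above equals $n$. Taking logarithms yields $\ln\varepsilon_n^{-1}\sim r\ln n$, and plugging back gives
\[
 \varepsilon_n \;\sim\; \frac{((1+\delta)c)^{d}}{(d-1)!^{r}}\,n^{-r}(\ln n)^{r(d-1)}.
\]
Since $\nu(\varepsilon_n)\leq n$ forces $\e(n,\mathcal{P}_d)\leq \varepsilon_n$ and $\delta>0$ was arbitrary, this yields (i). Part (ii) is proved by the symmetric argument: under $\sigma_n\vargtrsim c\,n^{-r}$ I obtain the matching lower bound for $\nu(\varepsilon)$ by restricting to tuples with all $n_j\geq N_0$ (the loss is again only in a $(\ln\varepsilon^{-1})^{d-2}$-term), and invert in the same fashion.

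The main obstacles are handling the boundary correction cleanly — i.e.\ ensuring that tuples involving small indices do not spoil the leading constant — and carrying out the asymptotic inversion $\nu\mapsto\tau$ while preserving the strong-inequality form with constants arbitrarily close to $1$. Both issues are resolved by the observation that the contribution of such boundary tuples lies in $O(\varepsilon^{-1/r}(\ln\varepsilon^{-1})^{d-2})$, which is strictly smaller than the leading $O(\varepsilon^{-1/r}(\ln\varepsilon^{-1})^{d-1})$ by a full logarithmic factor, so that the $\delta\to 0$ limit passes through.
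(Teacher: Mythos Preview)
Your proof is correct and follows essentially the same route as the paper: both arguments bound the counting function $\nu(\varepsilon)=\card\{\mathbf n:\sigma(\mathbf n)>\varepsilon\}$ by separating tuples with all coordinates large (where the tight bound $\sigma_n\approx c\,n^{-r}$ applies and the Dirichlet-type asymptotic $\sim t(\ln t)^{d-1}/(d-1)!$ gives the main term) from tuples with a small coordinate (which contribute only $O(\varepsilon^{-1/r}(\ln\varepsilon^{-1})^{d-2})$), and then invert asymptotically. The only cosmetic differences are that the paper first normalizes to $r=1$, stratifies by the exact number $l$ of large coordinates (using $K_N(R,l)$ and proving its asymptotic as Lemma~\ref{cardinalityasymptotics}) rather than your two-way split, and carries out the inversion via the monotonicity of $n/(\ln n)^{d-1}$ rather than by solving for $\varepsilon_n$ directly.
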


In particular, we obtain that asymptotic equality 
$\e(n,\mathcal{P}) \sim c\, n^{-r}$ implies 
asymptotic equality
$\e(n,\mathcal{P}_d) \sim 
\frac{c^d}{{(d-1)!}^r}\, n^{-r}\brackets{\ln n}^{r(d-1)}$
for the tensor product problem.
Theorem~\ref{asymptotic theorem} is due to Theorem~4.3 in \cite{KSU15}. 
There, Kühn, Sickel, and Ullrich prove
this asymptotic equality in the special case
where $S_d$ is the embedding of the mixed order Sobolev space
$H^{r}_{\rm mix}(\mathbb{T}^d)$ on the $d$-torus 
$\mathbb{T}^d=[0,2\pi]^d$.
The general statement can be deduced from this special case 
with the help of their Lemma~4.14.
However, we prefer to give a direct proof by generalizing
the proof of Theorem~4.3 in \cite{KSU15}.

For the proof,
it will be essential to study the asymptotics of the cardinalities
\begin{equation}
\label{cardinality def}
 K_N(R,l)
 =\card\Big\{\mathbf{n}\in\set{N,N+1,\hdots}^l \,\Big\vert\, 
 \prod\nolimits_{j=1}^l n_j \leq R\Big\}
\end{equation}
for $l\in\set{1,\hdots,d}$ and $N\in\IN$ as $R\to \infty$.
In \cite[Lemma 3.2]{KSU15} it is shown that
\begin{equation}
\label{cardinalityasymptoticsexplicitformula}
 R \brackets{\frac{\brackets{\ln\frac{R}{2^l}}^{l-1}}{\brackets{l-1}!} 
 - \frac{\brackets{\ln\frac{R}{2^l}}^{l-2}}{\brackets{l-2}!}}
 \leq K_2(R,l) \leq
 R \frac{\brackets{\ln R}^{l-1}}{\brackets{l-1}!}
\end{equation}
for $l\geq 2$ and $R\in\{4^l,4^l+1,\hdots\}$, 
see also \cite[Theorem~3.4]{CD16}. Consequently we have
\begin{equation}
\label{cardinalityasymptoticsformula}
 \lim\limits_{R\to\infty} \frac{K_N(R,l)}{R \brackets{\ln R}^{l-1}} 
 = \frac{1}{\brackets{l-1}!}
\end{equation}
for $N=2$. In fact, (\ref{cardinalityasymptoticsformula}) holds true 
for any $N\in\IN$.
This can be derived from the case $N=2$,
but for the reader's convenience we give a complete proof.

\begin{lemma}
 \label{cardinalityasymptotics}
 \begin{equation*}
  \lim\limits_{R\to\infty} \frac{K_N(R,l)}{R \brackets{\ln R}^{l-1}} 
  = \frac{1}{\brackets{l-1}!}.
 \end{equation*}
\end{lemma}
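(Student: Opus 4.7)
The plan is to reduce the claim for general $N$ to the already-established case $N=2$ coming from \eqref{cardinalityasymptoticsexplicitformula}. The case $l=1$ is settled immediately, since $K_N(R,1)=\lfloor R\rfloor-N+1$ for $R\geq N$, so $K_N(R,1)/R\to 1=1/0!$. From now on I would assume $l\geq 2$.

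First I would treat $N\geq 2$. The inclusion $\{N,N+1,\dots\}^l\subseteq\{2,3,\dots\}^l$ gives the trivial upper estimate $K_N(R,l)\leq K_2(R,l)$, hence $\limsup_{R\to\infty} K_N(R,l)/(R(\ln R)^{l-1})\leq 1/(l-1)!$ by \eqref{cardinalityasymptoticsformula} for $N=2$. For the matching lower bound I would control the "defect" $K_2(R,l)-K_N(R,l)$, i.e.\ the number of tuples in $\{2,3,\dots\}^l$ with product at most $R$ having at least one coordinate in $\{2,\dots,N-1\}$. Fixing the position $j\in\{1,\dots,l\}$ of such a small coordinate and its value in $\{2,\dots,N-1\}$, the remaining $l-1$ coordinates lie in $\{2,3,\dots\}$ with product at most $R/2$, so a union bound yields
$$K_2(R,l)-K_N(R,l)\,\leq\,l(N-2)\,K_2(R/2,l-1).$$
Applying the $N=2$ case in dimension $l-1$ shows the right-hand side is $O(R(\ln R)^{l-2})=o(R(\ln R)^{l-1})$, closing the argument.

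Next I would treat $N=1$. Partitioning the tuples in $\{1,2,\dots\}^l$ according to the subset of coordinates that equal $1$ yields the identity
$$K_1(R,l)\,=\,\sum_{k=0}^{l}\binom{l}{k}\,K_2(R,l-k),$$
with the convention $K_2(R,0)=1$. By the $N=2$ case only the summand $k=0$ contributes the leading order $R(\ln R)^{l-1}/(l-1)!$, while every other summand is $O(R(\ln R)^{l-2})$ (or $O(R)$ for $k=l$), hence negligible.

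The only real obstacle is bookkeeping: one must verify that the defect term is genuinely of lower order than the main term. This reduces in both cases to applying \eqref{cardinalityasymptoticsexplicitformula} in dimension $l-1$, so no combinatorial input beyond what is already stated in the excerpt is needed.
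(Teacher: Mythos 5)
Your proof is correct, and it takes a genuinely different route from the paper. The paper actually flags this very possibility in the sentence preceding the lemma (``This can be derived from the case $N=2$, but for the reader's convenience we give a complete proof''), and then deliberately does \emph{not} go that way: instead it gives a self-contained double argument, proving the sharp upper bound $K_2(R,l)\leq R(\ln R)^{l-1}/(l-1)!$ by induction on $l$, transferring it to $N\geq 2$ by monotonicity and to $N=1$ by the same partition identity you wrote down, and then establishing the $\liminf$ for all $N$ by a \emph{separate} fresh induction on $l$ using the recursion $K_N(R,l+1)=\sum_{k\geq N}K_N(R/k,l)$ and a threshold $R_0$. You instead treat the $N=2$ case as a black box supplied by \eqref{cardinalityasymptoticsexplicitformula} and derive all other $N$ from it: the upper direction by $K_N\leq K_2$, and the lower direction by your defect bound $K_2(R,l)-K_N(R,l)\leq l(N-2)K_2(R/2,l-1)=O(R(\ln R)^{l-2})$, which is the genuinely new ingredient relative to what is written in the paper. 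The trade-off is clear: your argument is shorter and avoids redoing any induction, at the cost of importing the nontrivial two-sided estimate from \cite{KSU15,CD16}; the paper's argument is longer but self-contained, which is presumably why the author preferred it. One small bookkeeping caveat in your version: \eqref{cardinalityasymptoticsexplicitformula} is stated for $l\geq 2$ and $R\geq 4^l$, so in the defect step you need the dimension-$(l-1)$ case, which for $l=2$ means $K_2(\cdot,1)$ and must be handled by the explicit formula $K_2(R,1)=\lfloor R\rfloor-1$ rather than by citing the displayed estimate; you implicitly acknowledge this in your closing remark. Also, for $k=l$ in the $N=1$ identity the contribution is $K_2(R,0)=1=O(1)$, not $O(R)$, though both are of course negligible.
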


\begin{proof}
 Note that for all values of the parameters,
 \begin{equation*}
  K_N(R,l+1) = \sum\limits_{k=N}^\infty K_N\brackets{\frac{R}{k},l},
 \end{equation*}
 where $K_N\brackets{\frac{R}{k},l}=0$ for $k > \frac{R}{N^l}$.
 This allows a proof by induction on $l\in\IN$.
 Like in estimate (\ref{cardinalityasymptoticsexplicitformula}), 
 we first show that
 \begin{equation}
 \label{cardinalitysharpupperbound}
   K_2(R,l) \leq R \frac{\brackets{\ln R}^{l-1}}{\brackets{l-1}!}
 \end{equation}
 for any $l\in\IN$ and $R\geq 1$. This is obviously true for $l=1$.
 On the other hand, if this relation holds for some $l\in\IN$ 
 and if $R\geq 1$, then
 \begin{equation*}
  \begin{split}
   &K_2(R,l+1)
   = \sum\limits_{k=2}^{\left\lfloor R\right\rfloor} 
   K_2\brackets{\frac{R}{k},l}
   \leq \sum\limits_{k=2}^{\left\lfloor R \right\rfloor} 
   \frac{R \brackets{\ln \frac{R}{k}}^{l-1}}{k \brackets{l-1}!}\\
   &\leq \frac{R}{\brackets{l-1}!} 
   \int_1^R \frac{\brackets{\ln \frac{R}{x}}^{l-1}}{x} ~\d x
   = \frac{R}{\brackets{l-1}!} 
   \left[-\frac{1}{l}\brackets{\ln \frac{R}{x}}^l \right]_1^R
   = R \frac{\brackets{\ln R}^l}{l!}
  \end{split}
 \end{equation*}
 and (\ref{cardinalitysharpupperbound}) is proven. In particular, we have
 \begin{equation}
 \label{cardinalitylimsup}
  \limsup\limits_{R\to\infty} \frac{K_N(R,l)}{R \brackets{\ln R}^{l-1}} 
  \leq \frac{1}{\brackets{l-1}!}
 \end{equation}
 for $l\in\IN$ and $N=2$. Clearly, the same holds for $N\geq 2$, 
 since $K_N(R,l)$ is decreasing in $N$.
 Relation (\ref{cardinalitylimsup}) for $N=1$ follows from the case $N=2$ 
 by the identity
 \begin{equation*}\begin{split}
  K_1(R,l) 
  &= \sum\limits_{m=0}^l \card\set{\mathbf{n}\in\IN^l
  \,\Big\vert\, \card\set{1\leq j\leq l \mid n_j\neq 1}=m \land 
  \prod\nolimits_{j=1}^d n_j \leq R}\\
  &= \mathbf{1}_{R\geq 1} + \sum\limits_{m=1}^l \binom{l}{m}\cdot K_2(R,m)
 .\end{split}\end{equation*}
 It remains to prove
 \begin{equation}
 \label{cardinalityliminf}
  \liminf\limits_{R\to\infty} 
  \frac{K_N(R,l)}{R \brackets{\ln R}^{l-1}} \geq \frac{1}{\brackets{l-1}!}
 \end{equation}
 for $N\in\IN$ and $l\in\IN$. Again, this is obvious for $l=1$.
 Suppose, (\ref{cardinalityliminf}) holds for some $l\in\IN$ and let $b<1$.
 Then there is some $R_0\geq 1$ such that
 \begin{equation*}
  K_N(R,l) \geq b R \frac{\brackets{\ln R}^{l-1}}{\brackets{l-1}!}
 \end{equation*}
 for all $R\geq R_0$ and hence
 \begin{equation*}
  \begin{split}
   &K_N(R,l+1)
   \geq \sum\limits_{k=N}^{\left\lfloor R/R_0\right\rfloor} 
   K_N\brackets{\frac{R}{k},l}
   \geq \sum\limits_{k=N}^{\left\lfloor R/R_0\right\rfloor} 
   \frac{b R \brackets{\ln \frac{R}{k}}^{l-1}}{k \brackets{l-1}!}\\
   &\geq \frac{b R}{\brackets{l-1}!} \int_N^{\frac{R}{R_0}} 
   \frac{\brackets{\ln \frac{R}{x}}^{l-1}}{x} ~\d x
   = \frac{b R}{l!} 
   \brackets{\brackets{\ln \frac{R}{N}}^l- \brackets{\ln R_0}^l}
   \geq b^2 R \frac{\brackets{\ln R}^l}{l!} 
  \end{split}
 \end{equation*}
 for large $R$. Since this is true for any $b<1$, the induction step is complete.
\end{proof}

We turn to the proof of Theorem~\ref{asymptotic theorem}.

\begin{proof}[Proof of Theorem~\ref{asymptotic theorem}]
 We first realize that changing the singular numbers $\sigma_n$
 by a multiplicative constant $c$ for all $n\in\IN$ 
 changes $\e(n,\mathcal{P}_d)$ by the the factor $c^d$.
 Moreover, raising the singular numbers to some fixed power
 changes $\e(n,\mathcal{P}_d)$
 by the same power.
 We can hence assume without loss of generality that $\sigma_1=1$ and $r=1$.
 
 Proof of $(i)$: Let $c_3>c_2>c_1>c$.
 Since $\sigma_n \varlesssim c\, n^{-r}$,
 there is some $N\in\IN$ such that for any $n\geq N$ we have
 \begin{equation}
 \label{asymptotictheoremupperbounddim1}
  \sigma_n\leq c_1\, n^{-1}.
 \end{equation}
 We want to prove
 \begin{equation}
 \label{asymptotictheoremupperbound}
  \limsup\limits_{n\to\infty}  
  \frac{\e(n,\mathcal{P}_d)\,n}{\brackets{\ln n}^{d-1}} \leq \frac{c^d}{(d-1)!}.
 \end{equation}
 Since $n/\brackets{\ln n}^{d-1}$ is eventually increasing,
 instead of giving an upper bound for $\e(n,\mathcal{P}_d)$ in terms of $n$,
 we can just as well give an upper bound for $n$ 
 in terms of $\e(n,\mathcal{P}_d)$ to obtain (\ref{asymptotictheoremupperbound}).
 Clearly, there are at least $n+1$ singular values
 of $S_d$ greater than or equal 
 to $\e(n,\mathcal{P}_d)$ and hence
 \begin{equation*}
  \begin{split}
   n &\leq \card\set{\mathbf{n}\in\IN^d \mid \sigma(\mathbf{n}) 
   \geq \e(n,\mathcal{P}_d)}\\
   &= \sum\limits_{l=0}^d \card\set{\mathbf{n}\in\IN^d
   \,\big\vert\, \card\set{1\leq j \leq d \mid n_j\geq N}=l \land \sigma(\mathbf{n}) 
   \geq \e(n,\mathcal{P}_d)}\\
   &\overset{\sigma_1=1}{\leq} \sum\limits_{l=0}^d \binom{d}{l} N^{d-l}\, 
   \card\set{\mathbf{n}\in\set{N,N+1,\hdots}^l
   \,\big\vert\, \sigma(\mathbf{n}) \geq \e(n,\mathcal{P}_d)}
  .\end{split}
 \end{equation*}
 For every $\mathbf{n}$ in the latter set, 
 relation~(\ref{asymptotictheoremupperbounddim1}) implies
 that $\prod_{j=1}^l n_j \leq c_1^l \e(n,\mathcal{P}_d)^{-1}$. 
 Thus,
 \begin{equation*}
   n\leq \sum\limits_{l=0}^d \binom{d}{l}\, N^{d-l}\, K_N\brackets{c_1^l\,\e(n,\mathcal{P}_d)^{-1},l}.
 \end{equation*}
 Lemma~\ref{cardinalityasymptotics} yields that,
 if $n$ and hence $c_1^l\,\e(n,\mathcal{P}_d)^{-1}$ is large enough,
 \begin{equation*}
  K_N\brackets{c_1^l\,\e(n,\mathcal{P}_d)^{-1},l}
  \leq \frac{c_2^l\,\e(n,\mathcal{P}_d)^{-1}}{\brackets{l-1}!} \brackets{\ln \brackets{c_2^l\,\e(n,\mathcal{P}_d)^{-1}}}^{l-1}
 \end{equation*}
 for $l\in\set{1,\hdots,d}$. Letting $n\to\infty$, the term for $l=d$ is dominant and hence
 \begin{equation*}
  n \leq \frac{c_3^d\,\e(n,\mathcal{P}_d)^{-1}}{\brackets{d-1}!} \brackets{\ln \brackets{c_3^d\,\e(n,\mathcal{P}_d)^{-1}}}^{d-1}
 \end{equation*}
 for large values of $n$. By the monotonicity of $n/\brackets{\ln n}^{d-1}$ we obtain
 \begin{equation*}
  \frac{\e(n,\mathcal{P}_d)\,n}{\brackets{\ln n}^{d-1}}
  \leq \frac{c_3^d}{\brackets{d-1}!} \cdot
  \brackets{\frac{\ln \brackets{c_3^d \e(n,\mathcal{P}_d)^{-1}}}{\ln \brackets{\e(n,\mathcal{P}_d)^{-1}\cdot \frac{c_3^d}{\brackets{d-1}!} \brackets{\ln \brackets{c_3^d \e(n,\mathcal{P}_d)^{-1}}}^{d-1}}}}^{d-1}
 .\end{equation*}
 The fraction in brackets tends to one as $n$ and hence $\e(n,\mathcal{P}_d)^{-1}$ tends to infinity and thus
 \begin{equation*}
  \limsup\limits_{n\to\infty} \frac{\e(n,\mathcal{P}_d)\,n}{\brackets{\ln n}^{d-1}} \leq \frac{c_3^d}{\brackets{d-1}!}
 .\end{equation*}
 Since this is true for any $c_3>c$, the proof of (\ref{asymptotictheoremupperbound}) is complete.
 
 Proof of $(ii)$: Let $0<c_3<c_2<c_1<c$. Since $\sigma_n \vargtrsim c\, n^{-r}$,
 there is some $N\in\IN$ such that for $n\geq N$,
 we have
 \begin{equation}
 \label{asymptotictheoremlowerbounddim1}
  \sigma_n\geq c_1\, n^{-1}.
 \end{equation}
 We want to prove
 \begin{equation}
 \label{asymptotictheoremlowerbound}
  \liminf\limits_{n\to\infty} \frac{\e(n,\mathcal{P}_d)\,n}{\brackets{\ln n}^{d-1}} \geq \frac{c^d}{{(d-1)!}^s}
 \end{equation}
 for any $d\in\IN$. Clearly, there are at most $n$ 
 singular values of $S_d$ greater than 
 $\e(n,\mathcal{P}_d)$ and hence
 \begin{equation*}
 \begin{split}
  n &\geq \card\set{\mathbf{n}\in\IN^d \mid \sigma(\mathbf{n}) 
  > \e(n,\mathcal{P}_d)}\\
  &\geq \card\set{\mathbf{n}\in\set{N,N+1,\hdots}^d \mid 
  \sigma(\mathbf{n}) > \e(n,\mathcal{P}_d)}
 .\end{split}\end{equation*}
 Relation (\ref{asymptotictheoremlowerbounddim1}) implies that every $\mathbf{n}\in\set{N,N+1,\hdots}^d$
 with $\prod_{j=1}^d n_j < c_1^d\, \e(n,\mathcal{P}_d)^{-1}$ is contained in the last set.
 This observation and Lemma~\ref{cardinalityasymptotics} yield that
 \begin{equation*}
  n \geq K_N\brackets{c_2^d\, \e(n,\mathcal{P}_d)^{-1},d} \geq \frac{c_3^d\, \e(n,\mathcal{P}_d)^{-1}}{\brackets{d-1}!} \brackets{\ln \brackets{c_3^d\, \e(n,\mathcal{P}_d)^{-1}}}^{d-1}
 \end{equation*}
 for sufficiently large $n$. By the monotonicity of $n/\brackets{\ln n}^{d-1}$ for large $n$ we obtain
 \begin{equation*}
  \frac{\e(n,\mathcal{P}_d)\,n}{\brackets{\ln n}^{d-1}}
  \geq \frac{c_3^d}{\brackets{d-1}!} \cdot \brackets{\frac{\ln \brackets{c_3^d \e(n,\mathcal{P}_d)^{-1}}}{\ln \brackets{\frac{c_3^d}{\brackets{d-1}!} \brackets{\ln \brackets{c_3^d \e(n,\mathcal{P}_d)^{-1}}}^{d-1} \e(n,\mathcal{P}_d)^{-1} }}}^{d-1}
 .\end{equation*}
 The fraction in brackets tends to 1 as $n$ and hence $\e(n,\mathcal{P}_d)^{-1}$ tends to infinity and thus
 \begin{equation*}
  \liminf\limits_{n\to\infty} \frac{\e(n,\mathcal{P}_d)\,n}{\brackets{\ln n}^{d-1}}
  \geq \frac{c_3^d}{\brackets{d-1}!}
 .\end{equation*}
 Since this is true for any $c_3<c$, the proof of (\ref{asymptotictheoremlowerbound}) is complete.
\end{proof}

We give an interpretation of Theorem~\ref{asymptotic theorem}.
For $r>0$ let us consider the quantities
\begin{align*}
 &C_1=\limsup\limits_{n\to\infty}\, \e(n,\mathcal{P}) n^r,
 &&c_1=\liminf\limits_{n\to\infty}\, \e(n,\mathcal{P}) n^r,\\
 &C_d=\limsup\limits_{n\to\infty} 
 \frac{\e(n,\mathcal{P}_d)\cdot n^r}{\brackets{\ln n}^{r(d-1)}},
 &&c_d=\liminf\limits_{n\to\infty} 
 \frac{\e(n,\mathcal{P}_d)\cdot n^r}{\brackets{\ln n}^{r(d-1)}}.
\end{align*}
These limits may be both infinite or zero.
They can be interpreted as asymptotic or optimal constants for the bounds
\begin{align}
\label{rateupperbound}
\e(n,\mathcal{P}_d) 
&\leq C \cdot n^{-r}\brackets{\ln n}^{r(d-1)}\quad\text{and}\\
\label{ratelowerbound}
\e(n,\mathcal{P}_d) 
&\geq c \cdot n^{-r}\brackets{\ln n}^{r(d-1)}
.\end{align}
For any $C>C_d$ respectively $c<c_d$ there is a threshold $n_0\in\IN$ such that
(\ref{rateupperbound}) respectively (\ref{ratelowerbound}) 
holds for all $n\geq n_0$,
whereas for any $C<C_d$ respectively $c>c_d$ there is no such threshold.
Note that our proof provides a possibility to track down admissible 
thresholds $n_0$
for any $C> \frac{C_1^d}{{(d-1)!}^r}$ respectively 
any $c<\frac{c_1^d}{{(d-1)!}^r}$.
Theorem~\ref{babenko mityagin theorem} states that $C_d$ is finite, 
whenever $C_1$ is finite,
whereas $c_d$ is positive, whenever $c_1$ is positive.
Theorem~\ref{asymptotic theorem} is more precise. It states that
\begin{equation}
\label{constantsrelations}
 \frac{c_1^d}{{(d-1)!}^r}
 \leq c_d \leq C_d
 \leq \frac{C_1^d}{{(d-1)!}^r}.
\end{equation}
Obviously, there must be equality in 
all the relations of \eqref{constantsrelations},
if the limit of the sequence $\e(n,\mathcal{P}) n^r$ 
for $n\to\infty$ exists,
that is, if $C_1=c_1$.
It is natural to ask, whether any of these equalities always holds true.
The answer is no, as shown by the following example.

\begin{ex}
 Consider a solution operator $S$ with singular values
 $\sigma_n=2^{-k}$ for $n\in\{2^k,\hdots,2^{k+1}-1\}$ and $k\in\IN_0$. 
 That is, the singular values decay linearly in $n$, 
 but are constant on segments of length $2^k$.
 They satisfy
 \begin{equation*}
  C_1=\limsup\limits_{n\to\infty} \sigma_{n+1} n =
  \lim\limits_{k\to\infty} 2^{-k}\cdot\brackets{2^{k+1}-2}=2
 \end{equation*}
 and
 \begin{equation*}
  c_1=\liminf\limits_{n\to\infty} \sigma_{n+1} n 
  =\lim\limits_{k\to\infty} 2^{-k}\cdot \brackets{2^k-1}=1
 .\end{equation*}
 Also the singular values $\sigma(\mathbf n)$ 
 of the tensor product operator $S_d$
 are of the form $2^{-k}$ for some $k\in\IN_0$, where
 \begin{equation*}
  \begin{split}
  &\card\set{\mathbf n\in\IN^d \mid \sigma(\mathbf n)=2^{-k}}
  = \sum\limits_{\abs{\mathbf{k}}=k} 
  \card\set{\mathbf{n}\in\IN^d\mid \sigma_{n_j}=2^{-k_j} 
  \text{ for } j=1\hdots d}\\
  &= \sum\limits_{\abs{\mathbf{k}}=k} 2^k
  = 2^k \cdot \binom{k+d-1}{d-1}
  = \frac{2^k}{(d-1)!}\cdot (k+1)\cdot\hdots\cdot(k+d-1)
  .\end{split}
 \end{equation*}
 Hence, $\e(n,\mathcal{P}_d)=2^{-k}$ for $N(k-1,d)\leq n< N(k,d)$ 
 with $N(-1,d)=0$ and
 \begin{equation*}
  N(k,d)= \sum\limits_{j=0}^k 
  \frac{2^j}{(d-1)!}\cdot (j+1)\cdot\hdots\cdot(j+d-1)
 \end{equation*}
 for $k\in\IN_0$. The monotonicity of $n / \brackets{\ln n}^{d-1}$ 
 for large $n$ implies
 \begin{equation}
  \label{limsupinfexample1}
   C_d=\limsup\limits_{n\to\infty} \frac{\e(n,\mathcal{P}_d)\cdot n}{\brackets{\ln n}^{d-1}} = \lim\limits_{k\to\infty} \frac{2^{-k}\cdot N(k,d)}{\brackets{\ln N(k,d)}^{d-1}}
  \end{equation}
  and
  \begin{equation}
  \label{limsupinfexample2}
   c_d=\liminf\limits_{n\to\infty} 
   \frac{\e(n,\mathcal{P}_d)\cdot n}{\brackets{\ln n}^{d-1}} = \lim\limits_{k\to\infty} \frac{2^{-k}\cdot N(k-1,d)}{\brackets{\ln N(k-1,d)}^{d-1}}
 .\end{equation}
 We insert the relations
 \begin{equation*}
   N(k,d) \leq \frac{(k+d)^{d-1}}{(d-1)!}\sum\limits_{j=0}^k 2^j \leq \frac{2^{k+1}\cdot(k+d)^{d-1}}{(d-1)!}
 \end{equation*}
 and
 \begin{equation*}
   N(k,d) \geq \frac{(k-l)^{d-1}}{(d-1)!}\sum\limits_{j=k-l+1}^k 2^j = \frac{2^{k+1}(k-l)^{d-1}}{(d-1)!}\brackets{1-2^{-l}}
 \end{equation*}
 for arbitrary $l\in\IN$ in (\ref{limsupinfexample1}) and (\ref{limsupinfexample2}) to obtain
  \begin{equation*}
   C_d = 2\cdot\frac{\brackets{\log_2 e}^{d-1}}{(d-1)!} \quad\quad\text{and}\quad\quad
   c_d = \frac{\brackets{\log_2 e}^{d-1}}{(d-1)!}.
 \end{equation*}
 In particular, 
 \begin{equation*}
  \frac{c_1^d}{\brackets{d-1}!}
  < c_d
  < C_d
  < \frac{C_1^d}{\brackets{d-1}!}
 \quad\text{for}\quad 
 d\neq 1.\end{equation*}
\end{ex}

More generally, one can define the tensor product $S_d$ of $d$ 
different compact operators $S^{(j)}$ between Hilbert spaces.
If the singular numbers of $S^{(j)}$ are given by 
$\sigma^{(j)}_n$ for $n\in\IN$,
then the singular numbers of $S_d$ are given by
$$
 \sigma(\mathbf n) = \prod_{j=1}^d\sigma^{(j)}_{n_j}
 \quad\text{for}\quad \mathbf n \in \IN^d.
$$
An example for $S_d$ is given by the $L^2$-embedding of
Sobolev functions on the $d$-torus 
with mixed smoothness $(r_1,\hdots,r_d)$,
where $r_j>0$ for all $j\leq n$.
It is the tensor product of the $L^2$-embeddings
of the univariate Sobolev spaces with smoothness $r_j$,
whose singular numbers are of order $n^{- r_j}$.
It is known that $\e(n,\mathcal{P}_d)$
has the order $n^{-r}\brackets{\ln n}^{r(l-1)}$ in this case
where $r$ is the minimum among all numbers $r_j$ and $l$ is its multiplicity.
This was proven by Mityagin~\cite{Mi62} for integer vectors 
$(r_1,\hdots,r_d)$ and by Nikol’skaya~\cite{Ni74} in the general case.
See \cite[pp.\,32, 36, 72]{Te86} and \cite{DTU18} for more details.
It is not hard to deduce that
the order of decay of $\e(n,\mathcal{P}_d)$ 
is at least (at most) $n^{-r}\brackets{\ln n}^{r(l-1)}$,
whenever the singular values of the factor operators
decay at least (at most) with order $n^{-r_j}$.
But in contrast to the case studied above, 
asymptotic constants of tensor product problems with different factors
are not determined by the asymptotic constants of the factor problems.

\begin{ex}
Consider solution operators $S$, $T$ and $\widetilde T$ 
with singular numbers
 \begin{equation*}
  \sigma_n=n^{-1}, \quad \mu(n)=n^{-2}, \quad \tilde\mu(n)=
  \left\{\begin{array}{lr}
        1, & \text{for } n\leq N,\\
        n^{-2}, & \text{for } n>N,
        \end{array}\right.
 \end{equation*}
for some $N\in\IN$ and all $n\in\IN$.
The tensor product $S_2$ of $S$ and $T$ has the 
singular values
\begin{equation*}
 \sigma(\mathbf n)= n_1^{-1} n_2^{-2}
 \quad\text{for}\quad \mathbf n\in\IN^2
\end{equation*}
which yields for the respective problem $\mathcal{P}_2$ 
and all $n\in\IN$ that
\begin{equation*}
 \begin{split}
  n &\leq \card\set{\mathbf n\in\IN^2 \mid 
  \sigma(\mathbf n) \geq \e(n,\mathcal{P}_2)}
  = \card\set{\mathbf n\in\IN^2 \mid {n}_1 n_2^2 \leq \e(n,\mathcal{P}_2)^{-1}}\\
  &\leq \sum\limits_{ n_2=1}^\infty \card\set{ n_1\in\IN \mid 
  n_1 \leq \e(n,\mathcal{P}_2)^{-1}  n_2^{-2}}
  \leq \e(n,\mathcal{P}_2)^{-1} \sum\limits_{ n_2=1}^\infty n_2^{-2}
  \leq 2 \e(n,\mathcal{P}_2)^{-1},
 \end{split}
\end{equation*}
and hence
\begin{equation*}
 \limsup\limits_{n\to\infty} \e(n,\mathcal{P}_2)n \leq 2.
\end{equation*}
The tensor product $\widetilde S_2$ of $S$ and $\widetilde T$ 
has the singular values
\begin{equation*}
 \tilde \sigma(\mathbf n)=
 \left\{\begin{array}{lr}
        n_1^{-1}, & \text{if } n_2\leq N,\\
        n_1^{-1} n_2^{-2}, & \text{else},
        \end{array}\right.
 \quad\text{for}\quad \mathbf n\in\IN^2.
\end{equation*}
For the respective problem $\mathcal{\widetilde P}_2$ 
and $n\in\IN$ we obtain
\begin{equation*}
\begin{split}
  n &\geq \card\set{\mathbf n\in\IN^2 \mid
  \tilde \sigma(\mathbf n) > \e(n,\mathcal{\widetilde P}_2)}
  \geq N \card\set{n_1\in\IN \mid n_1^{-1} 
  > \e(n,\mathcal{\widetilde P}_2)}\\
  &\geq N \brackets{ \e(n,\mathcal{\widetilde P}_2)^{-1} -1}
\end{split}
\end{equation*}
and thus
\begin{equation*}
 \liminf\limits_{n\to\infty} \e(n,\mathcal{\widetilde P}_2) n 
 \geq N.
\end{equation*}
Hence, matching asymptotic constants of the factor problems 
do not necessarily lead to
matching asymptotic constants of the tensor product problems.
\end{ex}

\subsection{Preasymptotic Behavior}
\label{preasymptoticssection}

Theorem \ref{asymptotic theorem} leads to a good 
understanding of the asymptotic behavior of the $n^{\rm th}$ minimal
error of $\mathcal{P}_d$ if the $n^{\rm th}$ minimal error
of $\mathcal{P}$ is of polynomial decay.
If $\e(n,\mathcal{P})$ is roughly $c\, n^{-r}$ for large $n$,
then $\e(n,\mathcal{P}_d)$ is roughly 
$c^d (d-1)!^{-r} n^{-r} (\ln n)^{r(d-1)}$
for $n$ larger than a certain threshold.
But even for modest dimensions, the size of this threshold may
go far beyond the scope of computational capabilities.
Indeed, while the minimal error decreases, 
the function $n^{-r}\brackets{\ln n}^{r(d-1)}$ grows rapidly 
as $n$ goes from 1 to $e^{d-1}$.
For this function to become less than 1, 
the number $n$ even has to be super-exponentially large with respect to
the dimension.
Thus, any estimate for the minimal error 
in terms of this function is useless
to describe its behavior in the range $n\leq 2^d$, 
its so called preasymptotic behavior.
As a replacement we present the following estimate.

\begin{thm}[\cite{Kr18}]
\label{preasymptoticstheorem1}
Let $\mathcal{P}_d$ be a tensor product problem
as defined in Section~\ref{sec:tensorproduct setting}.
Let $\sigma_1=1$ and $\sigma_2\in(0,1)$ and assume 
$\sigma_n \leq C\, n^{-r}$ for some $r,C>0$ and all $n\geq 2$.
Then
\begin{equation*}
 \sigma_2 
 \brackets{\frac{1}{n+1}}^{
 \frac{\ln \brackets{\sigma_2^{-1}}}{
 \ln\brackets{1 + \frac{d}{\log_{2}(n+1)}}
 }}
 \leq \e(n,\mathcal{P}_d) 
 \leq \brackets{\frac{\exp\brackets{C^{2/r}}}{n+1}}^{
 \frac{\ln \brackets{\sigma_2^{-1}}}{
 \ln\brackets{\sigma_2^{-2/r}\, d}
 }}
\end{equation*}
for any $n\in\{1,\hdots,2^d-1\}$.
\end{thm}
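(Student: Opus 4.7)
The proof rests on the fact that, by the tensor product structure, the singular values of $S_d$ are exactly the products $\sigma(\mathbf n) = \prod_{j=1}^d \sigma_{n_j}$ for $\mathbf n \in \IN^d$, so $\e(n,\mathcal P_d)$ coincides with the $(n+1)$-st largest of them.

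For the lower bound, I would restrict attention to indices $\mathbf n \in \{1,2\}^d$. Such an $\mathbf n$ with exactly $k$ twos yields $\sigma(\mathbf n) = \sigma_2^{k}$, so at least $\sum_{k=0}^{K}\binom{d}{k}$ singular values are $\geq \sigma_2^{K}$. The task reduces to bounding the smallest integer $K^*$ with $\sum_{k=0}^{K^*}\binom{d}{k}\geq n+1$. The key ingredient is the elementary inequality
\[
\sum_{k=0}^K \binom{d}{k} \;\geq\; \Bigl(1+\tfrac{d}{K}\Bigr)^{\!K} \qquad (1 \leq K \leq d),
\]
which follows from the termwise bound $\binom{d}{j}\geq \binom{K}{j}(d/K)^j$ (a product of factors $(1-i/d)/(1-i/K)\geq 1$) and the binomial expansion of $(1+d/K)^K$. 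Writing $L=\log_2(n+1)$, the hypothesis $n<2^d$ yields $L\leq d$, and hence $K_0 := \lceil L\ln 2/\ln(1+d/L)\rceil$ satisfies $K_0 \leq d$. The monotonicity $\ln(1+d/K_0)\geq \ln(1+d/L)$ then gives $(1+d/K_0)^{K_0}\geq 2^L=n+1$, so $K^*\leq K_0 \leq 1 + L\ln 2/\ln(1+d/L)$. Substituting into $\e(n,\mathcal P_d)\geq \sigma_2^{K^*}$ and rewriting $2^{-L} = (n+1)^{-1}$ produces precisely the claimed lower bound.

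For the upper bound, my plan is to use a Markov-type counting inequality with a carefully chosen exponent. For any $t > 0$ with $\sum_n \sigma_n^t < \infty$,
\[
\#\{\mathbf n\in\IN^d : \sigma(\mathbf n)>v\}\cdot v^t \;\leq\; \sum_{\mathbf n\in\IN^d}\sigma(\mathbf n)^t \;=\; \Bigl(\sum_{n\geq 1}\sigma_n^t\Bigr)^{\!d}.
\]
For $t\geq 2/r$, the splitting $\sigma_n^t = \sigma_n^{t-2/r}\sigma_n^{2/r}$ together with $\sigma_n\leq \sigma_2$ and $\sigma_n^{2/r}\leq C^{2/r}n^{-2}$ for $n\geq 2$ and $\sum_{n\geq 2}n^{-2}<1$ yields $\sum_n\sigma_n^t < 1+\sigma_2^{t-2/r}C^{2/r}$. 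Choosing $t = 2/r+\ln d/\ln \sigma_2^{-1}$, so that $\sigma_2^{t-2/r} = 1/d$, the inequality $(1+x/d)^d\leq e^x$ gives $\bigl(\sum_n\sigma_n^t\bigr)^d < \exp(C^{2/r})$. Setting $v = (\exp(C^{2/r})/(n+1))^{1/t}$ then makes $v^{-t}\sum_{\mathbf n}\sigma(\mathbf n)^t < n+1$, so $\#\{\sigma(\mathbf n) > v\}\leq n$ and hence $\e(n,\mathcal P_d)\leq v$. The identity $1/t = \ln \sigma_2^{-1}/\ln(\sigma_2^{-2/r}d)$ matches the exponent in the statement.

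The main obstacle is finding the correct exponent $t$: it must be large enough for $\sum \sigma_n^t$ to be controlled by the $n^{-2}$ tail, yet so precisely tuned that the $d$-th power remains bounded by a constant independent of $d$. The value $t=2/r+\ln d/\ln\sigma_2^{-1}$ is exactly the balance. The lower bound, by contrast, is essentially combinatorial; its only nontrivial ingredient is $\sum_{k\leq K}\binom{d}{k}\geq(1+d/K)^K$, and the restriction $n<2^d$ is used precisely to ensure $K_0\leq d$ so that this inequality applies.
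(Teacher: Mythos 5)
Your proof is correct, and the argument for the lower bound is essentially the same as the paper's: restrict to indices in $\{1,2\}^d$ (the paper's $v=1$ specialization), count via binomial coefficients, and use the elementary inequality $\sum_{k\le K}\binom{d}{k}\geq(1+d/K)^K$ for $K\le d$. Your upper bound, however, takes a genuinely different route. The paper's proof (of the refined Theorem~\ref{preasymptoticstheorem}, from which this statement follows with $\delta=1$, $\sigma_1=1$) counts the set $\{\mathbf n:\sigma(\mathbf n)\geq v\}$ directly by stratifying on the number $l$ of coordinates $\neq 1$ and invoking the explicit estimate $K_2(R,l)\le R^{1+\delta}/\delta^{l-1}$ for the hyperbolic-cross cardinalities $K_2(R,l)$ from Lemma~\ref{hyperboliccrosscountinglemma}, then summing over $l$; this yields the full family of bounds parametrized by $\delta\in(0,1]$. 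You instead use a moment (Markov/Chebyshev) argument: for the right power $t$, the full sum $\sum_{\mathbf n}\sigma(\mathbf n)^t=(\sum_n\sigma_n^t)^d$ tensorizes cleanly, and the tuning $t=2/r+\ln d/\ln\sigma_2^{-1}$ makes $\sigma_2^{t-2/r}=1/d$ so that the $d$-th power stays bounded by $\exp(C^{2/r})$. Your approach is shorter and avoids the combinatorial lemma altogether, but it produces only the $\delta=1$ instance; the paper's stratified count is slightly more laborious but gives the free parameter $\delta$, which matters in Theorem~\ref{preasymptoticstheorem} where optimizing over $\delta$ sharpens the exponent. For the statement at hand, both reach the same constant and exponent.
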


Let us assume that the dimension $d$ is large.
Then the $n^{\rm th}$ minimal error, 
which roughly decays like $n^{-r}$ for huge values of $n$,
roughly decays like $n^{-t_d}$ with 
$$
t_d=\frac{\ln \brackets{\sigma_2^{-1}}}{\ln d}
$$ for small values of $n$.
This is why we refer to $t_d$
as preasymptotic rate of the tensor product sequence.
The preasymptotic rate is much worse than the asymptotic rate.
This is not an unusual phenomenon for high-dimensional problems.
Comparable estimates for the case of
$S_d$ being the $L^2$-embedding of the mixed order Sobolev space
on the $d$-torus
are established in Theorems 4.9, 4.10, 4.17 and 4.20 of \cite{KSU15}.
See \cite{CW17,CW17b,KMU16} for other examples.

In order to obtain bounds on the 
$n^{\rm th}$ minimal error for small values of $n$,
we give explicit estimates for $K_2(R,l)$ from (\ref{cardinality def})
for $l\leq d$ and small values of $R$.
The right asymptotic behavior of these estimates 
is less important in this case.
Note that $K_2(R,l)=0$ for $R< 2^l$.

\begin{lemma}
\label{hyperboliccrosscountinglemma}
Let $R\geq 0$ and $l\in\IN$. For any $\delta>0$ we have
\begin{align*}
 &K_2(R,l) \leq \frac{R^{1+\delta}}{\delta^{l-1}}  	&&\text{and}\\
 &K_2(R,l) \geq \frac{R}{3\cdot 2^{l-1}}			&&\text{for } R\geq 2^l.
\end{align*}
\end{lemma}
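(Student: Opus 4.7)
The plan is to prove both inequalities by induction on $l$, using the same recurrence that drove the proof of Lemma~\ref{cardinalityasymptotics}, namely
\[
 K_2(R,l+1) \;=\; \sum_{k=2}^{\infty} K_2\!\brackets{R/k,\,l},
\]
which follows by conditioning on the value of the last coordinate $n_{l+1}$.

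For the upper bound, the base case $l=1$ is immediate: if $R<2$, then $K_2(R,1)=0$, and if $R\geq 2$, then $K_2(R,1)=\lfloor R\rfloor-1\leq R\leq R^{1+\delta}$, which matches the claim since $\delta^{0}=1$. For the induction step, assuming $K_2(R,l)\leq R^{1+\delta}/\delta^{l-1}$ for all $R\geq 0$, the recurrence gives
\[
 K_2(R,l+1) \;\leq\; \frac{R^{1+\delta}}{\delta^{l-1}}\sum_{k=2}^{\infty} k^{-(1+\delta)}
 \;\leq\; \frac{R^{1+\delta}}{\delta^{l-1}}\int_{1}^{\infty} x^{-(1+\delta)}\,\d x
 \;=\; \frac{R^{1+\delta}}{\delta^{l}},
\]
which is what we want. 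The only thing to be slightly careful about is that the comparison of the series with the integral requires $1+\delta>1$, which holds.

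For the lower bound, the base case $l=1$, $R\geq 2$, is the observation $\lfloor R\rfloor-1\geq R/3$: for $R\in[2,3)$ both sides are at most $1$ and $K_2(R,1)=1$, while for $R\geq 3$ one has $\lfloor R\rfloor-1\geq R-2\geq R/3$. For the induction step with $R\geq 2^{l+1}$, I would simply keep only the $k=2$ term of the recurrence:
\[
 K_2(R,l+1) \;\geq\; K_2\!\brackets{R/2,\,l} \;\geq\; \frac{R/2}{3\cdot 2^{l-1}} \;=\; \frac{R}{3\cdot 2^{l}},
\]
where the middle inequality uses the induction hypothesis and is legitimate because $R/2\geq 2^{l}$.

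The proof is short and there is no real obstacle; if anything, the only delicate point is choosing between using the full recurrence (needed for the tight $1/\delta$ factor in the upper bound, via tail comparison with an integral) and using only the single $k=2$ term (which suffices, and is sharper, for the lower bound). Both bounds are otherwise routine consequences of the recurrence.
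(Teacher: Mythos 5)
Your proof is correct and follows essentially the same route as the paper: induction on $l$ via the recurrence $K_2(R,l+1)=\sum_{k\ge 2}K_2(R/k,l)$, bounding the sum above by $\int_1^\infty x^{-(1+\delta)}\,\d x$ for the upper estimate and retaining only the $k=2$ term for the lower estimate. The only (harmless) difference is that you spell out the base case $\lfloor R\rfloor-1\geq R/3$ in more detail than the paper does.
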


\begin{proof}
 Both estimates hold in the case $l=1$, since
 \begin{equation*}
  K_2(R,1)
  = \left\{\begin{array}{lr}
        0, & \text{for } R< 2,\\
        \lfloor R \rfloor -1, & \text{for } R\geq 2
        .\end{array}\right.
 \end{equation*}
If they hold for some $l\in\IN$, then
\begin{equation*}
  K_2(R,l+1)
  = \sum\limits_{k=2}^\infty K_2\left(\frac{R}{k},l\right)
  \leq\ \frac{R^{1+\delta}}{\delta^{l-1}} 
  \sum\limits_{k=2}^\infty \frac{1}{k^{1+\delta}}
  \leq\ \frac{R^{1+\delta}}{\delta^{l-1}} \int_1^\infty \frac{1}{x^{1+\delta}} ~\d x
  =\ \frac{R^{1+\delta}}{\delta^l}
\end{equation*}
and for $R\geq 2^{l+1}$
\begin{equation*}
  K_2(R,l+1)
  \geq K_2\brackets{\frac{R}{2},l}
  \geq \frac{R/2}{3\cdot 2^{l-1}}
  = \frac{R}{3\cdot 2^l}.
\end{equation*}
We have thus proven Lemma~\ref{hyperboliccrosscountinglemma} by induction.
\end{proof}

We give a slight refinement of Theorem~\ref{preasymptoticstheorem1}.

\begin{thm}[\cite{Kr18}]
\label{preasymptoticstheorem}
Consider $\mathcal{P}_d$
as defined in Section~\ref{sec:tensorproduct setting}
with $\sigma_1>\sigma_2>0$.
\begin{itemize}
 \item[(i)] Suppose that $\sigma_n \leq C\, n^{-r}$ 
 for some $r,C>0$ and all $n\geq 2$ and let $\delta\in (0,1]$.
 For any $n\in\IN_0$,
 \begin{equation*}
  \e(n,\mathcal{P}_d) \leq \sigma_1^d
  \brackets{\frac{\tilde{C}(\delta)}{n+1}}^{\mathbf{\alpha}(d,\delta)}
  \quad\text{with}
  \end{equation*}
  \begin{equation*}                 
  \tilde{C}(\delta)=\exp{\brackets{\frac{(C/\sigma_1)^{(1+\delta)/r}}{\delta}}} 
  \quad\text{and}\quad
  \mathbf{\alpha}(d,\delta)
  =\frac{\ln(\sigma_1/\sigma_2)}{
  \ln \brackets{d (\sigma_1/\sigma_2)^{(1+\delta)/r}}
  }>0
  .\end{equation*}
 \item[(ii)] Let $v=\card\set{n \geq 2\mid \sigma_n= \sigma_2}$. 
 For any $n\in\{1,\hdots,(1+v)^d-1\}$,
 \begin{equation*}
   \e(n,\mathcal{P}_d) \geq \sigma_1^{d-1}
   \sigma_2 \brackets{\frac{1}{n+1}}^{\beta(d,n+1)}
  \quad\text{with}\quad
  \beta(d,n) 
  = \frac{\ln(\sigma_1/\sigma_2)}{
  \ln \brackets{1+\frac{v}{\log_{1+v} n}\cdot d}}
  >0.
 \end{equation*}
\end{itemize}
\end{thm}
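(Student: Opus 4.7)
The proof of both parts rests on the observation (recalled in Section~\ref{sec:tensorproduct setting}) that $\e(n,\mathcal{P}_d)$ equals the $(n+1)$st largest singular value of $S_d$, and that the singular values of $S_d$ are the products $\sigma(\mathbf n)=\prod_{j=1}^{d}\sigma_{n_j}$ indexed by $\mathbf n\in\IN^d$. Consequently $\e(n,\mathcal{P}_d)\le \varepsilon$ is equivalent to $\card\{\mathbf n : \sigma(\mathbf n) > \varepsilon\}\le n$, so the upper bound in~(i) will reduce to a counting upper bound for this sublevel set, while the lower bound in~(ii) will follow once one exhibits at least $n+1$ tuples with $\sigma(\mathbf n)$ above the target.

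For part~(i) the plan is to decompose the counting set by the support size $l:=\card\{j: n_j\ge 2\}$. For a fixed support $S$ of size $l$, the constraint $\sigma(\mathbf n)>\varepsilon$ reads $\prod_{j\in S}\sigma_{n_j}>\varepsilon/\sigma_1^{d-l}$, and the hypothesis $\sigma_n\le Cn^{-r}$ forces $\prod_{j\in S}n_j < R_l := (C^{l}\sigma_1^{d-l}/\varepsilon)^{1/r}$. Lemma~\ref{hyperboliccrosscountinglemma} bounds the number of such $S$-indexed tuples by $R_l^{1+\delta}/\delta^{l-1}$, and summing over the $\binom{d}{l}$ supports yields
\[
 \card\{\mathbf n : \sigma(\mathbf n)>\varepsilon\}
 \le \delta\brackets{\frac{\sigma_1^{d}}{\varepsilon}}^{(1+\delta)/r}\sum_{l=0}^{d}\binom{d}{l}A^{l},
 \qquad A:=\frac{(C/\sigma_1)^{(1+\delta)/r}}{\delta}.
\]
Moreover, only $l$ with $\sigma_1^{d-l}\sigma_2^{l}>\varepsilon$, equivalently $l<L:=\log_{\sigma_1/\sigma_2}(\sigma_1^{d}/\varepsilon)$, can actually contribute, since otherwise $\sigma(\mathbf n)\le \sigma_1^{d-l}\sigma_2^{l}<\varepsilon$ automatically. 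Combining the truncation at $l<L$ with $\binom{d}{l}\le d^{l}/l!$ allows one to reorganise the truncated sum so that the $d$-dependence enters the final estimate as an exponent rather than a multiplicative constant; setting the resulting count equal to $n+1$ and solving for $\varepsilon$ then produces the claimed bound with $\tilde C(\delta)=\exp(A)$ and $\alpha(d,\delta)$ as stated.

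For part~(ii) the plan is to restrict attention to the $(1+v)^{d}$ tuples $\mathbf n\in\{1,2,\dots,1+v\}^{d}$. Since $\sigma_2=\sigma_3=\dots=\sigma_{1+v}$ by the definition of $v$, each such tuple satisfies $\sigma(\mathbf n)=\sigma_1^{d-l}\sigma_2^{l}$ with $l=\card\{j : n_j\ge 2\}$. The number of these tuples with $l\le l_0$ equals $\sum_{l=0}^{l_0}\binom{d}{l}v^{l}$, and each of them satisfies $\sigma(\mathbf n)\ge \sigma_1^{d-l_0}\sigma_2^{l_0}$. Choosing $l_0$ as the smallest integer for which this count reaches $n+1$, which lies in $\{1,\dots,d\}$ by the hypothesis $n+1\le (1+v)^{d}$, therefore gives $\e(n,\mathcal{P}_d)\ge \sigma_1^{d-l_0}\sigma_2^{l_0}=\sigma_1^{d-1}\sigma_2\cdot(\sigma_2/\sigma_1)^{l_0-1}$. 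Matching this with the claimed form reduces to the inequality $(l_0-1)\ln(\sigma_1/\sigma_2)\le \beta(d,n+1)\ln(n+1)$, which I would verify using the lower bound $\binom{d}{l_0}\ge (d/l_0)^{l_0}$ to control $l_0$ in terms of the quantity defining $\beta(d,n+1)$.

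The main obstacle is part~(i): a direct application of the binomial theorem gives $\sum_{l}\binom{d}{l}A^{l}=(1+A)^{d}$, which yields a constant that is exponential in $d$ and is therefore useless in the preasymptotic regime $n\le 2^{d}$ addressed by the theorem. The stated form instead has a $d$-independent constant $\tilde C(\delta)$ at the cost of a $d$-dependent exponent $\alpha(d,\delta)$, and producing this trade-off, via the truncation at $l<L$ combined with a careful optimisation of the truncated sum in $l$, is the only genuinely nontrivial computation in the proof; part~(ii) is essentially mechanical once $l_0$ is correctly identified.
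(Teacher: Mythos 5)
Your framework for part~(i) is correct and matches the paper's: reduce $\e(n,\mathcal{P}_d)$ to a counting bound on $\card\{\mathbf n:\sigma(\mathbf n)>\varepsilon\}$, decompose by the number $l$ of coordinates exceeding $1$, invoke Lemma~\ref{hyperboliccrosscountinglemma}, and observe that only $l<L:=\log_{\sigma_1/\sigma_2}(\sigma_1^d/\varepsilon)$ can contribute. But the step you flag as the "genuinely nontrivial computation" --- converting the truncated sum $\sum_{l\le\lfloor L\rfloor}\binom{d}{l}A^l$ into a $d$-independent constant times a $d^L$ factor --- is exactly what you have not supplied, and without it the proof does not close. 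The paper's trick is simple once seen: after truncation, bound $\binom{d}{l}\le d^l/l!$ and then pull out the maximum $d^l\le d^L$ (valid for $l\le\lfloor L\rfloor\le L$ since $d\ge 1$), obtaining
\[
 \sum_{l=0}^{\lfloor L\rfloor}\binom{d}{l}A^l
 \;\le\;
 d^{L}\sum_{l=0}^{\infty}\frac{A^l}{l!}
 \;=\;
 d^{L}\,e^{A},
\]
so the series telescopes to the Taylor series for $e^A=\tilde C(\delta)$ and the $d$-dependence is absorbed entirely into $d^L$, which then combines with the factor $(\sigma_1^d/\varepsilon)^{(1+\delta)/r}=(\sigma_1/\sigma_2)^{L(1+\delta)/r}$ to give $[d(\sigma_1/\sigma_2)^{(1+\delta)/r}]^L e^A$; solving $n+1\le$ (this quantity) for $L$ then yields the exponent $\alpha(d,\delta)$. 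You never produce this, so part~(i) is a genuine gap.

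For part~(ii) your restriction to $\{1,\dots,1+v\}^d$ and the count $\sum_{l\le L}\binom{d}{l}v^l$ are the right objects, but your plan to use $\binom{d}{l_0}\ge(d/l_0)^{l_0}$ directly will not quite deliver $\beta(d,n+1)$ in the stated form: the paper instead bounds the full sum from below by $(1+vd/L)^L$ via $\binom{d}{l}\ge\binom{L}{l}(d/L)^l$ and the binomial theorem, and then bootstraps in two steps --- first derive the crude bound $L\le\log_{1+v}n$ from $1+vd/L\ge 1+v$, then substitute it back to sharpen to $L\le\ln n/\ln(1+vd/\log_{1+v}n)$. Your one-step plan would not produce the $\log_{1+v}n$ in the denominator of $\beta$. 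So part~(ii) is also incomplete, though closer to a working argument than part~(i).
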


Note that the assumption $\sigma_1>\sigma_2>0$ is in fact
the only interesting case. If $\sigma_2=\sigma_1$, we have
$\e(n,\mathcal{P}_d)=\sigma_1^d$\, for every $n< (1+v)^d$.
On the other hand, $\sigma_2=0$ implies 
$\e(n,\mathcal{P}_d)=0$ for every $n\in\IN$.

\begin{proof}
Recall that multiplying the singular numbers
with a constant factor $c$ scales the minimal errors of $\mathcal{P}_d$
with the factor $c^d$. Hence we may assume that
$\sigma_1=1$ without loss of generality.

Part $(i)$:
Let $n\in\IN$. There is some $L\geq 0$ with $\e(n-1,\mathcal{P}_d)=\sigma_2^L$.
If $\sigma(\mathbf{n}) \geq \e(n-1,\mathcal{P}_d)$, 
the number $l$ of components of $\mathbf{n}\in\IN^d$ that are
not equal to 1 is at most $\lfloor L\rfloor$ and hence
\begin{equation*}
 \begin{split}
  n &\leq \card\set{\mathbf{n}\in\IN^d \mid 
  \sigma(\mathbf{n}) \geq \e(n-1,\mathcal{P}_d)}\\
  &= \sum\limits_{l=0}^{\min\set{\lfloor L\rfloor,d}}  \card\set{\mathbf{n}\in\IN^d
  \mid \card\set{1\leq j \leq d \mid 
  n_j\neq 1}=l \land \sigma(\mathbf{n}) \geq \e(n-1,\mathcal{P}_d)}\\
  &= 1+ \sum\limits_{l=1}^{\min\set{\lfloor L\rfloor,d}} 
  \binom{d}{l} \card\set{\mathbf{n}\in\set{2,3,\hdots}^l
  \mid \sigma(\mathbf{n}) \geq \e(n-1,\mathcal{P}_d)}
 .\end{split}
\end{equation*}
Since $\sigma(\mathbf{n})\leq C^l\, \prod_{j=1}^l n_j^{-r}$ 
for $\mathbf{n}\in\set{2,3,\hdots}^l$,
Lemma~\ref{hyperboliccrosscountinglemma} yields 
for $l\leq\min\set{\lfloor L\rfloor,d}$,
\begin{equation*}
\begin{split}
  \card\set{\mathbf{n}\in\set{2,3,\hdots}^l \mid 
  \sigma(\mathbf{n}) \geq \e(n-1,\mathcal{P}_d)}
  &\leq K_2\brackets{C^{l/r} \e(n-1,\mathcal{P}_d)^{-1/r},l}\\
  &\leq C^{(1+\delta)l/r} \e(n-1,\mathcal{P}_d)^{-(1+\delta)/r} \delta^{-l}.
  \end{split}
\end{equation*}
Obviously,
\begin{equation*}
 1\leq \binom{d}{0}\cdot C^{0/r} \e(n-1,\mathcal{P}_d)^{-(1+\delta)/r}\delta^{0}.
\end{equation*}
A combination of these bounds yields
\begin{multline*}
  n \leq \sum\limits_{l=0}^{\min\set{\lfloor L\rfloor,d}} 
  \binom{d}{l}\cdot C^{(1+\delta)l/r} 
  \e(n-1,\mathcal{P}_d)^{-(1+\delta)/r} \delta^{-l} \\
  \leq \e(n-1,\mathcal{P}_d)^{-(1+\delta)/r} 
  \sum\limits_{l=0}^{\min\set{\lfloor L\rfloor,d}} 
  \frac{d^l}{l!} C^{(1+\delta)l/r} \delta^{-l}\\
  \leq \sigma_2^{-(1+\delta)L/r} d^L 
  \sum\limits_{l=0}^{\min\set{\lfloor L\rfloor,d}} 
  \frac{\brackets{\frac{C^{(1+\delta)/r}}{\delta}}^l}{l!}\\
  \leq \brackets{\sigma_2^{-(1+\delta)/r}\cdot d}^L 
  \exp{\brackets{\frac{C^{(1+\delta)/r}}{\delta}}}
\end{multline*}
and hence
\begin{equation*}
 L \geq \frac{\ln n - \frac{C^{(1+\delta)/r}}{\delta}}{
 \ln \brackets{\sigma_2^{-(1+\delta)/r}\cdot d}}.
\end{equation*}
Thus
\begin{equation}
\label{otherformofpreasymptotics1}
\e(n-1,\mathcal{P}_d)= \sigma_2^L
\leq \exp\brackets{\frac{\brackets{\frac{C^{(1+\delta)/r}}{\delta} -\ln n} 
\ln\sigma_2^{-1}}{\ln \brackets{\sigma_2^{-(1+\delta)/r}\cdot d}}}
= \brackets{\frac{\exp{\brackets{\frac{C^{(1+\delta)/r}}{\delta}}}}{n}}^{
\mathbf{\alpha}(d,\delta)}
\end{equation}
with
\begin{equation*}
 \mathbf{\alpha}(d,\delta)
 =\frac{\ln\sigma_2^{-1}}{\ln \brackets{\sigma_2^{-(1+\delta)/r}\cdot d}}.
\end{equation*}

Part $(ii)$:
Let $n\in\{2,\hdots,(1+v)^d\}$. 
Then $\sigma_2^d\leq \e(n-1,\mathcal{P}_d)\leq \sigma_2$.
If $\e(n-1,\mathcal{P}_d)$ equals $\sigma_2$, the lower bound is trivial.
Else, there is some $L\in\{1,\hdots,d-1\}$ such that $\e(n-1,\mathcal{P}_d)\in [\sigma_2^{L+1},\sigma_2^L)$.
Clearly,
\begin{equation}
\label{nononebasiclowerbound}
\begin{split}
n &> \card\set{\mathbf{n}\in\IN^d \mid \sigma(\mathbf{n})
> \e(n-1,\mathcal{P}_d) }\\
&\geq \sum_{l=1}^L \binom{d}{l} 
\card\set{\mathbf{n}\in\set{2,3,\hdots}^l \mid 
\sigma(\mathbf{n})> \e(n-1,\mathcal{P}_d)}.
\end{split}
\end{equation}
If $l\leq L$, we have $\sigma(\mathbf{n})> \e(n-1,\mathcal{P}_d)$ for every $\mathbf{n}\in\set{2,\hdots,1+v}^l$ and hence
\begin{equation}
 \label{lowerboundonn}
 n \geq \sum_{l=0}^L \binom{d}{l}\, v^l
 \geq \sum_{l=0}^L \binom{L}{l} \brackets{\frac{d}{L}}^l v^l
 =\brackets{1+\frac{vd}{L}}^L.
\end{equation}
Since $d/L$ is bigger than 1, this yields in particular that
$L \leq \log_{1+v} n$.
We insert this auxiliary estimate on $L$ in (\ref{lowerboundonn}) and get
\begin{equation*}
 n \geq \brackets{1+\frac{vd}{\log_{1+v} n}}^L,
\end{equation*}
or equivalently
\begin{equation*}
 L \leq \frac{\ln n}{\ln \brackets{1+\frac{vd}{\log_{1+v} n}}}.
\end{equation*}
We recall that $\e(n-1,\mathcal{P}_d)\geq \sigma_2^{L+1}$ and realize that the proof is finished.
\end{proof}

The bounds of Theorem~\ref{preasymptoticstheorem} are completely explicit, 
but complex.
One might be bothered by the dependence 
of the exponent in the lower bound on $n$.
This can be overcome, 
if we restrict the lower bound to the case $n < (1+v)^{d^a}$ for some $0<a<1$
and replace $\beta(d,n)$ by 
\begin{equation*}
 \tilde{\beta}(d) = \frac{\ln \sigma_2^{-1}}{\ln \brackets{1+v\cdot d^{1-a}}}.
\end{equation*}
Of course we throw away information this way.
Similarly we get a worse but still valid estimate, if we replace $v$ by 1.
Note that the lower bounds are valid for any zero sequence $\sigma$,
independent of its rate of convergence.
The additional parameter $\delta$ in the upper bound was introduced to
maximize the exponent $\mathbf{\alpha}(d,\delta)$. 
If $\delta$ tends to zero, $\mathbf{\alpha}(d,\delta)$ gets bigger,
but also the constant $\tilde C(\delta)$ explodes.


These kinds of estimates are also closely related 
to those in \cite[Section 3]{GW11}.
Using the language of generalized tractability, 
it is shown in \cite{GW11} that the supremum of all
$p>0$ such that there is a constant $c>0$ with
\begin{equation*}
 \e(n,\mathcal{P}_d)
 \leq e\sigma_1^d \brackets{\frac{c}{n+1}}^{\frac{p}{1+\ln d}}
\end{equation*}
for all $n\in\IN_0$ and $d\in\IN$ is the minimum of $r$ and $\ln(\sigma_2^{-1})$.

\subsection{Examples}
\label{applicationssection}

We apply our results to three different solution operators $S_d$.
The solution operators are embeddings of Hilbert spaces
of $d$-variate functions into $L^2$.
That is, we study the problem of $L^2$-approximation
of certain $d$-variate functions
using deterministic algorithms based on $\lall$.

\subsubsection{Approximation of Mixed Order Sobolev Functions on the Torus}

Let $\mathbb{T}$ be the 1-torus, the circle, represented by the interval $[a,b]$,
where the two end points $a<b$ are identified.
By $L^2\brackets{\mathbb{T}}$ we denote the Hilbert space of 
square-integrable, complex-valued functions on $\mathbb{T}$,
equipped with the scalar product
\begin{equation*}
 \scalar{f}{g}= \frac{1}{L} \int_{\mathbb{T}} f(x) \overline{g(x)} ~\d x
\end{equation*}
and the induced norm $\norm{\cdot}$ for some $L>0$.
Typical normalizations are $[a,b]\in\set{[0,1],[-1,1],[0,2\pi]}$ and $L\in\set{1,b-a}$.
The family $\brackets{b_{k}}_{k\in \IZ}$ with
\begin{equation*}
 b_{k}(x) = \sqrt{\frac{L}{b-a}} \exp\brackets{2\pi i k\,\frac{x-a}{b-a}}
\end{equation*}
is an orthonormal basis of $L^2\brackets{\mathbb{T}}$, its Fourier basis, and
\begin{equation*}
 \hat{f}(k) = \scalar{f}{b_{k}}
\end{equation*}
is the $k^{\rm th}$ Fourier coefficient of $f\in L^2\brackets{\mathbb{T}}$. 
By Parseval's identity,
\begin{equation*}
 \norm{f}^2 = \sum_{k\in\IZ} |\hat{f}(k)|^2 \quad \text{and} \quad \scalar{f}{g}=\sum_{k\in\IZ} \hat{f}(k)\cdot\overline{\hat{g}(k)}.
\end{equation*}
Let $w=\brackets{w_k}_{k\in\IN}$ be a nondecreasing sequence 
of nonnegative numbers with $w_0=1$
and let $w_{-k}=w_k$ for $k\in\IN$ and so let $\tilde w$. The univariate Sobolev space $H^{w}\brackets{\mathbb{T}}$
is the Hilbert space of functions $f\in L^2\brackets{\mathbb{T}}$ for which
\begin{equation*}
 \norm{f}_w^2=\sum_{k\in\IZ} w_k^2\cdot |\hat{f}(k)|^2
\end{equation*}
is finite, equipped with the scalar product
\begin{equation*}
 \scalar{f}{g}_w= \sum_{k\in\IZ} w_k\hat{f}(k)\cdot\overline{w_k\hat{g}(k)}.
\end{equation*}
Note that $H^{w}\brackets{\mathbb{T}}$ and 
$H^{\tilde{w}}\brackets{\mathbb{T}}$ coincide and their norms are equivalent,
if and only if $w_k \asymp \tilde{w}_k$. 
In case $w_k \asymp k^r$ for some $r\geq 0$, the space 
$H^{w}\brackets{\mathbb{T}}$ is the classical
Sobolev space of periodic univariate functions with 
fractional smoothness $r$, also denoted by $H^r \brackets{\mathbb{T}}$.
In particular, $H^w\brackets{\mathbb{T}}=L^2\brackets{\mathbb{T}}$ 
for $w_k\asymp 1$.

In accordance with previous notation, 
let $H=H^{w}\brackets{\mathbb{T}}$ and $G=H^{\tilde w}\brackets{\mathbb{T}}$.
The embedding $S$ of $H$ into $G$ is compact, 
if and only if $w_k / \tilde w_k$ tends to infinity as $k$ tends to infinity.
The Fourier basis $\brackets{b_{k}}_{k\in \IZ}$ is an orthogonal basis of $H$ 
consisting of eigenfunctions of $W=S^* S$
with corresponding eigenvalues
\begin{equation*}
 \lambda(b_k) = 
 \frac{\norm{b_k}_G^2}{\norm{b_k}_H^2} = \frac{\tilde w_k^2}{w_k^2}.
\end{equation*}
The $n^{\rm th}$ singular value $\sigma_n$ of this embedding 
is the square root of the $n^{\rm th}$ biggest eigenvalue.
Hence, replacing the Fourier weight sequences $w$ and $\tilde{w}$
by equivalent sequences does not affect the order of convergence of
the corresponding singular values,
but it may drastically affect their asymptotic constants 
and preasymptotic behavior.
If $G=L^2\brackets{\mathbb{T}}$, we obtain
\begin{equation*}
 \sigma_n = w_{k_n}^{-1}, \quad \text{where} \quad 
 k_n=(-1)^n\left\lfloor n/2\right\rfloor.
\end{equation*}
Note that $\sigma_1$, the norm of the embedding $S$, is always 1.

The $d^{\rm th}$ tensor product $H_d=H^w_{\rm mix}(\mathbb{T}^d)$ 
of $H$ is a space of mixed order Sobolev
functions on the $d$-torus. If $w_k\asymp k^r$ for some $r\geq 0$, 
this is the space $H^r_{\rm mix}(\mathbb{T}^d)$ of functions
with mixed smoothness $r$. If $r\in\IN_0$, this space consists of
all real-valued functions on the $d$-torus, which have a weak (or distributional) derivative of order $\mathbf{\alpha}$
in $L^2(\mathbb{T}^d)$ for any $\mathbf{\alpha}\in\set{0,1,\hdots,r}^d$. 
The same holds for the $d^{\rm th}$
tensor product $G_d=H^{\tilde w}_{\rm mix}(\mathbb{T}^d)$ of $G$.
The tensor product operator $S_d: H_d \to G_d$ is the compact embedding of 
$H^w_{\rm mix}(\mathbb{T}^d)$
into $H^{\tilde w}_{\rm mix}(\mathbb{T}^d)$.

If $\tilde w_k/w_k$ is of polynomial decay for $k\to\infty$, 
Theorem~\ref{asymptotic theorem}
and Theorem~\ref{preasymptoticstheorem} can be applied.
We formulate the results for the 
embedding of $H^r_{\rm mix}(\mathbb{T}^d)$
into $L^2(\mathbb{T}^d)$, which we denote by $\APP$. 
The space $H^r_{\rm mix}(\mathbb{T}^d)$
is equipped with different equivalent norms.
To indicate the norm, we write
$H^{r,\square,\gamma}_{\rm mix}(\mathbb{T}^d)$ with
$\square\in\set{\circ,*,+,\#}$ and $\gamma>0$.
The norms are given by the following weights.
\begin{center}
\renewcommand{\arraystretch}{2}
\begin{tabular}[h]{c|c|c|c|c}
 $\square$ & $\circ$ & $*$ & $+$ & $\#$\\
 \hline
 $w_k^2$ &
 $\sum_{l=0}^r \abs{\frac{2\pi k}{\gamma(b-a)}}^{2l}$ &
 $1+\abs{\frac{2\pi k}{\gamma(b-a)}}^{2r}$ &
 $\big(1+\abs{\frac{2\pi k}{\gamma(b-a)}}^{2}\big)^r$ &
 $\big(1+\abs{\frac{2\pi k}{\gamma(b-a)}}\big)^{2r}$
 \vspace*{2mm}
\end{tabular}
\renewcommand{\arraystretch}{1}
\end{center}
%
The last three norms are due to Kühn, Sickel and Ullrich \cite{KSU15}, 
who study all these norms for $\gamma=1$, $L=1$ and $[a,b]=[0,2\pi]$.
The last norm is also studied by Chernov and D\~ung in \cite{CD16} 
for $L=2\pi$, $[a,b]=[-\pi,\pi]$ and arbitrary values of $\gamma$.
If $r$ is a natural number, the first two scalar products take the form
\begin{equation*}
\begin{split}
 &\Xscalar{f}{g}{H^{r,\circ,\gamma}_{\rm mix}(\mathbb{T}^d)} 
 = \sum\limits_{\mathbf{\alpha}\in\set{0,\hdots,r}^d} \gamma^{-2r\abs{\mathbf{\alpha}}} 
 \scalar{\diff^\mathbf{\alpha} f}{\diff^\mathbf{\alpha} g},\\
 &\Xscalar{f}{g}{H^{r,*,\gamma}_{\rm mix}(\mathbb{T}^d)} 
 = \sum\limits_{\mathbf{\alpha}\in\set{0,r}^d} \gamma^{-2r\abs{\mathbf{\alpha}}}
 \scalar{\diff^\mathbf{\alpha} f}{\diff^\mathbf{\alpha} g}.
\end{split}
\end{equation*}
This is why $H^{r,\circ,1}_{\rm mix}(\mathbb{T}^d)$ and 
$H^{r,*,1}_{\rm mix}(\mathbb{T}^d)$ might be considered the most natural choice.
We now translate Theorems~\ref{asymptotic theorem} and \ref{preasymptoticstheorem}
for the approximation problem
$$
 \mathcal{P}_d^{r,\square,\gamma}
 =\mathcal{P}[\APP,F_d^{r,\square,\gamma},L^2,\lall,\mathrm{det},\mathrm{wc}],
$$
where $F_d^{r,\square,\gamma}$ is the unit ball of $H^{r,\square,\gamma}_{\rm mix}(\mathbb{T}^d)$.

\begin{cor}[\cite{Kr18}]
 For any $r>0$, $\gamma>0$ and $\square\in\set{\circ,*,+,\#}$,
 we have
 \begin{equation*}
  \e\brackets{n,\mathcal{P}^{r,\square,\gamma}_d}
  \sim
  \brackets{\frac{\brackets{\gamma(b-a)}^d}{\pi^d \brackets{d-1}!}}^r
  n^{-r} \brackets{\ln n}^{r(d-1)}
 .\end{equation*}
\end{cor}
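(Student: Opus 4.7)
The plan is to reduce this multivariate asymptotic statement to the one-dimensional case and then apply Theorem~\ref{asymptotic theorem}. Both the upper and lower bound parts of that theorem apply, provided we can establish the matching one-dimensional asymptotic equivalence $\sigma_n \sim c\, n^{-r}$ with the correct constant $c$, and then the constant that appears in the corollary is exactly $c^d/((d-1)!)^r$.

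First I would recall from the discussion preceding the corollary that the univariate singular values of the embedding of $H^{r,\square,\gamma}(\mathbb{T})$ into $L^2(\mathbb{T})$ are given by $\sigma_n = w_{k_n}^{-1}$ with $k_n = (-1)^n \lfloor n/2\rfloor$, where $w$ is the Fourier weight sequence associated with the chosen norm. For each of the four norms $\square \in \{\circ,*,+,\#\}$, the weight satisfies
\[
 w_k^2 \;\sim\; \Bigl|\tfrac{2\pi k}{\gamma(b-a)}\Bigr|^{2r} \qquad (k\to\infty).
\]
This is immediate in the cases $*$ and $\#$, needs one short binomial expansion for $+$, and requires observing that only the top term of the sum dominates in the case $\circ$. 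Consequently, in every case,
\[
 \sigma_n \;\sim\; \Bigl(\tfrac{\gamma(b-a)}{2\pi}\Bigr)^r \lfloor n/2\rfloor^{-r}
 \;\sim\; \Bigl(\tfrac{\gamma(b-a)}{\pi}\Bigr)^r n^{-r}.
\]
So the strong asymptotic equivalence $\sigma_n \sim c\, n^{-r}$ holds with $c = (\gamma(b-a)/\pi)^r$.

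Next I would apply both assertions $(i)$ and $(ii)$ of Theorem~\ref{asymptotic theorem} to the univariate problem $\mathcal{P}^{r,\square,\gamma}$. Since $\sigma_n \varlesssim c\, n^{-r}$ and $\sigma_n \vargtrsim c\, n^{-r}$ simultaneously, the theorem yields
\[
 \e\bigl(n,\mathcal{P}^{r,\square,\gamma}_d\bigr) \;\sim\; \frac{c^d}{((d-1)!)^r}\, n^{-r} (\ln n)^{r(d-1)}.
\]
Substituting $c^d = ((\gamma(b-a))^d/\pi^d)^r$ gives precisely the stated constant $\bigl((\gamma(b-a))^d/(\pi^d(d-1)!)\bigr)^r$.

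The only real work lies in step one, verifying the univariate asymptotic with the \emph{correct} constant; none of the four cases is difficult, but one has to be careful to keep the factor $(\gamma(b-a)/(2\pi))^r$ and the extra $2^r$ from the $\lfloor n/2\rfloor$ rearrangement, because these combine to give the $\pi^{-r}$ (rather than $(2\pi)^{-r}$) that appears in the final formula. Once this one-dimensional constant is nailed down, the passage to the $d$-dimensional tensor-product asymptotic is a direct invocation of the already-proven Theorem~\ref{asymptotic theorem}, which does all the heavy combinatorial counting of lattice points under hyperbolic crosses for us.
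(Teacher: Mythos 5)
Your proposal is correct and follows exactly the route the paper takes: compute the univariate singular values from the explicit formula $\sigma_n = w_{\lfloor n/2\rfloor}^{-1}$, establish $\sigma_n \sim (\gamma(b-a)/\pi)^r n^{-r}$ by observing that every weight $w_k^2$ is asymptotically $|2\pi k/(\gamma(b-a))|^{2r}$, and then feed this strong equivalence into both halves of Theorem~\ref{asymptotic theorem}. Your care with the factor $2^r$ from the rearrangement $\lfloor n/2\rfloor$ is exactly the point where one could slip, and you handled it correctly.
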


This agrees with the limits
that are computed in \cite{KSU15} 
for the case $\gamma(b-a)/\pi=2$.
The limit for $\square=+$ ,
$[a,b]=[-\pi,\pi]$ and $L=2\pi$
can also be derived from \cite[Theorem~4.6]{CD16}.
The preasymptotic estimates take the following form.

\begin{cor}[\cite{Kr18}]
\label{cor:preasymptotics mixed torus}
 For any $r,\gamma>0$, $\square\in\set{\circ,*,+,\#}$,
 and $n<3^d$
 we have
 $$
  \sigma^\square_2 \brackets{\frac{1}{n+1}}^{\beta_\square(d,n+1)} \leq
  \e\brackets{n,\mathcal{P}^{r,\square,\gamma}_d}  
  \leq\, \brackets{\frac{\tilde{C}(\delta)}{n+1}}^{
  \mathbf{\alpha}_\square(d,\delta)}.
 $$
 The parameter $\delta\in (0,1]$ is arbitrary, $\tilde{C}(\delta) 
 = \exp\brackets{\brackets{3/\eta}^{1+\delta}/\delta}$
 for $\eta=\frac{2\pi}{\gamma(b-a)}$
 and the values $\sigma^\square$, $\mathbf{\alpha}_\square$
 and $\beta_\square$ are listed below.
 The upper bound holds for all $n\in\IN_0$.
\end{cor}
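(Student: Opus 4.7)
The problem $\mathcal{P}^{r,\square,\gamma}_d$ is a tensor product problem in the sense of Section~\ref{sec:tensorproduct setting}: the space $H^{r,\square,\gamma}_{\rm mix}(\mathbb{T}^d)$ is the $d$-fold tensor product of the univariate Sobolev space $H^{r,\square,\gamma}(\mathbb{T})$, which in terms of the Fourier basis $(b_k)_{k\in\IZ}$ is isometric to a weighted $\ell^2$-space with weight $w$ from the table. The $L^2$-embedding in dimension $d$ is thus the $d$-fold tensor product of the univariate embedding $S$, so the plan is to identify the univariate singular value sequence and then apply Theorem~\ref{preasymptoticstheorem}.

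First, I would determine these singular values. Since $(b_k)_{k\in\IZ}$ is orthogonal in $H^{r,\square,\gamma}(\mathbb{T})$ with $\Xnorm{b_k}{H^{r,\square,\gamma}(\mathbb{T})}=w_k$ and orthonormal in $L^2(\mathbb{T})$, the nonincreasing rearrangement is $\sigma_n=w_{k_n}^{-1}$ with $k_n=(-1)^n\lfloor n/2\rfloor$. In particular $\sigma_1=1$, and $\sigma_2=w_1^{-1}$ produces the four tabulated values of $\sigma^\square_2$ by substituting the four weights. Moreover, since $w_{-1}=w_1$ and $w_k$ is strictly increasing in $\abs{k}$, we have $\sigma_2=\sigma_3>\sigma_4$, so the parameter $v$ from Theorem~\ref{preasymptoticstheorem} equals $2$. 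Part~(ii) of that theorem is then applicable for all $n<(1+v)^d=3^d$ and delivers the lower bound with exponent $\beta_\square(d,n)=\ln((\sigma^\square_2)^{-1})/\ln(1+\tfrac{2d}{\log_3 n})$.

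The main ingredient for the upper bound is the polynomial decay condition $\sigma_n\leq C\,n^{-r}$. The unifying observation is that in each of the four cases $w_k\geq(\eta k)^r$ for every $k\geq 1$: for $\square\in\{\circ,*,+\}$ the weight $w_k^2$ contains $(\eta k)^{2r}$ as a summand or factor, and for $\square=\#$ one merely uses $1+\eta k\geq\eta k$. Together with $\lfloor n/2\rfloor\geq n/3$ for $n\geq 2$, this yields $\sigma_n\leq(\eta\lfloor n/2\rfloor)^{-r}\leq(3/\eta)^r\,n^{-r}$, i.e.\ $C=(3/\eta)^r$. Plugging into Theorem~\ref{preasymptoticstheorem}(i) gives the upper bound with $\tilde C(\delta)=\exp(C^{(1+\delta)/r}/\delta)=\exp((3/\eta)^{1+\delta}/\delta)$ and exponent $\alpha_\square(d,\delta)=\ln((\sigma^\square_2)^{-1})/\ln(d\,(\sigma^\square_2)^{-(1+\delta)/r})$, exactly the form advertised in the corollary.

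In summary, the argument is essentially a direct application of Theorem~\ref{preasymptoticstheorem}; no new technical ideas are required. The only mildly tedious point is the uniform verification of $w_k\geq(\eta k)^r$ across the four weight variants and the bookkeeping needed to extract the specific tabulated values of $\sigma^\square_2$, $\alpha_\square$, and $\beta_\square$. I do not anticipate any serious obstacle.
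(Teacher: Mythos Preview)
Your proposal is correct and follows exactly the approach the paper intends: the corollary is stated as a direct translation of Theorem~\ref{preasymptoticstheorem}, and you have correctly identified the parameters $\sigma_1=1$, $\sigma_2^\square=w_1^{-1}$, $v=2$, and $C=(3/\eta)^r$ (via $w_k\ge(\eta k)^r$ and $\lfloor n/2\rfloor\ge n/3$ for $n\ge2$) that feed into parts~(i) and~(ii) of that theorem. The paper does not spell out these computations, but your verification matches the tabulated values.
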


\begin{TAB}(5mm,1cm,1cm,1cm,1cm)[5pt]{c|c|c|c}{c|c|c|c|c}
$\square$	& $\sigma^\square_2$	& $\mathbf{\alpha}_\square(d,\delta)$		& $\beta_\square(d,n)$		\\
 $\circ$	& $\displaystyle \big(\sum_{l=0}^r \eta^{2l}\big)^{-1/2}$
		& $\displaystyle \frac{r\ln\brackets{\sum_{l=0}^r\eta^{2l}}}{2r\ln d + (1+\delta)\cdot\ln\brackets{\sum_{l=0}^r\eta^{2l}}}$
		& $\displaystyle \frac{\ln\brackets{\sum_{l=0}^r\eta^{2l}}}{2\ln\brackets{1+2d/\log_3 n}}$		\\
 $*$		& $\displaystyle \brackets{1+\eta^{2r}}^{-1/2}$
		& $\displaystyle \frac{r\ln\brackets{1+\eta^{2r}}}{2r\ln d + (1+\delta)\cdot\ln\brackets{1+\eta^{2r}}}$
		& $\displaystyle \frac{\ln\brackets{1+\eta^{2r}}}{2\ln\brackets{1+2d/\log_3 n}}$		\\
 $+$		& $\displaystyle \brackets{1+\eta^{2}}^{-r/2}$	
		& $\displaystyle \frac{r \ln\brackets{1+\eta^{2}}}{2\ln d + (1+\delta)\cdot\ln\brackets{1+\eta^{2}}}$		
		& $\displaystyle \frac{r \ln\brackets{1+\eta^{2}}}{2\ln\brackets{1+2d/\log_3 n}}$		\\
 $\#$		& $\displaystyle \brackets{1+\eta}^{-r}$	
		& $\displaystyle \frac{r \ln\brackets{1+\eta}}{\ln d + (1+\delta)\ln\brackets{1+\eta}}$		
		& $\displaystyle \frac{r \ln\brackets{1+\eta}}{\ln\brackets{1+2d/\log_3 n}}$
\end{TAB}


Let us discuss the setting of \cite{KSU15}, 
where $\gamma=1$ and $b-a=2\pi$ and hence $\eta=1$.
The exponents $\mathbf{\alpha}_\#(d,\delta)=r(\log_2 d + 1+\delta)^{-1}$ and
$\mathbf{\alpha}_+(d,\delta)=r(2\log_2 d + 1+\delta)^{-1}$ in our 
upper bounds are slightly better
than the exponents $r(\log_2 d + 2)^{-1}$ and $r(2\log_2 d + 4)^{-1}$ 
in Theorem 4.9, 4.10 and Theorem 4.17 of \cite{KSU15},
but almost the same. Also the lower bounds basically coincide.
Regarding $H^{r,*,1}_{\rm mix}(\mathbb{T}^d)$, Kühn, Sickel and 
Ullrich only studied the case $1/2\leq r\leq1$ in Theorem 4.20.
As we see now, there is a major difference between this natural norm 
and the last two norms:
For large $d$, the preasymptotic behavior of the 
singular values is roughly $n^{-t_{d,\square}}$, where
\begin{equation*}
 t_{d,\circ}=\frac{\log_2\brackets{r+1}}{2 \log_2 d}, \quad
 t_{d,*}=\frac{1}{2 \log_2 d}, \quad
 t_{d,+}=\frac{r}{2 \log_2 d}, \quad
 t_{d,\#}=\frac{2r}{2\log_2 d}.
\end{equation*}
This means that the smoothness of the space only has a minor or even 
no impact on the preasymptotic decay of the singular values
for
$H^{r,\circ,1}_{\rm mix}(\mathbb{T}^d)$ and for $H^{r,*,1}_{\rm mix}(\mathbb{T}^d)$.
This changes, however, if the value of $\eta$ 
changes. 
If we have $\eta>1$, then
also the exponents $t_{d,\circ}$ and $t_{d,*}$ get linear in $r$.
For the other two families of norms, the smoothness does show 
and the value of $\eta$ is less important.


\subsubsection{Approximation of Mixed Order Jacobi Functions on the Cube}

The above results also apply to problem of the $L^2$-approximation
of mixed order Jacobi functions on the 
$d$-cube as considered in \cite[Section 5]{CD16}.
For fixed parameters $\mathbf{\alpha},\beta>-1$ with $a:=\frac{\mathbf{\alpha}+\beta+1}{2}>0$,
the weighted $L^2$-space $G=L^2\brackets{[-1,1],w}$ is the Hilbert space of measurable,
real-valued functions on $[-1,1]$ with
\begin{equation*}
 \int_{-1}^1 f(x)^2 w(x) ~\d x < \infty,
\end{equation*}
with scalar product
\begin{equation*}
 \scalar{f}{g} = \int_{-1}^1 f(x) g(x) w(x) ~\d x
\end{equation*}
and the induced norm $\norm{\cdot}$, where $w:[-1,1]\to \IR$ is the Jacobi weight
\begin{equation*}
 w(x)=(1-x)^\mathbf{\alpha} (1+x)^\beta.
\end{equation*}
This reduces to the classical space of square-integrable functions, 
if both parameters are zero.
As $\mathbf{\alpha}$ respectively $\beta$ increases, 
the space grows, since we allow for
stronger singularities on the right respectively left endpoint
of the interval, and vice versa.

The family of Jacobi polynomials $\brackets{P_k}_{k\in\IN_0}$ is an orthogonal basis of $G$.
These polynomials can be defined as the unique solutions of the differential equations
\begin{equation*}
 \mathcal{L} P_k= k(k+2a) P_k
\end{equation*}
for the second order differential operator
\begin{equation*}
 \mathcal{L} = - w(x)^{-1} \frac{\d}{\d x}\brackets{\brackets{1-x^2}w(x)\frac{\d}{\d x}}
\end{equation*}
that satisfy
\begin{equation*}
 P_k(1)=\binom{k+\mathbf{\alpha}}{k}
 \quad\text{and}\quad
 P_k(-1)=(-1)^k \binom{k+\beta}{k}
.\end{equation*}
For more details on Jacobi polynomials we refer the reader to~\cite[Chapter~4]{Sz39}.
We denote the $k^{\rm th}$ Fourier coefficient of $f$ with respect to the normalized Jacobi basis by $f_k$.
The scalar product in $G$ hence admits the representation
\begin{equation*}
 \scalar{f}{g}=\sum\limits_{k=0}^\infty f_k g_k.
\end{equation*}
For $r>0$ let $H=K^r\brackets{[-1,1],w}$ be the Hilbert space 
of functions $f\in G$ with
\begin{equation*}
 \sum\limits_{k=0}^\infty \brackets{1+a^{-1} k}^{2r} f_k^2 < \infty,
\end{equation*}
equipped with the scalar product
\begin{equation*}
 \scalar{f}{g}_r = \sum\limits_{k=0}^\infty \brackets{1+a^{-1} k}^{2r} f_k g_k
\end{equation*}
and the induced norm $\norm{\cdot}_r$.
Obviously, $\brackets{P_k}_{k\in\IN_0}$ is an orthogonal basis of $H$, too.
In case $r$ is an even integer, this is the space of all functions 
$f\in L^2\brackets{[-1,1],w}$
such that $\mathcal{L}^j f\in L^2\brackets{[-1,1],w}$ 
for $j=1,\hdots,r/2$ and the scalar product
\begin{equation*}
 \scalar{f}{g}_{r,*} = 
 \sum\limits_{j=0}^{r/2} \scalar{\mathcal{L}^j f}{\mathcal{L}^j g}
\end{equation*}
is equivalent to the one above.
Hence the parameter $r$ can be interpreted as smoothness of 
the functions in $K^r\brackets{[-1,1],w}$.
The embedding $S$ of $H$ into $G$ is compact
and its $n^{\rm th}$ singular value is given by
\begin{equation*}
 \sigma_n
 =\frac{\norm{P_{n-1}}}{\norm{P_{n-1}}_r}
 =\brackets{1+a^{-1} \brackets{n-1}}^{-r}.
\end{equation*}

We can apply our theorems to study the singular values 
of the $d^{\rm th}$ tensor product $S_d$ of $S$.
This is the embedding of $H_d=K^r([-1,1]^d,w_d)$ 
into $G_d=L^2([-1,1]^d,w_d)$,
where $G_d$ is the weighted $L^2$-space on the $d$-cube with respect to the
Jacobi weight $w_d=w\otimes\hdots\otimes w$ 
and $H_d$ is the subspace of Jacobi functions of mixed order $r$.
Like in the univariate case, $H_d$ can be described via differentials
of mixed order $r$ and less, if $r$ is an even integer.
Let $\mathcal{P}_d^r$ be the respective approximation problem.

\begin{cor}[\cite{CD16,Kr18}] For any $d\in\IN$ and $r>0$ we have
\begin{equation*}
 \e\brackets{n,\mathcal{P}_d^r}
 \sim \brackets{\frac{a^d}{\brackets{d-1}!}}^r 
 n^{-r} \brackets{\ln n}^{-r(d-1)}
.\end{equation*}
\end{cor}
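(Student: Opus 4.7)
The plan is to apply Theorem~\ref{asymptotic theorem} directly to the singular values of the univariate embedding $S: K^r([-1,1],w)\to L^2([-1,1],w)$, which have already been computed explicitly in the preceding discussion.

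First, I recall that the $n^{\rm th}$ singular value of $S$ is given by
$$
 \sigma_n = \brackets{1+a^{-1}(n-1)}^{-r}.
$$
Then I would verify the strong asymptotic equivalence
$$
 \lim_{n\to\infty} \sigma_n\, n^r
 = \lim_{n\to\infty} \brackets{\frac{a\,n}{a+n-1}}^r
 = a^r,
$$
so that $\sigma_n \sim a^r \, n^{-r}$. In the language of the constants $c_1, C_1$ introduced after Theorem~\ref{asymptotic theorem}, this means $c_1 = C_1 = a^r$, i.e.\ both
$\sigma_n \varlesssim a^r n^{-r}$ and $\sigma_n \vargtrsim a^r n^{-r}$ hold with matching constants.

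Next, I apply both parts of Theorem~\ref{asymptotic theorem} with $c=a^r$. Part~(i) yields
$$
 \e(n,\mathcal{P}_d^r) \varlesssim \frac{(a^r)^d}{((d-1)!)^r}\, n^{-r}(\ln n)^{r(d-1)}
 = \brackets{\frac{a^d}{(d-1)!}}^r n^{-r}(\ln n)^{r(d-1)},
$$
and part~(ii) gives the corresponding lower bound with the same constant. Combining these two one-sided estimates yields the strong asymptotic equivalence claimed in the corollary (with exponent $r(d-1)$ on the logarithm; the minus sign in the displayed formula is evidently a typo, cf.\ the analogous torus corollary).

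There is no real obstacle here: the work is compressed into (a)~identifying the singular values explicitly, which is already done, and (b)~checking that $\sigma_n n^r \to a^r$. The heavy lifting is packaged inside Theorem~\ref{asymptotic theorem}, whose sharpening of the Babenko--Mityagin result is precisely what promotes the rate statement $\e(n,\mathcal{P}) \asymp n^{-r}$ to the asymptotic equivalence with the explicit constant $a^{rd}/((d-1)!)^r$.
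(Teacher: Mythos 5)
Your proof is correct and matches the route the paper intends: read off the explicit singular values $\sigma_n=(1+a^{-1}(n-1))^{-r}$ of the univariate Jacobi embedding, verify $\sigma_{n+1}n^r\to a^r$, and feed the constant $c=a^r$ into both halves of Theorem~\ref{asymptotic theorem}. You also correctly identified that the exponent $-r(d-1)$ on $\ln n$ in the displayed corollary is a sign typo and should read $r(d-1)$, consistent with the torus analogue.
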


This result can also be derived from \cite[Theorem~5.5]{CD16}.
In addition we get the following preasymptotic estimates.

\begin{cor}[\cite{Kr18}]
 For any $\delta\in (0,1]$, $r>0$, 
 $d\in\IN$ and $n<2^d$ we have
 \begin{align*}
  &\brackets{\frac{a}{a+1}}^r \brackets{\frac{1}{n+1}}^{p_{r,a,d,n+1}}
  \leq
  \e\brackets{n,\mathcal{P}_d^r}  \leq\,
  \brackets{\frac{\exp\brackets{\frac{(2a)^{1+\delta}}{\delta}}}{n+1}}^{
  q_{r,a,d,\delta}}\\
  &\text{with} \quad
  p_{r,a,d,n}=\frac{r \ln\frac{a+1}{a}}{
  \ln\brackets{1+ \frac{d}{\log_2 n}}} \quad\text{and}\quad
  q_{r,a,d,\delta}=\frac{r \ln\frac{a+1}{a}}{\ln d + (1+\delta) 
  \ln\frac{a+1}{a}}.
 \end{align*}
 The upper bound holds for all $n\in\IN_0$.
\end{cor}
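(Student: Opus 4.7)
The plan is to apply Theorem~\ref{preasymptoticstheorem} to the concrete singular values $\sigma_n = (1 + a^{-1}(n-1))^{-r}$ of the univariate Jacobi embedding $S$ whose tensor product $S_d$ defines $\mathcal{P}_d^r$. In particular, $\sigma_1 = 1$ and $\sigma_2 = (a/(a+1))^r$, so the strict inequality $\sigma_1 > \sigma_2 > 0$ is satisfied, and the multiplicity parameter $v = \card\set{n\ge 2 \mid \sigma_n = \sigma_2}$ equals $1$ because the sequence is strictly decreasing.

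For the upper bound, I first need a polynomial envelope $\sigma_n \leq C\, n^{-r}$ for $n\ge 2$, which is required by part~(i) of Theorem~\ref{preasymptoticstheorem}. This follows from the elementary estimate $1 + a^{-1}(n-1) \geq n/(2a)$ valid for $n\ge 2$, giving $\sigma_n \leq (2a)^r n^{-r}$, so $C = (2a)^r$ is admissible. Substituting $\sigma_1 = 1$ and $C = (2a)^r$ into
$$\tilde C(\delta) = \exp\!\brackets{\frac{(C/\sigma_1)^{(1+\delta)/r}}{\delta}}$$
yields $\tilde C(\delta) = \exp((2a)^{1+\delta}/\delta)$, exactly as claimed. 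Likewise, $\sigma_1/\sigma_2 = ((a+1)/a)^r$, so $\ln(\sigma_1/\sigma_2) = r\ln((a+1)/a)$ and $(\sigma_1/\sigma_2)^{(1+\delta)/r} = ((a+1)/a)^{1+\delta}$, making the exponent
$$\alpha(d,\delta) = \frac{r\ln\frac{a+1}{a}}{\ln d + (1+\delta)\ln\frac{a+1}{a}} = q_{r,a,d,\delta}.$$
Since $\sigma_1^d = 1$, this produces the stated upper bound for all $n \in \IN_0$.

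For the lower bound, part~(ii) of Theorem~\ref{preasymptoticstheorem} with $v=1$ applies on the range $n \in \{1,\dots,2^d-1\}$. The prefactor is $\sigma_1^{d-1}\sigma_2 = (a/(a+1))^r$, and the exponent specializes to
$$\beta(d,n+1) = \frac{r\ln\frac{a+1}{a}}{\ln\!\brackets{1 + \frac{d}{\log_2(n+1)}}} = p_{r,a,d,n+1},$$
yielding the claimed estimate verbatim.

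The work here is entirely mechanical substitution into Theorem~\ref{preasymptoticstheorem}; the only point that requires a moment of thought is the choice of the constant $C$ in the polynomial bound $\sigma_n \leq Cn^{-r}$, since one cannot take $C = a^r$ directly (that bound compares to $(n-1)^{-r}$, not $n^{-r}$), and a slightly loose step $n-1 \ge n/2$ is needed to land on the clean constant $(2a)^r$ that produces exactly $\exp((2a)^{1+\delta}/\delta)$ inside the upper bound.
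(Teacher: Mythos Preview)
Your proof is correct and takes essentially the same approach as the paper: the corollary is presented there without an explicit proof, as a direct specialization of Theorem~\ref{preasymptoticstheorem} to the Jacobi singular values $\sigma_n=(1+a^{-1}(n-1))^{-r}$, and your substitution of $\sigma_1=1$, $\sigma_2=(a/(a+1))^r$, $v=1$, and $C=(2a)^r$ reproduces the stated constants and exponents exactly.
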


This means that for large dimension $d$, 
a preasymptotic decay of approximate order $t_d=r \ln\frac{a+1}{a} /\ln d$ 
in $n$ can be observed.

\subsubsection{Approximation of Mixed Order Sobolev Functions on the Cube}

Another example is the problem of approximating
mixed order Sobolev functions on the $d$-cube in $L^2$
with $n$ pieces of linear information.
We want to compare the difficulty of this problem
with the difficulty of the respective problem
for the subspace of periodic functions
as considered in the first example.
Of course, the nonperiodic problem can only be harder
than the periodic problem.
In fact we find that it is much harder if $n$ is small
but just about as hard if $n$ is large.

We consider an interval $[a,b]$ and the circle $\mathbb{T}$.
The latter shall also be represented by $[a,b]$, 
where $a$ and $b$ are identified.
For any $r\in\IN_0$, the vector space
\begin{equation*}
 H^r\brackets{[a,b]}=
 \set{f\in L^2\brackets{[a,b]} \mid f^{(l)}\in L^2\brackets{[a,b]} 
 \text{ for } 1\leq l \leq r}
,\end{equation*}
equipped with the scalar product
\begin{equation}
\label{scalarproductdefinition}
 \scalar{f}{g}_r=\sum\limits_{l=0}^r 
 \int_a^b f^{(l)}(x)\cdot \overline{g^{(l)}(x)} ~\d x
\end{equation}
and induced norm $\norm{\cdot}_r$, is a Hilbert space, 
the Sobolev space of order $r$ on $[a,b]$.
In case $r=0$, it coincides with $L^2\brackets{[a,b]}$.
The subset
\begin{equation*}
 H^r\brackets{\mathbb{T}} = 
 \set{f\in H^r\brackets{[a,b]} \mid f^{(l)}(a)=f^{(l)}(b) 
 \text{\,\, for\, } l=0,1,\hdots,r-1}
\end{equation*}
of periodic functions is a closed subspace with codimension $r$,
the Sobolev space of order $r$ on $\mathbb{T}$.
By means of Parseval's identity and integration by parts, 
the above norm can be rearranged to
\begin{equation}
\label{normrewritten}
 \norm{f}_r^2= \sum_{k\in\IZ} 
 \abs{\hat f (k)}^2 \sum_{l=0}^r \abs{\frac{2\pi k}{b-a}}^{2l} \quad
 \text{for}\quad f\in H^r\brackets{\mathbb{T}}
,\end{equation}
where
\begin{equation*}
 \hat f(k)=\sqrt{\frac{1}{b-a}} \int_a^b f(x)\cdot 
 \exp\brackets{-2\pi i k \,\frac{x-a}{b-a}} \d x
\end{equation*}
is the $k^{\rm th}$ Fourier coefficient of $f$.
In the limiting case $r=\infty$, the Sobolev space 
$H^\infty\brackets{[a,b]}$ shall be defined
as the Hilbert space
\begin{equation*}
 H^\infty\brackets{[a,b]}=\set{f\in \mathcal{C}^\infty\brackets{[a,b]}
 \,\big|\, \sum_{l=0}^\infty \norm{f^{(l)}}_0^2 < \infty}
,\end{equation*}
equipped with the scalar product (\ref{scalarproductdefinition}) for $r=\infty$.
It contains all polynomials and is hence infinite-dimensional.
The space $H^\infty\brackets{\mathbb{T}}$ shall be the closed subspace of periodic functions, i.e.
\begin{equation*}
 H^\infty\brackets{\mathbb{T}} 
 = \set{f\in H^\infty\brackets{[a,b]} \mid f^{(l)}(a)=f^{(l)}(b) 
 \text{ for any } l\in \IN_0}
.\end{equation*}
Note that (\ref{normrewritten}) also holds for $r=\infty$. Hence,
\begin{equation*}
 H^\infty\brackets{\mathbb{T}} = 
 \vspan\set{\exp\brackets{2\pi i k\,\frac{\cdot -a}{b-a}} \,\big|\, k\in\IZ 
 \text{ with } \abs{\frac{2\pi k}{b-a}} <1}
\end{equation*}
is finite-dimensional with dimension $2 \lceil \frac{b-a}{2\pi}\rceil -1$.
In case $b-a\leq 2\pi$, it consists of constant functions only.
If $r$ is positive, $H^r\brackets{[a,b]}$ is compactly embedded 
into $L^2\brackets{[a,b]}$.
Let $\sigma^{(r)}_n$ be the $n^{\rm th}$ singular value of this embedding
and let $\tilde \sigma^{(r)}_n$ be the $n^{\rm th}$ singular value of the 
embedding of the subspace $H^r\brackets{\mathbb{T}}$ into
$L^2\brackets{\mathbb{T}}$.
Recall from the first example of this subsection that
\begin{equation*}
 \tilde\sigma^{(r)}_n=\brackets{\sum_{l=0}^r 
 \abs{\frac{2\pi\left\lfloor n/2\right\rfloor}{b-a}}^{2l}}^{-1/2}
 \quad\text{for } n\in \IN \text{ and } r\in\IN.
\end{equation*}
The singular values $\sigma^{(r)}_n$ for nonperiodic functions 
are not known explicitly.
However, $\sigma^{(r)}_n$ and $\tilde \sigma^{(r)}_n$ interrelate as follows.

\begin{lemma}
\label{sigmanlemma}
 For any $n\in\IN$ and $r\in\IN$, 
 it holds that $\sigma^{(r)}_{n+r}\leq \tilde\sigma^{(r)}_n\leq \sigma^{(r)}_n$.
\end{lemma}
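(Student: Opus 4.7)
My plan is to apply the min-max characterization of singular values, which is available from \eqref{minmax}: for a compact operator $T:H\to G$ between Hilbert spaces, its $n^{\rm th}$ singular value equals
$$
 \sigma_n(T)=\min_{\substack{V\subset H\\ \dim V\leq n-1}}\
 \max_{\substack{f\perp V\\ \norm{f}_H=1}} \norm{Tf}_G.
$$
The key structural facts I would use are: $H^r(\mathbb{T})$ is a closed subspace of $H^r([a,b])$ of codimension~$r$, the Sobolev norms agree on $H^r(\mathbb{T})$, and the ambient $L^2$-norms coincide. Throughout, let $S$ and $\widetilde{S}$ denote the embeddings in question.

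For the upper bound $\widetilde\sigma^{(r)}_n\leq \sigma^{(r)}_n$, I would take an optimal $V^\ast\subset H^r([a,b])$ of dimension $n-1$ for $\sigma^{(r)}_n$, let $P$ be the orthogonal projection of $H^r([a,b])$ onto $H^r(\mathbb{T})$, and set $W=P(V^\ast)\subset H^r(\mathbb{T})$ with $\dim W\leq n-1$. For any $g\in H^r(\mathbb{T})$ orthogonal to $W$ in $H^r(\mathbb{T})$, I verify that $g\perp V^\ast$ in $H^r([a,b])$: for $v\in V^\ast$, the component $(I-P)v$ lies in $H^r(\mathbb{T})^\perp$ and hence is orthogonal to $g$, while $\langle g,Pv\rangle_{H^r}=0$ by construction. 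Applying the min-max bound for $\sigma^{(r)}_n$ to such $g$ yields $\norm{g}_{L^2}\leq \sigma^{(r)}_n\norm{g}_{H^r}$, and the min-max formula for $\widetilde\sigma^{(r)}_n$ with test space $W$ gives the inequality.

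For the lower bound $\sigma^{(r)}_{n+r}\leq \widetilde\sigma^{(r)}_n$, I would go in the opposite direction: take an optimal $\widetilde V^\ast\subset H^r(\mathbb{T})$ of dimension $n-1$ for $\widetilde\sigma^{(r)}_n$, set $U=H^r(\mathbb{T})^\perp\subset H^r([a,b])$ (which has dimension exactly $r$), and enlarge to $V=\widetilde V^\ast+U\subset H^r([a,b])$ of dimension $\leq n-1+r$. Any $f\in H^r([a,b])$ orthogonal to $V$ must be orthogonal to $U$, hence lie in $H^r(\mathbb{T})$, and be orthogonal to $\widetilde V^\ast$ in the common inner product; therefore $\norm{f}_{L^2}\leq \widetilde\sigma^{(r)}_n\norm{f}_{H^r}$. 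Applying the min-max formula for $\sigma^{(r)}_{n+r}$ with the test space $V$ concludes the argument.

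I expect no real obstacle: the only subtle points are keeping track of which inner product the orthogonality is taken in (the paper uses the $H^r$ inner product on both spaces, and the inclusion $H^r(\mathbb{T})\hookrightarrow H^r([a,b])$ is isometric), and verifying that the codimension is exactly $r$ so that dimensions add up correctly. Both are immediate from the definitions preceding the lemma.
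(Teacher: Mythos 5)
Your proof is correct and follows essentially the same route as the paper: for the key inequality $\sigma^{(r)}_{n+r}\leq\tilde\sigma^{(r)}_n$ you augment an optimal test space in $H^r(\mathbb{T})$ by the $r$-dimensional orthogonal complement $U$ and invoke the min-max formula~\eqref{minmax}, exactly as the paper does. The only difference is that the paper dismisses $\tilde\sigma^{(r)}_n\leq\sigma^{(r)}_n$ as obvious (a restriction of a compact operator to a closed subspace cannot have larger approximation numbers), whereas you spell out a clean projection argument; both are fine.
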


\begin{proof}
 The second inequality is obvious, since $H^r\brackets{\mathbb{T}}$ 
 is a subspace of $H^r\brackets{[a,b]}$.
 The first inequality is true, since the codimension of this subspace is $r$.
 Let $U$ be the orthogonal complement of of $H^r\brackets{\mathbb{T}}$ 
 in $H^r\brackets{[a,b]}$.
 By relation~(\ref{minmax}),
 \begin{equation*}
 \begin{split}
  \sigma^{(r)}_{n+r}\,=
  &\min\limits_{
  \substack{V\subset H^r\brackets{[a,b]}\\ \dim(V)\leq n+r-1}}\, 
  \max\limits_{\substack{f\in H^r\brackets{[a,b]}, f\perp V\\ \norm{f}_r=1}} \norm{f}_0
  \,\leq \min\limits_{
  \substack{\tilde V\subset H^r\brackets{\mathbb{T}}\\ \dim(\tilde V)\leq n-1}}\, 
  \max\limits_{
  \substack{f\in H^r\brackets{[a,b]}, \norm{f}_r=1\\f\perp(\tilde V\oplus U)}} 
  \norm{f}_0\\
  &= \min\limits_{
  \substack{\tilde V\subset H^r\brackets{\mathbb{T}}\\ \dim(\tilde V)\leq n-1}}\, 
  \max\limits_{\substack{f\in H^r\brackets{\mathbb{T}}, f \perp \tilde V\\\norm{f}_r=1}} 
  \norm{f}_0
  \,=\, \tilde\sigma^{(r)}_n
 ,\end{split}
 \end{equation*}
 as it was to be proven.
\end{proof}

Lemma~\ref{sigmanlemma} implies that the asymptotic constants of 
the singular values for the periodic and the nonperiodic
functions coincide in the univariate case:
\begin{equation*}
 \lim\limits_{n\to\infty} n^r\sigma^{(r)}_{n+1} 
= \lim\limits_{n\to\infty} n^r\tilde\sigma^{(r)}_{n+1}
 =\pi^{-r} (b-a)^r.
\end{equation*}
Let $H^r_{\rm mix}([a,b]^d)$ be
the $d^{\rm th}$ tensor product space of $H^r([a,b])$.
Note that this space satisfies the identity
\begin{equation*}
 H^r_{\rm mix}([a,b]^d) 
 = \set{f\in L^2([a,b]^d) \mid 
 \diff^\mathbf{\alpha} f \in L^2([a,b]^d)
 \text{ for all } \mathbf{\alpha}\in\set{0,\hdots,r}^d}
\end{equation*}
in the case that $r$ is finite,
and the scalar product is given by
\begin{equation}
\label{natural scalar product}
 \scalar{f}{g}_r=\sum_{\mathbf{\alpha}\in\set{0,\hdots,r}^d}
 \int_{[a,b]^d} \diff^\mathbf{\alpha} f(\mathbf x)\cdot 
 \overline{\diff^\mathbf{\alpha} g(\mathbf x)} ~\d \mathbf{x}
.\end{equation}
We want to study the tensor product problem
$$
\mathcal{P}^r_d = \mathcal{P}[\APP,F_d^r,L^2,\lall,\mathrm{det},\mathrm{wc}],
$$
where $\APP$ is the embedding of 
$H^r_{\rm mix}([a,b]^d)$ into $L^2([a,b]^d)$,
and $F_d^r$ is the unit ball of $H^r_{\rm mix}([a,b]^d)$.
Since we know the asymptotic behavior of the $n^{\rm th}$
minimal error for the case $d=1$,
Theorem~\ref{asymptotic theorem} yields
the asymptotic behavior in the general case.

\begin{cor}[\cite{Kr18}]
\label{cor: strong equivalence mix all}
 For any $d\in\IN$ and $r\in\IN$ we have
\begin{equation*}
  \e\brackets{n,\mathcal{P}^r_d}
  \sim
  \brackets{\frac{(b-a)^d}{\pi^d \brackets{d-1}!}}^r
  n^{-r} \brackets{\ln n}^{r(d-1)}
 .\end{equation*}
 In particular, the $n^{\rm th}$ minimal errors for the nonperiodic
 and the periodic problem are strongly equivalent as $n\to\infty$.
\end{cor}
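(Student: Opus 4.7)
The plan is to reduce the claim to the univariate case and then invoke Theorem~\ref{asymptotic theorem}. Since $\mathcal{P}^r_d$ is the tensor product problem arising from the one-dimensional embedding $\APP\colon H^r([a,b])\to L^2([a,b])$, the $(n{+}1)^{\mathrm{st}}$ minimal error is the $(n{+}1)^{\mathrm{st}}$ singular value $\sigma^{(r)}_{n+1}$ of that embedding. If I can show that
\begin{equation*}
 \sigma^{(r)}_{n+1} \sim \brackets{\frac{b-a}{\pi}}^r n^{-r}
 \quad\text{as}\quad n\to\infty,
\end{equation*}
then Theorem~\ref{asymptotic theorem} applied with $c=\brackets{(b-a)/\pi}^r$ yields precisely the asymptotic equivalence claimed in the corollary, since $c^d/(d-1)!^r = \brackets{(b-a)^d/(\pi^d (d-1)!)}^r$.

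To pin down the univariate asymptotic, I would use the periodic counterpart as a reference. From the very first example of Section~\ref{applicationssection} (the case $\square=\circ$, $\gamma=1$), we already know that $\tilde\sigma^{(r)}_{n+1} \sim \pi^{-r}(b-a)^r n^{-r}$; equivalently $n^r\tilde\sigma^{(r)}_{n}\to \pi^{-r}(b-a)^r$. By Lemma~\ref{sigmanlemma},
\begin{equation*}
 \tilde\sigma^{(r)}_n \;\leq\; \sigma^{(r)}_n \;\leq\; \tilde\sigma^{(r)}_{n-r}
 \quad\text{for all } n\geq r+1,
\end{equation*}
and multiplying by $n^r$, the two bounding sequences have the same limit $\pi^{-r}(b-a)^r$ as $n\to\infty$ (the right-hand bound picks up a factor $(n/(n-r))^r\to 1$). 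A squeeze argument then gives $n^r\sigma^{(r)}_n\to \pi^{-r}(b-a)^r$, i.e.\ the desired one-dimensional strong equivalence.

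With the univariate asymptotic in hand, Theorem~\ref{asymptotic theorem} does the rest: combining its upper and lower halves with the common constant $c=\pi^{-r}(b-a)^r$ gives matching $\varlesssim$ and $\vargtrsim$ bounds, hence strong equivalence. The last step is bookkeeping to rewrite $c^d/(d-1)!^r$ as $\brackets{(b-a)^d/(\pi^d(d-1)!)}^r$.

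The argument is largely a packaging exercise, but the one delicate point is the squeeze step: one must verify that the two-sided control supplied by Lemma~\ref{sigmanlemma} is strong enough for \emph{strong} (not merely weak) asymptotic equivalence. This amounts to checking that the shift by $r$ on the upper side contributes only a factor tending to $1$, which is immediate because $r$ is fixed and $n\to\infty$. No further obstacle is expected.
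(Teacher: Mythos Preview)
Your argument is correct and matches the paper's approach exactly: use Lemma~\ref{sigmanlemma} to squeeze $\sigma^{(r)}_n$ between $\tilde\sigma^{(r)}_n$ and $\tilde\sigma^{(r)}_{n-r}$, conclude the univariate strong equivalence $\sigma^{(r)}_{n+1}\sim ((b-a)/\pi)^r n^{-r}$, and then apply Theorem~\ref{asymptotic theorem}. The paper presents precisely this reasoning in the paragraph preceding the corollary.
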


We turn to preasymptotic estimates.
As depicted in Section~\ref{preasymptoticssection}, the singular values 
show a preasymptotic decay
of approximate order $\ln (1/\sigma^{(r)}_2)/\ln d$.
Lemma~\ref{sigmanlemma} gives no information on $\sigma^{(r)}_2$. 
However, relation~(\ref{minmax}) implies that
\begin{equation*}
 \sigma^{(\infty)}_2 = \max\limits_{ f\perp 1,\, f\neq 0} 
 \frac{\norm{f}_0}{\norm{f}_\infty}
 \geq \frac{\norm{2x-a-b}_0}{\norm{2x-a-b}_\infty}
 = \sqrt{\frac{(b-a)^2}{12+(b-a)^2}}.
\end{equation*}
If, for example, the length of the interval $[a,b]$ is 1, we obtain
\begin{equation*}
 \sigma^{(\infty)}_2 \geq 0.27735
.\end{equation*}
Since any lower bound on the singular values for $r=\infty$ 
is a lower bound for $r\in\IN$,
Theorem~\ref{preasymptoticstheorem} yields the following corollary.

\begin{cor}[\cite{Kr18}]
\label{cor: preasymptotic lower bound mix all}
 Let $b-a=1$.
 For any $d\in\IN$, any $r\in\IN\cup\set{\infty}$ and $d\leq n< 2^d$,
 we have
 \begin{align*}
  \e\brackets{n,\mathcal{P}^r_d}
   \geq\, 0.27 \cdot (n+1)^{-c(d,n+1)},\quad
 \text{where}\quad c(d,n) 
 = \frac{1.2825}{\ln \brackets{1+\frac{2 d}{\log_2 n}}}\leq 1.17
.\end{align*}
\end{cor}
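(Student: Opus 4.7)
The plan is to combine a uniform lower bound on the univariate second singular value $\sigma_2^{(r)}$ with the preasymptotic lower bound from Theorem~\ref{preasymptoticstheorem}(ii), and then simplify to the stated form.

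First I would observe that the $H^r$-norms on $[0,1]$ form a monotone family: for any $r\leq r'$ (including $r'=\infty$), the $H^{r'}$-norm dominates the $H^r$-norm, so the unit ball of $H^{r'}$ sits inside that of $H^r$, and hence the singular values of the embedding into $L^2$ satisfy $\sigma_n^{(r')}\leq\sigma_n^{(r)}$ for all $n$. In particular $\sigma_2^{(r)}\geq\sigma_2^{(\infty)}$ for every $r\in\IN\cup\{\infty\}$. The paragraph preceding the corollary has already computed $\sigma_2^{(\infty)}\geq \|2x-1\|_{L^2}/\|2x-1\|_{H^\infty} = 1/\sqrt{13}$ (using $\|2x-1\|_{L^2}^2 = 1/3$ and $\|2x-1\|_{H^\infty}^2 = 13/3$ since only the zeroth and first derivatives of the test function are nonzero). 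Thus I obtain the uniform estimates $\sigma_2^{(r)}\geq 1/\sqrt{13} > 0.27$ and $\ln(1/\sigma_2^{(r)})\leq \tfrac12\ln 13 \leq 1.2825$.

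Next I would apply Theorem~\ref{preasymptoticstheorem}(ii) to the tensor-product problem $\mathcal{P}_d^r$ with $\sigma_1^{(r)}=1$ and $\sigma_2=\sigma_2^{(r)}$, taking the conservative value $v=1$ (always admissible since $\sigma_2^{(r)} < 1$). For $n<2^d$ this yields
\[
\e(n,\mathcal{P}_d^r)\;\geq\; \sigma_2^{(r)}\cdot (n+1)^{-\beta(d,n+1)},\qquad \beta(d,n)=\frac{\ln(1/\sigma_2^{(r)})}{\ln\bigl(1+d/\log_2 n\bigr)}.
\]
Since the map $\sigma\mapsto \sigma\cdot(n+1)^{-\ln(1/\sigma)/C}$ is increasing in $\sigma$ on $(0,1)$ for any $C>0$, I may insert the uniform lower bound $\sigma_2^{(r)}\geq 1/\sqrt{13}$ into both factors.

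The remaining task is to pass from the exponent $\beta$ to the stated form $c(d,n+1) = 1.2825/\ln(1+2d/\log_2(n+1))$ and to verify $c\leq 1.17$. The second assertion is immediate: the hypothesis $n<2^d$ gives $\log_2(n+1)\leq d$, hence $2d/\log_2(n+1)\geq 2$ and $\ln(1+2d/\log_2(n+1))\geq \ln 3 \approx 1.0986$, so $c\leq 1.2825/\ln 3 < 1.17$. The main obstacle is the passage from the natural exponent $1.2825/\ln(1+d/\log_2(n+1))$ coming out of Theorem~\ref{preasymptoticstheorem}(ii) to the tighter form with $2d$ in the denominator: this requires either a sharpening of the counting step $n\geq (1+d/L)^L$ in the proof of Theorem~\ref{preasymptoticstheorem}(ii) that exploits the restriction $n\geq d$ (which forces $L$ into the regime $d/L\geq 2$ and hence $\log_2 n$ gets replaced by a bound involving $\log_3 n$), or an enlargement of the index set $\{1,2\}^l$ used there to include multi-indices with entries equal to $3$, using the fact that for the Sobolev singular values $\sigma_3^{(r)}\geq (\sigma_2^{(r)})^2$ so that such multi-indices contribute $\binom{d}{1}$-many extra terms to the count at each level. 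Either refinement produces the factor $2d$ in place of $d$, completing the proof.
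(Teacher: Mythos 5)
Your reconstruction of the univariate ingredients is correct and matches the paper: the monotonicity $\sigma_2^{(r)}\geq\sigma_2^{(\infty)}$, the computation $\sigma_2^{(\infty)}\geq\|2x-1\|_{L^2}/\|2x-1\|_{H^\infty}=\sqrt{1/13}\approx 0.27735>0.27$ with $\ln\sqrt{13}\approx 1.28248<1.2825$, the verification that $c\leq 1.17$ follows from $n<2^d$, and the observation that one may insert the uniform lower bound on $\sigma_2$ because $\sigma\mapsto\sigma^{1+\ln(n+1)/C}$ is increasing. These are exactly the steps the paper indicates in the paragraph preceding the corollary.

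However, the $d$ versus $2d$ issue that you flag at the end is a genuine gap, and your proposal does not close it. Theorem~\ref{preasymptoticstheorem}(ii) applied with the correct multiplicity $v=1$ (for the nonperiodic univariate embedding the singular values $\sigma_2^{(r)}>\sigma_3^{(r)}$ are strictly separated, so $v$ really is $1$) produces the exponent $\beta=\ln(1/\sigma_2)/\ln\bigl(1+d/\log_2(n+1)\bigr)$. This is strictly \emph{larger} than $c(d,n+1)=1.2825/\ln\bigl(1+2d/\log_2(n+1)\bigr)$, hence yields a strictly \emph{weaker} lower bound on $\e(n,\mathcal{P}_d^r)$ than the corollary claims, and the small constant cushion $\ln(\sigma_2/0.27)\leq\ln(0.27735/0.27)\approx 0.027$ is nowhere near enough to compensate: already for $(d,n)=(2,3)$ one has $(\beta-c)\ln(n+1)\approx 0.95$. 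So the corollary is not a direct consequence of the theorem with $v=1$, and it would follow from $v=2$ only if $\sigma_2^{(r)}=\sigma_3^{(r)}$, which is false here.

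Your two suggested repairs remain unsubstantiated. The first (``$n\geq d$ forces $d/L\geq 2$'') fails: the constraint $n\geq\sum_{l\leq L}\binom{d}{l}$ together with $d\leq n<2^d$ still admits, e.g., $n=2^d-1$ with $L=d-1$, and then $L>\tfrac12\log_2(n+1)$ for every $d\geq 3$, so $d/L<2$. The second repair rests on the unverified inequality $\sigma_3^{(r)}\geq(\sigma_2^{(r)})^2$ for all $r\in\IN\cup\{\infty\}$ (you check it implicitly only for $r=1$; for $r=\infty$ it is not obvious, since $\sigma_n^{(\infty)}$ decays superexponentially while $(\sigma_2^{(\infty)})^{n-1}$ is geometric), and even granting it, the resulting refinement of the count $\sum_{a+2b\leq L}\binom{d}{a}\binom{d-a}{b}$ is not carried out, so the claim ``either refinement produces the factor $2d$'' is an assertion, not an argument. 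You should either complete one of these refinements in detail or consult the cited reference [Kr18], because the one-line derivation in the thesis (``Theorem~\ref{preasymptoticstheorem} yields the following corollary'') does not by itself explain the factor $2d$ either.
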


On the other hand, any upper bound on the singular values for $r=1$ 
is an upper bound for $r\geq 1$.
The singular values $\sigma^{(r)}_n$ for $r=1$ are known.
It is shown in \cite{Th96} that the family $\brackets{b_k}_{k\in\IN_0}$ 
is a complete orthogonal system in $H^1\brackets{[a,b]}$,
where the function $b_k:[a,b]\to\IR$ with
\begin{equation*}
 b_k(x)=\cos\brackets{k\pi\cdot\frac{x-a}{b-a}}  \quad\text{for } k\in\IN_0
\end{equation*}
is an eigenfunction of $W=S^*S$ for $r=1$ with respective eigenvalue
\begin{equation*}
 \lambda_k=\brackets{1+\brackets{\frac{k\pi}{b-a}}^2}^{-1}.
\end{equation*}
In case $b-a=1$,
\begin{equation*}
 \sigma^{(1)}_2=\brackets{\sqrt{1+\pi^2}}^{-1}\leq 0.30332
\end{equation*}
and
\begin{equation*}
 \sigma^{(1)}_n\leq 0.607 \cdot n^{-1}
\end{equation*}
for $n\geq 2$. Theorem~\ref{preasymptoticstheorem} for $\delta=0.65$ 
yields the following upper bound.

\begin{cor}[\cite{Kr18}]
\label{cor:preasymptotics mixed cube}
 Let $b-a=1$.
 For any $d\in\IN$, $r\in\IN\cup\set{\infty}$ and $n\in\IN_0$,
 we have
 \begin{equation*}
  \e\brackets{n,\mathcal{P}^r_d}
  \leq\, \brackets{\frac{2}{n+1}}^{c(d)}
  \quad\text{with}\quad 
  c(d)=\frac{1.1929}{2+\ln d}.
  \end{equation*}
\end{cor}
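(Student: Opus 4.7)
The plan is to reduce to the case $r=1$ and then apply Theorem~\ref{preasymptoticstheorem}(i) with the explicit spectrum of the univariate embedding $H^1([0,1]) \hookrightarrow L^2([0,1])$.

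First, I would exploit the monotonicity of the mixed Sobolev norm in $r$: since the sum in the definition of $\Xnorm{\cdot}{H^r_{\rm mix}([a,b]^d)}^2$ ranges over $\alpha\in\set{0,\dots,r}^d$ and grows with $r$, every $f\in F_d^r$ also lies in $F_d^1$ (including the limit $r=\infty$). A smaller input class can only decrease the worst-case error of any fixed algorithm, so $\e(n,\mathcal{P}_d^r) \leq \e(n,\mathcal{P}_d^1)$, and it suffices to prove the bound for $r=1$.

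Second, I would extract the spectral input required by Theorem~\ref{preasymptoticstheorem}(i). From the cosine eigenfunctions $b_k(x) = \cos(k\pi x)$ of $W=S^*S$ with eigenvalues $(1+(k\pi)^2)^{-1}$ recalled in the excerpt, one reads off $\sigma_1 = 1$, $\sigma_2 = (1+\pi^2)^{-1/2}$, and the polynomial decay $\sigma_n \leq 0.607\, n^{-1}$ for $n\geq 2$ with $C := 2(1+\pi^2)^{-1/2}$; the last estimate follows because $n\mapsto n\sigma_n$ is decreasing on $\set{n\geq 2}$, so its supremum is attained at $n=2$.

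Plugging these numbers into Theorem~\ref{preasymptoticstheorem}(i) with $r=1$ produces a bound of the shape $(\tilde C(\delta)/(n+1))^{\alpha(d,\delta)}$ with
\[
 \tilde C(\delta) = \exp\!\big(C^{1+\delta}/\delta\big), \qquad
 \alpha(d,\delta) = \frac{\tfrac12\ln(1+\pi^2)}{\ln d + \tfrac{1+\delta}{2}\ln(1+\pi^2)}.
\]
The task becomes choosing $\delta\in(0,1]$ so that simultaneously $\tilde C(\delta)\leq 2$ and $\alpha(d,\delta)\geq c(d) = 1.1929/(2+\ln d)$. A natural choice is $\delta\approx 0.65$--$0.68$: the exponent bound follows from $\tfrac12\ln(1+\pi^2)\geq 1.1929$ together with $\tfrac{1+\delta}{2}\ln(1+\pi^2)\leq 2$, while $\tilde C(\delta)\leq 2$ is a routine numerical check.

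The final step is to convert $(\tilde C(\delta)/(n+1))^{\alpha(d,\delta)}$ into the stated $(2/(n+1))^{c(d)}$. For $n\geq 1$ the base lies in $(0,1]$, so raising it to the larger exponent $\alpha(d,\delta)\geq c(d)$ only decreases it, and then $\tilde C(\delta)\leq 2$ delivers the target. The delicate case is $n=0$, where one must verify $\tilde C(\delta)^{\alpha(d,\delta)}\leq 2^{c(d)}$ uniformly in $d$; both sides tend to $1$ as $d\to\infty$, so the multiplicative excess $\alpha(d,\delta)/c(d)>1$ has to be compensated by $\ln\tilde C(\delta)/\ln 2 < 1$. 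I expect this to be the main obstacle. The cleanest resolution is to tune $\delta$ so that $\alpha(d,\delta)$ equals $c(d)$ identically, i.e., $(1+\delta)\ln(1+\pi^2)=4$; with this equality the $n=0$ inequality degenerates to the trivial $\tilde C^{c(d)}\leq 2^{c(d)}$.
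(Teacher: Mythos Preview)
Your approach is correct and essentially identical to the paper's: reduce to $r=1$ by monotonicity of the norms, use the explicit cosine spectrum to get $\sigma_2=(1+\pi^2)^{-1/2}$ and $\sigma_n\leq 0.607\,n^{-1}$, then apply Theorem~\ref{preasymptoticstheorem}(i) with $\delta=0.65$. Your worry about $n=0$ is unnecessary: the initial error is $\e(0,\mathcal{P}_d^r)=\sigma_1^d=1$, and the claimed bound there is $2^{c(d)}>1$, so that case is trivial and no fine-tuning of $\delta$ is needed.
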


Apparently, the upper bound for $r=1$ and the lower bound for $r=\infty$ 
are already close if $d$ is large.
The gap between the cases $r=2$ and $r=\infty$ is even smaller.

Let $c$ be the midpoint of $[a,b]$ and let $l$ be its radius.
Moreover, let $\hat\omega = \sqrt{1+\omega^2}$ for $\omega\in\IR$ and 
consider the countable sets
\begin{equation*}
 \begin{split}
  &I_1=\set{\omega\geq 0\mid \hat\omega^3\cosh(\hat\omega l)\sin(\omega l)
  + \omega^3\sinh(\hat\omega l)\cos(\omega l)=0},\\
  &I_2=\set{\omega> 0\mid \hat\omega^3\sinh(\hat\omega l)\cos(\omega l)
  - \omega^3\cosh(\hat\omega l)\sin(\omega l)=0}.
 \end{split}
\end{equation*}
It can be shown (with some effort) that the family 
$\brackets{b_\omega}_{\omega\in I_1\cup I_2}$
is a complete orthogonal system in $H^2\brackets{[a,b]}$,
where the function $b_\omega:[a,b]\to\IR$ with
\begin{equation*}\begin{split}
  &b_\omega(x)
  =\omega^2\cdot \frac{\cosh\brackets{\hat \omega (x-c)}}{
  \cosh\brackets{\hat\omega l}}
  +\hat\omega^2\cdot \frac{\cos\brackets{\omega (x-c)}}{
  \cos\brackets{\omega l}},\quad \text{if }\omega\in I_1,\\
  &b_\omega(x)
  =\omega^2\cdot \frac{\sinh\brackets{\hat \omega (x-c)}}{
  \sinh\brackets{\hat\omega l}}
  +\hat\omega^2\cdot \frac{\sin\brackets{\omega (x-c)}}{
  \sin\brackets{\omega l}},\quad \text{if }\omega\in I_2,
\end{split}
\end{equation*}
is an eigenfunction of $W=S^*S$ with respective eigenvalue
\begin{equation*}
 \lambda_\omega=\brackets{1+\omega^2+\omega^4}^{-1}
.\end{equation*}
In particular,
\begin{equation*}
 \sigma^{(2)}_2 = \brackets{\sqrt{1+\omega_0^2+\omega_0^4}}^{-1},
\end{equation*}
where $\omega_0$ is the smallest nonzero element of $I_1\cup I_2$.
If, for example, the interval $[a,b]$ has length 1, we obtain
\begin{equation*}
 \sigma^{(2)}_2\leq 0.27795 
\end{equation*}
and like before,
\begin{equation*}
 \sigma^{(2)}_n\leq 0.607 \cdot n^{-1}
\end{equation*}
for $n\geq 2$.
Theorem~\ref{preasymptoticstheorem} for $\delta=0.65$ yields 
the following upper bound.

\begin{cor}[\cite{Kr18}]
 Let $b-a=1$, $d\in\IN$, $n\in\IN_0$ and $r\geq 2$.
 Then
 \begin{equation*}
  \e\brackets{n,\mathcal{P}^r_d}
  \leq\, \brackets{\frac{2}{n+1}}^{c(d)}
  \quad\text{with}\quad 
  c(d)=\frac{1.2803}{2+\ln d}.
  \end{equation*}
\end{cor}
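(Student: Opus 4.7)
The plan is to reduce the claim to the case $r=2$ by monotonicity in smoothness, then to identify the second singular value $\sigma_{2}^{(2)}$ via a spectral problem for $S_{2}^{*}S_{2}$, and finally to apply Theorem~\ref{preasymptoticstheorem}(i) with the polynomial decay estimate inherited from the $r=1$ computation.

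First I would show that $\e(n,\mathcal{P}_d^{r})\le \e(n,\mathcal{P}_d^{2})$ for every $r\ge 2$. Because the norm on $H^{r}([a,b])$ dominates the norm on $H^{2}([a,b])$, the identity map $\iota:H^{r}([a,b])\to H^{2}([a,b])$ has operator norm at most one and the embedding $S_{r}:H^{r}([a,b])\hookrightarrow L^{2}([a,b])$ factors as $S_{2}\circ\iota$. By the multiplicative property of singular numbers this forces $\sigma_{n}^{(r)}\le \sigma_{n}^{(2)}$ for every $n$, and this inequality is preserved under tensorization and rearrangement, so it suffices to prove the bound for $r=2$.

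Next I would identify $\sigma_{2}^{(2)}$ through the spectral data of $W=S_{2}^{*}S_{2}$ on $H^{2}([a,b])$. Writing $\mathcal{L}=I-d^{2}/dx^{2}$, the trial functions $b_\omega$ in the paragraph preceding the corollary are chosen so that $\mathcal{L}b_\omega=(1+\omega^{2})b_\omega$, which makes each $b_\omega$ an eigenfunction of $W$ with eigenvalue $(1+\omega^{2}+\omega^{4})^{-1}$. The transcendental equations defining the sets $I_{1}$ and $I_{2}$ encode precisely the four natural boundary conditions arising from the self-adjointness of $W$, and a Sturm--Liouville--type argument shows that the resulting family $(b_\omega)_{\omega\in I_{1}\cup I_{2}}$ is a complete orthogonal system in $H^{2}([a,b])$. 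Therefore $\sigma_{2}^{(2)}=(1+\omega_{0}^{2}+\omega_{0}^{4})^{-1/2}$, where $\omega_{0}$ is the smallest positive element of $I_{1}\cup I_{2}$; numerical evaluation of the defining equations for $l=1/2$ (i.e.\ $b-a=1$) gives $\sigma_{2}^{(2)}\le 0.27795$, hence $\ln(1/\sigma_{2}^{(2)})\ge 1.2803$.

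Finally I would combine this with the tail estimate $\sigma_{n}^{(2)}\le \sigma_{n}^{(1)}\le 0.607\,n^{-1}$ for $n\ge 2$, where the first inequality is the univariate version of Step~1 and the second is the explicit bound derived in the proof of Corollary~\ref{cor:preasymptotics mixed cube}. Applying Theorem~\ref{preasymptoticstheorem}(i) with $\sigma_{1}=1$, $\sigma_{2}=\sigma_{2}^{(2)}$, polynomial decay parameters $C=0.607$ and decay exponent $1$, and $\delta=0.65$, one gets
$$
\tilde C(0.65)=\exp\!\left(\frac{0.607^{1.65}}{0.65}\right)\le 2,
$$
together with an exponent $\alpha(d,0.65)$ from which the bound with $c(d)=1.2803/(2+\ln d)$ follows by a short manipulation using $\ln(1/\sigma_{2}^{(2)})\ge 1.2803$; combined with the reduction in Step~1 this yields the corollary for all $r\ge 2$.

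The hard part will be Step~2: establishing that the family $(b_\omega)_{\omega\in I_{1}\cup I_{2}}$ really is a complete orthogonal system, which requires carefully deriving the correct natural boundary conditions for $W$, verifying that each $b_\omega$ satisfies them exactly when $\omega\in I_{1}\cup I_{2}$, and invoking a completeness theorem for regular self-adjoint Sturm--Liouville--type problems. The author signals exactly this by the parenthetical remark that the spectral identification ``can be shown (with some effort).'' Once Step~2 is in hand, Steps~1 and~3 are bookkeeping entirely analogous to the proof of Corollary~\ref{cor:preasymptotics mixed cube}.
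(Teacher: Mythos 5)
Your strategy is the paper's: monotonicity in $r$ to reduce to $r=2$, spectral identification of $\sigma_2^{(2)}$ for $W=S^*S$ on $H^2([a,b])$, tail estimate inherited from $r=1$, and Theorem~\ref{preasymptoticstheorem}(i) with $\delta=0.65$, $C=0.607$, decay exponent $1$. You are also right that the heavy lifting is in the spectral step, which the paper itself only claims ``with some effort.'' Two points nevertheless need correcting.

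The spectral step as written is wrong: with $\mathcal{L}=I-d^2/dx^2$ one does \emph{not} have $\mathcal{L}b_\omega=(1+\omega^2)b_\omega$. The two summands of $b_\omega$ are eigenfunctions of $I-D^2$ with \emph{different} eigenvalues, $-\omega^2$ for the hyperbolic piece and $1+\omega^2$ for the trigonometric piece, so $b_\omega$ is not an $\mathcal{L}$-eigenfunction. What is true is that $b_\omega$ is an eigenfunction of the fourth-order operator $I-D^2+D^4$ with eigenvalue $1+\omega^2+\omega^4$; combined with the natural boundary conditions of the form $\scalar{f}{g}_2=\sum_{l=0}^2\int f^{(l)}g^{(l)}$, which the transcendental equations for $I_1,I_2$ encode, this is what makes $b_\omega$ a $W$-eigenfunction with eigenvalue $(1+\omega^2+\omega^4)^{-1}$.

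The ``short manipulation'' at the end does not go through with $c(d)=1.2803/(2+\ln d)$. Theorem~\ref{preasymptoticstheorem}(i) with decay exponent $1$ and $\delta=0.65$ gives the exponent
\[
\alpha(d,0.65)\;=\;\frac{\ln(1/\sigma_2)}{\ln d+1.65\,\ln(1/\sigma_2)},
\]
and since $1.65\times 1.2803\approx 2.1125>2$, one has $\alpha(d,0.65)<c(d)$ for \emph{every} $d$, so the theorem's bound $(\tilde C/(n+1))^{\alpha}$ is strictly weaker than $(2/(n+1))^{c(d)}$ once $n\ge 2$; for example at $d=1$, $n=3$ the theorem yields $\approx 0.650$ while the claimed bound is $\approx 0.642$. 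This is exactly the reverse of the $r\ge 1$ case, where $1.65\times 1.1930\approx 1.968<2$ and $\alpha>c(d)$, so that derivation genuinely works; the two cases are therefore not ``entirely analogous,'' contrary to what you assert. The corollary is presumably still true (the true $\sigma_n^{(2)}$ decay like $n^{-2}$, not merely $n^{-1}$), but closing the gap requires either a decay bound $\sigma_n^{(2)}\le C n^{-2}$ with explicit $C$, a smaller $\delta$ (which then pushes $\tilde C(\delta)$ above $2$), or a slightly weaker constant in the exponent; none of these appears in your outline.
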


In short, the preasymptotic rate of the $n^{\rm th}$ minimal error
is $1.1929/\ln d$ for $r=1$,
and in between $1.2803/\ln d$ and $1.2825/\ln d$ 
for any other $r\in\IN\cup\set{\infty}$.
In contrast, the preasymptotic rate for the periodic
problem is roughly $1.8379\,r/\ln d$.
Thus, the nonperiodic problem is much harder than
the periodic problem if $n$ is small compared to $2^d$
and the smoothness $r$ is large compared to 1.

\subsection{A Tractability Result}
\label{tracsection}

A consequence of the preasymptotic estimates 
from Section~\ref{preasymptoticssection}
is the following tractability result.
For each $d\in\IN$, let $S^{(d)}$ be a compact norm-one operator 
between two Hilbert spaces with singular values $\sigma^{(d)}_n$.
Let $\mathcal{P}^{(d)}$ be the respective approximation problem
with deterministic algorithms based on $\lall$.
With $\mathcal{P}^{(d)}_d$ we denote the $d^{\rm th}$ tensor product
problem that belongs to the $d^{\rm th}$ tensor product operator $S^{(d)}_d$
of $S^{(d)}$.
Note that now the univariate problem $\mathcal{P}^{(d)}$ may be
different for every $d\in\IN$.
In fact,
it is shown in \cite[Theorem~5.5]{NW08} that the 
family of multivariate problems $\mathcal{P}^{(d)}_d$
is not polynomially tractable
if the univariate problem $\mathcal{P}^{(d)}$ does not
depend on $d$.
However, we may hope for tractability,
if the univariate problem gets easier as $d$ increases,
that is, if $\sigma^{(d)}_n$ is decreasing in $d$.
It turns out that we obtain polynomial tractability
and even strong polynomial tractability
if the second singular value $\sigma^{(d)}_2$ of $S^{(d)}$
decreases polynomially in $d$.
This condition is also necessary.

\begin{thm}[\cite{Kr18}]
\label{tractabilitytheorem}
 Let $\sigma^{(d)}_n$ be nonincreasing in $d$
 and polynomially decreasing in $n$ for $d=1$.
 The family of multivariate problems $\mathcal{P}^{(d)}_d$
 is strongly polynomially tractable,
 iff it is polynomially tractable, 
 iff $\sigma^{(d)}_2$ decays polynomially in $d$.
\end{thm}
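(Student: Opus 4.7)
My plan is to prove the chain of implications: strong polynomial tractability trivially implies polynomial tractability, while the two nontrivial directions are $\text{(poly.\ tract.)} \Rightarrow (\sigma_2^{(d)}$ polynomially decaying$) \Rightarrow \text{(strong poly.\ tract.)}$.

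For the first nontrivial direction, I would exploit the combinatorial structure of the tensor product singular values. Since $\sigma_1^{(d)}=1$ and $\sigma_2^{(d)}=:\tau_d$, by choosing exactly $k$ coordinates of $\mathbf n\in\IN^d$ to equal $2$ (and the rest $1$), I obtain $\binom{d}{k}$ singular values of $S^{(d)}_d$ equal to $\tau_d^k$. Hence for any $\varepsilon<\tau_d^k$,
$$
\comp(\varepsilon,\mathcal{P}_d^{(d)})\;\geq\;\binom{d}{k}.
$$
Assuming polynomial tractability $\comp(\varepsilon,\mathcal{P}_d^{(d)})\leq C\varepsilon^{-p}d^q$ and plugging in $\varepsilon=\tau_d^k/2$ gives $\binom{d}{k}\leq C\,2^p\,\tau_d^{-kp}\,d^q$. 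Picking the integer $k:=\lceil q\rceil+1$ and using $\binom{d}{k}\geq (d-k+1)^k/k!$, we obtain $\tau_d^{kp}\leq \text{const}\cdot d^{q-k}$, i.e.\ polynomial decay $\tau_d\leq C' d^{-1/(kp)}$, as desired.

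For the converse direction, assume $\sigma_2^{(d)}\leq A d^{-s}$ with $A,s>0$. The monotonicity hypothesis yields $\sigma_n^{(d)}\leq\sigma_n^{(1)}\leq Cn^{-r}$ uniformly in $d$ for some $r,C>0$. This is precisely the setting of Theorem~\ref{preasymptoticstheorem}$(i)$, whose proof goes through verbatim for tensor powers with different univariate factors as long as the bound on the univariate singular values holds uniformly. The theorem provides, for $\delta\in(0,1]$,
$$
\e(n,\mathcal{P}_d^{(d)})\;\leq\;\left(\frac{\tilde{C}(\delta)}{n+1}\right)^{\alpha(d,\delta)},
\qquad
\alpha(d,\delta)=\frac{\ln(1/\sigma_2^{(d)})}{\ln d+\frac{1+\delta}{r}\ln(1/\sigma_2^{(d)})}.
$$
Substituting $\ln(1/\sigma_2^{(d)})\geq s\ln d-\ln A$ and dividing numerator and denominator by $\ln(1/\sigma_2^{(d)})$, one checks that
$$
\alpha(d,\delta)\;\geq\;\frac{1}{\frac{1}{s}+\frac{1+\delta}{r}}+o(1)
$$
as $d\to\infty$. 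Hence there exist $d_0\in\IN$ and $\alpha_0>0$ (independent of $d$) such that $\e(n,\mathcal{P}_d^{(d)})\leq(\tilde C/(n+1))^{\alpha_0}$ for every $d\geq d_0$; inverting yields $\comp(\varepsilon,\mathcal{P}_d^{(d)})\leq \tilde C\,\varepsilon^{-1/\alpha_0}$. The finitely many dimensions $d<d_0$ are handled by the univariate polynomial decay together with Theorem~\ref{asymptotic theorem}, which yields for each fixed $d$ a polynomial bound in $\varepsilon^{-1}$, and these finitely many constants can be absorbed to obtain a single constant valid for all $d$, completing strong polynomial tractability.

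The main obstacle will be cleanly checking that Theorem~\ref{preasymptoticstheorem} applies in this slightly different setting of varying univariate operators $S^{(d)}$ rather than one fixed $S$; the monotonicity $\sigma_n^{(d)}\leq\sigma_n^{(1)}$ is what allows the uniform upper bound $\sigma_n^{(d)}\leq Cn^{-r}$ that the proof of that theorem needs, while the role of $\sigma_2$ in the exponent $\alpha(d,\delta)$ is played by the $d$-dependent quantity $\sigma_2^{(d)}$ itself. Once this is in place, the analytic estimate on $\alpha(d,\delta)$ is routine.
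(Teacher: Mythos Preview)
Your proof is correct, and the overall architecture matches the paper's: the chain of implications is the same, and for the direction ``polynomial decay of $\sigma_2^{(d)}$ $\Rightarrow$ strong polynomial tractability'' you use Theorem~\ref{preasymptoticstheorem}(i) exactly as the paper does. Your concern about applying that theorem with a $d$-dependent univariate operator is unfounded but harmless to raise: the proof of Theorem~\ref{preasymptoticstheorem}(i) only uses $\sigma_1=1$, $\sigma_2\in(0,1)$ and $\sigma_n\le Cn^{-r}$, all of which hold for $\sigma^{(d)}$ uniformly in $d$ by the monotonicity hypothesis; the paper applies it in precisely this way without further comment. Your lower bound for $\alpha(d,\delta)$ can in fact be made uniform without the $o(1)$ by first absorbing the constant $A$ into $d_0$ (i.e.\ choosing $d_0$ so that $\sigma_2^{(d)}\le d^{-s'}$ for some $s'<s$), which is what the paper does.

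The one genuine difference is in the direction ``polynomial tractability $\Rightarrow$ polynomial decay of $\sigma_2^{(d)}$''. The paper proves this via the preasymptotic \emph{lower} bound Theorem~\ref{preasymptoticstheorem}(ii): it sets $\varepsilon=d^{-1}$, chooses $r$ so that $\comp(d^{-1},\mathcal P_d^{(d)})\le d^r-1$, and then reads off $d^{-1}\ge \e(d^r-1,\mathcal P_d^{(d)})\ge (\sigma_2^{(d)})^{2r+1}$ from the lower bound with $n=d^r$. Your argument is more elementary and self-contained: you directly count the $\binom{d}{k}$ tensor singular values equal to $\tau_d^k$, choose $k=\lceil q\rceil+1$, and extract the decay from $\binom{d}{k}\le C\,2^p\tau_d^{-kp}d^q$. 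This avoids invoking Theorem~\ref{preasymptoticstheorem}(ii) altogether and gives an explicit decay exponent $(k-q)/(kp)$. Both approaches are short; yours has the advantage of not depending on any auxiliary result, while the paper's keeps the argument symmetric by using both halves of the same preasymptotic theorem.
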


\begin{proof}
 Clearly, strong polynomial tractability implies polynomial tractability.
 
 Let the problem be polynomially tractable and choose
 nonnegative numbers $C,p$ and $q$ such that
 \begin{equation*}
  \comp(\varepsilon,\mathcal{P}^{(d)}_d)
  \leq C\, \varepsilon^{-q}\, d^p
 \end{equation*}
 for all $\varepsilon>0$ and $d\in\IN$. In particular, 
 there is some $r\in\IN$ with
 \begin{equation*}
  \comp(d^{-1},\mathcal{P}^{(d)}_d) \leq d^{r} -1
 \end{equation*}
 for every $d\geq 2$.
 If $d$ is large enough, we can apply the second part of 
 Theorem~\ref{preasymptoticstheorem} for $n=d^r$ and the estimate
 \begin{equation*}
  \beta\brackets{d,d^r} = \frac{\ln (1/\sigma^{(d)}_2)}{
  \ln \brackets{1+\frac{v\cdot d}{r \log_{1+v} d}}}
  \leq \frac{2 \ln (1/\sigma^{(d)}_2)}{\ln d}
 \end{equation*}
 to obtain
 \begin{equation*}
  d^{-1}
  \geq \e\brackets{d^r-1,\mathcal{P}^{(d)}_d}
  \geq \sigma^{(d)}_2 \cdot d^{-r \beta\brackets{d,d^r}}
  \geq (\sigma^{(d)}_2)^{2r +1}.
 \end{equation*}
 Consequently, $\sigma^{(d)}_2$ decays polynomially in $d$.
 
 Now let $\sigma^{(d)}_2$ be of polynomial decay.
 Then there are $p>0$ and $d_0\in\IN$ such that 
 $\sigma^{(d)}_2$ is bounded above by $d^{-p}$ for any $d\geq d_0$.
 On the other hand, there are positive constants $C$ and $r$ such that
 \begin{equation*}
  \sigma^{(d)}_n \leq \sigma^{(1)}_n \leq C \, n^{-r}.
 \end{equation*}
 We apply the first part of 
 Theorem~\ref{preasymptoticstheorem} and the estimate
 \begin{equation*}
  \mathbf{\alpha}\brackets{d,1}= \frac{\ln (1/\sigma^{(d)}_2)}{
  \ln d + \frac{2}{r} \ln (1/\sigma^{(d)}_2)}
  \geq \frac{p}{1+\frac{2p}{r}} =: s >0
 \end{equation*}
 to obtain
 \begin{equation*}
  \e\brackets{n,\mathcal{P}^{(d)}_d} 
  \leq \brackets{\frac{\exp\brackets{C^{2/r}}}{n+1}}^s
 \end{equation*}
 for any $n\in\IN$ and $d\geq d_0$. Consequently,
 \begin{equation*}
  \comp\brackets{\varepsilon,\mathcal{P}^{(d)}_d}
  \leq \exp\brackets{C^{2/r}}\cdot \varepsilon^{-1/s}
 \end{equation*}
 for any $d\geq d_0$ and $\varepsilon>0$ and the problem 
 is strongly polynomially tractable.
\end{proof}

As an example we consider the embeddings
$$
 S^{(d)}_d: H^{r_d}_{\rm mix}([a,b]^d) \hookrightarrow L^2([a,b]^d),
 \qquad
 \widetilde S^{(d)}_d: H^{r_d}_{\rm mix}(\mathbb{T}^d) 
 \hookrightarrow L^2(\mathbb{T}^d),
$$
where the mixed order Sobolev spaces with smoothness $r_d\in\IN$ 
are equipped with the scalar product \eqref{natural scalar product},
see Section~\ref{applicationssection}.
Let $\mathcal{P}^{r_d}_d$ and $\mathcal{\widetilde P}^{r_d}_d$
be the respective approximation problems.
If the smoothness $r_d$ is independent of $d$
these problems are not polynomially tractable.
Can we achieve polynomial tractability by increasing 
the smoothness with the dimension?
We obtain the following result.

\begin{cor}[\cite{Kr18}]
\label{tractabilitycorollary}
 The problem $\mathcal{P}^{r_d}_d$ is not polynomially tractable
 for any choice of natural numbers $r_d$.
 The problem $\mathcal{\widetilde P}^{r_d}_d$
 is strongly polynomially tractable,
 iff it is polynomially tractable,
 iff $b-a<2\pi$ and $r_d$ grows at least logarithmically in $d$
 or $b-a=2\pi$ and $r_d$ grows at least polynomially in $d$.
\end{cor}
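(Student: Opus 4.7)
The plan is to reduce both statements to Theorem~\ref{tractabilitytheorem}, which translates polynomial tractability of the $d$th tensor product into polynomial decay of the univariate second singular value $\sigma_2^{(d)}$ in $d$. The operator whose tensor product gives $S^{(d)}_d$ (respectively $\widetilde S^{(d)}_d$) is the univariate embedding of $H^{r_d}([a,b])$ (respectively $H^{r_d}(\mathbb T)$) into its $L^2$ space. These singular values are nonincreasing in the smoothness $r_d$ and decay like $n^{-r_d}$ in $n$. Up to standard monotone-envelope reductions that let one assume $r_d$ is nondecreasing in $d$ (replacing $r_d$ by $\max_{j\leq d}r_j$ for the impossibility direction, and by $\min_{j\geq d}r_j$ for the sufficiency direction, since these only make the problem easier or harder respectively, while preserving the stated growth conditions), the hypotheses of Theorem~\ref{tractabilitytheorem} are satisfied.

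For the nonperiodic problem $\mathcal{P}^{r_d}_d$, I will invoke the lower bound from Section~\ref{applicationssection},
\[
 \sigma^{(r)}_2 \;\geq\; \sigma^{(\infty)}_2 \;\geq\; \sqrt{\frac{(b-a)^2}{12+(b-a)^2}},
\]
which is a positive constant independent of $r$. Hence $\sigma_2^{(d)}=\sigma^{(r_d)}_2$ is uniformly bounded away from zero and cannot decay polynomially in $d$; by Theorem~\ref{tractabilitytheorem}, the problem is not polynomially tractable regardless of the choice of $r_d$.

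For the periodic problem $\mathcal{\widetilde P}^{r_d}_d$, the scalar product~\eqref{natural scalar product} corresponds, as in the $\circ$-case of Section~\ref{applicationssection} with $\gamma=1$, to the Fourier weight $w_k^2=\sum_{l=0}^r|2\pi k/(b-a)|^{2l}$, which diagonalizes the embedding and yields the explicit formula
\[
 \tilde\sigma^{(r)}_2 \;=\; \left(\sum_{l=0}^r \eta^{2l}\right)^{-1/2}, \qquad \eta=\frac{2\pi}{b-a}.
\]
I then split into three subcases. If $b-a<2\pi$ then $\eta>1$ and the geometric sum gives $\tilde\sigma^{(r)}_2\asymp\eta^{-r}$, so polynomial decay of $\tilde\sigma^{(r_d)}_2$ in $d$ is equivalent to $r_d$ growing at least logarithmically in $d$. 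If $b-a=2\pi$ then $\eta=1$ and $\tilde\sigma^{(r)}_2=(r+1)^{-1/2}$, so polynomial decay is equivalent to $r_d$ growing at least polynomially in $d$. Finally, if $b-a>2\pi$ then $\eta<1$ and $\tilde\sigma^{(r)}_2\geq\sqrt{1-\eta^2}>0$ uniformly in $r$, so no growth of $r_d$ can produce polynomial decay and hence no choice yields polynomial tractability. Theorem~\ref{tractabilitytheorem} then packages these three subcases into the claimed equivalence of strong polynomial tractability, polynomial tractability, and the stated growth conditions on $r_d$.

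The main obstacle is largely bookkeeping: executing the monotone-envelope reductions carefully and computing $\tilde\sigma^{(r)}_2$ from the Fourier weights. The mathematical heart of the argument is the sharp contrast between the $r$-independent lower bound on $\sigma^{(r)}_2$ in the nonperiodic case, which kills tractability unconditionally, and the explicit $r$-dependent formula for $\tilde\sigma^{(r)}_2$ in the periodic case, whose rate of decay in $r$ depends decisively on the position of the period $b-a$ relative to $2\pi$.
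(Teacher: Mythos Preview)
Your overall approach is correct and matches the paper's intended derivation from Theorem~\ref{tractabilitytheorem}: the nonperiodic case uses the $r$-independent lower bound on $\sigma_2^{(r)}$, and the periodic case uses the explicit formula for $\tilde\sigma_2^{(r)}$ together with the trichotomy on $\eta=2\pi/(b-a)$.

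However, your monotone-envelope reduction is both unnecessary and, as stated, wrong in one direction. For the necessity (impossibility) direction you replace $r_d$ by $\bar r_d=\max_{j\le d}r_j$, which makes the problem easier; from tractability of the original you therefore deduce tractability of the easier problem, hence that $\bar r_d$ satisfies the growth condition. But this does \emph{not} imply that $r_d$ does: take $r_d=d$ on powers of two and $r_d=1$ otherwise; then $\bar r_d\asymp d$ grows polynomially while $r_d$ fails every growth condition. So the envelope does not ``preserve the stated growth conditions'' in the direction you need.

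The fix is simply to observe that the monotonicity hypothesis in Theorem~\ref{tractabilitytheorem} is not used for the implication ``polynomially tractable $\Rightarrow$ $\sigma_2^{(d)}$ decays polynomially'': its proof applies Theorem~\ref{preasymptoticstheorem}(ii) to each $d$ separately. For the converse implication, monotonicity is used only to obtain a uniform bound $\sigma_n^{(d)}\le Cn^{-r}$; here $r_d\ge 1$ for all $d$, whence $\tilde\sigma_n^{(r_d)}\le\tilde\sigma_n^{(1)}\le Cn^{-1}$ automatically (and similarly $\sigma_n^{(r_d)}\le\sigma_n^{(1)}$ in the nonperiodic case). So the theorem applies directly to the possibly non-monotone sequence $r_d$, and the envelopes can be dropped entirely.
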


With regard to tractability, the $L^2$-approximation 
of mixed order Sobolev functions
is hence much harder for nonperiodic than for periodic functions.
The negative tractability result for nonperiodic 
functions can be explained by the difficulty of approximating $d$-variate
polynomials with degree 1 or less in each 
variable and $H^1_{\rm mix}$-norm less than 1.
The corresponding set of functions is contained in the unit ball
of the nonperiodic space $H^r_{\rm mix}$ for every $r\in\IN\cup\set{\infty}$.

\begin{rem}
Corollary~\ref{tractabilitycorollary} 
for cubes of unit length is in accordance with the
results of \cite{PW10},
where Papageorgiou and Woźniakowski prove the 
corresponding statement for the $L^2$-approximation in Sobolev spaces
of mixed smoothness $(r_1,\hdots,r_d)$ on 
the unit cube.
The smoothness of such functions increases 
from variable to variable,
but the smoothness with respect to a fixed 
variable does not increase with the dimension.
There, the authors raise the question for a 
characterization of spaces and their norms
for which increasing smoothness yields polynomial tractability.
Theorem~\ref{tractabilitytheorem} says that in 
the setting of uniformly increasing mixed smoothness,
polynomial tractability is achieved,
if and only if it leads to a polynomial decay 
of the second singular value of the univariate problem.
It would be interesting to verify whether the 
same holds in the case of variable-wise increasing smoothness
and to compute the exponents of strong polynomial tractability.
\end{rem}

\begin{rem}[Impact of the interval representation]
The reason for the great sensibility of the 
tractability results for the periodic spaces to the length of the interval
can be seen in the difficulty of approximating
trigonometric polynomials with frequencies in $\frac{2\pi}{b-a}\set{-1,0,1}^d$
that are contained in the unit ball of $H^\infty_{\rm mix}(\mathbb{T}^d)$.
The corresponding set of functions is nontrivial,
if and only if $\frac{2\pi}{b-a}$ is smaller than 1.

It may yet seem unnatural that the singular values are so
sensible to the representation $[a, b]^d$ 
of the $d$-torus or the $d$-cube.
This can only happen, since the above and common scalar products
\eqref{natural scalar product}
do not define a homogeneous family of norms on 
$H^{r}_{\rm mix}([a,b]^d)$.
To see that, let $S$ be the embedding of 
$H^{r}_{\rm mix}([a,b]^d)$
into $L^2([a,b]^d)$
and let $S_0$ be the embedding in the case 
$[a,b]=[0,1]$.
The dilation operation 
$Mf=f\brackets{\mathbf a+(b-a) \,\cdot}$ defines a
linear homeomorphism both
from $L^2([a,b]^d)$ into $L^2([0,1]^d)$
and from $H^{r}_{\rm mix}([a,b]^d)$ 
into $H^{r}_{\rm mix}([0,1]^d)$
and we have $S_0 = M S M^{-1}$.
The $L^2$-spaces satisfy the homogeneity relation
\begin{equation*}
 \Xnorm{Mf}{L^2([0,1]^d)}
 = (b-a)^{-d/2} 
 \cdot \Xnorm{f}{L^2([a,b]^d)}
 \quad \text{for} \quad f\in L^2([a,b]^d)
.\end{equation*}
If the chosen family of norms on $H^{r}_{\rm mix}(([a,b]^d))$ 
is also homogeneous, i.e.
\begin{equation*}
 \Xnorm{Mf}{H^{r}_{\rm mix}([0,1]^d)}
 = (b-a)^{-d/2} 
 \cdot \Xnorm{f}{H^{r}_{\rm mix}([a,b]^d)}
 \quad \text{for} \quad f\in H^{r}_{\rm mix}([a,b]^d)
,\end{equation*}
the singular values of $S$ and $S_0$ clearly must coincide.
The above scalar products do not yield a homogeneous family of norms.
An example of an equivalent and homogeneous family of norms 
on $H^{r}_{\rm mix}([a,b]^d)$
is given by the scalar products
\begin{equation*}
 \scalar{f}{g}=\sum_{\mathbf{\alpha}\in\set{0,\hdots,r}^d} 
 (b -a)^{2\abs{\mathbf{\alpha}}} \Xscalar{\diff^\mathbf{\alpha} f}{\diff^\mathbf{\alpha} g}{L^2([a,b]^d)}
.\end{equation*}
Hence, the singular values and tractability 
results with respect to this scalar product
do not depend on $a$ and $b$ at all,
both in the periodic and the nonperiodic case.
They coincide with the singular values with respect to the
scalar product \eqref{natural scalar product} 
for the case $[a,b]=[0,1]$.
\end{rem}



\section{Randomized Approximation \texorpdfstring{in $L^2$}{}}
\label{sec:OptimalMC}

We want to approximate an unknown real or complex valued function $f$
on a set $D$ based on a finite number $n$ of function values
which may be evaluated at randomly and adaptively chosen points.
In general, 
we cannot avoid to make an approximation error.
We measure this error in the space $L^2(D,\mathcal{A},\mu)$
of square integrable functions on $D$ with respect to some measure $\mu$.

If we want to say anything about this error,
we need to have some a priori knowledge of the function.
For example, $D$ might be a compact manifold and we might know that
$f$ is bounded with respect to some Sobolev norm on $D$.
More generally, we may assume that the function 
can be approximated well with respect
to some orthonormal system $\mathcal{B}=\{b_1,b_2,\hdots\}$ in $L^2$.
That is, there is a nonincreasing zero sequence $\eps:\IN_0\to(0,\infty)$
such that the function is contained in
\begin{equation*}
 F_{\mathcal{B}}^\eps
 =
 \Big\{ f\in L^2 \,\Big\vert\, \big\Vert f-\sum_{j=1}^m \scalar{f}{b_j}_2 b_j\big\Vert_2^2
 \leq \eps(m)
 \text{\ \ for all }
 m\in\IN_0\Big\}.
\end{equation*}

The approximation is described by a random mapping
$A_n: F_{\mathcal{B}}^\eps\to L^2$,
which we call algorithm.
The error of the algorithm $A_n$ is defined by
\begin{equation*}
 \err\brackets{A_n,F_{\mathcal{B}}^\eps}
 = \sup\limits_{f\in F_{\mathcal{B}}^\eps} 
 \brackets{\IE \norm{f-A_n(f)}_2^2 }^{1/2} 
.\end{equation*}
This is the worst mean squared error that
can occur for the given a priori knowledge.
The algorithm is called $n^{\rm th}$ optimal,
we write $A_n^*$ instead of $A_n$,
if it satisfies
\begin{equation*}
 \err\brackets{A_n^*,F_{\mathcal{B}}^\eps}
 = \inf\,
 \err\brackets{A_n,F_{\mathcal{B}}^\eps},
\end{equation*}
where the infimum is taken over all algorithms $A_n$
that require at most $n$ function values of the unknown function.

It seems to be an unrealistic hope 
to find such algorithms $A_n^*$.
Things look better if $n$ arbitrary pieces of linear information are allowed.
The optimal deterministic algorithm
that requires at most $n$ pieces of linear information is 
given by the orthogonal projection $P_n$
onto the span of the first $n$ functions in $\mathcal{B}$.
Its worst case error is the square-root of $\eps(n)$.
The algorithm $P_n$ asks for the first $n$ coefficients of $f$
with respect to the orthonormal system $\mathcal{B}$.

In most applications, however, it is not possible to sample these coefficients
and we may only make use of function values.
This leads to the following questions:
\begin{itemize}
 \item How does the error of
 $A_n^*$ compare to the error of $P_n$?
 \item Find an algorithm $A_n$ whose error is close to the error of $A_n^*$.
\end{itemize}
Note that $A_n^*$ cannot be much better than $P_n$.
In 1992, Novak~\cite{No92} proved that
\begin{equation}
\label{eq:novak lower ran}
 \err\brackets{A_n^*,F_{\mathcal{B}}^\eps}
 \geq \frac{1}{\sqrt{2}}\,
 \err\brackets{P_{2n-1},F_{\mathcal{B}}^\eps},
\end{equation}
see also Theorem~\ref{thm:randomization useless in Hspace}.
On the other hand, there are various examples
where the error of the algorithm $A_n^*$
behaves similarly to the error of $P_n$,
see for instance \cite{CDL13,CM17,He94,Ma91,TWW88}.
In 2006,
Wasilkowski and Woźniakowski~\cite{WW06}
proved for the general case that
\begin{equation*}
 \err\brackets{P_n,F_{\mathcal{B}}^\eps} 
 \preccurlyeq n^{-p} (\ln n)^q \quad
 \Rightarrow \quad
 \err\brackets{A_n^*,F_{\mathcal{B}}^\eps}
 \preccurlyeq n^{-p} (\ln n)^q (\ln \ln n)^{p+1/2} 
\end{equation*}
for all $p>0$ and $q\geq 0$.
In that sense, $A_n^*$ is almost as good as $P_n$.
The proof is constructive.
Of course, we immediately wonder whether
the additional power of the double logarithm is necessary.
In 2012, Novak and Woźniakowski showed
that this is not the case for $q=0$, that is,
\begin{equation*}
 \err\brackets{P_n,F_{\mathcal{B}}^\eps} \preccurlyeq n^{-p} \quad
 \Rightarrow \quad
 \err\brackets{A_n^*,F_{\mathcal{B}}^\eps} \preccurlyeq n^{-p}
\end{equation*}
for all $p>0$.
The proof of this result is not constructive.
Both proofs can be found in~\cite[Chapter~22]{NW12}.
In this section we prove the corresponding statement
for $q>0$.
This solves Open Problem~99 as posed in~\cite{NW12}.

More generally, we consider sequences with
the property
\begin{equation}
\label{eq:equivalence relation eps}
\eps(2n)\asymp\eps(n).
\end{equation}
For any such sequence and any orthonormal system $\mathcal{B}$,
we provide an algorithm $A_n$ and a constant $c_{\eps}>0$
such that, for all $n\in\IN$, we have
\begin{equation*}
 \err\brackets{A_n,F_{\mathcal{B}}^\eps} \leq 
 c_{\eps} \err\brackets{P_n,F_{\mathcal{B}}^\eps},
\end{equation*}
see Theorem~\ref{thm:explicit algorithm}.
Together with \eqref{eq:novak lower ran},
this answers both questions from above:
The errors of $A_n$ and $A_n^*$ only differ by a constant
and we have
\[
 \err\brackets{A_n^*,F_{\mathcal{B}}^\eps}
 \asymp
 \err\brackets{P_n,F_{\mathcal{B}}^\eps}.
\]
The algorithm is a refinement of the algorithm proposed in~\cite{WW06}.
Note that the constant $c_{\eps}$ only depends on the equivalence constant
of~\eqref{eq:equivalence relation eps}.
This constant is usually independent of the dimension
of the domain $D$.

These results are presented in Section~\ref{sec:general result}.
In Section~\ref{sec:examples OptimalMC} we consider several examples.
In particular, we study the problem of approximating
mixed order Sobolev functions in $L^2$ with
randomized algorithms based on $\lstd$ and
obtain the optimal order of convergence for the $n^{\rm th}$ minimal error,
see Corollary~\ref{cor:order OptimalMC mixed}.

In Section~\ref{int section}, we use these algorithms for the integration
of functions $f$ in $F_{\mathcal{B}}^\eps$ with respect 
to probability measures $\mu$.
We simply exploit the relation
\begin{equation*}
 \int_D f~\d\mu = \int_D A_n f~\d\mu + \int_D (f- A_n f)~\d\mu.
\end{equation*}
We compute the integral of $A_n f$ precisely
and use a direct simulation to approximate the integral of $f\,$--$\,A_n f$,
which has a small variance.
This technique is called control variates
or separation of the main part and is widely used
for Monte Carlo integration, see
\cite[Theorem~5.3]{He94} for another example.
The error of the resulting algorithm significantly improves 
on the error of a sole direct simulation,
even if the number of samples is small
and $D$ is a high-dimensional domain.

\subsection{The Algorithm and its Error} 
\label{sec:general result}

Let $(D,\mathcal{A},\mu)$ be a measure space and $\IK\in\set{\IR,\IC}$.
The space $L^2=L^2(D,\mathcal{A},\mu)$ is the space of square integrable
$\IK$-valued functions on $(D,\mathcal{A},\mu)$,
equipped with the scalar product
\begin{equation*}
 \scalar{f}{g}_2=\int_D f\, \overline{g}~\d\mu.
\end{equation*}
Let $\mathcal{B}=\set{b_1,b_2,\hdots}$ be an orthonormal system in $L^2$
and let $\mathcal{B}=\set{b_1,b_2,\hdots}$ be a nonincreasing zero-sequence.
We consider the set
\[
F_{\mathcal{B}}^\eps
 =
 \Big\{ f\in L^2 \,\Big\vert\, \big\Vert f-\sum_{j=1}^m \scalar{f}{b_j}_2 b_j\big\Vert_2^2
 \leq \eps(m)
 \text{\ \ for all }
 m\in\IN_0\Big\}.
\]
The functions in this set can be approximated well with respect to $\mathcal{B}$.
In other words, they can be approximated
well based on $m$ pieces of linear information.
The goal is to show that they can be approximated
just as well based on $n$ randomly chosen function values,
where $n$ is not much larger than $m$.

We introduce some further notation. Let $S$ be the identity on $L^2$.
For $m\in\IN_0$,
let $V_m$ be the span of the first $m$ elements of $\mathcal{B}$.
By $P_m$ we denote the orthogonal projection onto $V_m$ in $L^2$,
that is,
$$
 P_m: L^2\to L^2, \quad P_m(f)=\sum_{j=1}^m \scalar{f}{b_j}_2 b_j.
$$
The orthogonal projection onto the orthogonal complement of $V_m$
is denoted by $Q_m$. 
Note that $Q_m+P_m=S$.
Moreover, we define the function
\begin{equation*}
 u_m=\frac{1}{m} \sum\limits_{j=1}^m \abs{b_j}^2.
\end{equation*}
This is a probability density with respect to $\mu$.
We consider the probability measures
\begin{equation*}
 \mu_m: \mathcal{A}\to [0,1], \quad \mu_m(E)=\int_E u_m ~\d\mu.
\end{equation*}
on $(D,\mathcal{A})$.
We now define a family 
of randomized algorithms based on function evaluations.
Using the notions form Section~\ref{sec:usual problems},
we study algorithms
$$
 A\in \mathcal{A}[L^2,L^2,\lstd,\mathrm{ran}].
$$
Recall that the worst-case mean-square error 
of the randomized algorithm $A$ for the $L^2$-approximation of a function
from $F\subset L^2$ is defined by
$$
 \err\brackets{A,F}^2 =
 \err\brackets{A,S,F,L^2,\mathrm{wc}}^2 = 
 \sup_{f\in F}\,\IE \norm{f-A(f)}_2^2
$$
and that $\cost(A,F)=\cost(A,F,\lstd,\mathrm{wc})$
is the maximal number of function values
that the algorithm requests about a problem instance $f\in F$.

\begin{alg}
\label{alg:main alg}
Let $\mathbf n$ and $\mathbf m$ be sequences of nonnegative integers
such that $\mathbf m$ is nondecreasing.
For every nonnegative integer $k$, we define
$$
 M^{(k)}_{\mathbf n,\mathbf m}: L^2\to L^2,
$$
by the following recursive scheme.
\begin{itemize}
 \item For $f\in L^2$, we set $M^{(0)}_{\mathbf n,\mathbf m}(f) =0$.
 \item For $k\geq 1$ and $f\in L^2$, let
 $X_1^{(k)},\hdots,X_{n_k}^{(k)}$ be random variables with distribution $\mu_{m_k}$
 that are each independent of all the other random variables and set
 \begin{equation*}
  M^{(k)}_{\mathbf n,\mathbf m} f = M^{(k-1)}_{\mathbf n,\mathbf m} f +
  \sum_{j=1}^{m_k} \left[\frac{1}{n_k}\sum_{i=1}^{n_k} \frac{\brackets{f-M^{(k-1)}_{\mathbf n,\mathbf m} f}\overline{b_j}}{u_{m_k}}
  \brackets{X_i^{(k)}} \right] b_j.
 \end{equation*}
\end{itemize}
\end{alg}

Note that the expectation of each term in the inner sum is 
$$
 \langle f-M^{(k-1)}_{\mathbf n,\mathbf m} f,b_j\rangle_2.
$$
The algorithm hence approximates $f$ in $k$ steps.
In the first step, $n_1$ function values of $f$ are used for standard Monte Carlo type approximations
of its $m_1$ leading coefficients with respect to the orthonormal system $\mathcal B$.
In the second step, $n_2$ values of the residue are used for standard Monte Carlo type approximations
of its $m_2$ leading coefficients and so on.
In total, the algorithm uses 
\begin{equation}
\label{eq:cost equation}
 \cost\brackets{M^{(k)}_{\mathbf n,\mathbf m},L^2}=\sum_{j=1}^k n_j
\end{equation}
function values of $f$.
The total number of approximated coefficients is $m_k$.

Algorithms of this type have already been studied by Wasilkowski and Woźniakowski in \cite{WW06}.
The simple but crucial difference with the above algorithms
is the variable number $n_j$ of nodes in each approximation step.
Note that this stepwise approximation
is similar to several multilevel Monte Carlo methods
as introduced by Heinrich in 1998, see \cite{He01}.

The benefit from the $k^{\rm th}$ step
is controlled by $m_k$ and $n_k$ as shown by the following lemma.
The lemma corresponds to Theorem~22.14 in \cite{NW12}.
The setting here is
slightly more general, but the proof is
almost the same.

\begin{lemma}
\label{error lemma}
 Algorithm~\ref{alg:main alg} satisfies for all $k\in\IN$ that
 \begin{equation*}
  \eps(m_k) \leq
  \err\brackets{M^{(k)}_{\mathbf n,\mathbf m},F_{\mathcal{B}}^\eps}^2
  \leq \frac{m_k}{n_k} \err\brackets{M^{(k-1)}_{\mathbf n,\mathbf m},F_{\mathcal{B}}^\eps}^2 
  + \eps(m_k).
 \end{equation*}
\end{lemma}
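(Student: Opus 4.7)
The plan is to prove the two inequalities separately. The lower bound is elementary, while the upper bound relies on the standard decomposition of the mean-squared error into a bias (projection) part and a variance (Monte Carlo) part, together with the crucial observation that the density $u_{m_k}$ has been chosen exactly to make the variance telescope.

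For the lower bound, I would exploit that the output $M^{(k)}_{\mathbf n,\mathbf m}(f)$ always lies in $V_{m_k}$. This follows by induction on $k$: $M^{(0)}f=0\in V_{m_0}$, and since $\mathbf m$ is nondecreasing we have $M^{(k-1)}f\in V_{m_{k-1}}\subset V_{m_k}$, while the correction $\sum_{j=1}^{m_k}\hat c_j^{(k)} b_j$ manifestly lies in $V_{m_k}$. Now consider the concrete test function $f=\sqrt{\eps(m_k)}\,b_{m_k+1}$. A quick check shows $f\in F_{\mathcal B}^\eps$ (for $m\leq m_k$ the tail norm equals $\eps(m_k)\leq\eps(m)$, and for $m\geq m_k+1$ it vanishes). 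Since $f\perp V_{m_k}$ and $M^{(k)}f\in V_{m_k}$, Pythagoras yields $\|f-M^{(k)}f\|_2^2=\|f\|_2^2+\|M^{(k)}f\|_2^2\geq \eps(m_k)$ pointwise in the random parameters, so the lower bound follows.

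For the upper bound, I would fix $f\in F_{\mathcal{B}}^\eps$ and condition on the randomness used in the first $k-1$ steps, which freezes $M^{(k-1)}f$ and hence the residue $g:=f-M^{(k-1)}f$. Using $P_{m_k}M^{(k-1)}f=M^{(k-1)}f$ and the orthogonal decomposition relative to $V_{m_k}$,
\begin{equation*}
 \|f-M^{(k)}f\|_2^2
 =\|Q_{m_k}f\|_2^2 + \sum_{j=1}^{m_k}\bigl|\langle g,b_j\rangle_2-\hat c_j^{(k)}\bigr|^2.
\end{equation*}
The first summand is bounded by $\eps(m_k)$ by definition of $F_{\mathcal{B}}^\eps$. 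For the second, the estimator $\hat c_j^{(k)}$ is a mean of $n_k$ i.i.d.\ samples of $g\overline{b_j}/u_{m_k}$ under $\mu_{m_k}$; its expectation is $\langle g,b_j\rangle_2$. Hence the conditional mean squared deviation is bounded by $\frac{1}{n_k}\IE_{X\sim\mu_{m_k}}|g\overline{b_j}/u_{m_k}(X)|^2=\frac{1}{n_k}\int_D|g|^2|b_j|^2/u_{m_k}\,\d\mu$. Summing over $j\leq m_k$ and using the key identity $\sum_{j=1}^{m_k}|b_j|^2=m_k\,u_{m_k}$, the awkward factor $u_{m_k}^{-1}$ cancels and we obtain $\tfrac{m_k}{n_k}\|g\|_2^2$. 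Taking expectation removes the conditioning, giving $\IE\|f-M^{(k)}f\|_2^2\leq \eps(m_k)+\tfrac{m_k}{n_k}\IE\|f-M^{(k-1)}f\|_2^2$, and then taking the supremum over $f\in F_{\mathcal{B}}^\eps$ yields the claim.

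The only delicate point is the cancellation in the variance sum; the density $u_{m_k}$ was engineered precisely so that this works (with the natural convention that $|b_j|^2/u_{m_k}=0$ on the $\mu$-null set $\{u_{m_k}=0\}$, where all $b_j$ vanish). Everything else is orthogonality and the standard $1/n_k$ variance reduction of a plain Monte Carlo mean, so I do not anticipate any further obstacle.
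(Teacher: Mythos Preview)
Your proposal is correct and follows essentially the same route as the paper's proof: the same test function $f=\sqrt{\eps(m_k)}\,b_{m_k+1}$ for the lower bound, and the same orthogonal decomposition into $\|Q_{m_k}f\|_2^2$ plus the Monte Carlo variance term, with the identity $\sum_{j\le m_k}|b_j|^2=m_k\,u_{m_k}$ driving the cancellation. Your explicit induction for $M^{(k)}f\in V_{m_k}$ and the remark on the null set $\{u_{m_k}=0\}$ are exactly the points the paper also addresses.
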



\begin{proof}
 We start with the lower bound. We consider the function
 $$
  f=\sqrt{\eps(m_k)}\cdot b_{m_k+1} \in F_{\mathcal{B}}^\eps.
 $$
 Note that $M^{(k)}_{\mathbf n,\mathbf m}(f)$ is contained in $V_{m_k}$
 and hence
 $$
  \norm{f-M^{(k)}_{\mathbf n,\mathbf m}(f)}_2^2
  \geq\norm{f-P_{m_k} f}_2^2
  =\norm{f}_2^2
  =\eps(m_k)
 $$
 for any realization of $M^{(k)}_{\mathbf n,\mathbf m}$.
 This yields the lower bound.
 
 We turn to the upper bound.
 Let $f\in F_{\mathcal{B}}^\eps$.
 Let us first fix a realization of $M^{(k)}_{\mathbf n,\mathbf m}$.
 We have
 \begin{equation*}
  \norm{f-M^{(k)}_{\mathbf n,\mathbf m}(f)}_2^2
  = \norm{P_{m_k}(f)-M^{(k)}_{\mathbf n,\mathbf m}(f)}_2^2
  + \norm{Q_{m_k}(f)}_2^2.
 \end{equation*}
 Recall that the second term is bounded by $\varepsilon(m_k)$.
 The first term satisfies
 $$
  \norm{P_{m_k}(f)-M^{(k)}_{\mathbf n,\mathbf m}(f)}_2^2
  = \sum_{j=1}^{m_k} \abs{\scalar{f-M^{(k)}_{\mathbf n,\mathbf m}f}{b_j}_2}^2.
 $$
 We turn back to the randomized setting.
 For $j\leq m_k$, we use the abbreviation
 \begin{equation*}
  g_j = \frac{1}{u_{m_k}} \brackets{f-M^{(k-1)}_{\mathbf n,\mathbf m}f}\overline{b_j}.
 \end{equation*}
 Note that $u_{m_k}=0$ implies $b_j=0$ and we set $g_j=0$ in this case.
 Let $\IE_k$ denote the expectation with respect to
 the random variables $X_i^{(k)}$ for $i\leq n_k$.
 We obtain
 \begin{align*}
   \IE_k &\abs{\scalar{f-M^{(k)}_{\mathbf n,\mathbf m}f}{b_j}_2}^2
   = \IE_k \abs{\scalar{f-M^{(k-1)}_{\mathbf n,\mathbf m}f}{b_j}_2
   - \frac{1}{n_k}\sum_{i=1}^{n_k} g_j\brackets{X_i^{(k)}}}^2\\
   &= \IE_k \abs{\int_D g_j(x) ~\d\mu_{m_k}(x) - \frac{1}{n_k}\sum_{i=1}^{n_k} g_j\brackets{X_i^{(k)}}}^2
   \leq \frac{1}{n_k} \int_D \abs{g_j(x)}^2 ~\d\mu_{m_k}(x)\\
   &= \frac{1}{n_k} \int_D \abs{g_j(x)}^2 u_{m_k}(x) ~\d\mu(x)
 \end{align*}
 and hence
 \begin{multline*}
  \IE_k \sum_{j=1}^{m_k} \abs{\scalar{f-M^{(k)}_{\mathbf n,\mathbf m}f}{b_j}_2}^2
  \leq \frac{1}{n_k} \int_D \sum_{j=1}^{m_k} \abs{g_j(x)}^2 u_{m_k}(x)~\d\mu(x)\\
  = \frac{m_k}{n_k} \int_D  \abs{\brackets{f-M^{(k-1)}_{\mathbf n,\mathbf m} f}(x)}^2 ~\d\mu(x)
  = \frac{m_k}{n_k} \norm{f-M^{(k-1)}_{\mathbf n,\mathbf m} f}_2^2.
 \end{multline*}
 With Fubini's theorem this yields that
 \begin{equation*}
   \IE \norm{f-M^{(k)}_{\mathbf n,\mathbf m}f}_2^2
   \ \leq\, \frac{m_k}{n_k}\, \IE \norm{f-M^{(k-1)}_{\mathbf n,\mathbf m} f}_2^2 
   + \eps(m_k)
 \end{equation*}
 and the upper bound is proven.
\end{proof}

Based on this error formula,
we now tune the parameters of Algorithm~\ref{alg:main alg}.

\begin{prop}[\cite{Kr18c}]
 \label{thm:fundamental theorem}
 Let $m_j=2^{j-1}$ and
 $n_j=2^j \left\lceil 
 \varepsilon(\lfloor 2^{j-2}\rfloor)/\varepsilon(2^{j-1})
 \right\rceil$
 for all $j\in\IN$.
 Then Algorithm~\ref{alg:main alg} satisfies
 for all $k\in\IN_0$ that
 \begin{itemize}
  \item $\displaystyle 
  \err\brackets{M^{(k)}_{\mathbf n,\mathbf m},F_{\mathcal{B}}^\eps}^2
  \leq 2\,\eps\brackets{\lfloor 2^{k-1}\rfloor}$.
  \item $\displaystyle \cost\brackets{M^{(k)}_{\mathbf n,\mathbf m},L^2}
  \leq 2^{k+1} \max\limits_{0\leq j < k}
  \left\lceil\frac{\varepsilon(\lfloor 2^{j-1}\rfloor)}{\varepsilon(2^j)}\right\rceil$.
 \end{itemize}
\end{prop}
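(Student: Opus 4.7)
The plan is to prove both bullet points by induction on $k$, using Lemma~\ref{error lemma} as the recursive engine and the specific choice of $m_j$ and $n_j$ to keep the recursion under control.

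For the error bound, I would proceed as follows. The base case $k=0$ is immediate: $M^{(0)}_{\mathbf n,\mathbf m} f = 0$, so the error squared is $\sup_{f\in F_{\mathcal{B}}^\eps}\Vert f\Vert_2^2\leq \eps(0) = \eps(\lfloor 2^{-1}\rfloor)\leq 2\eps(\lfloor 2^{-1}\rfloor)$, using $P_0 = 0$ and the definition of $F_{\mathcal{B}}^\eps$ with $m=0$. For the inductive step, assume the bound holds at level $k-1$ and apply Lemma~\ref{error lemma} with $m_k=2^{k-1}$:
\begin{equation*}
 \err\bigl(M^{(k)}_{\mathbf n,\mathbf m},F_{\mathcal{B}}^\eps\bigr)^2
 \,\leq\, \frac{m_k}{n_k}\,\err\bigl(M^{(k-1)}_{\mathbf n,\mathbf m},F_{\mathcal{B}}^\eps\bigr)^2 + \eps(m_k)
 \,\leq\, \frac{m_k}{n_k}\cdot 2\eps(\lfloor 2^{k-2}\rfloor) + \eps(2^{k-1}).
\end{equation*}
The whole point of the choice $n_k = 2^k\lceil \eps(\lfloor 2^{k-2}\rfloor)/\eps(2^{k-1})\rceil$ is that it makes the first summand at most $\eps(2^{k-1})$. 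Indeed, since $\lceil x\rceil\geq x$ for $x>0$,
\begin{equation*}
 \frac{m_k}{n_k}\cdot 2\eps(\lfloor 2^{k-2}\rfloor)
 \,\leq\, \frac{2^{k-1}}{2^k}\cdot \frac{\eps(2^{k-1})}{\eps(\lfloor 2^{k-2}\rfloor)}\cdot 2\eps(\lfloor 2^{k-2}\rfloor)
 \,=\, \eps(2^{k-1}).
\end{equation*}
Adding gives $2\eps(2^{k-1}) = 2\eps(\lfloor 2^{k-1}\rfloor)$, completing the induction. So the real content of this calculation is the balancing principle: $n_k$ is tuned precisely so that the propagated error from step $k-1$ and the residual $\eps(m_k)$ contribute equally.

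The cost bound follows from the identity \eqref{eq:cost equation}, a direct substitution, and a geometric sum. Writing $c_j = \lceil \eps(\lfloor 2^{j-1}\rfloor)/\eps(2^j)\rceil$ for $0\leq j<k$, the index shift $i=j-1$ gives
\begin{equation*}
 \cost\bigl(M^{(k)}_{\mathbf n,\mathbf m},L^2\bigr)
 \,=\, \sum_{j=1}^k n_j
 \,=\, \sum_{i=0}^{k-1} 2^{i+1}\,c_i
 \,\leq\, \Bigl(\max_{0\leq i<k} c_i\Bigr)\sum_{i=0}^{k-1} 2^{i+1}
 \,=\, (2^{k+1}-2)\max_{0\leq i<k} c_i,
\end{equation*}
which is at most $2^{k+1}\max_{0\leq j<k}\lceil \eps(\lfloor 2^{j-1}\rfloor)/\eps(2^j)\rceil$, as claimed.

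No step here is genuinely hard once Lemma~\ref{error lemma} is at our disposal; the only thing that has to be checked carefully is the edge case $k=1$ (where $\lfloor 2^{k-2}\rfloor=0$ and the inductive hypothesis only gives $\eps(0)\geq \Vert f\Vert_2^2$), but this matches the choice $n_1 = 2\lceil\eps(0)/\eps(1)\rceil$ exactly. The main conceptual point to highlight is really the design idea rather than a technical obstacle: the parameters $(m_k,n_k)$ are chosen as the solution of the one-step optimization of Lemma~\ref{error lemma} that makes the two contributions balance and produces a geometric halving of the error budget at each level.
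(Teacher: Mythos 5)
Your proof is correct and follows essentially the same route as the paper: induction on $k$ driven by Lemma~\ref{error lemma}, with the choice of $n_k$ making the propagated term collapse to $\eps(2^{k-1})$, and the cost bound read off from \eqref{eq:cost equation} via a geometric sum. The paper's proof is identical in substance, just terser (it dispatches the cost bound as "obvious"); your extra commentary on the balancing principle and the $k=1$ edge case is accurate.
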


\begin{proof}
 The second estimate is obvious from~\eqref{eq:cost equation}.
 The first estimate follows from Lemma~\ref{error lemma}
 by induction on $k$.
 For $k=0$, we have $M^{(k)}_{\mathbf n,\mathbf m}=0$ and hence
 $$
 \err\brackets{M^{(k)}_{\mathbf n,\mathbf m},F_{\mathcal{B}}^\eps}^2
 = \sup_{f\in F_{\mathcal{B}}^\eps} \norm{f}_2^2
 = \varepsilon(0).
 $$
 If the statement holds for all $k<k'$ with some $k'\in\IN$,
 Lemma~\ref{error lemma} yields
 \begin{multline*}
  \err\brackets{M^{(k')}_{\mathbf n,\mathbf m},F_{\mathcal{B}}^\eps}^2
  \leq \frac{m_{k'}}{n_{k'}} 
  \err\brackets{M^{(k'-1)}_{\mathbf n,\mathbf m},F_{\mathcal{B}}^\eps}^2 
  + \eps(m_{k'})\\
  \leq \frac{\eps\brackets{2^{k'-1}}}{2\,\eps\brackets{\lfloor 2^{k'-2}\rfloor}} 
  2\,\eps\brackets{\lfloor 2^{k'-2}\rfloor} 
  + \eps\brackets{2^{k'-1}}
  =2\,\eps\brackets{2^{k'-1}}
  =2\,\eps\brackets{\lfloor 2^{k'-1}\rfloor}
 \end{multline*}
 and the proof by induction is complete.
\end{proof}

For many sequences
the maximum in the cost bound
of Proposition~\ref{thm:fundamental theorem}
is bounded by a constant,
that is,
\begin{equation}
 \label{eq:constant in cost bound}
 \sup\limits_{j\in\IN_0}
 \frac{\varepsilon(\lfloor 2^{j-1}\rfloor)}{\varepsilon(2^j)}
 < \infty.
\end{equation}
In this case,
Proposition~\ref{thm:fundamental theorem} says that
we may achieve an error of order $\eps(n)$
for every $n\in\IN$ using only $n$
function values of the target function.
To make this precise,
we define the following instance of Algorithm~\ref{alg:main alg}.

\begin{alg}
 \label{alg:explicit alg}
 Let $\mathcal{B}$ be an orthonormal system in $L^2$
 and let $\varepsilon:\IN_0\to(0,\infty)$ be a nonincreasing zero
 sequence.
 For any $n\in\IN_0$,
 we consider the algorithm $A_n=M^{(k)}_{\mathbf n,\mathbf m}$
 as defined in Algorithm~\ref{alg:main alg}
 for the following parameters: 
 \begin{itemize}
  \item $m_j=2^{j-1}$ for all $j\in\IN$.
  \item $n_j=2^j \left\lceil 
 \varepsilon(\lfloor 2^{j-2}\rfloor)/\varepsilon(2^{j-1})
 \right\rceil$ for all $j\in\IN$.
  \item $k\in\IN_0$ maximal such that $\sum_{j=1}^k n_j \leq n$.
 \end{itemize}
\end{alg}

Note that
the randomized algorithm $A_n$ 
requires at most $n$ function values
of every input,
see~\eqref{eq:cost equation}.
We obtain the following.

\begin{thm}[\cite{Kr18c}]
 \label{thm:explicit algorithm}
 Let $\varepsilon:\IN_0\to(0,\infty)$ be a nonincreasing zero sequence
 that satisfies \eqref{eq:constant in cost bound}
 and let $\mathcal{B}$ be an orthonormal system in $L^2$.
 We put
 \begin{equation*}
 C = 2^{\ell^2+3\ell+1},
 \quad\text{with}\quad
 \ell=\min\Big\{ r\in\IN_0 \,\Big\vert\, 
 \frac{\varepsilon(\lfloor 2^{j-1}\rfloor)}{\varepsilon(2^j)}
 \leq 2^r
 \text{\ for all } j\in\IN_0
 \Big\}.
 \end{equation*}
 Then Algorithm~\ref{alg:explicit alg} satisfies for all $n\in\IN$
 that $\cost\brackets{A_n}\leq n$ and
 $$
  \err\brackets{A_n,F_{\mathcal B}^\eps}^2 \leq C\, \eps(n).
 $$
\end{thm}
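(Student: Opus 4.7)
The cost bound $\cost(A_n) \le n$ is immediate from the definition of $A_n$ together with the cost formula \eqref{eq:cost equation}, so the task reduces to proving the error estimate $\err(A_n,F_{\mathcal B}^\eps)^2 \le C\,\eps(n)$. The strategy is to combine the sharp error bound from Proposition~\ref{thm:fundamental theorem}, namely $\err(M^{(k)}_{\mathbf n,\mathbf m},F_{\mathcal B}^\eps)^2 \le 2\,\eps(\lfloor 2^{k-1}\rfloor)$, with a lower bound on the chosen index $k$ coming from its maximality, and then translate this lower bound on $k$ into an upper bound for $\eps(2^{k-1})$ in terms of $\eps(n)$ by repeatedly applying the inequality $\eps(\lfloor 2^{j-1}\rfloor)\le 2^\ell\,\eps(2^j)$ built into the definition of~$\ell$.

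The first step is to extract the lower bound on $k$. By maximality, $\sum_{j=1}^{k+1}n_j>n$. Using the cost estimate of Proposition~\ref{thm:fundamental theorem} for $M^{(k+1)}_{\mathbf n,\mathbf m}$ together with the inequality $\lceil \eps(\lfloor 2^{j-1}\rfloor)/\eps(2^j)\rceil\le 2^\ell$, which holds for every $j\ge 0$ by definition of~$\ell$, this yields
\[
n \;<\; \sum_{j=1}^{k+1} n_j \;\le\; 2^{k+2}\cdot 2^\ell \;=\; 2^{k+\ell+2}.
\]
Hence $k+\ell+2>\log_2 n$, so for $k\ge 1$ the argument $2^{k-1}$ of $\eps$ satisfies $2^{k+\ell+2}>n$.

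The second step is the iteration. For $k\ge 1$, I apply $\eps(2^{j-1})\le 2^\ell\,\eps(2^j)$ exactly $\ell+3$ times starting from $j=k$ to obtain
\[
\eps(2^{k-1}) \;\le\; 2^{(\ell+3)\ell}\,\eps(2^{k+\ell+2}) \;\le\; 2^{\ell^2+3\ell}\,\eps(n),
\]
where the last inequality uses $2^{k+\ell+2}>n$ together with the monotonicity of $\eps$. Combining with Proposition~\ref{thm:fundamental theorem} gives $\err(A_n,F_{\mathcal B}^\eps)^2\le 2\,\eps(2^{k-1})\le 2^{\ell^2+3\ell+1}\,\eps(n)=C\,\eps(n)$, as required.

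Finally, I need to address the boundary case $k=0$, where $A_n=0$ and the error squared equals $\sup_{f\in F_{\mathcal B}^\eps}\Vert f\Vert_2^2=\eps(0)$. The condition $k=0$ forces $n<n_1\le 2^{\ell+1}$, which via the same iteration (now starting at $j=0$ and performed at most $\ell+2$ times, using that $\eps(0)\le 2^\ell\eps(1)$) gives $\eps(0)\le 2^{\ell^2+2\ell}\,\eps(n)\le C\,\eps(n)$. The only mildly delicate point, and hence the main thing to double-check carefully in writing this up, is the exact count of iteration steps needed in both cases — in particular making sure that the integer-valued bound $k+\ell+2>\log_2 n$ really lets one stop after at most $\ell+3$ iterations in the main case and $\ell+2$ in the boundary case, so that the exponent $\ell^2+3\ell+1$ in $C$ is indeed the right size.
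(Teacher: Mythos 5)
Your proposal is correct and follows essentially the same route as the paper: invoke Proposition~\ref{thm:fundamental theorem}, use the maximality of $k$ together with $n_j\leq 2^{j+\ell}$ to get $n<2^{k+\ell+2}$, and iterate the defining inequality of $\ell$ exactly $\ell+3$ times followed by monotonicity of $\eps$. The only cosmetic difference is your separate treatment of $k=0$, which the paper sidesteps by keeping the floor $\lfloor 2^{k-1}\rfloor$ (so that $\lfloor 2^{-1}\rfloor=0$) throughout, letting the same $\ell+3$-step iteration and the bound $n<2^{k+\ell+2}$ cover that case uniformly.
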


\begin{proof}
 Proposition~\ref{thm:fundamental theorem} and our assumption yield that
 $$
  \err\brackets{A_n,F_{\mathcal B}^\eps}^2
  \leq 2\cdot \eps\brackets{\lfloor 2^{k-1}\rfloor}
 \leq 2 \cdot 2^{\ell(\ell+3)} \eps\brackets{2^{k+\ell+2}}
 \leq C \eps(n),
 $$
 where the last inequality follows from 
 $n< \sum_{j=1}^{k+1} n_j \leq 2^{k+\ell+2}$.
\end{proof}

Note that the constant $C$ in 
Theorem~\ref{thm:explicit algorithm}
is usually rather harmless.
We will consider several examples in Section~\ref{sec:examples OptimalMC}.
In all these examples, $F_{\mathcal B}^\eps$ will be a class of $d$-variate functions
and the constant will be independent of $d$.
Let us consider two sequences $\eps$ that satisfy 
the assumption of Theorem~\ref{thm:explicit algorithm}.

\begin{ex}
\label{ex:sequences with modest decay}
Let $c\geq 1$, $p>0$ and $q\geq 0$.
Property~\eqref{eq:constant in cost bound} is satisfied by
the sequence $\eps:\IN_0\to (0,\infty)$
with $\varepsilon(0)=1$ and
$$
 \eps(n) = \min\set{1, c\,n^{-p} \brackets{1+\log_2 n}^q},
 \quad n\geq 1.
$$
Another example is given by $\varepsilon(0)=1$ and
$$
 \eps(n) = \min\set{1, c \brackets{1+\log_2 n}^{-p}},
 \quad n\geq 1.
$$
In both cases we have the estimate
$$
 \sup\limits_{j\in\IN_0}
 \frac{\varepsilon(\lfloor 2^{j-1}\rfloor)}{\varepsilon(2^j)}
 \leq 2^p
 \leq 2^{\lceil p \rceil}.
$$
Thus Theorem~\ref{thm:explicit algorithm}
can be applied with $\ell=\lceil p \rceil$.
Property~\eqref{eq:constant in cost bound} is not satisfied
if the sequence
decays exponentially or if it has big jumps.
\end{ex}

\begin{rem}[Less a priori knowledge]
\label{knowledge remark}
We assumed that our target function $f$ satisfies
\begin{equation}
\label{eq:our assumption}
 \norm{f-P_nf}_2^2 \leq \eps(n)
\end{equation}
for all $n\in\IN_0$ for some $\eps:\IN_0\to(0,\infty)$
with $\eps(2n)\asymp\eps(n)$
and proved that randomized algorithms can achieve
a squared error of order $\eps(n)$ with a sample
size of order $n$.
Our algorithm depends on $\varepsilon$.
If we do not know an admissible upper bound $\varepsilon$,
we can still achieve a squared error of order $\norm{f-P_nf}_2^2$ using
a weighted least squares method,
see~\cite[Theorem~2.1~(iv)]{CM17}.
The sample size of this method, however, is
at least of order $n\ln n$.
Here it is assumed that
$D$ is a Borel subset of $\IR^d$
with positive Lebesgue measure,
$\mathcal{A}$ is the Borel sigma algebra on $D$
and $\mu$ is a probability measure on $(D,\mathcal{A})$.
Again, the involved proportionality constants
are independent of the dimension of the domain $D$.
\end{rem}

\subsection{Approximation of Functions from a Hilbert Space}
\label{sec:examples OptimalMC}

An important application of 
Theorem~\ref{thm:explicit algorithm}
is the $L^2$-approximation of functions
from a Hilbert space.
Let $\widetilde F$ be an infinite-dimensional Hilbert space 
that is compactly embedded into $L^2=L^2(D,\mathcal{A},\mu)$.
From Section~\ref{sec:LPs}, we know that
there is a countable orthogonal basis 
$$
 \mathcal{B}=\{b_1,b_2,\hdots\}
$$ 
of $\widetilde F$ such that $\mathcal{B}$ is orthonormal in $L^2$
and the sequence $\eps:\IN_0\to(0,\infty)$ with
$$
 \eps(n) = \Xnorm{b_{n+1}}{\widetilde F}^{-2}
$$
is a nonincreasing zero sequence.\footnote{We
point to the fact that the elements of $\mathcal{B}$ 
are normalized in $L^2$. In Section~\ref{sec:LPs},
we normalized the functions in $\widetilde F$.}
Let $F$ be the unit ball of $\widetilde F$.
For every $n\in\IN_0$, the linear algorithm
$$
 A_n:F\to L^2, \quad
 A_n(f)= \sum_{j=1}^{n} \scalar{f}{b_j}_2 b_j,
$$
is the optimal deterministic algorithm 
for the problem of approximating
functions from $F$ with $n$ pieces of linear information.
It satisfies
\begin{equation}
\label{eq:error of projection}
 \err\brackets{A_n,F}^2
 =\varepsilon(n).
\end{equation}
In other words, we have
\begin{equation}
 F\subset F_{\mathcal B}^\eps,
\end{equation}
and we can apply 
Algorithm~\ref{alg:explicit alg}.
In particular, we obtain the
following result on the order of convergence.

\begin{thm}[\cite{Kr18c}]
\label{thm:order thm}
 Let $F$ be the unit ball of an infinite-dimensional
 Hilbert space that is compactly embedded into 
 some $L^2$-space.
 If the singular values $\sigma:\IN\to (0,\infty)$ 
 of the embedding $\APP$ satisfy $\sigma(2n)\asymp\sigma(n)$
 then
 \begin{multline*}
 \e\brackets{n,\mathcal{P}[\APP,F,L^2,\lall,\mathrm{ran},\mathrm{wc}]}
 \asymp \e\brackets{n,\mathcal{P}[\APP,F,L^2,\lall,\mathrm{det},\mathrm{wc}]}\\
 \asymp \e\brackets{n,\mathcal{P}[\APP,F,L^2,\lstd,\mathrm{ran},\mathrm{wc}]}.
 \end{multline*}
\end{thm}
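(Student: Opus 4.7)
The plan is to link all three quantities through a short chain of inequalities, most of which have already been established. The equivalence of the first two follows entirely from results collected in Chapter~\ref{chap:problems and algorithms}. Theorem~\ref{thm:LPs over Hilbert spaces} gives
$$
 \e(n,\mathcal{P}[\APP,F,L^2,\lall,\mathrm{det},\mathrm{wc}]) = \sigma_{n+1},
$$
while Theorem~\ref{thm:randomization useless in Hspace} together with monotonicity of $\sigma$ yields
$$
 \tfrac{1}{\sqrt 2}\,\sigma_{2n}
 \;\leq\; \e(n,\mathcal{P}[\APP,F,L^2,\lall,\mathrm{ran},\mathrm{wc}])
 \;\leq\; \sigma_{n+1},
$$
where the upper bound is the trivial inclusion of deterministic into randomized algorithms. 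Invoking the hypothesis $\sigma(2n) \asymp \sigma(n)$, and hence $\sigma_{n+1} \asymp \sigma_{2n}$, collapses this sandwich and identifies the first two quantities up to constants.

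For the third quantity, the lower estimate $\e(\cdot,\lstd) \geq \e(\cdot,\lall)$ is immediate since standard information is a subclass of linear information. The main work lies in the matching upper bound, where I would apply Algorithm~\ref{alg:explicit alg}. Take $\mathcal{B} = \{b_1, b_2, \ldots\}$ to be the singular value basis of the embedding $\widetilde F \hookrightarrow L^2$, so that $\mathcal{B}$ is orthonormal in $L^2$ and orthogonal in $\widetilde F$, and define $\varepsilon(n) = \Xnorm{b_{n+1}}{\widetilde F}^{-2} = \sigma_{n+1}^2$. By \eqref{eq:error of projection}, the unit ball $F$ lies inside $F_{\mathcal B}^\varepsilon$. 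The hypothesis $\sigma(2n) \asymp \sigma(n)$ transfers to $\varepsilon(2n) \asymp \varepsilon(n)$, which in particular implies condition~\eqref{eq:constant in cost bound}. Theorem~\ref{thm:explicit algorithm} then produces a randomized algorithm $A_n$ using at most $n$ function values with
$$
 \err(A_n, F)^2 \;\leq\; \err(A_n, F_{\mathcal B}^\varepsilon)^2 \;\leq\; C\,\varepsilon(n) \;=\; C\,\sigma_{n+1}^2,
$$
which is the desired bound.

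The conceptual obstacle has been dispatched in the previous subsection, namely the construction of Algorithm~\ref{alg:explicit alg} and the error estimate of Theorem~\ref{thm:explicit algorithm}; once these are in hand, the corollary is largely bookkeeping. The only subtlety is to confirm that the doubling assumption on $\sigma$ is strong enough to equate the various index shifts $n+1$, $2n$, and so on appearing throughout the argument, but this follows from a single iteration of $\sigma(2n) \asymp \sigma(n)$ combined with the monotonicity of the singular value sequence.
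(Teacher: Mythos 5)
Your proof is correct and follows essentially the same route as the paper's: the first equivalence comes from Theorem~\ref{thm:randomization useless in Hspace} combined with the doubling condition, the upper bound on the third quantity from Theorem~\ref{thm:explicit algorithm} applied to the singular-value basis, and the matching lower bound from the trivial inclusion $\lstd\subset\lall$. The paper's own proof is merely a terser version citing the same two theorems and noting that $\sigma(2n)\asymp\sigma(n)$ is needed for both steps.
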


\begin{proof}
 The first relation follows from Theorem~\ref{thm:randomization useless in Hspace}.
 The second relation follows from Theorem~\ref{thm:explicit algorithm}.
 Note that we use the condition $\sigma(2n)\asymp\sigma(n)$
 for both these relations.
\end{proof}

This means that for the problem of approximating functions
from a Hilbert space in $L^2$,
randomized algorithms based on function values
are just as powerful as randomized or deterministic algorithms
based on arbitrary linear information.
Note that deterministic algorithms based on function values
may perform much worse, as shown by Hinrichs, Novak
and Vybíral \cite{HNV08},
see also \cite[Section~26.6.1]{NW12}.
We do not know whether the condition on the decay
of the singular values can be relaxed.

\begin{rem}
 We point to the fact that the error bounds of 
 Proposition~\ref{thm:fundamental theorem} and Theorem~\ref{thm:explicit algorithm}
 do not only hold for the class $F$
 but for the whole class $F_{\mathcal B}^\eps$.
 In general, the second class is strictly larger than the first.
 For example, if $\eps(m)=1/(m+1)^2$ for $m\in\IN_0$, then 
 \begin{equation*}
  f=\sum_{m\in\IN} (\eps(m-1)-\eps(m))^{1/2}\, b_m
 \end{equation*}
 belongs to $F_{\mathcal B}^\eps$
 but is not even contained in the space $\widetilde F$.
\end{rem}


We now consider several examples.
In each example, we first determine 
the order of convergence of
the $n^{\rm th}$ minimal error.
We then discuss explicit upper bounds.

\subsubsection{Functions with Mixed Smoothness on the Torus}

Let $D$ be the $d$-dimensional torus $\mathbb T^d$, 
represented by the unit cube $[0,1]^d$,
where opposite faces are identified.
Let $\mathcal{A}$ be the Borel $\sigma$-algebra on $\mathbb T^d$ and $\mu$ the Lebesgue measure.
Let $\widetilde F$ be the Sobolev space
of complex valued functions on $D$ with mixed smoothness $r\in\IN$,
equipped with the scalar product
\begin{equation}
\label{mix product}
 \Xscalar{f}{g}{\widetilde F} = \sum_{\norm{\alpha}_\infty \leq r} 
 \scalar{\diff^\alpha f}{\diff^\alpha g}_2
.\end{equation}
A classical result by Babenko \cite{Ba60} and Mityagin \cite{Mi62}
states that
\begin{equation}
\label{eq:order BaMi}
 \e\brackets{n,\mathcal{P}[\APP,F,L^2,\lall,\mathrm{det},\mathrm{wc}]}
 \asymp n^{-r}\ln^{r(d-1)} n.
\end{equation}
We remark that the same can be proven for fractional smoothness $r>0$,
see~\cite{Mi62}.
Theorem~\ref{thm:order thm} yields that
$$
 \e\brackets{n,\mathcal{P}[\APP,F,L^2,\lstd,\mathrm{ran},\mathrm{wc}]}
 \asymp n^{-r}\ln^{r(d-1)}n.
$$
This result is new.
The optimal order is achieved by Algorithm~\ref{alg:explicit alg} with 
$\eps(0)=1$ and
$$
\eps(n)=\min\set{1, c^2\,n^{-2r} \brackets{1+\log_2 n}^{2r(d-1)}}
$$
for $n\geq 1$, where $c$ is the constant in the upper bound
of~\eqref{eq:order BaMi}.
We do not know any other algorithm with this property.
It is still an open problem whether the same rate can be achieved
with deterministic algorithms based on function values.
So far, it is only known that
\begin{equation*}
 n^{-r}\ln^{r(d-1)}n
 \preccurlyeq
 \e\brackets{n,\mathcal{P}[\APP,F,L^2,\lstd,\mathrm{det},\mathrm{wc}]}
 \preccurlyeq
 n^{-r}\ln^{(r+1/2)(d-1)} n.
\end{equation*}
The upper bound is achieved by
Smolyak's algorithm, see~\cite{SU10}.

We now turn to explicit estimates.
We know that there is some $C_{r,d}>0$ such that
\begin{equation}
\label{asymptotic bound mix}
 \e\brackets{n,\mathcal{P}[\APP,F,L^2,\lstd,\mathrm{ran},\mathrm{wc}]} 
 \leq C_{r,d}\, n^{-r}\ln^{r(d-1)} n
\end{equation}
for all $n\geq 2$.
This upper bound is optimal as $n$ tends to infinity.
However, it is not useful to describe the error
numbers for small values of $n$.
Simple calculus shows that the right hand side in
\eqref{asymptotic bound mix} is increasing for $n\leq e^{d-1}$.
The error numbers, on the other hand, are decreasing.
Moreover,
the right hand side attains its minimum for $n=2$
if restricted to $n\leq (d-1)^{d-1}$
and is hence larger than the error for $n=2$.
This means that the trivial upper bound
\begin{equation}
\label{trivial bound}
 \e\brackets{n,\mathcal{P}[\APP,F,L^2,\lstd,\mathrm{ran},\mathrm{wc}]}
 \leq 
 \e\brackets{2,\mathcal{P}[\APP,F,L^2,\lstd,\mathrm{ran},\mathrm{wc}]}
\end{equation}
is better than \eqref{asymptotic bound mix} for all $n\in\{2,\hdots,(d-1)^{d-1}\}$
for any valid constant~$C_{r,d}$.
For these reasons, it is important 
to consider different error bounds,
if the dimension $d$ is large.
Based on~\cite{KSU15}, we already proved that the upper bound
\begin{equation*}
 \e\brackets{n,\mathcal{P}[\APP,F,L^2,\lstd,\mathrm{ran},\mathrm{wc}]} 
 \leq \brackets{2/n}^p
 \qquad\text{with}\quad
 p=\frac{r}{2+(\ln d)/(\ln 2\pi)}
\end{equation*}
holds for all $n\in\IN$. See~Corollary~\ref{cor:preasymptotics mixed torus}
for the parameters $[a,b]=[0,1]$, $\gamma=1$, $\square=\circ$ and $\delta=1$.
By Theorem~\ref{thm:explicit algorithm},
Algorithm~\ref{alg:explicit alg} with 
$\eps(n)=\min\set{1, 2^{2p} n^{-2p}}$ satisfies
\begin{equation}
\label{preasymptotic bound mix}
 \err\brackets{A_n,F}
  \leq 2^{(\ell^2+4\ell+1)/2}\cdot n^{-p} 
\end{equation}
for all $n\in\IN$, where $\ell=\lceil 2p\rceil$
is nonincreasing in $d$.
See Example~\ref{ex:sequences with modest decay}
for details on the constant.
For instance, let $r=6$ and $d=2000$.
Then \eqref{asymptotic bound mix} is useless up to $n=10^{6598}$,
whereas \eqref{preasymptotic bound mix} yields
the upper bound
$$
 \err(A_n,F)\leq 91\cdot n^{-0.97},
$$
which is useful for all $n\geq 105$. 

\subsubsection{Functions with Mixed Smoothness on the Cube}

Let $D$ be the $d$-dimensional unit cube $[0,1]^d$ equipped
with the Borel $\sigma$-algebra $\mathcal{A}$
and the Lebesgue measure $\mu$.
Let $\widetilde F$ be the Sobolev space
of complex valued functions on $[0,1]^d$ with mixed smoothness $r\in\IN$,
equipped with the scalar product \eqref{mix product}.
Just like on the torus, we have
$$
 \e\brackets{n,\mathcal{P}[\APP,F,L^2,\lall,\mathrm{det},\mathrm{wc}]}
 \asymp n^{-r}\ln^{r(d-1)}n
$$
and Theorem~\ref{thm:explicit algorithm} leads to the following result.

\begin{cor}[\cite{Kr18c}]
 \label{cor:order OptimalMC mixed}
 Let $F$ be the unit ball of the Sobolev space of
 mixed smoothness $r$ on the $d$-torus or on the $d$-cube. Then
 \begin{equation*}
 \e\brackets{n,\mathcal{P}[\APP,F,L^2,\lstd,\mathrm{ran},\mathrm{wc}]}
 \asymp n^{-r}\ln^{r(d-1)} n.
 \end{equation*}
\end{cor}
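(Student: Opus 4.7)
The plan is to reduce the corollary to Theorem~\ref{thm:order thm}, which asserts that if the singular values of the compact embedding $\APP$ satisfy the doubling condition $\sigma(2n)\asymp\sigma(n)$, then the worst-case $n^{\rm th}$ minimal errors in the settings $(\lall,\mathrm{det})$, $(\lall,\mathrm{ran})$, and $(\lstd,\mathrm{ran})$ all have the same order. Once this theorem is applicable, the desired equivalence for the randomized $\lstd$-error is immediate from the known deterministic $\lall$-rate in each of the two geometries.

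First I would verify the doubling condition. By Theorem~\ref{thm:LPs over Hilbert spaces} the $n^{\rm th}$ minimal error of the $(\lall,\mathrm{det})$ problem equals the $(n+1)$-st singular value $\sigma_{n+1}$ of the embedding $\APP:\widetilde F\hookrightarrow L^2$. On the torus, Babenko--Mityagin~\eqref{eq:order BaMi} gives $\sigma_n\asymp n^{-r}\ln^{r(d-1)} n$, and on the cube, Corollary~\ref{cor: strong equivalence mix all} gives the sharper statement $\sigma_n\sim(\pi^d(d-1)!\,n)^{-r}\ln^{r(d-1)}n$; in either case
$$
\frac{\sigma_{2n}}{\sigma_n}\;\longrightarrow\;2^{-r}\quad\text{as }n\to\infty,
$$
so $\sigma(2n)\asymp\sigma(n)$ holds. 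Theorem~\ref{thm:order thm} then provides the upper bound
$$
\e\bigl(n,\mathcal{P}[\APP,F,L^2,\lstd,\mathrm{ran},\mathrm{wc}]\bigr)\;\preccurlyeq\;n^{-r}\ln^{r(d-1)}n
$$
in both settings, with an explicit optimal algorithm coming from Algorithm~\ref{alg:explicit alg} fed the sequence $\eps(n)=\min\{1,c^2 n^{-2r}(1+\log_2 n)^{2r(d-1)}\}$.

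For the matching lower bound I would use the trivial inclusion $\lstd\subset\lall$, which gives
$$
\e\bigl(n,\mathcal{P}[\APP,F,L^2,\lstd,\mathrm{ran},\mathrm{wc}]\bigr)\;\geq\;\e\bigl(n,\mathcal{P}[\APP,F,L^2,\lall,\mathrm{ran},\mathrm{wc}]\bigr),
$$
and then invoke Theorem~\ref{thm:randomization useless in Hspace}, yielding
$$
\e\bigl(n,\mathcal{P}[\APP,F,L^2,\lall,\mathrm{ran},\mathrm{wc}]\bigr)\;\geq\;\tfrac{1}{\sqrt 2}\,\e\bigl(2n-1,\mathcal{P}[\APP,F,L^2,\lall,\mathrm{det},\mathrm{wc}]\bigr).
$$
The right-hand side is $\succcurlyeq n^{-r}\ln^{r(d-1)}n$ by the classical deterministic lower bounds recalled above, completing the proof.

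The argument is essentially a bookkeeping exercise, and there is no serious obstacle beyond checking the doubling condition. The only point that requires a moment of care is that the upper bound from Theorem~\ref{thm:order thm} is phrased in terms of the singular values of the embedding, while the lower bound we want to match is phrased in terms of the deterministic $\lall$ minimal error; identifying these via Theorem~\ref{thm:LPs over Hilbert spaces} and the known rate is what ties both ends together.
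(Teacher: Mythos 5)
Your proposal is correct and follows essentially the same route as the paper: the paper also derives Corollary~\ref{cor:order OptimalMC mixed} by combining the known deterministic rate $\e(n,\mathcal{P}[\APP,F,L^2,\lall,\mathrm{det},\mathrm{wc}])\asymp n^{-r}\ln^{r(d-1)}n$ with Theorem~\ref{thm:order thm}, whose hypothesis $\sigma(2n)\asymp\sigma(n)$ is verified exactly as you describe. You spell out the lower-bound chain (information inclusion $\lstd\subset\lall$ followed by Theorem~\ref{thm:randomization useless in Hspace}) more explicitly than the paper does, but this is precisely what is implicit in the paper's terse proof of Theorem~\ref{thm:order thm}, so there is no substantive difference.
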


Again, the optimal rate can be achieved with Algorithm~\ref{alg:explicit alg}.
Also in this case, 
the corresponding upper bounds are bad for $n\leq (d-1)^{d-1}$.
In this range, we need different estimates for the error. 
By Corollary~\ref{cor:preasymptotics mixed cube}, we know that
\begin{equation*}
 \e\brackets{n,\mathcal{P}[\APP,F,L^2,\lall,\mathrm{det},\mathrm{wc}]} 
 \leq (2/n)^{p} \qquad\text{with}\quad
 p=\frac{1.1929}{2+\ln d}
\end{equation*}
for all $n\in\IN$.
This estimate cannot be improved significantly for $n\leq 2^d$.
By Theorem~\ref{thm:explicit algorithm},
Algorithm~\ref{alg:explicit alg}
with $\eps(n)=\min\set{1, 2^{2p} n^{-2p}}$ satisfies
for all $n\in\IN$ and $d\geq 2$ that
\begin{equation}
\label{eq:preasymptotic ran std}
 \err\brackets{A_n,F}
  \leq 8\,n^{-p}. 
\end{equation}

\subsubsection{Functions from Tensor Product Spaces}

 This example is more general than the previous examples.
 By $H_1\otimes H_2$ we denote the tensor product of 
 two Hilbert spaces $H_1$ and $H_2$.
 For $j=1,\hdots,d$ let $(D_j,\mathcal{A}_j,\nu_j)$ be a $\sigma$-finite
 measure space and let $\widetilde F_j$ be a Hilbert space of $\IK$-valued functions
 with unit ball $F_j$ such that the embedding $\APP_j$
 of $\widetilde F_j$ into $L^2(D_j,\mathcal{A}_j,\nu_j)$ is compact.
 The $\sigma$-finity of the measure spaces ensures that
 \begin{equation*}
  L^2(D_1,\mathcal{A}_1,\nu_1) \otimes \dots \otimes L^2(D_d,\mathcal{A}_d,\nu_d) = L^2(D,\mathcal{A},\mu),
 \end{equation*}
 where $D$ is the Cartesian product of the sets $D_j$
 and $\mu$ is the unique product measure of the measures $\nu_j$
 on the tensor product $\mathcal{A}$ of the $\sigma$-algebras $\mathcal{A}_j$.
 The embedding $\APP$ of the tensor product space
 $\widetilde F=\widetilde F_1 \otimes \dots\otimes \widetilde F_d$
 into $L^2(D,\mathcal{A},\mu)$ is compact, too.
 Assuming that the approximation numbers
 of the univariate embeddings 
 $\APP_j$ 
 are of polynomial decay, that is,
 \begin{equation*}
  \e\brackets{n,\mathcal{P}[\APP_j,F_j,L^2(D_j,\mathcal{A}_j,\nu_j),
  \lall,\mathrm{det},\mathrm{wc}]}
  \asymp n^{-r_j}
 \end{equation*}
 for some $r_j>0$, it can be derived from
 \cite{Mi62,Ni74} that
 \begin{equation*}
  \e\brackets{n,\mathcal{P}[\APP,F,L^2(D,\mathcal{A},\mu),
  \lall,\mathrm{det},\mathrm{wc}]}
  \asymp n^{-r}\ln^{r(d_0-1)}n,
 \end{equation*}
 where $F$ is the unit ball of $\widetilde F$, 
 $r$ is the minimum among all numbers $r_j$ and $d_0$ is its multiplicity.
 Theorem~\ref{thm:order thm} implies
\begin{equation*}
 \e\brackets{n,\mathcal{P}[\APP,F,L^2(D,\mathcal{A},\mu),
  \lstd,\mathrm{ran},\mathrm{wc}]}
  \asymp n^{-r}\ln^{r(d_0-1)}n,
\end{equation*}
where the optimal order can be achieved with Algorithm~\ref{alg:explicit alg}.
We do not discuss explicit estimates in this general setting.

\subsubsection{Functions with Isotropic Smoothness on the Torus}

Our algorithm may also be used for functions with isotropic smoothness.
Let $D$ again be the $d$-torus,
this time represented by $[0,2\pi]^d$.
Let $\widetilde F$ be the Sobolev space
of complex valued functions on $D$ with isotropic smoothness $r\in\IN$,
equipped with the scalar product
\begin{equation*}
 \Xscalar{f}{g}{\widetilde F} = \sum_{\norm{\alpha}_1 \leq r} \scalar{\diff^\alpha f}{\diff^\alpha g}_2
.\end{equation*}
This is not a tensor product problem.
For this classical problem, it is known that
\begin{multline*}
\e\brackets{n,\mathcal{P}[\APP,F,L^2,\lstd,\mathrm{det},\mathrm{wc}]}
\asymp
\e\brackets{n,\mathcal{P}[\APP,F,L^2,\lstd,\mathrm{ran},\mathrm{wc}]}\\
\asymp
\e\brackets{n,\mathcal{P}[\APP,F,L^2,\lall,\mathrm{det},\mathrm{wc}]}
\asymp
\e\brackets{n,\mathcal{P}[\APP,F,L^2,\lall,\mathrm{ran},\mathrm{wc}]}
\asymp
n^{-r/d}
\end{multline*}
for $r>d/2$. In the case $r\leq d/2$, where function values
are only defined almost everywhere, the last three relations stay valid. 
We refer to \cite{He08,Je67,Ma91,Tr05}.
In the range $n\leq 2^d$, however, the function $n^{-r/d}$ is not suited to
describe the behavior of the errors.
It has been proven by Kühn, Mayer and Ullrich \cite{KMU16}
that there are positive constants $b_r$ and $B_r$ that do not
depend on $d$ such that
\begin{multline}
\label{counting}
 b_r \brackets{\frac{\log_2\brackets{1+d/\log_2 n}}{\log_2 n}}^{r/2}
 \leq \e\brackets{n-1,\mathcal{P}[\APP,F,L^2,\lall,\mathrm{det},\mathrm{wc}]} \\
 \leq 
 B_r \brackets{\frac{\log_2\brackets{1+d/\log_2 n}}{\log_2 n}}^{r/2}
\end{multline}
for all $d>1$ and $n\in\IN$ with $d\leq n \leq 2^d$.
If we apply Theorem~\ref{thm:randomization useless in Hspace}
and Theorem~\ref{thm:explicit algorithm},
we obtain the existence of $d$-independent positive constants $\tilde b_r$
and $\widetilde B_r$ such that
\begin{multline*}
 \tilde b_r \brackets{\frac{\log_2\brackets{1+d/\log_2 n}}{\log_2 n}}^{r/2}
 \leq \e\brackets{n-1,\mathcal{P}[\APP,F,L^2,\lstd,\mathrm{ran},\mathrm{wc}]} \\ 
 \leq 
 \widetilde B_r \brackets{\frac{\log_2\brackets{1+d/\log_2 n}}{\log_2 n}}^{r/2}
\end{multline*}
for all $d>1$ and $n\in\IN$ with $d\leq n \leq 2^{d-1}$.

\subsubsection{Implementation of these algorithms}

We are able to implement Algorithm~\ref{alg:explicit alg}
if we know the basis $\mathcal B$ that is
associated with the embedding of $\widetilde F$ into $L^2$ 
and if we can sample from the probability measures $\mu_m$.
These tasks may be very hard.
In the case of Sobolev functions on the torus,
however, it is not.
Here, $\mathcal B$ is the Fourier basis of $L^2$
and all the random variables are independent and uniformly 
distributed on the unit cube.
Also the case of general tensor product spaces 
can be handled
if the orthonormal bases $\mathcal{B}_j$ that are
associated with the univariate embeddings 
are known.
Then the basis $\mathcal{B}$ is given by 
\begin{equation*}
 \mathcal{B}=\set{b^{(1)}\otimes\dots\otimes b^{(d)} \mid b^{(j)}\in\mathcal{B}_j \text{ for }j=1\dots d}
\end{equation*}
and the probability measure $\mu_m$ is the average of $m$ product measures, 
that is,
\begin{equation*}
 \mu_m=\frac{1}{m} \sum_{i=1}^m \bigotimes_{j=1}^d \eta_{i,j},
\end{equation*}
where $\d\eta_{i,j}=|b_{i,j}|^2\d\nu_j$ with some $b_{i,j}\in\mathcal{B}_j$.
A random sample $\mathbf x$ from this distribution can be obtained as follows:
\begin{itemize}
 \item[(1)] Get $i$ from the uniform distribution on $\set{1,\dots,m}$.
 \item[(2)] Get $x_1,\dots,x_d$ independently from the probability distributions $\eta_{i,1},\dots,\eta_{i,d}$.
\end{itemize}
The second step can for example be done by rejection sampling,
if the measures $\eta_{i,j}$ have a bounded Lebesgue density.
This way, the total sampling costs are linear in $d$.
Another method of sampling from $\mu_m$ is proposed
in~\cite[Section 5]{CM17}.

\subsection{Integration via Separation of the Main Part}
\label{int section}

We use the notation of Section~\ref{sec:general result}.
In this section, we require the measure $\mu$ to be finite.
This ensures that the integral operator
\begin{equation*}
 \INT: L^2 \to \IK, \quad \INT(f)=\int_D f d\mu
\end{equation*}
is well defined and continuous on $L^2$.
Let us assume that $\mu$ is a probability measure.
We want to approximate $\INT(f)$ for an unknown function $f\in F_{\mathcal B}^\eps$
by a randomized algorithm $Q_n$ which evaluates at most $n$ function values
of $f$. 
Recall that the worst case error of $Q_n$ is the quantity
\begin{equation*}
 \err\brackets{Q_n,\INT,F_{\mathcal B}^\eps}
 =\sup\limits_{f\in F_{\mathcal B}^\eps} \brackets{\IE\abs{\INT(f)-Q_n(f)}^2}^{1/2}
.\end{equation*}
The minimal worst case error among such algorithms is denoted by
\begin{equation*}
 \e\brackets{n,\mathcal{P}[\INT,F_{\mathcal B}^\eps,\lstd,\mathrm{ran},\mathrm{wc}]}
 = \inf\limits_{Q_n} \err\brackets{Q_n,\INT,F_{\mathcal B}^\eps}
.\end{equation*}
Like any approximation method,
Algorithm~\ref{alg:explicit alg}
can also be used for integration.

\begin{alg}
\label{alg:explicit alg int}
 Let $\mathcal{B}$ be an orthonormal system in $L^2$
 and let $\varepsilon:\IN_0\to(0,\infty)$ be a nonincreasing zero
 sequence.
 For all $n\in\IN$ and $f\in L^2$, let
 \begin{equation*}
  Q_{2n}(f)=\INT(A_n f) + \frac{1}{n}\sum_{j=1}^n \brackets{f-A_n f}(X_j),
 \end{equation*}
 where $A_n$ is defined in Algorithm~\ref{alg:explicit alg}
 and $X_1,\dots,X_n$ are random variables with distribution $\mu$ which are
 independent of each other and the random variables in $A_n$.
\end{alg}

It is easy to verify that $Q_{2n}$ is unbiased, evaluates at most 
$2n$ function values of $f$
and satisfies
\begin{equation*}
  \IE\abs{\INT(f)-Q_{2n}(f)}^2
  \leq \frac{1}{n}\, \IE \norm{f-A_n f}_2^2
\end{equation*}
for each $f$ in $L^2$. 
Thus we obtain the following corollary.

\begin{cor}[\cite{Kr18c}]
\label{integration cor}
 Let $\mathcal{B}$ be an orthonormal system in $L^2$
 and let $\varepsilon:\IN_0\to(0,\infty)$ be a nonincreasing zero sequence.
 We assume that
 $$
  \sup\limits_{j\in\IN_0}
 \left\lceil\frac{\varepsilon(\lfloor 2^{j-1}\rfloor)}{\varepsilon(2^j)}\right\rceil
 \leq 2^\ell
 $$
 for some $\ell\in\IN_0$.
 For any $n\in\IN$, Algorithm~\ref{alg:explicit alg int} satisfies
 $$
  \err\brackets{Q_{2n},\INT,F_{\mathcal B}^\eps}^2 
  \leq 2^{\ell^2+3\ell+1}\, \eps(n) n^{-1}.
 $$
 In particular, if $F$ is the unit ball
 of a Hilbert space that is compactly embedded in $L^2$,
 we obtain for all $p>0$ and $q\geq 0$ that
 \begin{align*}
 \e\brackets{n,\mathcal{P}[\APP,F,L^2,\lall,\mathrm{det},\mathrm{wc}]}
  &\preccurlyeq n^{-p} \ln^q n\\
 \Rightarrow\quad
 & \e\brackets{n,\mathcal{P}[\INT,F,\lstd,\mathrm{ran},\mathrm{wc}]}
  \preccurlyeq n^{-p-1/2} \ln^q n.
 \end{align*}
\end{cor}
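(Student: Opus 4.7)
The plan is to split the error of $Q_{2n}$ by conditioning on the randomness in $A_n$. First I would verify that $Q_{2n}$ is unbiased: conditionally on $A_n$, each summand $(f-A_nf)(X_j)$ has expectation $\INT(f-A_nf)$ since $X_j\sim\mu$, so $\IE[Q_{2n}(f)\mid A_n] = \INT(A_nf) + \INT(f-A_nf) = \INT(f)$. Hence the total squared error equals the expected conditional variance, $\IE|\INT(f)-Q_{2n}(f)|^2 = \IE\bigl[\Var(Q_{2n}(f)\mid A_n)\bigr]$.

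Next, conditionally on $A_n$, the quantity $Q_{2n}(f) - \INT(A_nf)$ is the plain Monte Carlo estimator of $\INT(f-A_nf)$ based on $n$ iid samples from $\mu$. Its conditional variance is therefore bounded by $\tfrac{1}{n}\Var_{\mu}(f-A_nf) \leq \tfrac{1}{n}\|f-A_nf\|_2^2$. Taking the expectation over $A_n$ and the supremum over $f \in F_{\mathcal{B}}^\varepsilon$ yields
$$
\err\bigl(Q_{2n},\INT,F_{\mathcal B}^\eps\bigr)^2
\leq \frac{1}{n}\,\err\bigl(A_n,F_{\mathcal B}^\eps\bigr)^2.
$$
Plugging in the bound $\err(A_n,F_{\mathcal B}^\eps)^2 \leq 2^{\ell^2+3\ell+1}\,\varepsilon(n)$ supplied by Theorem~\ref{thm:explicit algorithm} immediately gives the first assertion.

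For the asymptotic consequence, let $F$ be the unit ball of a Hilbert space compactly embedded into $L^2$ with associated orthonormal system $\mathcal{B}$ and singular values $\sigma_n$; by assumption and \eqref{eq:error of projection}, $\sigma_{n+1}^2 \preccurlyeq n^{-2p}\ln^{2q} n$. I would then pick the nonincreasing sequence $\varepsilon(0)=1$ and $\varepsilon(n) = \min\{1,\, C^2 n^{-2p}(1+\log_2 n)^{2q}\}$ for $n\geq 1$ with $C$ chosen large enough that $\sigma_{n+1}^2\leq\varepsilon(n)$ for all $n\in\IN_0$; this guarantees $F\subset F_{\mathcal{B}}^\varepsilon$. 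As observed in Example~\ref{ex:sequences with modest decay}, such an $\varepsilon$ satisfies the doubling condition with a uniform $\ell$ independent of $n$, so the first part applies and produces
$$
\err(Q_{2n},\INT,F)^2 \ \preccurlyeq\ \frac{\varepsilon(n)}{n} \ \preccurlyeq\ n^{-2p-1}\ln^{2q} n.
$$
Since $Q_{2n}$ uses at most $2n$ function values, replacing $2n$ by $n$ yields $\e(n,\mathcal{P}[\INT,F,\lstd,\mathrm{ran},\mathrm{wc}]) \preccurlyeq n^{-p-1/2}\ln^q n$, as claimed.

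There is no serious obstacle here; the content is entirely carried by the conditional variance identity, which reduces the integration error of $Q_{2n}$ to the $L^2$-approximation error of $A_n$ already bounded in Theorem~\ref{thm:explicit algorithm}. The only points requiring a little care are checking the doubling condition for the chosen $\varepsilon$ and keeping track of the factor of $2$ between the cost of $A_n$ and the cost of $Q_{2n}$.
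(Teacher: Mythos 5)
Your proof is correct and follows the same route as the paper: the paper states the key inequality $\IE|\INT(f)-Q_{2n}(f)|^2 \leq \tfrac{1}{n}\IE\|f-A_nf\|_2^2$ as "easy to verify" and then combines it with Theorem~\ref{thm:explicit algorithm}, and you simply make that verification explicit via the conditional-variance decomposition, which is the intended argument. The choice of $\varepsilon$ and the appeal to Example~\ref{ex:sequences with modest decay} for the doubling condition in the second assertion also match what the paper implicitly relies on.
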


The result on the order of convergence is quite general but not always optimal.
An example is given by integration with respect to the Lebesgue measure $\mu$
on the Sobolev space $\widetilde F$ with mixed smoothness $r$
on the $d$-dimensional unit cube, 
as treated in Section~\ref{sec:Frolov}. In this case, we have
\begin{align*}
 \e\brackets{n,\mathcal{P}[\APP,F,L^2,\lall,\mathrm{det},\mathrm{wc}]}
  &\asymp n^{-r} \ln^{r(d-1)} n,\\
 \e\brackets{n,\mathcal{P}[\INT,F,\lstd,\mathrm{ran},\mathrm{wc}]}
  &\asymp n^{-r-1/2},
\end{align*}
see~\cite{Ba60,Mi62,Ul17},
respectively \eqref{eq:order BaMi} and Corollary~\ref{cor:order of convergence}.
The main strength of Corollary~\ref{integration cor} is
that it provides an unbiased algorithm
achieving a reasonable integration error
with a modest number of function values
in high dimensions.

\begin{ex}[Functions with mixed smoothness on the torus]
\label{mix example torus integration}
Like in the first example of Section~\ref{sec:examples OptimalMC},
let $\widetilde F$ be the Sobolev space of mixed smoothness $r$ on the $d$-torus
and let $\mu$ be the Lebesgue measure.
Among all randomized algorithms for multivariate integration in $\widetilde F$
the randomized Frolov algorithm $Q_n^*$ is known to have the optimal error rate,
see Theorem~\ref{thm:main theorem}.
It is shown by Ullrich \cite{Ul17} that there is some constant $c>2^d$ such that
\begin{equation}
\label{Frolov bound}
 \err\brackets{Q_n^*,\INT,F}
  \leq c\,n^{-r-1/2}
\end{equation}
for all $n\in\IN$.
However, this estimate is trivial
for $n\leq 2^{d/(r+1/2)}$.
In this range, an error less than one is guaranteed 
by the direct simulation 
\begin{equation*}
 S_n(f)=\frac{1}{n} \sum_{j=1}^n f(X_j),
\end{equation*}
with independent and uniformly distributed random variables $X_j$.
It satisfies
\begin{equation}
\label{Standard bound}
 \err\brackets{S_n,\INT,F}
  \leq n^{-1/2}
\end{equation}
for all $n\in\IN$.
However, this error bound converges only slowly, as $n$ tends to infinity.
It does not reflect the smoothness of the integrands at all.
Algorithm~\ref{alg:explicit alg int}
also guarantees nontrivial error bounds for smaller values of $n$,
but converges faster than~$S_n$.
Relation~(\ref{preasymptotic bound mix}) immediately yields
that
\begin{equation}
\label{preasymptotic bound mix integration}
 \err\brackets{Q_{2n},\INT,F}
  \leq 2^{(\ell^2+4\ell+1)/2}\cdot n^{-p-1/2} 
\end{equation}
for all $n\in\IN$, where
$p=r/(2+(\ln d)/(\ln 2\pi))$ and $\ell=\lceil 2p\rceil$.
For the example $d=2000$ and $r=6$,
we obtain
$$
 \err(Q_{2n},\INT,F)\leq 91\cdot n^{-1.47}. 
$$
For one million samples, the estimate~(\ref{Frolov bound})
for Frolov's algorithm is larger than one,
the estimate~(\ref{Standard bound}) for the direct simulation 
gives the error $10^{-3}$
and the estimate~(\ref{preasymptotic bound mix integration}) for our new algorithm
gives an error smaller than $4\times10^{-7}$. 
\end{ex}

\begin{rem}[Implementation]
We are able to implement Algorithm~\ref{alg:explicit alg int}
under the following assumptions:
\begin{itemize}
 \item We can implement Algorithm~\ref{alg:explicit alg}.
 This issue is discussed in Section~\ref{sec:examples OptimalMC}.
 \item We know the integrals $\INT(b_j)$ of the eigenfunctions $b_j\in\mathcal{B}$
 for all $j\leq n$.
 \item We can sample from the probability distribution $\mu$.
\end{itemize}
In the above example, the implementation is comparably easy,
since $\mathcal{B}$ is the Fourier basis and
all the random variables are independent and uniformly distributed on the unit cube.
\end{rem}

\chapter{Tractability of the Uniform Approximation Problem}
\label{chap:tractability uniform approximation}

We study the task of approximating a
function $f:[0,1]^d\to \IR$ in the uniform norm
with a deterministic scheme
based on a finite number of function values.
As a priori knowledge, we assume that the function is contained in
some class $F$ of smooth functions. 
In the notation of Chapter~\ref{chap:problems and algorithms}
we consider the problem
$$
 \mathcal{P}[\APP,F]=\mathcal{P}[\APP,F,\mathcal{B}([0,1]^d),\lstd,\mathrm{det},\mathrm{wc}],
$$
where $\APP:F\to \mathcal{B}([0,1]^d)$ is given by $\APP(f)=f$
and $\mathcal{B}([0,1]^d)$ is the set of bounded real valued functions on $[0,1]^d$.
We first study the classes $F=\C^r_d$
of real-valued functions on $[0,1]^d$ 
whose partial derivatives up to order $r\in\IN$
are continuous and bounded by~1,
see Section~\ref{sec:ck functions}.
We derive new results on the complexity of the approximation problem 
and compare with known results on the complexity of
the corresponding integration problem
$$
 \mathcal{P}[\INT,F]=\mathcal{P}[\INT,F,\IR,\lstd,\mathrm{det},\mathrm{wc}],
$$
where $\INT:F\to \IR$ is given by $\INT(f)=\int_{[0,1]^d} f(\mathbf x)~\d\mathbf{x}$.
For both problems, the complexity grows super-exponentially with the dimension. 
In particular, the problems suffer from the curse of dimensionality.
This section is based on~\cite{Kr19}.

Section~\ref{sec:rank one} is based on~\cite{KR19}. 
We show that the curse of dimensionality can
be avoided if $F$ is a class of rank one tensors.
The same observations hold for the problem of global optimization
on $F$, as explained in Section~\ref{sec:optimization}.
Finally, Section~\ref{sec:dispersion} is concerned with the problem of dispersion,
which is closely related to the uniform approximation of rank one tensors.
This section is based on~\cite{Kr18b}.

\section{Smooth Functions}
\label{sec:ck functions}

It is known that the integration of functions from the class
\begin{equation*}
 \C^r_d= \set{f \in \C^r\brackets{[0,1]^d} \,\big\vert\,
 \Vert \diff^\beta f \Vert_\infty \leq 1 
 \text{\ for all } \beta \in \IN_0^d \text{ with  } \abs{\beta}\leq r}
\end{equation*}
suffers from the curse of dimensionality.
In fact, the minimal number $\comp(\varepsilon,\mathcal{P}[\INT,\C^r_d])$
of function values that is needed to guarantee an integration error
$\varepsilon \in (0,1/2)$ for any function from the class $\C^r_d$
grows super-exponentially with the dimension.
It is proven in \cite{HNUW17} that there are positive constants $c_r$
and $C_r$ such that
\begin{equation*}
 \brackets{c_r\, d^{1/r} \varepsilon^{-1/r}}^d
 \leq \comp(\varepsilon,\mathcal{P}[\INT,\C^r_d])
 \leq \brackets{C_r\, d^{1/r} \varepsilon^{-1/r}}^d
 \end{equation*}
for all $\varepsilon \in (0,1/2)$ and $d\in\IN$.
Roughly speaking $\comp(\varepsilon,\mathcal{P}[\INT,\C^r_d])$
is of order $(d/ \varepsilon)^{d/r}$.
Since an $\varepsilon$-approximation of the function
immediately yields an $\varepsilon$-approximation of its 
integral, the uniform recovery
of functions from $\C^r_d$ can only be harder.
But how hard is the uniform recovery problem?
Is it significantly harder than the integration problem?
These questions were recently posed in \cite[Section~6]{Wo18}.

In the case $r=1$ the answer is known.
In this case the minimal number 
$\comp(\varepsilon,\mathcal{P}[\APP,\C^r_d])$
of function values that is needed to guarantee an approximation error
$\varepsilon>0$ for any function from $\C^r_d$
in the uniform norm behaves similarly to 
$\comp(\varepsilon,\mathcal{P}[\INT,\C^r_d])$.
There are positive constants $c$
and $C$ such that
\begin{equation*}
 \brackets{c\, d\, \varepsilon^{-1}}^d
 \leq \comp(\varepsilon,\mathcal{P}[\APP,\C^1_d]) \leq
 \brackets{C\, d\, \varepsilon^{-1}}^d
\end{equation*}
for all $\varepsilon \in (0,1/2)$ and $d\in\IN$.
This result is basically contained in \cite{Su78}.
Nonetheless, we will present its proof.
If $r\geq 2$ is even, we obtain the following result.

\begin{thm}[\cite{Kr19}]
\label{main theorem even}
Let $r\in\IN$ be even. 
Then there are constants $c_r,C_r,\varepsilon_r>0$
such that
\begin{equation*}
 \brackets{c_r \sqrt{d}\, \varepsilon^{-1/r}}^d
 \leq \comp(\varepsilon,\mathcal{P}[\APP,\C^r_d]) \leq
 \brackets{C_r \sqrt{d}\, \varepsilon^{-1/r}}^d
\end{equation*}
for all $d\in\IN$ and $\varepsilon\in (0,\varepsilon_r)$.
The upper bound holds for all $\varepsilon>0$.
\end{thm}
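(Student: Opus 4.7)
The plan is to prove the upper and lower bounds by distinct methods. For the lower bound I would use a fooling argument with translated radial bumps. The key observation, available precisely because $r$ is even, is that $\phi(x) = \psi(\norm{x}_2^2)$ with $\psi\in C^{r/2}([0,1])$ supported in $[0,1]$ and $\psi(0)=1$ is automatically $C^r$ on $\IR^d$, and by the chain rule each partial derivative $\diff^\beta\phi$ with $|\beta|\leq r$ is a polynomial in $(x_1,\ldots,x_d)$ of bounded degree (depending only on $r$) times $\psi^{(k)}(\norm{x}^2)$ with $k\le r/2$, so it admits a uniform bound $K_r$ on the unit ball that is \emph{independent of $d$}. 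Rescaling $\phi_\rho(x)=\varepsilon\phi(x/\rho)$ with $\rho=(K_r\varepsilon)^{1/r}$ puts $\phi_\rho$ into $\C^r_d$ (after harmless adjustment of constants). Using the volume asymptotic $v_d = \pi^{d/2}/\Gamma(d/2+1) \sim (2\pi e/d)^{d/2}/\sqrt{\pi d}$ for the Euclidean unit ball in $\IR^d$, one can pack at least $N\geq \brackets{c_r\sqrt{d}\,\varepsilon^{-1/r}}^d$ disjoint translates of $\phi_\rho$ into $[0,1]^d$. A standard pigeonhole over sign patterns $\sigma\in\set{-1,+1}^N$ then shows that for any algorithm using strictly fewer than $N$ function values there is an unsampled bump at whose center the functions $\sum_i\sigma_i\phi_\rho(\cdot - c_i)$ and its sign-flipped version differ by $2\varepsilon$, so the uniform approximation error is at least $\varepsilon$.

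For the upper bound I would cover $[0,1]^d$ by Euclidean balls of radius $\rho=c_r\varepsilon^{1/r}$ and apply a local polynomial approximation on each ball. Again by volume arguments involving $v_d$, such a covering exists with at most $(C_r\sqrt{d}\,\varepsilon^{-1/r})^d$ balls, and on each ball one determines an approximating polynomial of degree at most $r-1$ from $\binom{d+r-1}{d}\leq d^r$ function values via standard interpolation. Since $d^r \leq C^d$ holds for any $C>1$ and all $d$ large enough (with the remaining finite range of $d$ absorbed by inflating the base constant), the polynomial factor $d^r$ per ball is swallowed by the base $(C_r\sqrt{d}\,\varepsilon^{-1/r})^d$.

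The hard part is establishing, for even $r$, a \emph{dimension-free} Jackson-type inequality on balls: on $B_\rho\subset\IR^d$ with $\rho=c_r\varepsilon^{1/r}$, every $f\in\C^r_d$ should admit a polynomial approximation $P$ of degree at most $r-1$, recoverable from $O(d^r)$ function values in $B_\rho$, with $\norm{f-P}_{L^\infty(B_\rho)}\leq K_r\rho^r$ and $K_r$ independent of $d$. A naive Taylor expansion at the center of the ball only yields $\norm{f-P}_{L^\infty(B_\rho)}\leq (\sqrt{d}\rho)^r/r!$ (since $\norm{x-x_0}_1\leq\sqrt{d}\norm{x-x_0}_2\leq\sqrt{d}\rho$ on $B_\rho$), which is off by a factor $d^{r/2}$ and would deliver only the weaker complexity bound $(C_r d\,\varepsilon^{-1/r})^d$. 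To recover the sharp $\sqrt{d}$-scaling one has to choose $P$ so as to cancel a symmetric radial component of the $r$-th derivative tensor $\diff^r f$, for instance by symmetrizing the Taylor polynomial via an average over the sphere, so that the remainder depends only on a dimension-free residual norm of $\diff^r f$. This symmetrization is exactly where the parity of $r$ enters essentially, and is the reason the theorem and its proof in Section~\ref{sec:ck functions} are confined to even $r$, with the odd case left open.
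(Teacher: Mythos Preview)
Your lower bound is essentially the paper's argument: a radial bump $g_d(x)=h(\|x\|_2^2)$ with dimension-free $C^r$ bounds, placed in a Euclidean ball that avoids the sample set, whose radius is $\asymp \sqrt{d}\,n^{-1/d}$ by the volume of the Euclidean ball. The paper uses $h\in C^\infty$ rather than $C^{r/2}$, so it obtains the lower bound for all $r\geq 2$ without invoking parity; the parity of $r$ plays no role in the lower bound.

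The gap is in your upper bound. The dimension-free Jackson inequality you need --- a degree-$(r-1)$ polynomial on $B_\rho$ with error $K_r\rho^r$, $K_r$ independent of $d$, for every $f\in\C^r_d$ --- is not established, and ``averaging the Taylor polynomial over the sphere'' is not a proof. The obstruction is concrete: on $B_\rho$ the Taylor remainder is controlled by the operator norm of the $r$-th derivative tensor, and the $\C^r_d$ hypothesis only bounds its entries (axis-aligned partials) by $1$, which allows operator norm of order $d^{r/2}$; the global constraints on lower-order derivatives over the full cube do not obviously prevent this locally, and there is no averaging over $\mathbb{S}_{d-1}$ that repairs it while keeping $P$ recoverable from $O(d^r)$ point values.

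The paper's route is entirely different and avoids local Jackson estimates. It samples on a regular grid $Q_m^d=\{0,1/m,\dots,1\}^d$ together with $(d+1)^{r-1}$ nearby points per grid point (to make all partial derivatives of order $<r$ small on $Q_m^d$). The key step is a coordinate-wise recursion: at a global maximizer $\mathbf z$ in the interior one has $\partial_{x_d}f(\mathbf z)=0$, so a one-dimensional Taylor step of order two toward the nearest grid hyperplane gives
\[
E\bigl(Q_m^d,\C^r_d\bigr)\ \le\ E\bigl(Q_m^{d-1},\C^r_{d-1}\bigr)+\frac{1}{8m^2}\,E\bigl(Q_m^d,\C^{r-2}_d\bigr).
\]
A double induction on $d$ and even $r$ then yields $E(Q_m^d,\C^r_d)\le e\,d^{r/2}(2m)^{-r}$, and choosing $m\asymp\sqrt{d}\,\varepsilon^{-1/r}$ finishes. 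Parity enters precisely here, in the $r\to r-2$ step (the vanishing gradient at an interior maximum buys two orders, not one), not in any sphere symmetrization.
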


Roughly speaking $\comp(\varepsilon,\mathcal{P}[\APP,\C^r_d])$
is of order $(d^{r/2} /\varepsilon)^{d/r}$.
If the error tolerance $\varepsilon$ is fixed,
the complexity grows like $d^{d/2}$.
This is in contrast to the case $r=1$,
where we have a growth of order $d^d$.
If $r\geq 3$ is odd, 
we only have a partial result.

\begin{thm}[\cite{Kr19}]
\label{main theorem even odd}
Let $r\geq 3$ be odd. 
Then there are constants $c_r,C_r,\varepsilon_r>0$
such that
\begin{equation*}
 \brackets{c_r \sqrt{d}\, \varepsilon^{-1/r}}^d
 \leq \comp(\varepsilon,\mathcal{P}[\APP,\C^r_d]) \leq
 \brackets{C_r\, d^{\frac{r+1}{2r}} \varepsilon^{-1/r}}^d
\end{equation*}
for all $d\in\IN$ and $\varepsilon\in (0,\varepsilon_r)$.
The upper bound holds for all $\varepsilon>0$.
\end{thm}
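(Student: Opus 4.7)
The plan is to establish the lower and upper bounds by two separate constructions, paralleling the strategy used for Theorem~\ref{main theorem even}.

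For the lower bound, I would use the bump-function packing argument on Euclidean balls. Fix a smooth radial bump $\phi\in C_c^\infty(\IR^d)$ supported in $B(0,1/2)$ with $\phi(0)=1$ and $\max_{|\beta|\leq r}\|D^\beta\phi\|_\infty\leq M_r$. For a scale $h\in(0,1]$, the rescaled bump $\phi_h(x)=h^r\phi(x/h)/M_r$ lies in $\C_d^r$ and has $\|\phi_h\|_\infty=h^r/M_r$. A maximal packing of $[0,1]^d$ by Euclidean balls of radius $h/2$ (volume $\operatorname{vol} B(0,h/2)\leq(2\pi e h^2/d)^{d/2}$ by Stirling) has cardinality at least $(c_r\sqrt{d}/h)^d$. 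The usual pigeonhole step shows that any deterministic algorithm using fewer than that many function values cannot separate $\pm\phi_h(\cdot-x_i)$ from zero on some unsampled ball $B(x_i,h/2)$, forcing a worst-case error of at least $h^r/(2M_r)$. Setting $h=(2M_r\varepsilon)^{1/r}$ then produces $\comp(\varepsilon,\mathcal{P}[\APP,\C_d^r])\geq(c_r\sqrt{d}\,\varepsilon^{-1/r})^d$, valid for all $d\in\IN$ and sufficiently small $\varepsilon$.

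For the upper bound, I would build a piecewise polynomial interpolant. Cover $[0,1]^d$ by Euclidean balls of radius $R$ centered on a cubic grid of mesh $2R/\sqrt{d}$, so that each grid cube of diameter $2R$ sits inside its ball; the number of balls is at most $(C\sqrt{d}/R)^d$. On each ball, interpolate $f$ by a polynomial of total degree $\leq r-1$ at $\binom{d+r-1}{r-1}\lesssim d^{r-1}/(r-1)!$ nodes chosen so that the associated Lebesgue constant grows only polynomially in $d$. Using the Taylor-remainder bound
\[
\|f-T_{r-1}f\|_{L^\infty(B(x_0,R))}\leq\frac{|v|_1^r}{r!}\leq\frac{(\sqrt{d}\,R)^r}{r!}=\frac{d^{r/2}R^r}{r!},
\]
(which follows from $\sum_{|\beta|=r}\binom{r}{\beta}|v|^\beta=|v|_1^r$ together with $\|D^\beta f\|_\infty\leq1$) and combining with the Lebesgue-constant estimate yields a local error of $d^{r/2}R^r$ times a polynomial-in-$d$ factor. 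Balancing this local error against the total sample cost, and absorbing the $\operatorname{poly}(d)$ factors into an exponential base $C_r^d$, gives the claim after tuning $R\asymp\varepsilon^{1/r}$ times an appropriate power of $d$; this reproduces the exponent $(r+1)/(2r)$ in the dimension.

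The main obstacle is the $d$-dependence of the upper bound: one has to jointly control the Taylor constant (which is $d^{r/2}$ on Euclidean balls but $d^r$ on cubes), the Lebesgue constant of the polynomial interpolation, and the cardinality $\binom{d+r-1}{r-1}$ of the node set per cell. For even $r$, Theorem~\ref{main theorem even} gains a factor of $d^{1/2}$ by exploiting symmetric cancellations in an interpolation on antipodally symmetric node sets; these cancellations are unavailable when $r$ is odd, which accounts for the extra $d^{1/(2r)}$ in the exponent. A likely cleaner route for odd $r$ is a mollification argument: regularise $f\in\C_d^r$ into $g=f\ast\eta_\delta$, which after rescaling lies in $\C_d^{r+1}$, then apply the even-case result with $r+1$ and optimise $\delta$; the dimension dependence of both the mollification error $\|f-g\|_\infty\lesssim d^{r/2}\delta^r$ and the blow-up $\|D^{r+1}g\|_\infty\lesssim\delta^{-1}$ balances out to precisely the exponent $(r+1)/(2r)$, while explaining why matching the lower-bound exponent $1/2$ in the odd case appears to require a genuinely new idea and is left open.
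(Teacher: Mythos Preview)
Your lower bound is essentially the paper's argument (radial bump with dimension-free directional-derivative bounds, volume/packing via Stirling, pigeonhole), and it is fine.

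The upper bound, however, has a genuine gap. Carry out your own balancing: the Taylor remainder on a Euclidean ball of radius $R$ is $d^{r/2}R^r/r!$, so setting this equal to $\varepsilon$ forces $R\asymp d^{-1/2}\varepsilon^{1/r}$. But then the number of balls is $(\sqrt{d}/R)^d\asymp(d\,\varepsilon^{-1/r})^d$, i.e.\ exponent $1$ in $d$, not $(r+1)/(2r)$. No polynomial-in-$d$ factor can be ``absorbed into $C_r^d$'' to repair this, since the discrepancy is $d^{d(r-1)/(2r)}$. Your mollification alternative fails for a different reason: $g=f*\eta_\delta$ is not computable from finitely many function values of $f$, so applying the even-case algorithm to $g$ is not an admissible algorithm for $f$; and even formally the balance yields exponent $(r+2)/(2(r+1))$, not $(r+1)/(2r)$.

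The paper obtains the upper bound by a completely different mechanism. It first shows (Lemma~\ref{small derivatives}) that by adding a small cloud of $(d+1)^{r-1}$ points around each grid node one may assume that \emph{all} derivatives of order $<r$ are arbitrarily small on the grid $Q_m^d$. It then bounds the resulting quantity $E(Q_m^d,\C_d^r)$ by a recursion (Lemma~\ref{two step induction}) that exploits the vanishing of the first partial derivative at an interior maximum: a one-dimensional second-order Taylor step along $x_d$ reduces the dimension by one and the smoothness by two while gaining a factor $1/(8m^2)$. Iterating gives $E(Q_m^d,\C_d^r)\le e\,d^{r/2}/(2m)^r$ for even $r$, and a single first-order step (Lemma~\ref{odd r lemma}) then yields $E(Q_m^d,\C_d^r)\le e\,d^{(r+1)/2}/(2m)^r$ for odd $r$; choosing $m\asymp d^{(r+1)/(2r)}\varepsilon^{-1/r}$ gives the claimed bound. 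Your sketch's ``antipodal cancellation'' interpretation of the even case is not what happens; the gain comes from the maximum-point argument, which is why it drops smoothness in steps of two and leaves the odd case with one leftover step costing an extra $d^{1/2}$.
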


We point to the fact that 
$\comp(\varepsilon,\mathcal{P}[\APP,\C^r_d]) 
\leq \comp(\varepsilon,\mathcal{P}[\APP,\C^{r-1}_d])$
since the upper bound resulting 
from Theorem~\ref{main theorem even}
may improve on the upper bound 
of Theorem~\ref{main theorem even odd}
for $d\succ \varepsilon^{-2/(r-1)}$
if $r\geq 3$ is odd.
In~this case, we do not know the exact behavior of
$\comp(\varepsilon,\mathcal{P}[\APP,\C^r_d])$ as a function
of both $d$ and $\varepsilon$.
If regarded as a function of $\varepsilon$,
the complexity is of order $\varepsilon^{-d/r}$.
If regarded as a function of $d$, 
it is of order $d^{d/2}$.

Altogether, our results justify the following comparison.

\begin{cor}[\cite{Kr19}]
 The uniform recovery problem on the class $\C^r_d$ is
 significantly harder than the integration problem 
 if and only if $r\geq 3$.
\end{cor}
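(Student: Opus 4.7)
The plan is to interpret ``significantly harder'' as meaning that the ratio
$\comp(\varepsilon,\mathcal{P}[\APP,\C^r_d])/\comp(\varepsilon,\mathcal{P}[\INT,\C^r_d])$ is not bounded above by any exponential $\alpha^d$ for fixed $\varepsilon$ (equivalently, it grows super-exponentially in $d$), and then to simply match the exponents of $d$ provided by Theorems~\ref{main theorem even} and \ref{main theorem even odd} against the integration bounds
\[
 \brackets{c_r\, d^{1/r}\, \varepsilon^{-1/r}}^d
 \leq \comp(\varepsilon,\mathcal{P}[\INT,\C^r_d])
 \leq \brackets{C_r\, d^{1/r}\, \varepsilon^{-1/r}}^d
\]
recalled from~\cite{HNUW17} at the beginning of Section~\ref{sec:ck functions}.

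For the ``only if'' direction I would treat the two cases separately. For $r=1$ the bounds stated immediately after Theorem~\ref{main theorem even odd} show $\comp(\varepsilon,\mathcal{P}[\APP,\C^1_d])$ is of the order $(d\varepsilon^{-1})^d$, exactly the order of the integration complexity; so the ratio is at most $(C/c)^d$, bounded exponentially in $d$. For $r=2$, Theorem~\ref{main theorem even} gives $\comp(\varepsilon,\mathcal{P}[\APP,\C^2_d])$ of order $(\sqrt{d}\,\varepsilon^{-1/2})^d$, which again matches the integration order $(d^{1/2}\varepsilon^{-1/2})^d$ up to a factor $\alpha^d$. In both cases approximation is therefore not significantly harder than integration.

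For the ``if'' direction, fix any $r\geq 3$ and any $\varepsilon\in(0,\varepsilon_r)$. The lower bound of Theorem~\ref{main theorem even} (if $r$ is even) or Theorem~\ref{main theorem even odd} (if $r$ is odd) gives
\[
 \comp(\varepsilon,\mathcal{P}[\APP,\C^r_d])
 \geq \brackets{c_r\sqrt{d}\,\varepsilon^{-1/r}}^d,
\]
while the upper bound for integration yields
\[
 \comp(\varepsilon,\mathcal{P}[\INT,\C^r_d])
 \leq \brackets{C_r\, d^{1/r}\,\varepsilon^{-1/r}}^d.
\]
Dividing and absorbing the $\varepsilon$-factors into a constant $\beta_r>0$ leaves
\[
 \frac{\comp(\varepsilon,\mathcal{P}[\APP,\C^r_d])}{\comp(\varepsilon,\mathcal{P}[\INT,\C^r_d])}
 \geq \beta_r^{\,d}\cdot d^{\,d\,(1/2-1/r)},
\]
and since $1/2-1/r>0$ for $r\geq 3$, this lower bound grows faster than any exponential in $d$, proving that approximation is significantly harder than integration.

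The only real obstacle is agreeing on a precise meaning of ``significantly harder''; once that is fixed, the proof is a direct comparison of the exponents of $d$ in the previously established complexity bounds, and no new estimates are needed.
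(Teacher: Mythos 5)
Your proposal is correct and follows essentially the same (implicit) line of reasoning as the paper: fix $\varepsilon$, match the $d$-exponent in the base of the complexity bounds, and observe that for $r\geq 3$ the ratio picks up a factor $d^{d(1/2-1/r)}$ with positive exponent, hence grows super-exponentially, while for $r\in\{1,2\}$ the bases agree up to a constant and the ratio is bounded by an exponential. One small slip: the $r=1$ complexity bounds for approximation are recalled just before Theorem~\ref{main theorem even} (not after Theorem~\ref{main theorem even odd}); the mathematics is unaffected.
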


Aside from the case $r=1$, the lower bounds in 
Theorem~\ref{main theorem even} and Theorem~\ref{main theorem even odd}
even hold for the smaller class
\begin{equation*}
 \widetilde\C^r_d= \set{f \in \C^r\brackets{[0,1]^d} \,\big\vert\,
 \Vert \partial_{\theta_1}\cdots\partial_{\theta_\ell} f \Vert_\infty \leq 1 
 \text{\ for all } \ell \leq r \text{ and  } \theta_i\in \mathbb S_{d-1}}
\end{equation*}
of functions whose directional derivatives up to order $r\in\IN$
are bounded by one.
For this class, we obtain sharp bounds on the $\varepsilon$-complexity
of the uniform recovery problem for any $r\in\IN$.
The minimal number $\comp(\varepsilon,\mathcal{P}[\APP,\widetilde\C^r_d])$ of
function values that is needed to guarantee
an approximation error $\varepsilon$ for every function
from $\widetilde\C^r_d$ in the uniform norm satisfies the following.

\begin{thm}[\cite{Kr19}]
\label{side theorem}
Let $r\in\IN$. 
There are constants $c_r,C_r,\varepsilon_r>0$
such that
\begin{equation*}
 \brackets{c_r \sqrt{d}\, \varepsilon^{-1/r}}^d
 \leq \comp(\varepsilon,\mathcal{P}[\APP,\widetilde\C^r_d]) \leq
 \brackets{C_r \sqrt{d}\, \varepsilon^{-1/r}}^d
\end{equation*}
for all $d\in\IN$ and $\varepsilon\in (0,\varepsilon_r)$.
The upper bound holds for all $\varepsilon>0$.
\end{thm}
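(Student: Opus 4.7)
The plan is to obtain matching upper and lower bounds of the form $(c\sqrt{d}\,\varepsilon^{-1/r})^d$ for the $\varepsilon$-complexity of $\mathcal{P}[\APP,\widetilde\C^r_d]$. The upper bound will rest on piecewise polynomial approximation, and the lower bound will rest on a bump-and-pack argument in which the $\sqrt{d}$ factor arises from the volume of the $d$-dimensional Euclidean ball.

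For the upper bound, I would partition $[0,1]^d$ into subcubes of side $h$, to be chosen. In each subcube $Q$ with designated center $x_0$, the plan is to approximate $f$ by a polynomial $p_Q$ of total degree $r-1$. The key point is that for $f \in \widetilde\C^r_d$, the Taylor polynomial $T^{r-1}_{x_0}f$ of degree $r-1$ at $x_0$ satisfies the Lagrange remainder estimate
\[
 |f(x) - T^{r-1}_{x_0}f(x)|
 \leq \frac{\|x-x_0\|_2^r}{r!}\, \sup_{\xi \in [x_0,x]} |\partial_\theta^r f(\xi)|
 \leq \frac{(h\sqrt{d}/2)^r}{r!},
\]
where $\theta = (x-x_0)/\|x-x_0\|_2$; this is precisely where the hypothesis on directional (rather than merely partial) derivatives pays off. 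Constructing $p_Q$ as the interpolant of $f$ at a well-chosen set of $\binom{d+r-1}{r-1}$ points in $Q$ with Lebesgue constant bounded only in terms of $r$, we obtain $\|f-p_Q\|_{L^\infty(Q)} \leq C_r (h\sqrt{d})^r$. Summing over all $h^{-d}$ cells, the total cost is at most $C_r\,d^{r-1}\,h^{-d}$, and setting $h \asymp \varepsilon^{1/r}/\sqrt{d}$ (with a constant depending on $r$) produces the desired bound $(C_r'\sqrt{d}\,\varepsilon^{-1/r})^d$ after absorbing the factor $d^{r-1}$ into $(C_r')^d$.

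For the lower bound, I would begin by constructing a radial bump with direction-uniform derivative bounds. Let $\psi \in \C^\infty(\mathbb{R})$ be supported in $[0,1]$ with $\psi(0)=1$ and set $\phi(x) = \psi(\|x\|_2^2)$. Writing $t \mapsto \|x+t\theta\|_2^2 = \|x\|_2^2 + 2t\langle x,\theta\rangle + t^2$ and applying Fa\`a di Bruno, one checks that $\|\partial_\theta^k \phi\|_\infty \leq C_k$ uniformly in $\theta \in \mathbb{S}_{d-1}$ for $k \leq r$. Rescaling to $\phi_{x_0,\delta}(x) = \phi((x-x_0)/\delta)$ and multiplying by $c_r \delta^r$ produces a function in $\widetilde\C^r_d$ supported in the ball $B(x_0,\delta) \subset [0,1]^d$ with $L^\infty$-norm $c_r\delta^r$. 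Choosing $\delta = (2\varepsilon/c_r)^{1/r}$ makes this norm equal to $2\varepsilon$. I would then invoke a standard volume/greedy argument: the number of disjoint balls of radius $\delta$ that fit in $[0,1]^d$ is at least
\[
 \frac{(1-2\delta)^d}{\mathrm{vol}(B_{2\delta})}
 = \frac{(1-2\delta)^d}{(2\delta)^d}\cdot\frac{\Gamma(d/2+1)}{\pi^{d/2}},
\]
and Stirling's formula applied to $\Gamma(d/2+1)/\pi^{d/2}$ yields a lower bound of $(c'_r\sqrt{d}\,\varepsilon^{-1/r})^d$ for $\varepsilon \in (0,\varepsilon_r)$. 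Given any deterministic algorithm of lower cost, the classical fooling argument --- using that the adaptive sequence of queries on the input $0$ is fixed and that both $\pm c_r\delta^r\phi_{x_0,\delta}$ lie in $\widetilde\C^r_d$ and vanish at those queries for a suitable unvisited ball --- forces the worst-case error to exceed $\varepsilon$.

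The main obstacle I expect is in the upper bound: producing a polynomial interpolation scheme of total degree $r-1$ on $\binom{d+r-1}{r-1}$ nodes in a cube whose Lebesgue constant is bounded independently of $d$ (or at worst polynomially in $d$, which would still be absorbed into the final constant). One can sidestep this technicality via an averaged Taylor polynomial: express $T_{x_0}^{r-1}f$ through a Bramble--Hilbert-type integral representation and approximate the integral by an $O_r(d^{r-1})$-point quadrature, preserving the $(h\sqrt{d})^r$ error rate. The lower bound is conceptually clean once the radial bump is in hand, and the $\sqrt{d}$ factor emerges naturally from the Stirling asymptotics of the $d$-dimensional ball volume.
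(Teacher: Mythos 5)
Your lower bound argument is essentially the paper's: you construct a radial bump $g_d(\mathbf x)=\psi(\Vert\mathbf x\Vert_2^2)$ whose directional derivatives are bounded uniformly in $d$, rescale it, and locate it inside a ball that misses the sample points via a packing/volume argument with Stirling's formula supplying the $\sqrt d$ factor. This matches Lemmas~\ref{radial function} and \ref{radius lower bound} and the proof that follows them, with only a cosmetic difference in how the volume bound is phrased.

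The upper bound is where you diverge from the paper, and this is also where you have a genuine gap. The paper does \emph{not} construct a piecewise polynomial approximant. Instead it exploits the identity~\eqref{eq:bakhvalov for APP}, which reduces the $n$\textsuperscript{th} minimal error to $\inf_P\sup\{\Vert f\Vert_\infty : f\vert_P=0\}$, and then estimates this supremum directly. Concretely: take $Q=Q_m^d$ a regular grid, enlarge it to a set $P$ of size $(d+1)^{r-1}\vert Q\vert$ by adding axis-aligned ``clouds'' around each grid point (Lemma~\ref{small derivatives 2}); then, using only the mean value theorem, any $f\in\widetilde\C^r_d$ vanishing on $P$ must have all directional derivatives of order $<r$ arbitrarily small at the grid points. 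Finally, Lemma~\ref{recursion lemma 2} bounds $\Vert f\Vert_\infty$ for such an $f$ by integrating $\partial_\theta f$ along a straight line of length $\le\sqrt d/(2m)$ from a nearest grid point, and iterating this recursion $r$ times delivers $(\sqrt d/(2m))^r$. No interpolation operator, no Lebesgue constant, no quadrature rule is ever built; the argument works entirely with first-order integration steps applied to successive derivatives.

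Your route --- explicit piecewise polynomial approximation of total degree $r-1$ --- runs into exactly the difficulty you flag: you need either an interpolation scheme on $\binom{d+r-1}{r-1}$ nodes with a Lebesgue constant polynomial in $d$, or, in the Bramble--Hilbert variant, a positive-weight quadrature of algebraic degree $r-1$ whose discretization error for $y\mapsto f(y)K(x,y)$ preserves the $(h\sqrt d)^r$ rate uniformly in $x$. Neither is supplied. Tchakaloff's theorem gives existence of a positive cubature with the right node count, but it is nonconstructive and, more importantly, the error estimate you then need (accuracy of the quadrature applied to $f\cdot K(x,\cdot)$ in the sup norm over $x$) does not follow from ``exactness of degree $r-1$'' alone, since the integrand is not a polynomial. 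So the sidestep is not a repair but a relocation of the same problem. The paper's recursion is precisely the tool that avoids this: it replaces the construction of a polynomial reproducing operator by $r$ iterated line integrals, each controlled by a single directional-derivative bound. If you want a constructive algorithm matching the theorem's cost, the paper's Remark [Algorithms] sketches what it looks like (grid plus clouds, subcubewise Taylor with divided differences), but the \emph{proof} of the error bound is carried out on the information side via Lemmas~\ref{small derivatives 2} and~\ref{recursion lemma 2}, not by analyzing the algorithm directly.
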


These theorems also imply new results on the
complexity of global optimization.
We shortly discuss this problem in Section~\ref{sec:optimization}. 
In Sections~\ref{sec:upper bounds ck} and \ref{sec:lower bounds ck}
we prove the upper and lower bounds of Theorems~\ref{main theorem even},
\ref{main theorem even odd} and \ref{side theorem}.
Before we turn to the proofs,
we comment on some related problems.

\begin{rem}[Infinite smoothness]
 It is proven in \cite{NW09}
 that even the uniform recovery of functions from
 \begin{equation*}
  \C^\infty_d = 
  \set{f \in \C^\infty\brackets{[0,1]^d} \,\big\vert\,
 \Vert D^\beta f \Vert_\infty \leq 1 
 \text{\ for all } \beta \in \IN_0^d}
 \end{equation*}
 suffers from the curse of dimensionality.
 This cannot be avoided even if we allow randomized
 algorithms that may evaluate arbitrary
 continuous linear functionals~\cite[Section~2.4.2]{Ku17}.
 In fact, we have seen that the complexity $\comp(\varepsilon,\mathcal{P}[\APP,\C^r_d])$
 depends super-exponentially on the dimension for $r\in\IN$.
 It would be interesting to verify whether
 this is also true for $r=\infty$ and randomized algorithms.
 We remark that the uniform recovery problem does not suffer from the curse
 if the target function lies within the modified class
 \begin{equation*}
  \overline{\C^\infty_d} = 
  \bigg\{f \in \C^\infty\brackets{[0,1]^d} \,\Big\vert\,
 \sum_{\abs{\beta}=k} \frac{\Vert D^\beta f \Vert_\infty}{\beta !} \leq 1 
 \text{\ for all } k \in \IN_0\bigg\}
 \end{equation*}
 of smooth functions. This is proven in \cite{Vy14}.
\end{rem}

\begin{rem}[Algorithms]
 This section is not concerned with explicit algorithms.
 Nonetheless, our proof shows that there are optimal algorithms in the sense
 of Theorem~\ref{main theorem even}, \ref{main theorem even odd} and 
 \ref{side theorem} whose information is given by function values
 at a regular grid and small clouds around the grid points.
 This information can be used for a subcubewise Taylor approximation
 of the target function around the grid points,
 where the partial derivatives of order less than $r$ 
 are replaced by divided differences.
 The resulting algorithm is indeed optimal for the class $\widetilde \C^r_d$.
 However, the author does not know whether it is also optimal for $\C^r_d$.
\end{rem}

\begin{rem}[Other domains]
 Our lower bounds are still valid, 
 if the domains $[0,1]^d$ are replaced
 by any other sequence of domains $D_d\subset\IR^d$ 
 that satisfies $\lambda^d(D_d)\geq a^d$ for some $a>0$
 and all $d\in\IN$.
 The upper bounds, however, heavily exploit the geometry of the unit cube.
 The curse of dimensionality for 
 the integration problem on general domains
 is studied in the recent paper~\cite{HPU18}.
\end{rem}

\begin{rem}[Integration on the class $\widetilde\C^r_d$]
 The precise behavior 
 of the $\eps$-complexity 
 of the integration problem on $\widetilde\C^r_d$
 as a function of both $d$ and $\varepsilon$ is still open.
\end{rem}

\subsection{Upper bounds}
\label{sec:upper bounds ck}

Let $F\in\{\C^r_d,\widetilde \C^r_d\}$.
These classes are convex and symmetric
and $\mathcal{P}[\APP,F]$ is a linear problem.
Since we measure the error in $\mathcal{B}([0,1]^d)$,
Theorem~\ref{thm:error formula} yields that
\begin{equation}
\label{eq:bakhvalov for APP}
 \e(n,\mathcal{P}[\APP,F]) =
 \inf_{\substack{P\subset [0,1]^d\\ \card(P)\leq n}}
 \sup_{\substack{f\in F\\ f\mid_P=0}} \norm{f}_\infty.
\end{equation}
Therefore, if we want
to derive an upper bound on the $n^{\rm th}$ minimal error,
we may choose any point set $P$
with cardinality at most $n$ and 
give an upper bound on the 
maximal value of a function $f\in F$
that vanishes on $P$.
In fact, we can choose any 
point set $Q$ with cardinality at most $n/(d+1)^{r-1}$
and assume that not only $f$ but all its derivatives
of order less than $r$ are arbitrarily 
small on $Q$.
We start with the case $F=\C^r_d$.
More precisely, for any $\delta>0$, any $r\in\IN_0$, $d\in\IN$ and $Q\subset [0,1]^d$,
we define the subclasses
\begin{equation*}
 \C^r_d(Q,\delta) =
 \set{f\in\C^r_d \,\big\vert\, \abs{D^\alpha f(\mathbf x)}\leq \delta^{2^{r-\abs{\alpha}-1}}
 \text{ for all } \mathbf x\in Q \text{ and } \abs{\alpha} < r}
\end{equation*}
and the auxiliary quantities
\begin{equation*}
 E\brackets{Q,\C^r_d,\delta}=
 \sup\limits_{f\in \C^r_d(Q,\delta)} \norm{f}_\infty
 \qquad\text{and}\qquad
 E\brackets{Q,\C^r_d}= 
 \lim\limits_{\delta\downarrow 0} E\brackets{Q,\C^r_d,\delta}
\end{equation*}
and obtain the following.

\begin{lemma}
\label{small derivatives}
Let $d\in \IN$, $r\in\IN$ and $n\in \IN_0$.
If the cardinality of $Q\subset [0,1]^d$
is at most $n/(d+1)^{r-1}$, then
 \begin{equation*}
  \e(n,\mathcal{P}[\APP,\C^r_d]) \leq E\brackets{Q,\C^r_d}.
 \end{equation*}
\end{lemma}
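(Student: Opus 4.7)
The plan is to use the characterization \eqref{eq:bakhvalov for APP} and, for each $\delta>0$, to exhibit a point set $P_\delta\subset[0,1]^d$ with $\card(P_\delta)\leq n$ such that every $f\in\C^r_d$ vanishing on $P_\delta$ automatically lies in $\C^r_d(Q,\delta)$. Once this is done, taking the supremum over such $f$ and sending $\delta\downarrow 0$ yields $\e(n,\mathcal{P}[\APP,\C^r_d])\leq E(Q,\C^r_d,\delta)\to E(Q,\C^r_d)$, which is the claim.

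For the construction, set $W=\set{\mathbf k\in\IN_0^d:\abs{\mathbf k}\leq r-1}$. A surjection $\set{0,1,\dots,d}^{r-1}\twoheadrightarrow W$ sending $\phi$ to the multi-index whose $i$-th entry is $\card\phi^{-1}(i)$ for $i=1,\dots,d$ gives $\card(W)\leq(d+1)^{r-1}$. For every $\mathbf x\in Q$ I choose a sign vector $\mathbf s_{\mathbf x}\in\set{-1,+1}^d$ pointing coordinatewise toward the interior of $[0,1]^d$, and, writing $\mathbf s\odot\mathbf k$ for the componentwise product, set
$P_\delta=\bigcup_{\mathbf x\in Q}\set{\mathbf x+h(\mathbf s_{\mathbf x}\odot\mathbf k):\mathbf k\in W}$
for a small parameter $h>0$ to be fixed later; this lies in $[0,1]^d$ as soon as $h(r-1)\leq 1/2$. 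Since $\mathbf 0\in W$ we have $Q\subset P_\delta$, and $\card(P_\delta)\leq\card(Q)\,\card(W)\leq n$.

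The heart of the argument is to turn $f|_{P_\delta}=0$ into control of the low-order derivatives of $f$ at the points of $Q$. For $\mathbf x\in Q$ and $\mathbf k\in W$, Taylor expansion together with $\norm{D^\beta f}_\infty\leq 1$ for $\abs{\beta}=r$ gives
$0=f(\mathbf x+h(\mathbf s_{\mathbf x}\odot\mathbf k))=\sum_{\abs{\alpha}\leq r-1}\frac{h^{\abs{\alpha}}(\mathbf s_{\mathbf x}\odot\mathbf k)^\alpha}{\alpha!}\,D^\alpha f(\mathbf x)+R_{\mathbf x,\mathbf k},$
with $\abs{R_{\mathbf x,\mathbf k}}\leq c_{r,d}\,h^r$. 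I read this as a square linear system of size $\card(W)$ in the unknowns $D^\alpha f(\mathbf x)$, $\abs{\alpha}\leq r-1$; its coefficient matrix is, up to a harmless factorial normalization, the generalized Vandermonde matrix of the reflected principal lattice $\mathbf s_{\mathbf x}\odot W$ of the scaled standard simplex of order $r-1$. Since the principal lattice is classically unisolvent for polynomials of degree $\leq r-1$ and this property is preserved under coordinatewise reflection, the matrix is invertible with an inverse-norm bound $C_{r,d}$ that is independent of $h$ and of $\mathbf s_{\mathbf x}$, and solving yields $\abs{D^\alpha f(\mathbf x)}\leq C_{r,d}\,h^{r-\abs{\alpha}}$ for all $\mathbf x\in Q$ and $\abs{\alpha}<r$.

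Finally I would pick $h$ so small that $C_{r,d}\,h^{r-\abs{\alpha}}\leq\delta^{2^{r-\abs{\alpha}-1}}$ holds simultaneously for all $\abs{\alpha}<r$; a choice of the form $h=c_{r,d}\,\delta^{2^{r-1}/r}$ works for $\delta$ small enough, the binding constraint being $\abs{\alpha}=0$. With this $h$ every such $f$ lies in $\C^r_d(Q,\delta)$, hence $\norm{f}_\infty\leq E(Q,\C^r_d,\delta)$, which closes the argument. The main obstacle is really only the unisolvence of the principal lattice together with an $h$-free bound on the inverse of the (generalized) Vandermonde matrix; everything else is routine Taylor bookkeeping and matching the $h$-scalings to the thresholds $\delta^{2^{r-\abs{\alpha}-1}}$ in the definition of $\C^r_d(Q,\delta)$.
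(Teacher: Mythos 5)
Your proof is correct, but it takes a genuinely different route from the one in the paper. The paper builds its point set $P$ by iterating the construction $M\mapsto M[h]$ a total of $r-1$ times at $r-1$ \emph{different}, nested scales $h_1>h_2>\dots>h_{r-1}$ with $h_i=3(\delta/9)^{2^{i-1}}$, chosen so that $3h_i=h_{i-1}^2$; each iteration uses only the mean value theorem and the fundamental theorem of calculus to pass from a bound on $g$ on $M[h]$ to a bound on $\partial g/\partial x_j$ on $M$, and the doubly-exponential scaling is exactly what makes the constants compound into $\delta^{2^{r-\ell-1}}$. Your argument instead places, around each $\mathbf x\in Q$, a \emph{single} stencil at a \emph{single} scale $h$ --- the reflected principal lattice $\mathbf x + h(\mathbf s_{\mathbf x}\odot W)$ with $W=\set{\mathbf k\in\IN_0^d : \abs{\mathbf k}\le r-1}$ --- and inverts the generalized Vandermonde system coming from the Taylor expansion to control all derivatives of order $<r$ at once. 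This is conceptually slicker (no nested scales, no induction on the smoothness) but outsources more: you need unisolvence of the principal lattice for $\Pi_{r-1}(\IR^d)$ and an $h$-free bound on the inverse Vandermonde, neither of which the paper's MVT argument requires. The cardinality bounds also arrive differently: the paper iterates $\card(M[h])\le(d+1)\card(M)$, while you obtain $\card(W)\le(d+1)^{r-1}$ via the surjection $\set{0,\dots,d}^{r-1}\twoheadrightarrow W$. Both yield the same $(d+1)^{r-1}$ factor, and both correctly identify $\abs{\alpha}=0$ (the paper's $\ell=0$) as the binding constraint when matching the $h$-scalings against $\delta^{2^{r-\abs{\alpha}-1}}$. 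One small thing worth spelling out in your write-up: after normalizing out the $h^{\abs{\alpha}}$ factors, the Vandermonde matrix of $\mathbf s\odot W$ is obtained from that of $W$ by a diagonal $\pm1$ conjugation, so its inverse norm is literally independent of $\mathbf s_{\mathbf x}$, not merely uniformly bounded over the $2^d$ choices.
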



\begin{proof}
 Let $\delta \in (0,1)$.
 We will construct a point set $P\subset [0,1]^d$
 with cardinality at most $n$ such that
 any $f\in\C^r_d$ with $f\vert_P=0$ is contained
 in $\C^r_d(Q,\delta)$.
 Then equation~\eqref{eq:bakhvalov for APP} yields
 \begin{equation*}
  \e(n,\mathcal{P}[\APP,\C^r_d]) \leq
  \sup\limits_{f\in \C^r_d:\, f\vert_P=0} \norm{f}_\infty
  \leq \sup\limits_{f\in \C^r_d(Q,\delta)} \norm{f}_\infty
  = E\brackets{Q,\C^r_d,\delta}.
 \end{equation*}
 Taking the limit for $\delta\to 0$ yields the statement.
 
 If $r=1$, we can choose $P=Q$.
 Let us start with the case $r=2$.
 Given a set $M\subset [0,1]^d$ and $h\in (0,1/2]$, we define
 \begin{equation*}
  M[h]= M 
  \cup\bigcup_{\substack{(\mathbf x,j)\in M\times\set{1,\dots ,d}\\ \mathbf x+h\mathbf e_j\in [0,1]^d}} \set{\mathbf x+h\mathbf e_j}\
  \cup\bigcup_{\substack{(\mathbf x,j)\in M\times\set{1,\dots ,d}\\ \mathbf x+h\mathbf e_j\not\in [0,1]^d}} \set{\mathbf x-h\mathbf e_j}
 .\end{equation*}
 Obviously, the cardinality of $M[h]$ is at most $(d+1)\abs{M}$.
 Furthermore, we have 
 \begin{equation}
 \label{mean value}
   f\in\C^2_d\text{ with }\abs{f}\leq h^2\text{ on }M[h]\quad
   \Rightarrow\quad
    \abs{\frac{\partial f}{\partial x_j}} \leq 3h\text{ on }M\text{ for }j=1,\dots ,d
 .\end{equation}
 This is a simple consequence of the mean value theorem:
 For any $j\in\set{1,\dots ,d}$ and $\mathbf x\in M$ 
 with $\mathbf x+h\mathbf e_j\in[0,1]^d$ there is some $\eta\in (0,h)$ with
 \begin{equation*}
  \abs{\frac{\partial f}{\partial x_j}\brackets{\mathbf x+\eta \mathbf e_j}}
  = \abs{\frac{f\brackets{\mathbf x+h \mathbf e_j}-f(\mathbf x)}{h}}
  \leq 2h.
 \end{equation*}
 The same estimate holds for some $\eta\in(-h,0)$, if $\mathbf x+h\mathbf e_j\not\in[0,1]^d$.
 The fundamental theorem of calculus yields
 \begin{equation*}
  \abs{\frac{\partial f}{\partial x_j}\brackets{\mathbf x}}
  \leq \abs{\frac{\partial f}{\partial x_j}\brackets{\mathbf x+\eta \mathbf e_j}}
  + \abs{\eta}\cdot \max\limits_{\abs{t}\leq\eta} \abs{\frac{\partial^2 f}{\partial x_j^2}\brackets{\mathbf x+t\mathbf e_j}}
  \leq 3h
 .\end{equation*}
 This means that we can choose $P=Q\left[\delta/3\right]$.
 
 For $r>2$ we repeat this procedure $r-1$ times.
 We use the notation
 \begin{equation*}
  M\left[h_1,\dots,h_{i}\right]=M\left[h_1,\dots,h_{i-1}\right]\left[h_i\right]
 \end{equation*}
 for $i>1$. We choose the point set
 \[
 P=Q\left[h_1,\dots,h_{r-1}\right], 
 \quad\text{where}\quad
 h_i=3(\delta/9)^{2^{i-1}}
 \]
 for $i=1,\hdots ,r-1$.
 Note that $3h_i=h_{i-1}^2$ for each $i\geq 2$.
 Clearly, the cardinality of $P$ is at most $(d+1)^{r-1}\abs{Q}$
 and hence bounded by $n$.
 Let $f\in\C^r_d$ vanish on $P$ and let 
 $\frac{\partial^\ell f}{\partial x_{j_1}\dots\partial x_{j_\ell}}$
 be any derivative of order $\ell <r$.
 Fact (\ref{mean value}) yields:
 \begin{align*}
   &f\in C^r_d
   &&\text{ with }\abs{f}=0\leq h_{r-1}^2
   &&\text{ on }Q\left[h_1,\dots,h_{r-1}\right]\\
   \Rightarrow\
   &\frac{\partial f}{\partial x_{j_1}}\in C^{r-1}_d
   &&\text{ with }\abs{\frac{\partial f}{\partial x_{j_1}}}\leq 3h_{r-1} = h_{r-2}^2
   &&\text{ on }Q\left[h_1,\dots,h_{r-2}\right]\\
   \Rightarrow\
   &\frac{\partial^2 f}{\partial x_{j_1}\partial x_{j_2}}\in C^{r-2}_d
   &&\text{ with }\abs{\frac{\partial^2 f}{\partial x_{j_1}\partial x_{j_2}}}\leq 3h_{r-2}
   &&\text{ on }Q\left[h_1,\dots,h_{r-3}\right]\\
   \Rightarrow\ & &&\hdots &&\\
   \Rightarrow\
   &\frac{\partial^\ell f}{\partial x_{j_1}\dots\partial x_{j_\ell}}\in C^{r-\ell}_d
   &&\text{ with }\abs{\frac{\partial^\ell f}{\partial x_{j_1}\dots\partial x_{j_\ell}}}\leq 3h_{r-\ell}
   &&\text{ on }Q\left[h_1,\dots,h_{r-\ell-1}\right]
 .\end{align*}
 Since $Q\subset Q\left[h_1,\dots,h_{r-\ell-1}\right]$ and 
 $3h_{r-\ell}\leq \delta^{2^{r-\ell-1}}$, the lemma is proven.
\end{proof}

We can prove the desired upper bounds on $e\brackets{n,\C^r_d}$
by choosing $Q$ as a regular grid. 
We set
\begin{equation*}
 Q_m^d=\set{0,1/m,2/m,\hdots,1}^d
\end{equation*}
for $m\in\IN$. The following recursive formula is crucial.

\begin{lemma}
 \label{two step induction}
 Let $m\in \IN$, $d\geq 2$ and $r\geq 2$. Then
 \begin{equation*}
  E\brackets{Q_m^d,\C^r_d}
  \leq E\brackets{Q_m^{d-1},\C^r_{d-1}} + 
  \frac{1}{8m^2}\, E\brackets{Q_m^d,\C^{r-2}_d}.
 \end{equation*}
\end{lemma}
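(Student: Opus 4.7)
The plan is to reduce the $d$-dimensional problem to a $(d-1)$-dimensional problem plus an error term, by expanding $f$ in a second-order Taylor polynomial in the last coordinate direction around the nearest grid point.

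First I would fix $\delta \in (0,1)$ and $f \in \C^r_d(Q_m^d, \delta)$, pick an arbitrary $\mathbf{x} = (\mathbf{x}', x_d) \in [0,1]^d$, and choose $j \in \{0,1,\dots,m\}$ with $|x_d - j/m| \leq 1/(2m)$. Taylor's theorem in the last variable, valid since $r \geq 2$ guarantees that $\partial_d^2 f$ exists and is continuous, yields
\[
f(\mathbf{x}', x_d) = f(\mathbf{x}', j/m) + (x_d - j/m)\,\partial_d f(\mathbf{x}', j/m) + \int_{j/m}^{x_d} (x_d - t)\, \partial_d^2 f(\mathbf{x}', t)\, \d t.
\]
I would now bound the three summands separately.

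For the first summand, the restriction $g(\mathbf{x}') := f(\mathbf{x}', j/m)$ lies in $\C^r_{d-1}(Q_m^{d-1}, \delta)$: for $|\alpha| < r$ and $\mathbf{x}' \in Q_m^{d-1}$ the point $(\mathbf{x}', j/m)$ lies in $Q_m^d$, so $|D^\alpha g(\mathbf{x}')| = |D^\alpha f(\mathbf{x}', j/m)| \leq \delta^{2^{r-|\alpha|-1}}$, and the uniform bound on all derivatives up to order $r$ transfers trivially. Hence $|f(\mathbf{x}', j/m)| \leq E(Q_m^{d-1}, \C^r_{d-1}, \delta)$. For the second summand, since $|\mathbf{e}_d| = 1 < r$, the defining property of $\C^r_d(Q_m^d,\delta)$ gives $|\partial_d f(\mathbf{x}', j/m)| \leq \delta^{2^{r-2}}$, whence this term is bounded by $\tfrac{1}{2m}\,\delta^{2^{r-2}}$, which tends to $0$ as $\delta \downarrow 0$. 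For the third summand, I would verify that $\partial_d^2 f \in \C^{r-2}_d(Q_m^d, \delta)$: the derivative bounds up to order $r-2$ follow from $\|\partial^{\beta+2\mathbf{e}_d} f\|_\infty \leq 1$, and for any $|\alpha| < r-2$ and $\mathbf{y} \in Q_m^d$ one has $|D^\alpha \partial_d^2 f(\mathbf{y})| = |D^{\alpha+2\mathbf{e}_d} f(\mathbf{y})| \leq \delta^{2^{r-|\alpha|-3}} = \delta^{2^{(r-2)-|\alpha|-1}}$, which is precisely the condition with parameter $\delta$. Therefore $\|\partial_d^2 f\|_\infty \leq E(Q_m^d, \C^{r-2}_d, \delta)$, and the integral is bounded by $\tfrac{(x_d-j/m)^2}{2}\,E(Q_m^d, \C^{r-2}_d, \delta) \leq \tfrac{1}{8m^2}\, E(Q_m^d, \C^{r-2}_d, \delta)$.

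Combining the three estimates and taking the supremum over $\mathbf{x} \in [0,1]^d$ and over $f \in \C^r_d(Q_m^d,\delta)$ gives
\[
E(Q_m^d, \C^r_d, \delta) \leq E(Q_m^{d-1}, \C^r_{d-1}, \delta) + \frac{1}{2m}\,\delta^{2^{r-2}} + \frac{1}{8m^2}\, E(Q_m^d, \C^{r-2}_d, \delta).
\]
Letting $\delta \downarrow 0$, the middle term vanishes and the claim follows. The only delicate step is the bookkeeping of which $\C^{r-2}_d(Q_m^d,\delta')$ membership $\partial_d^2 f$ satisfies; the specific exponent $\delta^{2^{r-|\alpha|-1}}$ in the definition of $\C^r_d(Q,\delta)$ is chosen exactly so that differentiating twice produces the correct doubling of the inner exponent, so $\delta' = \delta$ works. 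Everything else is a standard one-dimensional Taylor estimate optimised over the grid spacing $1/m$.
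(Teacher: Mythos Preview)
There is a genuine gap in your bound on the second summand. You claim that
\[
|\partial_d f(\mathbf{x}', j/m)| \leq \delta^{2^{r-2}}
\]
because $|\mathbf{e}_d|=1<r$ and $f\in\C^r_d(Q_m^d,\delta)$. But the defining inequality $|D^\alpha f(\mathbf{y})|\le\delta^{2^{r-|\alpha|-1}}$ is only available at points $\mathbf{y}\in Q_m^d$, and here $\mathbf{x}'$ is an \emph{arbitrary} point of $[0,1]^{d-1}$, so $(\mathbf{x}',j/m)$ need not lie in $Q_m^d$. At such a point $\partial_d f$ is controlled only by the global bound $\|\partial_d f\|_\infty\le 1$, which gives a contribution of order $1/(2m)$ that does \emph{not} vanish as $\delta\downarrow 0$ and would destroy the inequality.

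This is precisely the obstacle the paper's argument is designed to avoid. Instead of expanding around the grid point, the paper works at the maximum point $\mathbf{z}$ of $|f|$: if $z_d\in(0,1)$ then $\partial_d f(\mathbf{z})=0$ by criticality, so Taylor's theorem \emph{around $\mathbf{z}$} (rather than around the grid point) gives
\[
f(\mathbf{y})=f(\mathbf{z})+\tfrac{1}{2}\,\partial_d^2 f(\mathbf{a})\,(y_d-z_d)^2
\]
with no first-order term at all; if $z_d\in\{0,1\}$ the problem reduces directly to the hyperplane. Your first and third bounds are fine, and your observation that $\partial_d^2 f\in\C^{r-2}_d(Q_m^d,\delta)$ with the \emph{same} $\delta$ is exactly right --- you just need to reorganise the Taylor step so that the linear term disappears for a structural reason rather than by an appeal to the $\delta$-smallness that is not available off the grid.
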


\begin{proof}
 We will prove for any $\delta>0$ that
 \begin{equation}
 \label{recursive formula}
  E\brackets{Q_m^d,\C^r_d,\delta}
  \leq E\brackets{Q_m^{d-1},\C^r_{d-1},\delta} + 
  \frac{1}{8m^2}\, E\brackets{Q_m^d,\C^{r-2}_d,\delta}.
 \end{equation}
 Taking the limit for $\delta\to 0$ yields the statement.
  
 Let $f\in \C^r_d\brackets{Q_m^d,\delta}$.
 We need to show that $\norm{f}_\infty$ is bounded
 by the right hand side of \eqref{recursive formula}.
 Since $f$ is continuous, there is some $\mathbf z\in[0,1]^d$
 such that $\abs{f(\mathbf z)}=\norm{f}_\infty$.
 We distinguish two cases.
 
 If $z_d\in \set{0,1}$,
 the restriction $f\vert_{H}$ of $f$ to the hyperplane
 \begin{equation*}
  H=\set{\mathbf x\in[0,1]^d \mid x_d=z_d}
 \end{equation*}
 is contained in $\C^r_{d-1}\brackets{Q_m^{d-1},\delta}$.
 This implies that
 \begin{equation*}
  \abs{f(\mathbf z)}=\norm{f\vert_H}_\infty \leq E\brackets{Q_m^{d-1},\C^r_{d-1},\delta}
 \end{equation*}
 and the statement is proven.
 
 Let us now assume that $z_d\in (0,1)$.
 Then we have $\frac{\partial f}{\partial x_d}(\mathbf z)=0$.
 We choose $\mathbf y\in [0,1]^d$ such that $y_j=z_j$ for $j<d$
 and $y_d\in Q_m$ with $\abs{y_d-z_d}\leq 1/(2m)$.
 The restriction $f\vert_{H^\prime}$ of $f$ to the hyperplane
 \begin{equation*}
  H^\prime=\set{\mathbf x\in[0,1]^d \mid x_d=y_d}
 \end{equation*}
 is contained in $\C^r_{d-1}\brackets{Q_m^{d-1},\delta}$.
 This implies that
 \begin{equation*}
  \abs{f(\mathbf y)}=\norm{f\vert_{H^\prime}}_\infty \leq E\brackets{Q_m^{d-1},\C^r_{d-1},\delta}.
 \end{equation*}
 Moreover, the second derivative $\frac{\partial^2 f}{\partial x_d^2}$
 is contained in $\C^{r-2}_d\brackets{Q_m^d,\delta}$ and hence
 \begin{equation*}
  \norm{\frac{\partial^2 f}{\partial x_d^2}}_\infty \leq 
  E\brackets{Q_m^d,\C^{r-2}_d,\delta}.
 \end{equation*}
 By Taylor's theorem, there is some $\mathbf a$ on the line segment between $\mathbf y$ and $\mathbf z$
 such that 
 \begin{equation*}
  f(\mathbf y)= f(\mathbf z) + \frac{1}{2} \frac{\partial^2 f}{\partial x_d^2}(\mathbf a)
  \cdot (y_d-z_d)^2.
 \end{equation*}
 We obtain
 \begin{equation*}
 \begin{split}
  \abs{f(\mathbf z)}\, &\leq\, \abs{f(\mathbf y)} \,+\,
  \frac{(y_d-z_d)^2}{2}\cdot \Big\Vert\frac{\partial^2 f}{\partial x_d^2}\Big\Vert_\infty \\
  &\leq\, E\brackets{Q_m^{d-1},\C^r_{d-1},\delta} \,+\, 
  \frac{1}{8m^2}\cdot E\brackets{Q_m^d,\C^{r-2}_d,\delta},
 \end{split}
 \end{equation*}
 as it was to be proven.
\end{proof}

By a double induction on $r$ and $d$ we obtain the following
result for even $r$.

\begin{lemma}
 \label{even r lemma}
 Let $d\in\IN$, $m\in\IN$ and $r\in\IN_0$ be even. Then
 \begin{equation*}
  E\brackets{Q_m^d,\C^r_d} \leq
  \frac{e d^{r/2}}{(2m)^r}.
 \end{equation*}
\end{lemma}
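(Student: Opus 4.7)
The plan is to prove this by a double induction on the pair $(d,r)$ using the recursive estimate in Lemma~\ref{two step induction}. It is convenient to set $B(d,r)=(2m)^r E(Q_m^d,\C^r_d)$, so that the lemma becomes the simple recursion
\begin{equation*}
 B(d,r) \leq B(d-1,r) + \tfrac{1}{2}\, B(d,r-2) \quad \text{for } d\geq 2,\ r\geq 2,
\end{equation*}
and the goal becomes $B(d,r)\leq e\, d^{r/2}$ for all $d\in\IN$ and all even $r\in\IN_0$.

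First I would dispose of the two base cases. For $r=0$ the class $\C^0_d$ consists of continuous functions with $\Vert f\Vert_\infty\leq 1$, so $B(d,0)\leq 1\leq e\cdot d^0$. For $d=1$ the recursion does not apply, but here a one-variable Taylor expansion around the nearest grid point (at distance at most $1/(2m)$) gives, after sending $\delta\downarrow 0$ in the definition of $E(Q_m^1,\C^r_1,\delta)$, the bound $E(Q_m^1,\C^r_1)\leq \tfrac{1}{r!(2m)^r}$, so $B(1,r)\leq 1/r!\leq e\cdot 1^{r/2}$.

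Next I would run the induction. Fix even $r\geq 2$ and assume $B(d,r-2)\leq e\, d^{(r-2)/2}$ for all $d$. Then I do induction on $d$: the case $d=1$ is handled above, and for $d\geq 2$, assuming $B(d-1,r)\leq e\,(d-1)^{r/2}$, the recursion yields
\begin{equation*}
 B(d,r) \,\leq\, e\,(d-1)^{r/2} + \tfrac{1}{2}\, e\, d^{(r-2)/2}.
\end{equation*}
So the step reduces to the elementary inequality $d^{r/2}-(d-1)^{r/2}\geq \tfrac{1}{2}d^{(r-2)/2}$, i.e.\ with $s=r/2\geq 1$,
\begin{equation*}
 1-\bigl(1-\tfrac{1}{d}\bigr)^{s} \,\geq\, \tfrac{1}{2d}.
\end{equation*}

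The only step that needs any thought is this last inequality, and it is in fact immediate: since $s\geq 1$ and $1-1/d\in[\tfrac{1}{2},1)$ for $d\geq 2$, we have $(1-1/d)^s\leq 1-1/d$, hence $1-(1-1/d)^s\geq 1/d\geq 1/(2d)$. This closes the double induction and delivers the stated estimate $E(Q_m^d,\C^r_d)\leq e\,d^{r/2}/(2m)^r$.
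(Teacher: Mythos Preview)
Your proof is correct and follows essentially the same approach as the paper: a double induction on $(d,r)$ using Lemma~\ref{two step induction}, with the $d=1$ base case handled by Taylor expansion and the induction step reduced to the elementary inequality $(1-1/d)^{r/2}+1/(2d)\leq 1$. The only cosmetic differences are that you normalize via $B(d,r)=(2m)^rE(Q_m^d,\C^r_d)$, you run the outer induction on $r$ rather than on $d$, and your $d=1$ bound $1/r!$ is slightly sharper than the paper's $e$ (both suffice).
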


\begin{proof}
 We give a proof by induction on $d$.
 Let $\delta>0$ and $f\in \C^r_1\brackets{Q_m,\delta}$
 for some even number $r$.
 Since $f$ is continuous, there is some $z\in[0,1]$
 such that $\abs{f(z)}=\norm{f}_\infty$.
 Let $y\in Q_m$ with $\abs{y-z}<1/(2m)$.
 By Taylor's theorem, there is some $\xi$
 between $y$ and $z$ such that
 $$
 f(z)=\sum_{k=0}^{r-1}\frac{f^{(k)}(y)}{k!} (z-y)^k 
 + \frac{f^{(r)}(\xi)}{r!} (z-y)^r .
 $$
 Using that $\vert f^{(k)}(y)\vert\leq \delta^{2^{r-k-1}} \leq \delta^{r-k}$,
 we obtain for $\delta \leq 1/(2m)$ that
 $$
 \norm{f}_\infty \leq \sum_{k=0}^r\frac{\delta^{r-k}}{k!} \brackets{\frac{1}{2m}}^k 
 \leq \brackets{\frac{1}{2m}}^r \sum_{k=0}^r\frac{1}{k!}
 \leq \frac{e}{(2m)^r}.
 $$
 Since this is true for any such $f$ and any $\delta\leq 1/(2m)$,
 this proves the case $d=1$.
 
 Now let $d\geq 2$. We assume that the statement holds for every dimension
 smaller than $d$.
 To show that it also holds in dimension $d$,
 we use induction on $r$.
 For $r=0$ the statement is trivial since $E(Q_m^d,\C^0_d)=1$.
 Let $r\geq 2$ be even and assume that the statement
 holds in dimension $d$ for any even smoothness smaller than $r$.
 Lemma~\ref{two step induction} yields
 \begin{align*}
  E\brackets{Q_m^d,\C^r_d}
  \leq \frac{e (d-1)^{r/2}}{(2m)^r} + 
  \frac{1}{8m^2}\, \frac{e d^{r/2-1}}{(2m)^{r-2}}\\
  = \frac{e d^{r/2}}{(2m)^r}
  \brackets{\brackets{1-\frac{1}{d}}^{r/2}+\frac{1}{2d}}
  \leq \frac{e d^{r/2}}{(2m)^r} ,
 \end{align*}
 which completes the inner and therefore the outer induction.
\end{proof}

This immediately yields the upper bound of 
Theorem~\ref{main theorem even}.

\begin{proof}[Proof of Theorem~\ref{main theorem even} (Upper Bound)]
 Let $d\in\IN$, $r\in\IN$ be even and $\varepsilon >0$. We set
 $$
 n=(d+1)^{r-1} (m+1)^d, \quad\text{where}\quad
 m=\left\lceil\frac{e^{1/r}}{2} \sqrt{d} \varepsilon^{-1/r}\right\rceil.
 $$
 Lemmas~\ref{small derivatives} and \ref{even r lemma} yield
 $$
 \e(n,\mathcal{P}[\APP,\C^r_d]) 
 \leq E\brackets{Q_m^d,\C^r_d}
 \leq \frac{e d^{r/2}}{(2m)^r}
 \leq \varepsilon.
 $$
 Hence,
 $$
 \comp(\varepsilon,\mathcal{P}[\APP,\C^r_d]) \leq n
 $$
 and this implies the result.
\end{proof}

To derive the upper bounds for odd $r$,
we use the following recursive formula.

\begin{lemma}
 \label{odd r lemma}
 Let $m\in \IN$, $d\in \IN$ and $r\in\IN$. Then
 \begin{equation*}
  E\brackets{Q_m^d,\C^r_d}
  \leq \frac{d}{2m}\,
  E\brackets{Q_m^d,\C^{r-1}_d}.
 \end{equation*}
\end{lemma}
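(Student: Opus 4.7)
The plan is to mimic the structure of the proof of Lemma~\ref{even r lemma} but with a simple first-order Taylor expansion, exploiting the fact that the partial derivatives of $f$ belong to a class with smoothness $r-1$.

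First I would fix $\delta > 0$ and take an arbitrary $f \in \C^r_d(Q_m^d,\delta)$, with the aim of proving
$$E\bigl(Q_m^d,\C^r_d,\delta\bigr) \leq \delta^{2^{r-1}} + \frac{d}{2m}\, E\bigl(Q_m^d,\C^{r-1}_d,\delta\bigr),$$
from which the lemma follows by letting $\delta \downarrow 0$. The key auxiliary observation is that for each coordinate $j \in \{1,\dots,d\}$, the partial derivative $\partial_j f$ belongs to $\C^{r-1}_d(Q_m^d,\delta)$. Membership in $\C^{r-1}_d$ is immediate from $f \in \C^r_d$, and the grid condition is a matter of matching exponents: for any multi-index $\beta$ with $|\beta| < r-1$ and any $\mathbf{x}\in Q_m^d$,
$$\bigl|D^\beta(\partial_j f)(\mathbf{x})\bigr| = \bigl|D^{\beta+e_j}f(\mathbf{x})\bigr| \leq \delta^{2^{r-|\beta+e_j|-1}} = \delta^{2^{(r-1)-|\beta|-1}},$$
which is exactly the condition for $\partial_j f \in \C^{r-1}_d(Q_m^d,\delta)$. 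Consequently,
$$\Vert\partial_j f\Vert_\infty \leq E\bigl(Q_m^d,\C^{r-1}_d,\delta\bigr)$$
for every $j$.

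Next I would choose $\mathbf{z}\in [0,1]^d$ with $|f(\mathbf{z})|=\Vert f\Vert_\infty$, which exists by continuity and compactness, and pick a grid point $\mathbf{y}\in Q_m^d$ with $|y_j-z_j|\le 1/(2m)$ for all $j$. The mean value theorem applied along the segment from $\mathbf{y}$ to $\mathbf{z}$ yields some $\mathbf{a}$ on this segment with
$$f(\mathbf{z}) = f(\mathbf{y}) + \sum_{j=1}^d \partial_j f(\mathbf{a})\,(z_j-y_j).$$
Since $\mathbf{y}\in Q_m^d$ and $|0|<r$, the grid condition gives $|f(\mathbf{y})|\leq \delta^{2^{r-1}}$. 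Combining this with the uniform bound on $\partial_j f$ and $|z_j-y_j|\leq 1/(2m)$ produces
$$\Vert f\Vert_\infty = |f(\mathbf{z})| \leq \delta^{2^{r-1}} + \frac{d}{2m}\, E\bigl(Q_m^d,\C^{r-1}_d,\delta\bigr),$$
which is the claimed intermediate estimate. Taking the supremum over $f$ and then letting $\delta \to 0$ makes the first term vanish and yields the lemma.

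There is no real obstacle; the only thing to be careful about is the bookkeeping of the exponents $2^{r-|\alpha|-1}$ when passing from the $\C^r_d(Q_m^d,\delta)$-condition on $f$ to the $\C^{r-1}_d(Q_m^d,\delta)$-condition on $\partial_j f$, and the observation that a first-order (rather than higher-order) Taylor expansion suffices here because we have a genuine $\C^{r-1}_d$-bound on the gradient everywhere, not just at grid points. This is precisely what makes the recursion lose a factor $d/(2m)$ per step, in contrast to the factor $1/(8m^2)$ gained in the second-order step of Lemma~\ref{two step induction}.
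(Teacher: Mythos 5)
Your proof is correct and follows essentially the same approach as the paper: reduce to the estimate $E(Q_m^d,\C^r_d,\delta) \le \delta^{2^{r-1}} + \frac{d}{2m}E(Q_m^d,\C^{r-1}_d,\delta)$, observe that the partial derivatives lie in $\C^{r-1}_d(Q_m^d,\delta)$, and travel from a maximizer $\mathbf z$ to a nearby grid point $\mathbf y$. The paper walks from $\mathbf y$ to $\mathbf z$ along an axis-parallel polygonal chain of length at most $d/(2m)$ and integrates, whereas you use the straight segment and the mean value theorem; these are interchangeable and give the identical bound.
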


\begin{proof}
 It suffices to show for any $\delta>0$ that
 \begin{equation*}
  E\brackets{Q_m^d,\C^r_d,\delta}
  \leq \delta^{2^{r-1}} + \frac{d}{2m}\,
  E\brackets{Q_m^d,\C^{r-1}_d,\delta} .
 \end{equation*}
 Taking the limit for $\delta\to 0$ yields the statement.
 Let $f\in\C^r_d(Q_m^d,\delta)$ and
 let $\mathbf z\in[0,1]^d$ such that $\abs{f(\mathbf z)}=\norm{f}_\infty$.
 There is some $\mathbf y\in Q_m^d$ such that $\mathbf y$ and $\mathbf z$
 are connected by an axis-parallel polygonal chain
 of length at most $d/(2m)$.
 For every $j\in\set{1,\dots,d}$, the partial derivative
 $\partial f/\partial x_j$ is 
 contained in $\C^{r-1}_d(Q_m^d,\delta)$.
 Integrating along the curve yields
 $$
 \abs{f(\mathbf z)} \leq 
 \abs{f(\mathbf y)} + \frac{d}{2m} \max\limits_{j=1\hdots d} 
 \Big\Vert\frac{\partial f}{\partial x_j}\Big\Vert_\infty
 \leq \delta^{2^{r-1}} + \frac{d}{2m} E\brackets{Q_m^d,\C^{r-1}_d,\delta}.
 $$
 This proves the lemma.
\end{proof}

Now the upper bounds of Theorem~\ref{main theorem even odd}
follow from the results for even $r$.
Note that the upper bound for $r=1$ is included.

\begin{proof}[Proof of Theorem~\ref{main theorem even odd} (Upper Bound)]
 Let $d\in\IN$, $r\in\IN$ be odd and $\varepsilon>0$.
 For any $m\in\IN$,
 Lemma~\ref{even r lemma} and \ref{odd r lemma} yield
 $$
 E\brackets{Q_m^d,\C^r_d} \leq
  \frac{e d^{(r+1)/2}}{(2m)^r}.
 $$
 We set
 $$
 n=(d+1)^{r-1} (m+1)^d, \quad\text{where}\quad
 m=\left\lceil\frac{e^{1/r}}{2} d^{\frac{r+1}{2r}} \varepsilon^{-1/r}\right\rceil.
 $$
 We obtain
 $$
 \e(n,\mathcal{P}[\APP,\C^r_d]) 
 \leq E\brackets{Q_m^d,\C^r_d}
 \leq \varepsilon
 $$
 and hence
 $$
 \comp(\varepsilon,\mathcal{P}[\APP,\C^r_d]) \leq n,
 $$
 as it was to be proven.
\end{proof}

We proceed similarly
to prove of the upper bound of Theorem~\ref{side theorem}.
For any $\delta>0$, any $r\in\IN_0$, $d\in\IN$ and $Q\subset [0,1]^d$,
we define the subclasses
\begin{equation*}
 \widetilde\C^r_d(Q,\delta) =
 \set{f\in\widetilde\C^r_d \,\big\vert\,
 \abs{\partial_{\theta_1}\cdots\partial_{\theta_\ell} f(\mathbf x)}
 \leq \delta^{2^{r-\ell-1}}
 \text{ for } \mathbf x\in Q,  \ell < r, \theta_1\hdots\theta_\ell\in \mathbb S_{d-1}}
\end{equation*}
and the auxiliary quantities
\begin{equation*}
 E\brackets{Q,\widetilde\C^r_d,\delta}=
 \sup\limits_{f\in \widetilde\C^r_d(Q,\delta)} \norm{f}_\infty
 \qquad\text{and}\qquad
 E\brackets{Q,\widetilde\C^r_d}= 
 \lim\limits_{\delta\downarrow 0} E\brackets{Q,\widetilde\C^r_d,\delta}
\end{equation*}
and obtain the following.

\begin{lemma}
\label{small derivatives 2}
Let $d,r\in \IN$ and $n\in\IN_0$.
If the cardinality of $Q\subset [0,1]^d$
is at most $n/(d+1)^{r-1}$, then
\begin{equation*}
 \e(n,\mathcal{P}[\APP,\C^r_d]) 
 \leq E\brackets{Q,\widetilde\C^r_d}.
\end{equation*}
\end{lemma}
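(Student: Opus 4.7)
The plan is to mirror the strategy of Lemma~\ref{small derivatives}, replacing the axis-aligned neighbor construction with a simplex-based one that provides control over arbitrary directional derivatives rather than only coordinate-aligned partial derivatives. First, I would fix $d+1$ unit vectors $v_0,\ldots,v_d\in\mathbb S_{d-1}$ forming a regular simplex, chosen so that every $\theta\in\mathbb S_{d-1}$ is a linear combination of the $v_i$ with coefficient bound uniform in $d$. Given $M\subseteq[0,1]^d$ and $h\in(0,1/2]$, I would set $M[h]=M\cup\{x+hv_i:x\in M,\,i=1,\ldots,d\}$, substituting $x-hv_i$ whenever $x+hv_i$ leaves the cube, so that $|M[h]|\le(d+1)|M|$. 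Iterating, I would define $P=Q[h_1,\ldots,h_{r-1}]$ for a nested sequence $h_i=c(\delta/c')^{2^{i-1}}$ analogous to the one in Lemma~\ref{small derivatives}; this yields $|P|\le(d+1)^{r-1}|Q|\le n$.

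Next, I would prove by induction on $k$ the analog of \eqref{mean value}: if $f\in\C^{k+1}_d$ satisfies $|f|\le h^2$ on $M[h]$ (or, at higher induction levels, if the $(k{-}1)$-th directional derivatives are bounded by $h^2$ on $M[h]$), then every first directional derivative $\partial_\theta f$ is bounded by a constant multiple of $h$ on $M$, uniformly in $\theta\in\mathbb S_{d-1}$. For each $x\in M$ and each $i$, the mean value theorem along the segment from $x$ to $x\pm hv_i$ produces a point where $|\partial_{v_i}f|\le 2h$; the uniform bound $|\partial_{v_i}\partial_{v_i}f|\le 1$ (following from $f\in\C^{k+1}_d$ by expanding $\partial_{v_i}^2$ in coordinates) then transfers this estimate back to $x$ with a factor of $3h$, yielding control on $\partial_{v_i}f(x)$ for every $i$. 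Because the $v_i$ span $\IR^d$ with a well-conditioned basis, the bound on the $\partial_{v_i}f(x)$ lifts to a bound on $\partial_\theta f(x)$ for every unit $\theta$ with a constant independent of $d$.

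Iterating this inductive step through the $r-1$ fattening levels, starting from the trivial estimate that $f$ vanishes on the innermost layer $P$, yields for every $f\in\C^r_d$ with $f|_P=0$ the estimate $|\partial_{\theta_1}\cdots\partial_{\theta_\ell}f(x)|\le\delta^{2^{r-\ell-1}}$ at every $x\in Q$, every $\ell<r$ and every unit $\theta_1,\ldots,\theta_\ell$. Combining this with the global $\C^r_d$-bound places $f$ (after an inessential rescaling by a dimensional factor absorbed into $\delta$, under which every $\C^r_d$-function lies in $\widetilde\C^r_d$) into the subclass $\widetilde\C^r_d(Q,\delta)$. Hence $\|f\|_\infty\le E(Q,\widetilde\C^r_d,\delta)$, and letting $\delta\downarrow 0$ together with \eqref{eq:bakhvalov for APP} gives $\e(n,\mathcal{P}[\APP,\C^r_d])\le E(Q,\widetilde\C^r_d)$.

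The main obstacle is the second step: controlling arbitrary directional derivatives from finitely many simplex-aligned finite differences without a dimension-dependent blow-up in the constants. This forces a careful choice of the regular simplex $(v_i)$ and a sharp bookkeeping of the doubly-exponential scales $\delta^{2^{r-\ell-1}}$ through the induction, so that the ratio $h_{i-1}^2/h_i$ stays bounded by a dimension-free constant. A secondary technical issue is that $f\in\C^r_d$ alone does not place $f$ into the global class $\widetilde\C^r_d$ used in the definition of $\widetilde\C^r_d(Q,\delta)$; this is resolved by a rescaling $f\mapsto d^{-r/2}f$, which only modifies the parameter $\delta$ and does not affect the limit as $\delta\downarrow 0$.
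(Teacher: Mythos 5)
Your construction is a genuinely different and much heavier route than the paper's. The paper does not rebuild the point set at all: it reuses the axis-aligned cloud $P$ from Lemma~\ref{small derivatives}, observes that any $f$ with $f\vert_P=0$ then has small coordinate partials on $Q$, and converts these to small directional derivatives via the elementary inequality $\abs{\partial_{\theta_1}\cdots\partial_{\theta_\ell} f(\mathbf x)}\leq d^{\ell/2}\max_{\abs{\alpha}=\ell}\abs{\diff^\alpha f(\mathbf x)}$; the factor $d^{(r-1)/2}$ is absorbed into $\delta$ and disappears in the limit $\delta\downarrow 0$. Crucially, the paper applies this only to $f\in\widetilde\C^r_d$ (which is contained in $\C^r_d$, so Lemma~\ref{small derivatives} applies), so that membership in the \emph{global} class $\widetilde\C^r_d$ — required by the definition of $\widetilde\C^r_d(Q,\delta)$ — is an assumption, not something to be manufactured. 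This weaker conclusion (the bound for the $\widetilde\C^r_d$-problem) is all that the proof of Theorem~\ref{side theorem} uses.

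The genuine gap in your argument is the last step. You correctly notice that $f\in\C^r_d$ need not lie in $\widetilde\C^r_d$ (its $r$-th directional derivatives can be as large as $d^{r/2}$), but the proposed fix — rescaling $f\mapsto g=d^{-r/2}f$ — does \emph{not} merely modify $\delta$. It yields $\norm{g}_\infty\leq E\brackets{Q,\widetilde\C^r_d,\delta'}$ and hence $\norm{f}_\infty\leq d^{r/2}\,E\brackets{Q,\widetilde\C^r_d,\delta'}$; the factor $d^{r/2}$ multiplies the quantity you are bounding, not the parameter $\delta'$, and therefore survives the limit $\delta'\downarrow 0$. Your argument thus only gives $\e(n,\mathcal{P}[\APP,\C^r_d])\leq d^{r/2}E\brackets{Q,\widetilde\C^r_d}$, which is too weak by exactly the dimensional factor that Theorems~\ref{main theorem even} and \ref{side theorem} are designed to save. (If the stated inequality held for all of $\C^r_d$ without that loss, it would, combined with Lemma~\ref{recursion lemma 2}, settle the odd-$r$ case left open after Theorem~\ref{main theorem even odd}.) Two secondary inaccuracies are harmless but worth flagging: the regular-simplex basis is \emph{not} well-conditioned uniformly in $d$ (the Gram matrix of $d$ of the directions has smallest eigenvalue $1/d$), and for $f\in\C^r_d$ one only gets $\abs{\partial_{v_i}^2 f}\leq d$ rather than $\leq 1$; both blow-ups affect only the on-$Q$ derivative estimates and can indeed be absorbed into $\delta$, unlike the final rescaling.
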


\begin{proof}
 Let $\delta \in (0,1)$.
 In the proof of Lemma~\ref{small derivatives}
 we constructed a point set $P$
 with cardinality at most $n$ such that
 any $f\in\C^r_d$ with $f\vert_P=0$ is contained
 in $\C^r_d(Q,\delta)$.
 In particular, any $f\in\widetilde\C^r_d$ with $f\vert_P=0$
 satisfies $\abs{D^\alpha f(\mathbf x)}\leq \delta^{2^{r-\abs{\alpha}-1}}$
 for all $\mathbf x\in Q$ and $\abs{\alpha} < r$.
 Taking into account that for $\mathbf x\in [0,1]^d$ and $\ell < r$ we have
 $$
 \abs{\partial_{\theta_1}\cdots\partial_{\theta_\ell} f(\mathbf x)}
 \leq d^{\ell/2} \max\limits_{\abs{\alpha}=\ell} \abs{D^\alpha f(\mathbf x)} ,
 $$
 we obtain that $f\in \widetilde\C^r_d(Q,d^{\frac{r-1}{2}} \delta)$ and hence
 $$
 \e(n,\mathcal{P}[\APP,\C^r_d]) \leq
  \sup\limits_{f\in \widetilde\C^r_d:\, f\vert_P=0} \norm{f}_\infty
  \leq \sup\limits_{f\in \widetilde\C^r_d(Q,d^{(r-1)/2} \delta)} \norm{f}_\infty
  = E\brackets{Q,\widetilde\C^r_d,d^{\frac{r-1}{2}}\delta}.
 $$
 Taking the limit for $\delta\to 0$ yields the statement.
\end{proof}

For these classes, 
it is enough to consider the following
single-step recursion.

\begin{lemma}
 \label{recursion lemma 2}
 Let $m\in \IN$, $d\in\IN$ and $r\in\IN$. Then
 \begin{equation*}
  E\brackets{Q_m^d,\widetilde\C^r_d}
  \leq \frac{\sqrt{d}}{2m}\,
  E\brackets{Q_m^d,\widetilde\C^{r-1}_d}.
 \end{equation*}
\end{lemma}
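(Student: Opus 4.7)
The plan is to mimic the proof of Lemma~\ref{odd r lemma}, but replace the axis-parallel polygonal chain of length $d/(2m)$ with a single straight-line segment of Euclidean length at most $\sqrt{d}/(2m)$, exploiting the fact that on $\widetilde\C^r_d$ we control \emph{every} directional derivative and not merely the axis-aligned ones. As in the earlier recursions, it suffices to prove the analogous $\delta$-dependent inequality
\begin{equation*}
 E\brackets{Q_m^d,\widetilde\C^r_d,\delta}
 \leq \delta^{2^{r-1}} + \frac{\sqrt{d}}{2m}\,
 E\brackets{Q_m^d,\widetilde\C^{r-1}_d,\delta}
\end{equation*}
for every $\delta>0$ and then let $\delta\downarrow 0$.

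First I would fix $f\in\widetilde\C^r_d(Q_m^d,\delta)$ and choose $\mathbf z\in[0,1]^d$ with $\abs{f(\mathbf z)}=\norm{f}_\infty$. Round each coordinate of $\mathbf z$ to the nearest element of $\set{0,1/m,\dots,1}$ to obtain $\mathbf y\in Q_m^d$ with $\abs{y_j-z_j}\leq 1/(2m)$ for all $j$, so that $\Vert\mathbf z-\mathbf y\Vert_2\leq \sqrt{d}/(2m)$. Setting $\theta=(\mathbf z-\mathbf y)/\Vert\mathbf z-\mathbf y\Vert_2\in\sphere_{d-1}$ (trivially if $\mathbf z=\mathbf y$), the fundamental theorem of calculus along the segment from $\mathbf y$ to $\mathbf z$ gives
\begin{equation*}
 \abs{f(\mathbf z)}\,\leq\,\abs{f(\mathbf y)}\,+\,\Vert\mathbf z-\mathbf y\Vert_2\cdot\Vert\partial_\theta f\Vert_\infty
 \,\leq\,\delta^{2^{r-1}}\,+\,\frac{\sqrt{d}}{2m}\,\Vert\partial_\theta f\Vert_\infty,
\end{equation*}
where the first summand uses the hypothesis on $f$ at the grid point $\mathbf y$ for $\ell=0$.

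The key auxiliary step is that $g:=\partial_\theta f$ belongs to $\widetilde\C^{r-1}_d(Q_m^d,\delta)$. Indeed, $g\in\widetilde\C^{r-1}_d$ because any directional derivative of $g$ of order at most $r-1$ is a directional derivative of $f$ of order at most $r$, hence bounded by $1$; and for $\ell<r-1$ and any $\theta_1,\dots,\theta_\ell\in\sphere_{d-1}$ we have
\begin{equation*}
 \abs{\partial_{\theta_1}\cdots\partial_{\theta_\ell}g(\mathbf x)}
 =\abs{\partial_{\theta_1}\cdots\partial_{\theta_\ell}\partial_\theta f(\mathbf x)}
 \leq \delta^{2^{r-(\ell+1)-1}}
 =\delta^{2^{(r-1)-\ell-1}}
\end{equation*}
for every $\mathbf x\in Q_m^d$, which is precisely the defining condition for $\widetilde\C^{r-1}_d(Q_m^d,\delta)$. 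Consequently $\Vert g\Vert_\infty\leq E(Q_m^d,\widetilde\C^{r-1}_d,\delta)$, and combining with the previous display finishes the $\delta$-dependent inequality. Taking $\delta\downarrow 0$ on both sides kills the term $\delta^{2^{r-1}}$ and yields the claim.

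No real obstacle is anticipated: the whole argument is a one-line Taylor-of-order-one estimate once one realises that working with directional derivatives allows a diagonal move from $\mathbf z$ to the nearest grid point, saving the factor $\sqrt{d}$ over Lemma~\ref{odd r lemma}. The only point requiring a little care is the bookkeeping of the exponents $2^{r-\ell-1}$ when passing from the class parameter $r$ to $r-1$, which works out exactly because the parameter shift $\ell\mapsto\ell+1$ in the derivative order compensates the shift $r\mapsto r-1$ in the smoothness.
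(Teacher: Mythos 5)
Your proof is correct and follows essentially the same route as the paper: reduce to the $\delta$-dependent inequality, pick the max point $\mathbf z$, move along a straight segment to the nearest grid point $\mathbf y$ (length $\leq\sqrt d/(2m)$), use the $\ell=0$ hypothesis at $\mathbf y$, observe that $\partial_\theta f\in\widetilde\C^{r-1}_d(Q_m^d,\delta)$, and let $\delta\downarrow 0$. You merely spell out a little more explicitly than the paper the exponent bookkeeping showing $\partial_\theta f\in\widetilde\C^{r-1}_d(Q_m^d,\delta)$, which is a welcome but minor elaboration.
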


\begin{proof}
 It suffices to show for any $\delta>0$ that
 \begin{equation*}
  E\brackets{Q_m^d,\widetilde\C^r_d,\delta}
  \leq \delta^{2^{r-1}} + \frac{\sqrt{d}}{2m}\,
  E\brackets{Q_m^d,\widetilde\C^{r-1}_d,\delta} .
 \end{equation*}
 To this end, let $f\in\widetilde\C^r_d(Q_m^d,\delta)$ and
 let $\mathbf z\in[0,1]^d$ such that $\abs{f(\mathbf z)}=\norm{f}_\infty$.
 There is some $\mathbf y\in Q_m^d$ such that $\mathbf y$ and $\mathbf z$
 are connected by a line segment
 of length at most $\sqrt{d}/(2m)$.
 Let $\theta = \mathbf z-\mathbf y/\norm{\mathbf z-\mathbf y}_2$.
 Then we have $\partial_\theta f\in\widetilde\C^{r-1}_d(Q_m^d,\delta)$.
 Integrating along the line yields
 $$
 \abs{f(\mathbf z)} \leq 
 \abs{f(\mathbf y)} + \frac{\sqrt{d}}{2m}
 \norm{\partial_\theta f}_\infty
 \leq \delta^{2^{r-1}} + \frac{\sqrt{d}}{2m} E\brackets{Q_m^d,\widetilde\C^{r-1}_d,\delta}.
 $$
 Taking the limit for $\delta\to 0$ yields the statement.
\end{proof}

The upper bound of Theorem~\ref{side theorem}
can now be proven by induction on $r$.

\begin{proof}[Proof of Theorem~\ref{side theorem} (Upper Bound)]
Lemma~\ref{recursion lemma 2} and $E(Q_m^d,\widetilde\C^0_d)=1$ yield
$$
E\brackets{Q_m^d,\widetilde\C^r_d} \leq 
\brackets{\frac{\sqrt{d}}{2m}}^r
$$
for any $m\in\IN$, $d\in\IN$ and $r\in\IN_0$.
Now let $d\in\IN$, $r\in\IN$ and $\varepsilon>0$.
We set
 $$
 n=(d+1)^{r-1} (m+1)^d, \quad\text{where}\quad
 m=\left\lceil\frac{1}{2} \sqrt{d} \varepsilon^{-1/r}\right\rceil.
 $$
 Lemma~\ref{small derivatives 2} yields
 $$
 \e(n,\mathcal{P}[\APP,\C^r_d]) 
 \leq E\brackets{Q_m^d,\widetilde\C^r_d}
 \leq \varepsilon
 $$
 and hence
 $$
 \comp(\varepsilon,\mathcal{P}[\APP,\widetilde\C^r_d]) \leq n,
 $$
 as it was to be proven.
\end{proof}

\subsection{Lower bounds}
\label{sec:lower bounds ck}

By equation~\eqref{eq:bakhvalov for APP}, we can estimate 
$\e(n,\mathcal{P}[\APP,\widetilde\C^r_d])$ from below as follows.
For any point set $P$ with cardinality at most $n$,
we construct a function $f\in \widetilde\C^r_d$ that vanishes on $P$
but has a large maximum in $[0,1]^d$,
a so-called fooling function.
We will use the following lemma.
Note that
$$
 \norm{f}_{r,d} =
 \sup\limits_{\ell \leq r, \theta_i\in \mathbb S_{d-1}}
 \Vert \partial_{\theta_1}\cdots\partial_{\theta_\ell} f \Vert_\infty
$$
defines a norm on the space of smooth functions $f:\IR^d\to \IR$ 
with compact support.

\begin{lemma}
\label{radial function}
 There exists a sequence $\brackets{g_d}_{d\in\IN}$ of infinitely
 differentiable functions 
 $g_d:\IR^d\to \IR$
 with support in the Euclidean unit ball that satisfy $g_d(\mathbf 0) =1$ and
 \begin{equation*}
  \sup\limits_{d\in\IN}\, \norm{g_d}_{r,d}\, < \infty
  \qquad \text{for all } \quad r\in\IN_0.
 \end{equation*}
\end{lemma}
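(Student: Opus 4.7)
My plan is to construct the sequence explicitly via a radial bump function. Fix once and for all a smooth function $\phi:\IR\to\IR$ with $\phi(0)=1$ and $\supp\phi\subset[-1,1]$ (for instance, a standard mollifier scaled to take the value $1$ at the origin). Then I would define
\begin{equation*}
 g_d(\mathbf x) = \phi\bigl(\Vert\mathbf x\Vert_2^2\bigr)\quad\text{for}\quad \mathbf x\in\IR^d.
\end{equation*}
Since $\Vert\mathbf x\Vert_2^2 = \sum_{i=1}^d x_i^2$ is a polynomial in the coordinates, each $g_d$ is infinitely differentiable. The normalization and support properties are immediate: $g_d(\mathbf 0)=\phi(0)=1$, and $g_d(\mathbf x)=0$ whenever $\Vert\mathbf x\Vert_2\geq 1$.

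The heart of the proof is to show that the norms $\Vert g_d\Vert_{r,d}$ are controlled independently of $d$. For this I would compute directional derivatives by repeated application of the chain rule to $g_d=\phi\circ q$, where $q(\mathbf x)=\Vert\mathbf x\Vert_2^2$. Observe that for unit vectors $\theta_1,\dots,\theta_\ell\in\mathbb S_{d-1}$,
\begin{equation*}
 \partial_{\theta_1} q(\mathbf x) = 2\scalar{\theta_1}{\mathbf x},\qquad
 \partial_{\theta_2}\partial_{\theta_1} q(\mathbf x) = 2\scalar{\theta_1}{\theta_2},
\end{equation*}
and all higher derivatives of $q$ vanish. By induction on $\ell$ (or by Fa\`a di Bruno's formula), the derivative $\partial_{\theta_1}\cdots\partial_{\theta_\ell} g_d(\mathbf x)$ is a finite sum of terms of the form
\begin{equation*}
 C_{\ell,k,\pi}\cdot \phi^{(k)}\bigl(\Vert\mathbf x\Vert_2^2\bigr)\cdot \prod_{i} \scalar{\theta_{a_i}}{\mathbf x}\cdot \prod_{j} \scalar{\theta_{b_j}}{\theta_{c_j}},
\end{equation*}
where $k\leq \ell$, the number of summands and the combinatorial constants $C_{\ell,k,\pi}$ depend only on $\ell$, and each Kronecker-type factor comes from pairing up the $\theta$'s.

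The key point — and the reason the bound is dimension-free — is that on the support of $g_d$ we have $\Vert\mathbf x\Vert_2\leq 1$, so by the Cauchy--Schwarz inequality $|\scalar{\theta_a}{\mathbf x}|\leq 1$ and $|\scalar{\theta_b}{\theta_c}|\leq 1$. Hence each term in the above sum is bounded by $|C_{\ell,k,\pi}|\cdot \Vert\phi^{(k)}\Vert_\infty$, a quantity depending only on $\ell$ and the fixed function $\phi$. Taking the supremum over $\mathbf x$ and over all choices of $\ell\leq r$ and unit vectors $\theta_i$ yields an upper bound on $\Vert g_d\Vert_{r,d}$ that is independent of $d$, which is the desired conclusion. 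I do not anticipate any serious obstacle: the only slightly delicate point is to write the induction cleanly so as to see that the combinatorics do not secretly introduce a $d$-dependence, but this follows at once from the vanishing of third and higher derivatives of~$q$.
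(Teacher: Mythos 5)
Your construction is the same as the paper's (a radial bump $g_d(\mathbf x)=\phi(\Vert\mathbf x\Vert_2^2)$), and the key observation driving dimension-independence — that the directional derivatives depend only on the scalar products among $\mathbf x,\theta_1,\ldots,\theta_\ell$, which are all bounded by $1$ on the support — is exactly what the paper's proof invokes. Your argument is essentially the same, just with the Faà di Bruno computation spelled out in more detail where the paper merely asserts the invariance property.
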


\begin{proof}
 Take any function $h\in\C^\infty(\IR)$ which 
 equals 1 on $(-\infty,0]$ and 0 on $[1,\infty)$.
 Then the radial functions
 \begin{equation*}
  g_d: \IR^d \to \IR, \quad g_d(x)=h\brackets{\norm{x}_2^2}
 \end{equation*}
 for $d\in\IN$ have the desired properties.
 This follows from the fact that the directional derivative
 $\partial_{\theta_1}\cdots\partial_{\theta_r} g_d(\mathbf x)$
 only depends on the length of $\mathbf x$ and the angles
 between each pair of vectors $\theta_1,\hdots,\theta_r\in \mathbb S_{d-1}$ and $\mathbf x\in\IR^d$. 
 As soon as $d$ is large enough such that 
 all constellations of lengths and angles are possible,
 the norm $\norm{g_d}_{r,d}$ is independent of
 the dimension $d$.
\end{proof}

To obtain a suitable fooling function for a given point set $P$,
it is enough to shrink and shift the support of $g_d$
to the largest euclidean ball that does not intersect with $P$.
The radius of this ball can be estimated by a simple volume argument.

\begin{lemma}
\label{radius lower bound}
 Let $P\subset[0,1]^d$ be of cardinality $n\in\IN$.
 There exists $\mathbf z\in[0,1]^d$ such
 that for all $\mathbf x\in P$ we have
 \begin{equation*}
  \norm{\mathbf z-\mathbf x}_2 \geq \frac{\sqrt{d}}{5 n^{1/d}}.
 \end{equation*}
\end{lemma}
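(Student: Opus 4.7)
The plan is to use a standard volume (pigeonhole) argument: set $r = \sqrt{d}/(5 n^{1/d})$ and show that the union of the closed Euclidean balls $B(\mathbf{x}, r)$ for $\mathbf{x} \in P$ cannot cover the unit cube, so some $\mathbf z \in [0,1]^d$ must lie at distance at least $r$ from every point of $P$.

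First I would write down that the Lebesgue measure of $\bigcup_{\mathbf x \in P} B(\mathbf x, r)$ is at most $n V_d r^d$, where $V_d = \pi^{d/2}/\Gamma(d/2+1)$ is the volume of the Euclidean unit ball in $\IR^d$. It therefore suffices to verify
\begin{equation*}
n V_d r^d < 1, \quad\text{i.e.,}\quad V_d^{1/d} \leq \frac{5}{\sqrt d},
\end{equation*}
because plugging in $r = \sqrt d/(5 n^{1/d})$ turns this into $n V_d r^d \leq (2\pi e/25)^{d/2} < 1$.

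The only real content of the argument is therefore the uniform bound $V_d^{1/d} \leq 5/\sqrt d$ for every $d \in \IN$. For this I would invoke the standard Stirling-type lower bound $\Gamma(d/2+1) \geq \sqrt{\pi d}\,(d/(2e))^{d/2}$, which gives
\begin{equation*}
V_d \leq \frac{(2\pi e/d)^{d/2}}{\sqrt{\pi d}} \leq \brackets{\frac{2\pi e}{d}}^{d/2},
\end{equation*}
so that $V_d^{1/d} \leq \sqrt{2\pi e}/\sqrt{d} < 5/\sqrt{d}$, since $2\pi e \approx 17.08 < 25$. (If one wishes to avoid asymptotic constants altogether, one may instead check the desired inequality directly for small $d$ and then invoke Stirling for large $d$.)

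The step I expect to be slightly delicate is just the bookkeeping with the Gamma function to make the bound valid for \emph{all} $d \in \IN$, including $d=1,2$; this is where the factor $5$ (rather than the sharper $\sqrt{2\pi e} \approx 4.13$) provides the necessary slack. Nothing beyond elementary volume computations and Stirling is needed.
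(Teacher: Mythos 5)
Your argument is correct and is essentially the same as the paper's: both use the volume/pigeonhole bound $\lambda^d\big(\bigcup_{\mathbf x\in P}B_r^2(\mathbf x)\big)\le n\,V_d\,r^d$ together with a Stirling-type lower bound for $\Gamma(d/2+1)$ to show that $n\,V_d\,r^d<1$ when $r=\sqrt d/(5n^{1/d})$, so the balls cannot cover $[0,1]^d$. The only difference is which explicit Stirling inequality is invoked (the paper uses $\Gamma(d/2+1)\ge\frac{\sqrt{2\pi}}{e}(d/(2e))^{d/2}$, yielding the factor $e^{3/2}/5<1$; you use $\Gamma(d/2+1)\ge\sqrt{\pi d}\,(d/(2e))^{d/2}$, yielding $\sqrt{2\pi e}/5<1$), and both are valid.
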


\begin{proof}
 The set
 \begin{equation*}
  B_R^2(P) = \bigcup_{\mathbf x \in P} B_R^2(\mathbf x)
 \end{equation*}
 of points within a distance $R>0$ of $P$ has the volume
 \begin{equation*}
  \lambda^d\brackets{B_R^2(P)}
  \leq n R^d\, \lambda^d\brackets{B_1^2(\mathbf 0)}
  = \frac{n R^d\, \pi^{d/2}}{\Gamma\brackets{\frac{d}{2}+1}}.
 \end{equation*}
 By Stirling's Formula, this can be estimated from above by
  \begin{equation*}
  \lambda^d\brackets{B_R^2(P)}
  \leq \frac{n R^d\, \pi^{d/2}}{\frac{\sqrt{2\pi}}{e} \brackets{\frac{d}{2e}}^{d/2}}
  \leq \brackets{ \frac{n^{1/d} e^{3/2}}{\sqrt{d}}\, R }^d
 .\end{equation*}
 If $R=\sqrt{d}/(5 n^{1/d})$, the volume is less than 1
 and $[0,1]^d\setminus B_R^2(P)$ must be nonempty.
\end{proof}

We are ready to prove the lower bound of Theorem~\ref{side theorem}.

\begin{proof}[Proof of Theorem~\ref{side theorem} (Lower Bound)]
Let $r\in\IN$, $d\in\IN$ and $n\in\IN$.
Let $P$ be any subset of $[0,1]^d$ with cardinality at most $n$.
Let $g_d$ be like in Lemma~\ref{radial function} and set
 \begin{equation*}
  K_r=\sup\limits_{d\in\IN}\, \norm{g_d}_{r,d}
  \qquad\text{and}\qquad
  R=\min\set{1,\frac{\sqrt{d}}{5 n^{1/d}}}.
 \end{equation*}
By Lemma~\ref{radius lower bound} there is a point $\mathbf z\in[0,1]^d$
such that $B_R^2(\mathbf z)$ does not contain any element of $P$.
Hence, the function
 \begin{equation*}
  f_*: [0,1]^d \to \IR, \quad f_*(\mathbf x) = \frac{R^r}{K_r}\, g_d\brackets{\frac{\mathbf x-\mathbf z}{R}}
 \end{equation*}
is an element of $\widetilde\C^r_d$ and vanishes on $P$. 
We obtain
\begin{equation*}
 \sup\limits_{f\in \widetilde\C^r_d:\, f\vert_P=0} \norm{f}_\infty
 \geq \norm{f_*}_\infty
 \geq f_*(\mathbf z)
 = \frac{R^r}{K_r}
 = \min\set{\frac{1}{K_r},\frac{d^{r/2}}{5^r K_r n^{r/d}}}
.\end{equation*}
Since this is true for any such $P$, equation~\eqref{eq:bakhvalov for APP} yields 
\begin{equation}
\label{error numbers lower bound}
 \e(n,\mathcal{P}[\APP,\C^r_d]) \geq
 \min\set{\frac{1}{K_r},\frac{d^{r/2}}{5^r K_r n^{r/d}}}.
\end{equation}
We set $\varepsilon_r=1/K_r$.
Given $\varepsilon\in(0,\varepsilon_r)$, 
the right hand side in \eqref{error numbers lower bound}
is larger than $\varepsilon$
for any $n$ smaller than $d^{d/2}/(5^d K_r^{d/r}\varepsilon^{d/r})$.
This yields
\begin{equation*}
 \comp(\varepsilon,\mathcal{P}[\APP,\widetilde\C^r_d])
 \geq \brackets{(5^rK_r)^{-1/r} \sqrt{d}\, \varepsilon^{-1/r}}^d
\end{equation*}
as it was to be proven.
\end{proof}

In the same way, we obtain lower bounds
for the case that the domains $[0,1]^d$ are replaced
by other domains $D_d\subset\IR^d$ that satisfy
$\lambda^d(D_d)\geq a^d$ for some $a>0$ and all $d\in\IN$.
We simply have to multiply the radii in the previous proofs by $a$.

We now turn to the lower bounds of Theorem~\ref{main theorem even}
and \ref{main theorem even odd}.

\begin{proof}[Proof of Theorem~\ref{main theorem even} and \ref{main theorem even odd} (Lower Bounds)]
Note that $\C^r_d$ contains $\widetilde\C^r_d$ and hence
\begin{equation*}
 \comp(\varepsilon,\mathcal{P}[\APP,\C^r_d])
 \geq
 \comp(\varepsilon,\mathcal{P}[\APP,\widetilde\C^r_d]).
\end{equation*}
Furthermore, any $\varepsilon$-approximation
of a function on $[0,1]^d$ immediately yields an
$\varepsilon$-approximation of its integral and hence
\begin{equation*}
 \comp(\varepsilon,\mathcal{P}[\APP,\C^r_d])
 \geq
 \comp(\varepsilon,\mathcal{P}[\INT,\C^r_d]).
\end{equation*}
With these relations at hand,
the desired lower bounds for $r\geq 2$ immediately
follow from Theorem~\ref{side theorem}.
The lower bound for $r=1$ follows from
the complexity of numerical integration as
studied in~\cite{HNUW17}.
\end{proof}

\section{Rank One Tensors}
\label{sec:rank one}


The uniform approximation of smooth functions $f:[0,1]^d\to \IR$
suffers from the curse of dimensionality~\cite{NW09}.
The number of function values that we need to
capture $f$ up to some error $\varepsilon\in(0,1)$
in the uniform norm grows exponentially with the dimension.
But suppose we know that $f$
is the tensor product of $d$ univariate functions.
How many function values do we need then? 
This question has first been
posed and investigated 
in the recent work of Bachmayr, Dahmen, DeVore and Grasedyck \cite{BDDG14}.
More precisely, 
it is assumed that $f$ is contained in 
a class of rank one tensors that is given by
\begin{equation*} 
F_{r,M}^d = \big\{ \bigotimes_{i=1}^d  f_i \mid
f_i:[0,1]\to [-1,1], \
\Vert f_i^{(r)} \Vert_\infty \le M \big\}
\end{equation*} 
for smoothness parameters $r\in \IN$ and $M\geq 0$, 
where the function
$$
\bigotimes_{i=1}^d  f_i:[0,1]^d\to \IR,
\quad
f(\mathbf{x})= \prod_{i=1}^d f_i(x_i)
$$
is called a rank one tensor.
Note that $f_i^{(r)}$ denotes the $r^{\rm th}$ weak derivative of $f_i$.
In particular, 
it is assumed that $f_i$ is contained in
the Sobolev class $W_\infty^r([0,1])$ of univariate functions
that have $r$ weak derivatives in $L^\infty([0,1])$.

It is proven in \cite{NR16} that for $M\geq2^r r!$ 
also this problem
suffers from the curse of dimensionality.
Even for randomized methods, the curse is present.
For $M<2^r r!$ however,
a randomized algorithm is constructed
that does not require exponentially many function values
with respect to the dimension $d$.
We show that the same is possible with a deterministic algorithm.
In fact, we construct algorithms for every constellation
of the smoothness parameters
that are optimal in terms of tractability. 
%

\subsection{Results}

A deterministic algorithm for the uniform
recovery of rank one tensors is already constructed in \cite{BDDG14}.
It achieves the worst case error $\varepsilon$
while using at most
\begin{equation*}
 C_{r,d}\, M^{d/r} \varepsilon^{-1/r}
\end{equation*}
function values of $f$, see \cite[Theorem~5.1]{BDDG14}.
This number behaves optimally as a function of $\varepsilon$.
However, the constant $C_{r,d}$ and hence
the number of function values grows super-exponentially with $d$
for any $M>0$ and $r\in\IN$.
The algorithm uses the following observation. 
If we know a nonzero $\mathbf z$ of $f$,  
we can essentially recover every factor $f_i$ separately
by sampling along the line 
$$
 E_i=\set{\mathbf x \in [0,1]^d \mid \forall j\in\set{1,\hdots,d}\setminus\set{i}:
 x_j=z_j}.
$$
This results in a deterministic algorithm $I_{m}(\mathbf z,\cdot)$
that requests function values at
\begin{equation} \label{eq: n_chosen_for_err_eps}
   m= \left\lfloor C_{r,M}\, d^{1+1/r} \varepsilon^{-1/r} \right\rfloor
\end{equation}
points and satisfies
\begin{equation} \label{eq: known_z_star}
 \norm{I_{m}(\mathbf z,f)-f}_\infty \leq \varepsilon
\end{equation}
for any $f\in F_{r,M}^d$ with $f(\mathbf z)\neq 0$.
Here, the constant $C_{r,M}$ is positive and 
depends only on $r$ and $M$. 
See~\cite{BDDG14} for details.
Roughly speaking, the knowledge of a nonzero of $f$
allows us to reduce the problem
to $d$ univariate approximation problems
which can, for example, be treated by the use of polynomial interpolation.
With this observation at hand, the authors of \cite{BDDG14}
use an approximation scheme of the following type:
\begin{alg}
\label{generic algorithm}
   Given $m\in \mathbb{N}$, a finite point set $P\subset[0,1]^d$ 
   and a function $f\in F_{r,M}^d$,
   obtain $A_{P,m}(f)$ as follows:
\begin{enumerate}
 \item 
 For any $\mathbf x\in P$ check whether $f(\mathbf x)= 0$.
 \item If we found some $\mathbf z\in P$ with $f(\mathbf z)\not =0$ then
       call $I_{m}(\mathbf z,f)$ from \eqref{eq: known_z_star}. 
       If $f\vert_P=0$,
       then return the zero function.
\end{enumerate}
\end{alg}
The idea behind this algorithm is to choose a point set $P$ such that
every $f$ that vanishes on $P$
must also be small on the whole domain,
and thus the zero function is a good approximation of $f$.
This property is characterized by the notion of detectors.
We call a finite point set $P$ in $[0,1]^d$ 
an $\varepsilon$-detector for the class $F_{r,M}^d$
if it contains (detects) a nonzero of every function $f\in F_{r,M}^d$
with uniform norm greater than $\varepsilon$.
If $P$ is an $\varepsilon$-detector 
and $m$ is chosen as in (\ref{eq: n_chosen_for_err_eps}),
it is easy to see 
that Algorithm~\ref{generic algorithm} satisfies
\begin{equation*}
 \err(A_{P,m},F_{r,M}^d) \leq \varepsilon \quad\text{and}\quad
 \cost(A_{P,m},F_{r,M}^d) \leq \card(P) + m,
\end{equation*}
see also Lemma~\ref{detector lemma}.
The authors of \cite{BDDG14} use a point set $P$ that contains
a finite Halton sequence $H$.
They obtain that $P$ is an $\eps$-detector if
\begin{equation*}  
 \card(H) \geq 
 2^{d+d/r} M^{d/r} \varepsilon^{-d/r} \pi_d,
\end{equation*}
where $\pi_d$ is the product of the first $d$ primes.
However, this number increases super-exponentially with the dimension
for all parameters $M$ and $r$.
Here, we want to construct smaller $\varepsilon$-detectors for $F_{r,M}^d$.

In the range $M\geq 2^r r!$
we know that the problem suffers from the curse of dimensionality
such that we cannot expect 
to find an $\varepsilon$-detector with small cardinality. 
However, we provide a detector 
whose cardinality depends merely
exponentially on the dimension and not super-exponentially.
In the range $M<2^r r!$
we give a detector whose cardinality only grows polynomially with the dimension.
The order of growth is proportional to $\ln(\varepsilon^{-1})$.
There even exists
a detector whose cardinality grows at most quadratically 
with the dimension for all $\varepsilon$ if we have $M\leq r!$.
Altogether, this yields the following theorem.

\begin{thm}[\cite{KR19}]
\label{thm: thm_upp_bnd}
 For any $r\in\mathbb{N}$ and $M>0$, there are positive constants
 $c_1,\hdots,c_4$ such that the following holds.
 For any $d\in \mathbb{N}$ and $\varepsilon\in (0,1)$,  
 there is a finite point set $P\subset[0,1]^d$
 and a natural number $m$ such that $\err(A_{P,m},F_{r,M}^d)\leq \varepsilon$ and
 \begin{equation*}
  \cost(A_{P,m},F_{r,M}^d) \leq
  \left\{\begin{array}{lr}
        c_1^d\, \varepsilon^{-1/r}
        \quad
        & \text{if } M\in(0,\infty),\\ 
        c_2 \exp\left(c_3(1+\ln(\varepsilon^{-1}))(1+\ln d)\right)\quad
        & \text{if } M\in(0,2^r r!),\\
        c_4\, d^{2} \varepsilon^{-1/r} \ln(\varepsilon^{-1/r})
        & \text{if } M\in(0,r!].
        \end{array}\right.
 \end{equation*}
\end{thm}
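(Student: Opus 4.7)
The strategy is to reduce the uniform recovery of rank-one tensors to a geometric covering problem, then to construct, in each regime of $M$, an $\varepsilon$-detector of the claimed cardinality. Recall that $P\subset[0,1]^d$ is an $\varepsilon$-detector for $F^d_{r,M}$ if every $f\in F^d_{r,M}$ with $\|f\|_\infty>\varepsilon$ has a nonzero on $P$; given such a $P$ and $m$ as in \eqref{eq: n_chosen_for_err_eps}, Algorithm~\ref{generic algorithm} realises worst-case error $\le\varepsilon$ at cost $\card(P)+m$, and $m$ is already bounded by each of the three expressions of the theorem.

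First I would prove a univariate nonzero-interval lemma: for $g\in W^r_\infty([0,1])$ with $|g|\le 1$, $\|g^{(r)}\|_\infty \le M$, and $\|g\|_\infty\ge\alpha$, $g$ has a zero-free open interval of length at least $\kappa_r\min\bigl(1,(\alpha/M)^{1/r}\bigr)$ for a constant $\kappa_r$. The proof is Lagrange interpolation plus Rolle-style counting: on any subinterval $J$ containing $r$ zeros of $g$, the interpolation error formula gives $\|g\|_{L^\infty(J)}\le M|J|^r/r!$; hence on an interval of length $\sim(\alpha r!/M)^{1/r}$ around a near-maximum of $g$ there are at most $r-1$ zeros, leaving a gap of proportional length. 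Applied coordinatewise to $f=\bigotimes_i f_i$ with $\|f\|_\infty>\varepsilon$ and $\alpha_i:=\|f_i\|_\infty\le 1$, the trivial chain $\alpha_i\ge\prod_j\alpha_j>\varepsilon$ yields intervals $I_i\subset[0,1]$ of length $\ge\ell:=c_{r,M}\,\varepsilon^{1/r}$ on which $f_i$ has no zero. Thus $f$ is nonzero on the box $\prod_i I_i$, and every $P$ meeting each axis-aligned box whose sides are all at least $\ell$ is an $\varepsilon$-detector; call such a $P$ ``$\ell$-transversal''.

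The three cost bounds then correspond to three $\ell$-transversal detectors of decreasing size as $M$ shrinks. For the universal bound $c_1^d\varepsilon^{-1/r}$, I would use the tautological product grid of side length $\ell$ and thin it out via a single-coordinate exhaustive search, exploiting that locating one nonzero already suffices, with the remaining $\varepsilon^{-1/r}$ factor absorbed into $I_m$. For $M<2^r r!$, I would invoke the sparse grid of Section~\ref{sec:dispersion}, whose hierarchical structure makes it $\ell$-transversal with only $\exp\bigl(c_3(1+\ln\varepsilon^{-1})(1+\ln d)\bigr)$ points. When $M\le r!$ the constant $c_{r,M}$ in the key lemma is especially favourable, and a thinner sparse grid yields the polynomial bound $c_4 d^2 \varepsilon^{-1/r}\ln(\varepsilon^{-1/r})$.

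The main obstacle will be to show that the sparse grids of Section~\ref{sec:dispersion} are $\ell$-transversal in the stronger box-hitting sense used here, rather than merely having small dispersion: boxes with all sides $\ge\ell$ can have volume as small as $\ell^d$, so the Ullrich--Vybíral dispersion bound of order $\varepsilon^{-2}\log^2\varepsilon^{-1}\log d$ is useless when applied with $\varepsilon=\ell^d$. The transversality must instead be extracted directly from the coordinate-by-coordinate hierarchical construction, and the constants must be tracked with enough care that the thresholds $2^r r!$ and $r!$ emerge precisely at the quasi-polynomial and polynomial tractability transitions.
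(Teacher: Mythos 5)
Your high-level framing (reduce to building $\varepsilon$-detectors, then bound $\card(P)+m$) matches the paper, and your univariate nonzero-interval lemma is essentially Lemma~\ref{empty interval lemma}. But from there the proposal diverges in a way that does not recover, and I believe cannot recover, the claimed bounds.

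The decisive loss is in passing from the nonzero intervals $I_i$ to the detector condition. You apply the trivial chain $\alpha_i\ge\prod_j\alpha_j>\varepsilon$ to conclude each side has length $\ge\ell\asymp\varepsilon^{1/r}$, and then ask for point sets that hit every box whose \emph{minimum side} is $\ge\ell$ (your ``$\ell$-transversality''). This is a much stronger requirement than low dispersion: an axis-aligned grid of cells of side $\ell$ already shows any $\ell$-transversal set needs $\Omega(\ell^{-d})$ points, which is $\Omega(\varepsilon^{-d/r})$ — far above $c_1^d\varepsilon^{-1/r}$. You notice this near the end and hope to rescue it via the hierarchical structure of the sparse grid, but the obstruction is information-theoretic, not constructive; no thinning, sparse-grid or otherwise, can beat $\ell^{-d}$. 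The paper avoids this entirely by keeping the multiplicativity: from Lemma~\ref{empty interval lemma}, $L(f_i)\ge\varrho^{-1}\|f_i\|_\infty^{1/r}$, so $\prod_i L(f_i)\ge\varrho^{-d}\prod_i\|f_i\|_\infty^{1/r}=\varrho^{-d}\|f\|_\infty^{1/r}>\varrho^{-d}\varepsilon^{1/r}$. The resulting box has \emph{volume} $\ge\varrho^{-d}\varepsilon^{1/r}$, so ordinary dispersion $\le\varrho^{-d}\varepsilon^{1/r}$ suffices, and Larcher's $(t,s,d)$-net bound gives $\card(P)\le\lceil 2^{7d+1}\varrho^d\varepsilon^{-1/r}\rceil$ directly. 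You threw away exactly the structure that makes the first case work.

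For the second and third regimes the gap is larger still: the paper does not use the sparse grid of Section~\ref{sec:dispersion} as an $\ell$-transversal set at all. For $M<2^r r!$ it relies on Lemma~\ref{zeros at 1/2} (a factor with $r$ zeros in $I_\delta=[1/2-\delta,1/2+\delta]$ has $\|g\|_\infty\le M(1+2\delta)^r/(2^r r!)<1$), which forces a pseudo-dimension bound (Lemma~\ref{harmless functions}): at most $d_0\asymp\ln\varepsilon^{-1}/\ln C_\delta^{-1}$ coordinates, \emph{independently of $d$}, can have $\ge r$ zeros near $1/2$. The detector is then $\bigcup_{\card J=d_0}\{\mathbf x : \mathbf x_J\in P_1,\ \mathbf x_{[d]\setminus J}\in P_2\}$, where $P_1$ is a low-dispersion set in $d_0$ dimensions (not $d$) and $P_2$ is a diagonal set of $(r-1)(d-d_0)+1$ collinear points exploiting Lemma~\ref{diagonal lemma}; the cardinality $\binom{d}{d_0}\card(P_1)\card(P_2)$ is $O(d^{d_0+1})$, hence quasi-polynomial. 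For $M\le r!$, Lemma~\ref{almost empty interval lemma} upgrades this to a \emph{dimension-independent} favourable interval and a diagonal cloud attached to a single $d$-dimensional low-dispersion set $P_0$, giving the $d^2\varepsilon^{-1/r}\ln(\varepsilon^{-1/r})$ bound. None of the pseudo-dimension reduction, the combinatorial union over $d_0$-subsets, or the diagonal-line mechanism appears in your proposal, and without them the precise thresholds $2^r r!$ and $r!$ do not emerge. In short: the starting lemma is right, but the reduction to $\ell$-transversality is a dead end, and the genuinely new combinatorial ideas driving cases two and three are missing.
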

We always choose $m$ as in \eqref{eq: n_chosen_for_err_eps}.
The point sets $P$ and the constants $c_i$ can be found in Section~\ref{algorithms section}.
In each of these ranges
we also give a lower bound on the complexity of the problem,
which is the reason for
us to call the resulting algorithms optimal.
In particular, we obtain the following tractability results~\cite{KR19}.

\begin{thm}[\cite{KR19}]
\label{tractability results}
 The problem $\mathcal{P}[\APP,F_{r,M}^d]$ of uniform approximation on $F_{r,M}^d$
 with deterministic standard information in the worst case setting
\begin{itemize}
 \item suffers from the curse of dimensionality
 iff $M\geq 2^r r!$.
 \item is quasi-polynomially tractable iff $M<2^r r!$.
 \item is polynomially tractable iff $M\leq r!$.
 \item is strongly polynomially tractable iff $M=0$ and $r=1$.
\end{itemize}
\end{thm}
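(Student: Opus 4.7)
The plan is to establish the four equivalences by matching upper and lower bounds. The upper bound directions follow essentially from Theorem~\ref{thm: thm_upp_bnd} together with a trivial observation, while the lower bound directions require fooling constructions; the main obstacle lies in the non-polynomial lower bound in the regime $r!<M<2^r r!$.

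For the upper bounds, the case $M=0$, $r=1$ reduces to constant functions, since $f_i'\equiv 0$ forces each $f_i$ to be constant on $[0,1]$, so $f$ itself is constant and a single function value recovers it exactly, giving strong polynomial tractability with exponent zero. For $M<2^r r!$, the second estimate of Theorem~\ref{thm: thm_upp_bnd} is already in the exact form of quasi-polynomial tractability. For $M\leq r!$, the third estimate gives $\cost\leq c_4\, d^2\varepsilon^{-1/r}\ln(\varepsilon^{-1/r})$; absorbing the logarithmic factor via $\ln t\leq C_\delta\, t^\delta$ for arbitrary $\delta>0$ turns this into $\cost\leq c\, d^2\varepsilon^{-(1+\delta)/r}$, which is polynomial tractability.

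For the lower bounds, the curse of dimensionality in the range $M\geq 2^r r!$ can be imported directly from \cite{NR16}, where it is established even for randomized algorithms. To rule out strong polynomial tractability whenever $(M,r)\neq(0,1)$, I plan to exploit the $d$ coordinate directions: fix a nontrivial $b\in W^r_\infty([0,1])$ with $|b|\leq 1$ and $\|b^{(r)}\|_\infty\leq M$ (take $b$ to be a polynomial of degree $\leq r-1$ in the case $M=0$, $r\geq 2$; any nontrivial smooth profile works otherwise), and consider the $d+1$ candidates $f^{(0)}\equiv 1$ and $f^{(k)}=1\otimes\cdots\otimes b\otimes\cdots\otimes 1$ with the factor $b$ in position $k$. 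These are pairwise $\varepsilon_0$-separated in the uniform norm for some absolute $\varepsilon_0>0$, so any algorithm achieving error $\varepsilon<\varepsilon_0/2$ must distinguish all of them; a standard information-theoretic counting argument then forces the cost to grow at least logarithmically in $d$, excluding any bound independent of $d$.

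The hardest step is the lower bound ``$M>r!$ implies not polynomially tractable''. Here I plan to construct, for each $d$ and for an appropriately $d$-dependent error tolerance $\varepsilon=\varepsilon(d)$, a family of rank-one tensors in $F^d_{r,M}$ that are pairwise $\varepsilon$-separated and whose cardinality grows faster than any polynomial in $d$. The univariate building blocks will be scaled polynomial or spline bumps whose $r^{\rm th}$ derivative saturates just above $r!$: since $r!$ is precisely the $r^{\rm th}$ derivative of the normalized monomial $x^r$, the threshold $M>r!$ is exactly what allows a nontrivial tiling of such bumps on $[0,1]$, whereas for $M\leq r!$ only a single bump of amplitude close to $1$ fits in the interval. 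Combining the tiling with a careful combinatorial indexing over the $d$ coordinates must be calibrated so that the resulting lower bound matches the quasi-polynomial upper bound $\exp(c_3(1+\ln\varepsilon^{-1})(1+\ln d))$ up to constants; in particular, the amplitude and width of the bumps have to be tuned jointly with $\varepsilon$ to avoid producing a spuriously strong exponential (curse-type) lower bound that would overshoot the known quasi-polynomial regime. This calibration is the delicate part of the argument.
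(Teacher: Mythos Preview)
Your upper bounds and the curse-of-dimensionality citation are fine and match the paper. The serious gap is in your ``not strongly polynomially tractable'' argument.

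Your $d+1$ candidates $f^{(0)}\equiv 1$ and $f^{(k)}=1\otimes\cdots\otimes b\otimes\cdots\otimes 1$ do \emph{not} force any lower bound on the number of function evaluations. A function evaluation returns a real number, not a bit, so cardinality-based counting does not apply. Concretely, pick $b$ strictly monotone (a small linear function works for $M>0$, or for $M=0,\,r\geq 2$ since polynomials of degree $\le r-1$ are allowed) and evaluate at a single point $\mathbf x$ with pairwise distinct coordinates. Then the values $f^{(0)}(\mathbf x)=1$ and $f^{(k)}(\mathbf x)=b(x_k)$ are all distinct, so one evaluation already identifies the input. Your family gives $\comp\ge 1$, nothing more.

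The paper's fooling argument is of a different type: for $r\geq 2$, given any $n\leq d$ sample points $\mathbf x_1,\dots,\mathbf x_n$, it builds a product $f=\bigotimes_{i=1}^d f_i$ where $f_i$ is a linear function vanishing at $(\mathbf x_i)_i$ for $i\le n$ and $f_i\equiv 1$ otherwise; then $f(\mathbf x_j)=0$ for every $j$ because the $j^{\rm th}$ factor kills it, while $\|f\|_\infty=1$. This gives $\comp(\varepsilon)>d$ for every $\varepsilon<1$. For $r=1$, $M>0$, linear factors are no longer free (their derivative must be $\le M$), and the paper instead uses the dispersion lower bound of \cite{AHR17}: any $\lfloor\log_2 d\rfloor$ points miss a box of the form $I_j\times I_\ell\times[0,1]^{d-2}$ with $I_j=[0,1/2)$, $I_\ell=(1/2,1]$, and one places piecewise-linear factors supported there. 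The key mechanism in both cases is producing a function that is \emph{identically zero at all sample points}, not merely a large family that must be ``distinguished''.

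For $M>r!$ your intuition is in the right direction but your description (``tiling of bumps on $[0,1]$'') suggests packing several bumps along one coordinate. The paper instead uses a single split point $x_0=(r!/M)^{1/r}\in(0,1)$ per coordinate, with $g=\tfrac{M}{r!}|x-x_0|^r\mathbbm 1_{[0,x_0]}$ (norm $1$) and $h=\tfrac{M}{r!}|x-x_0|^r\mathbbm 1_{[x_0,1]}$ (norm $h(1)\in(0,1)$). For each $J\subset[d]$ with $|J|=k$, the tensor with $g$ in positions $J$ and $h$ elsewhere has norm $h(1)^{d-k}$; choosing $k=k(\varepsilon,d)\approx\ln\varepsilon^{-1}/\ln h(1)^{-1}$ makes all $\binom{d}{k}$ such functions have norm $>\varepsilon$ and pairwise disjoint support, giving $\comp(\varepsilon)\ge\binom{d}{k(\varepsilon,d)}$, which kills polynomial tractability since $k(\varepsilon,d)\to\infty$ as $\varepsilon\to 0$. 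The calibration you flag as ``delicate'' is just this choice of $k$.
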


We also show that the first three statements of Theorem~\ref{tractability results} 
do not change for
randomized algorithms.
In this sense, randomization does not help for the problem
of recovering high-dimensional rank one tensors.
However, we do not know whether the last statement has to be modified
for randomized algorithms.

Before we turn to the proofs,
let us introduce some further notation. 
For any $k\in\IN$ 
we write $[k]=\{1,\hdots,k\}$.
If $x_i\in\IR$ for each $i\in J$ with some finite index set $J$,
we set $\mathbf x_J=(x_i)_{i\in J}$.
If $I_i$ is an interval for each $i\in J$, then $I_J$ 
denotes the Cartesian product of these intervals.
The term \emph{box} will always
refer to a product of nonempty subintervals of $[0,1]$.
If we are given functions $f_i:I_i\to \IR$ for each $i\in J$,
their tensor product is denoted by $f_J:I_J\to \IR$.
We recall that the \emph{dispersion}
of a finite subset $P$ of $[0,1]^d$ is the minimal
number $\eta>0$ such that 
$P$ has non-empty intersection with
every box of volume greater than~$\eta$,
see also Example~\ref{ex:dispersion}.
Throughout this section, we always assume that $r,d\in\IN$,
$\varepsilon\in(0,1)$ and $M>0$.

\subsection{Algorithms}
\label{algorithms section}

This section contains the proof of Theorem~\ref{thm: thm_upp_bnd}.
Here we always assume that $M>0$.
We start with the observation 
that the construction of an $\varepsilon$-detector
is sufficient to achieve the worst case error $\varepsilon$
with the algorithm $A_{P,m}$.
Recall that a point set $P$ in $[0,1]^d$ is called
an \mbox{$\varepsilon$-detector} for $F_{r,M}^d$,
if it contains a nonzero of any function
$f\in F_{r,M}^d$ with $\norm{f}_\infty > \varepsilon$.
Any such function is of the following form:
\begin{equation}
\label{target function}
\begin{split}
 f= \bigotimes\limits_{i=1}^d f_i,
 \quad &\text{where} \quad f_i:[0,1]\to[-1,1]
 \quad \text{with} \quad \Vert f_i^{(r)}\Vert_\infty \leq M \\
 &\text{and} \quad \norm{f}_\infty = \prod\limits_{i=1}^d \norm{f_i}_\infty > \varepsilon.
\end{split}
\end{equation}
Note that this representation of $f\in F_{r,M}^d$
is usually not unique, since
we may rescale the factors $f_i$ without changing the product.

\begin{lemma}
\label{detector lemma}
 Let $r\in\IN$, $d\in\IN$ and $M>0$.
 If $P$ is an $\varepsilon$-detector for $F_{r,M}^d$
 and $m$ is chosen as in (\ref{eq: n_chosen_for_err_eps}), 
 then
 Algorithm~\ref{generic algorithm} satisfies
 \begin{equation*}
  \err(A_{P,m},F_{r,M}^d) \leq \varepsilon \quad\text{and}\quad 
  \cost(A_{P,m},F_{r,M}^d) \leq \card(P) + m.
 \end{equation*}
\end{lemma}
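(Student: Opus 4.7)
The plan is a straightforward case analysis on whether the target function $f\in F_{r,M}^d$ vanishes on the detector set $P$ or not, bounding the error and cost separately in each case.

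First I would fix an arbitrary $f\in F_{r,M}^d$ and split into two disjoint cases based on what step~2 of Algorithm~\ref{generic algorithm} does. \emph{Case 1: $f\vert_P\neq 0$.} Then step~1 identifies some $\mathbf z\in P$ with $f(\mathbf z)\neq 0$, and the algorithm returns $I_m(\mathbf z,f)$. Property \eqref{eq: known_z_star}, together with the choice of $m$ from \eqref{eq: n_chosen_for_err_eps}, gives $\Vert I_m(\mathbf z,f)-f\Vert_\infty\leq \varepsilon$ directly. \emph{Case 2: $f\vert_P=0$.} The algorithm returns the zero function, so the error equals $\Vert f\Vert_\infty$. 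Since $P$ is an $\varepsilon$-detector, by contrapositive of the detector definition, $\Vert f\Vert_\infty>\varepsilon$ would force $P$ to contain a nonzero of $f$, contradicting $f\vert_P=0$. Hence $\Vert f\Vert_\infty\leq \varepsilon$.

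For the cost bound I would simply count function evaluations. Step~1 evaluates $f$ at each point of $P$, contributing at most $\card(P)$ values (in Case~2 the algorithm may terminate early, but the worst case still uses all of $P$). Step~2 either does nothing (Case~2) or invokes $I_m(\mathbf z,f)$, which by its definition uses at most $m$ further function values. Therefore $\cost(A_{P,m},F_{r,M}^d)\leq \card(P)+m$ in both cases.

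Combining the two cases, $\sup_{f\in F_{r,M}^d}\Vert A_{P,m}(f)-f\Vert_\infty\leq \varepsilon$, which is the required worst-case error bound. There is no real obstacle here: the proof is essentially a bookkeeping argument that the detector property handles exactly the functions not covered by $I_m$, and vice versa. The only subtlety worth a sentence is that the same evaluations in step~1 need not be re-requested in step~2, so the additive bound $\card(P)+m$ is a genuine (and usually loose) upper bound; no care is needed beyond noting that $I_m(\mathbf z,f)$ itself costs at most $m$ values by construction.
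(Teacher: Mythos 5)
Your proof is correct and matches the paper's argument: the same case split on whether $f$ vanishes on $P$, with \eqref{eq: known_z_star} handling the nonzero case and the detector property handling the vanishing case, plus the obvious additive cost count. You spell out a few details the paper leaves implicit, but there is no difference in substance.
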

\begin{proof}
Let $f\in F_{r,M}^d$.
If $P$ contains a nonzero of $f$,
Algorithm~\ref{generic algorithm}
returns an \mbox{$\varepsilon$-approximation} of $f$
due to relation (\ref{eq: known_z_star}).
If not, the output is zero.
But since $P$ is a detector, we necessarily have
$\Vert f\Vert_\infty\leq \varepsilon$ and zero
is an $\varepsilon$-approximation of $f$, as well.
The second statement is obvious.
\end{proof}

Furthermore, we will use the following formula
for polynomial interpolation.

\begin{lemma}
 \label{interpolation lemma}
 Let $a<b$, $r\in\IN$ and $g\in W_\infty^r([a,b])$.
 Let $x_1,\hdots,x_r\in[a,b]$ be distinct
 and $p$ be the unique polynomial with degree less than $r$
 such that $p(x_i)=g(x_i)$ for all $i\in[r]$.
 For every $x\in[a,b]$, 
 there exist $\xi_1,\xi_2\in[a,b]$ such that
 $$
 g(x)-p(x) =
 \frac{1}{r!} \cdot
 \frac{g^{(r-1)}\brackets{\xi_2}-g^{(r-1)}\brackets{\xi_1}}{\xi_2-\xi_1} \cdot
 \prod\limits_{i=1}^r \brackets{x-x_i} .
 $$
\end{lemma}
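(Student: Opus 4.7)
The plan is to adapt the classical interpolation error formula to the Sobolev setting by replacing the usual application of the mean value theorem on $g^{(r-1)}$ by a direct divided-difference argument.

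First I would dispose of the degenerate case $x\in\{x_1,\dots,x_r\}$: here both sides of the asserted identity vanish (take any two distinct $\xi_1<\xi_2$), so nothing remains to prove. Assume now $x\notin\{x_1,\dots,x_r\}$, and define, on $[a,b]$, the auxiliary function
$$
 h(t) = g(t) - p(t) - C\prod_{i=1}^r (t-x_i),
 \qquad
 C = \frac{g(x)-p(x)}{\prod_{i=1}^r (x-x_i)} ,
$$
so that $h$ vanishes at the $r+1$ distinct points $x_1,\dots,x_r,x$ of $[a,b]$. Because $g\in W_\infty^r([a,b])$, the derivative $g^{(r-1)}$ is Lipschitz, hence continuous, and consequently $g\in \C^{r-1}([a,b])$. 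Since $p$ and the product $\prod_i(t-x_i)$ are polynomials, $h\in \C^{r-1}([a,b])$.

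Next I would iterate Rolle's theorem $r-1$ times: each differentiation reduces the guaranteed number of zeros in $[a,b]$ by one, so $h^{(r-1)}$ has at least two distinct zeros $\xi_1,\xi_2\in[a,b]$ with $\xi_1\neq\xi_2$. Since $p$ has degree less than $r$, $p^{(r-1)}$ is a constant; and since $\prod_{i=1}^r(t-x_i)=t^r+\text{(lower order)}$, its $(r-1)$-th derivative equals $r!\,t+c$ for some constant $c$. Therefore
$$
 h^{(r-1)}(t) = g^{(r-1)}(t) - p^{(r-1)} - C\,(r!\,t + c).
$$
Evaluating at $t=\xi_1$ and $t=\xi_2$ and subtracting cancels the constants, yielding
$$
 0 = g^{(r-1)}(\xi_2) - g^{(r-1)}(\xi_1) - C\,r!\,(\xi_2-\xi_1),
$$
from which
$$
 C = \frac{1}{r!}\cdot\frac{g^{(r-1)}(\xi_2)-g^{(r-1)}(\xi_1)}{\xi_2-\xi_1}.
$$
Inserting this value of $C$ into the definition of $h$ and using $h(x)=0$ gives the claimed identity.

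The only step that requires a little care is verifying that Rolle's theorem is applicable $r-1$ times, which is the place where the Sobolev assumption $g\in W_\infty^r$ enters: it is precisely this assumption that forces $g^{(r-1)}$ to be (Lipschitz) continuous, so that the lower-order derivatives $h,h',\dots,h^{(r-2)}$ are genuinely continuously differentiable on $[a,b]$ and the classical Rolle argument applies. No higher regularity of $g^{(r)}$ is needed, because the final formula only involves the divided difference of $g^{(r-1)}$, which is automatically well-defined in $W_\infty^r$.
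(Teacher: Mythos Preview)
Your proof is correct and follows essentially the same route as the paper: define the auxiliary function $h=g-p-Cw$ (the paper calls it $\phi$ with constant $\lambda$), use the $r+1$ zeros and $(r-1)$ applications of Rolle's theorem to find two distinct zeros $\xi_1,\xi_2$ of $h^{(r-1)}$, and extract $C$. The only cosmetic difference is that the paper reaches the identity for $C$ by integrating $\phi^{(r)}$ from $\xi_1$ to $\xi_2$ (using absolute continuity of $g^{(r-1)}$), whereas you subtract the two values of $h^{(r-1)}$ directly; your version is arguably slightly cleaner since it never mentions $g^{(r)}$.
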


Lemma~\ref{interpolation lemma} is well known for $g\in\mathcal{C}^r([a,b])$.
In this case, the second fraction can be 
replaced by $g^{(r)}(\xi)$ for some $\xi\in [a,b]$.
We refer to \cite[Theorem~2, Section~6.1]{CK91}.
Under the more general assumption that $g\in W_\infty^r([a,b])$,
we have to modify the proof of the mentioned theorem.

\begin{proof}
 If $x$ coincides with one of the nodes, the statement is trivial.
 Hence, let $x$ be distinct from all the nodes.
 We consider
 $$
 w: [a,b]\to \IR, \quad w(y)=\prod\limits_{i=1}^r \brackets{y-x_i}
 $$
 and set
 $$
 \lambda = \frac{g(x)-p(x)}{w(x)}.
 $$
 The function $\phi=g-p-\lambda w$
 vanishes at the points $x_1,\hdots,x_r$ and $x$.
 Since $g$ and $\phi$ are elements of $W_\infty^r([a,b])$,
 their $(r-1)^{\text{st}}$ derivatives are absolutely continuous.
 If we apply Rolle's Theorem $(r-1)$ times,
 we obtain that $\phi^{(r-1)}$ has at least 2 distinct zeros $\xi_1$
 and $\xi_2$ in $[a,b]$ and hence
 $$
 0=\int_{\xi_1}^{\xi_2} \phi^{(r)}(y)~\d y
 =\int_{\xi_1}^{\xi_2} g^{(r)}(y)-\lambda r! ~\d y
 = g^{(r-1)}\brackets{\xi_2}-g^{(r-1)}\brackets{\xi_1}
 - \lambda r! \brackets{\xi_2-\xi_1}.
 $$
 This is the stated identity in disguise.
\end{proof}
If $g\in W_\infty^r([0,1])$ has $r$ distinct zeros $x_1,\dots,x_r\in [0,1]$, 
and $x$ is a maximum point of $\abs{g}$, we get
\begin{equation} \label{eq: polyn_interpol}
 \norm{g}_\infty \leq \frac{\Vert g^{(r)}\Vert_\infty}{r!} \prod_{i=1}^r \abs{x-x_i}.
\end{equation}
This follows from Lemma~\ref{interpolation lemma} since the unique polynomial
$p$ with degree less than $r$ and $p(x_i)=g(x_i)$ for $i\in [r]$ is the zero polynomial
and
\[
\abs{g^{(r-1)}\brackets{\xi_2}-g^{(r-1)}\brackets{\xi_1}} =
\abs{\int_{\xi_1}^{\xi_2} g^{(r)}(y)~\d y}
\leq \Vert g^{(r)}\Vert_\infty\cdot \abs{\xi_2-\xi_1}.
\]

The rest of this section is devoted to
the construction of small $\varepsilon$-detectors for $F_{r,M}^d$.
Thanks to Lemma~\ref{detector lemma},
this is sufficient to prove Theorem~\ref{thm: thm_upp_bnd}.
We will use three different strategies for
three different ranges of the parameter $M$.

\subsubsection{Detectors for large derivatives}

In this section, the smoothness parameter $M$ can be arbitrarily large.
It is shown in \cite{NR16} that
the cost of any algorithm with worst case error smaller than $1$
is at least $2^d$ if $M\geq 2^r r!$.
In particular, the cardinality of 
any \mbox{$\varepsilon$-detector} must grow exponentially with the dimension.
Yet, it does not get any worse:
We construct an \mbox{$\varepsilon$-detector} whose cardinality
``only'' grows exponentially with the dimension
but not super-exponentially.
We use the following lemma.

\begin{lemma}
\label{empty interval lemma}
 For each $g\in W_\infty^r([0,1])$ with $\Vert g^{(r)}\Vert_\infty \leq M$
 there is a subinterval of $[0,1]$ with length 
 \begin{equation*}
  L(g)=\min\set{\frac{1}{r},\brackets{\frac{\norm{g}_\infty}{M}}^{1/r}}
 \end{equation*}
 that does not contain any zero of $g$.
\end{lemma}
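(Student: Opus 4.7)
The plan is to argue by contradiction using the polynomial interpolation bound \eqref{eq: polyn_interpol}. If $\|g\|_\infty = 0$ the statement is vacuous, so assume $\|g\|_\infty > 0$, set $L := L(g) > 0$, and fix $x^* \in [0,1]$ with $|g(x^*)| = \|g\|_\infty$. Suppose for contradiction that every subinterval of $[0,1]$ of length $L$ contains a zero of $g$ in its interior. Then every gap between consecutive zeros of $g$ in $[0,1]$, including the two boundary gaps $[0,z_{\min}]$ and $[z_{\max},1]$, has length strictly less than $L$. A simple pigeonhole argument on the $m+1$ gaps summing to $1$ shows that the number $m$ of distinct zeros must satisfy $m \geq r$, since $L \leq 1/r$ would otherwise force some gap to have length at least $1/r \geq L$.

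The central step is to select $r$ distinct zeros $x_1,\ldots,x_r$ clustered near $x^*$ with a controlled product. Because $rL \leq 1$, I can place $r$ pairwise almost-disjoint closed subintervals $I_1,\ldots,I_r \subset [0,1]$, each of length $L$, as follows: in the interior case $x^* \in [rL/2,\,1-rL/2]$, I center the $I_j$ symmetrically around $x^*$; in the boundary case, say $x^* < rL/2$, I line them up as $[0,L],[L,2L],\ldots,[(r-1)L,rL]$. Each $I_j$ contains a zero $x_j$ of $g$ in its interior by the strict gap bound, and disjointness ensures the $x_j$'s are distinct. A direct distance calculation then yields, in both configurations, the strict bound
$$\prod_{j=1}^{r} |x^*-x_j| \;<\; r!\,L^r,$$
the boundary case giving $|x^*-x_j|<jL$ and the interior case giving the even tighter estimate $|x^*-x_j|\leq \lceil j/2\rceil L$. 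Applying \eqref{eq: polyn_interpol} with $x=x^*$ and these $r$ zeros gives
$$\|g\|_\infty \;\leq\; \frac{M}{r!}\prod_{j=1}^{r} |x^*-x_j| \;<\; M\,L^r,$$
hence $L > (\|g\|_\infty/M)^{1/r}$, which together with $L\leq 1/r$ contradicts the definition of $L=L(g)$.

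The main obstacle is the tile placement and the distance computation in step two, specifically guaranteeing a uniform strict bound $\prod|x^*-x_j| < r!\,L^r$ across interior and boundary configurations. The strictness is what delivers a genuine contradiction rather than the borderline equality case, and it rests on two facts: first, that the zeros must sit strictly inside each $I_j$ (a consequence of the assumed strict gap bound $<L$, which itself follows from asking that any length-$L$ subinterval contains a zero in its interior), and second, that the extremal configuration with zeros at the tile endpoints at distances $L,2L,\ldots,rL$ from $x^*$ is excluded by this strictness. Handling the boundary case requires a separate but essentially identical distance computation, and is where the hypothesis $L \leq 1/r$ is used most crucially.
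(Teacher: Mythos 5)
Your proof is correct and follows essentially the same strategy as the paper: tile a neighborhood of the maximizer $x^*$ with $r$ length-$L$ subintervals, place a zero in each, bound the distances $|x^*-x_j|$, apply the polynomial interpolation estimate \eqref{eq: polyn_interpol}, and derive a contradiction with the definition of $L(g)$. The paper streamlines the same idea by taking $r$ \emph{open}, disjoint subintervals of a single interval of length $rL$ containing $x^*$ and ordering them by distance, which gives $|x^*-x_i| < iL$ at once; this avoids both the detour through gaps between consecutive zeros (which is delicate if the zero set of $g$ is not finite or discrete, and is anyway not needed since the negated hypothesis applied to the open interior of each tile already produces an interior zero) and the separate interior/boundary case split.
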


\begin{proof}
 The function $\abs{g}$ attains its maximum, say for $x\in [0,1]$.
 We choose an interval $I\subset [0,1]$ of length $rL(g)$ that contains $x$.
 There are $r$ open and disjoint 
 subintervals of $I$ with length $L(g)$.
 We label these intervals $I_1,\dots,I_r$ such that the distance 
 of $x$ and $I_i$ is increasing with $i$.
 Assume that every interval $I_i$ contains a zero $x_i$ of $g$.
 Then we have $\abs{x-x_i}< iL(g)$ for all $i\in [r]$
 and \eqref{eq: polyn_interpol} leads to
 \[
  \norm{g}_\infty
  \leq \frac{M}{r!} \prod_{i=1}^r \abs{x-x_i} 
  < M L(g)^r
  \leq \norm{g}_\infty.
 \]
 This is a contradiction and the assertion is proven.
\end{proof}

If, in addition, the uniform norm of $g$ is bounded by $1$, we have
\begin{equation*}
 L(g)\geq \varrho^{-1}\norm{g}_\infty^{1/r}  \quad\text{for}\quad
 \varrho=\max\set{r,M^{1/r}}.
\end{equation*}
Hence, for every $f$ satisfying \eqref{target function}
there is a box $B$ in $[0,1]^{d}$ with volume
\begin{align*}
 &\prod_{i\in [d]} L\brackets{f_i}
 \geq \varrho^{-d} \prod_{i\in [d]} \norm{f_i}_\infty^{1/r}
 = \varrho^{-d} \norm{f}_\infty^{1/r}
 > \varrho^{-d} \varepsilon^{1/r}
\end{align*}
such that $f$ does not vanish anywhere on $B$.
Hence, any point set $P$ in $[0,1]^{d}$ with dispersion
$\varrho^{-d} \varepsilon^{1/r}$ or less 
is an $\varepsilon$-detector for $F_{r,M}^d$.
We know from the estimate of Larcher, see \cite{AHR17}, 
that we can choose $P$ as a $(t,s,d)$-net with cardinality
\begin{equation*}
 \card(P) = \left\lceil 2^{7d+1} \varrho^d \varepsilon^{-1/r}\right\rceil.
\end{equation*}
By Lemma~\ref{detector lemma},
the resulting algorithm achieves the worst case error $\varepsilon$ with 
the cost
\begin{equation*}
 \cost\brackets{A_{P,m},F_{r,M}^d}\leq
 \left\lceil 2^{7d+1} \varrho^d \varepsilon^{-1/r} \right\rceil
 + C_{r,M}\, d^{1+1/r} \varepsilon^{-1/r}.
\end{equation*}
This proves the first statement of Theorem~\ref{thm: thm_upp_bnd} 
with $c_1=2^8\rho+C_{r,M}$.
Note that the cost of this algorithm
has the minimal order of growth with respect to $\varepsilon$.
It grows like $\varepsilon^{-1/r}$ if $d$ is fixed
and $\varepsilon$ tends to zero.

\subsubsection{Detectors for moderately large derivatives}

In this section, we assume that $M < 2^r r!$.
In this case, we construct detectors $P$
with a cardinality that only grows polynomially with $d$
for any fixed $\varepsilon$.
The construction of $P$
is based on the observation that for any
function $f$ from $\eqref{target function}$
only some of the factors $f_i$
can have more than $(r-1)$ zeros close to $1/2$.
This is 
an essential difference
to the case $M\in [2^r r!,\infty)$,
where all factors $f_i$ may have infinitely many zeros
in any neighborhood of $1/2$.
We are going to specify this statement in Lemma~\ref{harmless functions},
but first we need the following observation.
For $\delta\in (0,1/2]$, we consider the interval
$I_\delta := [1/2-\delta,1/2+\delta]$.

\begin{lemma}
\label{zeros at 1/2}
 Let $g\in W_\infty^r([0,1])$ with $\Vert g^{(r)}\Vert_\infty \leq M$.
  Assume that $g$ has $r$ distinct zeros in
  $I_\delta$. Then
  \[
  \norm{g}_\infty\leq C_\delta := \frac{M(1+2\delta)^r}{2^r r!}.
  \]
\end{lemma}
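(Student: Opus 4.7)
The plan is to apply the polynomial interpolation bound \eqref{eq: polyn_interpol} directly. That inequality is perfectly tailored to this situation: it says that if $g\in W_\infty^r([0,1])$ has $r$ distinct zeros $x_1,\dots,x_r\in[0,1]$, and $x\in[0,1]$ is any maximum point of $\abs{g}$, then
\[
\norm{g}_\infty \,\leq\, \frac{\Vert g^{(r)}\Vert_\infty}{r!}\prod_{i=1}^r\abs{x-x_i}
\,\leq\, \frac{M}{r!}\prod_{i=1}^r\abs{x-x_i}.
\]
So the only thing that remains is to bound each factor $\abs{x-x_i}$ using the assumption that all zeros lie in $I_\delta=[1/2-\delta,1/2+\delta]$.

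First I would observe that since $x\in[0,1]$, we have $\abs{x-1/2}\leq 1/2$, and since $x_i\in I_\delta$, we have $\abs{x_i-1/2}\leq\delta$. The triangle inequality then yields
\[
\abs{x-x_i}\,\leq\,\abs{x-1/2}+\abs{1/2-x_i}\,\leq\,\tfrac{1}{2}+\delta\,=\,\tfrac{1+2\delta}{2}
\]
for every $i\in[r]$. Taking the product over $i$ gives $\prod_{i=1}^r\abs{x-x_i}\leq\bigl((1+2\delta)/2\bigr)^r$.

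Plugging this into the interpolation bound, I obtain
\[
\norm{g}_\infty \,\leq\, \frac{M}{r!}\cdot\frac{(1+2\delta)^r}{2^r}\,=\,C_\delta,
\]
which is exactly the claimed estimate. There is no real obstacle here; the statement is a direct geometric consequence of the already-established bound \eqref{eq: polyn_interpol}, and the content of the lemma is merely the quantitative observation that clustering the zeros inside a short interval $I_\delta$ around $1/2$ forces the uniform norm of $g$ to be small by a factor $((1+2\delta)/2)^r$ compared to the worst-case polynomial interpolation estimate on $[0,1]$.
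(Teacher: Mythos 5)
Your proof is correct and is essentially identical to the paper's own argument: both apply the polynomial interpolation bound \eqref{eq: polyn_interpol} at a maximum point $x$ of $\abs{g}$ and then bound each factor $\abs{x-x_i}$ by $1/2+\delta$ using that $x\in[0,1]$ and $x_i\in I_\delta$. The only difference is that you spell out the triangle inequality explicitly, which the paper leaves implicit.
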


\begin{proof}
 Let $x_1,\dots,x_r$ be those zeros.
 The function $\abs{g}$ attains its maximum, say for $x\in [0,1]$.
 By \eqref{eq: polyn_interpol} we have
 \[
  \norm{g}_\infty
  \leq \frac{\Vert g^{(r)}\Vert_\infty}{r!} \prod_{i=1}^r \abs{x-x_i}.
 \]
 This yields the desired inequality since $\abs{x-x_i}\leq 1/2+\delta$ for each $i\in [r]$.
\end{proof}

Since $M < 2^r r!$, we can choose $\delta\in(0,1/2]$ such that $C_\delta <1$.
We define the pseudo-dimension
$d_0$ as the largest number in $[d]\cup\{0\}$ that satisfies
$C_\delta^{d_0} >\varepsilon$, that is,
\begin{equation*}
 d_0 := \min\set{\left\lceil \frac{\ln \varepsilon}{\ln C_\delta}\right\rceil -1, d}.
\end{equation*}
Obviously, the pseudo-dimension is bounded above independently of $d$.
We can now specify the statement from the beginning of this section.

\begin{lemma}
\label{harmless functions}
 Let $f$ be given as in (\ref{target function}).
 Then there are at most $d_0$ coordinates $i\in [d]$
 such that $f_i$ has more than $(r-1)$ zeros in $I_\delta$.
\end{lemma}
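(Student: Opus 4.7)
The plan is to argue by contradiction via a direct counting estimate based on Lemma~\ref{zeros at 1/2}. First I would introduce the set
\[
S = \{i \in [d] : f_i \text{ has more than } (r-1) \text{ zeros in } I_\delta\},
\]
and observe that for each $i\in S$, Lemma~\ref{zeros at 1/2} gives $\|f_i\|_\infty \le C_\delta$, while for each $i\notin S$ we only know $\|f_i\|_\infty \le 1$. Combining these bounds with the tensor product structure yields
\[
\|f\|_\infty = \prod_{i=1}^d \|f_i\|_\infty \le C_\delta^{|S|}.
\]

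Next I would suppose for contradiction that $|S| > d_0$. Since $C_\delta < 1$ (which is where the assumption $M < 2^r r!$ is used, together with the choice of $\delta$), both $\ln C_\delta$ and $\ln \varepsilon$ are negative, so $\ln \varepsilon/\ln C_\delta > 0$ and the definition of $d_0$ gives two cases. If $d_0 = d$ then $|S| > d$ is impossible, so we must be in the case $d_0 = \lceil \ln \varepsilon/\ln C_\delta\rceil - 1$, which yields
\[
|S| \ge d_0 + 1 = \left\lceil \frac{\ln \varepsilon}{\ln C_\delta}\right\rceil \ge \frac{\ln \varepsilon}{\ln C_\delta}.
\]
Multiplying by the negative number $\ln C_\delta$ reverses the inequality and gives $|S|\ln C_\delta \le \ln \varepsilon$, i.e.\ $C_\delta^{|S|} \le \varepsilon$. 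Combined with the displayed product bound, this forces $\|f\|_\infty \le \varepsilon$, contradicting \eqref{target function}.

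There is no real obstacle here: everything reduces to Lemma~\ref{zeros at 1/2} and the logarithmic manipulation of the definition of $d_0$. The one point that requires care is making sure the definition of $d_0$ (with its ceiling and the $-1$) is unpacked correctly, and that the inequality $C_\delta < 1$ actually holds for the chosen $\delta$, which I would note explicitly at the start of the argument.
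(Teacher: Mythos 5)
Your proof is correct and follows essentially the same route as the paper: bound $\|f\|_\infty$ by $C_\delta^{|S|}$ via Lemma~\ref{zeros at 1/2} and the product structure, then use the definition of $d_0$ to conclude $|S|\le d_0$. The paper's version is slightly more compact because it invokes the verbal characterization of $d_0$ (``the largest number in $[d]\cup\{0\}$ with $C_\delta^{d_0}>\varepsilon$'') to read off the conclusion directly from $\varepsilon < \|f\|_\infty \le C_\delta^{k}$, whereas you unpack the explicit formula $d_0=\min\{\lceil \ln\varepsilon/\ln C_\delta\rceil-1,d\}$ and carry out the logarithm bookkeeping; both are equivalent.
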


\begin{proof}
 Let $k$ be the number of coordinates $i\in[d]$
 for which $f_i$ has more than $(r-1)$ zeros in $I_\delta$.
 Lemma~\ref{zeros at 1/2} yields that $\varepsilon < \Vert f\Vert_\infty \leq C_\delta^k$.
 The maximality of $d_0$ yields that $k\leq d_0$.
\end{proof}

This means that there is a subset $J^*$ of $[d]$ with cardinality $d_0$
such that $f_i$ has at most $(r-1)$ zeros in $I_\delta$ 
for all $i\in [d]\setminus J^*$.
Let us suppose for the moment that we know this set $J^*$.
Then we can find a nonzero of $f$ by solving the following tasks:
\begin{enumerate}
 \item Find a nonzero of $f_{J^*}$.
 \inlineitem Find a nonzero of $f_{[d]\setminus J^*}$.
\end{enumerate}

We can easily solve the first task
since this problem is only $d_0$-dimensional.
By Lemma~\ref{empty interval lemma}, 
there is a box $B$ in $[0,1]^{d_0}$ with volume
\begin{align*}
 &\prod_{i\in J^*} L\brackets{f_i}
 \geq \prod_{i\in J^*} \varrho^{-1} \norm{f_i}_\infty^{1/r}
 \geq \varrho^{-d_0} \prod_{i\in [d]} \norm{f_i}_\infty^{1/r}
 = \varrho^{-d_0} \norm{f}_\infty^{1/r}
 > \varrho^{-d_0} \varepsilon^{1/r}
\end{align*}
such that $f_{J^*}$ does not vanish on $B$.
Hence, any point set $P_1$ in $[0,1]^{d_0}$ with dispersion
$\varrho^{-d_0} \varepsilon^{1/r}$ or less contains a nonzero of $f_{J^*}$.
Again by the result of Larcher, see \cite{AHR17}, 
we know that we can choose $P_1$ as a $(t,s,d)$-net
of cardinality $2^{7d_0+1} \varrho^{d_0} \varepsilon^{-1/r}$.

We can also cope with the second task
since $f_i$ has at most $(r-1)$ zeros in $I_\delta$
for all $i\in [d]\setminus J^*$.
We use the following observation.

\begin{lemma}
\label{diagonal lemma}
 Let $J$ be a subset of $[d]$ with $\ell$ elements
 and, for all $i\in J$, let $f_i$ be a function with at most
 $k$ zeros on some interval $I_i$.
 Then every set in $I_J$ with $(\ell k+1)$ elements
 that are pairwise distinct in every coordinate
 contains a nonzero of~$f_J$.
\end{lemma}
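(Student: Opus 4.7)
The plan is to argue by a pigeonhole-style counting on the coordinate-projections of the given point set. Recall that $f_J(\mathbf{x}) = \prod_{i\in J} f_i(x_i)$, so a point $\mathbf{x}\in I_J$ is a zero of $f_J$ if and only if there is at least one coordinate $i\in J$ with $f_i(x_i)=0$. I will bound the number of points in the given set that can satisfy this condition and show that it is strictly less than $\ell k + 1$.

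First, I would fix a single coordinate $i\in J$. By hypothesis, the projections of the $\ell k + 1$ points onto their $i$-th coordinates are pairwise distinct elements of $I_i$. Since $f_i$ has at most $k$ zeros on $I_i$, at most $k$ of these distinct values coincide with a zero of $f_i$. In other words, for each fixed $i\in J$, the number of points $\mathbf{x}$ in the set with $f_i(x_i)=0$ is at most $k$.

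Next, I would take the union over all $i\in J$ and apply the subadditivity of cardinality:
\[
\card\bigl\{\mathbf{x}\text{ in the set} : f_J(\mathbf{x})=0\bigr\}
\;\leq\; \sum_{i\in J} \card\bigl\{\mathbf{x}\text{ in the set} : f_i(x_i)=0\bigr\}
\;\leq\; \ell k.
\]
Since the set contains $\ell k + 1$ points, there must be at least one point of the set for which $f_i(x_i)\neq 0$ for every $i\in J$; this point is a nonzero of $f_J$, which is what we wanted. There is no real obstacle here; the proof is a direct bookkeeping argument and fits into a few lines, with the only slightly subtle ingredient being the observation that pairwise distinctness in every coordinate is what prevents a single zero of some $f_i$ from being charged multiple times.
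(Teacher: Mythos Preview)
Your proof is correct and is essentially the same pigeonhole argument as the paper's, just phrased as a direct count rather than by contradiction: the paper assumes $f_J$ vanishes on all of $P$, writes $P=\bigcup_{i\in J}P_i$ with $P_i=\{\mathbf x\in P: f_i(x_i)=0\}$, and derives that some $P_i$ has more than $k$ elements, contradicting that the $i$-th coordinates in $P_i$ are distinct zeros of $f_i$. Both arguments hinge on exactly the observation you highlight, that pairwise distinctness in each coordinate caps $\card(P_i)$ at $k$.
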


\begin{proof}
 Let $P$ be a set in $I_J$ with $(\ell k+1)$ elements
 that are pairwise distinct in every coordinate 
 and suppose that $f_J$ vanishes everywhere on $P$.
 For each $i\in J$ let
 $P_i=\set{\mathbf x_J\in P \mid f_i(x_i)=0}$.
 Since $f_J(\mathbf x_J)=0$ implies that there is some $i\in J$ with $f_i(x_i)=0$, we have
 $P=\bigcup_{i\in J} P_i$.
 This can only be true, if one of the sets $P_i$ has more than $k$ elements.
 But since $x_i$ is different for every $\mathbf x_J\in P_i$,
 this means that the corresponding function $f_i$ has more than $k$ zeros, a contradiction.
\end{proof}
\noindent
Applying this lemma
for the functions $f_i$ in~\eqref{target function},
for the index set $J=[d]\setminus J^*$ and
for $k=r-1$, we obtain that the diagonal set
\begin{equation*}
 P_2=\set{\brackets{\frac{1}{2} -\delta + \frac{2\delta j}{(r-1)(d-d_0)}}\cdot \mathbf 1
 \mid j\in\IN_0 \text{ with } j\leq (r-1)(d-d_0)}
\end{equation*}
in $[0,1]^{d-d_0}$ contains a nonzero of $f_{[d]\setminus J^*}$.
Together, this yields that there must be 
at least one nonzero of $f$ in the set
$$
 \set{\mathbf x\in [0,1]^d \mid \mathbf x_{J^*}\in P_1,\, 
 \mathbf x_{[d]\setminus J^*}\in P_2 }.
$$

This would solve our problem if we knew the set $J^*$.
Since we do not know this set,
we simply try all sets $J\subset [d]$ of cardinality $d_0$.
This is OK since number of such sets only depends polynomially on $d$.
Altogether, we obtain the $\varepsilon$-detector
\begin{equation*}
P=\bigcup_{J\subset [d]:\, \card(J)=d_0}
 \set{\mathbf x\in [0,1]^d \mid \mathbf x_J\in P_1,\, \mathbf x_{[d]\setminus J}\in P_2 }.
\end{equation*}
In fact, we have seen that for any $f$ as in~\eqref{target function}
there must be some $J^*\subset [d]$ with cardinality $d_0$,
a nonzero $\mathbf y\in P_1$ of $f_{J^*}$
and a nonzero $\mathbf z\in P_2$ of $f_{[d]\setminus J^*}$.
The point $\mathbf x\in [0,1]^d$ with $\mathbf x_{J^*}=\mathbf y$ and 
$\mathbf x_{[d]\setminus J^*}=\mathbf z$
is contained in the set $P$ and a nonzero of $f$.
The cardinality of the detector is given by
\begin{equation*}
\card(P)
 = \binom{d}{d_0} \card(P_1) \card(P_2)
 = \binom{d}{d_0} \left[(r-1)(d-d_0)+1\right] 2^{7d_0+1} \varrho^{d_0} \varepsilon^{-1/r}.
\end{equation*}
This number grows like $d^{d_0+1}$
if $\varepsilon$ is fixed and $d$ tends to infinity.
Together with Lemma~\ref{detector lemma},
this proves the second statement of Theorem~\ref{thm: thm_upp_bnd} with
\[
 c_2=2r+C_{r,M},
 \quad\text{and}\quad
 c_3=\ln(2^7\rho)\left(1+1/\ln(C_\delta^{-1})\right).
\]
Note that 
$d_0$ equals $d$ 
if $\varepsilon$ is small enough.
Hence, the cardinality of $P$ and the cost 
of the algorithm grows like $\varepsilon^{-1/r}$
if $d$ is fixed and $\varepsilon$ tends to zero,
which is optimal.

\subsubsection{Detectors for small derivatives}

In this section, we assume that $M\leq r!$.
In this case, each function $f$ satisfying~\eqref{target function} 
does not vanish almost everywhere on a box whose size is independent of $d$.
This is due to the following fact.

\begin{lemma}
\label{almost empty interval lemma}
 For each $g\in W_\infty^r([0,1])$ with 
 $\Vert g^{(r)}\Vert_\infty \leq r!$ 
 there is an interval in $[0,1]$ with length 
 $\min\{1,\Vert g\Vert_\infty^{1/r}\}$
 that contains at most $(r-1)$ zeros of $g$.
\end{lemma}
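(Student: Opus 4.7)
The plan is to exploit the polynomial interpolation bound~\eqref{eq: polyn_interpol} (a consequence of Lemma~\ref{interpolation lemma}) to show that zeros of $g$ cannot cluster too tightly around a maximum point of $\abs{g}$. Denote $L := \min\{1,\norm{g}_\infty^{1/r}\}$. If $g$ has at most $r-1$ distinct zeros in $[0,1]$, then any subinterval of $[0,1]$ of length $L$ (which exists because $L\leq 1$) already has the required property, so I may assume henceforth that $g$ has at least $r$ distinct zeros in $[0,1]$.

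Pick $x\in[0,1]$ with $\abs{g(x)}=\norm{g}_\infty$ and choose $r$ distinct zeros $z_1,\dots,z_r$ of $g$ for which $\abs{x-z_r}$ is minimal among all selections of $r$ distinct zeros; equivalently, take the $r$ zeros closest to $x$. Applying~\eqref{eq: polyn_interpol} with the hypothesis $\Vert g^{(r)}\Vert_\infty\leq r!$ yields
\[
\norm{g}_\infty\,\leq\,\frac{\Vert g^{(r)}\Vert_\infty}{r!}\prod_{i=1}^r \abs{x-z_i}\,\leq\,\abs{x-z_r}^r,
\]
so $\abs{x-z_r}\geq \norm{g}_\infty^{1/r}\geq L$. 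By the minimality of $\abs{x-z_r}$, every zero of $g$ other than $z_1,\dots,z_{r-1}$ lies at distance at least $L$ from $x$.

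Consequently the open interval $(x-L,\,x+L)$ contains at most the $r-1$ points $z_1,\dots,z_{r-1}$ as zeros of $g$. Since $x\in[0,1]$, a short case analysis in $x$ shows that the intersection $(x-L,x+L)\cap[0,1]$ is an interval of length at least $L$, so it admits a subinterval $I\subset[0,1]$ of length exactly $L$. Because $I\subset(x-L,x+L)$, the interval $I$ also contains at most $r-1$ zeros of $g$, which is the desired conclusion.

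The main subtlety to watch is that the inequality $\abs{x-z_r}\geq L$ is non-strict, so the argument must use the open interval $(x-L,x+L)$ rather than the closed one in order to genuinely exclude $z_r$ in the boundary case $\abs{x-z_r}=L$. Once this is handled, everything else reduces to routine bookkeeping with~\eqref{eq: polyn_interpol}.
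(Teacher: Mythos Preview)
Your approach is essentially the paper's: locate a maximum point $x$ of $|g|$ and use~\eqref{eq: polyn_interpol} to show that no interval of length $L$ around $x$ can contain $r$ zeros. The detour through ``the $r$ zeros closest to $x$'' is unnecessary and not quite rigorous, however: such a minimizing selection need not exist, since $g\in W_\infty^r$ may have infinitely many zeros (accumulating at some point away from $x$), and then the infimum of $\max_i|x-z_i|$ over $r$-tuples of distinct zeros can fail to be attained. The paper bypasses this by arguing directly by contradiction: pick an open interval $I\subset[0,1]$ of length $L$ whose closure contains $x$; if $I$ contained $r$ distinct zeros $x_1,\dots,x_r$, then each $|x-x_i|<L\le\|g\|_\infty^{1/r}$, and~\eqref{eq: polyn_interpol} yields the absurdity $\|g\|_\infty<\|g\|_\infty$. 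Your argument is easily repaired the same way---simply drop the minimality and suppose for contradiction that $(x-L,x+L)$ contains $r$ distinct zeros; the rest of your proof (extracting a length-$L$ subinterval of $(x-L,x+L)\cap[0,1]$) then goes through unchanged.
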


\begin{proof}
 The function $\abs{g}$ attains its maximum, say for $x\in [0,1]$.
 We choose an open interval $I\subset [0,1]$ 
 of length $\min\{1,\Vert g\Vert_\infty^{1/r}\}$ whose closure contains $x$.
 Assume that $I$ contains $r$ distinct zeros $x_1,\hdots,x_r$ of $g$.
 Then $\abs{x-x_i}< \norm{g}_\infty^{1/r}$ for all $i\in [r]$
 and \eqref{eq: polyn_interpol} yields
 \[
  \norm{g}_\infty
  \leq \frac{\Vert g^{(r)}\Vert_\infty}{r!} \prod_{j=1}^r \abs{x-x_j} 
  \leq \prod_{j=1}^r \abs{x-x_j} 
  < \norm{g}_\infty.
 \]
 This is a contradiction and the assertion is proven.
\end{proof}

We now construct an $\varepsilon$-detector for any $\varepsilon\in(0,1)$. 
To this end, let
\begin{equation*}
 \gamma=(1-2^{-1/d})\,\varepsilon^{1/r}.
\end{equation*}
Note that $\gamma$ is smaller than 1/2.
We choose a point set $P_0$ in $[0,1]^d$ whose dispersion is
at most $\varepsilon^{1/r}/2$ and consider
the point set $P$ in $[0,1]^d$, given by
 \begin{equation*}
 P= \Big\{(1-\gamma)\cdot x+\frac{\gamma j}{(r-1)d}\cdot \mathbf 1 
 \,\Big\vert\, \mathbf x\in P_0 \text{ and }
 j\in\IN_0 \text{ with } j\leq (r-1)d\Big\}.
\end{equation*}

\begin{lemma}
\label{detector for small M}
 The point set $P$ is an \mbox{$\varepsilon$-detector} for $F_{r,M}^d$.
\end{lemma}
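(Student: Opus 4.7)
The plan is to show that for every $f$ as in~\eqref{target function} with $\norm{f}_\infty > \varepsilon$, the set $P$ contains a nonzero of $f$. Since $M \leq r!$, Lemma~\ref{almost empty interval lemma} applied to each factor $f_i$ yields intervals $I_i = [a_i, a_i + L_i] \subseteq [0,1]$ of length $L_i = \norm{f_i}_\infty^{1/r}$ on which $f_i$ has at most $r-1$ zeros. The box $B = \prod_i I_i$ then satisfies $\lambda^d(B) = \prod_i L_i = \norm{f}_\infty^{1/r} > \varepsilon^{1/r}$, and the zero set of $f$ inside $B$ is contained in the union of at most $(r-1)d$ axis-aligned hyperplanes of the form $\{x_i = z\}$. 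Moreover, $L_i > \gamma$ for every $i$: otherwise some $L_i \leq \gamma$ would give $\norm{f}_\infty \leq \norm{f_i}_\infty = L_i^r \leq \gamma^r = (1-2^{-1/d})^r\,\varepsilon < \varepsilon$, contradicting the hypothesis.

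Next I consider the rescaled box
\[
B' = \prod_{i=1}^d \left[\frac{a_i}{1-\gamma},\; \frac{a_i + L_i - \gamma}{1-\gamma}\right] \subseteq [0,1]^d
\]
of volume $\lambda^d(B') = \prod_i (L_i - \gamma)/(1-\gamma)^d$. The hard part will be to prove the volume estimate $\prod_i (L_i - \gamma) > (1-\gamma)^d\, \varepsilon^{1/r}/2$, which would force $\lambda^d(B') > \varepsilon^{1/r}/2$ and therefore a point $\mathbf{x} \in P_0 \cap B'$ by the dispersion assumption on $P_0$. I would establish the slightly more general statement: $\prod_i(L_i - \gamma) \geq (1-\gamma)^d c/2$ whenever $c \in (0,1]$, $L_i \in [\gamma, 1]$, $\prod L_i \geq c$, and $\gamma \leq (1-2^{-1/d})c$, by induction on $d$. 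The base $d=1$ is immediate: $L_1 - \gamma \geq c - \gamma = c/2 \geq (1-\gamma)c/2$. For the inductive step, since $\prod (L_i - \gamma)$ is monotone in each $L_i$, it suffices to minimize at the equality $\prod L_i = c$. Lagrange multipliers identify the unique interior critical point as $L_i = c^{1/d}$ for all $i$, and there the desired bound $(c^{1/d} - \gamma)^d \geq (1-\gamma)^d c/2$ rearranges to $c^{1+1/d} \geq 2^{1/d}(c - c^{1/d})$, which is trivial because $c - c^{1/d} \leq 0$ for $c \leq 1$. The boundary $L_i = \gamma$ is infeasible (it would force $\prod L_j \leq \gamma < c$), while any boundary $L_i = 1$ reduces to the $(d-1)$-dimensional claim, whose hypothesis $\gamma \leq (1-2^{-1/(d-1)})c$ is implied by $\gamma \leq (1-2^{-1/d})c$.

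Having found $\mathbf{x} \in P_0 \cap B'$, observe that by construction $(1-\gamma)x_i + t \in [a_i, a_i + L_i] = I_i$ for every $t \in [0, \gamma]$ and every $i$. Consequently, all $(r-1)d + 1$ points
\[
\mathbf{y}_j := (1-\gamma)\mathbf{x} + \frac{\gamma j}{(r-1)d}\,\mathbf{1}, \quad j = 0, 1, \ldots, (r-1)d,
\]
lie in $B$ and belong to $P$. Each of the at most $(r-1)d$ zero-hyperplanes of $f$ inside $B$ meets the diagonal line through these points in at most one point, so the $(r-1)d + 1$ points $\mathbf{y}_j$ cannot all be zeros of $f$; at least one $\mathbf{y}_j \in P$ is therefore a nonzero of $f$, proving that $P$ is an $\varepsilon$-detector.
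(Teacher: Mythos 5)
Your proof is correct, but it takes a considerably longer route than the paper's, because you overlook the observation that makes the volume bound a one-liner. Since every $\norm{f_j}_\infty \leq 1$, each factor satisfies $\norm{f_i}_\infty \geq \prod_j\norm{f_j}_\infty = \norm{f}_\infty > \varepsilon$, so $L_i > \varepsilon^{1/r}$ and hence $\gamma = (1-2^{-1/d})\varepsilon^{1/r} \leq (1-2^{-1/d})L_i$, which gives $L_i - \gamma \geq 2^{-1/d}L_i$ for each $i$. Multiplying over $i$ yields $\prod_i(L_i-\gamma) \geq \tfrac{1}{2}\prod_i L_i = \tfrac{1}{2}\norm{f}_\infty^{1/r} > \tfrac{1}{2}\varepsilon^{1/r}$ in a single step, so the unrescaled box $\widetilde B = \prod_i(a_i,\,a_i+L_i-\gamma)$ already has volume exceeding the dispersion of $P_0$; the paper then just notes that $B' = \widetilde B/(1-\gamma)$ is larger and still inside $[0,1]^d$. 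You record only the weaker fact $L_i > \gamma$, which forces you into the Lagrange-multiplier/induction argument for $\prod_i(L_i-\gamma) \geq (1-\gamma)^d c/2$. That argument does go through: you check the inequality at the unique symmetric critical point and reduce the boundary faces $L_i = 1$ to the lower-dimensional case, and these together cover all possible minimizers. (As it happens, the symmetric critical point is a maximum, not a minimum — $\sum_i \log(e^{t_i}-\gamma)$ is concave in $t_i=\log L_i$ along the affine constraint $\sum_i t_i = \log c$ — so the minimum sits on the boundary and your critical-point check is actually superfluous, but it does not break the logic.) One finishing detail is left implicit: to obtain the strict inequality $\lambda^d(B') > \varepsilon^{1/r}/2$ needed to invoke the dispersion bound, you must apply your lemma with $c = \norm{f}_\infty^{1/r}$, which strictly exceeds $\varepsilon^{1/r}$, rather than with $c = \varepsilon^{1/r}$ itself.
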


\begin{proof}
Let $f$ be given as in \eqref{target function}.
By Lemma~\ref{almost empty interval lemma},
there are intervals $(a_i,b_i)$ in $[0,1]$ with length $\norm{f_i}_\infty^{1/r}$
containing at most $(r-1)$ zeros of $f_i$.
By the choice of $\gamma$, we have
\begin{equation*}
 \gamma 
 \leq (1-2^{-1/d}) \norm{f_i}_\infty^{1/r}.
\end{equation*}
In particular, the box
\begin{equation*}
 \widetilde B= \prod_{i\in [d]} (a_i,b_i-\gamma)
\end{equation*}
is well defined.
In fact, the volume of this box satisfies
\begin{equation*}
 \vert \widetilde B \vert
 = \prod_{i\in [d]} \brackets{\norm{f_i}_\infty^{1/r}-\gamma}
 \geq \prod_{i\in [d]} \brackets{2^{-1/d} \norm{f_i}_\infty^{1/r}}
 = \frac{\norm{f}_\infty^{1/r}}{2}
 > \frac{\varepsilon^{1/r}}{2}.
\end{equation*}
The box $\widetilde B/(1-\gamma)$ is contained in $[0,1]^d$
and even larger than $\widetilde B$.
It hence contains some $\mathbf x\in P_0$.
Consequently, we have $(1-\gamma) \mathbf x\in \widetilde B$
and all the points
\begin{equation*}
 x^{(j)}=(1-\gamma)\cdot x+\frac{\gamma j}{(r-1)d}\cdot \mathbf 1  \quad\quad\text{for }
 j\in\IN_0 \text{ with } j\leq (r-1)d
\end{equation*}
are elements of $P$.
These are $(r-1)d+1$ points
that are pairwise distinct in every coordinate
and that are all contained in the larger box 
\begin{equation*}
 B= \prod_{i\in [d]} (a_i,b_i).
\end{equation*}
Recall that each function $f_i$ has at most $(r-1)$
zeros in $(a_i,b_i)$.
By Lemma~\ref{diagonal lemma}, one of the points $x^{(j)}$
must be a nonzero of $f$.
As an example, Figure~\ref{box figure} illustrates the
case $d=2$ and $r=3$.
\end{proof}

\begin{figure}[ht]
\centering
  \includegraphics[width=0.8\textwidth]{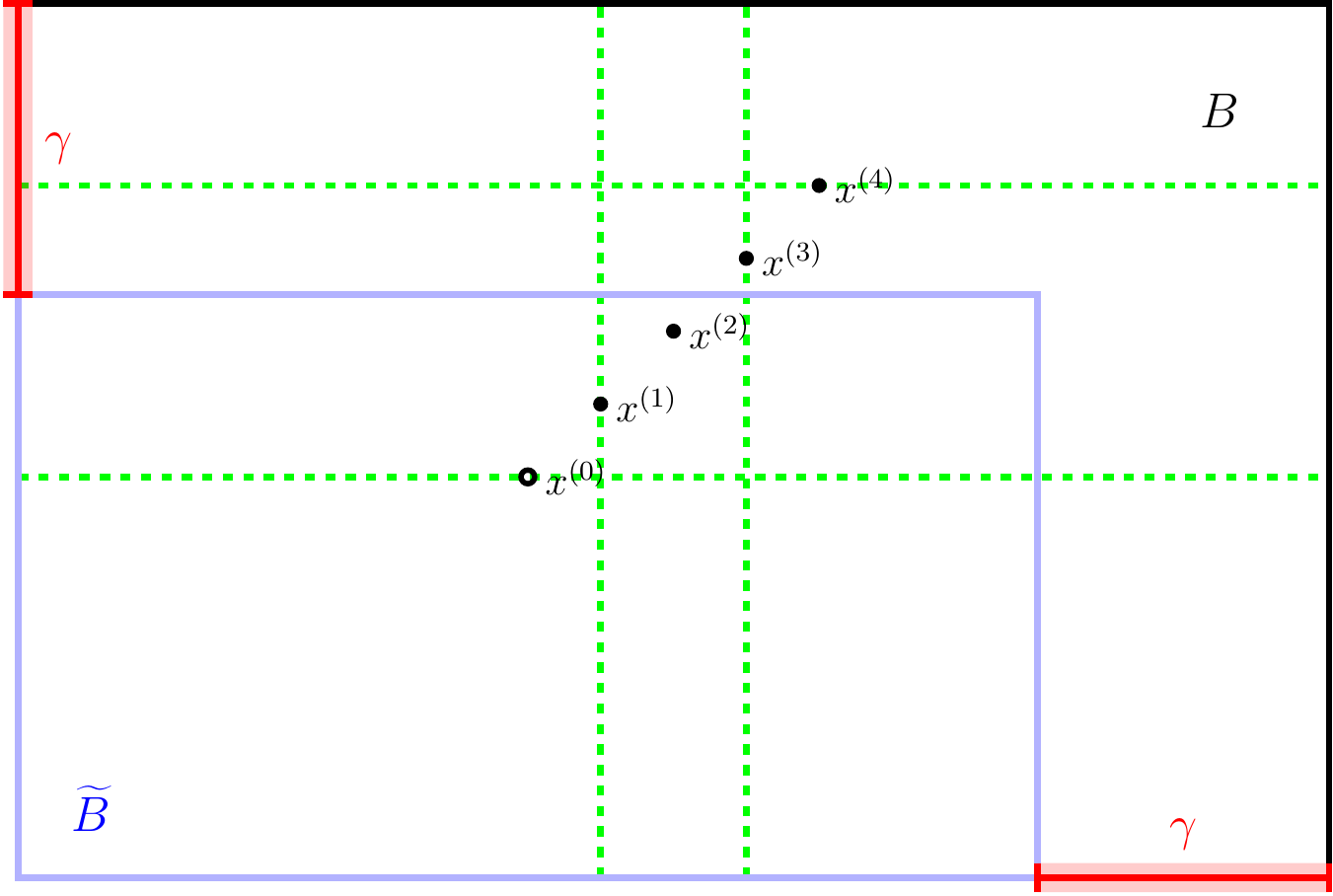}
  \caption{The box $B$ for $(r,d)=(3,2)$.
  The dashed lines indicate the zeros of $f$ in $B$.
  Since $f$ only vanishes there,
  one of the points $x^{(j)}$ must be a nonzero.}
  \label{box figure}
\end{figure}
This means that we have an $\varepsilon$-detector for $F_{r,M}^d$ with the cardinality
\begin{equation*}
 \card(P)=((r-1)d+1) \card(P_0),
\end{equation*}
where $P_0$ is a point set with dispersion $\varepsilon^{1/r}/2$ or less.
For example, we know from \cite{Ru18} that this can be achieved with
\begin{equation} \label{eq: pointset_Ru16}
 \card(P_0)=\left\lceil 2^4 d\, \varepsilon^{-1/r} 
 \ln\brackets{66\varepsilon^{-1/r}} \right\rceil
\end{equation}
points.
In particular, Lemma~\ref{detector for small M} 
and Lemma~\ref{detector lemma} 
give the last statement of Theorem~\ref{thm: thm_upp_bnd}
with the constant
$c_4=85r+C_{r,M}$.

\begin{rem}[Alternative choices of $P_0$]
If follows from \cite{UV18}
that the point set $P_0$ with dispersion $\varepsilon^{1/r}/2$ 
can also be chosen such that
\begin{equation}
\label{eq:probabilistic bound UV18}
 \card(P_0)\leq C \log_2(d)\,\brackets{1+\log_2\brackets{\varepsilon^{-1/r}}}^2\,\varepsilon^{-2/r}
\end{equation}
with an absolute constant $C>0$.
This number is smaller than \eqref{eq: pointset_Ru16} if $d$ is large, 
but the dependence on $\varepsilon$ is worse.
In both cases, however, the resulting algorithm $A_{P,m}$ is not completely explicit
since we do not know how to construct 
the point sets $P_0$ from~\eqref{eq: pointset_Ru16} and \eqref{eq:probabilistic bound UV18}.
We only know that they exist.
What we do know from~\cite{UV19} is how to construct $P_0$ such that
$$
 \card(P_0)\leq 
 C\, \varepsilon^{-6/r} \brackets{1 + \log_2\brackets{\varepsilon^{-1/r}}}^6  
  \,\log_2 d^*
$$
with $d^*=\max\{d,4\varepsilon^{-1/r}\}$, see also Section~\ref{Comparison Section}.
Using this construction of $P_0$, the algorithm $A_{P,m}$ from above
is completely explicit
and although the $\varepsilon$-dependence is far from optimal, 
its information cost is still polynomial in both $d$ and $\varepsilon^{-1}$.
\end{rem}

\subsection{Lower bounds}
\label{lower bounds section}

This section contains the proof of Theorem~\ref{tractability results}.
Of course, the positive tractability results are implied
by Theorem~\ref{thm: thm_upp_bnd},
where the case $M=0$ follows from the case $M>0$.
The only exception is the case $M=0$ and $r=1$.
Here, the functions in $F^d_{r,M}$ are constant
and can be recovered from a single function value.

We now provide lower bounds on the
complexity of the uniform approximation problem
which imply the negative tractability results.
Note that the first result of the following
lemma is already contained in~\cite[Theorem~2]{NR16}.

\begin{thm}[\cite{KR19}]
 \label{thm: lower bounds rank1}
 Let $r,d\in\IN$, $\varepsilon\geq 0$ and $M\geq 0$.
 \begin{itemize}
  \item If $M\geq 2^r r!$, then\, $\displaystyle\comp(\eps,\mathcal{P}[\APP,F^d_{r,M}])\geq 
  2^d$\, for any $\varepsilon<1$.
  \item If $M>r!$, then the problem $\mathcal{P}[\APP,F^d_{r,M}]$
  is not polynomially tractable.
  \item If $r\geq 2$, then\, $\displaystyle\comp(\eps,\mathcal{P}[\APP,F^d_{r,M}])> d$\,
  for any $\varepsilon<1$.
  \item If $r=1$ and $M>0$, then\, $\displaystyle\comp(\eps,\mathcal{P}[\APP,F^d_{r,M}])> 
  \lfloor \log_2 d\rfloor$\, for any $\varepsilon<M^2/4$.
 \end{itemize}
\end{thm}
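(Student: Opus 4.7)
The plan is to prove each of the four lower bounds by a fooling-function construction that exploits the symmetry of $F^d_{r,M}$ under $f\mapsto-f$. Given a deterministic adaptive algorithm $A$, run it on the zero input to freeze its adaptive query sequence $\mathbf x^{(1)},\hdots,\mathbf x^{(n)}$; if we can build a rank-one tensor $f\in F^d_{r,M}$ vanishing on all $\mathbf x^{(j)}$, then $A$ produces the same output on $f$, $-f$ and $0$, and the pair $(f,-f)$ forces worst-case error at least $\|f\|_\infty$. The task in each part is thus reduced to constructing a rank-one tensor with large supremum norm vanishing at a prescribed point set.

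Parts (i) and (iii) are the most direct. For (i), partition $[0,1]^d$ into the $2^d$ dyadic sub-cubes $\prod_i[a_i,a_i+\tfrac12]$ with $a_i\in\{0,\tfrac12\}$ and on each place the rank-one bump $f_a=\bigotimes_i \phi_{a_i}$ with $\phi_{a_i}(x)=2^r(x-a_i)_+^r$. One checks $\|\phi_{a_i}^{(r)}\|_\infty=2^r r!\le M$, $\phi_{a_i}(a_i+\tfrac12)=1$, and $\phi_{a_i}\colon[0,1]\to[0,1]$, so the $2^d$ bumps have pairwise disjoint supports and unit supremum norm; any $n<2^d$ queries miss some cube $Q_a$, and the pair $(f_a,-f_a)$ yields error $\ge 1$. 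For (iii), given $n=d$ queries, assign query $j$ to coordinate $j$ and put $f_j(x)=(x-x^{(j)}_j)/\max\{x^{(j)}_j,1-x^{(j)}_j\}$; then $f_j\in[-1,1]$, $\|f_j\|_\infty=1$, $f_j(x^{(j)}_j)=0$, and since $r\ge 2$ we have $f_j^{(r)}\equiv 0\le M$, so $f=\bigotimes_j f_j$ is a rank-one tensor in $F^d_{r,M}$ with $\|f\|_\infty=1$ vanishing at every query.

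For part (iv) the earlier approach fails for $r=1$ because the linear factors are forced by $\|f_j'\|_\infty\le M$ to satisfy $\|f_j\|_\infty\le M/2$, so the product $\|f\|_\infty\ge(M/2)^d$ collapses. I would instead use a binary distinguishing argument with only $d$ (not $d^d$) test functions: take $f^{(k,\pm)}(\mathbf x)=\pm\phi(x_k)$ for $k\in[d]$, $\phi(x)=M(\tfrac12-x)_+$ a nonnegative Lipschitz bump of height $M/2$ supported in $[0,\tfrac12]$. Then $f^{(k,+)}$ and $f^{(k,-)}$ have $L^\infty$-distance $M$ and both vanish at every query whose $k$-th coordinate lies outside $[0,\tfrac12]$. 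To each $k$ associate the binary signature $\beta(k)=(\mathbf 1[x^{(j)}_k\le\tfrac12])_{j=1}^n\in\{0,1\}^n$; with $n\le\lfloor\log_2 d\rfloor<\log_2 d$, pigeonhole produces $k\ne k'$ with $\beta(k)=\beta(k')$, and a careful case analysis on the sign tuples $\sigma,\sigma'\in\{+,-\}$ yields two functions among the $f^{(k,\pm)},f^{(k',\pm)}$ that are indistinguishable by the $n$ responses but differ in $L^\infty$ by $\ge M^2/2$, forcing error $\ge M^2/4$. The delicate point here is that responses are real-valued rather than binary, so making the signature actually capture all distinguishing power requires choosing $\phi$ so that on each query its value is determined by whether the coordinate lies in $\supp\phi$ at all (or by placing the queries at comparable $\phi$-values through adversarial rescaling of $\phi$).

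The principal obstacle is part (ii), where the sub-cube construction of (i) breaks down: for $r!<M<2^r r!$ the univariate bumps of height $1$ require intervals of length $(r!/M)^{1/r}>\tfrac12$, so only one bump fits per coordinate and the $d$-fold product yields a single fooling function. My plan is an embedding argument exploiting the same pseudo-dimension $d_0=d_0(\eps)\to\infty$ as $\eps\to 0$ that appears in the positive result of Section~\ref{algorithms section}. For each $d_0$-subset $S\subset[d]$ consider $f_S(\mathbf x)=\bigotimes_{i\in S}\phi(x_i)$ with $\phi$ a bump of height slightly less than $(r!/M)^{1/r}$; there are $\binom{d}{d_0}\asymp d^{d_0}$ such tensors, and their supports overlap only when restricted to the coordinates in $S\cap S'$. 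I would then show, either by restricting to a sub-family with mutually disjoint supports (via a careful combinatorial selection analogous to a sunflower lemma) or by a Bakhvalov-type averaging over $S$, that $n$ query points rule out at most a polynomial-in-$d$ fraction of these configurations, so that $\comp(\eps,\mathcal P_d)\succcurlyeq d^{d_0(\eps)}$. Since $d_0(\eps)\to\infty$, this precludes polynomial tractability. Controlling the support overlap — in effect, estimating the number of $S\in\binom{[d]}{d_0}$ whose associated $f_S$ are simultaneously non-zero at a single given point — is the main technical hurdle.
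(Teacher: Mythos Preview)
Your Part~(iii) matches the paper. In Part~(i) the idea is right but the explicit formula is broken: $\phi_0(x)=2^r x^r$ takes the value $2^r$ at $x=1$, so $\phi_0$ does not map $[0,1]$ to $[-1,1]$, and $\supp\phi_0=[0,1]$ is not disjoint from $\supp\phi_{1/2}=[1/2,1]$. The paper fixes this by letting the bumps point \emph{inward} toward $1/2$: set $g(x)=2^r|x-\tfrac12|^r\,\mathbbm 1_{[0,1/2]}(x)$ and $h(x)=2^r|x-\tfrac12|^r\,\mathbbm 1_{[1/2,1]}(x)$, which do satisfy $g,h\colon[0,1]\to[0,1]$ and have disjoint supports.

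Part~(ii) is where you are missing the key idea. You propose $f_S=\prod_{i\in S}\phi(x_i)$ with constant factor $1$ on coordinates outside $S$, and then try to handle the resulting support overlap via a sunflower-type argument or Bakhvalov averaging. This combinatorial hurdle is entirely avoidable: replace the constant $1$ on the complement of $S$ by a second bump whose support is disjoint from that of the first. Concretely, set $x_0=(r!/M)^{1/r}\in(0,1)$, $g(x)=\tfrac{M}{r!}|x-x_0|^r\,\mathbbm 1_{[0,x_0]}(x)$ (so $\|g\|_\infty=g(0)=1$) and $h(x)=\tfrac{M}{r!}|x-x_0|^r\,\mathbbm 1_{[x_0,1]}(x)$ (so $\|h\|_\infty=h(1)\in(0,1)$). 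For each $J\subset[d]$ with $|J|=\kappa(\eps):=\lceil\ln(\eps^{-1})/\ln(h(1)^{-1})\rceil-1$, let $f_J$ use $h$ on $J$ and $g$ elsewhere. Then the $\binom{d}{\kappa(\eps)}$ tensors $f_J$ have \emph{pairwise disjoint supports} automatically, each lies in $F^d_{r,M}$, and $\|f_J\|_\infty=h(1)^{\kappa(\eps)}>\eps$. Hence $\comp(\eps,\mathcal P_d)\ge\binom{d}{\kappa(\eps)}$, and since $\kappa(\eps)\to\infty$ as $\eps\to 0$ this rules out polynomial tractability immediately.

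Part~(iv) does not work as written, and you already see why: the signature $\beta(k)$ records only whether $x^{(j)}_k\le\tfrac12$, not the real-valued response $\phi(x^{(j)}_k)$, so equal signatures do not make the functions indistinguishable. The paper's construction uses two nontrivial factors rather than one. By the pigeonhole argument underlying the dispersion lower bound of \cite{AHR17}, any $n\le\lfloor\log_2 d\rfloor$ points in $[0,1]^d$ miss a box of the form $[0,\tfrac12)_j\times(\tfrac12,1]_\ell\times[0,1]^{d-2}$ for some $j\ne\ell$. Then take $f_j(x)=M(x-\tfrac12)\,\mathbbm 1_{[0,1/2)}(x)$, $f_\ell(x)=M(x-\tfrac12)\,\mathbbm 1_{(1/2,1]}(x)$, and $f_i=1$ for $i\notin\{j,\ell\}$; the resulting rank-one tensor vanishes at every query and has $\|f\|_\infty=(M/2)^2=M^2/4$.
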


\begin{proof} \textit{Part 1.} Let $M\geq 2^r r!$.
The function
\begin{equation*}
 g(x)=2^r \vert x-1/2\vert^r\cdot \mathbbm{1}_{[0,1/2]}(x),
 \quad
 x \in [0,1]
\end{equation*}
is $r$-times differentiable with $\norm{g}_\infty=1$ and 
$\Vert g^{(r)}\Vert_\infty \leq M$.
The same holds for the function
\begin{equation*}
 h(x)=2^r \vert x-1/2\vert^r\cdot \mathbbm{1}_{[1/2,1]}(x),
 \quad
 x \in [0,1].
\end{equation*}
Hence, all functions
$f=f_{[d]}$ with $f_i \in \set{g,h}$
for $i\in[d]$
are contained in $F_{r,M}^d$ and satisfy $\norm{f}_\infty=1$.
We obtain a set $E$ of 
$2^d$ functions with pairwise disjoint support.

Let $A$ be an algorithm and let $\mathbf x_1,\dots,\mathbf x_n$ 
be the sample points the algorithm uses for
the input $f_0=0$.
If $n<2^d$, there is at least one function $f\in E$ 
that vanishes at all these points.
The algorithm cannot distinguish this function $f$ from
--$f$ and $f_0$ such that
\begin{equation*}
 A(f)=A(f_0)=A(-f)
\end{equation*}
and we obtain the error bound
 \begin{equation*}
  \err(A,F_{r,M}^d) \geq \max\set{\norm{A(f_0)-f}_\infty,\norm{A(f_0)+f}_\infty}
  \geq \norm{f}_\infty =1.
 \end{equation*}
\textit{Part 2}. Let $M>r!$ and $\eps\in(0,1)$.
Note that the point $x_0=\brackets{r!/M}^{1/r}$ is contained
in $(1/2,1)$. The function
\begin{equation*}
 g(x)=\frac{M\vert x-x_0\vert^r}{r!} \cdot \mathbbm{1}_{[0,x_0]}(x),
 \quad
 x \in [0,1]
\end{equation*}
is $r$-times differentiable with $\norm{g}_\infty=1$
and $\Vert g^{(r)}\Vert_\infty \leq M$.
The function
\begin{equation*}
 h(x)=\frac{M\vert x-x_0\vert^r}{r!}\cdot \mathbbm{1}_{[x_0,1]}(x),
 \quad
 x \in [0,1]
\end{equation*}
is also $r$-times differentiable with $\Vert h^{(r)} \Vert_\infty \leq M$
and $\Vert h\Vert_\infty=h(1)$ is in $(0,1)$.
Let $k(\varepsilon,d)$ be the largest number in $\{0,1,\hdots,d\}$  
such that $h(1)^{k(\varepsilon,d)} > \varepsilon$.
Namely, let
\begin{equation*}
 k(\varepsilon,d)=\min\set{\kappa(\varepsilon), d}
 \quad\text{with}\quad
 \kappa(\varepsilon):=\left\lceil
 \frac{\ln(\varepsilon^{-1})}{\ln(h(1)^{-1})}
 \right\rceil -1.
\end{equation*}
For every subset $J$ of $[d]$ with cardinality $k(\varepsilon,d)$,
the function $f=f_{[d]}$
with $f_i = g$ for $i\in J$ and
$f_i = h$ for $i\in [d]\setminus J$ is contained in $F_{r,M}^d$
and satisfies $\norm{f}_\infty>\varepsilon$.
We obtain a set $E$ of
$$
 N(\eps,d)= \displaystyle \binom{d}{k(\varepsilon,d)}
$$ 
functions with pairwise disjoint support.

Let $A$ be an algorithm and let $\mathbf x_1,\dots,\mathbf x_n$ 
be the sample points the algorithm uses for
the input $f_0=0$.
If $n<N(\eps,d)$, there is at least one function $f\in E$
that vanishes at all these points.
The algorithm cannot distinguish this function $f$ from
--$f$ and $f_0$, such that its error satisfies
 \begin{equation*}
  \err(A,F_{r,M}^d) \geq \max\set{\norm{A(f_0)-f}_\infty,\norm{A(f_0)+f}_\infty}
  \geq \norm{f}_\infty > \varepsilon.
 \end{equation*}
We obtain 
\begin{equation*}
 \comp(\eps,\mathcal{P}[\APP,F^d_{r,M}]) \geq N(\eps,d) 
 \geq \brackets{\frac{d}{k(\varepsilon,d)}}^{k(\varepsilon,d)}.
\end{equation*}
This implies that the problem is not polynomially tractable.
In fact, let us assume that the problem is polynomially tractable.
Then there are $c,q,p>0$ such that 
\begin{equation}
\label{trac assumption}
 \comp(\eps,\mathcal{P}[\APP,F^d_{r,M}]) \leq c\, \varepsilon^{-p} d^q
\end{equation}
for all $\varepsilon\in(0,1)$ and all $d\in\IN$.
We can, however, choose $\varepsilon\in(0,1)$
such that $\kappa(\varepsilon) > q$ and hence
\begin{equation*}
 \lim\limits_{d\to \infty} \frac{n(\varepsilon,d)}{d^q}
 \geq \lim\limits_{d\to \infty}
 \frac{d^{\kappa(\varepsilon)-q}}{\kappa(\varepsilon)^{\kappa(\varepsilon)}}
 = \infty,
\end{equation*}
which contradicts the assumption \eqref{trac assumption}.\newline
\textit{Part 3}. Let $r\geq 2$.
Let $A$ be an algorithm and let $\mathbf x_1,\dots,\mathbf x_n$ 
be the sample points the algorithm uses for
the input $f_0=0$.
Let us assume that $n\leq d$.
For each $i\in[n]$, there is a linear function $f_i$
on $[0,1]$ that vanishes at the $i^{\rm th}$ coordinate of $\mathbf x_i$
and satisfies $\Vert f_i\Vert_\infty = 1$.
For $i\in[d]\setminus [n]$ we set $f_i=1$.
The function $f=f_{[d]}$ is in $F_{r,M}^d$
and vanishes at all sample points.
Hence, $f$ and --$f$ cannot be distinguished from $f_0$
and the error of $A$ is at least $\norm{f}_\infty=1$.\newline
\textit{Part 4}.
Let $r=1$ and $M>0$.
The previous argument does not work in this case,
since the first derivative of $f_i$ is not
necessarily bounded by $M$.
Here, we assume that 
the number of sample points of the algorithm $A$ for the input $f_0=0$
is at most $\lfloor \log_2 d \rfloor$.
By the proof of \cite[Lemma 2]{AHR17}, we know that there
are two distinct coordinates $j,\ell \in [d]$ such that
the box $I_{[d]}$ does not contain any of these points,
where $I_j = [0,1/2)$, $I_\ell=(1/2,1]$
and $I_i=[0,1]$ otherwise.
The function $f=f_{[d]}$ with 
\begin{equation*}
 f_i(x)=M (x-1/2) \cdot \mathbbm{1}_{I_i}(x),
 \quad x\in[0,1]
\end{equation*}
for $i\in\set{j,\ell}$ and $f_i=1$ otherwise,
is contained in $F_{r,M}^d$ and vanishes at all sample points.
Therefore, the algorithm cannot distinguish $f$ and
--$f$ from $f_0$ such that its error is at least $\norm{f}_\infty=M^2/4$.
\end{proof}

The first two statements of the previous lemma
can be extended to randomized algorithms based on $\lstd$.
We use Bakhvalov's proof technique, see Theorem~\ref{thm:Bakhvalov technique}.
The first statement of the following theorem is again contained
in~\cite[Theorem~3]{NR16}.

\begin{thm}
 \label{thm: lower bounds rank1 ran}
 Let $r,d\in\IN$, $\varepsilon\geq 0$ and $M\geq 0$.
 \begin{itemize}
  \item If $M\geq 2^r r!$, then\, $\displaystyle\e(n,\mathcal{P}[\APP,F^d_{r,M},\mathrm{ran}])>
  2^{d-1}$ for all $\varepsilon< 2^{-1/2}$.
  \item If $M>r!$, then the problem $\mathcal{P}[\APP,F^d_{r,M},\mathrm{ran}]$
  is not polynomially tractable.
 \end{itemize}
\end{thm}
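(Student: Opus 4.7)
The plan is to apply Bakhvalov's technique (Theorem~\ref{thm:Bakhvalov technique}): for each of the two claims it suffices to exhibit a Borel probability measure $\mu$ on $F_{r,M}^d$ such that every deterministic algorithm of worst-case cost at most $n$ has average-case error above the required threshold. In both cases I will reuse the fooling families built in the proof of Theorem~\ref{thm: lower bounds rank1} and randomize them by an independent equiprobable sign.

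For Part~1 let $E\subset F_{r,M}^d$ be the family of $2^d$ tensor products $f=f_{[d]}$ with factors in $\set{g,h}$ constructed in Part~1 of Theorem~\ref{thm: lower bounds rank1}; each has uniform norm $1$, and any two of them have disjoint supports because $\supp g=[0,1/2]$ and $\supp h=[1/2,1]$. Let $\mu$ be the distribution of $\sigma f$, where $f$ is uniform on $E$ and $\sigma$ is uniform on $\set{-1,+1}$, independently. Given any deterministic $A$ of cost at most $n\leq 2^{d-1}$, consider the (at most $n$) adaptive sample points $\mathbf{x}_1,\dots,\mathbf{x}_n$ that $A$ visits when fed with the zero input. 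Pairwise disjointness of the supports forces at most $n$ members of $E$ to meet this list; for every one of the remaining $\geq 2^d-n$ functions $f\in E$ and both signs $\sigma$ the algorithm receives only zero responses, so it traces the same zero-path and outputs the fixed $g_0:=A(0)$. For each such $f$, convexity of $t\mapsto t^2$ together with $\norm{f-g_0}_\infty+\norm{f+g_0}_\infty\geq 2\norm{f}_\infty$ gives
\begin{equation*}
\tfrac12\norm{f-g_0}_\infty^2+\tfrac12\norm{f+g_0}_\infty^2
\,\geq\,\Big(\tfrac{\norm{f-g_0}_\infty+\norm{f+g_0}_\infty}{2}\Big)^2
\,\geq\,\norm{f}_\infty^2=1.
\end{equation*}
Hence the average squared error of $A$ with respect to $\mu$ is at least $(2^d-n)/2^d\geq 1/2$, and Bakhvalov's theorem yields $\e(n,\mathcal{P}[\APP,F_{r,M}^d,\mathrm{ran}])\geq 2^{-1/2}$ for all $n\leq 2^{d-1}$, which is the stated claim (equivalently in complexity form).

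For Part~2 the same scheme is carried out with the $N(\varepsilon,d)=\binom{d}{k(\varepsilon,d)}$ fooling functions $f_J$ from Part~2 of Theorem~\ref{thm: lower bounds rank1}. They have pairwise disjoint supports up to a Lebesgue null set (the union of the hyperplanes $\set{x_i=x_0}$) and satisfy $\norm{f_J}_\infty>\varepsilon$, so the zero-path argument goes through verbatim and shows
\begin{equation*}
\e(n,\mathcal{P}[\APP,F_{r,M}^d,\mathrm{ran}])^2
\,\geq\,\frac{N(\varepsilon,d)-n}{N(\varepsilon,d)}\,\varepsilon^2.
\end{equation*}
In particular $\comp(\varepsilon/\sqrt{2},\mathcal{P}[\APP,F_{r,M}^d,\mathrm{ran}])\geq N(\varepsilon,d)/2\geq\brackets{d/k(\varepsilon,d)}^{k(\varepsilon,d)}/2$. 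Since $k(\varepsilon,d)=\min\set{\kappa(\varepsilon),d}$ and $\kappa(\varepsilon)\to\infty$ as $\varepsilon\to 0$, for any prospective polynomial bound $c\,\varepsilon^{-p}d^q$ we may fix $\varepsilon$ with $\kappa(\varepsilon)>q$, upon which the right-hand side outgrows $d^q$ as $d\to\infty$, contradicting polynomial tractability.

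The main obstacle is the correct treatment of adaption: the ``algorithm sees only zeros'' argument must be justified recursively. The first adaptive query $\mathbf{x}_1$ is a deterministic point of $[0,1]^d$; any input $\sigma f_E$ (resp.\ $\sigma f_J$) whose support misses $\mathbf{x}_1$ returns $0$ there, so the second query is determined by the response $0$ and again is the same as on the zero input, and so on. Hence the zero-path is well defined and independent of both $E$ (resp.\ $J$) and $\sigma$ for the inputs of interest, and pairwise disjointness of the supports is exactly what is needed to count how many of them can intersect the list of at most $n$ zero-path queries. Measurability of $(\omega,f)\mapsto\norm{f-A^\omega(f)}_\infty$, required both by our definition of a randomized algorithm in Section~\ref{sec:algorithms} and by Bakhvalov's theorem, is automatic since $\mu$ is supported on a finite set of functions.
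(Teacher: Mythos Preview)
Your proof is correct and follows essentially the same approach as the paper: both apply Bakhvalov's technique with the uniform measure on $E\cup(-E)$ (your $\sigma f$ description is the same thing), use the disjoint-support counting to show at least $2^d-n$ (resp.\ $N(\varepsilon,d)-n$) fooling functions are indistinguishable from zero along the algorithm's zero-path, and conclude via the inequality $\|g_0-f\|_\infty^2+\|g_0+f\|_\infty^2\geq 2\|f\|_\infty^2$. Your explicit treatment of the adaptive zero-path and the measurability remark are welcome elaborations but do not alter the argument.
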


\begin{proof} \textit{Part 1}. Let $M\geq 2^r r!$.
In the first part of the proof of Lemma~\ref{thm: lower bounds rank1}
we defined a set $E$ consisting of $2^d$ functions in $F^d_{r,M}$.
Let $\mu$ be the uniform distribution on $E\cup(-E)$.
Let $A$ be a deterministic algorithm and let $\mathbf x_1,\dots,\mathbf x_n$ 
be the sample points the algorithm uses for
the input $f_0=0$.
If $n\leq 2^{d-1}$, there is a subset $E_0$ of 
$E$ with cardinality at least $2^{d-1}$ such that any $f\in E_0$
vanishes at all the sample points.
The algorithm cannot distinguish $f$ from
--$f$ and the triangle inequality yields
\begin{equation*}
 \norm{A(f)-f}_\infty + \norm{A(-f)-(-f)}_\infty \geq 2\norm{f}_\infty
 =2,
\end{equation*}
and hence
\begin{equation*}
 \norm{A(f)-f}_\infty^2 + \norm{A(-f)-(-f)}_\infty^2 \geq 2.
\end{equation*}
We obtain the error bound
 \begin{equation*}
  \err(A,\mu)^2 =
  \frac{1}{2^{d+1}} \sum_{f\in E} 
  \brackets{\norm{A(f)-f}_\infty^2 + \norm{A(-f)-(-f)}_\infty^2}
  \geq
  \frac{2\card(E_0)}{2^{d+1}}
  \geq \frac{1}{2}.
 \end{equation*}
Together with Theorem~\ref{thm:Bakhvalov technique},
this yields the statement.\newline
\textit{Part 2}. Let $M>r!$ and $\eps\in(0,1)$.
In the second part of the proof of Lemma~\ref{thm: lower bounds rank1}
we defined a set $E$ consisting of $N(\eps,d)$ functions in $F^d_{r,M}$.
Let $\mu$ be the uniform distribution on $E\cup(-E)$.
Let $A$ be a deterministic algorithm and let $\mathbf x_1,\dots,\mathbf x_n$ 
be the sample points the algorithm uses for
the input $f_0=0$.
If $n\leq N(\eps,d)/2$, there is a subset $E_0$ of 
$E$ with cardinality at least $N(\eps,d)/2$ such that any $f\in E_0$
vanishes at all the sample points.
The algorithm cannot distinguish $f$ from
--$f$ and the triangle inequality yields
\begin{equation*}
 \norm{A(f)-f}_\infty + \norm{A(-f)-(-f)}_\infty \geq 2\norm{f}_\infty
 > 2\eps,
\end{equation*}
and hence
\begin{equation*}
 \norm{A(f)-f}_\infty^2 + \norm{A(-f)-(-f)}_\infty^2 > 2\eps^2.
\end{equation*}
We obtain the error bound
\begin{equation*}
  \err(A,\mu)^2 =
  \frac{1}{2 N(\eps,d)} \sum_{f\in E} 
  \brackets{\norm{A(f)-f}_\infty^2 + \norm{A(-f)-(-f)}_\infty^2}
  > \frac{\varepsilon^2}{2}.
\end{equation*}
Together with Theorem~\ref{thm:Bakhvalov technique},
we obtain that
\begin{equation*}
  \comp(\varepsilon,\mathcal{P}[\APP,F^d_{r,M},\mathrm{ran}])
  > \frac{N(\sqrt{2}\eps,d)}{2}.
\end{equation*}
Like above,
this implies that the problem is not polynomially tractable.
\end{proof}

\section{Global Optimization}
\label{sec:optimization}



Let $F$ be a class of bounded real-valued functions on $[0,1]^d$.
We study the problem $\mathcal{P}[\OPT,F]$ of global optimization on $F$
in the worst case setting.
That is, given any function $f\in F$,
we want to find a point $\mathbf{x}^*\in [0,1]^d$
such that $f(\mathbf{x}^*)$ is almost maximal.
We emphasize that we want want to find
the maximizer and not just the maximum.
In order to find $\mathbf x^*$, we
may request $n$ function values of the unknown function
at adaptively and deterministically chosen points.

In the sense of Definition~\ref{def:problem},
we define $\mathcal{P}[\OPT,F]=(\mathcal{A},\err,\cost)$,
where
$$
 \mathcal{A}=\mathcal{A}[F,[0,1]^d,\lstd,\mathrm{det}]
$$ 
is the class of all deterministic algorithms
based on standard information
with input 
$f$ in $F$ and output $\mathbf x^*=A(f)$ in $[0,1]^d$
(see Section~\ref{sec:algorithms})
and each algorithm $A\in \mathcal{A}$ is assigned
the cost
$$
 \cost(A)=\cost(A,F,\lstd,\mathrm{wc}),
$$
which is the maximal number
of function values that $A$ requests of the input function
(see Section~\ref{sec:cost}), and the error
$$
 \err(A)=\sup_{f\in F}\brackets{\sup_{\mathbf{x}\in[0,1]^d} f(\mathbf x) - f(\mathbf x^*)},
$$
which is the residual error in the worst case.
Note that this problem is not described by
solution operator $\OPT:F\to [0,1]^d$.
The reason is that we usually cannot assign a unique maximizer
to every function $f\in F$.


We now show that the results from Section~\ref{sec:ck functions}
and Section~\ref{sec:rank one} for the problem $\mathcal{P}[\APP,F]$ of
uniform approximation in the class $F\in\{\C^r_d,\widetilde\C^r_d,F_{r,M}^d\}$  also hold for
the problem $\mathcal{P}[\OPT,F]$ of global optimization.
On the one hand, it is easy to see that
global optimization is never harder than
uniform approximation.
On the other hand, it is known that global optimization
is practically as hard as uniform approximation
if $F$ is convex and symmetric~\cite{Wa84,No88}.
Note that the classes $\C^r_d$ and $\widetilde\C^r_d$ are convex and symmetric
but the class $F_{r,M}^d$ is not convex.
In this case, we may still apply the following 
comparison statement,
which is implicitly contained
in the proof of~\cite[Proposition~1.3.2]{No88}.

\begin{prop}
\label{prop:opt and app}
 Let $F\subset \mathcal{B}([0,1]^d)$ be symmetric 
 with $0\in F$. Then
 $$
 \inf_{\substack{P\subset [0,1]^d\\ \card(P)\leq n+1}}
 \sup_{\substack{f\in F\\ f\mid_P=0}} \norm{f}_\infty
 \leq \e(n,\mathcal{P}[\OPT,F]) \leq 
 2\e(n,\mathcal{P}[\APP,F]).
 $$
\end{prop}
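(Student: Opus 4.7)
The plan is to prove the two inequalities separately. For the upper bound, I will turn any $A_{\mathrm{app}}\in\mathcal{A}[F,\mathcal{B}([0,1]^d),\lstd,\mathrm{det}]$ of cost at most $n$ into an optimization algorithm $A_{\mathrm{opt}}$ by letting $A_{\mathrm{opt}}(f)$ be a $\delta$-near-maximizer of the surrogate function $A_{\mathrm{app}}(f)$; since $A_{\mathrm{opt}}(f)$ depends on $f$ only through the function values already queried by $A_{\mathrm{app}}$, it defines a valid element of $\mathcal{A}[F,[0,1]^d,\lstd,\mathrm{det}]$ of cost at most $n$. Writing $\mathbf x^*:=A_{\mathrm{opt}}(f)$ and splitting
\[
 f(\mathbf y)-f(\mathbf x^*)=\bigl[f(\mathbf y)-A_{\mathrm{app}}(f)(\mathbf y)\bigr]+\bigl[A_{\mathrm{app}}(f)(\mathbf y)-A_{\mathrm{app}}(f)(\mathbf x^*)\bigr]+\bigl[A_{\mathrm{app}}(f)(\mathbf x^*)-f(\mathbf x^*)\bigr]
\]
for arbitrary $\mathbf y\in[0,1]^d$, I will bound the outer two differences by $\norm{f-A_{\mathrm{app}}(f)}_\infty$ and the middle one by $\delta$, take the supremum over $\mathbf y$, and let $\delta\downarrow 0$ to deduce $\err(A_{\mathrm{opt}})\leq 2\,\err(A_{\mathrm{app}})$.

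For the lower bound, I will fix an arbitrary optimization algorithm $A$ of cost at most $n$ and probe it on the zero function, which lies in $F$ by hypothesis. Let $P_0\subset[0,1]^d$ denote the at most $n$ points at which $A$ samples on this input, set $\mathbf x^*:=A(0)$, and put $P:=P_0\cup\{\mathbf x^*\}$, so that $\card(P)\leq n+1$. The key adaptive-simulation observation is that for every $f\in F$ with $f\vert_{P_0}=0$ the information mapping reads the same sequence of zeros as on the zero function, so by the recursive definition of an adaptive information mapping in Section~\ref{sec:algorithms} the algorithm queries exactly the same points and returns the same output $\mathbf x^*$; by the assumed symmetry of $F$ the same conclusion applies to $-f$. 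If in addition $f\vert_{P}=0$, then $f(\mathbf x^*)=0$, so the residual errors for the inputs $f$ and $-f$ equal $\sup_{[0,1]^d} f$ and $-\inf_{[0,1]^d} f$ respectively, whose maximum equals $\norm{f}_\infty$. Consequently $\err(A)\geq \norm{f}_\infty$ for every such $f$; taking the supremum over admissible $f$ and then the infimum over $A$ and $P$ yields the claimed lower bound.

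The only delicate ingredient is the adaptive-simulation step in the second half, but it is a routine consequence of the definition of an adaptive information mapping as $N(f)=(L_1(f),L_2(f,y_1),\dots)$ with $y_i=L_i(f,y_1,\dots,y_{i-1})$: once the observed values coincide with those on the zero function, every subsequent query functional is evaluated at the same auxiliary vector and therefore picks the same point of the domain. No further obstacle is anticipated.
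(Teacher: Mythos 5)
Your proof is correct and takes essentially the same route as the paper's: the upper bound turns an approximation algorithm into a near-argmax optimizer and uses the same three-term telescoping to get the factor $2$, and the lower bound uses the same adaptive-simulation argument on the zero input together with the symmetry of $F$ to replace $\sup f$ by $\norm{f}_\infty$. The only cosmetic difference is that you write the telescope around an arbitrary $\mathbf y$ whereas the paper telescopes through $\sup g$, but these are the same computation.
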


\begin{proof}
 \emph{Upper Bound.}
 Let $A$ 
 be an algorithm for uniform approximation.
 For every $\delta>0$,
 we define an algorithm 
 $$
  Q_\delta:F\to [0,1]^d, \quad
  f \mapsto \mathbf x^*= Q_\delta(f)
 $$ 
 for global optimization as follows.
 Let $g=A(f)$ be our approximation of $f\in F$.
 We choose $\mathbf x^* \in[0,1]^d$ such that 
 $g(\mathbf x^*)\geq \sup g-\delta$.
 Then
 $$
  \sup f - f(\mathbf x^*) 
  \leq \sup f - \sup g + g(\mathbf x^*) - f(\mathbf x^*) + \delta
  \leq 2\norm{f-g}_\infty + \delta.
 $$
 We obtain
 $$
  \cost(Q_\delta)\leq \cost(A), \qquad
  \err(Q_\delta)\leq 2\err(A)+\delta.
 $$
 The statement is obtained as $\delta$ tends to zero.
 
 \emph{Lower Bound.}
 Let $Q$ be an algorithm for global optimization with cost $n$ or less.
 Then there is a point set $P\subset [0,1]^d$ with cardinality $n+1$
 that contains all nodes of the algorithm for the input $f_0=0$
 and the point $Q(f_0)$.
 If $f\in F$ vanishes on $P$, the algorithm cannot distinguish
 $f$ from $f_0$ and hence $Q(f)=Q(f_0)$.
 This yields $f(Q(f))=0$ and hence
 the error of the algorithm $Q$ is at least $\sup f$.
 With the symmetry of $F$, we obtain
 $$
  \err(Q) \geq \sup_{\substack{f\in F\\ f\mid_{P}=0}} \sup f
  = \sup_{\substack{f\in F\\ f\mid_{P}=0}} \norm{f}_\infty
 \geq \inf_{\substack{P\subset [0,1]^d\\ \card(P)\leq n+1}}
 \sup_{\substack{f\in F\\ f\mid_P=0}} \norm{f}_\infty,
 $$
 as it was to be proven.
\end{proof}

Theorem~\ref{thm:error formula}
implies that the lower bound of Proposition~\ref{prop:opt and app} 
coincides with 
the $(n+1)^{\rm st}$ minimal error of the approximation problem
if $F$ is convex and symmetric.
Thus optimization is just as hard as uniform approximation in this case.
Since $\C^r_d$ and $\widetilde\C^r_d$ are convex and symmetric,
we can translate Theorems~\ref{main theorem even},
\ref{main theorem even odd} and \ref{side theorem}
for the problem of global optimization.
For example, we obtain the following result.

\begin{cor}
\label{cor of main theorem even}
Let $r\in\IN$ be even. 
Then there are positive constants $c_r$, $C_r$ and $\varepsilon_r$
such that
\begin{equation*}
 \brackets{c_r \sqrt{d}\, \varepsilon^{-1/r}}^d
 \leq \comp(\varepsilon,\mathcal{P}[\OPT,\C^r_d]) \leq
 \brackets{C_r \sqrt{d}\, \varepsilon^{-1/r}}^d
\end{equation*}
for all $d\in\IN$ and $\varepsilon\in (0,\varepsilon_r)$.
The upper bound holds for all $\varepsilon>0$.
\end{cor}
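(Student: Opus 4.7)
The plan is to deduce the corollary directly from Theorem~\ref{main theorem even} by invoking the comparison between optimization and uniform approximation that was established in Proposition~\ref{prop:opt and app}. The key point is that the class $\C^r_d$ is convex and symmetric and contains the zero function, so Proposition~\ref{prop:opt and app} applies.

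First I would note that Proposition~\ref{prop:opt and app} yields the two-sided sandwich
\[
 \inf_{\substack{P\subset [0,1]^d\\ \card(P)\leq n+1}}\
 \sup_{\substack{f\in \C^r_d\\ f\mid_P=0}} \norm{f}_\infty
 \;\leq\; \e(n,\mathcal{P}[\OPT,\C^r_d]) \;\leq\;
 2\,\e(n,\mathcal{P}[\APP,\C^r_d]).
\]
Since $\mathcal{P}[\APP,\C^r_d]$ is a linear problem with target space $\mathcal{B}([0,1]^d)$, the error formula \eqref{eq:bakhvalov for APP} identifies the left-hand side as $\e(n+1,\mathcal{P}[\APP,\C^r_d])$. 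Hence
\[
 \e(n+1,\mathcal{P}[\APP,\C^r_d]) \;\leq\; \e(n,\mathcal{P}[\OPT,\C^r_d]) \;\leq\;
 2\,\e(n,\mathcal{P}[\APP,\C^r_d]).
\]

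From this, translating between $n^{\rm th}$ minimal errors and $\varepsilon$-complexity gives
\[
 \comp(\varepsilon,\mathcal{P}[\APP,\C^r_d]) - 1
 \;\leq\; \comp(\varepsilon,\mathcal{P}[\OPT,\C^r_d])
 \;\leq\; \comp(\varepsilon/2,\mathcal{P}[\APP,\C^r_d]).
\]
Now I would plug in the bounds from Theorem~\ref{main theorem even}: for even $r$ there exist $\tilde c_r,\tilde C_r,\tilde \varepsilon_r>0$ such that, for $\varepsilon\in(0,\tilde \varepsilon_r)$ and all $d\in\IN$,
\[
 \brackets{\tilde c_r \sqrt{d}\,\varepsilon^{-1/r}}^d
 \leq \comp(\varepsilon,\mathcal{P}[\APP,\C^r_d])
 \leq \brackets{\tilde C_r \sqrt{d}\,\varepsilon^{-1/r}}^d.
\]
Applying this upper bound at $\varepsilon/2$ yields the desired upper bound for the optimization problem with constant $C_r=2^{1/r}\tilde C_r$, valid for all $\varepsilon>0$ (since the upper bound in Theorem~\ref{main theorem even} holds for all $\varepsilon>0$). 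For the lower bound, the shift by $-1$ is harmless as soon as $\varepsilon$ is small enough that the right-hand side is at least $2$; absorbing the loss into the constant produces $c_r=\tilde c_r/2^{1/d}$ (or a slightly smaller $d$-independent constant) together with a possibly smaller threshold $\varepsilon_r\leq\tilde\varepsilon_r$.

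There is no genuine obstacle here: the work was done in Theorem~\ref{main theorem even} and Proposition~\ref{prop:opt and app}. The only mild bookkeeping point is to verify that the factor $2$ in the upper bound and the shift from $n$ to $n+1$ in the lower bound can both be absorbed into the constants $C_r$ and $c_r$ uniformly in $d$, which is immediate since $2^{1/r}$ is $d$-independent and $(n+1)$ versus $n$ alters the $d$-th root by a factor tending to $1$.
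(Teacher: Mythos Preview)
Your proposal is correct and follows exactly the approach indicated in the paper: apply Proposition~\ref{prop:opt and app} to the convex, symmetric class $\C^r_d$, identify the lower bound there with $\e(n+1,\mathcal{P}[\APP,\C^r_d])$ via the error formula~\eqref{eq:bakhvalov for APP}, and then transfer the bounds of Theorem~\ref{main theorem even}. The paper does not spell out the proof either, merely stating that since $\C^r_d$ is convex and symmetric, optimization is just as hard as uniform approximation and the theorems can be translated directly.
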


The class $F_{r,M}^d$ of rank one tensors
is not convex. However, the lower bounds of
Theorem~\ref{thm: lower bounds rank1} were in fact
proven for the left hand side in Proposition~\ref{prop:opt and app}.
This yields the following.

\begin{cor}
\label{cor:tractability results}
 The problem $\mathcal{P}[\OPT,F_{r,M}^d]$ of 
 global optimization on $F_{r,M}^d$
 with deterministic standard information in the worst case setting
\begin{itemize}
 \item suffers from the curse of dimensionality
 iff $M\geq 2^r r!$.
 \item is quasi-polynomially tractable iff $M<2^r r!$.
 \item is polynomially tractable iff $M\leq r!$.
 \item is strongly polynomially tractable iff $M=0$ and $r=1$.
\end{itemize}
\end{cor}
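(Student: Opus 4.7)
The class $F_{r,M}^d$ is symmetric (negating any one of the factors $f_i$ sends $f$ to $-f$) and contains the zero function (take $f_1\equiv 0$), so Proposition~\ref{prop:opt and app} applies. Setting
\[
 \phi(k) := \inf_{\substack{P\subset[0,1]^d\\ \card(P)\leq k}}\ \sup_{\substack{f\in F_{r,M}^d\\ f\mid_P=0}} \norm{f}_\infty,
\]
it yields the two-sided comparison
\[
 \phi(n+1)\ \leq\ \e(n,\mathcal{P}[\OPT,F_{r,M}^d])\ \leq\ 2\,\e(n,\mathcal{P}[\APP,F_{r,M}^d]).
\]

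The positive tractability results then follow from the right-hand inequality together with Theorem~\ref{tractability results}: the factor $2$ does not affect the tractability class, so quasi-polynomial tractability of $\mathcal{P}[\APP,F_{r,M}^d]$ (for $M<2^r r!$) and polynomial tractability (for $M\leq r!$) transfer directly to $\mathcal{P}[\OPT,F_{r,M}^d]$. The case $r=1,\,M=0$ of strong polynomial tractability is immediate, because every $f\in F_{1,0}^d$ is constant and a single function value suffices to maximize it.

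For the negative tractability results I would revisit the proof of Theorem~\ref{thm: lower bounds rank1}. In each of its four parts the argument constructs a finite set $E\subset F_{r,M}^d$ of functions with pairwise disjoint supports, takes an arbitrary deterministic algorithm $A$, considers the points $\mathbf{x}_1,\dots,\mathbf{x}_n$ that $A$ queries on the input $f_0=0$, and deduces $\err(A)\geq \norm{f}_\infty$ for any $f\in E$ vanishing at all these points. Since these sample points depend only on $f_0$, the argument in fact produces a lower bound on $\phi(n)$ rather than merely on $\e(n,\mathcal{P}[\APP,F_{r,M}^d])$. Combined with the left-hand inequality above, this yields: the curse of dimensionality for $\mathcal{P}[\OPT,F_{r,M}^d]$ when $M\geq 2^r r!$ (Part~1); failure of polynomial tractability when $M>r!$ (Part~2); and failure of strong polynomial tractability when $r\geq 2$ (Part~3) or when $r=1,\,M>0$ (Part~4). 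Together with the positive results this establishes each of the four equivalences.

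The main obstacle is precisely the observation in the previous paragraph: that every lower bound in Theorem~\ref{thm: lower bounds rank1} is a genuine fooling bound on $\phi$, so it carries over to optimization through Proposition~\ref{prop:opt and app} in spite of the non-convexity of $F_{r,M}^d$. The usual route --- equating optimization with approximation via convexity and symmetry as in Theorem~\ref{thm:error formula} --- is unavailable here, but is also not needed once one notes that the required lower bounds are purely combinatorial/geometric in nature.
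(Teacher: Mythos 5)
Your proposal is correct and follows exactly the paper's argument: the paper also derives the positive tractability results from the right-hand inequality of Proposition~\ref{prop:opt and app} combined with Theorem~\ref{tractability results}, and justifies the negative results by the same observation that the lower bounds in Theorem~\ref{thm: lower bounds rank1} are in fact bounds on the quantity appearing on the left-hand side of Proposition~\ref{prop:opt and app}. You simply spell out the verification (symmetry of $F_{r,M}^d$, $0\in F_{r,M}^d$, and the constancy of $F^d_{1,0}$) in more detail than the paper does.
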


Thus, in the sense of tractability, 
global optimization is just as hard
as uniform approximation also for the non-convex
class $F=F_{r,M}^d$.

\section{Dispersion}
\label{sec:dispersion}

Let $d\in\IN$ and let $\mathcal{S}_d$ be the set of all finite
subsets of $[0,1]^d$.
The dispersion of a point set $P\in \mathcal{S}_d$
is the volume of the largest empty box
amidst the point set, that is, 
\begin{equation*}
 \disp(P)=\sup\set{\lambda^d(B) \mid B\in\mathcal B_d, B\cap P =\emptyset },
\end{equation*}
where $\mathcal B_d$ is the set of all axis-aligned boxes inside $[0,1]^d$.
Point sets with small dispersion already proved to be useful
for the uniform recovery of rank one tensors,
see Section~\ref{sec:dispersion},
and for the discretization of the uniform norm of
trigonometric polynomials~\cite{Te17}.
Recently great progress has been made in the question
for the minimal cardinality for which there \emph{exists} a point set
whose dispersion is at most $\varepsilon$,
$$
 \comp(\eps,\mathcal{P}_d)=
 \min\set{\card(P) \mid P\in\mathcal{S}_d,\, \disp(P)\leq \varepsilon },
$$
see 
\cite{DJ13,AHR17,Ru18,So18,UV18}.
This is the $\varepsilon$-complexity of
the problem $\mathcal{P}_d=(\mathcal{S}_d,\disp,\card)$
as already considered in Example~\ref{ex:dispersion}.
Here we shall \emph{provide} a point set
with small cardinality
that achieves the desired dispersion.
This point set has a simple geometric structure.
It is generated by the one-dimensional sets
\begin{equation*}
 M_j=\set{\frac{1}{2^{j+1}},\frac{3}{2^{j+1}},\hdots,\frac{2^{j+1}-1}{2^{j+1}}}
\end{equation*}
for $j\in\IN_0$. The $d$-dimensional point set of order $k\in\IN_0$
is defined as
\begin{equation*}
 P(k,d)=\bigcup_{\abs{\mathbf j}=k}
 M_{j_1} \times\dots\times M_{j_d}.
\end{equation*}
These point sets are particular instances of a sparse grid
as widely used for high-dimensional numerical integration
and approximation. We refer to
Novak and Woźniakowski~\cite{NW10} and the references therein.
A picture of the set of order $3$ in dimension $2$ 
can be found in Figure~\ref{Point Set}.
Here we prove the following result.

\begin{figure}[ht]
\begin{minipage}{.76\linewidth}
\includegraphics[width=\linewidth]{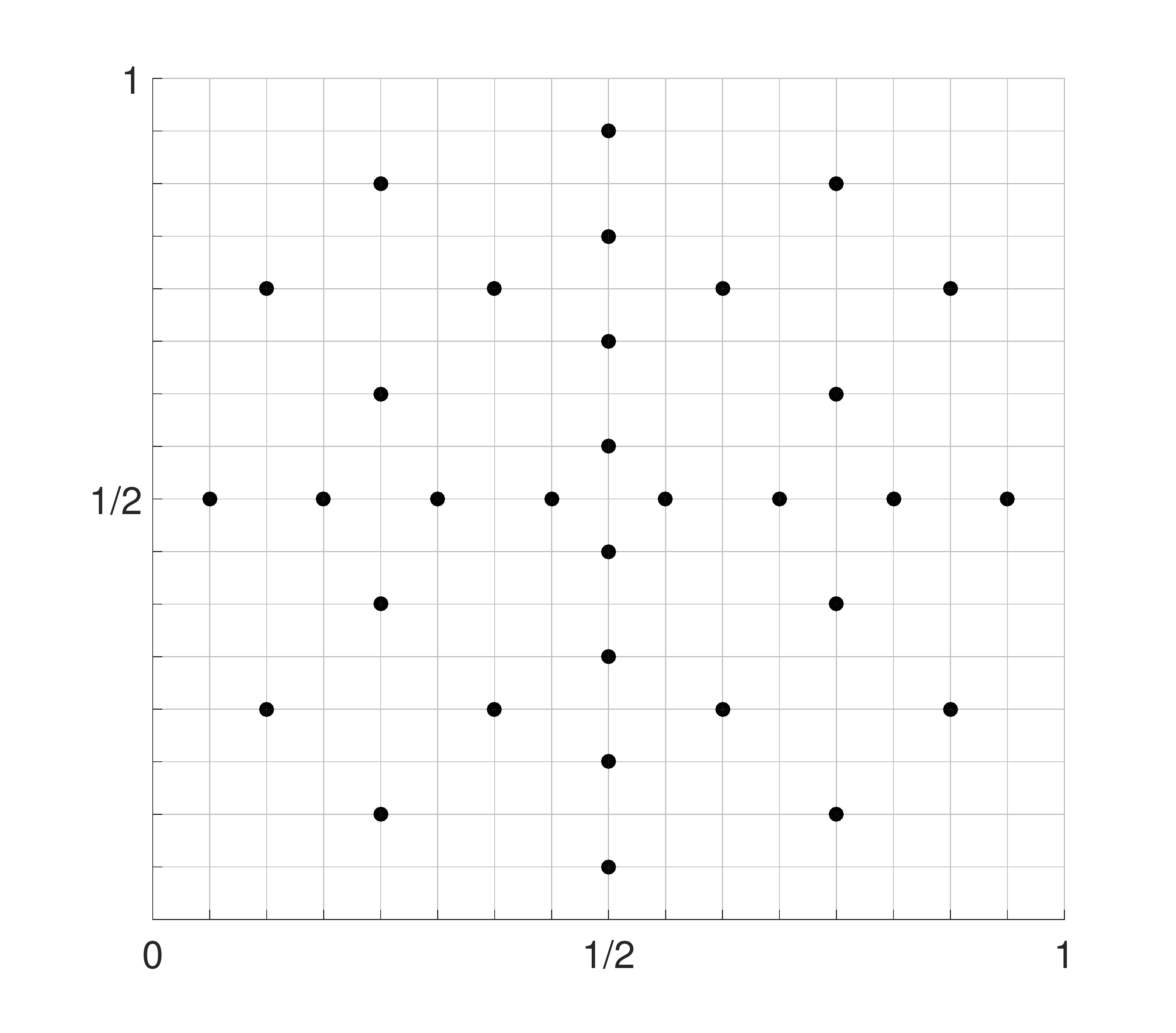}
\end{minipage}
\begin{minipage}{.22\linewidth}
\caption{The Point Set $P(3,2)$.}
\label{Point Set}
  This figure shows the set $P(k,d)$ of order $3$
  in dimension $2$.
  The largest empty box has the volume $1/16$,
  the size of $16$ of the little squares.
  If any of the $32$ points is removed,
  an empty box of volume $1/8$ emerges.
\end{minipage}
\end{figure}

\begin{thm}[\cite{Kr18b}]
\label{thm:dispersion}
 Let $\varepsilon\in (0,1)$ and $k(\varepsilon)=
 \left\lceil \log_2\brackets{\varepsilon^{-1}} \right\rceil -1$.
 For any $d\geq 2$
 the dispersion of the set $P(k(\varepsilon),d)$ is at most $\varepsilon$
 and its cardinality is given by
 \begin{equation*}
  \card\brackets{P(k(\varepsilon),d)} =
  2^{k(\varepsilon)}\, \binom{d+k(\varepsilon)-1}{d-1}.
 \end{equation*}
\end{thm}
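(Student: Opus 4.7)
The plan is to address the two assertions separately. For the cardinality, observe that a point in $M_j$ has denominator exactly $2^{j+1}$ in lowest terms, so the sets $M_0, M_1, \ldots$ are pairwise disjoint. Hence the products $M_{j_1}\times\cdots\times M_{j_d}$ for distinct multi-indices $\mathbf j$ are disjoint subsets of $[0,1]^d$, and summing $\prod_i\abs{M_{j_i}}=2^{\abs{\mathbf j}}=2^k$ over the $\binom{d+k-1}{d-1}$ compositions of $k$ into $d$ non-negative parts yields the claimed cardinality.

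For the dispersion bound I would prove the sharper statement $\disp(P(k,d))\leq 2^{-k-1}$, which implies the theorem since the definition of $k(\varepsilon)$ forces $\varepsilon\geq 2^{-k(\varepsilon)-1}$. Equivalently, every box $B=I_1\times\cdots\times I_d\subset[0,1]^d$ with $\prod_i\abs{I_i}>2^{-k-1}$ has to meet $P(k,d)$. Two elementary observations about an interval $I\subset[0,1]$ of length $\ell$ drive the argument. \emph{Key lemma}: if $\ell\geq 2^{-j}$, then $I\cap M_j\neq\emptyset$, since $M_j$ consists of $2^j$ points of $(0,1)$ spaced $1/2^j$ apart and no gap is long enough to swallow $I$. \emph{Super-lemma}: if $\ell>2^{-(m+1)}$, then $I$ meets some $M_j$ with $j\leq m$, because $\bigcup_{j=0}^m M_j=\set{k/2^{m+1} : 1\leq k\leq 2^{m+1}-1}$ is a uniform grid of spacing $1/2^{m+1}$ in $(0,1)$.

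Now set $\alpha_i=-\log_2\abs{I_i}$ and $t_i=\lfloor\alpha_i\rfloor$. The volume hypothesis gives $\sum_i\alpha_i<k+1$ and hence $\sum_i t_i\leq k$. Applying the super-lemma at each coordinate produces $j_i^\ast\in\set{0,1,\ldots,t_i}$ with $I_i\cap M_{j_i^\ast}\neq\emptyset$, and I set $\delta=k-\sum_i j_i^\ast\geq 0$. If $\delta=0$ we are done with $\mathbf j=\mathbf j^\ast$. Otherwise $\delta\geq 1$, and the crucial observation is that some index $i$ must satisfy $t_i-j_i^\ast\leq\delta-1$: if not, $\sum_i(t_i-j_i^\ast)\geq d\delta$, whereas the same sum equals $\sum_i t_i-\sum_i j_i^\ast\leq\delta$, forcing $d\leq 1$ and contradicting $d\geq 2$. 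For such an $i$ replace $j_i^\ast$ by $j_i=j_i^\ast+\delta\geq t_i+1>\alpha_i$; then $\abs{I_i}\geq 2^{-j_i}$, and the key lemma yields $I_i\cap M_{j_i}\neq\emptyset$. Keeping $j_l=j_l^\ast$ for $l\neq i$ produces $\mathbf j$ with $\abs{\mathbf j}=k$ and $B\cap(M_{j_1}\times\cdots\times M_{j_d})\neq\emptyset$.

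The main obstacle to anticipate is the tension between the key lemma, which only guarantees $I_i\cap M_j\neq\emptyset$ for $j\geq\lceil\alpha_i\rceil$, and the rigid constraint $\abs{\mathbf j}=k$: the naive total $\sum_i\lceil\alpha_i\rceil$ can exceed $k$ by as much as $d$. The super-lemma is precisely what is needed to compensate by producing a level already in $\set{0,\ldots,\lfloor\alpha_i\rfloor}$, after which the final pigeonhole step -- where $d\geq 2$ is essential -- closes the remaining gap.
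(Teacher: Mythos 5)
Your proof is correct and takes a genuinely different route from the paper. For the cardinality your argument matches the paper's exactly. For the dispersion bound, however, your argument is constructive where the paper's is not. The paper's proof takes a box $B$ disjoint from $P(k,d)$, invokes the density of $\bigcup_m P(m,d)$ to find the minimal level $m$ at which $B$ meets $P(m,d)$, observes that the immediate neighbours $x_\ell \pm 2^{-(j_\ell+1)}$ of a point of $M_{j_\ell}$ land either on $\{0,1\}$ or in a lower-level set $M_j$ (so minimality forces them outside $I_\ell$), and then deduces $\lambda^d(B) \le 2^{-m}$ when $m>k$ and $\lambda^d(B) \le 2^{dm-dk-m} \le 2^{-(k+1)}$ when $m<k$. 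Your proof instead takes an arbitrary box of volume $>2^{-(k+1)}$ and explicitly produces a grid point inside it: the super-lemma (the nice observation that $\bigcup_{j\le m}M_j$ is a uniform $2^{-(m+1)}$-grid) supplies candidate levels $j_i^\ast \le t_i$, and the pigeonhole step bumps one of them up by $\delta$ to hit $\abs{\mathbf j}=k$, with $d\ge 2$ entering precisely there. Your argument avoids the case split on $m\lessgtr k$ and makes the role of the sparse-grid structure more transparent; the paper's argument additionally yields the matching lower bound $\disp(P(k,d)) \ge 2^{-(k+1)}$ at essentially no extra cost, which you do not need for the theorem but which the paper records as an equality.

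One small imprecision worth flagging: your key lemma is stated with a weak inequality ``if $\ell \ge 2^{-j}$ then $I \cap M_j \neq \emptyset$,'' but since the paper's definition of dispersion allows arbitrary (in particular open) intervals, an open interval of length exactly $2^{-j}$ sitting between two consecutive points of $M_j$ avoids $M_j$. The correct statement requires strict inequality $\ell > 2^{-j}$. Fortunately you only ever invoke the key lemma in a situation where $j_i > \alpha_i$, hence $\abs{I_i} = 2^{-\alpha_i} > 2^{-j_i}$ strictly, so the argument goes through unchanged; you should just state the lemma with a strict inequality to match its use.
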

The formula for the cardinality of $P(k(\varepsilon),d)$ 
may be simplified.
On the one hand, we have
\begin{equation*}
 \card\brackets{P(k(\varepsilon),d)}
 \leq \varepsilon^{-1}\left\lceil \log_2\brackets{\varepsilon^{-1}} \right\rceil^{d-1},
\end{equation*}
which shows that the size roughly grows linearly in $1/\varepsilon$ for a fixed dimension $d$.
On the other hand,
\begin{equation*}
 \card\brackets{P(k(\varepsilon),d)}
 \leq (2d)^{k(\varepsilon)},
\end{equation*}
which shows that the size grows at most polynomially in $d$
for a fixed error tolerance~$\varepsilon$.
Although very simple, $P(k(\varepsilon),d)$ is the smallest explicitly known
point set in $[0,1]^d$ with dispersion at most $\varepsilon$
for many instances of $\varepsilon$ and $d$, see Section~\ref{Comparison Section}.


\subsection{Proof of Theorem~\ref{thm:dispersion}}

In the following,
we write $[d]=\set{1,\hdots,d}$ for each $d\in\IN$.
The vector $\mathbf e_\ell\in\IR^d$ has entry 1 in the $\ell^{\rm th}$
and 0 in all other coordinates.
We start with computing the number of elements in $P(k,d)$ for $k\in\IN_0$
and $d\in\IN$.

\begin{lemma}
 \begin{equation*}
  \card(P(k,d))=2^k\, \binom{d+k-1}{d-1}.
 \end{equation*}
\end{lemma}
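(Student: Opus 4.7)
The plan is to prove the cardinality formula by inclusion–exclusion-free counting, exploiting that the sets $M_j$ are pairwise disjoint across different $j$.

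First, I would observe that $|M_j|=2^j$, since $M_j$ consists of the odd multiples of $2^{-(j+1)}$ in $(0,1)$, and there are exactly $2^j$ such odd numerators between $1$ and $2^{j+1}-1$. Consequently, for any fixed multi-index $\mathbf{j}\in\IN_0^d$ with $|\mathbf{j}|=k$, the product set $M_{j_1}\times\cdots\times M_{j_d}$ has cardinality $\prod_{\ell=1}^d 2^{j_\ell}=2^k$.

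The key step is to verify that the union defining $P(k,d)$ is a disjoint union. For this I would note that every element of $M_j$, written as a reduced fraction, has denominator exactly $2^{j+1}$ (being an odd multiple of $2^{-(j+1)}$), so $M_j\cap M_{j'}=\emptyset$ whenever $j\neq j'$. Given two distinct multi-indices $\mathbf{j}\neq\mathbf{j}'$ with $|\mathbf{j}|=|\mathbf{j}'|=k$, there is some coordinate $\ell$ with $j_\ell\neq j_\ell'$, hence $M_{j_\ell}\cap M_{j_\ell'}=\emptyset$, and therefore $(M_{j_1}\times\cdots\times M_{j_d})\cap(M_{j_1'}\times\cdots\times M_{j_d'})=\emptyset$.

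It then remains to count the multi-indices: the number of $\mathbf{j}\in\IN_0^d$ with $|\mathbf{j}|=k$ equals $\binom{d+k-1}{d-1}$ by the standard stars-and-bars identity. Multiplying the number of multi-indices by the common cardinality $2^k$ of each product set yields $\card(P(k,d))=2^k\binom{d+k-1}{d-1}$, as claimed. There is no real obstacle here; the only point that needs a moment's care is the disjointness, which follows cleanly from the lowest-terms characterization of $M_j$.
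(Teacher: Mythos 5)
Your proposal is correct and follows essentially the same route as the paper's proof: compute $|M_j|=2^j$, note each product set $M_{j_1}\times\cdots\times M_{j_d}$ has cardinality $2^k$, and count the multi-indices with $|\mathbf j|=k$ by stars and bars. The only difference is that you spell out the disjointness of the union (via the lowest-terms characterization of $M_j$), which the paper leaves implicit when it writes the cardinality of the union as a sum of cardinalities; making that explicit is a small but sound addition, not a different approach.
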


\begin{proof}
 Note that the cardinality of $M_j$ is $2^j$ for all $j\in\IN_0$. 
 The identity
 \begin{multline*}
  \card(P(k,d))=
  \sum_{\abs{\mathbf j}=k}
  \card(M_{j_1}\times\hdots\times M_{j_d})\\
  = \sum_{\abs{\mathbf j}=k}
  2^{j_1+\hdots+j_d}
  = 2^k\, \card\set{\mathbf j\in\IN_0^d\mid \abs{\mathbf j}=k}
 \end{multline*}
 yields the statement of the lemma.
\end{proof}

It follows from \cite[Theorem 2.3]{Te17} 
that the dispersion of $P(k,d)$
decays with order $2^{-k}$ if $d$ is fixed and $k$ tends to infinity.
For our purposes, however, we need to study the dependence of 
the dispersion of $P(k,d)$ on both $k$ and $d$.
In turns out that
the dispersion can be computed precisely.
In dimension $d=1$, it is readily checked that the dispersion
equals $2^{-k}$ for $k\geq 1$ and $1/2$ for $k=0$.
In any other case, we obtain the following.

\begin{lemma}
For any $k\in\IN_0$ and $d\geq 2$, we have 
$$
 \disp(P(k,d))=2^{-(k+1)}.
$$
\end{lemma}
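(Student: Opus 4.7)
The plan is to prove the equality in two halves: the lower bound $\disp(P(k,d)) \geq 2^{-(k+1)}$ by exhibiting an explicit empty box, and the upper bound by showing that every axis-aligned box of volume strictly larger than $2^{-(k+1)}$ must meet $P(k,d)$.

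For the lower bound, fix integers $m_1,\dots,m_d \geq -1$ with $\sum_\ell m_\ell = k-d+1$ (always achievable for $d \geq 2$, $k \geq 0$) and consider $B_0 = \prod_\ell [0, 2^{-(m_\ell+1)})$, using the convention $2^0 = 1$. Its volume is exactly $2^{-(k+1)}$. A hypothetical point $\mathbf x \in B_0 \cap P(k,d)$ would have $x_\ell \in M_{j_\ell}$ for some $\mathbf j$ with $|\mathbf j|=k$; since the smallest element of $M_{j_\ell}$ is $2^{-(j_\ell+1)}$, the strict inequality $x_\ell < 2^{-(m_\ell+1)}$ forces $j_\ell \geq m_\ell+1$ for every $\ell$, yielding the contradiction $k = \sum j_\ell \geq \sum(m_\ell+1) = k+1$.

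For the upper bound, let $B = \prod_\ell (a_\ell, b_\ell)$ with $c_\ell = b_\ell - a_\ell$ and $\prod c_\ell > 2^{-(k+1)}$. For each $\ell$, set $N_\ell = \lfloor \log_2(1/c_\ell)\rfloor$ (so $2^{-(N_\ell+1)} < c_\ell \leq 2^{-N_\ell}$) and $\mu_\ell = \min\{j \in \IN_0 : M_j \cap (a_\ell,b_\ell) \neq \emptyset\}$. I will use two elementary facts: (i) $\mu_\ell \leq N_\ell$, because $M_0 \cup \dots \cup M_{N_\ell}$ is the dyadic grid of mesh $2^{-(N_\ell+1)}$ on $(0,1)$ whose largest gap equals that mesh, which is strictly less than $c_\ell$; and (ii) every $j \geq N_\ell+1$ belongs to $J_\ell := \{j : M_j \cap (a_\ell,b_\ell) \neq \emptyset\}$, since $M_j$ has spacing $2^{-j} < c_\ell$. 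Multiplying $c_\ell \leq 2^{-N_\ell}$ gives $\sum_\ell N_\ell < k+1$, hence the integer bound $\sum_\ell N_\ell \leq k$, and therefore $\sum_\ell \mu_\ell \leq k$ too.

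The goal now is to produce $(j_1,\dots,j_d) \in \prod_\ell J_\ell$ with $\sum_\ell j_\ell = k$, since then any choice of $x_\ell \in M_{j_\ell} \cap (a_\ell,b_\ell)$ gives a point of $B \cap P(k,d)$. Set $r = k - \sum_\ell \mu_\ell \geq 0$. If $r=0$, take $j_\ell = \mu_\ell$. If $r>0$, I keep $j_\ell = \mu_\ell$ for every index except one distinguished $i_0$, which I bump up to $j_{i_0} = \mu_{i_0}+r$; by (ii), this lies in $J_{i_0}$ precisely when $\mu_{i_0}+r \geq N_{i_0}+1$, i.e.\ when $\Delta_{i_0} := N_{i_0}-\mu_{i_0} \leq r-1$. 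The main obstacle is proving such an index $i_0$ must exist, and this is the unique place where the assumption $d \geq 2$ enters: if every $\Delta_\ell \geq r$, summing gives $\sum_\ell \Delta_\ell \geq dr$, whereas $\sum_\ell \Delta_\ell = \sum_\ell N_\ell - \sum_\ell \mu_\ell \leq r$, so $(d-1)r \leq 0$, contradicting $d \geq 2$ and $r > 0$. Combined with the lower bound, this yields the stated equality.
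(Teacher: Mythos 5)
Your proof is correct, but it takes a different route to the upper bound than the paper does, and the comparison is worth making. The paper argues directly about an arbitrary box $B$ that is disjoint from $P(k,d)$: it observes that the union $\bigcup_m P(m,d)$ is dense, picks the minimal $m$ with $B \cap P(m,d) \neq \emptyset$, and uses the spacing structure around the chosen point $\mathbf{x}$ (shifting a single coordinate of $\mathbf x$ by $2^{-(j_\ell+1)}$ or by $2^{-(k-m+j_\ell+1)}$) to bound each side length of $B$, handling the cases $m > k$ and $m < k$ separately. You instead work with the contrapositive and argue coordinatewise: you introduce the minimal levels $\mu_\ell$ and the scale exponents $N_\ell$ for each interval, establish $\sum_\ell N_\ell \le k$ from the volume hypothesis, and then convert the slack $r = k - \sum_\ell \mu_\ell$ into an explicit multi-index $\mathbf{j}$ with $|\mathbf j| = k$ and $j_\ell \in J_\ell$ by a pigeonhole argument to choose which coordinate to bump, using that $J_\ell$ contains every level $\ge N_\ell + 1$. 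This is fully constructive (no density/minimality argument, no separate case analysis) and makes the role of $d \ge 2$ transparent --- it is exactly what prevents all the gaps $\Delta_\ell = N_\ell - \mu_\ell$ from being large simultaneously. The paper's version is slightly shorter; yours arguably isolates the combinatorial mechanism more cleanly, and your lower bound exhibits a whole family of extremal empty boxes rather than the single slab $(0,2^{-(k+1)})\times(0,1)^{d-1}$. One small point worth making explicit: in the upper bound you should note that you may replace an arbitrary box by its interior without changing the volume, so restricting to open products $(a_\ell,b_\ell)$ is without loss of generality.
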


\begin{proof}
We first observe that there are many boxes of volume $2^{-(k+1)}$
which do not intersect with $P(k,d)$. For instance, the box
\begin{equation*}
 (0,2^{-(k+1)})\times(0,1)\times\dots\times(0,1)
\end{equation*}
has these properties.
This yields $\disp(P(k,d))\geq 2^{-(k+1)}$.
To prove the upper bound, 
let $B=I_1\times\dots\times I_d$ be any box in $[0,1]^d$
whose intersection with $P(k,d)$ is empty. 
The set
\begin{equation*}
 P= \bigcup_{m\in\IN} P(m,d)
 = \set{\frac{\alpha}{2^\beta} \,\big\vert\, 
 \beta \in\IN \text{ and } \alpha\in \left[2^\beta-1\right]}^d
\end{equation*}
is dense in $[0,1]^d$.
Without loss of generality, we assume that the interior of $B$ is nonempty.
Therefore, $B$ has nonempty intersection with $P$ and hence with $P(m,d)$ for some~$m\in\IN$.
Let $m$ be minimal with this property.
Since $B$ has empty intersection with $P(k,d)$, we either have $m>k$ or $m<k$.
Let $\mathbf x\in P(m,d)\cap B$.
This means that there is some $\mathbf j\in\IN_0^d$ with
$\abs{\mathbf j}=m$ and
\begin{equation*}
 x_\ell \in M_{j_\ell} \cap I_\ell
\end{equation*}
for all $\ell\in[d]$.
We observe that the numbers
$x_\ell \pm \frac{1}{2^{j_\ell+1}}$ are either
contained in $\set{0,1}$ or in $M_j$ for some $j< j_\ell$.
Hence, they are not contained in $I_\ell$,
because $I_\ell$ is a subset of $(0,1)$ and $m$ is minimal.
We obtain that 
\begin{equation*}
 I_\ell \subset
 \brackets{x_\ell - \frac{1}{2^{j_\ell+1}},x_\ell + \frac{1}{2^{j_\ell+1}}},
\end{equation*}
and hence
\begin{equation*}
 \lambda^d\brackets{B} \leq \prod_{\ell\in[d]} 2^{-j_\ell} = 2^{-m}.
\end{equation*}
In the case $m>k$, this yields the statement.
In the case $m<k$, we observe that the
numbers
$x_\ell \pm \frac{1}{2^{k-m+j_\ell+1}}$
cannot be contained in $I_\ell$ for any $\ell\in[d]$,
since otherwise the points
\begin{equation*}
\mathbf x \pm \frac{\mathbf e_\ell}{2^{k-m+j_\ell+1}}
\end{equation*}
would be both in $B$ and in $P(k,d)$.
This means that
\begin{equation*}
 I_\ell \subset
 \brackets{x_\ell - \frac{1}{2^{k-m+j_\ell+1}},x_\ell + \frac{1}{2^{k-m+j_\ell+1}}}.
\end{equation*}
We obtain
\begin{equation*}
 \lambda^d(B) \leq 
 \prod_{\ell\in[d]} 2^{m-k-j_\ell} 
 = 2^{dm-dk-m}
 \leq 2^{-(k+1)},
\end{equation*}
where we used that $d\geq 2$.
This yields $\disp(P(k,d))\leq 2^{-(k+1)}$.
\end{proof}

Note that the smallest number $k\in\IN_0$
that satisfies $2^{-(k+1)}\leq\varepsilon$
for some fixed $\varepsilon\in (0,1)$ is given by
\begin{equation*}
 k(\varepsilon)=\left\lceil \log_2\brackets{\varepsilon^{-1}} \right\rceil -1.
\end{equation*}
This yields the statement of Theorem~\ref{thm:dispersion}.

\subsection{A Comparison with Known Results}
\label{Comparison Section}

Let $d\geq 2$ be an integer and let $\varepsilon\leq 1/4$ be positive.
Let us call $P\in \mathcal{S}_d$ admissible
if the dispersion of $P$ is at most $\varepsilon$.
In 2017, Aistleitner, Hinrichs and Rudolf~\cite{AHR17} proved
that any admissible point set satisfies
\begin{equation}
\label{lower bound disp}
 \card(P) \geq 
 (4\varepsilon)^{-1} (1-4\varepsilon)\log_2 d.
\end{equation} 
At that time, the smallest known admissible point set
was a finite Halton-Hammersley sequence $H$ of size
\begin{equation}
\label{hammersley}
\card(H) \leq
 \left\lceil 2^{d-1} \pi_d\, \varepsilon^{-1}\right\rceil,
\end{equation}
where $\pi_d$ is the product of the first $(d-1)$ primes.
This was proven by Rote and Tichy~\cite{RT96}, see also 
Dumitrescu an Jiang~\cite{DJ13} for more details.
The cardinality of this set grows as slowly as possible as $\varepsilon$
tends to zero and $d$ is fixed.
However, it grows super-exponentially with $d$.
Larcher realized that there is a $(t,m,d)$-net $N$
which is admissible and satisfies
\begin{equation}
\label{nets}
\card(N) \leq 
 \left\lceil 2^{7d+1} \varepsilon^{-1}\right\rceil.
\end{equation}
The proof is included in~\cite{AHR17}.
This number is smaller than (\ref{hammersley}) for $d\geq 54$.
However, its exponential growth with respect to $d$ for fixed $\varepsilon$
is still far away from the logarithmic growth of the lower bound (\ref{lower bound disp}).
In the beginning of 2017, Rudolf~\cite{Ru18} significantly narrowed
this gap. Based on 
results of Blumer, Ehrenfeucht, Haussler and Warmuth~\cite{BEHW89},
he obtained the existence of an admissible point set with
\begin{equation}
\label{Daniel}
\card(P) \leq
 \left\lfloor 8d\, \varepsilon^{-1} \ln\brackets{33\varepsilon^{-1}} \right\rfloor.
\end{equation}
Quite recently, the remaining gap was closed by Sosnovec~\cite{So18},
who proved the existence of an admissible point set with
\begin{equation}
 \label{Jacub}
 \card(P) \leq
 \left\lfloor q^{q^2+2}(1+4\ln q)\cdot \ln d \right\rfloor,
 \quad
 q=\lceil 1/\varepsilon\rceil.
\end{equation}
This shows that the logarithmic
dependence on the dimension in (\ref{lower bound disp})
is sharp.
On the other hand, the upper bound~\eqref{Jacub}
depends super-exponentially on $1/\varepsilon$.
This was improved by Ullrich and Vyb\'iral \cite{UV18} 
who proved the existence of an admissible point set with
\begin{equation}
\label{eq:UllVyb}
\card(P)\leq
  \left\lceil 2^7\, 
  \varepsilon^{-2} \brackets{1 + \log_2\brackets{\varepsilon^{-1}}}^2  
  \,\log_2 d \right\rceil.
\end{equation}
Up to now, this is the best known
upper bound for the minimal cardinality $\comp(\eps,\mathcal{P}_d)$
of admissible point sets for many parameters $\eps$
and $d$.
We point to the fact that the upper bounds \eqref{Daniel}, \eqref{Jacub}
and \eqref{eq:UllVyb}
are based on the probabilistic method
and only yield the existence of the point set $P$.
On the other hand, it is shown by Ullrich and Vybiral in~\cite{UV19}
that one can construct an admissible point set $P$ with
\begin{equation}
\label{eq:UllVyb constructive}
\card(P)\leq
  \left\lceil C\, 
  \varepsilon^{-6} \brackets{1 + \log_2\brackets{\varepsilon^{-1}}}^6  
  \,\log_2 d^* \right\rceil
\end{equation}
with $d^*=\max\{d,2/\varepsilon\}$ and an absolute constant $C$.
The construction takes a running time which is 
polynomial in $d$ but super-exponential in $\varepsilon^{-1}$.

Here, we provided an admissible point set $P(k(\varepsilon),d)$ with
\begin{equation}
 \label{new}
 \card(P(k(\varepsilon),d)) =
 2^{k(\varepsilon)}\, \binom{d+k(\varepsilon)-1}{d-1},
 \quad
 k(\varepsilon)=\lceil \log_2\brackets{\varepsilon^{-1}}\rceil -1.
\end{equation}
It can be constructed in a running time which is linear in the cardinality.
For many parameters $(\varepsilon,d)$ this cardinality is much smaller than
the cardinalities of the point sets from \eqref{hammersley}, \eqref{nets} and \eqref{eq:UllVyb constructive}.
In some cases, it is even smaller
than the cardinalities resulting from the 
nonconstructive results \eqref{Daniel}, \eqref{Jacub}
and \eqref{eq:UllVyb}.
To illustrate these facts, we consider the dimension
$d\in\set{2,\hdots,100}$ and error tolerance
$\varepsilon\in\set{1/4,1/5,\dots,1/100}$ in Figure~\ref{comparison figure}.

\begin{figure}[ht]
 \includegraphics[width=\textwidth]{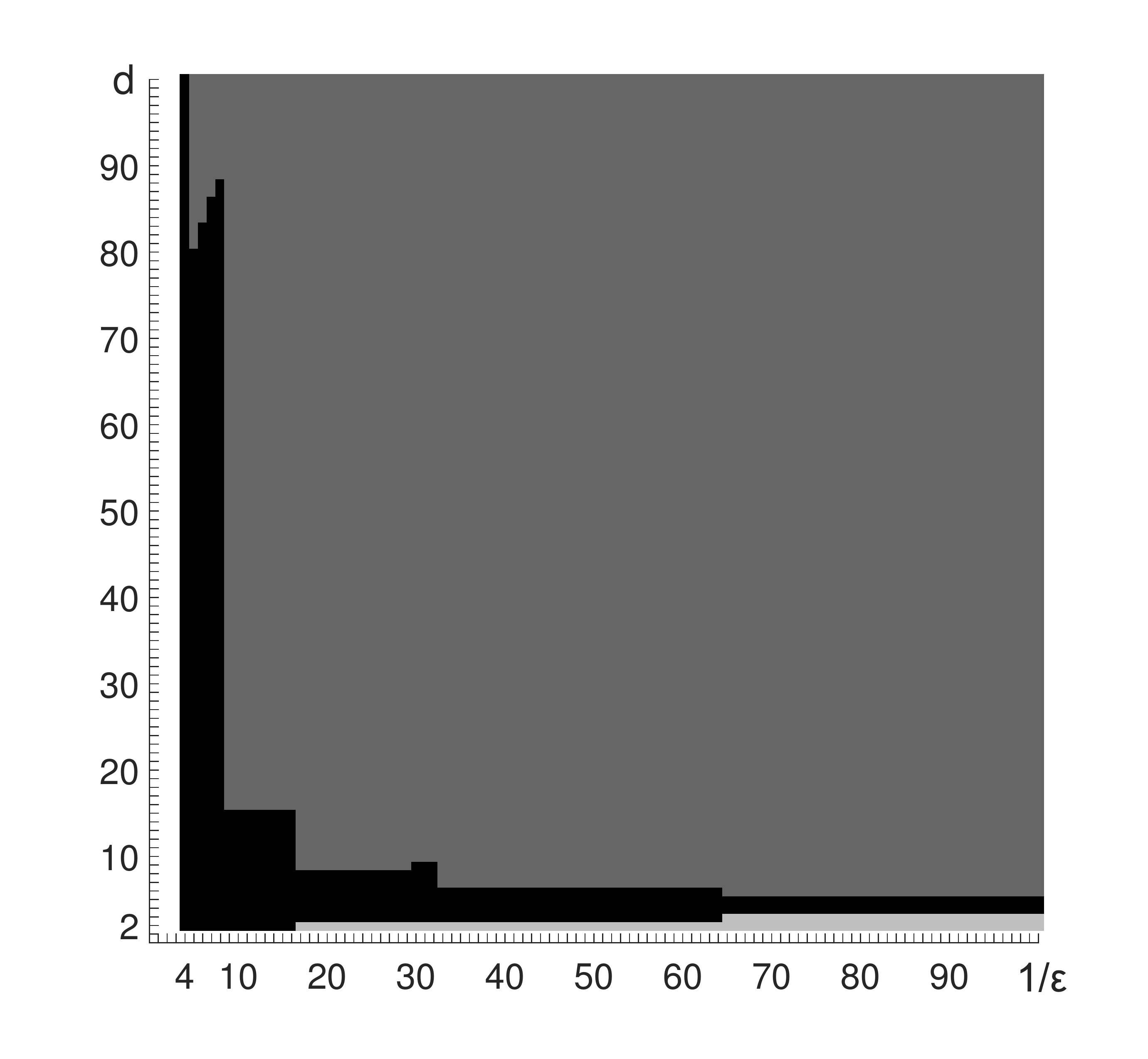}
 \caption{Cardinalities of admissible point sets.
 For the parameters $(\eps^{-1},d)$ 
 in the dark gray area, 
 the author
 does not know an admissible set which is smaller than 
 the sparse grid $P(k(\varepsilon),d)$
 although the existence of such a point set 
 follows form relation \eqref{Daniel}.
 In the black area, 
 it is not even clear whether such a point
 set exists.
 In the light gray area, the Halton-Hammersley set from~\eqref{hammersley} is 
 a smaller admissible set.}
 \label{comparison figure}
\end{figure}


\chapter{Optimal Information versus Random Information}
\label{chap:random info}

In complexity theory, 
we often want to approximate the solution of a linear
problem based on $n$ pieces of information
about the unknown problem instance.
The power of the information is measured by the
minimal worst case error that can be achieved
with the given information.
Usually, we assume that some kind of oracle is available
which grants us this information at our request.
We call the oracle $n$ times to get $n$ pieces of information.
Of course, we try to choose clever questions 
such that the information is most powerful.
We hope to obtain \emph{optimal information}.


Often, however, this model does not match reality.
There is no oracle which we can call at our will.
The information rather comes random 
and we simply have to get along with the information at hand.
Note that this is a standard assumption
in learning theory and uncertainty quantification.
It may also happen that an oracle is available
but we just do not know which questions to ask
to obtain optimal information from the oracle.
Also in this case, we may simply ask random questions. 
What we obtain is \emph{random information}.

In this chapter, we want to compare the power
of optimal information with the 
expected power of random information.
It is clear that random information cannot
be better than optimal information.
But how much do we loose?
We study this question for some basic examples.
Depending on the problem,
the answers will range from \emph{almost nothing}
over \emph{a little} up to \emph{almost everything}.
But before we turn to these examples,
let us state the general question a little more precisely.

A linear problem is given by
a linear solution operator $S$
that maps from a convex and symmetric subset $F$ 
of a normed space to a normed space $G$
and a class $\Lambda$ of continuous linear functionals on $F$,
the class of admissible measurements.
We may think of an integration problem,
where $S(f)$ is the integral of a function $f$,
or a recovery problem,
where $S$ is an embedding.
One wants to approximate the solution $S(f)$
for unknown $f\in F$
based on $n$ of these measurements
such that we can guarantee a small error with
respect to the norm in $G$.
We refer the reader to
Section~\ref{sec:LPs} for more details.
We consider a random family of information mappings
$$
 N_n:F\to \IR^n,
 \quad
 N_n(f)=(L_1(f),\hdots,L_n(f)),
$$
where the random functionals $L_i\in \Lambda$ 
are independent and
identically distributed.
The power or quality of the information mapping is measured by the
radius of information
$\rad(N_n,F,S,G)$.
This is the worst case error of the best algorithm
$A_n=\phi\circ N_n$ based on the information $N_n$,
see Section~\ref{sec:errors}.
The goal is to compare 
$$
 \inf_{N_n}\,\rad(N_n,F,S,G)
 \qquad\text{vs.}\qquad
 \IE\brackets{\rad(N_n,F,S,G)},
$$
the radius of optimal information
and the expected radius of random information.

If the infimum and the expected value are comparable,
this means that there are many good algorithms
based on many different information mappings.
In this case, 
optimal information
and therefore optimal algorithms are
not very special. 
On the other hand, if the infimum is significantly smaller
than the expected value,
this means that optimal information is very special.
It seems to be an interesting characteristic
of a problem whether 
optimal information is special or not.

Of course, the answer to this question
heavily depends on the distribution
of our measurements.
While the question may be interesting for many distributions,
we feel that there often is a natural choice.
Often, the distribution only depends on the class $\Lambda$
of admissible measurements. 
In this case, collecting random information 
might even be a good idea if optimal information is available.
It may happen that we do not loose much in
terms of the radius but gain the following nice properties.
\begin{itemize}
 \item Since the distribution is independent of $n$, 
 it is easy to increase the number of
 measurements if our current approximation is not yet
 satisfactory.
 \item The information can be used for
 many different input classes $F$,
 solution operators $S$ and target spaces $G$.
 It is \emph{universal}.
\end{itemize}
We note that the second property
does not mean that the corresponding algorithm $A_n=\phi\circ N_n$
is universal. The optimal choice of $\phi$ 
usually depends on $F$, $S$ and $G$.

We will study this question for two linear problems.
In both cases, 
there is a rather canonical choice for the
distribution of the measurements.
The first problem is the $L^p$-approximation
of $d$-variate Lipschitz functions from
standard information. We assume that
random information is given by function values
at $n$ random points that are independent and uniformly
distributed on the domain.
This problem is
studied in Section~\ref{sec:random info Lipschitz}.
%
%
The second problem is the
$\ell^2$-approximation
of a point from an $m$-dimensional ellipsoid
by means of $n$ linear measurements,
where we imagine that $m$ is much larger than $n$.
We assume that random information is given by scalar products
in $n$ directions taken independently from the 
uniform distribution on the sphere in $\IR^m$.
This is studied in Section~\ref{sec:random info hilbert},
which is based on~\cite{HKNPU19}.

We point to the fact that several examples of the sort
\emph{random information is good}
can be deduced from various papers that use the probabilistic method.
We refer to \cite{GG84,SW98,HNWW01,UV18}.
In these papers, the authors introduce a random
family of algorithms or point sets
and show that the expected worst case error (respectively
discrepancy or dispersion) is small.
This is used to obtain the existence of good algorithms.
However, it actually implies that \emph{most}
of the algorithms in that family are good.
Therefore, the expected radius of the 
random information that lies on
the bottom of these algorithms must also be small.

\section{Standard Information for Lipschitz Functions}
\label{sec:random info Lipschitz}

Let $\dist$ denote the maximum metric on the $d$-torus,
that is,
$$
 \dist(\mathbf{x},\mathbf{y})=\min_{\mathbf{k}\in\IZ^d} 
 \norm{\mathbf{x}+\mathbf{k}-\mathbf{y}}_\infty
 \quad\text{for}\quad
 \mathbf{x},\mathbf{y}\in[0,1]^d.
$$
We study the problem of $L^p$-approximation for $1\leq p\leq \infty$ on the class 
$$
 F_d=\set{f:[0,1]^d\to \IR \,\big\vert\, \forall\mathbf{x},\mathbf{y}\in[0,1]^d:
 \abs{f(\mathbf{x})-f(\mathbf{y})}
 \leq \dist(\mathbf{x},\mathbf{y})}
$$
of Lipschitz continuous functions on the $d$-torus
with deterministic algorithms based on $n$
pieces of standard information in the worst
case setting.
We note that this is a linear problem
and thus it is enough to consider nonadaptive information
$$
  N_n:F_d\to \IR^n, \quad N_n(f)=(f(\mathbf{x}))_{\mathbf{x}\in P_n}
$$
for point sets $P_n\subset [0,1]^d$ of cardinality $n$,
see Corollary~\ref{cor:adaption does not help}.
The quality of the information mapping $N_n$ is measured
by its radius
$$
  \rad(N_n,\APP,F_d,L^p)
 = \adjustlimits\inf_{\phi:\IR^n\to L^p} \sup_{f\in F_d} \norm{\phi(N_n(f))-f}_p,
$$
which is the worst case error of the best algorithm based on $N_n$,
or alternatively, by its radius at zero
$$
 r_{\mathbf 0}(N_n,\APP,F_d,L^p)
 = \sup_{f\in F_d: f\vert_{P_n}=0} \norm{f}_p.
$$
By Theorem~\ref{thm:radius at zero} the radius at zero coincides 
with the overall radius up to a factor of at most 2.
In this case, we even know that the additional
factor is not necessary and that the optimal algorithm
based on $N_n$ works as follows.

\begin{alg}
\label{alg:optimal alg Lipschitz}
 Given $N_n$ as above and $f\in F_d$, let
 \[
  f^+=\min_{\mathbf{x}\in P_n} \brackets{f(\mathbf x) + \dist(\cdot,\mathbf x)}
  \quad\text{and}\quad
  f^-=\max_{\mathbf{x}\in P_n} \brackets{f(\mathbf x) - \dist(\cdot,\mathbf x)}.
 \]
 We define $A_n(f)=(f^++f^-)/2$.
\end{alg}

Note that $f^+$ and $f^-$ are the maximal and the minimal 
 function in $F_d$ that interpolate $f$ at the points of $P_n$.

\begin{lemma}
For any nonadaptive information $N_n$,
Algorithm~\ref{alg:optimal alg Lipschitz}
satisfies
 $$
 \sup_{f\in F_d} \norm{f-A_n(f)}_p
  =\rad(N_n,\APP,F_d,L^p)=r_{\mathbf 0}(N_n,\APP,F_d,L^p).
 $$
\end{lemma}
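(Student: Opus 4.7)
The plan is to prove two inequalities simultaneously: $\rad(N_n,\APP,F_d,L^p) \geq r_{\mathbf 0}(N_n,\APP,F_d,L^p)$ (a general lower bound reflecting the linearity of the problem), and $\sup_{f\in F_d} \norm{f-A_n(f)}_p \leq r_{\mathbf 0}(N_n,\APP,F_d,L^p)$ (showing $A_n$ already attains the lower bound). Combined with the trivial $\sup_{f\in F_d} \norm{f-A_n(f)}_p \geq \rad(N_n,\APP,F_d,L^p)$ from the definition of the radius, this forces all three quantities to coincide.

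For the first inequality, I use a symmetry argument. Given any $f \in F_d$ with $f\vert_{P_n} = 0$, both $f$ and $-f$ lie in $F_d$ (by symmetry of the class) and satisfy $N_n(f) = N_n(-f) = \mathbf 0$. Hence any algorithm $\phi \circ N_n$ produces a single output $y = \phi(\mathbf 0) \in L^p$ for both inputs, and the triangle inequality gives
$$
 2\Xnorm{f}{p} = \Xnorm{(y-f) - (y+f)}{p} \leq \Xnorm{y-f}{p} + \Xnorm{y+f}{p},
$$
so $\max(\Xnorm{y-f}{p}, \Xnorm{y+f}{p}) \geq \Xnorm{f}{p}$. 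Taking the supremum over all admissible $f$ yields $\sup_{f\in F_d} \Xnorm{\phi(N_n(f)) - f}{p} \geq r_{\mathbf 0}(N_n,\APP,F_d,L^p)$, and infimizing over $\phi$ gives the desired lower bound on the radius.

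For the second inequality, I first record two standard properties of the envelopes $f^\pm$ from Algorithm~\ref{alg:optimal alg Lipschitz}. Since $f \in F_d$, for every $\mathbf x_0 \in P_n$ we have $f(\mathbf x_0) \leq f(\mathbf x) + \dist(\mathbf x_0,\mathbf x)$ for all $\mathbf x \in P_n$, which together with the choice $\mathbf x = \mathbf x_0$ gives $f^+(\mathbf x_0) = f(\mathbf x_0)$; analogously $f^-(\mathbf x_0) = f(\mathbf x_0)$. Moreover $f^- \leq f \leq f^+$ pointwise on $[0,1]^d$: for the upper bound, apply the Lipschitz inequality $f(\mathbf y) \leq f(\mathbf x) + \dist(\mathbf y,\mathbf x)$ for each $\mathbf x \in P_n$ and take the minimum. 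Pointwise this gives
$$
 \abs{f(\mathbf y) - A_n(f)(\mathbf y)} \leq \tfrac{1}{2}\brackets{f^+(\mathbf y) - f^-(\mathbf y)} =: g(\mathbf y).
$$

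The final step is to argue that $g \in F_d$ and $g\vert_{P_n} = 0$, which will imply $\Xnorm{g}{p} \leq r_{\mathbf 0}(N_n,\APP,F_d,L^p)$ and complete the proof. The vanishing on $P_n$ is immediate from $f^\pm(\mathbf x_0) = f(\mathbf x_0)$. For the Lipschitz property, the functions $\mathbf y \mapsto f(\mathbf x) \pm \dist(\mathbf y,\mathbf x)$ are $1$-Lipschitz with respect to $\dist$ for each fixed $\mathbf x$ (since $\dist$ is a metric and satisfies the triangle inequality), and the pointwise minimum resp.\ maximum of a family of $1$-Lipschitz functions is again $1$-Lipschitz; hence both $f^+$ and $f^-$ are $1$-Lipschitz, and so is $g = (f^+ - f^-)/2$ with constant at most $1$. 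Therefore $g \in F_d$, which finishes the proof. I do not anticipate a genuine obstacle: the only point requiring some care is the stability of the Lipschitz constant under the operations min, max, and averaging, which is straightforward once stated explicitly.
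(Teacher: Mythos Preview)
Your proof is correct and follows essentially the same route as the paper: the same chain of inequalities closed by the pointwise bound $|f-A_n(f)|\le (f^+-f^-)/2$ together with the observation that $(f^+-f^-)/2\in F_d$ vanishes on $P_n$. The only minor difference is that you justify $\rad(N_n)\ge r_{\mathbf 0}(N_n)$ via an explicit symmetry argument, whereas the paper simply invokes the general relation between the global radius and the local radius at zero; both are valid, and your treatment of the Lipschitz property of $g$ is in fact more detailed than the paper's.
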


\begin{proof}
 Clearly $A_n$ is of the form $\phi\circ N_n$
 for some mapping $\phi:\IR^n\to L^p$.
 The definitions of the radii easily yield 
 the first two inequalities of
 $$
  \sup_{f\in F_d} \norm{f-A_n(f)}_p
  \geq \rad(N_n) \geq r_{\mathbf 0}(N_n)
  \geq \sup_{f\in F_d} \norm{f-A_n(f)}_p.
 $$
 On the other hand, any $f\in F_d$ satisfies the pointwise estimate
 $$
  \abs{f - A_n(f)} \leq \frac{f^+-f^-}{2}.
 $$
 This implies the remaining inequality
 since the right hand side
 is an element of $F_d$ that vanish on $P_n$.
\end{proof}

It is known that \emph{optimal} information satisfies
$$
 \inf_{N_n}\rad(N_n,\APP,F_d,L^p) \asymp n^{-1/d}
$$
for all $1\leq p\leq\infty$.
This follows from the upper bound on the complexity
of uniform approximation as studied in \cite{Su78} 
and the lower bound on the complexity of numerical
integration as studied in \cite{Su79}.
Using the proof technique of the latter, 
we even obtain a precise formula
for the minimal radius if $n=m^d$.

\begin{prop}
 Let $n=m^d$ for some $m\in\IN$.
 Then
 $$
  \inf_{N_n} \rad(N_n,\APP,F_d,L^p)
  = \left\{\begin{array}{ll}
  		\displaystyle
        \frac{1}{2} \sqrt[\leftroot{1}\uproot{2}p]{\frac{d}{d+p}}\,n^{-1/d}
        & \text{if} \quad \displaystyle 1\leq p<\infty,\vspace*{2mm}\\
        \displaystyle
        \frac{1}{2}\,n^{-1/d} & \text{if} \displaystyle \quad p=\infty.
        \end{array}\right.
 $$
The infima are attained for 
 $P_n=\{i/m \mid 0\leq i<m\}^d$.
\end{prop}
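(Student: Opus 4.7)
The plan is to reduce everything to computing (or bounding) the $L^p$-norm of the distance function $d_n(\mathbf{x}) = \dist(\mathbf{x},P_n)$, then perform the required calculus on Voronoi cells for the upper bound and a layer-cake estimate for the lower bound.

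First I would observe that for any point set $P_n$,
\[
 r_{\mathbf{0}}(N_n,\APP,F_d,L^p)
 \;=\; \sup_{f\in F_d:\,f|_{P_n}=0}\|f\|_p
 \;=\;\|d_n\|_p.
\]
The upper bound in this identity is because a $1$-Lipschitz function vanishing on $P_n$ satisfies the pointwise estimate $|f(\mathbf{x})|\le \dist(\mathbf{x},P_n)$, while the lower bound is immediate since $d_n$ itself lies in $F_d$ and vanishes on $P_n$. By the lemma just proved above this equals $\rad(N_n,\APP,F_d,L^p)$, so we only need to evaluate $\|d_n\|_p$ for the grid and lower-bound it for an arbitrary $n$-point set.

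For the upper bound, take $P_n=\{i/m:0\le i<m\}^d$ with $n=m^d$. Because we are on the torus with the max metric, the Voronoi cells of the grid points are translates of the cube $C=[-1/(2m),1/(2m)]^d$, they tile $[0,1]^d$, and on each cell the distance function coincides with $\|\cdot\|_\infty$ centered at the corresponding grid point. Hence
\[
 \|d_n\|_p^p \;=\; n\int_{C}\|\mathbf{x}\|_\infty^{\,p}\,\d\mathbf{x}.
\]
For the inner integral, note that if $\mathbf{X}$ is uniform on $C$ and $a=1/(2m)$, then $\|\mathbf{X}\|_\infty/a$ has distribution function $t^d$ on $[0,1]$, so $\IE\|\mathbf{X}\|_\infty^p=a^p\int_0^1 t^p\,\d(t^d)=a^p\,\tfrac{d}{d+p}$. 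Multiplying by $\lambda^d(C)=(2a)^d=1/n$ and then by $n$ yields $\|d_n\|_p^p=a^p\cdot\tfrac{d}{d+p}=\bigl(\tfrac12 n^{-1/d}\bigr)^p\tfrac{d}{d+p}$, which is the claimed upper bound. For $p=\infty$ the same cell analysis gives $\|d_n\|_\infty=a=\tfrac12 n^{-1/d}$.

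The main (and only non-routine) step is the matching lower bound for an arbitrary $P_n$ with $|P_n|=n$. Here I would use the elementary volume estimate
\[
 \lambda^d\bigl(\{\mathbf{x}\in[0,1]^d:d_n(\mathbf{x})<r\}\bigr)\;\le\;n\,(2r)^d\qquad\text{for }0\le r\le 1/2,
\]
valid because a max-metric ball of radius $r\le 1/2$ on the torus has volume $(2r)^d$, and the set on the left is contained in the union of $n$ such balls. Combining this with the layer-cake formula for $1\le p<\infty$,
\[
 \|d_n\|_p^p \;=\; p\int_0^{\infty} r^{p-1}\,\lambda^d\!\bigl(d_n\ge r\bigr)\,\d r
 \;\ge\; p\int_0^{r_0}r^{p-1}\bigl(1-n(2r)^d\bigr)\d r,
\]
with $r_0=\tfrac12 n^{-1/d}$ chosen so that $n(2r_0)^d=1$, gives after a direct evaluation of the elementary integral the value $r_0^p\,\tfrac{d}{d+p}$, matching the upper bound exactly. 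For $p=\infty$ the same volume inequality shows that the union of open balls of any radius strictly smaller than $\tfrac12 n^{-1/d}$ around the points of $P_n$ cannot cover the torus, so $\|d_n\|_\infty\ge \tfrac12 n^{-1/d}$. The chief obstacle is really just spotting the right truncation $r_0$ so that the two competing terms in the layer-cake integral balance to reproduce the factor $d/(d+p)$; once the truncation is identified the calculation is a one-liner.
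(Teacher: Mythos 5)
Your proposal is correct and follows essentially the same approach as the paper: identify the radius with $\|\dist(\cdot,P_n)\|_p$, compute this exactly for the grid, and obtain the matching lower bound for arbitrary $P_n$ via a layer-cake formula combined with the volume bound $\lambda^d(d_n<r)\leq n(2r)^d$, truncating the integral at $r_0=\tfrac12 n^{-1/d}$ where the bound becomes trivial. The only cosmetic difference is that you parametrize the layer-cake integral in $r$ rather than $t=r^p$ and phrase the grid computation via Voronoi cells rather than as the equality case of the layer-cake estimate, but these are the same calculation.
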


\begin{proof}
 We realize that the function $\dist(\cdot,P_n)$
 is contained in $F_d$ and vanishes on $P_n$.
 On the other hand, every other function $f\in F_d$
 that vanishes on $P_n$ must satisfy
 $$
  \abs{f(\mathbf{x})} \leq \dist(\mathbf{x},P_n)
 $$
 for all $\mathbf{x}\in[0,1]^d$. 
 This yields
 $$
  \rad(N_n,\APP,F_d,L^p)
  = \sup_{f\in F_d: f\vert_{P_n}=0}\,\norm{f}_p
  = \norm{\dist(\cdot,P_n)}_p.
 $$
 Let us first consider the case $p=\infty$.
 Since the volume of the union of the balls 
 $B_r^\infty(\mathbf x)$ over $\mathbf x\in P_n$ is smaller than 1
 for all $r<1/(2m)$, there must be some $\mathbf{x}\in [0,1]^d$
 with $\dist(\mathbf{x},P_n)\geq r$ and thus
 $$
  \norm{\dist(\cdot,P_n)}_\infty \geq \frac{1}{2m}.
 $$
 It is easy to see that equality is satisfied for $P_n=\{i/m \mid 0\leq i<m\}^d$.
 Let us now turn to the case $p<\infty$.
 We have the formula
 $$
  \rad(N_n,\APP,F_d,L^p)^p
  = \int_{[0,1]^d} \dist(\mathbf{x},P_n)^p~\d\mathbf{x}
  = \int_0^{\infty} \lambda^d\brackets{\dist(\mathbf{x},P_n)^p\geq t}~\d t.
 $$
 We note that
 $$
  \lambda^d\brackets{\dist(\mathbf{x},P_n)^p\geq t}
  = 1-\lambda^d\brackets{B_{t^{1/p}}^\infty(P_n)}
  \geq 1-n(2t^{1/p})^d,
 $$
 where equality holds if the sets
 $B_{t^{1/p}}^\infty(\mathbf y)$ for $\mathbf y\in P_n$
 are pairwise disjoint.
 This yields
 \begin{multline*}
  \rad(N_n,\APP,F_d,L^p)^p
  \geq \int_0^{(1/2m)^p} \lambda^d\brackets{\dist(\mathbf{x},P_n)^p\geq t}~\d t\\
  \geq \frac{1}{(2m)^p} - 2^d n \int_0^{(1/2m)^p} t^{d/p}~\d t
  = \frac{1}{(2m)^p} \frac{d}{d+p}
 \end{multline*}
 with equality for $P_n=\{i/m \mid 0\leq i<m\}^d$.
 This proves the statement.
\end{proof}

In the following, 
we want to study the quality of an \emph{average} information
mapping with cost $n$.
That is, we ask for the expected radius of the random information
$$
 N_n: F_d\to\IR^n, \quad
 N_n(f)=\brackets{f\brackets{\mathbf{x}^{(1)}},\hdots,f\brackets{\mathbf{x}^{(n)}}},
$$
where the points $\mathbf{x}^{(i)}$ are 
independent and uniformly distributed
in $[0,1]^d$.
If $p$ is finite, the $p^{\rm th}$ moment of the radius
at zero can be computed precisely.

\begin{thm}
 Let $p> 0$ and $n\in\IN$.
 Then
 $$
  \IE \brackets{\rad(N_n,\APP,F_d,L^p)}^p
  = \frac{1}{2^p} \frac{n!}{(p/d+1)\cdots(p/d+n)}.
 $$
 In particular, the following sequences are strongly
 equivalent as $n$ tends to infinity:
 $$
  \sqrt[\leftroot{1}\uproot{2}p]{\IE \brackets{\rad(N_n,\APP,F_d,L^p)}^p}
  \sim
  \frac{1}{2} \sqrt[\leftroot{1}\uproot{2}p]{\Gamma(p/d+1)}\cdot n^{-1/d}.
 $$
\end{thm}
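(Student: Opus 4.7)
The plan is to reduce the expectation to a one-dimensional integral via Fubini, exploit the translation invariance of the torus metric to remove the dependence on $\mathbf{x}$, and then evaluate the resulting integral as a Beta function.

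First I would recall from the proof of the preceding proposition that for any point set $P_n$,
\[
 \rad(N_n,\APP,F_d,L^p)^p = \int_{[0,1]^d} \dist(\mathbf x,P_n)^p~\d\mathbf x.
\]
Applying Tonelli and writing $\dist(\mathbf x,P_n)=\min_{i\leq n}\dist(\mathbf x,\mathbf x^{(i)})$ gives
\[
 \IE\brackets{\rad(N_n,\APP,F_d,L^p)^p}
 =\int_{[0,1]^d}\IE\brackets{\min_{i\leq n}\dist(\mathbf x,\mathbf x^{(i)})^p}~\d\mathbf x.
\]
The key observation is that the torus metric $\dist$ is translation invariant and the $\mathbf x^{(i)}$ are uniform on $[0,1]^d$, so the random variable $\dist(\mathbf x,\mathbf x^{(i)})$ has the same law as $\dist(\mathbf 0,\mathbf x^{(i)})$ for every fixed $\mathbf x$. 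Hence the inner expectation does not depend on $\mathbf x$ and
\[
 \IE\brackets{\rad(N_n,\APP,F_d,L^p)^p}
 =\IE\brackets{R_{\min}^p},
 \quad R_{\min}=\min_{i\leq n}\dist(\mathbf 0,\mathbf x^{(i)}).
\]

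Next I would compute the distribution of $R_{\min}$. Since the $\infty$-ball of radius $r\in[0,1/2]$ around $\mathbf 0$ in the torus has Lebesgue measure $(2r)^d$, the i.i.d.\ variables $R_i=\dist(\mathbf 0,\mathbf x^{(i)})$ satisfy $\IP(R_i\leq r)=(2r)^d$, hence $\IP(R_{\min}>r)=(1-(2r)^d)^n$ for $r\in[0,1/2]$. Using the identity $\IE[R_{\min}^p]=\int_0^\infty \IP(R_{\min}^p>t)\,\d t$ and the substitution $u=(2t^{1/p})^d$, which maps $[0,2^{-p}]$ to $[0,1]$ with $\d t=\tfrac{p}{d\cdot 2^p}u^{p/d-1}\d u$, this becomes
\[
 \IE[R_{\min}^p]=\frac{p}{d\cdot 2^p}\int_0^1 u^{p/d-1}(1-u)^n\,\d u
 =\frac{p}{d\cdot 2^p}\,B\!\brackets{p/d,\,n+1}.
\]
Expanding the Beta function via $B(p/d,n+1)=\Gamma(p/d)\,n!/\Gamma(p/d+n+1)$ and using $\Gamma(p/d+n+1)=(p/d)(p/d+1)\cdots(p/d+n)\Gamma(p/d)$ yields
\[
 \IE\brackets{\rad(N_n,\APP,F_d,L^p)^p}
 =\frac{1}{2^p}\cdot\frac{n!}{(p/d+1)\cdots(p/d+n)},
\]
which is the claimed identity.

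Finally, the asymptotic statement is a direct consequence of the well-known ratio limit $n!\,/\Gamma(p/d+n+1)\sim n^{-p/d}$ (Stirling, or the Weierstrass product for $\Gamma$): writing
\[
 \frac{n!}{(p/d+1)\cdots(p/d+n)}=\Gamma(p/d+1)\cdot\frac{n!}{\Gamma(p/d+n+1)}\sim \Gamma(p/d+1)\,n^{-p/d},
\]
and taking $p$-th roots gives the asserted strong equivalence. The only mild subtlety in the whole argument is ensuring the upper endpoint of the substitution is indeed $1$ and keeping track of the factor $p/(d\,2^p)$; no genuinely hard obstacle arises, since the torus geometry makes the distribution of $R_i$ completely explicit.
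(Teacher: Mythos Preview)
Your proof is correct and follows essentially the same approach as the paper: Tonelli to swap expectation and the spatial integral, observe the integrand is constant in $\mathbf{x}$, compute the tail probability $(1-(2t^{1/p})^d)^n$, and evaluate the resulting one-dimensional integral. The only cosmetic differences are that you invoke translation invariance of the torus explicitly (the paper instead fixes $\mathbf{x}$ and computes the distribution directly), and you recognise the integral as a Beta function (the paper uses the substitution $s=1-(2t^{1/p})^d$ and integration by parts); both routes yield the same closed form.
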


\begin{proof}
 Let $P_n=\{\mathbf{x}^{(1)},\hdots,\mathbf{x}^{(n)}\}$.
 Recall that
 $$
  \rad(N_n,\APP,F_d,L^p)^p=
  \int_{[0,1]^d} \dist(\mathbf{x},P_n)^p~\d\mathbf{x}.
 $$
 Using Tonelli's theorem, we obtain
 $$
  \IE \brackets{\rad(N_n,\APP,F_d,L^p)}^p
  = \int_{[0,1]^d} \IE\brackets{\dist(\mathbf{x},P_n)}^p~\d\mathbf{x}.
 $$
 We will show that the integrand of the latter
 integral is constant.
 Let us fix $\mathbf{x}\in[0,1]^d$ and
 note that $\dist(\mathbf{x},P_n)\in[0,1/2]$.
 For any $t\in[0,1/2]$ we have 
 $$
  \dist(\mathbf{x},P_n) \geq t
  \quad\Leftrightarrow\quad
  \forall i\in\set{1,\hdots,n}:
  \mathbf{x}^{(i)} \not\in B_t^\infty(\mathbf{x})
 $$
 and thus
 $$
  \IP\brackets{\dist(\mathbf{x},P_n) \geq t}
  = \brackets{1-(2t)^d}^n.
 $$
 The substitution $s=1-(2t^{1/p})^d$ and
 integration by parts yields
 \begin{multline*}
  \IE\brackets{\dist(\mathbf{x},P_n)}^p
  =\int_0^{2^{-p}} \IP\brackets{\dist(\mathbf{x},P_n)^p \geq t}~\d t
  =\int_0^{2^{-p}} \brackets{1-(2t^{1/p})^d}^n~\d t\\
  =\frac{p/d}{2^p}\int_0^1 s^n (1-s)^{p/d-1}~\d s
  =\frac{1}{2^p} \frac{n!}{(p/d+1)\cdots(p/d+n)}
 ,\end{multline*}
 which implies the statement of our theorem.
\end{proof}

For $p\geq 1$, the expected radius is bounded
above by its $p^{\rm th}$ moment
and bounded below by the radius of optimal information.
This leads to the following corollary.

\begin{cor}
\label{cor:Lip p<infty}
 Let $1\leq p <\infty$. Then
 $$
  \IE\brackets{ \rad(N_n,\APP,F_d,L^p)}
  \asymp \inf_{N_n}\rad(N_n,\APP,F_d,L^p) 
  \asymp n^{-1/d}.
 $$
\end{cor}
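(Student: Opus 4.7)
The plan is to combine the two preceding results of the section. The second equivalence $\inf_{N_n}\rad(N_n,\APP,F_d,L^p) \asymp n^{-1/d}$ has already been established by the proposition that computes the infimum precisely when $n=m^d$; a standard monotonicity argument (embedding any $n$ into the next cube root) handles general $n$ up to a constant factor. So the real task is to sandwich $\IE\brackets{\rad(N_n,\APP,F_d,L^p)}$ between two constant multiples of $n^{-1/d}$.

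For the lower bound I would simply observe that, for every realization $\omega$ of $N_n$, the random point set $P_n(\omega)$ is an admissible deterministic point set of cardinality $n$, so
\[
 \rad(N_n(\omega),\APP,F_d,L^p) \geq \inf_{N_n}\rad(N_n,\APP,F_d,L^p).
\]
Taking expectations and invoking the second equivalence gives $\IE\brackets{\rad(N_n,\APP,F_d,L^p)} \vargtrsim n^{-1/d}$.

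For the upper bound I would use Jensen's inequality. Since $1 \leq p < \infty$, the map $t \mapsto t^p$ is convex on $[0,\infty)$, hence
\[
 \IE\brackets{\rad(N_n,\APP,F_d,L^p)} \leq \brackets{\IE\brackets{\rad(N_n,\APP,F_d,L^p)}^p}^{1/p}.
\]
The preceding theorem evaluates the right-hand side exactly and shows it is strongly equivalent to $\tfrac{1}{2}\sqrt[p]{\Gamma(p/d+1)}\cdot n^{-1/d}$, so in particular $\IE\brackets{\rad(N_n,\APP,F_d,L^p)} \varlesssim n^{-1/d}$. Combining both bounds yields the claimed asymptotic equivalences.

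There is no real obstacle here; the corollary is essentially a bookkeeping consequence of the previous theorem (which did the analytic work of computing $\IE[\rad^p]$ via the $(1-(2t)^d)^n$ tail distribution) and the known minimax rate $n^{-1/d}$. The only mild subtlety worth stating explicitly in the final write-up is that Jensen only gives a one-sided inequality between the $L^1$-mean and the $L^p$-mean, but that is exactly the direction we need, and the matching lower bound is pointwise rather than moment-based.
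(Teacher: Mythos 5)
Your proposal matches the paper's own (unstated but clearly signposted) argument: the paper introduces the corollary with exactly the two observations you make — the expected radius is bounded above by the $p^{\rm th}$ root of the $p^{\rm th}$ moment (Jensen, using $p\geq1$) and below by the radius of optimal information — and both of these were evaluated to order $n^{-1/d}$ in the immediately preceding results. Your account is correct and takes essentially the same route; the only addition you make, the remark about reducing general $n$ to the $n=m^d$ case by monotonicity, is a harmless bookkeeping point the paper also leaves implicit.
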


Thus, in the sense of order of convergence,
an average information mapping is already optimal 
for the problem of $L^p$-approximation on $F_d$.

\begin{rem}[Modifications of $F_d$]
 The rates of convergence of the average and
 the optimal radius do not change
 if we replace the maximum metric on the torus
 by some equivalent metric.
 The same holds true if we change the
 Lipschitz constant or if we switch 
 to the nonperiodic setting.
\end{rem}

We now turn to the case $p=\infty$,
the problem of uniform approximation.
In this case,
the expected radius of information 
is closely related to 
the so called coupon collector's problem.
This is the question for
the random number $\tau_\ell$ of coupons that a coupon collector has
to collect to obtain a complete set of $\ell$ distinct coupons.
The following facts on the distribution of $\tau_\ell$ 
are well known. We refer to \cite{LPW09}.
Here $H_\ell=\sum_{k=1}^\ell 1/k$ is the $\ell^{\rm th}$
harmonic number. Note that $H_\ell\sim \ln\ell$
as $\ell\to\infty$. 

\begin{prop}
\label{prop:coupons}
 Let $(Y_i)_{i=1}^\infty$ be a sequence of random
 variables that are uniformly distributed in
 the set $\set{1,\hdots,\ell}$
 and let
 $$
  \tau_\ell=\min\set{n\in\IN \mid \set{Y_1,\hdots,Y_n}=\set{1,\hdots,\ell}}.
 $$
 Then
 $$
  \IE\,\tau_\ell= \ell H_\ell
  \quad\text{and}\quad
  \Var \tau_\ell\leq \ell^2 \sum_{k=1}^\ell 1/k^2
 $$
 and for any $c>0$,
 $$
  \IP\brackets{\tau_\ell > \lceil c\,\ell\ln \ell\rceil} \leq 
  \ell^{-c+1}.
 $$
\end{prop}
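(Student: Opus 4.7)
The plan is to use the classical decomposition of $\tau_\ell$ into waiting times between successive ``new'' coupons. Specifically, I would write $\tau_\ell=T_1+T_2+\dots+T_\ell$, where $T_i$ denotes the number of additional draws needed to obtain the $i$-th distinct coupon after $i-1$ have already been seen. By the Markov property, the $T_i$ are independent and $T_i$ is geometrically distributed with success probability $p_i=(\ell-i+1)/\ell$, since any of the $\ell-i+1$ yet-unseen coupons results in success.

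From there the first two claims are straightforward. Using $\mathbb{E}T_i=1/p_i=\ell/(\ell-i+1)$ and linearity gives
\[
 \mathbb{E}\tau_\ell=\sum_{i=1}^\ell \frac{\ell}{\ell-i+1}=\ell\sum_{k=1}^\ell\frac{1}{k}=\ell H_\ell.
\]
For the variance, independence yields $\Var\tau_\ell=\sum_{i=1}^\ell\Var T_i$, and the geometric variance formula gives $\Var T_i=(1-p_i)/p_i^2\le 1/p_i^2=\ell^2/(\ell-i+1)^2$, so summing and reindexing produces exactly $\ell^2\sum_{k=1}^\ell 1/k^2$.

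For the tail bound I would not use the decomposition at all but a direct union bound. Observe that the event $\{\tau_\ell>n\}$ says that some coupon $j\in\{1,\dots,\ell\}$ has not been collected in the first $n$ draws. For fixed $j$ this has probability $(1-1/\ell)^n\le e^{-n/\ell}$, so a union bound over $j$ gives $\mathbb{P}(\tau_\ell>n)\le\ell\,e^{-n/\ell}$. Plugging in $n=\lceil c\ell\ln\ell\rceil\ge c\ell\ln\ell$ bounds this by $\ell\cdot\ell^{-c}=\ell^{1-c}$, which is the asserted estimate.

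None of the steps looks to be a serious obstacle; the only points to handle with a bit of care are the formal verification that the $T_i$ are genuinely independent (which follows from conditioning on the identities of the already-collected coupons, since the probability of success in the next trial depends only on how many distinct coupons have been seen) and the use of $1-x\le e^{-x}$ to pass from $(1-1/\ell)^n$ to $e^{-n/\ell}$ in the tail bound.
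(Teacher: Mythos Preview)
Your proposal is correct and matches the paper's proof essentially step for step: the same decomposition into independent geometric waiting times for the mean and variance, and the same union bound over the events ``coupon $j$ missing after $n$ draws'' combined with $(1-1/\ell)^n\le e^{-n/\ell}$ for the tail estimate.
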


\begin{proof}
 For $1\leq i \leq \ell$, let $\nu_i$ be the
 number of coupons that have to be collected
 to get the $i^{\rm th}$ distinct coupon after having
 collected $i-1$ distinct coupons.
 These are independent geometric random variables with
 $$
  \IE\,\nu_i = \frac{\ell}{\ell-i+1}
  \quad\text{and}\quad
  \Var\nu_i= \frac{\ell(i-1)}{(\ell-i+1)^2}
  \leq \frac{\ell^2}{(\ell-i+1)^2}.
 $$
 Now the first two statements follow from $\tau_\ell=\sum_{i=1}^\ell \nu_i$.
 To obtain the tail bound, we consider the events
 $A_i$ that the coupon with number $i$ was not
 collected during the first $\lceil c\,\ell\ln \ell\rceil$ trials.
 Then
 $$
  \IP(A_i)=\brackets{1-1/\ell}^{\lceil c\,\ell\ln\ell\rceil}
  \leq \exp\brackets{-c\ln \ell}
  =\ell^{-c}.
 $$
 This yields
 $$
  \IP\brackets{\tau_\ell > \lceil c\,\ell\ln \ell\rceil}
  = \IP\brackets{\bigcup_{i=1}^\ell A_i}
  \leq \sum_{i=1}^\ell \IP(A_i)
  \leq \ell^{-c+1},
 $$
 as stated in the proposition.
\end{proof}

This leads to the following estimates
of the expected radius for $p=\infty$.

\begin{thm}
 Let $n\in\IN$ and let
 $$
  m_1=\min\set{m\in\IN \mid m^d (H_{m^d}-2) \geq n},
  \quad
  m_2=\max\set{m\in\IN \mid 2 m^d \ln(m^d) \leq n}.
 $$
 Then
 $$
  \frac{1}{4m_1}
  \leq \IE\brackets{ \rad(N_n,\APP,F_d,L^\infty)} \leq
  \frac{2}{m_2}.
 $$
\end{thm}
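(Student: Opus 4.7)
My plan starts from the identification, already used in the finite-$p$ case,
\[
\rad(N_n,\APP,F_d,L^\infty) \;=\; \sup_{f\in F_d:\,f|_{P_n}=0}\norm{f}_\infty \;=\; \norm{\dist(\cdot,P_n)}_\infty,
\]
where $P_n=\{\mathbf x^{(1)},\dots,\mathbf x^{(n)}\}$ and $\dist$ is the torus $\infty$-metric. Thus the random radius equals the covering radius of $P_n$ in $[0,1]^d$. The whole argument then rests on a discretization: partition the $d$-torus into $m^d$ congruent axis-parallel half-open cubes of side $1/m$ (thought of as ``coupons''). Since the $\mathbf x^{(i)}$ are i.i.d.\ uniform on the torus, the sequence of cubes they land in is precisely the i.i.d.\ sequence $Y_1,Y_2,\dots$ uniform on $\{1,\dots,m^d\}$ from Proposition~\ref{prop:coupons}, so the first time every cube is hit is $\tau_{m^d}$.

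For the upper bound I take $m=m_2$. If every cube contains a sample point, then any $\mathbf x\in[0,1]^d$ lies in a cube containing some $\mathbf x^{(i)}$, hence $\dist(\mathbf x,P_n)\le 1/m_2$ (the torus $\infty$-diameter of a cube), so $\rad(N_n)\le 1/m_2$. Otherwise I use the trivial bound $\rad(N_n)\le 1/2$ (the diameter of the torus). The definition of $m_2$ gives $n\ge 2 m_2^d\ln(m_2^d)$, so the tail estimate of Proposition~\ref{prop:coupons} with $c=2$ and $\ell=m_2^d$ yields $\IP(\tau_{m_2^d}>n)\le m_2^{-d}$. Splitting the expectation accordingly,
\[
\IE\,\rad(N_n) \;\le\; \frac{1}{m_2} + \frac{1}{2}\cdot m_2^{-d} \;\le\; \frac{2}{m_2},
\]
where the last step uses $m_2^{\,d-1}\ge 1/2$, which holds for all $d\ge 1$ and $m_2\ge 1$.

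For the lower bound I take $m=m_1$ and the same partition. The key geometric remark is that whenever some cube $Q$ contains no sample point, its center $\mathbf c_Q$ satisfies $\dist(\mathbf c_Q,P_n)\ge 1/(2 m_1)$, because any sample lies outside $Q$ and the nearest point outside $Q$ is at $\infty$-distance at least $1/(2m_1)$ from $\mathbf c_Q$. Consequently, on the event $\{\tau_{m_1^d}>n\}$ the covering radius is at least $1/(2m_1)$, and so $\IE\,\rad(N_n)\ge \frac{1}{2m_1}\,\IP(\tau_{m_1^d}>n)$. To push the probability past $1/2$ I apply Chebyshev: the definition of $m_1$ says $\IE\tau_{m_1^d}-n\ge 2 m_1^d$, and Proposition~\ref{prop:coupons} gives $\Var\tau_{m_1^d}\le m_1^{2d}\sum_{k\ge 1}1/k^2=\pi^2 m_1^{2d}/6$, so
\[
\IP(\tau_{m_1^d}\le n)\;\le\;\frac{\Var\tau_{m_1^d}}{(2m_1^d)^2}\;\le\;\frac{\pi^2}{24}\;<\;\frac12,
\]
and hence $\IE\,\rad(N_n)\ge \frac{1}{2m_1}\cdot\frac12=\frac{1}{4m_1}$.

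The plan is essentially book-keeping once the coupon collector bridge is in place; the main place where one has to be careful is the upper bound, both in matching the ceiling in the tail estimate of Proposition~\ref{prop:coupons} with the inequality $2m_2^d\ln(m_2^d)\le n$ defining $m_2$, and in verifying the inequality $1/m_2+1/(2m_2^d)\le 2/m_2$ in edge cases (small $m_2$, small $d$). The lower bound also requires checking that $m_1$ is well-defined and at least $2$ for the regime of $n$ in question, but these are routine monotonicity considerations rather than real obstacles.
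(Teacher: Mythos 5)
Your proof is correct and follows essentially the same route as the paper's: discretize the torus into $m^d$ axis-aligned cubes, identify the covering radius with the radius of information, bridge to the coupon-collector variable $\tau_{m^d}$, then use the tail bound from Proposition~\ref{prop:coupons} with $c=2$ for the upper bound and Chebyshev for the lower bound. The only cosmetic difference is that you close the upper bound with the trivial bound $1/2$ (the torus diameter) on the bad event, whereas the paper uses $1$; both satisfy the final inequality $1/m_2 + \IP(A^{\mathsf{c}})\cdot(\text{trivial bound}) \leq 2/m_2$.
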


\begin{proof}
 We decompose $[0,1]^d$ into $\ell=m^d$ subcubes 
 $$
  B_{\mathbf{k}}=\prod_{i=1}^d\left[\frac{k_i-1}{m},\frac{k_i}{m}\right),
  \quad
  \mathbf{k}\in\set{1,2,\hdots,m}^d
 $$
 of equal volume for some $m\in\IN$.
 Recall that the radius at zero is given by
 $$
  \rad(N_n,\APP,F_d,L^\infty)
  = \max_{\mathbf x\in [0,1]^d} \dist(\mathbf x,P_n),
 $$ 
 and therefore bounded above by $1/m$ if every box contains
 a point $\mathbf x^{(i)}\in P_n$,
 and bounded below by $1/(2m)$ if one of the boxes does
 not contain a point.
 Let $A$ be the event that every box contains a point.
 Note that the number of random points $\mathbf x^{(i)}$
 that it takes to hit all the boxes follows the
 distribution of the coupon collector's variable $\tau_\ell$ 
 as defined in Proposition~\ref{prop:coupons}.
 Thus
 $$
  \IP(A) = \IP\brackets{\tau_\ell \leq n}.
 $$
 For the upper bound, we choose $m=m_2$.
 Proposition~\ref{prop:coupons} yields
 $$
  \IP(A^\mathsf{c}) = \IP\brackets{\tau_\ell > n} \leq 1/\ell
 $$
 and hence
 $$
  \IE\brackets{ \rad(N_n,\APP,F_d,L^\infty)}\\
 \leq \IP(A) \cdot \frac{1}{m} + \IP\brackets{A^\mathsf{c}} \cdot 1
 \leq \frac{2}{m}.
 $$
 For the lower bound, we choose $m=m_1$.
 Chebyshev's inequality yields
 $$
  \IP(A) = \IP\brackets{\tau_\ell\leq n}
  \leq \IP\brackets{\tau_\ell \leq \ell H_\ell-2\ell }
  \leq \frac{\Var \tau_\ell}{4\ell^2}
  \leq \frac{1}{2}.
 $$
 We obtain
 $$
  \IE\brackets{ \rad(N_n,\APP,F_d,L^\infty)}
  \geq \IP\brackets{A^\mathsf{c}} \frac{1}{2m} 
  \geq \frac{1}{4m},
 $$
 as it was to be proven.
\end{proof}

Note that both $m_1^d$ and $m_2^d$ are of order $n/\ln n$.
This yields the following corollary.
This corollary is already known from~\cite{BDKKW17},
where the authors study the uniform approximation
of functions on $[0,1]^d$ with bounded $r^{\rm th}$ derivative.
The upper bound for Sobolev spaces on closed manifolds 
can also be found in~\cite{EGO18}.

\begin{cor}[\cite{BDKKW17}]
\label{cor:Lip p=infty}
 For all $n\in\IN$ let $\ell=\lfloor n/\ln(n+1) \rfloor$. Then
 $$
  \IE\brackets{ \rad(N_n,\APP,F_d,L^\infty)}
  \asymp \inf_{N_\ell}\rad(N_\ell,\APP,F_d,L^\infty) 
  \asymp \ell^{-1/d}.
 $$
\end{cor}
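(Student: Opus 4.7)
The plan is to combine the two-sided bound on $\IE[\rad(N_n)]$ just established (in terms of the integers $m_1$ and $m_2$) with a careful asymptotic analysis of these integers, together with the previously recorded fact on the radius of optimal information.

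First I would verify that both $m_1^d$ and $m_2^d$ are of order $n/\ln n$. For the upper bound on $m_2$, I would check that the choice $m^d = \lfloor n/(2\ln n)\rfloor$ (for $n$ large enough) satisfies $2m^d \ln(m^d)\leq n$, since $\ln(m^d)\leq \ln n$; this gives $m_2^d \geq c_1\, n/\ln n$. For the lower bound on $m_1$, I would show that any $m$ with $m^d \geq c_2\, n/\ln n$ (for a sufficiently large constant $c_2$) already satisfies $m^d(H_{m^d}-2)\geq n$, using $H_{m^d}\sim \ln(m^d) \leq \ln n$; this gives $m_1^d \leq c_2\, n/\ln n$. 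Combined with the obvious matching inequalities in the other direction (by definition of $m_1,m_2$), this yields $m_1\asymp m_2 \asymp (n/\ln n)^{1/d}$, and hence
\[
 \IE\brackets{\rad(N_n,\APP,F_d,L^\infty)} \asymp (\ln n / n)^{1/d}.
\]

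Next I would observe that $\ell = \lfloor n/\ln(n+1)\rfloor$ satisfies $\ell \asymp n/\ln n$ (for $n\geq 2$), hence $\ell^{-1/d} \asymp (\ln n / n)^{1/d}$. This gives the first asymptotic equivalence in the corollary.

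Finally, for the second asymptotic equivalence, I would invoke the already-recorded fact from the discussion preceding the theorem (based on \cite{Su78,Su79}) that
\[
 \inf_{N_k}\rad(N_k,\APP,F_d,L^p) \asymp k^{-1/d}
\]
for every $1\leq p\leq\infty$ and every $k\in\IN$; specializing to $p=\infty$ and $k=\ell$ yields $\inf_{N_\ell}\rad(N_\ell,\APP,F_d,L^\infty)\asymp \ell^{-1/d}$, completing the chain. The only real work is the asymptotic bookkeeping for $m_1$ and $m_2$; the rest is essentially a rewriting.
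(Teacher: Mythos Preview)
Your proposal is correct and follows exactly the route the paper takes: the paper simply notes that ``both $m_1^d$ and $m_2^d$ are of order $n/\ln n$'' and says this yields the corollary, while you supply the (routine) asymptotic bookkeeping showing this and then combine it with the known order $\inf_{N_k}\rad(N_k)\asymp k^{-1/d}$. One cosmetic slip: your phrases ``upper bound on $m_2$'' and ``lower bound on $m_1$'' are swapped relative to what you actually prove (you establish $m_2^d\succcurlyeq n/\ln n$ and $m_1^d\preccurlyeq n/\ln n$), but the mathematics itself is right.
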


Thus, for the problem of uniform approximation on $F_d$,
an average information mapping with cost $n$ is as good 
as an optimal information mapping with cost $n/\ln n$.

\section{Linear Information for \texorpdfstring{$\ell^2$-}{}Approximation}
\label{sec:random info hilbert}

We study random information for $\ell^2$-approximation 
of points from a high or infinite dimensional ellipsoid
and compare it to optimal information. 
The radius of optimal information with cost $n$ is given by 
the length $\sigma_{n+1}$ of the $(n+1)^{\text{st}}$ largest semi-axis
of the ellipsoid. 
The sequence $\sigma$ of semi-axes also determines 
the distribution of the radius $\mathcal R_n$ 
of Gaussian random information. 
We find that 
random information behaves
very differently depending 
on whether $\sigma \in \ell^2$ or not. 
For $\sigma \notin \ell^2$ random information is completely useless
and we have $\IE[\mathcal R_n] = \sigma_1$. 
For $\sigma \in \ell^2$ the expected radius of random information tends to zero at least 
at rate $ o (1/\sqrt{n})$ as $n\to\infty$. 
The case 
 \[
  \sigma_n \asymp n^{-\alpha} \ln^{-\beta}(n+1) , 
 \]
where $\alpha > 0$ and $\beta\in\IR$, 
is very interesting for applications. 
Here we prove 
 \[
\IE[\mathcal R_n] \asymp
  \left\{\begin{array}{cl}
  	\sigma_1 	
        &
        \text{if} \quad \alpha<1/2 \text{\, or \,} \beta\leq\alpha=1/2,
        \vspace*{2mm}
        \\
	  \sigma_{n+1} \, \sqrt{\ln(n+1)}  \quad
        &
        \text{if} \quad \beta>\alpha=1/2,
        \vspace*{2mm}
        \\
       \sigma_{n+1}  
        &
        \text{if} \quad \alpha>1/2.
        \end{array}\right.
 \]
For the proof we use
a comparison result for Gaussian processes \`a la Gordon, 
exponential estimates for sums of chi-squared random variables, 
and estimates for the extreme singular values of (structured) Gaussian random matrices.
This section is based on \cite{HKNPU19}.

\subsection{The Problem}

Let $\sigma$ be a sequence of nonnegative numbers
$\sigma_1\geq\sigma_2\geq\hdots\geq 0$.
We consider the ellipsoid
$$
 F(\sigma)
 =\set{\mathbf x \in \ell^2 \mid 
 \sum_{j=1}^\infty \frac{x_j^2}{\sigma_j^2} \leq 1
 },
$$
where we require that $x_j=0$ whenever $\sigma_j=0$.
For all $n\in\IN$ let $G_n\in\IR^{n\times\infty}$ be a random
matrix with independent standard Gaussian entries $g_{ij}$.
We want to study the distribution of the random variable
\begin{equation}
\label{eq:definition R_n}
 \mathcal{R}_n(\sigma)
 = \sup\set{\norm{\mathbf x}_2 \mid \mathbf x \in F(\sigma),\,
 G_n(\mathbf x) = 0}.
\end{equation}
Of course, the equation
$G_n(\mathbf x) = 0$
requires that the involved series converge at all.
We now give several interpretations of the
random variable $\mathcal{R}_n(\sigma)$.
We start with the case that
\begin{equation}
 \label{eq:sigma finite}
 \exists m\in\IN \colon \sigma_j=0 \Longleftrightarrow j> m. 
\end{equation}
Then $F(\sigma)$ can be regarded as an ellipsoid in $\IR^m$
and $G_n$ can be regarded as an $n\times m$-matrix.
In this case we assume that $n<m$.
In fact, our main interest lies in the case
that $n$ is much smaller than $m$.


\subsubsection{Version 1}

Let $E_n$ be uniformly distributed on the
Grassmannian manifold of $n$-codimensional hyperplanes in $\IR^m$.
The intersection of $E_n$ and $F$ is
an $(m-n)$-dimensional ellipsoid.
We study the circumradius
of the random intersection,
that is,
$$
 \mathcal{R}_n^{(1)}(\sigma)=\rad\brackets{F(\sigma)\cap E_n}.
$$
This is the radius of the smallest Euclidean
ball that contains the intersection ellipsoid,
or equivalently the length of its largest semi-axis.
It is easy to see that the radius
is maximal if $E_n$ contains $\mathbf e_1$.
In this case, it takes the value $\sigma_1$.
The minimal radius on the other hand, is attained
if $E_n$ is the
span of the vectors $\mathbf e_i$ for $i>n$.
It is given by $\sigma_{n+1}$.
Thus, we always have
$$
 \mathcal{R}_n^{(1)}(\sigma)\in [\sigma_{n+1},\sigma_1].
$$
But how large is the radius of a typical intersection?
Is it comparable to the minimal or the
maximal radius or does it behave 
completely different?

\subsubsection{Version 2}

We study the problem of recovering
$\mathbf x\in F(\sigma)$ 
from $n$ pieces of information,
where we want to guarantee a small error
in the Euclidean norm.
The information about $\mathbf x\in F(\sigma)$
is given by coordinates in $n$ directions 
$\mathbf y^{(i)}\in \mathbb{S}_{m-1}$.
This is described by the mapping
$$
 N_n:F(\sigma)\to \IR^n, \quad N_n(\mathbf x)=
 (\langle\mathbf x,\mathbf y^{(i)}\rangle)_{i=1}^n.
$$
The quality of the information mapping is measured
by its radius, which is the worst case error of the best
recovery algorithm based on the information $N_n$,
that is,
$$
  \rad(N_n,\APP,F(\sigma),\ell^2_m)
 = \adjustlimits\inf_{\phi:\IR^n\to \IR^m} 
 \sup_{\mathbf x\in F(\sigma)} \norm{\phi(N_n(\mathbf x))-\mathbf x}_2.
$$
This is a linear problem over Hilbert spaces
as described in Section~\ref{sec:LPs}
since $F(\sigma)$ is the unit ball of 
the Hilbert space $H(\sigma)=\IR^m$
equipped with the scalar product 
$$
 \Xscalar{\mathbf x}{\mathbf y}{H(\sigma)}
 = \sum_{j\leq m} \frac{x_j y_j}{\sigma_j^2}.
$$
The numbers $\sigma_j$ are the singular values
of the embedding of $H(\sigma)$ into $\ell^2_m$.
It is well known that the information is \emph{optimal}
(its radius is minimal)
if the directions $\mathbf y^{(i)}$ coincide with the $n$
largest semi-axes of the ellipsoid, see Theorem~\ref{thm:LPs over Hilbert spaces}.
The quality of optimal information is 
given by 
$$
 \min_{\mathbf y^{(1)},\hdots,\mathbf y^{(n)}\in\mathbb{S}_{m-1}} \rad(N_n,\APP,F(\sigma),\ell^2_m)
 = \sigma_{n+1}.
$$
Here we want to study the typical quality 
of \emph{random} information in
comparison to optimal information.
We ask for the radius
$$
\mathcal{R}_n^{(2)}(\sigma)=\rad(N_n,\APP,F(\sigma),\ell^2_m),
$$
of the random information mapping $N_n$
where the points $\mathbf y^{(i)}$ are independent
and uniformly distributed on the Euclidean sphere $\mathbb{S}_{m-1}$.
Is typical random information much worse than optimal information?

\subsubsection{Version 3}

Like in the previous version,
we study the radius 
of a random information mapping.
This time we consider the Gaussian information $G_n$
from above.
We denote the radius of information by
\begin{equation}
\label{eq:radius random info}
 \mathcal{R}_n^{(3)}(\sigma)=\rad(G_n,\APP,F(\sigma),\ell^2_m)
 = \adjustlimits\inf_{\phi:\IR^n\to \IR^m} 
 \sup_{\mathbf x\in F(\sigma)} \norm{\phi(G_n(\mathbf x))-\mathbf x}_2.
\end{equation}
The following lemma says
that these are indeed merely three versions
of $\mathcal{R}_n(\sigma)$.
Moreover,
the alignment of the ellipsoid
with the standard axes of $\IR^m$
is not a relevant assumption.

\begin{lemma}
 \label{lem:equivalent versions}
 Under the assumption~\eqref{eq:sigma finite},
 the random variables $\mathcal{R}_n^{(1)}(\sigma)$, $\mathcal{R}_n^{(2)}(\sigma)$
 and $\mathcal{R}_n^{(3)}(\sigma)$ have the same distribution
 as $\mathcal{R}_n(\sigma)$,
 which does not change
 if $F(\sigma)$ is replaced by $QF(\sigma)$
 for some orthogonal matrix $Q\in \mathrm{O}(m)$
 in any of their definitions.
\end{lemma}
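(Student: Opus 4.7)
The plan is to show that all four quantities can be written as the supremum
\begin{equation*}
 \sup\set{\Xnorm{\mathbf x}{2} \mid \mathbf x\in F(\sigma),\ \mathbf x\in V}
\end{equation*}
for some random subspace $V\subset\IR^m$ of codimension at most $n$, and that in all four cases this random subspace is uniformly distributed on the Grassmannian of $(m-n)$-dimensional linear subspaces of~$\IR^m$. The claim then follows because the supremum above only depends on $V$ through its distribution, and this distribution matches in all four cases.

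First I would rewrite $\mathcal{R}_n^{(2)}(\sigma)$ and $\mathcal{R}_n^{(3)}(\sigma)$ as radii at zero. The problem $\mathcal{P}[\APP,F(\sigma),\ell^2_m,\lall,\mathrm{det},\mathrm{wc}]$ is linear, the input set is the unit ball of a Hilbert space, and the target space is a Hilbert space; hence by Theorem~\ref{thm:linear algorithms are optimal} applied to the realizations of the random information mappings we obtain, almost surely,
\begin{align*}
 \mathcal{R}_n^{(2)}(\sigma) &= \sup\set{\Xnorm{\mathbf x}{2} \mid \mathbf x\in F(\sigma),\ \scalar{\mathbf x}{\mathbf y^{(i)}}=0\text{ for }i=1,\hdots,n},\\
 \mathcal{R}_n^{(3)}(\sigma) &= \sup\set{\Xnorm{\mathbf x}{2} \mid \mathbf x\in F(\sigma),\ G_n(\mathbf x)=\mathbf 0}.
\end{align*}
In particular $\mathcal{R}_n^{(3)}(\sigma)=\mathcal{R}_n(\sigma)$ by definition. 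Moreover, $\mathcal{R}_n^{(1)}(\sigma)$ is the supremum of $\Xnorm{\mathbf x}{2}$ over $\mathbf x\in F(\sigma)\cap E_n$, where $E_n$ is a uniformly random $(m-n)$-dimensional subspace. So all three versions are of the announced form, with the random subspace being the kernel of $G_n$, the orthogonal complement of the span of $\mathbf y^{(1)},\hdots,\mathbf y^{(n)}$, and $E_n$ itself, respectively.

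Next I would invoke rotational invariance to identify these distributions. For any orthogonal $Q\in\mathrm{O}(m)$, the matrix $G_n Q$ has iid standard Gaussian entries, hence the same distribution as $G_n$; consequently $\ker(G_n)\stackrel{d}{=}Q^{-1}\ker(G_n)$, so $\ker(G_n)$ is $\mathrm{O}(m)$-invariant and therefore uniformly distributed on the Grassmannian of $(m-n)$-dimensional subspaces (uniqueness of the $\mathrm{O}(m)$-invariant probability measure). The same rotational invariance argument applied to the uniform distribution on $\mathbb S_{m-1}$ shows that $\vspan(\mathbf y^{(1)},\hdots,\mathbf y^{(n)})$ is almost surely $n$-dimensional and uniformly distributed on the corresponding Grassmannian, so its orthogonal complement has the same distribution as $\ker(G_n)$ and as $E_n$. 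This yields $\mathcal R_n^{(1)}(\sigma)\stackrel{d}{=}\mathcal R_n^{(2)}(\sigma)\stackrel{d}{=}\mathcal R_n^{(3)}(\sigma)=\mathcal R_n(\sigma)$.

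Finally, the orthogonal invariance of $F(\sigma)\mapsto QF(\sigma)$ follows from the same principle: substituting $\mathbf x=Q\mathbf x'$ gives
\begin{equation*}
 \sup\set{\Xnorm{\mathbf x}{2} \mid \mathbf x\in QF(\sigma),\ G_n(\mathbf x)=\mathbf 0}
 =\sup\set{\Xnorm{\mathbf x'}{2} \mid \mathbf x'\in F(\sigma),\ (G_n Q)(\mathbf x')=\mathbf 0},
\end{equation*}
and $G_n Q\stackrel{d}{=}G_n$. The only mild subtlety, which I would check carefully, is that the supremum genuinely depends only on the distribution of the random subspace (a measurability point) and that the almost-sure equality of radius and radius-at-zero from Theorem~\ref{thm:linear algorithms are optimal} is available for every realization of the random information; both are straightforward since, for fixed information, the problem is a deterministic linear problem over a Hilbert space.
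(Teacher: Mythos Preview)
Your proof is correct and follows essentially the same approach as the paper: reduce each radius to the radius at zero via Theorem~\ref{thm:linear algorithms are optimal}, so that every version becomes the supremum of $\|\mathbf x\|_2$ over $F(\sigma)$ intersected with a random subspace, and then observe that rotational invariance plus uniqueness of the Haar measure forces all these random subspaces to be uniformly distributed on the Grassmannian. The paper's argument is more terse but identical in substance.
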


\begin{proof}
 The orthogonal invariance immediately follows
 from the fact that the distributions of the 
 hyperplane $E_n$, the matrix $N_n$ and the matrix
 $G_n$ are invariant under orthogonal transformations.
 To see that the variables $\mathcal{R}_n^{(i)}(\sigma)$ for $i\leq 3$
 are interchangeable, we need the fact that
 $$
  \rad(A,\APP,F(\sigma),\ell^2_m)
  = \sup\set{\norm{\mathbf x}_2 \mid \mathbf x \in F(\sigma)\cap \ker(A) }
  = \rad\brackets{F(\sigma)\cap \ker(A)}
 $$
 for any matrix $A\in\IR^{n\times m}$,
 which follows from Theorem~\ref{thm:linear algorithms are optimal}.
 Now we only need to notice that the kernels 
 of $N_n$ and $G_n$ are uniformly distributed
 on the Grassmannian
 which follows from the orthogonal invariance of both 
 distributions
 and the uniqueness of the normalized Haar measure on compact groups.
\end{proof}

\begin{rem}
The radius of the section of a symmetric convex body 
with a random lower-dimensional subspace has already been studied in \cite{GM97,GM98} 
and subsequently in many other papers such as \cite{LT00,GMT05,LPT06}. 
However, one cannot expect these bounds to be sharp for the whole class 
of symmetric convex bodies, as has already been pointed out in \cite[Example 2.2]{GM97} 
for the example of ellipsoids with highly incomparable semi-axes. 
Moreover, the focus in these papers was on subspaces of proportional codimension, 
while we focus on subspaces with comparably small codimension
such as $m=2^n$.
\end{rem}

In the infinite-dimensional case, that is,
if \eqref{eq:sigma finite} does not hold,
the interpretations according to Versions~1 and 2 fail.
There is no uniform distribution on the sphere in $\ell^2$
and the Grassmannian for $m=\infty$.
However, $\mathcal{R}_n(\sigma)$ may still be
interpreted as the radius of Gaussian random information:
\begin{itemize}
 \item Let $\sigma \in \ell^2$. Then
 the matrix $G_n$ almost surely defines a bounded
 operator from the Hilbert space
 $$
 H(\sigma)=\Big\{\mathbf x\in\ell^2\,\big\vert\,\sum_{j=1}^\infty \frac{x_j^2}{\sigma_j^2}<\infty\Big\},
 \quad
 \Xscalar{\mathbf x}{\mathbf y}{H(\sigma)}=
  \sum_{j=1}^\infty \frac{x_j y_j}{\sigma_j^2}
 $$
 to $\ell^2_n$. This follows for example from \cite[Theorem~3.1]{BV16}
, see also Lemma~\ref{lem:BH}.
 Since $H(\sigma)$ is a Hilbert
 space and $F(\sigma)$ is its unit ball, we have
 $$
  \mathcal{R}_n(\sigma)=\rad(G_n,\APP,F(\sigma),\ell^2_m)
 $$
 almost surely, see Theorem~\ref{thm:linear algorithms are optimal}.
 \item Let $\sigma \not\in \ell^2$.
 Then the matrix $G_n$ almost surely defines an unbounded operator from
 $H(\sigma)$ to $\ell^2_n$.
 This follows for example from \cite[Corollary~4.1]{LVY18},
 see also Lemma~\ref{lem:smin basic}.
 The mapping $G_n$ need not even be defined for all $\mathbf x \in F(\sigma)$.
 Thus, the definition of the radius 
 $\rad(G_n,\APP,F(\sigma),\ell^2_m)$ according to 
 equation~\eqref{eq:radius random info} makes no sense
 and we need to define the radius in some other way.
 Recall that the radius is supposed to reflect the
 worst case error of the best recovery algorithm based on $G_n$.
 On the one hand, the zero algorithm has the worst case error $\sigma_1$.
 On the other hand, any algorithm based on $G_n$ cannot distinguish the
 elements $\mathbf x\in F(\sigma)$ for which $G_n(\mathbf x)=0$.
 Thus, we must have
 $$
  \mathcal{R}_n(\sigma) \leq \rad(G_n,\APP,F(\sigma),\ell^2_m) \leq \sigma_1
 $$
 for any reasonable definition of the radius.
 It will turn out that $\mathcal{R}_n(\sigma)=\sigma_1$ almost
 surely, which is why the precise definition of the
 radius does not matter.
\end{itemize}

\begin{rem}
Instead of $\ell^2$ we may also consider a separable 
$L^2$-space since both spaces 
are isometrically isomorphic. 
Then we may study a compact embedding 
of a Hilbert space $H$ into $L^2$ and denote the unit ball 
of $H$ by $F(\sigma)$, 
where $\sigma$ is the sequence of singular values
of the embedding.
An important case are Sobolev embeddings, 
where $H$ is a Sobolev space of functions that are defined 
on a bounded domain in $\IR^d$. 
It is well known that then the singular values behave as 
$ 
  \sigma_n \asymp n^{-\alpha} \ln^{-\beta}(n+1),
$  
where $\alpha$ and $\beta$ depend on the smoothness and the dimension $d$.
\end{rem}

\subsection{Results}


We prove the following bounds on the
random variable $\mathcal{R}_n(\sigma)$
which hold with high probability.
We start with upper bounds.

\begin{thm}[\cite{HKNPU19}]
\label{thm:upper bound random section}
Let $\sigma\in\ell^2$ be nonincreasing.
Then, for all $n\in\IN$ and
$c,s\in[1,\infty)$, we have
  $$
   \IP\left[
   \mathcal R_n(\sigma) \geq  \frac{221}{\sqrt{n}} 
   \bigg(\sum_{j\geq\lfloor n/4\rfloor}\sigma_j^2\bigg)^{1/2}\,
   \right]
   \leq 2e^{-n/100}
  $$
 and
  $$
   \IP\left[
   \mathcal R_n(\sigma) \geq 14sn
   \bigg(\sum_{j>n}\sigma_j^2\bigg)^{1/2}\,
   \right]
   \leq e^{-c^2 n}+\frac{c \sqrt{2e}}{s}.
$$
\end{thm}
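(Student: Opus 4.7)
The plan is to reduce the problem to two well-studied random-matrix quantities: the smallest singular value of a rectangular Gaussian matrix, and the operator norm of a Gaussian matrix with prescribed column variances $\sigma_j^2$. First I would fix a splitting index $m$, taking $m = \lfloor n/2\rfloor$ for the first inequality and $m = n$ for the second, and split every admissible $\mathbf x \in F(\sigma)\cap \ker(G_n)$ as $\mathbf x = (\mathbf x_I, \mathbf x_{II})$ with $I = \set{1,\dots,m}$. Let $G_n^{(I)} \in \IR^{n\times m}$ and $G_n^{(II)}$ denote the corresponding column blocks of $G_n$. The relation $G_n \mathbf x = 0$ gives $G_n^{(I)}\mathbf x_I = -G_n^{(II)}\mathbf x_{II}$, so
\[
  s_{\min}(G_n^{(I)})\, \Xnorm{\mathbf x_I}{2} \;\leq\; \Xnorm{G_n^{(II)}\mathbf x_{II}}{2} \;\leq\; \Xnorm{G_n^{(II)} D_{\sigma,II}}{\mathrm{op}},
\]
the last step because $\mathbf x_{II} = D_{\sigma,II}\mathbf y_{II}$ with $\Xnorm{\mathbf y_{II}}{2} \leq 1$ (the tail ellipsoid constraint). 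The same tail constraint also yields $\Xnorm{\mathbf x_{II}}{2} \leq \sigma_{m+1}$.

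The two technical ingredients are therefore (i) a lower bound on $s_{\min}(G_n^{(I)})$, and (ii) an upper bound on $\Xnorm{G_n^{(II)} D_{\sigma,II}}{\mathrm{op}}$. For (i) with $m = \lfloor n/2\rfloor$, the Davidson--Szarek deviation inequality gives $s_{\min}(G_n^{(I)}) \geq c\sqrt n$ on an event of probability at least $1 - e^{-n/100}$; for $m = n$ I would instead use a small-ball estimate of Szarek/Edelman type, $\IP[\sqrt n\, s_{\min}(G_n^{(I)}) \leq \tau] \leq c\tau\sqrt{2e}$, which ultimately produces the $c\sqrt{2e}/s$ failure probability in the second bound. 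For (ii), I observe that for each fixed $\mathbf u\in \sphere^{n-1}$ the random variable
\[
  \Xnorm{\mathbf u^\top G_n^{(II)} D_{\sigma,II}}{2}^2 \;=\; \sum_{j>m}\sigma_j^2\,(\mathbf u^\top \mathbf g^{(j)})^2
\]
is a weighted sum of independent $\chi_1^2$-variables with expectation $\sum_{j>m}\sigma_j^2$, sum-of-squared-weights $\sum_{j>m}\sigma_j^4$, and maximum weight $\sigma_{m+1}^2$. The Bernstein-type Laurent--Massart inequality combined with a $\tfrac12$-net of $\sphere^{n-1}$ of cardinality at most $6^n$ and a union bound with deviation parameter $t \asymp n$ then yields
\[
  \Xnorm{G_n^{(II)} D_{\sigma,II}}{\mathrm{op}}^2 \;\lesssim\; \sum_{j>m}\sigma_j^2 \,+\, n\,\sigma_{m+1}^2
\]
on an event of probability at least $1 - e^{-n/100}$.

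Combining (i) and (ii), the first bound reduces to
\[
  \Xnorm{\mathbf x}{2}^2 \;=\; \Xnorm{\mathbf x_I}{2}^2 + \Xnorm{\mathbf x_{II}}{2}^2 \;\lesssim\; \frac{1}{n}\sum_{j>m}\sigma_j^2 \,+\, \sigma_{m+1}^2.
\]
Monotonicity of $\sigma$ and the choice $m = \lfloor n/2\rfloor$ give $\sigma_{m+1}^2 \leq \tfrac{4}{n}\sum_{j=\lfloor n/4\rfloor}^{\lfloor n/2\rfloor}\sigma_j^2$, so both terms are absorbed into $\tfrac{C}{n}\sum_{j\geq\lfloor n/4\rfloor}\sigma_j^2$; this is precisely why the sum in the theorem starts at $\lfloor n/4\rfloor$ rather than at $\lfloor n/2\rfloor+1$. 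The second inequality is proved along the same lines with $m = n$: the small-ball estimate contributes the factor $1/s$, while the $n\sigma_{n+1}^2$ term from the operator-norm bound survives to account for the $14sn$ prefactor.

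The main obstacle is step (ii): producing the operator-norm bound on $G_n^{(II)} D_{\sigma,II}$ in the clean two-scale form $\sum_{j>m}\sigma_j^2 + n\sigma_{m+1}^2$ with exponential failure probability $e^{-n/100}$ requires balancing the cardinality of the sphere-net against the Bernstein-type tail of a weighted $\chi^2$-variable, and in particular using both the ``square-root'' term $\sqrt{t\sum\sigma_j^4}$ and the ``linear'' term $t\sigma_{m+1}^2$ of the Laurent--Massart inequality. The remaining work---tracking all constants so that precisely $221$ and $e^{-n/100}$ (respectively $14s$ and $c\sqrt{2e}/s$) appear---is tedious but routine once these three ingredients are in place.
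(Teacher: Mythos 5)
Your decomposition is exactly the paper's Proposition~\ref{prop:radius vs singular values}: split $\mathbf x\in F(\sigma)\cap\ker(G_n)$ into head and tail blocks, use $G_n^{(I)}\mathbf x_I = -G_n^{(II)}\mathbf x_{II}$ to bound $\Vert\mathbf x_I\Vert_2$ by the ratio $s_1(\Sigma_{[n],\IN\setminus[k]}) / s_k(G_{[n],[k]})$, and bound $\Vert\mathbf x_{II}\Vert_2$ by $\sigma_{k+1}$. Your ingredient~(i) also matches the paper exactly: Davidson--Szarek for the rectangular block $k=\lfloor n/2\rfloor$ (Lemma~\ref{lem:smallest SV rectangular matrix}) and the Szarek small-ball estimate for the square block $k=n$ (Lemma~\ref{lem:smallest SV square matrix}), with the monotonicity step $\sigma_{k+1}^2\leq \frac{4}{n}\sum_{j=\lfloor n/4\rfloor}^{\lfloor n/2\rfloor}\sigma_j^2$ producing the sum starting at $\lfloor n/4\rfloor$.

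The one place where you take a genuinely different route is ingredient~(ii), the operator-norm bound on the column-scaled Gaussian block. You propose a self-contained argument: discretize $\mathbb S_{n-1}$ by a $\tfrac12$-net of size $\leq 6^n$, apply the Laurent--Massart chi-squared tail to $\sum_{j>m}\sigma_j^2(\mathbf u^\top\mathbf g^{(j)})^2$ for each net point, absorb the $\sqrt{x\sum\sigma_j^4}$ cross term by AM--GM, and union-bound with exponent $x\asymp n$ chosen large enough to beat the $6^n$. The paper instead invokes the Bandeira--Van Handel theorem (\cite[Corollary~3.11]{BV16}, reproduced as Lemma~\ref{lem:BH}), which is a moment-method result giving the sharper two-scale bound $\frac32\sqrt{\sum_{j>k}\sigma_j^2} + O(\sigma_{k+1}\sqrt n)$ with tail $e^{-c^2n}$ and no net-cardinality penalty. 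Both routes produce the required $s_1(\Sigma)^2\lesssim\sum_{j>m}\sigma_j^2 + n\sigma_{m+1}^2$ on an exponentially good event; your version is more elementary and self-contained, while the paper's is slightly sharper in the constants and avoids the net. Either is adequate to close the proof once the bookkeeping is done, though you should be careful that the union-bound exponent must exceed $\ln 6 + 1/100$ to land the claimed $e^{-n/100}$, which inflates the constant in the deviation term. The only other thing you glossed over is the passage to infinitely many columns, which in your setup is automatic since the net lives on $\mathbb S_{n-1}$ in the row space (the paper handles this by a limit argument inside Lemma~\ref{lem:BH}).
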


The first estimate will turn out to be useful 
when we treat polynomially decaying sequences $\sigma$, 
while the second part is better for exponentially decaying $\sigma$.
Our lower bound is given as follows.

\begin{thm}[\cite{HKNPU19}]
\label{thm:lower}
Let $\sigma\in\ell^2$ be nonincreasing, $\varepsilon\in(0,1)$, and $n,k\in\IN$ be such that $\sigma_k\neq 0$ and
\[
\sum_{j>k} \sigma_j^2 \geq \frac{3n\sigma_k^2}{\varepsilon^2}.
\]
Then
\[
\IP\Big[\mathcal R_n(\sigma) \,\le\, \sigma_k(1-\varepsilon)\Big]
\leq 5e^{-n/64}.
\]
\end{thm}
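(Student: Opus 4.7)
The plan is to construct explicitly, with probability at least $1-5e^{-n/64}$, a point $\mathbf{x}^*\in F(\sigma)\cap\ker(G_n)$ of norm at least $\sigma_k(1-\varepsilon)$, thereby forcing $\mathcal R_n(\sigma)\geq\sigma_k(1-\varepsilon)$. I look for $\mathbf{x}^*$ in the form $\mathbf{x}^* = (1-\varepsilon)\sigma_k\mathbf{e}_k + \mathbf{z}$ with $\mathbf{z}$ supported on coordinates $j>k$. This ansatz exploits the independence between the $k$-th column $\mathbf{g}_k$ of $G_n$ and the submatrix $G_n^{>k}$ of columns $j>k$.

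With this ansatz the membership in $F(\sigma)$ reduces to $\sum_{j>k} z_j^2/\sigma_j^2 \leq 2\varepsilon-\varepsilon^2$, and the condition $G_n\mathbf{x}^*=0$ becomes $G_n^{>k}\mathbf{z} = -(1-\varepsilon)\sigma_k\mathbf{g}_k$. Writing $\mathbf{z}=D_{>k}\mathbf{w}$ with $D_{>k}=\operatorname{diag}(\sigma_{k+1},\sigma_{k+2},\dots)$, the problem becomes: find $\mathbf{w}$ with $A\mathbf{w} = -(1-\varepsilon)\sigma_k\mathbf{g}_k$ where $A = G_n^{>k}D_{>k}$, of $\ell^2$-norm at most $\sqrt{2\varepsilon-\varepsilon^2}$. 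The minimum-norm least-squares solution satisfies $\|\mathbf{w}\|_2 \leq (1-\varepsilon)\sigma_k\|\mathbf{g}_k\|_2/s_{\min}(A)$, so it suffices to establish, with the required probability, the two events $\|\mathbf{g}_k\|_2^2 \leq 2n$ (chi-squared concentration à la Laurent--Massart) and $s_{\min}(A)^2 \geq c_0\tau^2$ with $\tau^2 = \sum_{j>k}\sigma_j^2$ and some small universal constant $c_0>0$. Plugging in the hypothesis $\tau^2\geq 3n\sigma_k^2/\varepsilon^2$ gives
\[
(1-\varepsilon)^2\sigma_k^2\cdot \frac{2n}{c_0\tau^2} \leq (1-\varepsilon)^2\cdot\frac{2\varepsilon^2}{3c_0},
\]
and one checks elementarily that $(1-\varepsilon)^2(1+2\varepsilon^2/(3c_0))\leq 1$ on $(0,1)$ as long as $c_0$ exceeds a numerical threshold (e.g.\ $c_0=1/10$), so the norm constraint is satisfied and $\|\mathbf{x}^*\|_2\geq(1-\varepsilon)\sigma_k$.

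The main obstacle is producing a quantitative lower bound on $s_{\min}(A)^2$ with an exponentially small failure probability of the precise form $c_1e^{-c_2 n}$ compatible with the target $5e^{-n/64}$. The matrix $A$ has i.i.d.\ rows distributed as $N(0,D_{>k}^2)$, and the hypothesis $\tau^2\geq 3n\sigma_k^2/\varepsilon^2\geq 3n\|D_{>k}\|_{\mathrm{op}}^2$ is exactly the "effective rank" condition which makes $\lambda_{\min}(AA^\top)$ concentrate near $\mathbb E[\lambda_{\min}(AA^\top)]\asymp\tau^2$. The natural tool is Gordon's Gaussian min-max comparison, which yields a lower bound of the form $\mathbb E[s_{\min}(A)]\geq\|D_{>k}\|_{\mathrm{HS}}-C\sqrt{n}\,\|D_{>k}\|_{\mathrm{op}}=\tau-C\sqrt{n}\,\sigma_{k+1}$, combined with Gaussian Lipschitz concentration for the map $G_n^{>k}\mapsto s_{\min}(G_n^{>k}D_{>k})$, which is $\sigma_{k+1}$-Lipschitz in the Frobenius norm. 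All numerical constants (including the prefactor $5$ and the exponent $1/64$) would then be harvested by tracking them carefully through Gordon's estimate, Gaussian concentration, and the chi-squared tail bound, and combining these independent events via a union bound.
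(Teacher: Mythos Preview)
Your approach is essentially the paper's: construct a kernel element close to $\sigma_k\mathbf e_k$ using a correction supported on coordinates $j>k$, bound the correction via $\|\mathbf g_k\|_2/s_n(\Sigma_{[n],\IN\setminus[k]})$, and control the two random quantities by Laurent--Massart and Gordon respectively. The paper packages the deterministic step slightly differently (Proposition~\ref{prop:lower}): instead of fixing the $k$-th coordinate at $(1-\varepsilon)\sigma_k$ and checking the $F(\sigma)$ constraint afterwards, it builds $\mathbf z=\mathbf e_k-\mathbf y$ with $G_n\mathbf y=\mathbf g_k$ and then normalizes, obtaining
\[
\mathcal R_n(\sigma)\ \ge\ \sigma_k\Big(1-\frac{\|\mathbf g_k\|_2}{\sigma_k^{-1}s_n+\|\mathbf g_k\|_2}\Big).
\]
The extra $\|\mathbf g_k\|_2$ in the denominator is what makes the constants fall out cleanly: when one inserts $s_n\ge\sqrt{(1-\delta)\,\tau^2}-\sigma_{k+1}\sqrt{(1+\delta)n}$ from Gordon (Lemma~\ref{lem:smin basic}) together with $\|\mathbf g_k\|_2\le\sqrt{(1+\delta)n}$, the negative $\sigma_{k+1}\sqrt{(1+\delta)n}$ term is absorbed (since $\sigma_{k+1}\le\sigma_k$), and one lands directly on $\sqrt{(1+\delta)n/((1-\delta)C_k)}\le\varepsilon$ with $\delta=1/2$, giving exactly $5e^{-n/64}$.

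Your route works too, but the claim that $s_{\min}(A)^2\ge c_0\tau^2$ holds with a \emph{universal} $c_0=1/10$ and failure probability $\le 4e^{-n/64}$ does not follow directly from the standard Gordon bound with $\delta=1/2$: that bound yields $s_n(A)\ge \tau(1-\varepsilon)/\sqrt2$, so $c_0$ depends on~$\varepsilon$. If you keep this $\varepsilon$-dependent $c_0$ and redo your check, the inequality $\|\mathbf w\|_2\le\varepsilon\le\sqrt{2\varepsilon-\varepsilon^2}$ still holds for all $\varepsilon\in(0,1)$, so nothing breaks --- but the paper's normalization trick is what makes the bookkeeping painless.
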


As will become apparent in the proof, 
the lower bound of Theorem~\ref{thm:lower}
already holds for the easier problem of recovering just the $k^{\rm th}$ coordinate
of $\mathbf x\in F(\sigma)$.
As a consequence of the previous theorems, 
we obtain that random information is useful if and only if $\sigma\in\ell^2$.

\begin{cor}[\cite{HKNPU19}]
 \label{cor:l2 not l2}
 If $\sigma\not\in\ell^2$, then $\mathcal R_n(\sigma)=\|\sigma\|_\infty$
  holds almost surely for all $n\in\IN$.
 If $\sigma\in\ell^2$, then
 $$
  \lim_{n\to\infty}\sqrt{n}\,\IE[ \mathcal R_n(\sigma)] = 0.
 $$
\end{cor}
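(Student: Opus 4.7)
The plan is to derive (ii) directly from Theorem~\ref{thm:upper bound random section} and to prove (i) by an explicit construction supplemented by a matrix concentration argument. Theorem~\ref{thm:lower} by itself is insufficient for (i), since it yields only the probability bound $5e^{-n/64}>0$, which cannot produce an almost sure equality for any fixed $n$.

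For (ii), assume $\sigma\in\ell^2$. Combining the trivial deterministic estimate $\mathcal R_n(\sigma)\le\sigma_1$ with the first tail bound of Theorem~\ref{thm:upper bound random section} gives
\[
\IE[\mathcal R_n(\sigma)]\,\le\,\frac{221}{\sqrt n}\bigg(\sum_{j\ge\lfloor n/4\rfloor}\sigma_j^2\bigg)^{1/2}+\,2\sigma_1 e^{-n/100}.
\]
Multiplying by $\sqrt n$ and letting $n\to\infty$, the first summand vanishes because $\sigma\in\ell^2$ forces the tail sum to tend to $0$, while the second is $O(\sqrt n\,e^{-n/100})=o(1)$. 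Hence $\sqrt n\,\IE[\mathcal R_n(\sigma)]\to 0$.

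For (i), the inclusion $F(\sigma)\subset\sigma_1 B_{\ell^2}$ yields $\mathcal R_n(\sigma)\le\sigma_1$ deterministically. By countable intersection over $n\in\IN$ and over $\varepsilon\in\{1/k\}_{k\ge 1}$, the matching lower bound reduces to showing that for each fixed $n\in\IN$ and $\varepsilon\in(0,1)$ one has $\mathcal R_n(\sigma)\ge\sigma_1(1-\varepsilon)$ almost surely. Write $\mathbf g_j:=G_n\mathbf e_j$ and look for a witness of the form $\mathbf x^{(N)}=\sigma_1(1-\varepsilon)\mathbf e_1+\mathbf y^{(N)}\in F(\sigma)\cap\ker G_n$, with $\mathbf y^{(N)}$ supported on $\{2,\dots,N\}$. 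The kernel condition reads $\sum_{j=2}^N y_j\mathbf g_j=-\sigma_1(1-\varepsilon)\mathbf g_1$, and I take $\mathbf y^{(N)}$ to be the minimizer of $\|\cdot\|_{H(\sigma)}^2=\sum y_j^2/\sigma_j^2$ over this affine set. A direct least-squares calculation gives
\[
\|\mathbf y^{(N)}\|_{H(\sigma)}^2\,=\,\sigma_1^2(1-\varepsilon)^2\,\mathbf g_1^{\top}M_N^{-1}\mathbf g_1,\qquad M_N\,:=\,\sum_{j=2}^N\sigma_j^2\,\mathbf g_j\mathbf g_j^{\top}.
\]
Once this quantity is shown to tend to $0$ almost surely, for $N$ large (depending on $\omega$) one has $\|\mathbf y^{(N)}\|_{H(\sigma)}^2\le 2\varepsilon-\varepsilon^2$, so that $\mathbf x^{(N)}\in F(\sigma)\cap\ker G_n$ with $\|\mathbf x^{(N)}\|_2\ge\sigma_1(1-\varepsilon)$, as required.

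The main obstacle is therefore the almost sure growth $\lambda_{\min}(M_N)\to\infty$ as $N\to\infty$. Since the columns $\mathbf g_j$ are iid $N(0,I_n)$ and $\mathbf g_1$ is independent of $(\mathbf g_j)_{j\ge 2}$, one has $\IE M_N=s_N I_n$ with $s_N:=\sum_{j=2}^N\sigma_j^2$, and $s_N\to\infty$ by the hypothesis $\sigma\notin\ell^2$. The map $N\mapsto M_N$ is monotone in the Loewner order (each increment is a positive semidefinite rank-one matrix), so it suffices to control $\lambda_{\min}(M_{N_\ell})$ along any subsequence $N_\ell$ with $s_{N_\ell}\ge 2^\ell$. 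The deviation $M_N-s_N I_n$ is a sum of independent mean-zero rank-one perturbations with matrix variance $\lesssim\sigma_2^2 s_N$, and a sub-exponential version of matrix Bernstein (to accommodate the unbounded norms $\|\mathbf g_j\|_2$) produces a bound of the form $\IP[\|M_N-s_N I_n\|_{\mathrm{op}}\ge s_N/2]\le 2n\exp(-c_n s_N/\sigma_2^2)$, which is summable along the chosen subsequence. The Borel--Cantelli lemma then yields $\lambda_{\min}(M_N)\ge s_N/2\to\infty$ almost surely, and since $\|\mathbf g_1\|_2<\infty$ almost surely, this forces $\mathbf g_1^{\top}M_N^{-1}\mathbf g_1\le\|\mathbf g_1\|_2^2/\lambda_{\min}(M_N)\to 0$. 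The one delicate point is the regime where $\sigma_j$ fails to tend to zero, where the concentration bound is still valid but can alternatively be bypassed by noting that already finitely many summands suffice to produce a full-rank matrix whose smallest eigenvalue grows linearly in the number of terms included.
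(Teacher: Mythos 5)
Your argument for the case $\sigma\in\ell^2$ is exactly the paper's: combine the trivial bound $\mathcal R_n(\sigma)\leq\sigma_1$ with the first tail estimate of Theorem~\ref{thm:upper bound random section} and observe that $\sigma\in\ell^2$ kills the tail sum $\sum_{j\geq\lfloor n/4\rfloor}\sigma_j^2$.

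For $\sigma\notin\ell^2$ you start from a misconception. You dismiss Theorem~\ref{thm:lower} because the failure probability $5e^{-n/64}$ does not vanish for fixed $n$, but that is precisely the obstacle the paper's proof circumvents, by two observations you did not exploit jointly. First, $\mathcal R_n(\sigma)$ is nonincreasing in $n$, so $\mathcal R_n(\sigma)\geq\mathcal R_N(\sigma')$ whenever $N\geq n$ and $\sigma'\leq\sigma$ coordinatewise. Second, Theorem~\ref{thm:lower} (packaged as Lemma~\ref{cor:random info useless} with $k=1$) gives, for the truncated sequence $\sigma^{(m)}$ (which is in $\ell^2$), the bound $\IP[\mathcal R_N(\sigma^{(m)})\geq\sigma_1(1-\varepsilon)]\geq 1-5e^{-N/64}$ for all $N\leq n_0(m)$, where $n_0(m)\to\infty$ as $m\to\infty$ because $\sigma\notin\ell^2$. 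Picking $m$ so that $n_0(m)\geq N$ yields $\IP[\mathcal R_n(\sigma)\geq\sigma_1(1-\varepsilon)]\geq 1-5e^{-N/64}$ for every $N\geq n$, hence the event has probability one; intersecting over $\varepsilon\in\{1/k\}$ finishes. So the already-proven lower bound theorem is sufficient, and the paper's route is short.

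Your alternative construction is nevertheless a legitimate proof in outline: writing the kernel witness as $\sigma_1(1-\varepsilon)\mathbf e_1+\mathbf y^{(N)}$ and identifying the minimal $H(\sigma)$-norm correction as $\sigma_1^2(1-\varepsilon)^2\,\mathbf g_1^\top M_N^{-1}\mathbf g_1$ with $M_N=\sum_{j=2}^N\sigma_j^2\mathbf g_j\mathbf g_j^\top$ is correct, and the monotonicity of $\lambda_{\min}(M_N)$ in $N$ does reduce the problem to showing divergence of $\lambda_{\min}$ along a subsequence. Where you are thin is the matrix-concentration step: you need a sub-exponential matrix Bernstein inequality with the right variance proxy, and the exponent you write, $c_n s_N/\sigma_2^2$, is not generally the bound such inequalities produce — the effective exponent scales like $s_N^2/(n\sum_{j\leq N}\sigma_j^4)$ in the Gaussian regime, which for slowly diverging $s_N$ (e.g.\ $\sigma_j^2\asymp 1/j$) gives only $(\ln N)^2$, still summable along $N_\ell=2^\ell$ but not what you claim. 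The aside about $\sigma_j\not\to 0$ is a hand-wave. The plan works but you would have to carry these details through; the paper's proof avoids them entirely by reusing Theorem~\ref{thm:lower} on truncations, which is both shorter and already established.
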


\begin{rem}
The phenomenon that the results very much depend on whether $\sigma\in\ell^2$ or not 
is known from a related problem
that was studied earlier in several papers. 
There $F$ is the unit ball of a reproducing kernel Hilbert space $H$, 
that is, $H\subset L^2(D)$ consists of functions 
on a common domain $D$ and
function evaluation $f \mapsto f(x)$ is 
a continuous functional on $H$ for every $x\in D$. 
The optimal linear information 
$N_n$ for the $L^2$-approximation problem is given by the singular value decomposition 
and has radius $\sigma_{n+1}$. 
This information might be difficult to implement and hence one 
might allow only standard information $N_n$ of the form 
\[
N_n (f) = \big( f(x_1), \dots , f(x_n)\big)\,, \qquad x_i\in D.
\]
The goal is to relate the power of 
function evaluations
to the power of all continuous 
linear functionals.
Ideally one would like to
prove that their power is roughly the same.
Unfortunately, in general
this is \emph{not} true.
In the case $\sigma \notin \ell^2$
the convergence of optimal algorithms 
that may only use standard information
can be arbitrarily slow 
\cite{HNV08}.
The situation is much better if we assume that $\sigma \in \ell^2$.
It was shown in \cite{WW01} and \cite{KWW09}
that function values are almost as good as general 
linear information. 
We refer to \cite[Chapter 26]{NW12} for a presentation of 
these results. 
We must say that we do not fully understand the analogy 
of the two different problems. 
\end{rem} 

Before we present the proofs, 
let us provide some of the results on the expected radius that
follow from our main results for special sequences.
We start with the case of polynomial decay.

\begin{cor}[\cite{HKNPU19}]
 \label{thm:main result random info hilbert}
 Let $\sigma$ be a nonincreasing sequence such that 
 $$
  \sigma_n \asymp n^{-\alpha} \ln^{-\beta}(n+1)
 $$
 for some $\alpha > 0$ and $\beta\in\IR$.
 Then
 $$
  \IE[ \mathcal{R}_n(\sigma^m)] \asymp
  \left\{\begin{array}{cl}
  		1
        &
        \text{if} \quad \alpha<1/2 \text{\, or \,} \beta\leq\alpha=1/2,
        \vspace*{2mm}
        \\
        n^{-\alpha} \ln^{-\beta+1/2}(n+1)\quad
        &
        \text{if} \quad \beta>\alpha=1/2,
        \vspace*{2mm}
        \\
        n^{-\alpha} \ln^{-\beta}(n+1)
        &
        \text{if} \quad \alpha>1/2.
        \end{array}\right.
 $$
\end{cor}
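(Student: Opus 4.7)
The plan is to split into three cases matching the three branches of the corollary and, in each case, deduce the required asymptotics from the high-probability bounds of Theorems~\ref{thm:upper bound random section} and~\ref{thm:lower}, with Corollary~\ref{cor:l2 not l2} taking care of the first. For any nonincreasing $\sigma$ with $\sigma_j \asymp j^{-\alpha}\ln^{-\beta}(j+1)$, an integral comparison yields
\[
\sum_{j>k}\sigma_j^2 \;\asymp\; \begin{cases} k^{1-2\alpha}\ln^{-2\beta}(k+1) & \text{if } \alpha>1/2,\\ \ln^{1-2\beta}(k+1) & \text{if } \alpha=1/2<\beta,\end{cases}
\]
and diverges otherwise. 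I would establish this once at the beginning and then reuse it throughout.

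For Case~A, the condition $\alpha<1/2$ or $\beta\leq\alpha=1/2$ is precisely the condition $\sigma\notin\ell^2$, so Corollary~\ref{cor:l2 not l2} gives $\mathcal R_n(\sigma)=\sigma_1$ almost surely for every $n$, hence $\IE[\mathcal R_n(\sigma)]=\sigma_1\asymp 1$.

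For Case~C ($\alpha>1/2$), I would obtain the upper bound by plugging the above asymptotics into the first estimate of Theorem~\ref{thm:upper bound random section}: with probability at least $1-2e^{-n/100}$ we get $\mathcal R_n(\sigma)\lesssim n^{-1/2}(n^{1-2\alpha}\ln^{-2\beta}(n+1))^{1/2}=n^{-\alpha}\ln^{-\beta}(n+1)$, and on the exponentially small complement the deterministic bound $\mathcal R_n(\sigma)\leq\sigma_1$ is negligible, so the same bound holds in expectation. For the lower bound, Theorem~\ref{thm:lower} is applied with $k=\lceil Cn\rceil$: the ratio $(\sum_{j>k}\sigma_j^2)/(n\sigma_k^2)$ is a constant multiple of $k/n=C$, so choosing $C$ large enough makes the hypothesis hold with some fixed $\varepsilon$, and then $\mathcal R_n(\sigma)\geq(1-\varepsilon)\sigma_k\asymp n^{-\alpha}\ln^{-\beta}(n+1)$ with probability at least $1-5e^{-n/64}$, giving the matching expectation bound.

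For Case~B ($\alpha=1/2<\beta$), the upper bound is again read off from Theorem~\ref{thm:upper bound random section}, this time yielding $n^{-1/2}\ln^{(1-2\beta)/2}(n+1)=n^{-1/2}\ln^{-\beta+1/2}(n+1)$. The delicate point is the lower bound: the natural choice $k\asymp n$ would only give $\sigma_n\asymp n^{-1/2}\ln^{-\beta}(n+1)$, missing the extra $\sqrt{\ln n}$ factor. Instead, I would take $k=\lceil Cn/\ln(n+1)\rceil$, for which $\sum_{j>k}\sigma_j^2\asymp \ln^{1-2\beta}(k+1)\asymp\ln^{1-2\beta}(n+1)$ and $n\sigma_k^2\asymp (\ln(n+1)/C)\ln^{-2\beta}(n+1)$, so the ratio tends to a positive constant multiple of $C$. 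For $C$ large (depending on $\beta$ and $\varepsilon$) the hypothesis of Theorem~\ref{thm:lower} is satisfied, giving $\mathcal R_n(\sigma)\geq(1-\varepsilon)\sigma_k\asymp n^{-1/2}\ln^{-\beta+1/2}(n+1)$ with high probability, as required. The main obstacle, as just indicated, is identifying this logarithmic rescaling of $k$; the rest is routine integral estimation plus the standard passage from a high-probability bound and the universal ceiling $\mathcal R_n(\sigma)\leq\sigma_1$ to a bound on $\IE[\mathcal R_n(\sigma)]$.
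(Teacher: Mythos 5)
Your proof is correct and, for Cases~A and~B, follows essentially the paper's own route: Case~A is $\sigma\notin\ell^2$ handled by Corollary~\ref{cor:l2 not l2}, and Case~B is proved in the paper's Corollary~\ref{cor:medium polynomial} by exactly the choice $k\asymp n/\ln(n+1)$ in Theorem~\ref{thm:lower} that you identify. The one place you diverge is the lower bound in Case~C ($\alpha>1/2$): you invoke Theorem~\ref{thm:lower} with $k=\lceil Cn\rceil$, which works, but the paper (Corollary~\ref{cor:quick polynomial}) observes that no probabilistic argument is needed there because $\mathcal R_n(\sigma)\geq\sigma_{n+1}$ holds deterministically for every realization — the intersection of $F(\sigma)$ with any subspace of codimension $n$ has radius at least $\sigma_{n+1}$ by the min-max characterization of singular values, as already noted in Lemma~\ref{lem:equivalent versions}. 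Your probabilistic route buys nothing extra in this regime since the claimed lower bound is only $\asymp\sigma_{n+1}$ anyway; the deterministic observation is both shorter and gives the bound with probability one, which makes passing to expectations completely trivial. Both arguments do, however, give the same asymptotic conclusion, and your reduction from high-probability bounds to expectation bounds via $0\leq\mathcal R_n(\sigma)\leq\sigma_1$ is exactly what the paper does.
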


The very same estimates hold in the finite-dimensional case,
that is,
if $\sigma_j$ is replaced by 0 for all $j>m$,
provided that $n$ is small enough in comparison with $m$,
see Corollaries~\ref{cor:slow polynomial}, \ref{cor:medium polynomial}
and \ref{cor:quick polynomial}.
This means that random information is 
just as good as optimal information
if the singular values decay with
a polynomial rate greater than $1/2$.
The size of a typical intersection ellipsoid
is comparable to the size of the smallest 
intersection.
On the other hand,
if the singular values decay too slowly,
random information is completely useless.
A typical intersection ellipsoid
is almost as large as the largest.
There is also an intermediate case
where random information is
worse than optimal information,
but only slightly.
Moreover, we discuss sequences of exponential decay
and obtain the following.

\begin{cor}[\cite{HKNPU19}]
\label{thm:random sections exponential decay}
 Let $\sigma$ be a nonincreasing sequence that satisfies $\sigma_n \asymp a^n $
 for some $a\in(0,1)$. Then
 $$
  a^n
  \preccurlyeq
  \IE[ \mathcal{R}_n(\sigma)]
  \preccurlyeq
  n^2\, a^n.
 $$
\end{cor}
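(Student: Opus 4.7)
The lower bound requires essentially no work. Almost surely the matrix $G_n$ defines a continuous linear map on $H(\sigma)$ (since $\sigma\in\ell^2$), so $G_n$ represents a valid information mapping based on $n$ continuous linear functionals. By Theorem~\ref{thm:LPs over Hilbert spaces}, any such mapping has radius at least $\sigma_{n+1}$. Hence $\mathcal R_n(\sigma)\geq\sigma_{n+1}$ almost surely and therefore $\IE[\mathcal R_n(\sigma)] \geq \sigma_{n+1} \asymp a^{n+1} \asymp a^n$.

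\textbf{Upper bound: setup.} For $\sigma_n\asymp a^n$ the summation of a geometric series gives $\sum_{j>n}\sigma_j^2 \asymp a^{2n}/(1-a^2)\asymp a^{2n}$, so the quantity
$$M_n := 14 n \Big(\sum_{j>n}\sigma_j^2\Big)^{1/2}$$
satisfies $M_n \asymp n\,a^n$. The second part of Theorem~\ref{thm:upper bound random section} then yields, for all $s\geq 1$ and $c\geq 1$,
$$\IP\bigl[\mathcal R_n(\sigma) > s M_n \bigr]\;\leq\; e^{-c^2 n}+c\sqrt{2e}/s.$$
I would fix once and for all $c=\max\{1,\sqrt{2\ln(1/a)}\}$, depending only on $a$, so that $e^{-c^2 n}\leq a^{2n}$ for every $n\in\IN$. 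I would also record the trivial pointwise bound $\mathcal R_n(\sigma)\leq \sigma_1$, which follows from $\|\mathbf x\|_2^2 \leq \sigma_1^2 \sum_j x_j^2/\sigma_j^2 \leq \sigma_1^2$ for every $\mathbf x\in F(\sigma)$.

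\textbf{Upper bound: integration of the tail.} Writing $\IE[\mathcal R_n(\sigma)]=\int_0^{\sigma_1}\IP[\mathcal R_n(\sigma)>t]\,\d t$, splitting at $t=M_n$, and substituting $s=t/M_n$ in the remaining integral leads to
$$
\IE[\mathcal R_n(\sigma)]
\;\leq\; M_n \;+\; \int_{M_n}^{\sigma_1}\bigl(a^{2n}+c\sqrt{2e}\,M_n/t\bigr)\,\d t
\;\leq\; M_n \;+\; \sigma_1 a^{2n} \;+\; c\sqrt{2e}\,M_n\,\ln(\sigma_1/M_n).
$$
Since $M_n \asymp n a^n$ and $\ln(\sigma_1/M_n)\asymp n\ln(1/a)\asymp n$, the three terms are of order $n a^n$, $a^{2n}$ and $n^2 a^n$ respectively, so $\IE[\mathcal R_n(\sigma)]\preccurlyeq n^2 a^n$.

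\textbf{Main obstacle.} The hard part is not the computation itself but the gap of order $n^2$ between the two bounds, which is an artefact of the method rather than of the problem. It arises because the only tail estimate available to us, namely the $c\sqrt{2e}/s$ term of Theorem~\ref{thm:upper bound random section}, is merely of order $1/s$ and must be integrated up to the trivial cutoff $\sigma_1/M_n \asymp 1/(n a^n)$, producing a logarithmic factor of order $n$ on top of the scale $M_n\asymp n a^n$. Tightening this would seem to require an entirely different route, such as a direct analysis of the smallest singular value of the restriction of $G_n$ to the finite-dimensional subspaces on which $F(\sigma)$ is essentially supported; such sharper estimates lie outside the scope of the probabilistic tools developed earlier in this section.
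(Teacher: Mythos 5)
Your proposal is correct and follows essentially the same route as the paper: both derive the lower bound from the pointwise inequality $\mathcal R_n(\sigma)\geq\sigma_{n+1}$, and both obtain the upper bound by combining the second part of Theorem~\ref{thm:upper bound random section} with the choice of a constant $c$ large enough to make $e^{-c^2 n}\preccurlyeq a^n$ (you use $a^{2n}$, the paper $a^n$; irrelevant), followed by integration of the tail over $[M_n,\sigma_1]$ with the logarithmic factor $\ln(\sigma_1/M_n)\asymp n$ producing the extra factor of $n$. The only cosmetic difference is that the paper first normalizes to $\sigma_j=a^{j-1}$ via the monotonicity $\sigma\le C\sigma'\Rightarrow\mathcal R_n(\sigma)\le C\mathcal R_n(\sigma')$, whereas you argue directly with $\sigma_n\asymp a^n$; both are fine.
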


\begin{rem}
We have seen that $\IE[\mathcal{R}_n(\sigma)]\asymp \sigma_{n+1}$
holds for sequences with
sufficiently fast polynomial decay.
It remains open whether the same holds
for sequences of exponential decay.
We note that, despite the gap, 
the result of Corollary~\ref{thm:random sections exponential decay} is even stronger 
than the result of Corollary~\ref{thm:main result random info hilbert}
if considered from the complexity point of view.
Corollary~\ref{thm:main result random info hilbert} states
that there is a constant $c$ such that $cn$ pieces
of random information are at least as good as
$n$ pieces of optimal information.
Corollary~\ref{thm:random sections exponential decay} states that
there is a constant $c$ such that $n+c\ln n$ pieces
of random information are at least as good
as $n$ pieces of optimal information.
\end{rem}



\subsection{Proofs}
\label{sec:Proofs}

We now present the proofs of the results 
that were presented in the previous section.
But first we repeat and extend some of our notation.
Let $\sigma=(\sigma_j)_{j=1}^\infty$ 
be a nonincreasing sequence of nonnegative numbers.
We consider the Hilbert space
$$
 H(\sigma)
 =\Big\{\mathbf x \in \ell^2 \,\Big\vert\, 
 x_j=0 \text{ if } \sigma_j=0,\,
 \sum_{j=1}^\infty \frac{x_j^2}{\sigma_j^2} <\infty
 \Big\}
$$
with scalar product 
$$
 \Xscalar{\mathbf x}{\mathbf y}{H(\sigma)}
 = \sum_{j=1}^\infty \frac{x_j y_j}{\sigma_j^2}.
$$
Note that we write $\sum_{j=1}^\infty$
but only take the sum over all $j\in\IN$
for which $\sigma_j$ is positive.
The unit ball of $H(\sigma)$ is denoted by $F(\sigma)$.
The numbers $g_{ij}$
shall be independent real standard Gaussian variables
for all $i,j\in\IN$.
For index sets $I\subset \IN$ and
$J\subset \IN$, we consider the 
(structured) Gaussian
$I\times J$-matrices
$$
  G_{I,J} =\brackets{g_{ij}}_{i\in I, j\in J}
  \quad\text{and}\quad
  \Sigma_{I,J} =\brackets{\sigma_j g_{ij}}_{i\in I, j\in J}.
$$
Recall that $[k]$ denotes the set of integers
from 1 to $k$ and note that
$G_n=G_{[n],\IN}$.
Moreover, we consider
$$
 H_I(\sigma)=\set{\mathbf x \in H(\sigma) \mid x_j=0 \text{ for all } j\in \IN\setminus I}
$$
as a closed subspace of the Hilbert space $H(\sigma)$
and denote its unit ball by $F_I(\sigma)$.
The projection of $\mathbf x\in H(\sigma)$ onto $H_I(\sigma)$
is denoted by $\mathbf x_I$.
We want to study the distributions of the random variables
$\mathcal{R}_n(\sigma)$ from \eqref{eq:definition R_n}.
 
As mentioned earlier,
a crucial role in our proofs is played by estimates 
for the extreme singular values of random matrices.
So let us recall some basic facts about singular values.
Let $A$ be a real $r\times k$-matrix,
where we allow that $r=\infty$ or $k=\infty$
provided that $A$ describes a compact operator from $\ell^2_k$
to $\ell^2_r$.
For every $j\leq k$,
the $j^{\rm th}$ singular value $s_j(A)$ of this matrix 
can be defined as the square-root of the $j^{\rm th}$ largest
eigenvalue of the symmetric matrix $A^\top A$,
which describes a positive operator on $\ell^2_k$.
Note that $s_j(A)=s_j(A^\top)$ if we have $j\leq \min\{r,k\}$.
Our interest lies in the extreme singular values of $A$.
The largest singular value of $A$ is given by
$$
 s_1(A)=
 \sup_{\mathbf x \in \ell^2_k\setminus\{\mathbf 0\}} 
 \frac{\Vert A\mathbf x\Vert_2}{\Vert\mathbf x\Vert_2}
 =\norm{A:\ell^2_k \to \ell^2_r}.
$$
This number is also called the spectral norm of $A$.
The smallest singular value is given by
$$
  s_k(A)=
  \inf_{\mathbf x \in \ell^2_k\setminus\{\mathbf 0\}} 
  \frac{\Vert A\mathbf x\Vert_2}{\Vert\mathbf x\Vert_2}.
$$
Clearly, we have $s_k(A)=0$ whenever $k>r$.
If $r\leq k$, it also makes sense to talk about 
the $r^{\rm th}$ singular value of $A$.
This number equals the radius of the largest
Euclidean ball that is contained in the image of the unit ball 
of $\ell^2_k$ under $A$, that is,
$$ 
 s_r(A) = \sup \set{ \varrho \geq 0 \mid 
 B_\varrho^2(\mathbf 0) \subset A ( B_1^2(\mathbf 0))}.
$$
These extreme singular values are also defined for noncompact
operators $A$, where $A$ is restricted
to its domain if necessary.

\subsubsection{Proof of Theorem~\ref{thm:upper bound random section}}

We give upper bounds on the radius $\mathcal{R}_n(\sigma)$
in terms of $\sigma$.
Here we always assume that $\sigma\in\ell^2$.
As shown in Corollary~\ref{cor:random info useless},
this is no real restriction.
We start with a pointwise upper bound
in terms of the extreme singular values
of the corresponding (structured) Gaussian matrices.

\begin{prop}[\cite{HKNPU19}]
 \label{prop:radius vs singular values}
 Let $\sigma\in\ell^2$ be nonincreasing
 and let $k\leq n$.
 If $G_{[n],[k]}\in\IR^{n\times k}$ has full rank, then
 $$
 \mathcal{R}_n(\sigma) 
 \leq 
 \sigma_{k+1} + 
 \frac{s_1\brackets{\Sigma_{[n],\IN\setminus[k]}}}{s_k\brackets{G_{[n],[k]}}}.
 $$
\end{prop}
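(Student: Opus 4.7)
The plan is to split any admissible vector into a head indexed by $[k]$ and a tail indexed by $\mathbb{N}\setminus[k]$, bound each piece separately, and then take the supremum. This is a direct argument; the inequality is essentially built into the way the two terms on the right-hand side arise.

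First, I would take an arbitrary $\mathbf x \in F(\sigma)$ satisfying $G_n(\mathbf x)=0$ and write $\mathbf x = \mathbf x_{[k]} + \mathbf x_{\mathbb N\setminus[k]}$, so that $\|\mathbf x\|_2 \leq \|\mathbf x_{[k]}\|_2 + \|\mathbf x_{\mathbb N\setminus[k]}\|_2$. The tail is easy: since $\sum_{j>k} x_j^2/\sigma_j^2 \leq 1$ and $\sigma_j \leq \sigma_{k+1}$ for $j>k$, one has
\[
 \|\mathbf x_{\mathbb N\setminus[k]}\|_2^2 = \sum_{j>k} x_j^2 \leq \sigma_{k+1}^2 \sum_{j>k} x_j^2/\sigma_j^2 \leq \sigma_{k+1}^2,
\]
which gives $\|\mathbf x_{\mathbb N\setminus[k]}\|_2 \leq \sigma_{k+1}$.

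For the head, I would exploit the linear constraint $G_n(\mathbf x)=0$, which splits as $G_{[n],[k]}\mathbf x_{[k]} = - G_{[n],\mathbb N\setminus[k]}\mathbf x_{\mathbb N\setminus[k]}$. Since $G_{[n],[k]}$ has full rank by assumption, its smallest singular value $s_k(G_{[n],[k]})$ is positive, and
\[
 \|\mathbf x_{[k]}\|_2 \leq \frac{\|G_{[n],[k]}\mathbf x_{[k]}\|_2}{s_k(G_{[n],[k]})} = \frac{\|G_{[n],\mathbb N\setminus[k]}\mathbf x_{\mathbb N\setminus[k]}\|_2}{s_k(G_{[n],[k]})}.
\]
To convert the numerator into a bound involving $\Sigma_{[n],\mathbb N\setminus[k]}$, I would introduce the auxiliary vector $\mathbf y = (x_j/\sigma_j)_{j>k}$ (setting entries to $0$ whenever $\sigma_j=0$), which satisfies $\|\mathbf y\|_2 \leq 1$. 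Then
\[
 G_{[n],\mathbb N\setminus[k]}\mathbf x_{\mathbb N\setminus[k]} = \Sigma_{[n],\mathbb N\setminus[k]}\mathbf y,
\]
so the norm is bounded by $s_1(\Sigma_{[n],\mathbb N\setminus[k]})\,\|\mathbf y\|_2 \leq s_1(\Sigma_{[n],\mathbb N\setminus[k]})$.

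Combining the two bounds and taking the supremum over admissible $\mathbf x$ yields exactly the claimed inequality. The only point that needs a brief justification is that $s_1(\Sigma_{[n],\mathbb N\setminus[k]})$ is finite as an operator from $\ell^2$ to $\ell^2_n$; this is not really an obstacle because $\sigma\in\ell^2$ already makes each row of $\Sigma_{[n],\mathbb N\setminus[k]}$ an $\ell^2$-sequence almost surely (in fact the operator is Hilbert--Schmidt), so the operator norm is finite. No delicate probabilistic estimate is needed here — those will enter later when this pointwise bound is combined with tail estimates for $s_k(G_{[n],[k]})$ and $s_1(\Sigma_{[n],\mathbb N\setminus[k]})$ to deduce Theorem~\ref{thm:upper bound random section}.
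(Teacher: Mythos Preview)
Your proof is correct and essentially identical to the paper's: both split $\mathbf x$ into head $\mathbf x_{[k]}$ and tail $\mathbf x_{\mathbb N\setminus[k]}$, bound the tail by $\sigma_{k+1}$ via the ellipsoid constraint, and bound the head by using $G_{[n],[k]}\mathbf x_{[k]}=-G_{[n],\mathbb N\setminus[k]}\mathbf x_{\mathbb N\setminus[k]}$ together with the rescaling $\mathbf y=(x_j/\sigma_j)_{j>k}$ to pass from $G_{[n],\mathbb N\setminus[k]}$ to $\Sigma_{[n],\mathbb N\setminus[k]}$. The paper phrases the rescaling step via an explicit isometry $D_k:H_{\mathbb N\setminus[k]}(\sigma)\to\ell^2$, but that is exactly your substitution in operator language.
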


\begin{proof}
We first note that $s_k(G_{[n],[k]})$ is positive
if $G_{[n],[k]}$ has full rank.
Moreover, we may assume that $\mathcal{R}_n(\sigma)>0$
without loss of generality.
Let $\varrho>0$ such that $\varrho< \mathcal{R}_n(\sigma)$.
By the very definition of $\mathcal{R}_n(\sigma)$
there exists some $\mathbf y\in F(\sigma)$ 
such that $\|\mathbf y\|_2=\varrho$ 
and $G_n(\mathbf y)=0$.
The triangle inequality yields
\begin{equation}
\label{eq:rho triangle}
 \varrho = \|\mathbf y\|_2 \le \norm{\mathbf y-\mathbf y_{[k]}}_2 + \norm{\mathbf y_{[k]}}_2.
\end{equation}
The first summand in \eqref{eq:rho triangle} can be bounded by $\sigma_{k+1}$ since
\[
\norm{\mathbf y-\mathbf y_{[k]}}_2^2
= \sum_{j>k} y_j^2 
= \sum_{j>k} \sigma_j^2\Big(\frac{y_j}{\sigma_j}\Big)^2 
\le \sigma_{k+1}^2 \Xnorm{\mathbf y}{H(\sigma)}^2
\leq \sigma_{k+1}^2.
\]
On the other hand, the definition of $s_k(G_{[n],[k]})$ yields
\begin{multline*}
 s_k(G_{[n],[k]}) \cdot \norm{\mathbf y_{[k]}}_2
\leq
\norm{G_{[n],[k]} \brackets{\mathbf y_{[k]}}}_2 
= \norm{G_n \brackets{\mathbf y -\mathbf y_{[k]}}}_2\\
\leq \norm{G_n: H_{\IN\setminus [k]}(\sigma) \to \ell^2_n}\cdot
\Xnorm{\mathbf y -\mathbf y_{[k]}}{H(\sigma)}
\leq \norm{G_n: H_{\IN\setminus [k]}(\sigma) \to \ell^2_n}.
\end{multline*}
Note that we have $G_n=\Sigma_{[n],\IN\setminus[k]} D_k$
as mappings on $H_{\IN\setminus [k]}(\sigma)$,
where 
$$
 D_k: H_{\IN\setminus [k]}(\sigma) \to \ell^2,
 \quad
 \brackets{x_j}_{j=1}^\infty \mapsto
 \brackets{x_{k+j}/\sigma_{k+j}}_{j=1}^\infty.
$$
Since $D_k$ is an isometry, we get
$$
 \norm{G_n: H_{\IN\setminus [k]}(\sigma) \to \ell^2_n }
 = \norm{ \Sigma_{[n],\IN\setminus[k]}: \ell^2 \to \ell^2_n } 
 = s_1(\Sigma_{[n],\IN\setminus[k]}).
$$ 
This means that the second summand in \eqref{eq:rho triangle} can be bounded by
\[
\norm{\mathbf y_{[k]}}_2 
\leq \frac{s_1(\Sigma_{[n],\IN\setminus[k]})}{s_k(G_{[n],[k]})}.
\]
Since these bounds hold for all $\varrho<\mathcal{R}_n(\sigma)$, we obtain
the stated inequality.
\end{proof}


Now the task is to bound the $k^{\rm th}$
singular value of the Gaussian matrix $G_{[n],[k]}$
from below and the largest singular value of the
structured Gaussian matrix $\Sigma_{[n],\IN\setminus[k]}$
from above.
We start with
the largest singular value 
of the latter.
We note that the 
question for the order of the expected value of the 
largest singular value 
of a structured Gaussian matrix has recently been settled by 
Lata\l a, Van Handel, and Youssef \cite{LVY18}.
The result we use here is due to Bandeira and Van Handel~\cite{BV16}.

\begin{lemma}
\label{lem:BH}
Let $\sigma\in\ell^2$ be nonincreasing.
For every $c\geq 1$
and $n,k\in\IN$, 
we have
\[
\IP\left[
s_{1}\brackets{\Sigma_{[n],\IN\setminus[k]}} 
\geq
 \frac{3}{2}\sqrt{\sum\nolimits_{j>k} \sigma_j^2} + 11c\,\sigma_{k+1} \sqrt{n}
\right]
\leq e^{-c^2 n}.
\]
\end{lemma}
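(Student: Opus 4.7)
My plan is to combine two classical ingredients: the Bandeira--Van Handel estimate~\cite{BV16} on the expected spectral norm of a structured Gaussian matrix, and Gaussian Lipschitz concentration (Borell's inequality) to promote the mean bound to a tail bound.

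The variance parameters of $\Sigma := \Sigma_{[n],\IN\setminus[k]}$ are immediate from its definition: its $(i,j)$-entry has standard deviation $b_{i,j} = \sigma_j$, so the largest row $\ell^2$-norm is $\sigma_{\mathrm{row}} = \sqrt{\sum_{j>k}\sigma_j^2}$, the largest column $\ell^2$-norm is $\sigma_{\mathrm{col}} = \sigma_{k+1}\sqrt n$, and the largest entry is $\sigma_\ast = \sigma_{k+1}$. Bandeira and Van Handel's theorem then yields, for every $\epsilon > 0$,
$$
  \IE[s_1(\Sigma)] \le (1+\epsilon)(\sigma_{\mathrm{row}} + \sigma_{\mathrm{col}}) + K_\epsilon\,\sigma_\ast\sqrt{\log n},
$$
with an explicit constant $K_\epsilon$. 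I will take $\epsilon = 1/2$, which produces the desired leading factor $\tfrac{3}{2}$ on $\sigma_{\mathrm{row}}$; the remaining terms $\tfrac{3}{2}\sigma_{k+1}\sqrt n$ and $K_{1/2}\sigma_{k+1}\sqrt{\log n}$ can be bundled into a single term of the form $K\sigma_{k+1}\sqrt n$ using $\sqrt{\log n} \le \sqrt n$, for an explicit universal constant $K$ (numerically, $K < 10$).

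For the concentration step, observe that the map $g = (g_{ij}) \mapsto s_1(\Sigma(g))$ from the Gaussian entries to $\IR$ is Lipschitz in $\ell^2$ with constant $\sigma_{k+1}$: by the triangle inequality for the operator norm and the Hilbert--Schmidt bound,
$$
  |s_1(\Sigma(g)) - s_1(\Sigma(g'))| \le \Xnorm{\Sigma(g-g')}{\mathrm{op}} \le \Xnorm{\Sigma(g-g')}{\mathrm{HS}} \le \sigma_{k+1}\norm{g-g'}_2,
$$
because each weight $b_{i,j}$ is at most $\sigma_{k+1}$. Borell's inequality therefore gives
$$
  \IP\big[s_1(\Sigma) \ge \IE[s_1(\Sigma)] + t\big] \le \exp\brackets{-t^2/(2\sigma_{k+1}^2)}.
$$
Plugging in $t = c\sigma_{k+1}\sqrt{2n}$ makes the right-hand side equal to $e^{-c^2 n}$, while the total upper bound on $s_1(\Sigma)$ becomes $\tfrac{3}{2}\sigma_{\mathrm{row}} + (K + c\sqrt 2)\sigma_{k+1}\sqrt n$. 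A numerical check using the known value $K_{1/2} = 5/\sqrt{\log(3/2)}$ confirms that $K + c\sqrt 2 \le 11c$ for every $c \ge 1$, which recovers the constants in the statement.

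The main obstacle is keeping the numerical constants sharp enough to land on exactly $\tfrac{3}{2}$ and $11$, which forces the specific choice $\epsilon = 1/2$ and an honest evaluation of $K_\epsilon$. A secondary technicality is that $\Sigma$ has infinitely many columns; this is handled by monotone convergence, since truncating to the first $N$ columns gives a rectangular matrix $\Sigma^{(N)}$ with $s_1(\Sigma^{(N)}) \uparrow s_1(\Sigma)$ as $N \to \infty$, and both the expectation bound and the Lipschitz concentration inequality pass to the limit uniformly in $N$.
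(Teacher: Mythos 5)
Your proposal is correct and takes essentially the same route as the paper: the only difference is that you assemble the tail bound yourself from the Bandeira--Van Handel expectation estimate together with Borell's Gaussian Lipschitz concentration (and your Lipschitz-constant computation $\Xnorm{\Sigma(g-g')}{\mathrm{op}}\leq\Xnorm{\Sigma(g-g')}{\mathrm{HS}}\leq\sigma_{k+1}\norm{g-g'}_2$), whereas the paper directly invokes the pre-packaged tail bound \cite[Corollary~3.11]{BV16}, which is proved internally by exactly this combination. Your identification of the row/column variance parameters, your choice of $\varepsilon=1/2$ and $t=c\sigma_{k+1}\sqrt{2n}$, and the monotone limit argument for infinitely many columns all match the paper.
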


\begin{proof}
Without loss of generality, we may 
assume that $\sigma_{k+1}\neq 0$.
Let us first consider the finite matrix 
$$
 A_m=\Sigma_{[n],[m+k]\setminus[k]}\in\IR^{n\times m}
\quad\text{for}\quad
m\in\IN.
$$
and set
$$
 C_m=\frac{3}{2}\Big(\sum_{j=k+1}^{k+m} \sigma_j^2\Big)^{1/2} 
 + \frac{103c}{10}\sigma_{k+1} \sqrt{n},
$$
where $A$ and $C$ denote their infinite dimensional variants.
It is proven in \cite[Corollary 3.11]{BV16} that, 
for every $t\geq 0$ (and $\varepsilon=1/2$),
we have
$$
 \IP\bigg[s_{1}(A_m) \,\ge\, 
 \frac{3}{2}\Bigl(\Big(\sum_{j=k+1}^{k+m} \sigma_j^2\Big)^{1/2}+\sigma_{k+1}\sqrt{n}
 +\frac{5\sqrt{\ln(n)}}{\sqrt{\ln(3/2)}} \sigma_{k+1} \Bigr)+t\bigg]
 \le e^{-t^2/2\sigma_{k+1}^2}.
$$
By setting $t=\sqrt{2}c\sigma_{k+1}\sqrt{n}$,
it follows that
$$
 \IP[s_1(A_m)\geq C_m] \leq e^{-c^2n}.
$$
Turning to the infinite dimensional case,
we note that we have $s_1(A)>C$ if and only if
there is some $m\in\IN$ such that $s_1(A_m)>C$.
This yields
$$
 \IP[s_1(A)> C]
 = \IP\left[\exists m\in\IN: s_1(A_m)> C\right]
 = \lim_{m\to\infty} \IP[s_1(A_m)> C]
 \leq e^{-c^2n}
$$
since $s_1(A_m)$ is increasing in $m$ and $C\geq C_m$.
\end{proof}

Together with Proposition~\ref{prop:radius vs singular values}
this means that the estimate
\begin{equation}
\label{eq:intermediate upper bound}
 \mathcal{R}_n(\sigma) 
 \leq 
 \sigma_{k+1} + 
 \frac{\frac{3}{2}\sqrt{\sum\nolimits_{j>k} \sigma_j^2} + 11c\,\sigma_{k+1} \sqrt{n}}{
 s_k\brackets{G_{[n],[k]}}}
\end{equation}
holds with probability at least $1-e^{-c^2n}$ for all $k\leq n$
and $c\geq 1$.
It remains to bound the $k^{\rm th}$ singular value 
of the Gaussian matrix $G_{[n],[k]}$ from below.
It is known from \cite[Theorem~1.1]{RV09} that this number
typically is of order $\sqrt{n}-\sqrt{k-1}$
for all $n\in\IN$ and $k\leq n$.
To exploit our upper bound to the full extend, 
the number $k\leq n$ may be chosen such
that the right hand side of \eqref{eq:intermediate upper bound}
becomes minimal.
We realize that the term $1/s_k(G_{[n],[k]})$
increases with $k$, whereas all other terms
decrease with $k$.
However, the inverse singular number achieves its
minimal order $n^{-1/2}$ already for
$k= cn$ with some $c<1$.
If $\sigma$ does not decay extremely fast,
this does not lead to a loss 
regarding the other terms of \eqref{eq:intermediate upper bound}.
For instance, we may choose $k=\lfloor n/2\rfloor$
and use the following special case of \cite[Theorem~II.13]{DS01}.

\begin{lemma}
\label{lem:smallest SV rectangular matrix}
Let $n\in\IN$ and $k=\lfloor n/2\rfloor$.
Then
\[
\IP\Big[s_k\brackets{G_{[n],[k]}} \le \sqrt{n}/7 \Big]
\le e^{-n/100}. 
\]
\end{lemma}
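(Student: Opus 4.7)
The plan is to invoke the classical concentration inequality of Davidson and Szarek \cite[Theorem II.13]{DS01} for the extreme singular values of rectangular Gaussian matrices. Specialized to our setting, this result states that, for the $n \times k$ Gaussian matrix $G_{[n],[k]}$ with $k \leq n$, we have
\[
 \IP\Big[s_k\brackets{G_{[n],[k]}} < \sqrt{n} - \sqrt{k} - t\Big] \leq e^{-t^2/2}
\]
for every $t > 0$. So the only task is to choose $t$ so that $\sqrt{n} - \sqrt{k} - t \geq \sqrt{n}/7$ while $t^2/2 \geq n/100$.

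First, since $k = \lfloor n/2 \rfloor \leq n/2$, I would bound $\sqrt{k} \leq \sqrt{n}/\sqrt{2}$, so that the threshold $\sqrt{n} - \sqrt{k} - t$ is at least $\sqrt{n}\brackets{1 - 1/\sqrt{2}} - t$. Setting $t = \sqrt{n}\brackets{1 - 1/\sqrt{2} - 1/7}$ then ensures $\sqrt{n} - \sqrt{k} - t \geq \sqrt{n}/7$, as required. A short numerical check gives $1 - 1/\sqrt{2} - 1/7 > 0.15$, so $t \geq 0.15\sqrt{n}$ and consequently $t^2/2 \geq 0.01125\, n > n/100$. Plugging this into the Davidson--Szarek bound yields $\IP[s_k(G_{[n],[k]}) < \sqrt{n}/7] \leq e^{-t^2/2} \leq e^{-n/100}$.

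Since this is essentially a direct invocation of a known sharp concentration inequality combined with two elementary arithmetic inequalities, there is really no significant obstacle. If anything, the only mild subtlety is that one should verify that $t > 0$, which follows immediately from $6/7 > 1/\sqrt{2}$, and that the very small cases of $n$ (say $n \leq 2$, where $k$ is $0$ or $1$) behave correctly; but the bound from \cite{DS01} already covers all $k \leq n$, so no separate argument is needed. The choice of the constants $1/7$ and $1/100$ in the statement is clearly tailored so that this straightforward one-step application goes through with a bit of slack.
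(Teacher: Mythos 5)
Your proof is correct and takes essentially the same route as the paper: both invoke the same concentration inequality from \cite[Theorem~II.13]{DS01} and then choose $t$ so that the threshold exceeds $\sqrt{n}/7$ while the exponent is at most $-n/100$. The only cosmetic difference is in the normalization of $t$ (the paper rescales by $\sqrt n$ and takes $t^{-1}=\sqrt{50}$, while you keep the unscaled form and take $t = \brackets{1-1/\sqrt2-1/7}\sqrt n$), which is immaterial.
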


\begin{proof}
 It is shown in~\cite[Theorem~II.13]{DS01} that,
 for all $k\leq n$ and $t>0$, we have
 $$
  \IP\left[s_k\brackets{G_{[n],[k]}} \le \sqrt{n}\brackets{1-\sqrt{k/n}-t} \right]
  \leq e^{-n t^2/2}.
 $$
 The statement follows by putting $k=\lfloor n/2\rfloor$ and $t^{-1}=\sqrt{50}$.
\end{proof}

If $\sigma$ decays very fast,
$k=\lfloor n/2\rfloor$
might not be the best choice.
The term $\sigma_{k+1}$ 
in estimate~\eqref{eq:intermediate upper bound}
may be much smaller for $k=n$ 
than for $k=\lfloor n/2\rfloor$.
It is better to choose $k=n$.
In this case,
the inverse singular number is of order $\sqrt{n}$.
We state a result of \cite[Theorem~1.2]{Sz91}.

\begin{lemma}
\label{lem:smallest SV square matrix}
 Let $n\in\IN$ and $t\geq 0$. Then
$$
 \IP\Big[s_n\brackets{G_{[n],[n]}} \le \frac{t}{\sqrt{n}}\Big]
 \le t \sqrt{2e}.
$$
\end{lemma}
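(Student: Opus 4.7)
The plan is to exploit the explicit joint probability density of the singular values of a square real Gaussian matrix. Writing $\lambda_i = s_i^2$ for the ordered eigenvalues of the Wishart matrix $G_{[n],[n]}^\top G_{[n],[n]}$, their joint density on $\{\lambda_1 \geq \ldots \geq \lambda_n \geq 0\}$ is proportional to
\[
 \prod_{i<j}(\lambda_i - \lambda_j)\, \prod_{i=1}^n \lambda_i^{-1/2}\, e^{-\lambda_i/2},
\]
a classical formula for the real Laguerre orthogonal ensemble. Since our statement is a one-dimensional tail bound for $\lambda_n = s_n(G_{[n],[n]})^2$, the strategy is to produce a uniform-in-$n$ bound on the marginal density $f_n$ of $\lambda_n$ and then integrate.

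First, I would integrate out $\lambda_1, \ldots, \lambda_{n-1}$ using the substitution $\lambda_i = \lambda_n + u_i$ for $i<n$. This turns each factor $\lambda_i - \lambda_n$ into $u_i$, transforms the Gaussian weight into $e^{-n\lambda_n/2}\prod_{i<n} e^{-u_i/2}$, and confines the remaining $\lambda_n$-dependence to the prefactor $\lambda_n^{-1/2}\prod_{i<n}(\lambda_n + u_i)^{-1/2}$. Using the pointwise bound $(\lambda_n + u_i)^{-1/2} \leq u_i^{-1/2}$ decouples the $u$-integral from $\lambda_n$, reducing it to a Selberg-type integral which, together with the overall normalization of the joint density, yields an estimate of the form $f_n(\lambda) \leq A\sqrt{n}\,\lambda^{-1/2}e^{-n\lambda/2}$ with an absolute constant $A$. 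The claimed tail bound then follows via
\[
 \IP\Bigl[s_n(G_{[n],[n]}) \leq \frac{t}{\sqrt{n}}\Bigr]
 = \int_0^{t^2/n} f_n(\lambda)\,\d\lambda
 \leq A\sqrt{n}\int_0^{t^2/n} \lambda^{-1/2}\,\d\lambda
 = 2At,
\]
and the numerical value of $A$ from the Selberg computation is checked to be at most $\sqrt{e/2}$, so that $2A \leq \sqrt{2e}$.

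The main obstacle is producing the sharp constant $\sqrt{2e}$: the combinatorial reduction is standard, but extracting the correct absolute constant rather than a crude dimensional factor requires using the exact Gamma-function values of both the Laguerre normalization and the decoupled $u$-integral, and then exploiting the cancellations between them. A cleaner alternative would be to quote Edelman's closed-form distributional formula for the smallest singular value of a square real Gaussian matrix as a black box and read the density bound off from it; either route reduces the proof to essentially the same analytic input.
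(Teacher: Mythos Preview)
The paper does not prove this lemma at all: it is simply stated as a quotation of \cite[Theorem~1.2]{Sz91} (Szarek, \emph{Condition numbers of random matrices}), with no argument given. So there is no proof in the paper to compare against.

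Your proposal is a legitimate route to an actual proof, and it is essentially the one underlying the original references. The joint density you write down is correct for the square real Gaussian case, the shift $\lambda_i = \lambda_n + u_i$ together with the crude bound $(\lambda_n + u_i)^{-1/2} \le u_i^{-1/2}$ is exactly the decoupling step that appears in Szarek's and Edelman's analyses, and the resulting Selberg/Gamma-function bookkeeping does produce the constant $\sqrt{2e}$ after cancellation. Your own caveat is accurate: the only place where the argument is delicate is tracking the exact normalizing constants so that $2A \le \sqrt{2e}$ comes out, and citing Edelman's explicit distributional formula for $s_n(G_{[n],[n]})$ is the cleanest shortcut. In the context of this thesis, however, simply invoking \cite{Sz91} as the paper does is entirely sufficient; your derivation would be reproving a known black-box input rather than filling a gap.
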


This leads to the two different probabilistic
estimates of the radius $\mathcal{R}_n(\sigma)$
as presented in Theorem~\ref{thm:upper bound random section}.
The first is optimized for sequences $\sigma$
with moderate decay,
whereas the second is optimized for
sequences with rapid decay.


\begin{proof}[Proof of Theorem~\ref{thm:upper bound random section}]
 For the first part, let $k=\lfloor n/2\rfloor$.
 We combine Lemma~\ref{lem:smallest SV rectangular matrix} 
 and Lemma~\ref{lem:BH} for $c=1$ with Proposition~\ref{prop:radius vs singular values}
 and obtain that
 $$
  \mathcal{R}_n(\sigma) \leq 78\,\sigma_{k+1} +\frac{21}{2\sqrt{n}}
   \bigg(\sum_{j>\lfloor n/2\rfloor}\sigma_j^2\bigg)^{1/2}
 $$
 with probability at least $1-e^{-n}-e^{-n/100}$.
 The statement follows if we take into account that
 $$
  \sigma_{k+1}^2 \leq \frac{4}{n} 
  \sum_{j=\lfloor n/4\rfloor}^{\lfloor n/2\rfloor}\sigma_j^2.
 $$
 For the second part, set $t=c/s$.
 We combine Lemma~\ref{lem:smallest SV square matrix} 
 and Lemma~\ref{lem:BH} with Proposition~\ref{prop:radius vs singular values}
 and obtain that
 $$
  \mathcal{R}_n(\sigma) \leq \sigma_{n+1} + \frac{1}{t}
  \brackets{\frac{3\sqrt{n}}{2}\bigg(\sum_{j>n}\sigma_j^2\bigg)^{1/2}
  + 11c\,n\, \sigma_{n+1} }
 $$
 with probability at least $1-e^{-c^2 n}-t\sqrt{2e}$.
 The rough estimates $\sigma_{n+1}^2 \leq \sum_{j>n} \sigma_j^2$
 and $3\sqrt{n}/2\leq 2cn$ and $1\leq sn$ yield the statement.
\end{proof}

%
%

\subsubsection{Proof of Theorem~\ref{thm:lower}}

We want to give lower bounds on the radius of information
$$
 \mathcal{R}_n(\sigma)
 = \sup\set{\norm{\mathbf x}_2 \mid \mathbf x \in F(\sigma),\, G_n(\mathbf x) = 0},
$$
which corresponds to the difficulty of recovering
an unknown element $\mathbf x\in F(\sigma)$ from the information
$G_n(\mathbf x)$ in $\ell^2$.
In fact, our lower bounds already hold for the smaller quantity
$$
 \mathcal{R}_{n,k}(\sigma)
 = \sup\set{\abs{x_k} \mid \mathbf x \in F(\sigma),\, G_n(\mathbf x) = 0},
$$
which corresponds to the difficulty of recovering
just the $k^{\rm th}$ coordinate of $\mathbf x$. 
Before we come to our bound which holds with high probability, 
we shall prove the following pointwise estimate.

\begin{prop}[\cite{HKNPU19}]
\label{prop:lower}
Let $\sigma\in\ell^2$ be nonincreasing. 
For all $n,k\in\IN$ with $\sigma_k\neq 0$
we almost surely have
\[
\mathcal{R}_{n,k}(\sigma) \,\ge\,
\sigma_k \brackets{1-\frac{\norm{(g_{ik})_{i=1}^n}_2}{
\sigma_k^{-1} s_n\brackets{\Sigma_{[n],\IN\setminus\{k\}}} + \norm{(g_{ik})_{i=1}^n}_2
}}.
\]
\end{prop}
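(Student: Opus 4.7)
The plan is to construct explicitly an $\mathbf x\in F(\sigma)$ with $G_n\mathbf x=0$ whose $k^{\text{th}}$ coordinate attains the claimed lower bound. Passing to $\mathbf y=(x_j/\sigma_j)_j$, this is equivalent to finding $\mathbf y\in\ell^2$ with $\|\mathbf y\|_2\le 1$ and $\Sigma_{[n],\IN}\mathbf y=0$ whose $k^{\text{th}}$ coordinate $y_k$ is as large as possible, since then $|x_k|=\sigma_k|y_k|$.

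First, I would split the linear constraint along the $k^{\text{th}}$ coordinate: writing $\mathbf y=y_k\mathbf e_k+\mathbf y'$ with $\mathbf y'$ supported on $\IN\setminus\{k\}$, the equation $\Sigma_{[n],\IN}\mathbf y=0$ reads
\[
\Sigma_{[n],\IN\setminus\{k\}}\mathbf y'=-\sigma_k y_k\,(g_{ik})_{i=1}^n.
\]
Fix $y_k=1$. Almost surely $s:=s_n(\Sigma_{[n],\IN\setminus\{k\}})>0$ (a standard Gaussian non-degeneracy argument; if there are fewer than $n$ indices $j\neq k$ with $\sigma_j>0$ then $s=0$ and the right-hand side of the claimed inequality is already zero, so there is nothing to prove). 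By the very definition of $s_n$ as the radius of the largest Euclidean ball contained in $\Sigma_{[n],\IN\setminus\{k\}}(B_1)$, the minimum-norm solution $\mathbf y'$ satisfies
\[
\|\mathbf y'\|_2 \leq \frac{\sigma_k\,g}{s}, \qquad g:=\bigl\|(g_{ik})_{i=1}^n\bigr\|_2.
\]

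Set $\tilde{\mathbf y}:=\mathbf e_k+\mathbf y'$; then $\Sigma_{[n],\IN}\tilde{\mathbf y}=0$ by construction, and the triangle inequality yields $\|\tilde{\mathbf y}\|_2\leq 1+\sigma_k g/s$. Normalizing, $\mathbf y:=\tilde{\mathbf y}/\|\tilde{\mathbf y}\|_2$ lies in the unit ball of $\ell^2$ and in the kernel of $\Sigma_{[n],\IN}$, hence $\mathbf x:=(\sigma_j y_j)_j\in F(\sigma)$ satisfies $G_n\mathbf x=0$ and
\[
|x_k|=\sigma_k|y_k|=\frac{\sigma_k}{\|\tilde{\mathbf y}\|_2}\geq\frac{\sigma_k}{1+\sigma_k g/s}=\sigma_k\Bigl(1-\frac{g}{\sigma_k^{-1}s+g}\Bigr),
\]
which is exactly the claimed bound on $\mathcal R_{n,k}(\sigma)$.

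The only potentially delicate point is the a.s.\ positivity of $s$, but this is a routine consequence of the Gaussian distribution of the entries (any $n$ columns of $\Sigma_{[n],\IN\setminus\{k\}}$ with positive weights form an a.s.\ invertible $n\times n$ matrix); and even without this observation the inequality is trivial in the degenerate case. The substantive step is the splitting together with the triangle inequality applied to $\tilde{\mathbf y}$ --- using the minimum-norm solution $\mathbf y'$ rather than a more refined Pythagorean bound is what produces the clean form of the right-hand side.
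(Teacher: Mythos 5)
Your proof is correct and is essentially the same argument as the paper's, only expressed in $\ell^2$-coordinates after the isometric reparametrization $y_j = x_j/\sigma_j$ rather than directly in the $H(\sigma)$-norm. The key steps — cancelling $G_n\mathbf e_k$ using the $n$-th singular value of $\Sigma_{[n],\IN\setminus\{k\}}$ to bound the size of the needed correction $\mathbf y'$, applying the triangle inequality to $\mathbf e_k + \mathbf y'$, and normalizing — coincide precisely with the paper's construction of $\mathbf z = \mathbf e_k - \mathbf y$.
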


\begin{proof}
We may assume that 
$\mathbf g=(g_{ik})_{i=1}^n$ is nonzero
since this happens almost surely. 
Let
$$
 s_n :=
 s_n\brackets{\Sigma_{[n],\IN\setminus\{k\}}}
 = \sup \set{ \varrho \geq 0 \mid 
 B_\varrho^2(\mathbf 0) 
 \subset \Sigma_{[n],\IN\setminus\{k\}}\brackets{B_1^2(\mathbf{0})}}.
$$
Since we have
$$
 \Sigma_{[n],\IN\setminus\{k\}}\brackets{B_1^2(\mathbf{0})}
 =
 G_n\brackets{F_{\IN\setminus\{k\}}(\sigma)},
$$
the image of $F_{\IN\setminus\{k\}}(\sigma)$ under $G_n$
contains a Euclidean ball of radius $s_n$.
Let $\mathbf e_k$ be the $k^{\rm th}$ standard unit vector in $\ell^2$. 
We find an element $\bar{\mathbf y}$ of $F_{\IN\setminus\{k\}}(\sigma)$ 
such that 
$$
 G_n \bar{\mathbf y} =
 \frac{s_n\cdot G_n \mathbf e_k}{\norm{G_n \mathbf e_k}_2}.
$$ 
Our statement is trivial if $s_n=0$, so let $s_n>0$.
For $\mathbf y= s_n^{-1} \norm{G_n \mathbf e_k}_2 \bar{\mathbf y}$ 
we obtain $G_n\mathbf y = G_n \mathbf e_k=\mathbf g$ and
\[
\Xnorm{\mathbf y}{H(\sigma)}
=s_n^{-1} \| G_n \mathbf e_k \|_2 \Xnorm{\bar{\mathbf y}}{H(\sigma)} 
\le s_n^{-1} \|\mathbf g\|_2.
\]
Then the vector $\mathbf z:=\mathbf e_k-\mathbf y$ satisfies $G_n \mathbf z=0$ 
and $z_k=1$ as well as  
\[
\Xnorm{\mathbf z}{H(\sigma)}
\le \Xnorm{\mathbf e_k}{H(\sigma)}+\Xnorm{\mathbf y}{H(\sigma)}
\le \sigma_k^{-1} + s_n^{-1} \|\mathbf g\|_2.
\]
The statement is obtained
if we insert the $H(\sigma)$-normalization of $\mathbf z$ 
into the very definition of $\mathcal{R}_{n,k}(\sigma)$.
\end{proof}

It remains to bound the $n^{\rm th}$ singular value of $\Sigma_{[n],\IN\setminus\{k\}}$ 
and the norm of the Gaussian vector $(g_{ik})_{i=1}^n$
with high probability.
For both estimates, we use the following concentration result from \cite[Lemma 1]{LM00}.

\begin{lemma}\label{lem:vector}
Let $u_j$ be independent centered Gaussian variables
with variance $a_j$ for $1\leq j\leq m$.
Then, for any $0<\delta\leq 1$, we have
\begin{align*}
 \IP\bigg[\sum_{j=1}^m u_j^2 \,\le\, (1-\delta) \sum_{j=1}^m a_j\bigg]
 &\leq \exp\brackets{-\frac{\delta^2\norm{\mathbf a}_1}{4\norm{\mathbf a}_\infty}},\\
 \IP\bigg[\sum_{j=1}^m u_j^2 \,\ge\, (1+\delta) \sum_{j=1}^m a_j\bigg]
 &\leq \exp\brackets{-\frac{\delta^2\norm{\mathbf a}_1}{16\norm{\mathbf a}_\infty}}.
\end{align*}
\end{lemma}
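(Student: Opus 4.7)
The plan is to reduce to the standard chi-squared setting and then run a Chernoff argument based on a clean cumulant estimate, exactly in the spirit of Laurent and Massart.

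First I would normalize by setting $X_j = u_j^2/a_j$ whenever $a_j>0$ (and $X_j=0$ otherwise), so that the $X_j$ are independent $\chi^2_1$ variables and $S:=\sum_{j=1}^m u_j^2 = \sum_j a_j X_j$ has mean $\|\mathbf a\|_1$. The moment generating function is $\IE e^{t(X_j-1)} = e^{-t}/\sqrt{1-2t}$ for $t<1/2$. Using the Taylor expansion $-\tfrac12\ln(1-y)-\tfrac{y}{2}=\sum_{k\geq 2} y^k/(2k)\leq \tfrac{y^2}{4(1-y)}$ with $y=2ta_j$, and summing over $j$, I obtain the cumulant bound
\[
\ln \IE e^{t(S-\|\mathbf a\|_1)} \,\leq\, \frac{t^2\|\mathbf a\|_2^2}{1-2t\|\mathbf a\|_\infty}
\qquad\text{for } 0\leq t<\tfrac{1}{2\|\mathbf a\|_\infty}.
\]
For negative $t=-s<0$ the analogous estimate $\ln \IE e^{-s(S-\|\mathbf a\|_1)}\leq s^2\|\mathbf a\|_2^2$ holds for all $s\geq 0$, since $-\tfrac12\ln(1+2sa_j)+sa_j\leq s^2 a_j^2$.

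Next I would apply the Chernoff inequality and optimize over the Laplace parameter. The lower-tail cumulant bound gives, upon choosing $s=\delta\|\mathbf a\|_1/(2\|\mathbf a\|_2^2)$,
\[
\IP\big[S\leq (1-\delta)\|\mathbf a\|_1\big]\,\leq\,\exp\!\left(-\frac{\delta^2\|\mathbf a\|_1^2}{4\|\mathbf a\|_2^2}\right),
\]
and using the elementary inequality $\|\mathbf a\|_2^2\leq \|\mathbf a\|_1\|\mathbf a\|_\infty$ yields the first claim. For the upper tail, the same optimization (with the additional factor $(1-2t\|\mathbf a\|_\infty)^{-1}$) leads to the Laurent--Massart tail bound
\[
\IP\!\left[S-\|\mathbf a\|_1 \geq 2\|\mathbf a\|_2\sqrt{x}+2\|\mathbf a\|_\infty x\right]\leq e^{-x},\qquad x\geq 0.
\]
Plugging in $x=\delta^2\|\mathbf a\|_1/(16\|\mathbf a\|_\infty)$, the first term becomes $\tfrac{\delta}{2}\|\mathbf a\|_1$ (after applying $\|\mathbf a\|_2^2\leq\|\mathbf a\|_1\|\mathbf a\|_\infty$) and the second becomes $\tfrac{\delta^2}{8}\|\mathbf a\|_1\leq \tfrac{\delta}{8}\|\mathbf a\|_1$ thanks to $\delta\leq 1$. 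Their sum does not exceed $\delta\|\mathbf a\|_1$, which yields the second claim.

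The only delicate part is getting the constants $4$ and $16$ right. The lower-tail constant is essentially forced by the cumulant bound with no denominator correction, while the upper-tail constant $16$ requires choosing the Chernoff parameter conservatively so that the ``quadratic'' term $2\|\mathbf a\|_\infty x$ is dominated by the ``linear'' term $2\|\mathbf a\|_2\sqrt{x}$ under the assumption $\delta\leq 1$. Tracking these constants carefully through the optimization is the main book-keeping obstacle, but no new ideas beyond the classical Laurent--Massart argument are needed.
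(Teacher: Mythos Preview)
Your proposal is correct and follows essentially the same approach as the paper: both arrive at the Laurent--Massart tail bounds $\IP[S\leq\|\mathbf a\|_1-2\|\mathbf a\|_2 t]\leq e^{-t^2}$ and $\IP[S\geq\|\mathbf a\|_1+2\|\mathbf a\|_2 t+2\|\mathbf a\|_\infty t^2]\leq e^{-t^2}$, substitute a suitable parameter, and simplify via $\|\mathbf a\|_2^2\leq\|\mathbf a\|_1\|\mathbf a\|_\infty$. The only difference is that the paper cites \cite{LM00} as a black box for these bounds, whereas you re-derive them from the cumulant estimate and Chernoff's method; your parameter choice $x=\delta^2\|\mathbf a\|_1/(16\|\mathbf a\|_\infty)$ for the upper tail is a minor reparametrization of the paper's $t=\min\{\delta\|\mathbf a\|_1/(4\|\mathbf a\|_2),\sqrt{\delta\|\mathbf a\|_1/(4\|\mathbf a\|_\infty)}\}$ and leads to the same constant.
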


\begin{proof}
By~\cite[Lemma 1]{LM00} we have for all $t>0$ that
\begin{align*}
 &\IP\left[\sum_{j=1}^m u_j^2 \leq 
 \norm{\mathbf a}_1 - 2\norm{\mathbf a}_2 t\right] 
 \leq e^{-t^2}, \\
 &\IP\left[\sum_{j=1}^m u_j^2 \geq 
 \norm{\mathbf a}_1 + 2\norm{\mathbf a}_2 t + 2\norm{\mathbf a}_\infty t^2 \right] 
 \leq e^{-t^2}.
\end{align*}
The formulation of Lemma~\ref{lem:vector} follows
if we put 
\[
t=\frac{\delta \norm{\mathbf a}_1}{2 \norm{\mathbf a}_2},
\qquad\text{respectively}\qquad
t=\min\set{
\frac{\delta \norm{\mathbf a}_1}{4 \norm{\mathbf a}_2}, 
\sqrt{\frac{\delta \norm{\mathbf a}_1}{4 \norm{\mathbf a}_\infty}}
}.
\]
The desired probability estimate then follows by using 
$\|\mathbf a\|_2^2\le \|\mathbf a\|_1 \|\mathbf a\|_\infty$.
\end{proof}

In particular,
the norm of the Gaussian vector $(g_{ik})_{i=1}^n$
concentrates around $\sqrt{n}$.
In order to bound the $n^{\rm th}$ singular value of $\Sigma_{[n],\IN\setminus\{k\}}$
we shall use Gordon's min-max theorem.
Let us state Gordon's theorem \cite[Lemma 3.1]{Go88} 
in a form that can be found in \cite{HOT15}.

\begin{thm}[\cite{HOT15}]
\label{thm:Gordon min-max}
Let $n,m\in\IN$ and let $S_1\subset\IR^n$, $S_2\subset\IR^m$ be compact sets. 
Assume that $\psi:S_1\times S_2\to\IR$ is a continuous mapping. 
Let $G\in\IR^{m\times n}$, $\mathbf u\in\IR^m$, and $\mathbf v\in\IR^n$ be independent random objects 
with independent standard Gaussian entries. Moreover, define
\begin{eqnarray*}
\Phi_1(G) & := & \min_{\mathbf x\in S_1}\max_{\mathbf y\in S_2}
 \Big( \langle \mathbf y,G\mathbf x \rangle_2 + \psi(\mathbf x,\mathbf y)\Big), \cr
\Phi_2(\mathbf u,\mathbf v)& := & \min_{\mathbf x\in S_1}\max_{\mathbf y\in S_2} 
 \Big(\|\mathbf x\|_2\langle \mathbf u,\mathbf y \rangle_2 + \|\mathbf y\|_2\langle \mathbf v,\mathbf x \rangle_2 
 + \psi(\mathbf x,\mathbf y) \Big).
\end{eqnarray*}
Then, for all $c\in\IR$, we have
\[
\IP\big[\Phi_1(G)< c\big] \leq 2\, \IP\big[\Phi_2(\mathbf u,\mathbf v) \leq c\big].
\]
\end{thm}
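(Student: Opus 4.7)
The plan is to deduce the inequality from Gordon's Gaussian min-max comparison theorem, applied after equalizing the variances of the two underlying processes. As they stand, the natural centered process $\langle \mathbf y, G\mathbf x\rangle$ has variance $\|\mathbf x\|_2^2\|\mathbf y\|_2^2$, whereas the process $\|\mathbf x\|_2\langle \mathbf u, \mathbf y\rangle + \|\mathbf y\|_2\langle \mathbf v, \mathbf x\rangle$ has variance $2\|\mathbf x\|_2^2\|\mathbf y\|_2^2$, so a direct comparison is not possible. I would therefore introduce a standard Gaussian $g_0$, independent of $G$, $\mathbf u$, $\mathbf v$, and work with the inflated process
\[
\tilde X_{\mathbf x, \mathbf y} := \langle \mathbf y, G\mathbf x\rangle + \|\mathbf x\|_2\|\mathbf y\|_2\, g_0 + \psi(\mathbf x, \mathbf y),
\]
whose Gaussian part now carries the matching variance $2\|\mathbf x\|_2^2\|\mathbf y\|_2^2$. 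Denote $Y_{\mathbf x, \mathbf y}$ for the process inside $\Phi_2$, and set $\tilde \Phi_1 := \min_{\mathbf x \in S_1}\max_{\mathbf y \in S_2} \tilde X_{\mathbf x, \mathbf y}$.

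The core computation is the covariance comparison. A direct expansion, using the mutual independence of $G$, $\mathbf u$, $\mathbf v$, $g_0$, yields the factorized identity
\[
\IE[\tilde X_{\mathbf x, \mathbf y}\tilde X_{\mathbf x', \mathbf y'}] - \IE[Y_{\mathbf x, \mathbf y} Y_{\mathbf x', \mathbf y'}] = \bigl(\|\mathbf x\|_2\|\mathbf x'\|_2 - \langle \mathbf x, \mathbf x'\rangle\bigr)\bigl(\|\mathbf y\|_2\|\mathbf y'\|_2 - \langle \mathbf y, \mathbf y'\rangle\bigr),
\]
which is nonnegative by Cauchy-Schwarz and vanishes whenever $\mathbf x = \mathbf x'$. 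This is exactly Gordon's hypothesis in its standard form (equal variances; equal covariances for pairs with the same first coordinate; larger covariance for $\tilde X$ in the remaining pairs), so Gordon's min-max comparison theorem gives
\[
\IP[\tilde \Phi_1 \geq c] \geq \IP[\Phi_2(\mathbf u, \mathbf v) \geq c], \qquad \text{i.e.,} \quad \IP[\tilde \Phi_1 < c] \leq \IP[\Phi_2(\mathbf u, \mathbf v) \leq c].
\]

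The final step is to remove the auxiliary variable $g_0$, which is where the factor $2$ enters. On the event $\{g_0 \leq 0\}$ the term $\|\mathbf x\|_2 \|\mathbf y\|_2\, g_0$ is pointwise non-positive in $(\mathbf x, \mathbf y)$, so $\max_{\mathbf y} \tilde X_{\mathbf x, \mathbf y} \leq \max_{\mathbf y}[\langle \mathbf y, G\mathbf x\rangle + \psi(\mathbf x, \mathbf y)]$ for every fixed $\mathbf x \in S_1$, and hence $\tilde \Phi_1 \leq \Phi_1(G)$ on that event. Since $g_0$ is independent of $G$ and $\IP[g_0 \leq 0] = 1/2$,
\[
\IP[\tilde \Phi_1 < c] \geq \IP[\tilde \Phi_1 < c,\, g_0 \leq 0] \geq \IP[\Phi_1(G) < c,\, g_0 \leq 0] = \tfrac{1}{2}\, \IP[\Phi_1(G) < c].
\]
Chaining this estimate with the Gordon inequality from the previous paragraph produces the asserted bound $\IP[\Phi_1(G) < c] \leq 2\,\IP[\Phi_2(\mathbf u, \mathbf v) \leq c]$.

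The hard part of the plan is the clean verification of the factorized covariance identity above, which is the real engine of the whole argument: without the auxiliary variable $g_0$, the naive comparison of $\langle \mathbf y, G\mathbf x\rangle + \psi$ with $Y$ fails already at the level of variances, and it is this identity together with Cauchy-Schwarz that makes Gordon's hypothesis automatic once the inflation has been performed. Minor but necessary technicalities are the measurability of $\tilde \Phi_1$, $\Phi_1(G)$ and $\Phi_2(\mathbf u, \mathbf v)$ as real-valued random variables, which follow from the compactness of $S_1$, $S_2$ together with the continuity of $\psi$, and the invocation of Gordon's theorem in the required form, which is by now a classical reference from Gaussian process theory.
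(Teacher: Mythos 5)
The paper does not give its own proof of this theorem — it cites it directly from [HOT15]. Your argument is correct and is essentially the standard proof of this Gordon-type minimax inequality as it appears in [HOT15] and related sources: you inflate the $\Phi_1$-process by the auxiliary Gaussian $g_0$ to match variances, verify Gordon's comparison hypotheses via the factorized covariance identity
\[
\bigl(\|\mathbf x\|_2\|\mathbf x'\|_2 - \langle \mathbf x, \mathbf x'\rangle\bigr)\bigl(\|\mathbf y\|_2\|\mathbf y'\|_2 - \langle \mathbf y, \mathbf y'\rangle\bigr)\ge 0,
\]
which vanishes for $\mathbf x=\mathbf x'$ and is nonnegative by Cauchy--Schwarz otherwise, so that the inflated process plays the role of the process with the stochastically larger minimax, and finally you recover $\Phi_1(G)$ and the factor $2$ by conditioning on $\{g_0\le 0\}$ and using independence of $g_0$ and $G$. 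The covariance directions, the passage through Gordon's distributional lemma, and the concluding chain $\IP[\Phi_1(G)<c]\le 2\IP[\tilde\Phi_1<c]\le 2\IP[\Phi_2\le c]$ are all sound, modulo the standard density/continuity argument needed to pass from finite index sets to compact $S_1,S_2$ with continuous $\psi$, which you correctly flag as a technicality.
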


This yields the following lower bound
on the smallest singular value
of structured Gaussian matrices.
Note that this is a generalization
of Lemma~\ref{lem:smallest SV rectangular matrix}.

\begin{lemma}
\label{lem:smin basic}
Let $A\in\IR^{m\times n}$ be a random matrix
with $m\geq n$
whose entries $a_{ij}$ are centered Gaussian
variables with variance $a_i$ for all $i\leq m$ and $j\leq n$.
Then, for all $0<\delta<1$, we have
$$
\IP\left[s_n(A) \leq 
 \sqrt{(1-\delta)\norm{\mathbf a}_1}
 - \sqrt{(1+\delta) n \norm{\mathbf a}_\infty} \right] 
 \leq 4 \exp\left(-\frac{\delta^2}{16} \min\set{n, 
 \frac{\norm{\mathbf a}_1}{\norm{\mathbf a}_\infty}}\right).
$$
\end{lemma}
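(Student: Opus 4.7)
The proof will express $s_n(A)$ as a Gaussian min-max problem, then use Gordon's comparison theorem (Theorem~\ref{thm:Gordon min-max}) to decouple the $\min$ and the $\max$, and finally control the resulting one-sided Gaussian expression by the concentration inequality of Lemma~\ref{lem:vector}. First, I would factor $A = DG$ with $D = \diag(\sqrt{a_1},\dots,\sqrt{a_m})$ and $G \in \IR^{m\times n}$ an iid standard Gaussian matrix. The dual characterization of the Euclidean norm then gives
\[
s_n(A) \;=\; \min_{x\in S^{n-1}} \|DGx\|_2 \;=\; \min_{x\in S^{n-1}}\max_{z\in E}\langle z, Gx\rangle,
\]
where $E = \{Dy:\|y\|_2\le 1\}$ is the image of the unit Euclidean ball under $D$. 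The degenerate case where some $a_i$ vanish is harmless: either the number of indices with $a_i>0$ is at least $n$ and one restricts to the corresponding subblock, or it is smaller and the right-hand side of the claim is negative (so the bound is vacuous).

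\textbf{Applying Gordon and bounding $\Phi_2$.} Second, I would apply Theorem~\ref{thm:Gordon min-max} with $S_1 = S^{n-1}$, $S_2 = E$ and $\psi \equiv 0$, which yields $\IP[s_n(A) < c] \le 2\IP[\Phi_2(u,v)\le c]$, where $u, v$ are independent standard Gaussians in $\IR^m, \IR^n$ and
\[
\Phi_2(u,v) \;=\; \min_{x\in S^{n-1}}\max_{z\in E}\bigl(\langle u, z\rangle + \|z\|_2\langle v, x\rangle\bigr).
\]
The third and most delicate step is to obtain the pointwise lower bound
\[
\Phi_2(u,v) \;\ge\; \|Du\|_2 - \sqrt{\|\mathbf a\|_\infty}\,\|v\|_2.
\]
To prove it, I would use weak duality to interchange $\min$ and $\max$, reducing $\Phi_2(u,v)$ to $\max_{z\in E}(\langle u, z\rangle - \|z\|_2\|v\|_2)$, and then evaluate at the explicit point $z^\star = D^2u/\|Du\|_2 \in E$. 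A direct computation gives $\langle u, z^\star\rangle = \|Du\|_2$ and $\|z^\star\|_2 = \|D^2u\|_2/\|Du\|_2 \le \sqrt{\|\mathbf a\|_\infty}$.

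\textbf{Concentration and conclusion.} Finally I would invoke Lemma~\ref{lem:vector} to control the two terms on the right-hand side. Since $\|Du\|_2^2 = \sum_i a_i u_i^2$ is a sum of squares of independent centered Gaussians with variances $a_i$, the lemma yields $\IP[\|Du\|_2^2 \le (1-\delta)\|\mathbf a\|_1]\le \exp(-\delta^2\|\mathbf a\|_1/(4\|\mathbf a\|_\infty))$; analogously, $\IP[\|v\|_2^2 \ge (1+\delta)n]\le \exp(-\delta^2 n/16)$. Setting $c = \sqrt{(1-\delta)\|\mathbf a\|_1} - \sqrt{(1+\delta)n\|\mathbf a\|_\infty}$ and taking a union bound, one obtains $\IP[\Phi_2(u,v)\le c] \le 2\exp\bigl(-(\delta^2/16)\min\{n,\|\mathbf a\|_1/\|\mathbf a\|_\infty\}\bigr)$. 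Plugging this into the Gordon inequality produces the overall factor of $4$ in the announced bound.

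\textbf{Main obstacle.} The substantive point is the pointwise lower bound on $\Phi_2(u,v)$, and specifically the choice of $z^\star$. Its virtue is that it simultaneously maximizes $\langle u, z\rangle$ over $E$ (delivering the main term $\|Du\|_2$) while having $\|z^\star\|_2$ controlled by $\sqrt{\|\mathbf a\|_\infty}$ rather than by a quantity involving $\|\mathbf a\|_1$. Without this second property one could not recover the sharp $\sqrt{\|\mathbf a\|_\infty n}$ correction in the statement and would be forced to settle for a weaker estimate. Everything else reduces to standard Gaussian tail bounds and the black-box application of Theorem~\ref{thm:Gordon min-max}.
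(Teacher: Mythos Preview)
Your proposal is correct and follows essentially the same route as the paper: factor $A=DG$, apply Gordon's theorem with $S_1=S^{n-1}$ and $S_2$ the image of the sphere (you use the ball, which is equivalent) under $D$ and $\psi\equiv 0$, plug in the test point $z^\star=D^2u/\|Du\|_2$ to obtain $\Phi_2(u,v)\ge \|Du\|_2-\sqrt{\|\mathbf a\|_\infty}\,\|v\|_2$, and finish with Lemma~\ref{lem:vector} and a union bound. The only cosmetic difference is that you phrase the lower bound on $\Phi_2$ via weak duality before choosing $z^\star$, whereas the paper simply inserts the specific $z$ inside the inner $\max$; both arguments are equivalent.
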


\begin{proof}
Note that the statement is trivial if $m\leq n$.
We may assume that the $a_i$ are positive
since an additional row of zeros does neither change
$s_n(A)$ nor the norms of the vector $\mathbf a$.
We have the identity $A=DG$ where $G\in\IR^{m\times n}$ is
a random matrix with independent standard Gaussian entries
and $D\in\IR^{m\times m}$ is the diagonal matrix 
$$
 D=\diag\brackets{\sqrt{a_1},\hdots,\sqrt{a_m}}.
$$
We want to apply Gordon's theorem for the matrix $G$
and $\psi=0$,
where $S_1$ is the sphere in $\ell^2_n$ and
$S_2$ is the image of the sphere in $\ell^2_m$ under $D$.
Then we have
\begin{multline*}
 \Phi_1(G) = \min_{\mathbf x\in S_1}\max_{\mathbf y\in S_2}
    \langle \mathbf y,G\mathbf x \rangle_2
 = \min_{\norm{\mathbf x}_2=1}\max_{\norm{\mathbf z}_2=1}
    \langle D\mathbf z,G\mathbf x \rangle_2\\
 = \min_{\norm{\mathbf x}_2=1}\max_{\norm{\mathbf z}_2=1}
    \langle \mathbf z,A\mathbf x \rangle_2
 = \min_{\norm{\mathbf x}_2=1} \norm{A \mathbf x}_2
 = s_n(A).
\end{multline*}
On the other hand, if $\mathbf u\in\IR^n$ and $\mathbf v\in\IR^m$
are standard Gaussian vectors, 
the choice of $\mathbf z= D\mathbf u / \Vert D\mathbf u\Vert_2$ yields
\begin{multline*}
 \Phi_2(\mathbf u,\mathbf v)=
 \min_{\mathbf x\in S_1} \max_{\mathbf y\in S_2}
   \Big(\scalar{\mathbf u}{\mathbf y}_2+ \norm{\mathbf y}_2 \scalar{\mathbf v}{\mathbf x}_2\Big)\\
 =\min_{\norm{\mathbf x}_2=1}\max_{\norm{\mathbf z}_2=1}
   \Big(\scalar{\mathbf u}{D\mathbf z}_2+ \norm{D\mathbf z}_2 \scalar{\mathbf v}{\mathbf x}_2\Big)\\
 \geq \min_{\norm{\mathbf x}_2=1}
   \Big(\norm{D\mathbf u}_2+ \frac{\norm{D^2\mathbf u}_2}{\norm{D\mathbf u}_2} 
   \scalar{\mathbf v}{\mathbf x}_2\Big)\\
 = \norm{D\mathbf u}_2- \frac{\norm{D^2\mathbf u}_2}{\norm{D\mathbf u}_2} \norm{\mathbf v}_2
 \geq \norm{D\mathbf u}_2- \sqrt{\norm{\mathbf a}_\infty} \norm{\mathbf v}_2.
\end{multline*}
Theorem~\ref{thm:Gordon min-max} implies for all $c\in\IR$ that
$$
 \IP\Big[s_n(A)< c\Big]
 \leq 2 \IP\Big[ \Phi_2(\mathbf u,\mathbf v) \leq c \Big]
 \leq 2 \IP\Big[ \norm{D\mathbf u}_2- \sqrt{\norm{\mathbf a}_\infty} \norm{\mathbf v}_2 
      \leq c \Big].
$$
To obtain the statement of our lemma,
we set $c=\sqrt{(1-\delta)\Vert\mathbf a\Vert_1} - \sqrt{(1+\delta) n \Vert\mathbf a\Vert_\infty}$.
By Lemma~\ref{lem:vector}, we have 
\[
\IP\Big[\norm{D\mathbf u}_2 \,\le\, \sqrt{(1-\delta) \norm{\mathbf a}_1}\Big]
\,\le\, \exp\brackets{- \frac{\delta^2\norm{\mathbf a}_1}{4\norm{\mathbf a}_\infty}}
\]
and 
\[
\IP\Big[\norm{\mathbf v}_2 \,\ge\, \sqrt{(1+\delta) n}\Big]
\,\le\, \exp\brackets{-\frac{\delta^2 n}{16}}.
\]
Now the statement is obtained from a union bound.
\end{proof}

We need the statement of Lemma~\ref{lem:smin basic}
for matrices with infinitely many rows,
which is obtained from a simple limit argument.

\begin{lemma}
\label{lem:smin basic infinite}
 The estimate in Lemma~\ref{lem:smin basic} also holds for $m=\infty$
 if $\mathbf a\in \ell^1$.
\end{lemma}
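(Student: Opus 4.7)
The plan is to reduce Lemma~\ref{lem:smin basic infinite} to the finite-dimensional Lemma~\ref{lem:smin basic} by truncation and a limiting argument. Write $A_m\in\IR^{m\times n}$ for the truncation of $A$ keeping only its first $m$ rows, and set $\mathbf a^{(m)}=(a_1,\dots,a_m)$. Under $\mathbf a\in\ell^1$ we have $\IE\Xnorm{A\mathbf e_j}{2}^2 = \Vert\mathbf a\Vert_1<\infty$ for each standard basis vector $\mathbf e_j$ of $\IR^n$, so almost surely the map $A\colon\IR^n\to\ell^2$ is well defined and bounded, and $s_n(A)=\inf_{\Xnorm{\mathbf x}{2}=1}\Xnorm{A\mathbf x}{2}$ makes sense.

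First I would verify that $s_n(A_m)\nearrow s_n(A)$ almost surely. Monotonicity is clear since $\Xnorm{A_m\mathbf x}{2}^2$ is nondecreasing in $m$ for every fixed $\mathbf x$. For the limit, note that $\Xnorm{A_m\mathbf x}{2}\to\Xnorm{A\mathbf x}{2}$ pointwise, hence $s_n(A)\geq\limsup_m s_n(A_m)$; combined with $s_n(A_m)\leq s_n(A)$ (attained at the minimizer of $\mathbf x\mapsto\Xnorm{A\mathbf x}{2}$, which exists by compactness of the unit sphere in $\IR^n$), we obtain $s_n(A_m)\nearrow s_n(A)$. Consequently, the events $\{s_n(A_m)\leq c\}$ are nonincreasing with intersection $\{s_n(A)\leq c\}$, and continuity of measure from above yields
\[
 \IP[s_n(A)\leq c] \;=\; \lim_{m\to\infty} \IP[s_n(A_m)\leq c]
\]
for any $c\in\IR$.

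Next, fix $\delta\in(0,1)$ and set $c(\delta)=\sqrt{(1-\delta)\Vert\mathbf a\Vert_1}-\sqrt{(1+\delta)n\Vert\mathbf a\Vert_\infty}$. To apply Lemma~\ref{lem:smin basic} to $A_m$ I would use a $\delta'$-trick: pick any $\delta'\in(0,\delta)$. Since $\Vert\mathbf a^{(m)}\Vert_1\nearrow\Vert\mathbf a\Vert_1$ and $\Vert\mathbf a^{(m)}\Vert_\infty\to\Vert\mathbf a\Vert_\infty$, for all sufficiently large $m$ one has
\[
 (1-\delta)\Vert\mathbf a\Vert_1 \leq (1-\delta')\Vert\mathbf a^{(m)}\Vert_1
 \quad\text{and}\quad
 (1+\delta)n\Vert\mathbf a\Vert_\infty \geq (1+\delta')n\Vert\mathbf a^{(m)}\Vert_\infty,
\]
so $c(\delta)\leq c_m(\delta')$ where $c_m(\delta')$ denotes the threshold of Lemma~\ref{lem:smin basic} for the matrix $A_m$ with parameter $\delta'$. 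Lemma~\ref{lem:smin basic} then gives
\[
 \IP[s_n(A_m)\leq c(\delta)]
 \;\leq\; \IP[s_n(A_m)\leq c_m(\delta')]
 \;\leq\; 4\exp\!\left(-\tfrac{(\delta')^2}{16}\min\!\left\{n,\tfrac{\Vert\mathbf a^{(m)}\Vert_1}{\Vert\mathbf a^{(m)}\Vert_\infty}\right\}\right).
\]

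Letting $m\to\infty$ and using $\Vert\mathbf a^{(m)}\Vert_1/\Vert\mathbf a^{(m)}\Vert_\infty\to\Vert\mathbf a\Vert_1/\Vert\mathbf a\Vert_\infty$, we obtain the desired bound with $(\delta')^2$ in place of $\delta^2$; a final limit $\delta'\uparrow\delta$ finishes the proof. The only mild obstacle is this reconciliation of the parameter inside the threshold with the parameter inside the exponential on the right-hand side: a direct substitution $\delta'=\delta$ would require $\Vert\mathbf a^{(m)}\Vert_1\geq\Vert\mathbf a\Vert_1$, which fails, and the $\delta'$-trick avoids this without any extra machinery.
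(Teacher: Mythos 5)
Your proposal is correct and follows essentially the same route as the paper's proof: truncate $A$ to $A_m$, apply the finite-dimensional Lemma~\ref{lem:smin basic} with a slightly smaller parameter $\delta'$ (the paper uses $\delta-\varepsilon$ and $\delta-2\varepsilon$), use $s_n(A)\geq s_n(A_m)$ to transfer the probability bound, and pass to the limits $m\to\infty$, $\delta'\uparrow\delta$. One small remark: you invest effort into the two-sided convergence $s_n(A_m)\nearrow s_n(A)$ and continuity of measure to get the equality $\IP[s_n(A)\leq c]=\lim_m\IP[s_n(A_m)\leq c]$, but only the one-sided inequality $\IP[s_n(A)\leq c]\leq\IP[s_n(A_m)\leq c]$ (a direct consequence of $s_n(A)\geq s_n(A_m)$) is used, which is exactly what the paper invokes; as written, your argument for the reverse inequality $\lim_m s_n(A_m)\geq s_n(A)$ is not actually supplied (the displayed inequalities only re-derive $\lim_m s_n(A_m)\leq s_n(A)$), so it is fortunate that this direction is not needed.
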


\begin{proof}
Again, we may assume that $\mathbf a$ is strictly positive.
For $m\in\IN$ let $A_m$ be the sub-matrix consisting
of the first $m$ rows of $A$ and let $\mathbf a^{(m)}$
be the sub-vector consisting of the first $m$ entries of $\mathbf a$.
We use the notation
\begin{align*}
 c_m(\delta)&=\sqrt{(1-\delta)\Vert\mathbf a^{(m)}\Vert_1} 
 - \sqrt{(1+\delta) n \Vert\mathbf a^{(m)}\Vert_\infty}, \\
 p_m(\delta)&=4 \exp\left(-\frac{\delta^2}{16} \min\set{n, 
 \frac{\norm{\mathbf a^{(m)}}_1}{\norm{\mathbf a^{(m)}}_\infty}}\right),
\end{align*}
where $c(\delta)$ and $p(\delta)$ correspond to the case $m=\infty$.
For any $\varepsilon>0$ with $\varepsilon<\delta/2$
we can choose $m\geq n$ such that 
$c(\delta) \leq c_m(\delta-\varepsilon)$
and $p_m(\delta-\varepsilon) \leq p(\delta-2\varepsilon)$.
Note that we have $s_n(A)\geq s_n(A_m)$
and thus
\begin{multline*}
 \IP\left[s_n(A) \leq c(\delta) \right]
 \leq \IP\left[s_n(A_m) \leq c(\delta) \right]
 \leq \IP\left[s_n(A_m) \leq c_m(\delta-\varepsilon)\right]\\
 \leq p_m(\delta-\varepsilon) 
 \leq p(\delta-2\varepsilon). 
\end{multline*}
Letting $\varepsilon$ tend to zero yields the statement.
\end{proof}

We arrive at our main lower bound. 

\begin{lemma}
\label{lem:lower}
Let $\sigma\in\ell^2$ be nonincreasing and let $n,k\in\IN$ be such that $\sigma_k\neq 0$.
Define
\[
C_k := C_k(\sigma) = \sigma_k^{-2}\sum_{j>k} \sigma_j^2\,. 
\]
Then, for all $\delta\in(0,1)$, 
we have
\[
\IP\left[ \mathcal R_n^{(k)}(\sigma) \,\le\, 
\sigma_k \left(1\,-\,\sqrt{\frac{(1+\delta)n}{(1-\delta)C_k}}\,\right)\right]
\,\leq\, 
5 \exp\brackets{-(\delta/4)^2\, \min\set{n, C_k}}.
\]
\end{lemma}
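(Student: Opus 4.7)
The plan is to combine the pointwise lower bound from Proposition~\ref{prop:lower} with Gaussian concentration estimates for the two quantities that appear in it. Write $\mathbf{g} = (g_{ik})_{i=1}^n$ and $s_n := s_n(\Sigma_{[n],\IN\setminus\{k\}})$, so that Proposition~\ref{prop:lower} gives
\[
\mathcal{R}_{n,k}(\sigma) \,\geq\, \sigma_k\left(1 - \frac{\|\mathbf{g}\|_2}{\sigma_k^{-1} s_n + \|\mathbf{g}\|_2}\right)
\]
almost surely. The fraction on the right is increasing in $\|\mathbf{g}\|_2$ and decreasing in $s_n$, so on the intersection of the events $\{\|\mathbf{g}\|_2 \leq v\}$ and $\{\sigma_k^{-1} s_n \geq u\}$ with $u \geq 0$, the fraction is at most $v/(u+v)$ and hence $\mathcal{R}_{n,k}(\sigma) \geq \sigma_k u/(u+v)$. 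I would choose
\[
v = \sqrt{(1+\delta)n}, \qquad u = \sqrt{(1-\delta)C_k} - \sqrt{(1+\delta)n},
\]
tailored so that $u+v = \sqrt{(1-\delta)C_k}$ and therefore $\sigma_k u/(u+v) = \sigma_k\bigl(1 - \sqrt{(1+\delta)n/((1-\delta)C_k)}\bigr)$, exactly the bound demanded by the lemma. If $u < 0$, the conclusion is $\leq 0 \leq \mathcal{R}_{n,k}$ and nothing has to be proven, so I may assume $u \geq 0$.

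It then remains to control the probability of each deviation event. The vector $\mathbf{g}$ consists of $n$ independent standard Gaussians, so Lemma~\ref{lem:vector} applied with $\mathbf{a} = (1,\dots,1) \in \IR^n$ immediately yields $\IP[\|\mathbf{g}\|_2 > v] \leq \exp(-\delta^2 n/16)$. For the lower bound on $s_n$, I would first drop the columns with index $j<k$: restricting to the sub-matrix $\Sigma_{[n],\{j>k\}}$ only shrinks the image of the unit ball of the domain, and hence $s_n \geq s_n(\Sigma_{[n],\{j>k\}})$. Lemma~\ref{lem:smin basic infinite} applied to the transpose of the latter, with variance vector $\mathbf{a} = (\sigma_j^2)_{j>k}$ (so that $\|\mathbf{a}\|_1 = \sigma_k^2 C_k$, $\|\mathbf{a}\|_\infty = \sigma_{k+1}^2$, and $\|\mathbf{a}\|_1/\|\mathbf{a}\|_\infty \geq C_k$), then gives
\[
\IP\Bigl[\sigma_k^{-1} s_n < \sqrt{(1-\delta)C_k} - \sqrt{(1+\delta)n\,\sigma_{k+1}^2/\sigma_k^2}\Bigr] \,\leq\, 4\exp\bigl(-(\delta/4)^2 \min\{n, C_k\}\bigr).
\]
The estimate $\sigma_{k+1}^2/\sigma_k^2 \leq 1$ only weakens the threshold to the desired $u$.

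A union bound on the complements of the two deviation events then yields the claimed failure probability $5\exp(-(\delta/4)^2 \min\{n,C_k\})$. I do not expect a genuine obstacle; the only subtle point is the choice of sub-matrix in the application of Lemma~\ref{lem:smin basic infinite}. Using only the columns with $j>k$ is essential because it simultaneously keeps $\|\mathbf{a}\|_\infty \leq \sigma_k^2$ (so that the correction term in the singular-value threshold becomes $\sqrt{(1+\delta)n}$ rather than something proportional to $\sigma_1$) and produces the correct $\|\mathbf{a}\|_1 = \sigma_k^2 C_k$ with the right $C_k$ in the probability bound; including the larger columns with $j<k$ would blow up $\|\mathbf{a}\|_\infty$ and spoil both.
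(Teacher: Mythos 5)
Your proof is correct and follows exactly the same route as the paper: apply Proposition~\ref{prop:lower}, control $\|(g_{ik})_{i=1}^n\|_2$ from above via Lemma~\ref{lem:vector}, control $s_n(\Sigma_{[n],\IN\setminus\{k\}})$ from below by passing to the transpose of the sub-matrix on columns $j>k$ and invoking Lemma~\ref{lem:smin basic infinite} with $\mathbf a=(\sigma_j^2)_{j>k}$, then take a union bound. The only cosmetic difference is that you pre-define $u,v$ so that the target fraction drops out immediately, whereas the paper keeps the $\sigma_{k+1}/\sigma_k$ factor a line longer before discarding it; the substance is identical.
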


\begin{proof}
First note that, in the setting of Proposition~\ref{prop:lower}, 
the matrix $\Sigma_{[n],\IN\setminus[k]}^\top$ and the vector $(g_{ik})_{i=1}^n$ are independent. 
Lemma~\ref{lem:vector} and Lemma~\ref{lem:smin basic infinite} yield
\begin{align*}
 \|(g_{ik})_{i=1}^n\|_2 \,&\le\, \sqrt{1+\delta}\,\sqrt{n}
 \qquad\text{and}\\
 s_n\brackets{\Sigma_{[n],\IN\setminus[k]}^\top}
 \,&\ge\, \sqrt{1-\delta}\, \sigma_k \sqrt{C_k} \,-\, \sqrt{1+\delta}\, \sigma_{k+1} \sqrt{n} 
\end{align*}
with probability at least $1-5 \exp(-(\delta/4)^2\, \min\{n, C_k\})$. 
Note that we have
$$
 s_n\brackets{\Sigma_{[n],\IN\setminus\{k\}}}
 = s_n\brackets{\Sigma_{[n],\IN\setminus\{k\}}^\top}
 \geq s_n\brackets{\Sigma_{[n],\IN\setminus[k]}^\top}
$$
since erasing rows can only shrink the smallest singular value.
In this case, we have
\begin{align*}
 \frac{\norm{(g_{ik})_{i=1}^n}_2}{\sigma_k^{-1} 
    s_n\brackets{\Sigma_{[n],\IN\setminus\{k\}}} + \norm{(g_{ik})_{i=1}^n}}
 & \leq \frac{\sqrt{1+\delta}\sqrt{n}}{ 
    \sqrt{1-\delta} \sqrt{C_k} - (\sigma_{k+1}/\sigma_k)\sqrt{1+\delta} \sqrt{n} 
    + \sqrt{1+\delta}\sqrt{n}} \cr
 & \leq \frac{\sqrt{1+\delta}\sqrt{n}}{ 
    \sqrt{1-\delta} \sqrt{C_k}}.
\end{align*}
Now the statement is obtained from Proposition~\ref{prop:lower}.
\end{proof}

The proof of Theorem~\ref{thm:lower} is completed by choosing $\delta=1/2$.

\subsubsection{Proofs of Corollaries~\ref{cor:l2 not l2}, \ref{thm:main result random info hilbert} and \ref{thm:random sections exponential decay}}

In order to optimize the lower bound of Theorem~\ref{thm:lower},
we may choose $k\in\IN$ such that the
right-hand side of our lower bound becomes maximal.
If the Euclidean norm of $\sigma$ is large, we simply choose $k=1$.
Taking into account that $\mathcal R_n(\sigma)$
is decreasing in $n$, we immediately arrive at the following result.

\begin{lemma}
 \label{cor:random info useless}
 Let $\sigma\in\ell^2$ be a nonincreasing sequence of nonnegative numbers
 and let 
  $$
   n_0=\left\lfloor \frac{\varepsilon^2}{3 \sigma_1^2} \sum_{j=2}^\infty  \sigma_j^2\right\rfloor,
   \quad \varepsilon\in(0,1).
  $$ 
  Then $\mathcal R_n(\sigma)\geq \sigma_1(1-\varepsilon)$
  for all $n\leq n_0$
  with probability at least $1-5e^{-n_0/64}$.
\end{lemma}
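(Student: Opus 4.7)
The plan is to apply Theorem~\ref{thm:lower} directly with $k=1$ and $n=n_0$, and then promote the resulting estimate from $n=n_0$ to all $n\leq n_0$ via a pointwise monotonicity argument. We may assume $\sigma_1>0$, as otherwise $\sigma\equiv 0$ and the conclusion is vacuous.

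First, I would verify the hypothesis of Theorem~\ref{thm:lower}. The very definition of $n_0$ gives
\[
 n_0 \,\leq\, \frac{\varepsilon^2}{3\sigma_1^2}\sum_{j\geq 2}\sigma_j^2,
\]
which after rearranging is exactly the required condition
\[
 \sum_{j>1}\sigma_j^2 \,\geq\, \frac{3 n_0\,\sigma_1^2}{\varepsilon^2}.
\]
Therefore Theorem~\ref{thm:lower} applies with $k=1$ and $n=n_0$, yielding
\[
 \IP\Big[\mathcal R_{n_0}(\sigma) \,\leq\, \sigma_1(1-\varepsilon)\Big] \,\leq\, 5e^{-n_0/64}.
\]

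Second, I would observe that $\mathcal R_n(\sigma)$ is almost surely nonincreasing in $n$. Indeed, recall that $G_n = G_{[n],\mathbb{N}}$ consists of the first $n$ rows of a single infinite Gaussian array $(g_{ij})$, so for $n\leq n_0$ the constraint set $\{\mathbf x\in F(\sigma) : G_{n_0}(\mathbf x)=0\}$ is contained in $\{\mathbf x\in F(\sigma) : G_n(\mathbf x)=0\}$. Taking suprema of $\|\mathbf x\|_2$ over these nested sets gives $\mathcal R_n(\sigma)\geq \mathcal R_{n_0}(\sigma)$ pointwise. Consequently, on the event $\{\mathcal R_{n_0}(\sigma)\geq \sigma_1(1-\varepsilon)\}$ (which has probability at least $1-5e^{-n_0/64}$), we have $\mathcal R_n(\sigma)\geq \sigma_1(1-\varepsilon)$ simultaneously for every $n\leq n_0$. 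This yields the claim.

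There is no real obstacle here; the statement is essentially a reformulation of Theorem~\ref{thm:lower} for $k=1$, optimized to the regime where $\sigma_1$ dominates and the tail $\sum_{j\geq 2}\sigma_j^2$ is still large. The only small point to be careful about is the pointwise monotonicity in $n$, which relies on the coupled definition of $G_n$ as a growing family sharing the same underlying Gaussian entries $g_{ij}$.
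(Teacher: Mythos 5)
Your proof is correct and follows the same route the paper takes: apply Theorem~\ref{thm:lower} with $k=1$ at $n=n_0$ (the hypothesis $\sum_{j>1}\sigma_j^2 \geq 3n_0\sigma_1^2/\varepsilon^2$ is exactly the definition of the floor), and then use the pointwise monotonicity $\mathcal R_n(\sigma)\geq \mathcal R_{n_0}(\sigma)$ for $n\leq n_0$, which comes from the nesting of $\ker G_n$ as the first $n$ rows of a single Gaussian array. The paper states this lemma as an immediate consequence of Theorem~\ref{thm:lower} via precisely those two observations.
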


We can now prove that random information is useful if and only if $\sigma\in\ell^2$.

\begin{proof}[Proof of Corollary~\ref{cor:l2 not l2}]
 We first consider the case that $\sigma\in\ell^2$.
 Since $\mathcal R_n(\sigma)\leq \sigma_1$, Theorem~\ref{thm:upper bound random section} yields
 $$
  \IE[ \mathcal R_n(\sigma)] \leq 2e^{-n/100} \cdot \sigma_1
  + \frac{156}{\sqrt{n}} 
   \bigg(\sum_{j\geq\lfloor n/4\rfloor}\sigma_j^2\bigg)^{1/2}.
 $$
 Now the statement is implied by the fact that $\sigma\in\ell^2$.
 
 For the case that $\sigma\not\in\ell^2$,
 let $0<\varepsilon<1$.
 For $m\in\IN$ let $\sigma^{(m)}$ be the sequence
 obtained from $\sigma$ by replacing $\sigma_j$
 with zero for all $j>m$.
 For any $N\geq n$, we can choose $m\in\IN$
 such that
 $$
  \frac{\varepsilon^2}{3 \sigma_1^2} \sum_{j=2}^m  \sigma_j^2
  \geq N
 $$
 since $\sigma\not\in\ell^2$.
 Lemma~\ref{cor:random info useless} yields that
 \begin{align*}
  \IP\left[\mathcal R_n(\sigma)\geq \sigma_1(1-\varepsilon)\right]
  & \geq \IP\left[\mathcal R_n(\sigma^{(m)})\geq \sigma_1(1-\varepsilon)\right]\cr
  & \geq \IP\left[\mathcal R_N(\sigma^{(m)})\geq \sigma_1(1-\varepsilon)\right]
   \geq 1-5\exp\brackets{-N/64}.
 \end{align*}
 Since this holds for any $N\geq n$, we get that
 the event $\mathcal R_n(\sigma)\geq \sigma_1(1-\varepsilon)$
 happens with probability 1 for any $\varepsilon\in(0,1)$.
 This yields the statement 
 since the event $\mathcal R_n(\sigma)\geq \sigma_1$
 is the intersection of countably many such events.
\end{proof}

We now apply our general estimates for $\mathcal R_n(\sigma)$ to 
specific sequences $\sigma$ and give a proof
of Corollaries~\ref{thm:main result random info hilbert} and \ref{thm:random sections exponential decay}. 
Note that the first part of Corollary~\ref{thm:main result random info hilbert}
which is concerned with slowly decaying sequences
is already proven by Corollary~\ref{cor:l2 not l2}.
We add a finite dimensional version of this statement.
%
%

\begin{cor}[\cite{HKNPU19}]
\label{cor:slow polynomial}
Let $m,n\in\IN$ and consider the sequence $\sigma$ with
$$
 \sigma_j=\left\{\begin{array}{cl}
  	\min\set{1,j^{-\alpha}\brackets{1+\ln j}^{-\beta}}	
        &
        \text{for} \quad j\leq m,
        \\
	  0
        &
        \text{for} \quad j>m.
        \end{array}\right.
$$
where $0\leq \alpha \leq 1/2$ and $\beta\in\IR$
with $\beta>0$ for $\alpha=0$ and $\beta\leq 1/2$ for $\alpha=1/2$.
Then, for any $0<\varepsilon<1$, 
we have with probability at least $1-5\exp(-n_0/64)$
for all $n\leq n_0$ that
$$
 1-\varepsilon \leq \mathcal{R}_n(\sigma)\leq 1
$$ 
if we put
\[
 n_0=\left\{\begin{array}{cl}
        \displaystyle \left\lfloor \frac{\varepsilon^2(m-2)m^{-2\alpha}}{3 (1+\ln m)^{\max\{2\beta,0\}}} \right\rfloor
        &
        \text{for} \quad \alpha<1/2,\vspace*{2mm}
        \\
       \displaystyle \left\lfloor \frac{\varepsilon^2(\ln m-1)}{3 (1+\ln m)^{\max\{2\beta,0\}}} \right\rfloor
        &
        \text{for} \quad \alpha=1/2,\, \beta<1/2,\vspace*{2mm}
        \\
        \displaystyle \left\lfloor \frac{\varepsilon^2(\ln\ln m-1)}{3} \right\rfloor
        &
        \text{for} \quad \alpha=\beta=1/2.
        \end{array}\right.
\]
\end{cor}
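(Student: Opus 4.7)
The plan is to reduce the corollary to Lemma~\ref{cor:random info useless} by giving matching lower bounds on the tail sum $\sum_{j=2}^{m} \sigma_j^2$ in each of the three regimes. Note first that $\sigma_1 = \min\{1, 1^{-\alpha}(1+\ln 1)^{-\beta}\} = 1$, so the trivial upper bound $\mathcal{R}_n(\sigma) \leq \sigma_1 = 1$ follows immediately. The sequence $\sigma$ is nonincreasing because the function $j \mapsto j^{-\alpha}(1+\ln j)^{-\beta}$ has a single maximum and is capped at $1$ by the min. Therefore Lemma~\ref{cor:random info useless} applies and yields $\mathcal{R}_n(\sigma) \geq 1-\varepsilon$ with probability at least $1 - 5\exp(-n_0/64)$ for every $n \leq n_0 = \lfloor \tfrac{\varepsilon^2}{3}\sum_{j=2}^{m}\sigma_j^2 \rfloor$. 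So everything boils down to lower-bounding that tail sum.

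For the case $\alpha < 1/2$, I would show the pointwise estimate $\sigma_j \geq m^{-\alpha}(1+\ln m)^{-\max\{\beta,0\}}$ for all $j \leq m$. When $\beta \geq 0$ both factors $j^{-\alpha}$ and $(1+\ln j)^{-\beta}$ are nonincreasing, so $j^{-\alpha}(1+\ln j)^{-\beta} \geq m^{-\alpha}(1+\ln m)^{-\beta}$, and the min with $1$ preserves this. When $\beta < 0$ one has $(1+\ln j)^{-\beta} \geq 1$ for $j \geq 1$, hence $\sigma_j \geq \min\{1,j^{-\alpha}\} \geq m^{-\alpha}$, which matches $\max\{\beta,0\} = 0$. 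Summing over $j = 2,\dots,m$ gives $\sum_{j=2}^{m}\sigma_j^2 \geq (m-1)m^{-2\alpha}(1+\ln m)^{-2\max\{\beta,0\}}$, which implies the claimed $n_0$.

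For $\alpha = 1/2$, $\beta < 1/2$, I would split again: when $\beta \geq 0$ we have $\sigma_j^2 = j^{-1}(1+\ln j)^{-2\beta} \geq j^{-1}(1+\ln m)^{-2\beta}$ for $j \leq m$, and when $\beta < 0$ we have $\sigma_j^2 \geq 1/j$ (either $\sigma_j = 1 \geq j^{-1/2}$ or $\sigma_j^2 = j^{-1}(1+\ln j)^{-2\beta} \geq j^{-1}$). In both subcases,
\[
\sum_{j=2}^{m}\sigma_j^2 \;\geq\; (1+\ln m)^{-2\max\{\beta,0\}} \sum_{j=2}^{m}\frac{1}{j}\;\geq\;(1+\ln m)^{-2\max\{\beta,0\}}(\ln m - 1),
\]
where the harmonic estimate follows from $\sum_{j=2}^{m}1/j \geq \int_{2}^{m+1} t^{-1}\,\d t = \ln((m+1)/2) \geq \ln m - 1$. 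For the critical case $\alpha = \beta = 1/2$, one has $\sigma_j^2 = j^{-1}(1+\ln j)^{-1}$ and
\[
\sum_{j=2}^{m}\frac{1}{j(1+\ln j)} \;\geq\; \int_{2}^{m+1}\frac{\d t}{t(1+\ln t)} \;=\; \ln(1+\ln(m+1)) - \ln(1+\ln 2) \;\geq\; \ln\ln m - 1,
\]
valid for all $m$ large enough that $\ln(1+\ln(m+1)) \geq \ln\ln m$.

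There is no substantive obstacle here; the proof is a routine bookkeeping exercise once Lemma~\ref{cor:random info useless} is in hand. The only mildly delicate point is handling the cases $\beta < 0$ uniformly, where one must use the min with $1$ to guarantee that $\sigma_j$ stays nonincreasing and that the pointwise lower bound is not lost in the regime where $j^{-\alpha}(1+\ln j)^{-\beta}$ would exceed $1$. Combining the three estimates with Lemma~\ref{cor:random info useless} yields exactly the stated thresholds $n_0$ and the probability bound $1 - 5\exp(-n_0/64)$.
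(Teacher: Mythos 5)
Your proof is correct and takes the same route the paper does: Corollary~\ref{cor:slow polynomial} is a direct application of Lemma~\ref{cor:random info useless}, and all that remains is to lower-bound $\sum_{j=2}^m \sigma_j^2$ in each of the three regimes, which you do correctly (your constant $m-1$ for $\alpha<1/2$ is even slightly sharper than the stated $m-2$, and your last inequality $\ln(1+\ln(m+1))-\ln(1+\ln 2)\geq\ln\ln m-1$ in fact holds for all $m\geq 2$, not just large $m$, since $1+\ln(m+1)>\ln m$ and $\ln(1+\ln 2)<1$).
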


We now present a result for sequences on the edge of $\ell^2$.
This result shows that random information may be worse than
optimal information even if $\sigma\in\ell^2$.

\begin{cor}[\cite{HKNPU19}]
\label{cor:medium polynomial}
Let $\beta>1/2$ and consider the sequence $\sigma$ with
$$
 \sigma_j = \left\{\begin{array}{cl}
  	j^{-1/2} (1+\ln j)^{-\beta}	
        &
        \text{for} \quad j\leq m,
        \\
	  0
        &
        \text{for} \quad j>m.
        \end{array}\right.
$$
Then there exist constants $c_\beta, C_\beta>0$ such that
for all $n\in\IN$ and $m\in\IN\cup\{\infty\}$ with $m>n^2$
we have with probability at least $1- 7 e^{-n/100}$ that
\[
 c_\beta n^{-1/2} (1+\ln n)^{1/2-\beta}
 \leq \mathcal{R}_n(\sigma)
 \leq C_\beta n^{-1/2} (1+\ln n)^{1/2-\beta}.
\]
\end{cor}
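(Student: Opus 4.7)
The plan is to directly apply the main upper and lower bounds Theorem~\ref{thm:upper bound random section} and Theorem~\ref{thm:lower} to the specific sequence $\sigma$, with the correct choice of the parameter $k$ in the lower bound. The key computation is the asymptotics of the tail sums
\[
T_N := \sum_{j=N}^{\min\{m,\infty\}} \sigma_j^2 = \sum_{j=N}^{\min\{m,\infty\}} \frac{1}{j(1+\ln j)^{2\beta}}.
\]
By integral comparison with $\int_N^\infty x^{-1}(\ln x)^{-2\beta}\,\d x = (2\beta-1)^{-1}(\ln N)^{1-2\beta}$, there exist $a_\beta, A_\beta >0$ such that for all $N\geq 2$ (and for all $m\in\IN\cup\{\infty\}$ with $m>N^2$, so that the tail removed by the cutoff at $m$ is only a bounded fraction of the full tail, using $(\ln m)^{1-2\beta} \leq 2^{1-2\beta}(\ln N)^{1-2\beta}$)
\[
a_\beta(\ln N)^{1-2\beta} \leq T_N \leq A_\beta(\ln N)^{1-2\beta}.
\]

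For the upper bound, I apply the first estimate of Theorem~\ref{thm:upper bound random section} with the cutoff $\lfloor n/4\rfloor$. Since $m>n^2$, we have $T_{\lfloor n/4\rfloor} \leq A_\beta(\ln n)^{1-2\beta}$ (up to an absolute constant from the log-shift), so with probability $\geq 1-2e^{-n/100}$,
\[
\mathcal{R}_n(\sigma) \leq \frac{221}{\sqrt{n}}\sqrt{T_{\lfloor n/4\rfloor}} \leq C_\beta\, n^{-1/2}(1+\ln n)^{1/2-\beta}.
\]

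For the lower bound, I set $k = \lceil n/\ln n\rceil$ (adjusted so $k\geq 2$ and $k<m$, which is automatic since $m>n^2$). Then $\sigma_k^2 \asymp (\ln n/n)(1+\ln k)^{-2\beta} \asymp n^{-1}(\ln n)^{1-2\beta}$, and $T_{k+1} \geq a_\beta(\ln k)^{1-2\beta} \asymp (\ln n)^{1-2\beta}$. Hence the ratio $T_{k+1}/\sigma_k^2 \asymp n$. Choosing the hidden constant in $k$ large enough (i.e., replacing $\lceil n/\ln n\rceil$ by $\lceil \gamma_\beta n/\ln n\rceil$ for a suitably large $\gamma_\beta$), the hypothesis $\sum_{j>k}\sigma_j^2 \geq 3n\sigma_k^2/\varepsilon^2$ of Theorem~\ref{thm:lower} is satisfied with, say, $\varepsilon=1/2$. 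Consequently, with probability $\geq 1-5e^{-n/64}$,
\[
\mathcal{R}_n(\sigma) \geq \tfrac{1}{2}\sigma_k \geq c_\beta\, n^{-1/2}(1+\ln n)^{1/2-\beta}.
\]
A union bound, together with $e^{-n/64}\leq e^{-n/100}$, gives the combined failure probability at most $2e^{-n/100}+5e^{-n/100} = 7e^{-n/100}$.

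The main technical point — and the only nontrivial obstacle — is the bookkeeping of logarithmic factors: verifying that the choice $k\asymp n/\ln n$ balances the two sides of the inequality $T_{k+1}\gtrsim n\sigma_k^2$ precisely so that $\sigma_k$ reproduces the conjectured rate $n^{-1/2}(\ln n)^{1/2-\beta}$, and that the truncation at $m>n^2$ does not destroy the tail estimates $T_N \asymp (\ln N)^{1-2\beta}$. Both are handled by the integral comparison above; everything else is a direct substitution into the two master theorems.
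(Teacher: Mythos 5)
Your proof is correct and follows essentially the same path as the paper's own: you compute the tail sums $\sum_{j>k}\sigma_j^2 \asymp (\ln k)^{1-2\beta}$ (with the condition $m>n^2$ keeping the truncation harmless), feed the first estimate of Theorem~\ref{thm:upper bound random section} for the upper bound, and apply Theorem~\ref{thm:lower} with $k\asymp n/\ln n$ (the paper takes $k = \lceil c_\beta'\, n/(1+\ln n)\rceil$, tuning the constant exactly as you do) for the lower bound, finishing with a union bound. The only cosmetic difference is that the paper states the tail sum as $\asymp \ln^{1-2\beta}(k)-\ln^{1-2\beta}(m)$, while you absorb the $m$-dependence using $m>n^2$ directly; the substance is identical.
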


\begin{proof}
 Note that we have for any $1< k< m<\infty$ that
 $$ 
  \sum_{j=k+1}^m \sigma_j^2 
  = \sum_{j=k+1}^m j^{-1} (1+\ln j)^{-2\beta}
  \asymp \ln^{1-2\beta}(k) - \ln^{1-2\beta}(m),
 $$
 where $\asymp$ means that the both sides of
 the equation are bounded by a constant multiple
 of the other side, where the constant depends only on $\beta$.
 Now the upper bound follows from the first part 
 of Theorem~\ref{thm:upper bound random section}
 and the lower bound follows from
 the second part of Theorem~\ref{thm:lower}
 with $k = \lceil c_\beta' n/(1+\ln n)\rceil$
 for some $c'_\beta >0$.
\end{proof}

If $\sigma$ decays with a polynomial rate
strictly larger than $1/2$,
then random information is up to a constant as good
as optimal information.

\begin{cor}[\cite{HKNPU19}]
\label{cor:quick polynomial}
Let $\alpha>1/2$ and $\beta\in\IR$ 
and consider the sequence $\sigma$ with
$$
 \sigma_j = \left\{\begin{array}{cl}
  	\min\set{1, j^{-\alpha} (1+\ln j)^{-\beta}}
        &
        \text{for} \quad j\leq m,
        \\
	  0
        &
        \text{for} \quad j>m.
        \end{array}\right.
$$
Then there exists a constant $C_{\alpha,\beta}>0$ such that
for all $n\in\IN$ and $m\in\IN\cup\{\infty\}$ with $n<m$
we have with probability at least $1- 2 e^{-n/100}$ that
\[
 \sigma_{n+1}
 \leq \mathcal{R}_n(\sigma)
 \leq C_{\alpha,\beta}\,\sigma_{n+1}.
\]
\end{cor}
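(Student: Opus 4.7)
The plan is to derive the lower bound from the general theory of optimal algorithms and to obtain the upper bound by applying the first estimate of Theorem~\ref{thm:upper bound random section} together with a routine integral comparison that exploits $\alpha>1/2$.

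For the lower bound, I would argue that $\sigma_{n+1}\le\mathcal{R}_n(\sigma)$ holds with probability one, not merely with high probability. Indeed, with probability one the matrix $G_n$ defines an admissible information mapping of cost $n$ on the Hilbert space $H(\sigma)$, and by the discussion following equation~\eqref{eq:radius random info} we have $\mathcal{R}_n(\sigma)=\rad(G_n,\APP,F(\sigma),\ell^2_m)$. Since Theorem~\ref{thm:LPs over Hilbert spaces} states that $\sigma_{n+1}$ is the minimum of $\rad(N_n,\APP,F(\sigma),\ell^2_m)$ over all information mappings with cost $n$, the lower bound follows immediately.

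For the upper bound, the first part of Theorem~\ref{thm:upper bound random section} gives, with probability at least $1-2e^{-n/100}$,
\[
  \mathcal{R}_n(\sigma)\,\le\,\frac{221}{\sqrt{n}}\bigg(\sum_{j\ge\lfloor n/4\rfloor}\sigma_j^2\bigg)^{1/2}.
\]
Choose $n_0=n_0(\alpha,\beta)$ large enough so that for $n\ge n_0$ the cap at $1$ is inactive on $\{j\ge\lfloor n/4\rfloor\}$, i.e.\ $\sigma_j=j^{-\alpha}(1+\ln j)^{-\beta}$ for such $j$. Since $m>n$, the truncation at index $m$ only shrinks the sum. A standard integral comparison based on $2\alpha-1>0$ yields
\[
  \sum_{j\ge\lfloor n/4\rfloor}j^{-2\alpha}(1+\ln j)^{-2\beta}\,\le\, C'_{\alpha,\beta}\,n^{1-2\alpha}(1+\ln n)^{-2\beta}.
\]
Inserting this into the probabilistic bound shows
\[
  \mathcal{R}_n(\sigma)\,\le\,C''_{\alpha,\beta}\,n^{-\alpha}(1+\ln n)^{-\beta}\,\asymp\,\sigma_{n+1},
\]
as $\sigma_{n+1}=(n+1)^{-\alpha}(1+\ln(n+1))^{-\beta}$ for $n+1\ge n_0$.

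For the finitely many remaining indices $n<n_0$, the deterministic bound $\mathcal{R}_n(\sigma)\le\sigma_1=1$ combined with the positive lower bound $\sigma_{n+1}\ge\sigma_{n_0}>0$ allows us to absorb these cases by enlarging $C_{\alpha,\beta}$ if necessary. There is no real obstacle here; the only minor care required is matching the scale $n^{-\alpha}(1+\ln n)^{-\beta}$ produced by the tail estimate with the normalization of $\sigma_{n+1}$ and handling the cap at $1$ and the truncation at $m$, both of which are benign.
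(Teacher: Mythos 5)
Your proof is correct and follows essentially the same route as the paper: the lower bound is immediate (the paper simply asserts $\mathcal{R}_n(\sigma)\ge\sigma_{n+1}$ holds for every realization, which you derive via Theorem~\ref{thm:LPs over Hilbert spaces}; a direct dimension-count on $\ker G_n \cap \operatorname{span}\{\mathbf e_1,\dots,\mathbf e_{n+1}\}$ gives the same thing without appealing to measurability), and the upper bound is exactly the first estimate of Theorem~\ref{thm:upper bound random section} combined with the tail estimate $\sum_{j\ge\lfloor n/4\rfloor}j^{-2\alpha}(1+\ln j)^{-2\beta}\le C n^{1-2\alpha}(1+\ln n)^{-2\beta}$ for $\alpha>1/2$, with small $n$ absorbed into the constant.
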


\begin{proof}
 The lower bound is trivial, it
 holds for every realization of $\mathcal{R}_n(\sigma)$.
 The upper bound is a consequence of Theorem~\ref{thm:upper bound random section}, 
 since for large $n$ we have
 $$ 
  \sum_{j\geq\lfloor n/4\rfloor} \sigma_j^2 
  = \sum_{j\geq\lfloor n/4\rfloor} j^{-2\alpha}(1+\ln j)^{-2\beta} 
  \leq C n^{1-2\alpha}(1+\ln n)^{-2\beta}
 $$
 with a constant $C$ depending only on $\alpha$ and $\beta$.
\end{proof}

Corollaries~\ref{cor:slow polynomial}, \ref{cor:medium polynomial}
and \ref{cor:quick polynomial} form a proof of
Corollary~\ref{thm:main result random info hilbert}:

\begin{proof}[Proof of Corollary~\ref{thm:main result random info hilbert}]
 It suffices to consider the sequences from Corollaries~\ref{cor:slow polynomial},
 \ref{cor:medium polynomial} and \ref{cor:quick polynomial}
 since $\sigma\le C\sigma'$ implies $\mathcal R_n(\sigma)\le C\mathcal R_n(\sigma')$ for all $n$. 
 Since we have $0\leq \mathcal{R}_n(\sigma) \leq \sigma_1$
 almost surely, the statements for the expected value hold
 if the corresponding lower bounds hold at least with a constant positive probability
 and if the corresponding upper bounds hold with probability 
 at least $1-c\sigma_{n+1}$ for some constant $c>0$.
 This is shown in the corollaries.
\end{proof}

\begin{rem}
 The case $\sigma_n \asymp n^{-\alpha} \ln^{-\beta}(n+1)$ with $\alpha>1/2$
 can be extended to $\sigma_n \asymp n^{-\alpha} \phi(n)$ 
 for any slowly varying function $\phi$. 
 In this case, random information is up to a constant 
 as powerful as optimal information, 
 i.e., $\IE[ \mathcal R_n(\sigma)] \asymp \sigma_{n+1}$.
\end{rem}

We turn to the case of exponentially decaying singular values


\begin{proof}[Proof of Corollary~\ref{thm:random sections exponential decay}]
 The lower bound is implied by the trivial estimate 
 $\mathcal{R}_n(\sigma) \ge \sigma_{n+1}$.
 To prove the upper bound, we use the second part
 of Theorem~\ref{thm:upper bound random section}.
 Without loss of generality, we may assume that 
 $\sigma_j = a^{j-1}$ for all $j \in\IN$.
 The general case follows from the fact that $\sigma\le C\sigma'$ implies $\mathcal R_n(\sigma)\le C\mathcal R_n(\sigma')$ for all $n$.
 We choose $c\geq 1$ such that $e^{-c^2}\leq a$.
 Note that there is some $b>0$ such that
 $$
  \bigg(\sum_{j>n}\sigma_j^2\bigg)^{1/2} = \frac{b\,a^n}{14}
 $$
 for all $n\in\IN$.
 The theorem yields for all $t\geq b n a^n$ that
 $$
  \IP[\mathcal{R}_n(\sigma)\geq t]
  \leq a^n + \frac{b\,n\,a^n\,c\sqrt{2e}}{t}.
 $$
 This yields that
 $$
  \IE[\mathcal{R}_n(\sigma)]
  = \int_0^1 \IP[\mathcal{R}_n(\sigma)\geq t]~\d t
  \leq a^n + b n a^n + n a^n \int_{b n a^n}^1 \frac{b c\sqrt{2e}}{t}~\d t
  \preccurlyeq n^2 a^n,
 $$
 as it was to be proven.
\end{proof}

\chapter*{Symbols}
\addcontentsline{toc}{chapter}{Symbols}

\pagestyle{scrplain}

\begin{tabularx}{\textwidth}{cX}
\multicolumn{2}{X}{\textbf{General}
\bigskip}\\
	$\IN_0$, $\IN$
		& the set of natural numbers with and without zero\\
	$[k]$
		& the set of natural numbers from 1 to $k\in\IN$\\
	$\IZ$, $\IQ$, $\IR$, $\IC$
		& the sets of integers, rational, real and complex numbers\\
	$\lfloor a \rfloor$ 
		& the largest integer smaller than or equal to $a \in  \IR$\\
	$\lceil a \rceil$ 
		& the smallest integer larger than or equal to $a \in  \IR$\\
	$\ln(x)$
		& natural logarithm of $x>0$\\
	$\log_a(x)$
		& logarithm of $x>0$ in base $a>0$\\
	$\card(A)$
		& cardinality of a set~$A$; number of elements if $A$ is finite\\
	$A\subset B$
		& set inclusion, equality allowed\\
	$\dist(f,g)$
		& distance of $f$ and $g$ in a metric space\\
	$\Xnorm{f}{G}$
		& norm of $f$ in a normed space $G$\\
	$\Xscalar{f}{g}{H}$
		& scalar product of $f$ and $g$ in a pre-Hilbert space $H$\\
	$\rad(M)$
		& radius of a set $M$ in a metric space, see~\eqref{eq:radius of sets}\\
	$\mathbb T$
		& a circle, usually represented by $[0,1]$ where $0$ and $1$
		are identified\\
	$\mathbb T^d$
		& the $d$-torus, usually represented by $[0,1]^d$\\
\phantom{void}& \\
\multicolumn{2}{X}{\textbf{Vectors and Sequences of Real Numbers}
\bigskip}\\
	$\mathbf x = (x_1,\hdots,x_m)$
		& vector in~$ \IC^m$ with entries $x_i$\\
	$\mathbf x = (x_1,x_2,\hdots)$
		& vector in~$ \IC^\IN$ with entries $x_i$\\
	$\mathbf x_J$
		& sub-vector $(x_i)_{i \in J}$ of $\mathbf x$ for some index set $J$\\
	$\mathbf 0$
		& vector with all the entries set to~$0$ \\
	$\mathbf 1$
		& vector with all the entries set to~$1$ \\
	$\mathbf e_i$
		& vector with the $i^{\rm th}$ entry set to~$1$ and all other entries set to~$0$ \\
	$[\mathbf x,\mathbf y]$
		& set of vectors $\mathbf z$ with entries $z_i$ 
		between $x_i\in\IR$ and $y_i\in\IR$\\
	$I_J$
		& Cartesian product of intervals $I_j$ over $j\in J$\\
	$\langle \mathbf x, \mathbf y \rangle$
		& Euclidean scalar product, 
		that is, \mbox{$\langle \mathbf x, \mathbf y \rangle=\sum_i x_i \bar{y_i}$}\\
	$\norm{\mathbf x}_p$
		& $p$-norm of a vector, that is, 
		\mbox{$\|\mathbf  x\|_p = \left(\sum_i |x_i|^p\right)^{1/p}$}
			for \mbox{$1 \leq p < \infty$} and
			\mbox{$\|\mathbf x\|_{\infty} = \sup_i |x_i|$} for \mbox{$p = \infty$}\\
	$\vert\mathbf x\vert$
		& sometimes used instead of~$\norm{\mathbf x}_1$,
		mainly if $\mathbf x\in\IZ^d$\\
	$\ell^p_m$
		& $\IR^m$ equipped with the $p$-norm;
		in some contexts $\IC^m$\\
	$\ell^p$
		& space of all vectors in $\IR^\IN$ with finite $p$-norm
		equipped with the $p$-norm;
		in some contexts $\IC^\IN$\\
	$B_r^p(\mathbf x)$
		& open ball within~$\ell^p$ or $\ell^p_m$ with radius $r\geq 0$ and center $\mathbf x$ \\
	$B_r^p(M)$
		& union of the balls $B_r^p(\mathbf x)$ over $\mathbf x\in M$ 
		for $M\subset \ell^p$ or $M\subset \ell^p_m$\\
	$\mathbb S_{m-1}$
		& Euclidean sphere in $\IR^m$\\
	$c_{00}$
		& set of finite sequences, that is, $c_{00}=\bigcup_{n\in\IN_0} \IR^n$\\
\phantom{void}& \\
\multicolumn{2}{X}{\textbf{Comparison of Sequences of Positive Numbers}
\bigskip}\\
	$x_n \preccurlyeq y_n$
		& there is a constant $c>0$ and a threshold $n_0\in\IN$ 
		such that $x_n \leq c y_n$ for all $n\geq n_0$\\
	$x_n \succcurlyeq y_n$
		& there is a constant $c>0$ and a threshold $n_0\in\IN$ 
		such that $x_n \geq c y_n$ for all $n\geq n_0$;
		equivalent to $y_n \preccurlyeq x_n$\\
	$x_n \asymp y_n$
		& $x_n \preccurlyeq y_n$ and $y_n \preccurlyeq x_n$;
		weak equivalence of sequences\\
	$x_n \varlesssim y_n$
		& for every constant $c>1$ there is a threshold $n_0\in\IN$ 
		such that $x_n \leq c y_n$ for all $n\geq n_0$\\
	$x_n \vargtrsim y_n$
		& for every constant $c<1$ there is a threshold $n_0\in\IN$ 
		such that $x_n \geq c y_n$ for all $n\geq n_0$;
		equivalent to $y_n \varlesssim x_n$\\
	$x_n \sim y_n$
		& $x_n \varlesssim y_n$ and $y_n \vargtrsim x_n$ or
		equivalently $\lim_{n\to\infty} x_n/y_n =1$;
		strong equivalence of sequences\\
\phantom{void}& \\
\multicolumn{2}{X}{\textbf{Matrices and Operators}
\bigskip}\\
	$\diag(\mathbf{x})$
		& square matrix with main diagonal $\mathbf{x}\in\IR^m$
		and all other entries set to~$0$\\
	$A^{-1}$
		& inverse of a square matrix\\
	$A^{\top}$
		& transpose of a matrix\\
	$A^{-\top}$
		& transpose of the inverse of a square matrix\\
	$\det(A)$
		& determinant of a square matrix\\
	$\ker(A)$
		& kernel of a matrix\\
	$\norm{T:X\to Y}$
		& operator norm of a bounded linear operator $T$ 
		between normed spaces $X$ and $Y$, that is,
		$\sup\{\Xnorm{Tx}{Y}\mid x\in X, \Xnorm{x}{X}=1\}$\\
	$\mathcal L(X,Y)$
		& space of bounded linear operators  
		between $X$ and $Y$ equipped with the operator norm\\
	$\norm{A}_p$
		& operator norm of the matrix $A\in \IR^{n\times m}$
		in $\mathcal L(\ell^p_m,\ell^p_n)$\\
	$X \hookrightarrow Y$
		& embedding, $X$~is identified with a subset of~$Y$,
			$f \mapsto f$\\
\phantom{void}& \\
\multicolumn{2}{X}{\textbf{Functions and Derivatives}
\bigskip}\\
	$f:D\to \IR$
		& real valued function on a domain $D\subset \IR^d$,
		mapping a point $\mathbf x\in D$ to a number $f(\mathbf x)\in\IR$\\
	$\sup f$
		&  supremum of $f$, that is, $\sup f= \sup\{f(\mathbf x)\mid \mathbf x\in D\}$\\
	$\supp f$
		& support of $f$; closure of the set $\{\mathbf x\in D \mid f(\mathbf x)\neq 0\}$ \\
	$f\vert_P$
		& restriction of $f$ to the set $P\subset D$\\
	$f^{(r)}$
		&  the $r^{\rm th}$ weak derivative of $f$ in the case $D\subset \IR$;
		if possible, $f^{(r)}$ is identified with a continuous function\\
	$\partial_{\theta} f$
		& directional (weak) derivative of $f$
		in the direction $\theta\in \mathbb S_{d-1}$ \vspace*{1mm} \\
	$\displaystyle\frac{\partial f}{\partial x_i}$
		& partial (weak) derivative with respect to $x_i$;
			equivalently~\mbox{$\partial_{\mathbf e_i} f$}\\
	$\diff^\alpha f$
		& partial (weak) derivative 
		$\displaystyle\frac{\partial^{\abs{\alpha}} f}{\partial x_1^{\alpha_1}
		\cdots\partial x_d^{\alpha_d}}$
		of order $\alpha\in\IN_0^d$\\
	$\diff \Psi$
		& Jacobian matrix of a function $\Psi:D\to \IR^d$\\
	$\abs{\diff \Psi}$
		& absolute value of the determinant of $\diff \Psi$\\
	$f_1\otimes\hdots\otimes f_d$
		& tensor product of the functions $f_i:D_i\to\IC$;
		maps $\mathbf x\in \prod_{i=1}^d D_i$ to
		$\prod_{i=1}^d f_i(x_i)\in \IC$\\
	$f_J$
		& tensor product of the functions $f_i$ over $i\in J$\\
\phantom{void}& \\
\multicolumn{2}{X}{\textbf{Measures and Function Spaces}
\bigskip}\\
	$(D,\mathcal{A},\mu)$
		& measure space\\
	$\|f\|_p$
		& $p$-norm of a measurable function $f:D\to\IC$
		(with respect to $\mathcal{A}$
		and the Borel $\sigma$-algebra on $\IC$),
		that is, \mbox{$\|f\|_p = \left(\int |f|^p ~\d\mu \right)^{1/p}$}
		for \mbox{$1 \leq p < \infty$}
		and \mbox{$\|f\|_{\infty} = \esssup_{ x \in D} |f( x)|$}
		for \mbox{$p = \infty$};\\
	$L^p(D,\mathcal{A},\mu)$
		& the space of measurable 
		functions~\mbox{$f:D \rightarrow  \IR$}
		with finite $p$-norm;
		functions that are
		equal $\mu$-almost everywhere are identified;
		sometimes $\IC$ instead of $\IR$\\
	$\scalar{f}{g}$
		& scalar product in $L^2(D,\mathcal{A},\mu)$,
		that is, \mbox{$\scalar{f}{g}= \int f\bar{g}~\d\mu$};\\
	$L^p(D)$
		& short for $L^p(D,\mathcal{A},\mu)$
		if $D$ is a domain in $\IR^d$, $\mathcal{A}$ is the
		Borel $\sigma$-algebra and $\mu$ is the
		$d$-dimensional Lebesgue measure\\
	$\lambda^d$
		& $d$-dimensional Lebesgue measure\\
	$\mu(Statement)$
		& measure of the set of all $x\in D$
			for which \emph{Statement} is true \\
	$(\Omega,\mathcal{F},\IP)$
		& usually used instead of $(D,\mathcal{A},\mu)$
		if $\mu(D)=1$; probability space\\
	$\IE$
		& expectation, that is, \mbox{$\IE X=\int X~\d\IP$} for $X\in L^1(\Omega,\mathcal{F},\IP)$\\
	$A^\mathsf{c}$
		& complement of $A\subset\Omega$, 
		that is, $A^\mathsf{c}=\Omega\setminus A$\\
	$\mathcal B(D)$
		& bounded real valued functions on a set $D$\\
	$\C(D)$
		& continuous real valued functions on 
		a topological space $D$\\
	$\C_c(D)$
		& continuous real valued functions on $D$ with compact support\\
	$\C^r(D)$
		& $r$ times continuously differentiable real valued functions
		on a domain $D\subset \IR^d$\\
	$\C^\infty(D)$
		& infinitely differentiable real valued functions on $D$\\
	$W_p^r(D)$
		& Sobolev space of functions $f:D\to\IR$
		whose weak derivatives $D^\alpha f$
		exist and are in $L^p(D)$ 
		for all $\alpha\in\IN_0^d$ with $\abs{\alpha}\leq r$\\
	$H^r(D)$
		& equal to $W_2^r(D)$\\
	$W^r_{p,\rm{mix}}(D)$
		& Sobolev space of functions $f:D\to\IR$
		whose weak derivatives $D^\alpha f$
		exist and are in $L^p(D)$ 
		for all $\alpha\in\IN_0^d$ with $\norm{\alpha}_\infty\leq r$\\
	$H^r_{\rm mix}(D)$
		& equal to $W^r_{2,\rm{mix}}(D)$
\end{tabularx}

\newpage
\renewcommand{\refname}{Bibliography}

\newpage
\cleardoublepage
\chapter*{Ehrenw\"ortliche Erkl\"arung}
\thispagestyle{empty}

Hiermit erkl\"are ich,
\begin{itemize}
	\item dass mir die Promotionsordnung der Fakult\"at bekannt ist,
	\item dass ich die Dissertation selbst angefertigt habe,
		keine Textabschnitte oder Ergebnisse eines Dritten
		oder eigenen Pr\"ufungsarbeiten ohne Kennzeichnung \"ubernommen
		und alle von mir benutzten Hilfsmittel, pers\"onliche Mitteilungen
		und Quellen in meiner Arbeit angegeben habe,
	\item dass ich die Hilfe eines Promotionsberaters nicht in Anspruch genommen habe
		und dass Dritte weder unmittelbar noch mittelbar geldwerte Leistungen von mir
		f\"ur Arbeiten erhalten haben, die im Zusammenhang mit dem Inhalt der vorgelegten
		Dissertation stehen,
	\item dass ich die Dissertation noch nicht als Pr\"ufungsarbeit f\"ur eine staatliche
		oder andere wissenschaftliche Pr\"ufung eingereicht habe.
\end{itemize}

\noindent
Bei der Auswahl und Auswertung des Materials sowie bei der Herstellung des Manuskripts
wurde ich durch Prof.~Dr.~Erich Novak unterst\"utzt.

\bigskip

\noindent
Ich habe weder die gleiche, noch eine in wesentlichen Teilen \"ahnliche
oder andere Abhandlung bei einer anderen Hochschule als Dissertation eingereicht.

\vspace*{\fill}

\noindent
Jena, 05.\ Februar 2019
\hfill David Krieg

%


\begin{thebibliography}{99.}
\bibliographystyle{plain}
\addcontentsline{toc}{chapter}{Bibliography}
\raggedright{



      
\bibitem[AHR17]{AHR17} 
      \textsc{C.\,Aistleitner, A.\,Hinrichs, D.\,Rudolf}.
      \newblock On the size of the largest empty box amidst a point set.
      \newblock {\em Discrete Applied Mathematics}, 
      230:146--150, 2017.  
      
      

\bibitem[Bab60]{Ba60} 
      \textsc{K.I.\,Babenko}.
      \newblock About the approximation of 
	    periodic functions of many variable trigonometric polynomials.
      \newblock {\em Dokladi Akademii Nauk SSR}, 32:247--250, 1960.     
      
\bibitem[Bak59]{Ba59}
      \textsc{N.S.\,Bakhvalov}.
      \newblock On the approximate calculation of multiple integrals.
      \newblock {\em Vestnik Moskovskogo Universiteta, 
      Seriya Matematiki, Mehaniki, Astronomi, Fiziki, Himii},
	  4:3--18, 1959. In Russian.
      \newblock English translation:
      \newblock {\em Journal of Complexity}, 
	  31(4):502--516, 2015.
	 

\bibitem[Bak62]{Ba62} 
      \textsc{N.S.\,Bakhvalov}.
      \newblock On a rate of convergence of indeterministic integration processes
	  within the functional classes $W_p^{(l)}$.
      \newblock {\em Theory of Probability and its Applications}, 
	  7:227, 1962.         
      
      
\bibitem[BDDG14]{BDDG14} 
      \textsc{M.\,Bachmayr, W.\,Dahmen, R.\,DeVore and L.\,Grasedyck}.
      \newblock Approximation of high-dimensional rank one tensors.
      \newblock {\em Constructive Approximation}, 39:385--395, 2014.
      
\bibitem[BDKKW17]{BDKKW17}
      \textsc{B.\,Bauer, L.\,Devroye, M.\,Kohler, A.\,Krzyzak, H.\,Walk}.
      \newblock Nonparametric estimation of a function 
      		from noiseless observations at random points.
      \newblock {\em Journal of Multivariate Analysis}, 
      160:93--104, 2017.   

\bibitem[BEHW89]{BEHW89} 
      \textsc{A.\,Blumer, A.\,Ehrenfeucht, D.\,Haussler, M.\,Warmuth}.
      \newblock Learnability  and  the  Vapnik-Chervonenkis dimension.
      \newblock {\em Journal of the Association for Computing Machinery}, 
      36(4):929--965, 1989.   
      	  
\bibitem[BV16]{BV16}
      \textsc{A.S.\,Bandeira, R.\,van Handel}.
      \newblock Sharp nonasymptotic bounds on the norm of random matrices with 
	  independent entries.
      \newblock {\em Annals of Probability}, 
      44(4):2479--2506, 2016.

      
\bibitem[CD16]{CD16} 
      \textsc{A.\,Chernov, D.\,D\~ung}.
      \newblock New explicit-in-dimension estimates for the 
	  cardinality of high-dimensional
	  hyperbolic crosses and approximation of functions 
	  having mixed smoothness. 
      \newblock{\em Journal of Complexity}, 32:92--121, 2016.

\bibitem[CDL13]{CDL13} 
      \textsc{A.\,Cohen, M.A.\,Davenport, D.\,Leviatan}.
      \newblock On the stability and accuracy of least squares approximations.
      \newblock {\em Foundation of Computational Mathematics}, 13:819--834, 2013.
      
\bibitem[CK91]{CK91} 
      \textsc{E.~Cheney, D.~Kincaid}.
      \newblock Numerical analysis: Mathematics of scientific computing.
      \newblock Brooks/Cole, Pacific Grove, California, 1991.

\bibitem[CM17]{CM17} 
      \textsc{A.\,Cohen, G.\,Migliorati}.
      \newblock Optimal weighted least-squares methods.
      \newblock {\em SMAI-Journal of Computational Mathematics}, 3:181--203, 2017.

\bibitem[CW17a]{CW17} 
      \textsc{J.\,Chen, H.\,Wang}.
      \newblock Preasymptotics and asymptotics of approximation 
	  numbers of anisotropic Sobolev embeddings.
      \newblock {\em Journal of Complexity}, 39:94--110, 2017.

\bibitem[CW17b]{CW17b} 
      \textsc{J.\,Chen, H.\,Wang}.
      \newblock Approximation numbers of Sobolev and Gevrey type embeddings 
      on the sphere and on the ball -- Preasymptotics, 
      asymptotics, and tractability.
      \newblock {\em Journal of Complexity}, 50:1--24, 2019.


\bibitem[DJ13]{DJ13} 
      \textsc{A.\,Dumitrescu, M.\,Jiang}.
      \newblock On the largest empty axis-parallel box amidst $n$ points.
      \newblock {\em Algorithmica}, 
      66(2):225--248, 2013.   
      
\bibitem[DS01]{DS01} 
      \textsc{K.\,Davidson, S.\,Szarek}.
      \newblock Local operator theory, random matrices and Banach spaces.
      \newblock Handbook on the geometry of Banach spaces. Volume~1.
      \newblock 317--366, Elsevier Science B.V., Amsterdam, 2001.

\bibitem[DTU18]{DTU18} 
      \textsc{D.\,D\~ung, V.N.\,Temlyakov, T.\,Ullrich}.
      \newblock Hyperbolic cross approximation.
      \newblock Advanced Courses in Mathematics -- 
      CRM Barcelona. Birkhäuser/Springer, 2018.
      
\bibitem[EGO18]{EGO18}
      \textsc{M.\,Ehler, M.\,Gräf, C.J.\,Oates}.
      \newblock Optimal Monte Carlo integration on closed manifolds.
      \newblock {\em Statistics and Computing}, to appear,
      available on arXiv:1707.04723 [math.NA].

\bibitem[Fro76]{Fr76} 
      \textsc{K.K.\,Frolov}.
      \newblock Upper error bounds for 
	  quadrature formulas on function classes.
      \newblock {\em Soviet Mathematics Doklady}, 17(6):1665--1669, 1976.
      
\bibitem[GG84]{GG84} 
      \textsc{A.Yu.\,Garnaev, E.D.\,Gluskin}.
      \newblock The widths of a Euclidean ball.
      \newblock {\em Soviet Mathematics Doklady}, 30:200--204, 1984.
      
\bibitem[GM97]{GM97}
      \textsc{A.A.\,Giannopoulos, V.D.\,Milman}.
      \newblock On the diameter of proportional sections of a symmetric convex body.
      \newblock {\em International Mathematics Research Notices}, 1:5--19, 1997.
      
\bibitem[GM98]{GM98}
      \textsc{A.A.\,Giannopoulos, V.D.\,Milman}.
      \newblock Mean width and diameter of proportional sections of a
              symmetric convex body.
      \newblock {\em Journal f\"{u}r die Reine und Angewandte Mathematik}, 
            497:113--139, 1998.
            
\bibitem[GMT05]{GMT05}
      \textsc{A.A.\,Giannopoulos, V.D.\,Milman, A.\,Tsolomitis}.
      \newblock Asymptotic formulas for the diameter of sections of symmetric
              convex bodies.
      \newblock {\em Journal of Functional Analysis}, 
            1:86--108, 2005.
      
\bibitem[Gor88]{Go88}
      \textsc{Y.\,Gordon}.
      \newblock On Milman's inequality and random subspaces which escape through a mesh in~${\bf R}^n$.
      \newblock Geometric aspects of functional analysis.
      \newblock Lecture Notes in Mathematics~1317, 84--106, Springer, 1988.

\bibitem[GT01]{GT01} 
      \textsc{D.\,Gilbarg, N.\,Trudinger}.
      \newblock Elliptic partial differential equations of second order.
      \newblock Springer, Berlin Heidelberg, 2001. 

\bibitem[GW11]{GW11} 
      \textsc{M.\,Gnewuch, H.\,Woźniakowski}.
      \newblock Quasi-polynomial tractability.
      \newblock {\em Journal of Complexity}, 27:312--330, 2011.
      
      
      
\bibitem[Hei94]{He94} 
      \textsc{S.\,Heinrich}.
      \newblock Random approximation in numerical analysis.
      \newblock {\em Proceedings of the Conference Functional Analysis}, 
      Essen~(Germany), 123--171, Marcel Dekker, 1994.

\bibitem[Hei01]{He01} 
      \textsc{S.\,Heinrich}.
      \newblock Multilevel Monte Carlo methods.
      \newblock {\em Proceedings of the Third International Conference 
	  on Large-Scale Scientific Computing}, 
      \newblock Sozopol~(Bulgaria), 58--67, Springer, 2001.
    
\bibitem[Hei08]{He08} 
      \textsc{S.\,Heinrich}.
      \newblock Randomized approximation of Sobolev embeddings.
      \newblock {\em Proceedings of the Seventh International Conference on 
	  Monte Carlo and Quasi-Monte Carlo Methods in Scientific Computing}, 
      \newblock Ulm~(Germany), 445--459, Springer, 2008.
            
\bibitem[HKNPU19]{HKNPU19}
      \textsc{A.\,Hinrichs, D.\,Krieg, E.\,Novak, J.\,Prochno, M.\,Ullrich}.
      \newblock Random sections of ellipsoids and the power of random information.
      \newblock Preprint, available on arXiv:1901.06639 [math.FA].
            
\bibitem[HNUW17]{HNUW17} 
      \textsc{A.\,Hinrichs, E.\,Novak, M.\,Ullrich, H.\,Woźniakowski}.
      \newblock Product rules are optimal for numerical integration 
	  in classical smoothness spaces.
      \newblock {\em Journal of Complexity}, 38:39--49, 2017.

\bibitem[HNV08]{HNV08} 
      \textsc{A.\,Hinrichs, E.\,Novak, J.\,Vybíral}.
      \newblock Linear information versus function evaluations for $L_2$-approximation.
      \newblock {\em Journal of Approximation Theory}, 153:97--107, 2008.
            
\bibitem[HNWW01]{HNWW01} 
      \textsc{S.\,Heinrich, E.\,Novak, G.W.\,Wasilkowski, H.\,Woźniakowski}.
      \newblock The inverse of the star-discrepancy depends linearly on the dimension.
      \newblock {\em Acta Arithmetica}, 96:279--302, 2001.

\bibitem[HOT15]{HOT15}
      \textsc{B.\,Hassibi, C.\,Thrampoulidis, S.\,Oymak}.
      \newblock The Gaussian min-max theorem in the presence of convexity.
      \newblock E-print, arXiv:1408.4837 [cs.IT], 2015. 
            
\bibitem[HPU18]{HPU18} 
      \textsc{A.\,Hinrichs, J.\,Prochno, M.\,Ullrich}.
      \newblock The curse of dimensionality for numerical 
	  integration on general domains.
      \newblock {\em Journal of Complexity}, 50:25--42, 2019.

\bibitem[HT08]{HT08} 
      \textsc{D.D.\,Haroske, H.\,Triebel}.
      \newblock Distributions, Sobolev spaces, elliptic equations.
      \newblock European Mathematical Society, Zürich, 2008.

   
\bibitem[Jer67]{Je67} 
      \textsc{J.W.\,Jerome}.
      \newblock On the $L_2$ n-width of 
	  certain classes of functions of several variables.
      \newblock {\em Journal of Mathematical Analysis and Applications}, 
	  20:110--123, 1967.
	  
      
\bibitem[KMU16]{KMU16} 
      \textsc{T.\,Kühn, S.\,Mayer, T.\,Ullrich}.
      \newblock Counting via entropy: New preasymptotics for the approximation
	  numbers of Sobolev embeddings.
      \newblock {\em SIAM Journal on Numerical Analysis}, 54(6):3625--3647, 2016.
      
\bibitem[KN17]{KN17} 
      \textsc{D.\,Krieg, E.\,Novak}.
      \newblock A universal algorithm
	    for multivariate integration.
      \newblock {\em Foundation of Computational Mathematics}, 17(4):895--916, 2017.
      
\bibitem[Koc00]{Ko00} 
      \textsc{H.\,Koch}.
      \newblock Number theory: algebraic numbers and functions.
      \newblock Graduate studies in mathematics. 
	  American Mathematical Society, Providence, 2000.
	        
\bibitem[Koe84]{Ko84} 
      \textsc{H.\,König}.
      \newblock On the tensor stability of $s$-number ideals.
      \newblock {\em Mathematische Annalen}, 269:77--93, 1984.
	  
\bibitem[KOU17]{KOU17} 
      \textsc{C.\,Kacwin, J.\,Oettershagen, T.\,Ullrich}.
      \newblock On the orthogonality of the Chebyshev-Frolov lattice and applications.
      \newblock {\em Monatshefte f\"ur Mathematik}, 184(3):425--441, 2017.
      
\bibitem[KR19]{KR19}
      \textsc{D.\,Krieg, D.\,Rudolf}.
      \newblock Recovery algorithms for high-dimensional rank one tensors.
      \newblock {\em Journal of Approximation Theory}, 237:17--29, 2019.
      
\bibitem[Kri16]{Kr16}
      \textsc{D.\,Krieg}.
      \newblock On the randomization of Frolov’s algorithm for multivariate integration.
      \newblock Master thesis, Friedrich-Schiller-Universit\"at Jena, 2016,
      available on arXiv:1603.04637 [math.NA].
   
\bibitem[Kri18a]{Kr18} 
      \textsc{D.\,Krieg}.
      \newblock Tensor power sequences and the approximation of tensor product operators.
      \newblock {\em Journal of Complexity}, 44:30--51, 2018. 

\bibitem[Kri18b]{Kr18b} 
      \textsc{D.\,Krieg}.
      \newblock On the dispersion of sparse grids.
      \newblock {\em Journal of Complexity}, 45:115--119, 2018.
      
\bibitem[Kri18c]{Kr18c} 
      \textsc{D.\,Krieg}.
      \newblock Optimal Monte Carlo methods for $L^2$-approximation.
      \newblock {\em Constructive Approximation}, 2018.
      https://doi.org/10.1007/s00365-018-9428-4
      
\bibitem[Kri19]{Kr19}
      \textsc{D.\,Krieg}.
      \newblock Uniform recovery of high-dimensional $C^r$-functions.
      \newblock {\em Journal of Complexity}, 50:116--126, 2019.      
      
\bibitem[KSU15]{KSU15} 
      \textsc{T.\,Kühn, W.\,Sickel, T.\,Ullrich}.
      \newblock Approximation of mixed order Sobolev functions 
	  on the $d$-torus -- asymptotics, preasymptotics and $d$-dependence.
      \newblock {\em Constructive Approximation}, 42:353--398, 2015.
      
\bibitem[Kun17]{Ku17}
      \textsc{R.J.\,Kunsch}.
      \newblock High-Dimensional Function Approximation: Breaking the Curse with Monte Carlo Methods.
      \newblock Dissertation, Friedrich-Schiller-Universit\"at Jena, 2017, available on arXiv:1704.08213 [math.NA]. 
      
\bibitem[KWW09]{KWW09} 
      \textsc{F.Y.\,Kuo, G.W.\,Wasilkowski, H.\,Wo\'{z}niakowski}.
      \newblock On the power of standard information for multivariate
              approximation in the worst case setting.
      \newblock {\em Journal of Approximation Theory}, 158(1):97--125, 2009.
      
\bibitem[LM00]{LM00}
      \textsc{B.\,Laurent, P.\,Massart}.
      \newblock Adaptive estimation of a quadratic functional by model selection.
      \newblock {\em The Annals of Statistics}, 28(5):1302--1338, 2000.


\bibitem[LPT06]{LPT06} 
      \textsc{A.E.\,Litvak, A.\,Pajor, N.\,Tomczak-Jaegermann}.
      \newblock Diameters of sections and coverings of convex bodies.
      \newblock {\em Journal of Functional Analysis}, 231(2):438--457, 2006.
      
\bibitem[LPW09]{LPW09}
      \textsc{D.A.\,Levin, Y.\,Peres, E.\,L\,Wilmer}.
      \newblock Markov chains and mixing times.
      \newblock American Mathematical Society, Providence, 2009.
      
\bibitem[LT00]{LT00}
      \textsc{A.E.\,Litvak, N.\,Tomczak-Jaegermann}.
      \newblock Random aspects of high-dimensional convex bodies.
      \newblock Geometric aspects of functional analysis.
      \newblock Lecture Notes in Mathematics~1745, 169--190, Springer, 2000.
      
\bibitem[LVY18]{LVY18} 
      \textsc{R.\,Latała, R.\,van Handel, P.\,Youssef}.
      \newblock The dimension-free structure of nonhomogeneous random matrices.
      \newblock {\em Inventiones Mathematicae}, 214(3):1031--1080, 2018.
      
      
\bibitem[Mat91]{Ma91} 
      \textsc{P.\,Mathé}.
      \newblock Random approximation of Sobolev embeddings.
      \newblock {\em Journal of Complexity}, 7:261--281, 1991.
      
\bibitem[Mit62]{Mi62} 
      \textsc{B.S.\,Mityagin}.
      \newblock Approximation of functions in $L^p$ and $C$ on the torus.
      \newblock {\em Mathematical Notes}, 58:397--414, 1962.
      
\bibitem[MUV15]{MUV15}
      \textsc{S.\,Mayer, T.\,Ullrich, J.\,Vybíral}.
      \newblock Entropy and sampling numbers of classes of ridge functions.
      \newblock {\em Constructive Approximation}, 42(2):231--264, 2015.
      
\bibitem[Nik74]{Ni74} 
      \textsc{N.S.\,Nikol’skaya}.
      \newblock Approximation of differentiable functions of several 
	  variables by Fourier sums in the $L_p$-metric.
      \newblock {\em Sibirskii Matematicheskii Zhurnal}, 15:395--412, 1974.

\bibitem[Nov88]{No88} 
      \textsc{E.\,Novak}.
      \newblock Deterministic and stochastic error bounds in 
	  numerical analysis.
      \newblock Lecture Notes in Mathematics~1349, Springer, 1988.
      
\bibitem[Nov92]{No92} 
      \textsc{E.\,Novak}.
      \newblock Optimal linear randomized methods for linear 
	   operators in Hilbert spaces.
      \newblock {\em Journal of Complexity}, 8:22--36, 1992.
      

\bibitem[NR97]{NR97} 
      \textsc{E.\,Novak, K.\,Ritter}.
      \newblock The curse of dimension and a universal method for numerical integration.
      \newblock In G.\,Nürnberger, J.\,W.\,Schmidt, G.\,Walz (eds): Multivariate approximation and splines.
      International Series of Numerical Mathematics~125, 177--188, Birkhäuser, 1997.
      
\bibitem[NR16]{NR16} 
      \textsc{E.\,Novak, D.\,Rudolf}.
      \newblock Tractability of the approximation of high-dimensional rank one tensors.
      \newblock {\em Constructive Approximation}, 
      43:1--13, 2016.   
      
\bibitem[NW08]{NW08} 
      \textsc{E.\,Novak, H.\,Woźniakowski}.
      \newblock Tractability of multivariate problems.
	  Volume~I: Linear information.
      \newblock European Mathematical Society, Zürich, 2008. 
      
\bibitem[NW09]{NW09} 
      \textsc{E.\,Novak, H.\,Woźniakowski}.
      \newblock Approximation of infinitely differentiable multivariate 
	  functions is intractable.
      \newblock {\em Journal of Complexity}, 25:398--404, 2009.

\bibitem[NW10]{NW10} 
      \textsc{E.\,Novak, H.\,Woźniakowski}.
      \newblock Tractability of multivariate problems. 
	  Volume~II: Standard information for functionals.
      \newblock European Mathematical Society, Zürich, 2010. 
      
\bibitem[NW12]{NW12} 
      \textsc{E.\,Novak, H.\,Woźniakowski}.
      \newblock Tractability of multivariate problems.
	  Volume~III: Standard information for operators.
      \newblock European Mathematical Society, Zürich, 2012. 
      
\bibitem[Pie78]{Pi78}
      \textsc{A.\,Pietsch}.
      \newblock Operator ideals.
      \newblock VEB Deutscher Verlag der Wissenschaften, Berlin, 1978,
      and North-Holland, Amsterdam, 1980. 

\bibitem[Pie82]{Pi82} 
      \textsc{A.\,Pietsch}.
      \newblock Tensor products of sequences, functions, and operators.
      \newblock {\em Archiv der Mathematik}, 38:335--344, 1982.
      
\bibitem[PW10]{PW10} 
      \textsc{A.\,Papageorgiou, H.\,Woźniakowski}.
      \newblock Tractability through increasing smoothness.
      \newblock {\em Journal of Complexity}, 26:409--421, 2010.
      
\bibitem[RT96]{RT96} 
      \textsc{G.\,Rote, R.F.\,Tichy}.
      \newblock Quasi-Monte Carlo methods and the dispersion of point sequences.
      \newblock {\em Mathematical Computational Modeling}, 
      23(8-9):9--23, 1996. 
      
      
      
\bibitem[Rud18]{Ru18} 
      \textsc{D.\,Rudolf}.
      \newblock An upper bound of the minimal dispersion via delta covers.
      \newblock {\em Contemporary Computational Mathematics - 
	    A Celebration of the 80th Birthday of Ian Sloan}, 
      1099--1108, Springer, 2018. 
      
\bibitem[RV09]{RV09} 
      \textsc{M.\,Rudelson, R.\,Vershynin}.
      \newblock Smallest singular value of a random rectangular matrix.
      \newblock {\em Communications on Pure and Applied Mathematics}, 
      62(12):1707--1739, 2009.

\bibitem[Skr94]{Sk94} 
      \textsc{M.M.\,Skriganov}.
      \newblock Constructions of uniform distributions in terms of geometry of numbers.
      \newblock {\em Algebra i Analiz}, 6:200--230, 1994. 
      
\bibitem[Sos18]{So18} 
      \textsc{J.\,Sosnovec}.
      \newblock A note on the minimal dispersion of point sets
	  in the unit cube.
      \newblock {\em European Journal of Combinatorics}, 
      69:255--259, 2018.     
      
\bibitem[SU09]{SU09}
      \textsc{W.\,Sickel, T.\,Ullrich}.
      \newblock Tensor products of Sobolev-Besov spaces and 
      applications to approximation from the hyperbolic cross.
      \newblock {\em Journal of Approximation Theory}, 161(2):748--786, 2009.
      
\bibitem[SU10]{SU10} 
      \textsc{W.\,Sickel, T.\,Ullrich}.
      \newblock Spline interpolation on sparse grids.
      \newblock {\em Applicable Analysis}, 90:337--383, 2010.
      
\bibitem[Suk78]{Su78} 
      \textsc{A.G.\,Sukharev}.
      \newblock Optimal method of constructing best uniform approximations
	  for functions of a certain class.
      \newblock {\em USSR Computational Mathematics and Mathematical Physics}, 18(2):21--31, 1978.
      
\bibitem[Suk79]{Su79} 
      \textsc{A.G.\,Sukharev}.
      \newblock Optimal numerical integration formulas for some classes of
		functions of several variables.
      \newblock {\em Soviet Mathematics Doklady}, 
      20:472--475, 1979.
      
\bibitem[SW98]{SW98} 
      \textsc{I.H.\,Sloan, H.\,Woźniakowski}.
      \newblock When are quasi-Monte Carlo algorithms
	  efficient for high dimensional integrals?
      \newblock {\em Journal of Complexity}, 14:1--33, 1998.
      
      
\bibitem[Sza91]{Sz91}
      \textsc{S.J.\,Szarek}.
      \newblock Condition numbers of random matrices.
      \newblock {\em Journal of Complexity}, 7(2):131--149, 1991.
      
\bibitem[Sze39]{Sz39}
      \textsc{G.\,Szeg\H{o}}.
      \newblock Orthogonal polynomials.
      \newblock Colloquium publications. American Mathematical Society, Providence, 1939. 
      

\bibitem[Tem86]{Te86} 
      \textsc{V.N.\,Temlyakov}.
      \newblock Approximation of functions with bounded mixed derivative.
      \newblock {\em Trudy MIAN}, 178:1--112, 1986.
      English translation in {\em Proceedings of the 
	Steklov Institute of Mathematics}~1, 1989.

\bibitem[Tem93]{Te93} 
      \textsc{V.N.\,Temlyakov}.
      \newblock Approximation of periodic functions.
      \newblock Computational mathematics and analysis series. 
      Nova Science Publishers, New York, 1993. 

\bibitem[Tem03]{Te03} 
      \textsc{V.N.\,Temlyakov}.
      \newblock Cubature formulas, discrepancy, 
	  and nonlinear approximation.
      \newblock {\em Journal of Complexity}, 19:352--391, 2003.
      
\bibitem[Tem17]{Te17} 
      \textsc{V.N.\,Temlyakov}.
      \newblock Universal discretization.
      \newblock {\em Journal of Complexity}, 
      47:97--109, 2018. 

\bibitem[Tho96]{Th96} 
      \textsc{C.\,Thomas-Agnan}.
      \newblock Computing a family of reproducing kernels 
	  for statistical applications.
      \newblock {\em Numerical Algorithms}, 13:21--32, 1996.
      
\bibitem[Tri05]{Tr05} 
      \textsc{H.\,Triebel}.
      \newblock Sampling numbers and embedding constants.
      \newblock {\em Proceedings of the Steklov Institute of Mathematics}, 
      248:268–277, 2005.  
      
\bibitem[TW80]{TW80} 
      \textsc{J.F.\,Traub, H.\,Woźniakowski}.
      \newblock A general theory of optimal algorithms.
      \newblock Academic Press, 1980.
      
\bibitem[TWW88]{TWW88} 
      \textsc{J.F.\,Traub, G.W.\,Wasilkowski, H.\,Woźniakowski}.
      \newblock Information-based complexity.
      \newblock Academic Press, 1988.   

\bibitem[Ull16]{Ul16} 
      \textsc{M.\,Ullrich}.
      \newblock On "Upper error bounds for quadrature 
	  formulas on function classes" by K.K.\,Frolov.
      \newblock {\em Proceedings of the MCQMC 2014}, 
	  Leuven~(Belgium), 571--582, Springer, 2016.

      
\bibitem[Ull17]{Ul17} 
      \textsc{M.\,Ullrich}.
      \newblock A Monte Carlo method for integration of
	  multivariate smooth functions.
      \newblock {\em SIAM Journal on Numerical Analysis}, 55(3):1188–1200, 2017.
      
\bibitem[UV18]{UV18} 
      \textsc{M.\,Ullrich, J.\,Vyb\'iral}.
      \newblock An upper bound on the minimal dispersion.
      \newblock {\em Journal of Complexity}, 
      45:120--126, 2018. 
      
\bibitem[UV19]{UV19}
      \textsc{M.\,Ullrich, J.\,Vyb\'iral}.
      \newblock Deterministic constructions of high-dimensional sets with small dispersion.
      \newblock Preprint, available on arXiv:1901.06702 [cs.CC].
      
\bibitem[Vyb14]{Vy14} 
      \textsc{J.\,Vybíral}.
      \newblock Weak and quasi-polynomial tractability of 
	  approximation of infinitely differentiable functions.
      \newblock {\em Journal of Complexity}, 30(2):48--55, 2014.
      
\bibitem[Was84]{Wa84} 
      \textsc{G.W.\,Wasilkowski}.
      \newblock Some nonlinear problems are as easy as the approximation problem.
      \newblock {\em Computers \& Mathematics with Applications}, 10:351--363, 1984.     
      
\bibitem[Wei12]{We12}
      \textsc{M.\,Weimar}.
      \newblock The complexity of linear tensor product problems in
	  (anti)symmetric Hilbert spaces.
      \newblock {\em Journal of Approximation Theory}, 164(10):1345--1368, 2012. 
      
\bibitem[Woz18]{Wo18} 
      \textsc{H.\,Woźniakowski}.
      \newblock ABC on IBC.
      \newblock {\em Journal of Complexity}, in press.
      \newblock https://doi.org/10.1016/j.jco.2018.05.001 
      
\bibitem[WW01]{WW01}
      \textsc{G.W.\,Wasilkowski, H.\,Woźniakowski}.
      \newblock On the power of standard information for weighted approximation.
      \newblock {\em Foundations of Computational Mathematics}, 1(4):417--434, 2001.        

\bibitem[WW04]{WW04}
      \textsc{G.W.\,Wasilkowski, H.\,Woźniakowski}.
      \newblock Finite-order weights imply tractability of linear multivariate problems.
      \newblock {\em Journal of Approximation Theory}, 130(1):57--77, 2004.  
      
\bibitem[WW06]{WW06} 
      \textsc{G.W.\,Wasilkowski, H.\,Woźniakowski}.
      \newblock The power of standard information for multivariate approximation
	  in the randomized setting.
      \newblock {\em Mathematics of Computation}, 76:965--988, 2006.

\bibitem[Yse10]{Ys10} 
      \textsc{H.\,Yserentant}.
      \newblock Regularity and approximability of
	  electronic wave functions.
      \newblock Lecture Notes in Mathematics~2000,
	  Springer, 2010.
      

   
}

\end{thebibliography}
\end{document}